\documentclass[final,1p,times]{elsarticle}
\usepackage{amssymb}
 \usepackage{amsthm}
 \usepackage{caption}
\usepackage{amsmath,amssymb,amsopn,amsfonts,mathrsfs,amsbsy,amscd}
\usepackage{longtable}
\usepackage{multirow}
\usepackage[latin1]{inputenc}
\newcommand{\R}{\mathbb{R}}

\newcommand{\G}{{\mathfrak{g}}}

\newcommand{\h}{{\mathfrak{h}}}

\newcommand{\e}{\check{e}}
\newtheorem{Def}{Definition}
\newtheorem{theo}{Theorem}
\newtheorem{pr}{Proposition}
\newtheorem{Le}{Lemma}
\newtheorem{co}{Corollary}

\newtheorem{remark}{Remark}

\usepackage{adjustbox}
\usepackage{longtable}
\usepackage{geometry}
\begin{document}
\begin{frontmatter}
\title{Three-Dimensional Real Affine Lie Groups}
\author[]{T. A\"it Aissa and S. El Bourkadi and M. W. Mansouri  }
\address{Department of Mathematics, Faculty of Sciences, Ibn Tofail University\\
	Analysis, Geometry and Applications Laboratory $($LAGA$)$\\ Kenitra, Morocco\\e-mail:
	tarik.aitaissa @uit.ac.ma\\
	said.elbourkadi @uit.ac.ma\\	mansourimohammed.wadia@uit.ac.ma}
\begin{abstract}
We classify all left-invariant real affine connections in dimension three. Our approach reduces the three-dimensional problem to a two-dimensional one by decomposing each left-invariant affine connection into a two-dimensional part and an additional one-dimensional component. 
After characterizing all possible two-dimensional left-invariant affine connections, we return to the three-dimensional setting to obtain a simplified description of all three-dimensional left-invariant affine connections. We then explicitly solve the resulting simplified quadratic equations and perform a refined analysis up to isomorphism, leading to a complete classification. Furthermore, we determine several geometric and algebraic properties of these structures, including the Novikov, associative, radiant, and bi-symmetric conditions, as well as geodesic completeness.
\end{abstract}

\begin{keyword}
 Affine Lie groups, Flat Lie algebras, Flat torsion-free connection.\\
        \MSC 22E25, 17B30, 53B05. 
\end{keyword}
        \end{frontmatter}
\section{Introduction}
An \textit{affine structure} on a   manifold $M$ is given by a maximal atlas of charts with values in $\R^n$ whose transition maps are locally affine transformations of $\R^n$. It is equivalent to endow $M$ with a flat and torsion-free connection $\nabla$. It is also equivalent to give a local diffeomorphism $D$ from the universal cover $\widetilde{M}$ of $M$ into $\R^n$, called the \textit{developing map}, such that there exists a representation $h$ of the fundamental group $\pi_1(M)$ into the group $\mathrm{Aff}(\R^n)$ of affine transformations of $\R^n$, called the \textit{holonomy}, satisfying, for all $X$ in $\widetilde{M}$ and $\gamma$ in $\pi_1(M)$,
\[
D(\gamma \cdot X) = h(\gamma)D(X).
\]
If $g$ is in $\mathrm{Aff}(\mathbb{R}^n)$, the developing maps $D$ and $g \circ D$ are considered equivalent. The simplest examples are the tori $\mathbb{T}^n$, whose universal covering space is the affine space $\mathbb{R}^n$. If $D$ is a diffeomorphism, i.e., if $\tilde{M}$ is affinely diffeomorph to $\mathbb{R}^n$, then $M$ is called complete.

A manifold with an affine structure naturally induces a differentiable structure. Additionally, affine manifolds are characterized by flat geometric structures. More precisely, There is a one-to-one correspondence between the affine structures and the flat torsion-free affine connections $\nabla$ on the manifold $M$. Such a connection is a linear connection on the tangent 
bundle $TM$ whose torsion tensor and curvature tensor vanish identically. 
That is, for all vector fields $X,Y,Z \in \mathfrak{X}(M)$,
\begin{equation}\label{Torsionfree}
	T^\nabla(X,Y)
	=
	\nabla_XY-\nabla_YX-[X,Y]
	=
	0,
\end{equation}
and
\begin{equation}\label{Flatness}
	R^\nabla(X,Y)Z
	=
	\nabla_X\nabla_YZ
	-
	\nabla_Y\nabla_XZ
	-
	\nabla_{[X,Y]}Z
	=
	0.
\end{equation}
Riemannian-flat and Lorentz-flat manifolds are subclasses of affine manifolds. The existence of affine structures is a fundamental problem. An affine structure in dimension two exhibits a fundamental topological property.  A closed surface admits an affine structure if and only if its Euler characteristic vanishes \cite{Milnor}.

Lie group structures with left-invariant affine structures provide many examples of affine manifolds. An affine structure on a Lie group $G$ is left-invariant if the left-multiplication by $g$, $\mathrm{L}_g:G\to G$, is an automorphism of the affine structure. 

Suppose that $G$ admits a left-invariant flat, torsion-free connection $\nabla$ on $G$. It follows that for any two left-invariant vector fields $X, Y\in\G$, their covariant derivative $\nabla_XY\in\G$ is left-invariant as well.
 Therefore, the covariant derivative defines a bilinear product on $\G$, given by
 \[
 XY:=\nabla_XY,
 \]
 for all $X,Y\in\G$. Keeping it simple, we will write  $\nabla_XY$ as $XY$ or $X\cdot Y$. Due to local flatness and torsion-free properties of $\nabla$, we have by \eqref{Torsionfree} and \eqref{Flatness}:
 \begin{align}
 	[X,Y]&=X\cdot Y-Y\cdot X,\label{admi}\\
 	[X,Y]\cdot Z&=X\cdot(Y\cdot Z)-Y\cdot(X\cdot Z).\label{repre}
 \end{align}
We can rewrite~\eqref{repre} using~\eqref{admi} as
\[
(X,Y,Z)=(Y,X,Z),
\]
where $(X,Y,Z)$ denotes the associator of the three elements $X$, $Y$, and $Z$ in $\G$. Thus, $(\G,\cdot)$ is a left-symmetric algebra (or simply an LSA), see~\cite{Bu2,Seg}.  There are many names for left-symmetric algebras. Left-symmetric algebras are also known as flat Lie algebras, Vinberg algebras, Koszul algebras, or quasiassociative algebras. We refer the reader to the survey article \cite{Bu1} and its references for more information on left-symmetric algebras. This context raises the following important question, also posed by Milnor \cite{Milnor2} in his studies of fundamental groups of complete affine manifolds: Which Lie groups (Lie algebras) admit left-invariant affine structures?  
There is a special difficulty in this question for nilpotent Lie groups. Several evidences suggest that nilpotent Lie groups admit left-invariant affine structures \cite{Bu3}. In fact, Milnor postulated that this holds even for solvable Lie groups \cite{Milnor2}. Recently, however, counterexamples were discovered \cite{Ben}. Several nilmanifolds are not affine.   When the Lie group $G$ is semisimple (also perfect), it does not admit left-invariant affine structures \cite{Hel}. As a concrete example of affine manifolds, symplectic Lie groups can be regarded as a fundamental class of affine manifolds. Indeed, it is well known that every symplectic Lie group $(G,\omega)$ carries a torsion-free flat connection $\nabla^{\omega}$ on $G$, naturally associated with the symplectic structure (see~\cite[Theorem~6]{Chu}).
In addition, cosymplectic Lie groups (or Lie algebras) constitute another class of affine manifolds; see~\cite{EM}.

Another problem related to affine structures is their classification. We cite here several classification results in low dimensions for left-symmetric algebras (or flat Lie algebras): the two-dimensional case~\cite{And}; three-dimensional left-symmetric algebras over the field $\mathbb{C}$ of complex numbers~\cite{Bai1}; the abelian case~\cite{Goz}; four-dimensional reductive Lie groups over $\mathbb{R}$ and $\mathbb{C}$~\cite{AM3,Bu0}; Complete left-invariant affine structures on nilpotent Lie groups up to dimension~4~\cite{Kim}; and abelian left-symmetric algebras up to dimension~$5$ \cite{Deki}.
\\\\
The paper follows the following structure. In Section~\ref{Sec2}, we introduce important definitions and notions in the context of flat Lie algebras and  their basic properties. Section~\ref{Sec3} is devoted to flat Lie  groups (algebras) and the description of flat, torsion-free connections on this class of Lie algebras. We also provide several characterizations of isomorphisms between flat, torsion-free connections. In Section~\ref{Sec4}, we begin by classifying all torsion-free connections in dimension~2 and then proceed to the classification of three-dimensional real flat, torsion-free connections.  The appendix~\ref{App} contains detailed proofs of the classification of flat, torsion-free connections on abelian Lie algebra.
\\\\
\textbf{Notation and conventions.} Unless otherwise stated, we work over a fixed field $\mathbb{K}$ of characteristic zero for our results on affine Lie groups or flat Lie algebras. A geometric interpretation of simply connected Lie groups over the field $\mathbb{K}=\R$ of real numbers, is a natural one, which is therefore of principal interest in our investigations. Throughout this paper, the notations $(\G,\nabla)$ and $(\G,\cdot)$ will be used interchangeably to denote the flat Lie algebras, where $\nabla$ is flat, torsion-free connection on $\G$, and $``\cdot"$ is defined by $X\cdot Y=\nabla_XY$, for all $X,Y\in\G$.

\section{Flat Lie algebras}\label{Sec2}

First, we define flat Lie algebras, as a first definition, we begin with the notion of linear connection. Let $G$ be a connected and simply connected finite-dimensional  Lie group with Lie algebra $\G$.

A \emph{connection} on a Lie algebra $\G$ is a bilinear map
\[
\nabla:\G\times\G\to\G,
\qquad
(X,Y)\mapsto \nabla_X Y.
\]
For all $X,Y,Z\in\G$, the \emph{torsion tensor} $T=T^\nabla$ is defined by
\[
T(X,Y)=\nabla_X Y-\nabla_Y X-[X,Y],
\]
and the \emph{curvature tensor} $R=R^\nabla$ by
\[
R(X,Y)Z
=\nabla_X(\nabla_Y Z)
-\nabla_Y(\nabla_X Z)
-\nabla_{[X,Y]}Z.
\]
A connection $\nabla$ is said to be \emph{torsion-free} if its torsion vanishes, that is,
\[
T=0.
\]
The curvature tensor $R$ vanishes if and only if the map
\[
\tau^\nabla : \G \to \mathrm{End}(\G),
\qquad
X \mapsto \nabla_X,
\]
defines a representation of $\G$ on itself. In this case, the connection
$\nabla$ is said to be \emph{flat}.
\begin{Def}
	A flat Lie algebra is a Lie algebra $\G$ equipped with a flat, torsion-free connection $\nabla$. The pair $(\G,\nabla)$ is then called a flat Lie algebra.
\end{Def}

We recall the following fundamental definition.

\begin{Def}
	Let $\nabla^1$ and $\nabla^2$ be two flat, torsion-free connections on the Lie algebra $\G$. Then, $\nabla^1$ and $\nabla^2$ are isomorphic if and only if there exists a Lie algebra automorphism $\Psi:\G\to \G$ such that
	\begin{equation}
		\nabla^2_X=\Psi\circ\nabla^1_{\Psi^{-1}(X)}\circ\Psi^{-1},\quad\text{for all}~~X\in\G.
	\end{equation}
	
\end{Def}

Setting $
\nabla_X Y=X\cdot Y$, we define the left and right multiplication operators associated with the product $``\cdot"$ by
\[
\mathrm{L}_X(Y)=X\cdot Y=\nabla_X Y,
\qquad
\mathrm{R}_X(Y)=Y\cdot
 X=\nabla_Y X,
\]
for all $X,Y\in\G$.  
Moreover, the left-multiplication operator
\[
\mathrm{L}:\G\longrightarrow \mathrm{End}(\G),\qquad X\mapsto \mathrm{L}_X,
\]
defines a representation of the sub-adjacent Lie algebra of the flat Lie algebra.

There is a one-to-one correspondence between left-invariant affine structures
on $G$ and flat structures $``\cdot"$ (LSA-structures)  on $\G$ \cite{Seg}.

There are some very important subclasses of flat Lie algebras (Left-symmetric algebras):

\begin{Def}
Let $(\G,\cdot)$ be a flat Lie algebra. 
\begin{enumerate}
	\item	If, for every $X\in \G$, the operator $\mathrm{R}_X$ is nilpotent, then $\G$ is said to be transitive or complete. Transitivity corresponds to the completeness of an affine manifold. 
	\item  If, for every $X,Y\in \G$, $\mathrm{R}_X\circ \mathrm{R}_Y=\mathrm{R}_Y\circ \mathrm{R}_X$, then $\G$ is called a Novikov algebra.
	\item  
	If, for every $X,Y,Z\in \G$, the associator $(X,Y,Z)$ is right-symmetric, that is,
	\[
	(X,Y,Z)=(X,Z,Y),
	\]
	then $\G$ is said to be bi-symmetric. 
	\item If there exists a vector  $\zeta\in \G$   such that
	\[
	\nabla_X \zeta=X,
	\]
	for every vector  $X\in\G$, then $(\G,\cdot)$ is called a radiant flat Lie algebra.
\end{enumerate}
	
\end{Def}

\begin{remark}
	\begin{enumerate}
		\item 	The sub-adjacent Lie algebra of a complete left-symmetric algebra is solvable, $($see~\textsc{\cite{Kim,Medina1})}.
		\item  The notion of Novikov algebra was introduced in connection with Poisson brackets of hydrodynamic type and Hamiltonian operators arising in formal variational calculus, $($see~\textsc{\cite{Balin})}.
		\item  The notion of bi-symmetric coincides with that of an assosymmetric ring in the study of nearly associative algebras, $($see, \textsc{\cite{Kle}, \cite{Bai3})}.
		\item The notion of radiant manifolds was introduced in~\textsc{\cite{Gold1,Gold2}}. Its existence is related to a cohomological obstruction to the existence of a fixed point for the holonomy representation. The classification of radiant flat Lie algebras up to dimension 3 was established in \textsc{\cite{AM1}}.

		\end{enumerate}
\end{remark}

\section{Real Flat Solvable Lie Algebras}\label{Sec3}

Let $\G =   \langle \ell\rangle\ltimes \G_0$ be the semidirect sum of a Lie algebra $\G_0$ and a one-dimensional vector space $\langle \ell \rangle$. We define an alternating bilinear map $[\cdot, \cdot] : \G \times \mathfrak{g} \to \G$ by specifying that the only non-zero brackets are given by
\begin{align}
[x,y]_\G&=[x,y]_{\G_0},\quad\quad\text{for all}~~x,y\in\G_{0},\label{Bra1}\\
[\ell,x]_\G&=\mathrm{D}(x),\quad\quad\text{for all}~~x\in\G_{0},\label{Bra2}
\end{align}
where, $\mathrm{D}\in\mathrm{End}(\G_{0})$ is an endomorphism of $\G_0$. 

\begin{Le}\label{Derivation}
The alternating product $[\cdot,\cdot ]$ as declared in $(\ref{Bra1})$ - $(\ref{Bra2})$
above defines a Lie algebra $\G = (\G, [\cdot,\cdot ])$ if and only if $\mathrm{D}$ is a derivation on $\G.$
\end{Le}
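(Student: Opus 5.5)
Since the bracket defined by (\ref{Bra1})--(\ref{Bra2}) is alternating and bilinear by construction, the only thing to check is the Jacobi identity on $\G=\langle\ell\rangle\oplus\G_0$. (We read ``derivation on $\G$'' as ``derivation of $\G_0$'', since $\mathrm{D}\in\mathrm{End}(\G_0)$.) Write
\[
J(a,b,c)=[a,[b,c]]+[b,[c,a]]+[c,[a,b]].
\]
This expression is trilinear and totally antisymmetric, because the bracket is alternating. So it suffices to evaluate $J$ on triples of basis-type elements, namely $\ell$ and elements of $\G_0$, with repetitions allowed only up to antisymmetry. Any triple containing $\ell$ twice gives $J=0$ automatically. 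This leaves two cases: triples $(x,y,z)$ with $x,y,z\in\G_0$, and triples $(\ell,x,y)$ with $x,y\in\G_0$.

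\emph{Case 1: $(x,y,z)$ with $x,y,z\in\G_0$.} By (\ref{Bra1}), all brackets stay inside $\G_0$ and agree with $[\cdot,\cdot]_{\G_0}$. Hence $J(x,y,z)=0$ because $\G_0$ is already a Lie algebra. This case imposes no condition on $\mathrm{D}$.

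\emph{Case 2: $(\ell,x,y)$ with $x,y\in\G_0$.} Using (\ref{Bra1}), (\ref{Bra2}) and the alternating property, we compute
\[
[\ell,[x,y]]=\mathrm{D}([x,y]_{\G_0}),\qquad [x,[y,\ell]]=-[x,\mathrm{D}(y)]_{\G_0},\qquad [y,[\ell,x]]=[y,\mathrm{D}(x)]_{\G_0}.
\]
Adding these gives
\[
J(\ell,x,y)=\mathrm{D}([x,y])-[x,\mathrm{D}y]-[\mathrm{D}x,y].
\]
Therefore $J(\ell,x,y)=0$ for all $x,y\in\G_0$ is exactly the statement that $\mathrm{D}$ is a derivation of $\G_0$.

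Both implications follow from these two cases. If $\mathrm{D}$ is a derivation, then $J$ vanishes on every generating triple, and by multilinearity it vanishes everywhere, so $\G$ is a Lie algebra. Conversely, if $\G$ is a Lie algebra, then in particular $J(\ell,x,y)=0$ for all $x,y\in\G_0$, which forces $\mathrm{D}$ to be a derivation.

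The argument is short. The only step needing care is the reduction to generating triples, which relies on the trilinearity and full antisymmetry of $J$, together with the sign bookkeeping in Case 2.
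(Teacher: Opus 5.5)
Your proof is correct and follows the same route as the paper, which simply asserts that the Jacobi identity for the bracket (\ref{Bra1})--(\ref{Bra2}) is ``easily verified'' to be equivalent to $\mathrm{D}$ being a derivation of $\G_0$. You supply that omitted verification: you reduce via trilinearity and alternation of $J$ to the triples $(x,y,z)$ and $(\ell,x,y)$, and compute $J(\ell,x,y)=\mathrm{D}([x,y])-[x,\mathrm{D}y]-[\mathrm{D}x,y]$. Your reading of ``derivation on $\G$'' as ``derivation of $\G_0$'' matches what the paper's proof actually states.
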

\begin{proof}
Let $\G = \langle \ell\rangle\ltimes \G_0$ be the semidirect sum of a Lie algebra $\G_0$ and a one-dimensional vector space $\langle \ell \rangle$. Then
it is easily verified that the  condition that $\mathrm{D}$ is a derivation on $\G_0$, is equivalent to the Jacobi-identity for the  alternating product $[\cdot,\cdot ]$ as declared in $(\ref{Bra1})$ - $(\ref{Bra2})$.
\end{proof}

Conversely, let $\G$ be a solvable Lie algebra. Then $\G \neq [\G,\G]$, and hence there exists an element $\ell \in \G$ such that $\ell \notin [\G,\G]$. Therefore, $\G$ can be viewed as a semidirect sum of $\langle \ell \rangle$ and an ideal $\G_0 \subset \G$. The Lie bracket structure on $\G$ is then given by \eqref{Bra1} and \eqref{Bra2}, under the conditions stated in Lemma~\ref{Derivation}.

Let $\nabla$ be a torsion-free connection on $\G$. Then it is easy to see that $\nabla$ has the following form:
\begin{align}\label{Connetorsionfre}
\begin{split}
\nabla_xy&=\nabla^0_xy+\theta(x,y)\ell,\\
\nabla_x\ell&=\beta(x)+\gamma(x)\ell,\\
\nabla_\ell x&=\eta(x)+\gamma(x)\ell,\\
\nabla_\ell\ell&=\zeta+\lambda \ell,
\end{split}
\end{align}

for all $x, y \in \G_0$, where $\zeta\in\G_0$,  $\theta \in \mathcal{S}^2(\G_0)$ is a symmetric form, $\beta, \eta: \G_0 \to \G_0$ are endomorphisms of $\G_0$, $\gamma: \G_0 \to \R$ is a one-form, and $\nabla^0$ is a torsion-free connection on $\G_0$. Moreover, since $\nabla$ is torsion-free, we have $\mathrm{D} = \eta-\beta $.

From now on, and for the remainder of this paper, we consider a solvable Lie algebra $\G$ viewed as a semidirect sum of a Lie algebra $\G_0$ and a one-dimensional subalgebra $\langle \ell \rangle$, and its Lie brackets are described by $(\ref{Bra1})$ and $(\ref{Bra2})$.
\begin{Le}
Let $(\G, \nabla)$ be a solvable Lie algebra endowed with the torsion-free connection given in $(\ref{Connetorsionfre})$. Then $\nabla$ has vanishing curvature if and only if the following conditions hold$:$
\begin{enumerate}
\item[$(i)$] $\mathcal{K}^{\nabla^0}(x,y,z)=\theta(x,z)\beta(y)- \theta(y,z)\beta(x)$,
\item[$(ii)$] $\theta\big(x,\nabla^0_yz\big)-\theta\big(y,\nabla^0_xz\big)=\theta(x,z)\gamma(y)-\theta(y,z)\gamma(x)$,
\item[$(iii)$] $\nabla^0_x\zeta-\eta\big(\beta(x)\big)+\gamma(x)\zeta=-\lambda \beta(x)-\beta\big(\mathrm{D}(x)\big)$,
\item[$(iv)$] $\theta(x,\zeta)=\gamma\big(\beta(x)-\mathrm{D}(x)\big)$,
\item[$(v)$] $\nabla^0_{\mathrm{D}(x)}y-\eta\big(\nabla^0_xy\big)+\nabla^0_x\eta(y)=\theta(x,y)\zeta-\gamma(y)\beta(x)$,
\item[$(vi)$] $\theta\big(\mathrm{D}(x),y\big)-\gamma\big(\nabla^0_xy\big)=\lambda\theta(x,y)-\gamma(x)\gamma(y)-\theta\big(x,\eta(y)\big)$,
\end{enumerate}
for all $x, y, z \in \G_0$.
\end{Le}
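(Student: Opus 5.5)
Since $\nabla$ is torsion-free, the curvature $R$ is a tensor, antisymmetric in its first two arguments, so $R=0$ holds if and only if it vanishes on the basis-type triples coming from the decomposition $\G=\langle\ell\rangle\oplus\G_0$. By antisymmetry these are $R(x,y)z$, $R(x,y)\ell$, $R(\ell,x)y$ and $R(\ell,x)\ell$ with $x,y,z\in\G_0$; the case $R(\ell,\ell)\cdot$ is trivial. The plan is to compute each of these four expressions with the formulas $(\ref{Connetorsionfre})$ and the brackets $(\ref{Bra1})$--$(\ref{Bra2})$. Each result splits into a $\G_0$-component and an $\ell$-component, and setting each component to zero gives one of the conditions $(i)$--$(vi)$. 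Here $\mathcal{K}^{\nabla^0}$ denotes the curvature of $\nabla^0$ on $\G_0$.

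\textbf{The triple $R(x,y)z$.} Expanding $\nabla_x(\nabla^0_yz+\theta(y,z)\ell)$ gives
\[
\nabla^0_x\nabla^0_yz+\theta(x,\nabla^0_yz)\ell+\theta(y,z)\beta(x)+\theta(y,z)\gamma(x)\ell .
\]
We antisymmetrize in $x,y$ and subtract $\nabla_{[x,y]}z=\nabla^0_{[x,y]}z+\theta([x,y],z)\ell$. The $\G_0$-component is then $\mathcal{K}^{\nabla^0}(x,y,z)+\theta(y,z)\beta(x)-\theta(x,z)\beta(y)$, whose vanishing is $(i)$. The $\ell$-component is
\[
\theta(x,\nabla^0_yz)-\theta(y,\nabla^0_xz)+\theta(y,z)\gamma(x)-\theta(x,z)\gamma(y)-\theta([x,y],z).
\]
We write $[x,y]=\nabla^0_xy-\nabla^0_yx$, using that $\nabla^0$ is torsion-free. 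I then expect the extra $\theta$-terms to reorganize, possibly after using $(i)$ or a symmetrization in $y,z$, into the form stated in $(ii)$.

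\textbf{The triple $R(\ell,x)\ell$.} Using $[\ell,x]=\mathrm{D}(x)$, we expand $\nabla_\ell(\beta(x)+\gamma(x)\ell)-\nabla_x(\zeta+\lambda\ell)-\nabla_{\mathrm{D}x}\ell$. The $\G_0$-part gives $(iii)$ and the $\ell$-part gives $(iv)$, after recognizing $\gamma(\beta(x))$ terms coming from $\nabla_\ell\beta(x)=\eta(\beta x)+\gamma(\beta x)\ell$.

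\textbf{The triple $R(\ell,x)y$.} Expanding $\nabla_\ell(\nabla^0_xy+\theta(x,y)\ell)-\nabla_x(\eta(y)+\gamma(y)\ell)-\nabla_{\mathrm{D}x}y$ gives $(v)$ from the $\G_0$-part and $(vi)$ from the $\ell$-part. Note that $(v)$ as stated has the opposite overall sign convention; the two forms are equivalent after multiplying by $-1$.

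\textbf{The triple $R(x,y)\ell$.} This is the main obstacle, since it does not obviously appear in the list. Its $\G_0$-part is
\[
\nabla^0_x\beta(y)-\nabla^0_y\beta(x)+\gamma(y)\beta(x)-\gamma(x)\beta(y)-\beta([x,y]),
\]
and its $\ell$-part is $\theta(x,\beta y)-\theta(y,\beta x)-\gamma([x,y])$. I would try to show that these follow from the others via the Bianchi identity. For a torsion-free connection, $R(x,y)\ell+R(y,\ell)x+R(\ell,x)y=0$, so $R(x,y)\ell$ is determined by $R(\ell,\cdot)\cdot$, which already vanishes by $(v)$ and $(vi)$. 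The same Bianchi identity explains the reorganization in $(ii)$. I would use it systematically to discard redundant equations, which should reduce the full system exactly to $(i)$--$(vi)$.
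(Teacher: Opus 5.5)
Your four-case decomposition is the same as the paper's. Your Bianchi argument for discarding $R(x,y)\ell$ is correct: $R(x,y)\ell=R(\ell,y)x-R(\ell,x)y$, read componentwise, vanishes once $(v)$ and $(vi)$ hold. It actually justifies what the paper only asserts.

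The gap is at $(ii)$. The $\ell$-component you computed contains $-\theta([x,y]_{\G_0},z)$, and nothing removes it:
\begin{itemize}
\item The first Bianchi identity with $x,y,z\in\G_0$ holds for every torsion-free connection; its $\ell$-component collapses to a tautology by symmetry of $\theta$ and torsion-freeness of $\nabla^0$, so it imposes no relation.
\item Condition $(i)$ concerns the $\G_0$-component only.
\item You cannot symmetrize away a term in an equation that must hold for all $x,y,z$.
\end{itemize}
In fact the printed $(ii)$ fails for flat connections once $\G_0$ is non-abelian. Take $\mathfrak{aff}(1,\R)\oplus\R e_3$ with $[e_1,e_2]=e_1$, $\ell=e_3$, and the paper's $\h_{1,2}$:
\[
\nabla_{e_1}e_2=-e_1+e_3,\quad \nabla_{e_2}e_1=-2e_1+e_3,\quad \nabla_{e_2}e_2=-e_2,\quad \nabla_{e_2}e_3=\nabla_{e_3}e_2=-e_3.
\]
This is flat, with $\theta(e_1,e_2)=1$, $\theta(e_i,e_i)=0$, $\gamma(e_1)=0$, $\gamma(e_2)=-1$, $\nabla^0_{e_1}e_2=-e_1$ and $\nabla^0_{e_2}e_2=-e_2$. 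At $x=e_1$, $y=z=e_2$ the printed $(ii)$ reads $0=-1$. Your version, with $-\theta([e_1,e_2],e_2)=-1$ on the left, holds. So the correct condition is
\[
\theta\big(x,\nabla^0_yz\big)-\theta\big(y,\nabla^0_xz\big)-\theta\big([x,y]_{\G_0},z\big)=\theta(x,z)\gamma(y)-\theta(y,z)\gamma(x).
\]
This agrees with the printed one only when $\theta([\G_0,\G_0],\G_0)=0$, e.g.\ for $\G_0$ abelian.

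The same problem occurs at $(iii)$, which you assert without checking signs. In $R(\ell,x)\ell$ the term $\nabla_\ell\big(\gamma(x)\ell\big)=\gamma(x)\zeta+\lambda\gamma(x)\ell$ enters with the same sign as $\eta(\beta(x))$. Hence the $\G_0$-component gives
\[
\nabla^0_x\zeta-\eta\big(\beta(x)\big)-\gamma(x)\zeta=-\lambda\beta(x)-\beta\big(\mathrm{D}(x)\big),
\]
with the opposite sign on $\gamma(x)\zeta$ from the printed $(iii)$. For a counterexample, take the paper's $\h_{3,1}$ on $3\G_1$:
\[
\nabla_{e_1}e_1=e_1,\quad \nabla_{e_2}e_2=e_2,\quad \nabla_{e_2}e_3=\nabla_{e_3}e_2=e_3,\quad \nabla_{e_3}e_3=-e_2,\qquad \ell=e_3.
\]
This is flat, with $\zeta=-e_2$, $\lambda=0$, $\beta=\eta=0$, $\gamma(e_2)=1$ and $\nabla^0_{e_2}e_2=e_2$. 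The printed $(iii)$ at $x=e_2$ gives $-2e_2=0$.

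The paper's own proof has both defects; its (C5) even writes $\theta(x,\zeta)$ in place of $\nabla^0_x\zeta$. So there is no hidden reorganization to find. Done honestly, your computation proves the lemma with $(ii)$ and $(iii)$ replaced by the two displayed equations, while $(i)$, $(iv)$, $(v)$ and $(vi)$ come out exactly as stated.
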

\begin{proof}
Let $\mathfrak{g}$ be a solvable Lie algebra and $\nabla$ a torsion-free connection on $\mathfrak{g}$ as described in $(\ref{Connetorsionfre})$. We compute the curvature associated with $\nabla$, i.e.,
\begin{align}\label{Curvature}
\mathcal{K}^\nabla(x,y)z&=\nabla_x\nabla_yz-\nabla_y\nabla_xz-\nabla_{[x,y]_\G}z,\quad\quad x,y,z\in\G.
\end{align}
A closer inspection of Equation $(\ref{Curvature})$ reveals: The equation has two components, yielding two distinct equations. This situation occurs in each of the four cases we must consider. The first case occurs when both $x$, $y$ and $z$ are vectors in $\G_0$, which yields equations that are readily identified as $(i)$ and $(ii)$.  The next case occurs when $x$ and $y$ are vectors in $\G_0$ and $z$ is a vector in $\R\ell$. The first component leads to:
\begin{align}\label{C3}
\nabla^0_x\beta(y) -\nabla^0_y\beta(x)-\beta\big([x,y]_{\G_0}\big)&=\gamma(x)\beta(y)-\gamma(y)\beta(x),
\end{align}
while the second component yields:
\begin{align}\label{C4}
\theta\big(x, \beta(y)\big)- \theta(y, \beta(x)) =\gamma\big([x, y]_{\G_0}\big).
\end{align}
The third case occurs when $x \in \G_0$ and $y, z \in \langle \ell \rangle$. The first component of the resulting equation is:
\begin{align}\label{C5}
\theta(x, \zeta)  - \eta\big(\beta(x)\big) + \gamma(x)\zeta=- \lambda\beta(x)-\beta\big(\mathrm{D}(x)\big),
\end{align}
while the second component yields:
\begin{align}\label{C6}
\theta(x,\zeta)=\gamma\big(\beta(x)-\mathrm{D}(x)\big).
\end{align}

The final case occurs when $x,z \in \G_0$ and $y \in \langle \ell \rangle$. The first component of the resulting equation yields:
\begin{align}\label{C7}
 \nabla^0_{\mathrm{D}(x)}z-\eta\big(\nabla^0_xz\big)+\nabla^0_x\eta(z)&=\theta(x,z)\zeta-\gamma(z)\beta(x),
\end{align}
and the second component gives:
\begin{align}\label{C8}
\theta\big(\mathrm{D}(x),z\big)-\gamma\big(\nabla^0_xz\big)&=
\lambda\theta(x,z)-\gamma(x)\gamma(z)-\theta\big(x,\eta(z)\big).
\end{align}
Finally, it is easily verified that condition $(\ref{C8})$ implies condition $(\ref{C6})$. Moreover, a straightforward computation shows that $(\ref{C7})$ implies $(\ref{C3})$.
\end{proof}

\begin{Le}\label{Auto}
Let $\G = \mathbb{R}\ell \ltimes \G_0$ be a solvable Lie algebra, and let $\Psi \in \mathrm{Aut}(\G)$ be an automorphism of $\G$. Then $\Psi$ has the following form$:$
\begin{align}\label{Autg}
\begin{split}
\Psi(x)&=\Phi(x)+\sigma(x)\ell,\quad\quad\text{for all } x\in\G_0,\\
\Psi(\ell)&=u+\tau \ell,\quad\quad\quad\quad~ u\in \G_0,
~~\tau\in\R,
\end{split}
\end{align}
where, 
\begin{enumerate}
\item[$(c_1)$] $\Phi\in\mathrm{Aut}(\G_0)$,
\item[$(c_2)$] $\sigma\in Z^1(\G_0)$,
\item[$(c_3)$] $\mathrm{Im}(\mathrm{D})\subset\ker\sigma$,
\item[$(c_4)$] $\Phi\circ\mathrm{D}-\tau\mathrm{D}\circ\Phi=\mathrm{ad}^0_u\circ\Phi-\mathrm{D}(u)\sigma$.
\end{enumerate}

\end{Le}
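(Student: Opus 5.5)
The approach is to write down an arbitrary linear map in the decomposition $\G=\G_0\oplus\R\ell$, impose the homomorphism identity $\Psi([X,Y]_\G)=[\Psi(X),\Psi(Y)]_\G$ on the three types of pairs ($x,y\in\G_0$; $\ell,x$; and $\ell,\ell$, which is trivial), and then split each resulting identity into its $\G_0$-component and its $\ell$-component, as was done for the curvature in the preceding lemma. Any linear $\Psi$ can be written as in \eqref{Autg} with $\Phi\in\mathrm{End}(\G_0)$, $\sigma\in\G_0^*$, $u\in\G_0$, $\tau\in\R$, so the whole content of the lemma is that the conditions $(c_1)$--$(c_4)$ hold.

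\emph{Pairs $(\ell,x)$.} Using \eqref{Bra1}--\eqref{Bra2} we compute
\[
\Psi([\ell,x])=\Phi(\mathrm{D}x)+\sigma(\mathrm{D}x)\ell,\qquad [u+\tau\ell,\Phi x+\sigma(x)\ell]=[u,\Phi x]_{\G_0}+\tau\mathrm{D}\Phi x-\sigma(x)\mathrm{D}(u).
\]
The $\ell$-component gives $\sigma\circ\mathrm{D}=0$, which is $(c_3)$. The $\G_0$-component gives $\Phi\mathrm{D}-\tau\mathrm{D}\Phi=\mathrm{ad}^0_u\Phi-\mathrm{D}(u)\sigma$, which is $(c_4)$.

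\emph{Pairs $(x,y)$ with $x,y\in\G_0$.} Here
\[
[\Psi x,\Psi y]=[\Phi x,\Phi y]_{\G_0}+\sigma(x)\mathrm{D}\Phi y-\sigma(y)\mathrm{D}\Phi x .
\]
The $\ell$-component gives $\sigma([x,y]_{\G_0})=0$, i.e.\ $\sigma\in Z^1(\G_0)$ for the trivial module, which is $(c_2)$. The $\G_0$-component reads
\[
\Phi([x,y])=[\Phi x,\Phi y]+\sigma(x)\mathrm{D}\Phi y-\sigma(y)\mathrm{D}\Phi x .
\]

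\emph{Condition $(c_1)$: the main obstacle.} For $(c_1)$ two things must be shown: the correction term $\sigma(x)\mathrm{D}\Phi y-\sigma(y)\mathrm{D}\Phi x$ vanishes, and $\Phi$ is bijective. For the correction term, I would first use $(c_4)$ to rewrite $\mathrm{D}\Phi=\tau^{-1}\bigl(\Phi\mathrm{D}-\mathrm{ad}^0_u\Phi+\mathrm{D}(u)\sigma\bigr)$ when $\tau\neq0$. The $\mathrm{D}(u)\sigma$ contributions then cancel in the antisymmetrization, leaving terms to be killed using $(c_2)$, $(c_3)$ and the fact that $\G_0$ is the ideal chosen complementary to $\ell\notin[\G,\G]$. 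In the intended setting $\G_0\supseteq[\G,\G]$, so $\Psi(\G_0)\supseteq[\G,\G]$. I would try to show $\Psi(\G_0)=\G_0$ whenever $\G_0$ is characteristic (e.g.\ the nilradical or $[\G,\G]$ in the cases of interest), which forces $\sigma=0$ and makes the correction vanish outright. If this fails in general, the fallback is to read the statement with the standing normalization that automorphisms preserve the ideal $\G_0$, and then record $\sigma$ as the cocycle part.

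\emph{Bijectivity of $\Phi$.} This then follows from invertibility of $\Psi$ together with the block-triangular form. Indeed, if $\Phi x=0$ then $\Psi x=\sigma(x)\ell$. Bracketing with $\Psi(\G_0)$ and using $(c_3)$ should give $x=0$, or else $\tau=0$, which contradicts $\Psi(\ell)\notin\G_0$ once $\G_0$ is preserved. This yields $(c_1)$.

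Conversely, maps satisfying $(c_1)$--$(c_4)$ reverse every step above, so they are automorphisms. I expect the real work to be only in $(c_1)$, specifically controlling $\sigma$.
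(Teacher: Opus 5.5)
You derive $(c_2)$, $(c_3)$ and $(c_4)$ by the same component-splitting the paper uses. You also correctly notice what the paper's proof misses. It writes $[\Phi(x)+\sigma(x)\ell,\Phi(y)+\sigma(y)\ell]_\G=[\Phi(x),\Phi(y)]_{\G_0}$, which silently drops $\sigma(x)\mathrm{D}\Phi(y)-\sigma(y)\mathrm{D}\Phi(x)$, and it never argues that $\Phi$ is bijective.

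The gap in your proposal is the step meant to close this. The cross term cannot be killed using $(c_2)$--$(c_4)$, and $\Phi$ need not be injective, because $(c_1)$ is false as stated. Take $\G=\mathfrak{aff}(1,\R)\oplus\R$ with $[e_1,e_2]=e_1$, $\G_0=\langle e_1,e_2\rangle$ and $\ell=e_2+e_3\notin[\G,\G]$, so that $\mathrm{D}e_1=-e_1$ and $\mathrm{D}e_2=0$. The map $\Psi(e_1)=e_1$, $\Psi(e_2)=e_2+e_3=\ell$, $\Psi(e_3)=e_3$ is an automorphism of $\G$. Yet $\Phi(e_1)=e_1$, $\Phi(e_2)=0$ and $\sigma(e_2)=1$. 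So $\Phi$ is neither injective nor a homomorphism, since $\Phi([e_1,e_2])=e_1\neq0=[\Phi e_1,\Phi e_2]$. The cross term equals $-\sigma(e_2)\mathrm{D}e_1=e_1\neq0$.

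The same failure occurs in the paper's own decomposition $\G_{3,1}=\R e_1\ltimes\langle e_2,e_3\rangle$. The automorphism $e_1\mapsto e_2$, $e_2\mapsto e_1$, $e_3\mapsto-e_3$ has $\Phi(e_2)=0$ and $\tau=0$. Your injectivity argument ends by invoking $\Psi(\ell)\notin\G_0$ ``once $\G_0$ is preserved''. That is circular, because preservation of $\G_0$ is exactly what fails here.

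So your fallback is not optional. The lemma holds only under the extra hypothesis $\Psi(\G_0)\subseteq\G_0$, i.e.\ $\sigma=0$. This is automatic when $\G_0$ is characteristic, e.g.\ $\G_0=[\G,\G]$ as for $\G_{3,2},\dots,\G_{3,5}$, and it is the normalization used in Remark~\ref{sympleofauto}. Under this hypothesis the cross term vanishes identically, so $\Phi$ is a homomorphism. Moreover $\det\Psi=\tau\det\Phi$ gives at once that $\Phi$ is bijective and $\tau\neq0$, with no bracketing argument. Conditions $(c_2)$ and $(c_3)$ become vacuous, and $(c_4)$ reads $\Phi\circ\mathrm{D}-\tau\mathrm{D}\circ\Phi=\mathrm{ad}^0_u\circ\Phi$. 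You should state this corrected lemma rather than hedge.

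Your closing converse is also false without that hypothesis. In the example above, take $\Phi=\mathrm{id}$, $\sigma(e_1)=0$, $\sigma(e_2)=c\neq0$, $u=0$ and $\tau=1$. These data satisfy $(c_1)$--$(c_4)$, but give $[\Psi e_1,\Psi e_2]=(1+c)e_1\neq\Psi([e_1,e_2])$.
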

\begin{proof}
Suppose that $\Psi:\G\longrightarrow\G$ is an automorphism of $\G$. Then, $\Psi$ can be expressed as:
\begin{align*}
\Psi(x)&=\Phi(x)+\sigma(x)\ell,&\text{for all } x\in\G_0,\\
\Psi(\ell)&=u+\tau \ell,&u\in \G_0,
~~\tau\in\R,
\end{align*}
where, $\Phi:\G_0\longrightarrow\G_0$ is an endomorphism of $\G_0$, and $\sigma\in\G_0^\ast$ is a $1$-form. The Lie brackets of $\G$ are given by
\begin{align}
[x,y]_\G&=[x,y]_{\G_0},&\text{for all } x,y\in\G_0,\\
[\ell,x]_\G&=\mathrm{D}(x),&\text{for all } x\in\G_0.
\end{align}
On the one hand, for all $x,y\in\G_0$, we have
\begin{align*}
\Psi\big([x,y]_\G\big)&=\Psi\big([x,y]_{\G_0}\big)\\
&=\Phi\big([x,y]_{\G_0}\big)+\sigma\big([x,y]_{\G_0}\big)\ell,
\end{align*}
and
\begin{align*}
[\Psi(x),\Psi(y)]_\G&=[\Phi(x)+\sigma(x)\ell,\Phi(y)+\sigma(y)\ell]\\
&=[\Phi(x),\Phi(y)]_{\G_0}.
\end{align*}
Thus,
\begin{align*}
    \mathrm{d}\sigma = 0, \text{ i.e., } \sigma \in Z^1(\G_0), \quad \text{and } \Phi \in \mathrm{Aut}(\G_0).
\end{align*}
On the other hand, for all $x\in\G_0$, we have
\begin{align*}
\Psi\big([\ell,x]_\G\big)&=\Psi\big(\mathrm{D}(x)\big)\\
&=\Phi\big(\mathrm{D}(x)\big)+\sigma\big(\mathrm{D}(x)\big)\ell,
\end{align*}
and
\begin{align*}
[\Psi(\ell),\Psi(y)]_\G&=[u+\tau\ell,\Phi(x)+\sigma(x)\ell]_\G\\
&=[u,\Phi(x)]_\G+\sigma(x)[u,\ell]_\G+\tau[\ell,\Phi(x)]_\G\\
&=\mathrm{ad}^0_u\circ\Phi(x)-\sigma(x)\mathrm{D}(u)+\tau\mathrm{D}\circ\Phi(x).
\end{align*}
This implies that, $\mathrm{Im}(\mathrm{D})\subset\ker\sigma$ and
\begin{align}
\Phi\circ\mathrm{D}-\tau\mathrm{D}\circ\Phi=\mathrm{ad}^0_u\circ\Phi-\mathrm{D}(u)\sigma.
\end{align}
\end{proof}

\begin{remark}\label{autodirectsum}
If the action of $\mathbb{R}\ell$ on $\mathfrak{g}_0$ is trivial, then any automorphism $\Psi$ of $\mathfrak{g}$ takes the form~$(\ref{Autg})$ with $\Phi\in\mathrm{Aut}(\G_0)$, $\sigma\in Z^1(\G_0)$ and $u$ belonging to the center $Z(\G_0)$.
\end{remark}
\begin{Le}\label{isoofconnection}
Let $\nabla^1$ and $\nabla^2$ be two flat torsion-free connections on a solvable Lie algebra $\G$. Then $\nabla^1$ and $\nabla^2$ are isomorphic if and only if there exist a Lie algebra automorphism
$\Phi: \G_0\longrightarrow\G_0$, $u\in\G_0$, $\sigma\in\G_0^\ast$, and $\tau\in\R$ such that
\begin{enumerate}
\item[$(i)$] $\nabla^{0,1}_{\Phi(x)}\Phi(y)-\Phi\big(\nabla^{0,2}_xy\big)=\theta_2(x,y)u-\sigma(x)\eta_1\big(\Phi(y)\big)-\sigma(y)\beta_1\big(\Phi(x)\big)-\sigma(x)\sigma(y)\zeta_1$,
\item[$(ii)$] $\big(\Phi^\ast\theta_1\big)(x,y)-\tau\theta_2(x,y)=\sigma\big(\nabla^{0,2}_xy\big)-\sigma(x)\gamma\big(\Phi(y)\big)-\sigma(y)\gamma\big(\Phi(x)\big)-\lambda_1\sigma(x)\sigma(y)$,
\item[$(iii)$] $\nabla^{0,1}_{\Phi(x)}u+\tau\beta_1\big(\Phi(x)\big)+\gamma(x)\eta_1(u)+\tau\sigma(x)\zeta_1=\Phi\big(\beta_2(x)\big)+\gamma(x)u$,
\item[$(iv)$] $\theta_1\big(\Phi(x),u\big)+\tau\gamma\big(\Phi(x)\big)+\sigma(x)\gamma(u)+\tau\sigma(x)\lambda_1=\sigma\big(\beta_2(x)\big)+\tau\gamma(x)$,
\item[$(v)$] $\nabla^{0,1}_uu+\tau\beta_1(u)+\tau\eta_1(u)+\tau^2\zeta_1=\Phi(\zeta_2)+\lambda_2u$,
\item[$(vi)$] $\theta_1(u,u)+2\tau\gamma(u)+\tau^2\lambda_1=\sigma(\zeta_2)+\lambda_2\tau$.
\end{enumerate}
\end{Le}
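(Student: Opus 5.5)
The plan is to unwind the definition of isomorphism of connections in the form $\Psi\circ\nabla^2_X=\nabla^1_{\Psi(X)}\circ\Psi$, i.e.
\[
\Psi\big(\nabla^2_XY\big)=\nabla^1_{\Psi(X)}\Psi(Y)\qquad\text{for all }X,Y\in\G,
\]
which is equivalent to the defining relation $\nabla^2_X=\Psi\circ\nabla^1_{\Psi^{-1}(X)}\circ\Psi^{-1}$ after substituting $X\mapsto\Psi(X)$. (One could instead take $\Psi\circ\nabla^1_X=\nabla^2_{\Psi(X)}\circ\Psi$; the two choices give equivalent systems, with the roles of $\nabla^1,\nabla^2$ exchanged. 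I would fix whichever convention makes the stated equations come out as written.)

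By Lemma~\ref{Auto}, $\Psi$ is determined by data $(\Phi,\sigma,u,\tau)$ as in \eqref{Autg}, with $\Phi\in\mathrm{Aut}(\G_0)$ and the compatibility conditions $(c_1)$--$(c_4)$. Conversely, any such data satisfying $(c_1)$--$(c_4)$ defines a Lie algebra automorphism, provided it is invertible. So the existence of an isomorphism amounts to the existence of such data satisfying the intertwining identity. Since both sides of that identity are bilinear, it suffices to test it on the three types of pairs $(x,y)$, $(x,\ell)$ and $(\ell,\ell)$ with $x,y\in\G_0$. The pair $(\ell,x)$ follows from $(x,\ell)$ by torsion-freeness, since $\Psi$ preserves brackets.

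For each pair type I would expand both sides using the form \eqref{Connetorsionfre} of $\nabla^1$ and $\nabla^2$, and then split into the $\G_0$-component and the $\ell$-component. For $(x,y)$, the left side is $\Phi(\nabla^{0,2}_xy)+\theta_2(x,y)u$ plus $\big(\sigma(\nabla^{0,2}_xy)+\tau\theta_2(x,y)\big)\ell$. The right side is $\nabla^1_{\Phi(x)+\sigma(x)\ell}\big(\Phi(y)+\sigma(y)\ell\big)$, which expands into four terms using $\nabla^{0,1}$, $\theta_1$, $\beta_1$, $\eta_1$, $\gamma_1$, $\zeta_1$ and $\lambda_1$. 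Comparing components gives (i) and (ii). The pair $(x,\ell)$, with $\Psi(\ell)=u+\tau\ell$, gives (iii) and (iv) in the same way. The pair $(\ell,\ell)$ gives (v) and (vi). Here the cross terms $\nabla_u\ell$ and $\nabla_\ell u$ produce $\beta_1(u)+\eta_1(u)$ and $2\gamma(u)$.

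The main obstacle is bookkeeping rather than anything conceptual. One has to track the $\ell$-components carefully, for example that $\nabla_x\ell$ and $\nabla_\ell x$ both carry $\gamma(x)\ell$, and that $\mathrm{D}=\eta-\beta$. One also has to make sure the sign and variable conventions match the statement exactly; in particular, the statement writes $\gamma$ without indices in places where it should be understood as $\gamma_1$ or $\gamma_2$ according to context. For the converse, I would note that the six identities together recover the intertwining identity on all basis pairs. Combined with $\Psi$ being an automorphism, this gives $\nabla^2_X=\Psi\circ\nabla^1_{\Psi^{-1}(X)}\circ\Psi^{-1}$.
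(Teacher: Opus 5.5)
Your proposal is correct and follows essentially the same route as the paper. Both rewrite the isomorphism as $\Psi(\nabla^2_XY)=\nabla^1_{\Psi(X)}\Psi(Y)$, parametrize $\Psi$ via Lemma~\ref{Auto}, and compare $\G_0$- and $\ell$-components on the pairs $(x,y)$, $(x,\ell)$, $(\ell,\ell)$, with the pair $(\ell,x)$ redundant. The paper checks that redundancy through $(c_4)$ and $\mathrm{Im}(\mathrm{D})\subset\ker\sigma$, which is your torsion-freeness argument written in components.

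One caveat concerns (iii). In the $(x,\ell)$ case the term $\sigma(x)\nabla^1_{\ell}u$ contributes $\sigma(x)\eta_1(u)$ to the $\G_0$-component, so no choice of convention reproduces (iii) verbatim. The $\gamma(x)\eta_1(u)$ in the statement is a typo, repeated in the paper's proof, and your computation gives the corrected identity.
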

\begin{proof}
Let $\nabla$ be a flat torsion-free connection on $\mathfrak{g}$. Then $\nabla$ has the following form$:$
\begin{align}
\begin{split}
\nabla_xy&=\nabla^0_xy+\theta(x,y)\ell,\\
\nabla_x\ell&=\beta(x)+\gamma(x)\ell,\\
\nabla_\ell x&=\eta(x)+\gamma(x)\ell,\\
\nabla_\ell\ell&=\zeta+\lambda \ell,
\end{split}
\end{align}

for all $x, y \in \G_0$, where $\zeta\in\G_0$,  $\theta \in \mathcal{S}^2(\G_0)$ is a symmetric form, $\beta, \eta: \G_0 \to \G_0$ are endomorphisms such that $\mathrm{D} = \eta - \beta$, $\gamma: \G_0 \to \R$ is a one-form, and $\nabla^0$ is a torsion-free connection on $\G_0$. Let $\nabla^1$ and $\nabla^2$ be two flat torsion-free connections on $\G$, each associated with the data $(\nabla^{0,j}, \theta_j, \beta_j, \gamma, \eta_j,\lambda_j,\zeta_j)$ for $j = 1, 2$, where $\nabla^{0,j}$ is a torsion-free connection on $\G_0$. Furthermore, if $\nabla^1$ and $\nabla^2$ are isomorphic, then there exists a Lie algebra automorphism $\Psi : \mathfrak{g} \to \mathfrak{g}$ such that
\begin{align}\label{isoconne}
\nabla^2_x = \Psi \circ \nabla^1_{\Psi^{-1}(x)} \circ \Psi^{-1}, \quad \text{for all }~ x \in \G.
\end{align}

Here, $\Psi$ is characterized as in Lemma~$\ref{Auto}$. The structure of Equation $(\ref{isoconne})$ involves two components, leading to a pair of equations that must be satisfied. This situation occurs in each of the four cases under consideration. First, equation $(\ref{isoconne})$ is equivalent to the following
 \begin{align}\label{isoconne2}
\Psi\big(\nabla^2_xy\big) = \nabla^1_{\Psi(x)}\Psi(y), \quad \text{for all }~ x,y \in \G.
\end{align}  
The first case occurs when both $x$ and $y$ are vectors in $\G_0$, in which the resulting equations reduce to $1.$ and $2.$ The
next case is when $x$ is a vector in $\G_0$ and $y$ is a vector in $\langle\ell\rangle$. The first component of the resulting equation yields:
\begin{align}\label{eqflat1}
\nabla^{0,1}_{\Phi(x)}u+\tau\beta_1\big(\Phi(x)\big)+\gamma(x)\eta_1(u)+\tau\sigma(x)\zeta_1&=\Phi\big(\beta_2(x)\big)+\gamma(x)u,
\end{align}
while the second component gives:
\begin{align}\label{eqflat2}
\theta_1\big(\Phi(x),u\big)+\tau\gamma\big(\Phi(x)\big)+\sigma(x)\gamma(u)+\tau\sigma(x)\lambda_1&=\sigma\big(\beta_2(x)\big)+\tau\gamma(x).
\end{align}
The third case occurs when $x \in \langle \ell \rangle$ and $y \in \G_0$. The first component yields$:$
\begin{align}\label{eqflat3}
\nabla^{0,1}_u\Phi(y)+\tau\eta_1\big(\Phi(y)\big)+\gamma(y)\beta_1(u)+\tau\sigma(y)\zeta_1&=\Phi\big(\eta_2(y)\big)+\gamma(y)u,
\end{align}
while the second component yields:
\begin{align}\label{eqflat4}
\theta_1\big(u,\Phi(y)\big)+\tau\gamma\big(\Phi(y)\big)+\sigma(y)\gamma(u)+\tau\sigma(y)\lambda_1&=\sigma\big(\eta_2(y)\big)+\tau\gamma(y).
\end{align}
Now, equation $(\ref{eqflat4})$ is readily seen to be equivalent to equation $(\ref{eqflat2})$ since $\mathrm{Im}(\mathrm{D})\subset\ker\sigma$. Furthermore, a straightforward computation shows that $(\ref{eqflat3})$ and $(\ref{eqflat1})$ are equivalent. Making use of  condition $(c_4)$. Finally, consider the case where $x$ and $y$ are vectors in $\langle \ell \rangle$. The first component of the resulting equation yields$:$
\begin{align}
\nabla^{0,1}_uu+\tau\beta_1(u)+\tau\eta_1(u)+\tau^2\zeta_1&=\Phi(\zeta_2)+\lambda_2u,
\end{align}
while the second component gives:
\begin{align}
\theta_1(u,u)+2\tau\gamma(u)+\tau^2\lambda_1&=\sigma(\zeta_2)+\lambda_2\tau.
\end{align}
\end{proof}
\begin{remark}\label{sympleofauto}
In the classification, we can use the simplification that $u = 0$ and $\sigma = 0$. In this case, $\tau \neq 0$ and the conditions become:
\begin{enumerate}
\item $\nabla^{0,2}_x  =\Phi\circ \nabla^{0,1}_{\Phi^{-1}(x)} \circ\Phi^{-1}$,
\item $\theta_2 =\frac{1}{\tau} \Phi^\ast\theta_1$ and $\Phi^\ast\gamma = \gamma$,
\item $\beta_2 = \tau\Phi^{-1} \circ \beta_1 \circ \Phi$ and $\eta_2 = \tau\Phi^{-1} \circ \eta_1 \circ \Phi$,
\item $\zeta_2 = \tau^2\Phi^{-1}(\zeta_1)$ and $\lambda_2 = \tau\lambda_1$.
\end{enumerate}
\end{remark}

\section{Classification of flat Lie algberas in low-dimensions}\label{Sec4}

In what follows, we outline the main approach to classifying real flat Lie algebras in low dimensions. The classification of real flat Lie algebras is particularly difficult, as there is no general theory to simplify or solve the quadratic equations arising from the curvature condition of $\nabla$. This stands in contrast to the case of flat Lie algebras over the complex field $\mathbb{C}$; see \cite{Bai1},  where Lie's theorem can be applied to obtain upper-triangular representations associated to the flat torsion-free connection $\nabla$. Our approach aims to simplify these quadratic equations and solve them in a systematic manner. We begin with a torsion-free connection $\nabla^0$ on a Lie algebra $\G_0$, which we extend to a torsion-free connection $\nabla$ as described in $(\ref{Connetorsionfre})$. This approach provides information about the components of $\nabla$ relative to the basis of $\G =\R\ell  \ltimes \G_0$.

For example, if $\G$ is a three-dimensional solvable Lie algebra, then $\G$ is isomorphic to either $ \R\ell\ltimes \R^2$ or $\R\ell \ltimes\mathfrak{aff}(1,\R) $. This shows that $\G_0$ is either $\mathfrak{aff}(1,\R)$ or $\R^2$. The classification of torsion-free connections on $\mathfrak{aff}(1,\R)$ or $\R^2$ is straightforward due to their low dimensions. The remaining equations (conditions $(i),\ldots(vi))$ then become easier to solve.

The classification of torsion-free connections on $\G_0 = \mathfrak{aff}(1,\mathbb{R})$ or $\G_0 = \mathbb{R}^2$ is carried out modulo the following equivalence relation: two connections $\nabla^1$ and $\nabla^2$ are equivalent if there exists an automorphism $\Phi \in \mathrm{Aut}(\G_0)$ satisfying condition $(c_4)$ of Lemma~$\ref{Auto}$  such that
\begin{align}\label{equiconne}
\nabla^2_x = \Phi \circ \nabla^1_{\Phi^{-1}(x)} \circ \Phi^{-1}, \quad \forall x \in \mathfrak{g}_0.
\end{align}
for all $x\in\G_0$. Now, if $\mathbb{R}\ell$ acts trivially on $\G_0$ (i.e., we have a central extension), then the equivalence relation reduces to Condition~$(\ref{equiconne})$ without requiring any condition on the automorphism $\Phi \in \mathrm{Aut}(\G_0)$; see Remark~$\ref{autodirectsum}$.

Every two-dimensional Lie algebra $\G$ has a flat, torsion-free connection. The following classification is easily verified for dimension reasons based on the definition of flat, torsion-free connections. 

\begin{pr}\label{Pr 2flat}
Let $(\h,\nabla)$ be a two-dimensional  real flat Lie algebra. Then $\h$  is isomorphic to one of the following algebras:
\end{pr}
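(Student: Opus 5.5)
Since $\dim\h=2$, $\h$ is either the abelian algebra $\R^2$ or $\mathfrak{aff}(1,\R)$ with basis $\{e_1,e_2\}$ and $[e_1,e_2]=e_2$. My plan is to treat both cases with the decomposition of Section~\ref{Sec3}: write $\h=\R\ell\ltimes\G_0$ with $\G_0=\R e_2$ one-dimensional and $\ell=e_1$, so that $\mathrm{D}=0$ for $\R^2$ and $\mathrm{D}=\mathrm{id}$ for $\mathfrak{aff}(1,\R)$. A torsion-free connection then has the form $(\ref{Connetorsionfre})$. Here $\nabla^0_{e_2}e_2=a e_2$, $\theta(e_2,e_2)=t$, $\beta(e_2)=b e_2$, $\eta=\beta+\mathrm{D}$, $\gamma(e_2)=c$, $\zeta=z e_2$, and $\lambda$ is a scalar. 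Since $\dim\G_0=1$, conditions $(i)$ and $(ii)$ are vacuous. Flatness therefore reduces to conditions $(iii)$--$(vi)$ of the curvature lemma, which become a handful of polynomial equations in the six scalars $a,t,b,c,z,\lambda$.

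I will solve these equations by case analysis on $t$ and $c$, which are the coefficients that feed back into the $\ell$-direction.
\begin{itemize}
\item If $t\neq0$: equation $(iv)$ gives $z$ in terms of $c$ and $b-\mathrm{D}$. Then $(vi)$ and $(v)$ give $\lambda$ and a relation between $a$ and $b$, and $(iii)$ becomes a consistency condition.
\item If $t=0$: $(iv)$ forces $c(b-\mathrm{D})=0$, which splits into subcases. The remaining equations are then linear or quadratic in a single unknown.
\end{itemize}
For $\R^2$ this is exactly the classical list of two-dimensional commutative associative algebras together with the nonassociative LSAs. For $\mathfrak{aff}(1,\R)$ the computation should recover the one-parameter families known from \cite{And}.

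Next I will normalize the solutions up to isomorphism. By Lemma~\ref{isoofconnection}, and Remark~\ref{sympleofauto} in the simple case $u=0$, $\sigma=0$, rescaling $e_2\mapsto\mu e_2$ and $\ell\mapsto\tau\ell$ acts on the parameters. For example, $\theta\mapsto\mu^2\theta/\tau$, $\zeta\mapsto\tau^2\zeta/\mu$, and $\lambda\mapsto\tau\lambda$. For $\mathfrak{aff}(1,\R)$, condition $(c_4)$ forces $\tau=1$ but allows $u\in\G_0$ with $\sigma=0$. For $\R^2$, the full group $\mathrm{GL}(2,\R)$ is available, so the basis splitting is not canonical. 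This lets me scale nonzero parameters to $\pm1$ or $1$. The parameter that cannot be scaled away, typically a ratio like $b$ relative to $\mathrm{D}$ or $\lambda$, will remain as a continuous family.

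The main obstacle is the isomorphism reduction for $\R^2$. There the decomposition $\R\ell\oplus\R e_2$ is arbitrary, so the general automorphism with $u,\sigma\neq0$ mixes all parameters, and I must avoid listing the same algebra twice. I would handle this by using invariants rather than raw parameters. These are the associativity and commutativity of the product, the dimension of the left annihilator, whether $\R^2$ has a left unit, and the nilpotency type of $\mathrm{R}_X$. I would choose a normal form for each invariant class and check that distinct listed algebras differ in some invariant. For $\mathfrak{aff}(1,\R)$ the automorphism group is small, so the normalization is direct.
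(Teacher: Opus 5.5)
Your overall strategy is sound, and it is a more organized version of what the paper does: the paper only asserts that the tables are ``easily verified'' from the definition. With $\dim\G_0=1$, conditions $(i)$--$(ii)$ are indeed vacuous, and flatness is exactly the four scalar equations coming from $R(x,\ell)=0$.

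\textbf{The step that fails.} You quote condition $(iii)$ of the curvature lemma in Section~\ref{Sec3} as printed, but the sign of $\gamma(x)\zeta$ there is wrong. Computing the $\G_0$-component of $\nabla_x\nabla_\ell\ell-\nabla_\ell\nabla_x\ell-\nabla_{[x,\ell]}\ell$ gives
$\nabla^0_x\zeta-\eta(\beta(x))-\gamma(x)\zeta=-\lambda\beta(x)-\beta(\mathrm{D}(x))$.
In your notation, with $\mathrm{D}=d\,\mathrm{id}$, the correct system is
\[
z(a-c)+b(\lambda-b)=0,\qquad tz=c(b-d),\qquad da=tz-cb,\qquad (b+2d-\lambda)t+c^2-ca=0 .
\]
The second and third equations give $da=-dc$, so $a=-c$ for $\mathfrak{aff}(1,\R)$. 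The printed $(iii)$ has $z(a+c)$ in place of $z(a-c)$, so it collapses to $b(\lambda-b)=0$ and loses the term $-2cz$. Your case analysis would therefore admit non-flat connections. For example, take $[e_1,e_2]=e_2$ and $a=-1$, $t=1$, $b=0$, $c=1$, $z=-1$, $\lambda=4$, i.e.\ $\nabla_{e_1}e_1=4e_1-e_2$, $\nabla_{e_1}e_2=e_1+e_2$, $\nabla_{e_2}e_1=e_1$, $\nabla_{e_2}e_2=e_1-e_2$. This satisfies $(iii)$--$(vi)$ as printed, yet $R(e_2,e_1)e_1=2e_2$.

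\textbf{The fix.} Rederive $(iii)$, or simply impose $[\nabla_{e_1},\nabla_{e_2}]=\nabla_{[e_1,e_2]}$ on $2\times2$ matrices. With the correct equations:
\begin{itemize}
\item $t=0$, $c\neq0$ is impossible, since it forces $b=d$ and $2c^2=0$.
\item For $t\neq0$, equation $(iii)$ gives $b=-c^2/t$. (Equation $(v)$ relates $a$ to $c$, not to $b$.) The translation automorphism $e_1\mapsto e_1+ue_2$ with $u=a/t$ kills $a$, hence $c$, and lands on $\mathfrak a_5$.
\item For $t=c=0$ you get $\mathfrak a_1$--$\mathfrak a_4$. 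The translation removes $z$ except in two resonant cases, which yield $\mathfrak a_3$ and $\mathfrak a_4$.
\end{itemize}
Also note that Table~\ref{Flataffine} uses $[e_1,e_2]=e_1$. To read the table off directly, take $\ell=e_2$, $\G_0=\R e_1$, $\mathrm{D}=-\mathrm{id}$.

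\textbf{The $\R^2$ case.} Here the ``main obstacle'' you identify does not exist, and one of your claims is false. Torsion-freeness on an abelian algebra means $XY=YX$, and a commutative left-symmetric product is associative. Indeed, commutativity gives $(x,y,z)=-(z,y,x)$. Combined with $(x,y,z)=(y,x,z)$, this makes the associator totally symmetric and skew in $x,z$, hence zero. So there are no nonassociative LSAs on $\R^2$. The classification is exactly that of two-dimensional commutative associative real algebras: either $A^2=0$, or $A$ is nilpotent with $A^2\neq0$, or $A$ contains an idempotent, giving $\R\oplus0$, $\R\times\R$, $\R[x]/(x^2)$ or $\mathbb{C}$. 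These are precisely $\mathfrak b_0,\dots,\mathfrak b_5$, and no $\mathrm{GL}_2$-normalization in a fixed splitting is needed.

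Finally, the invariants you list would not separate $\mathfrak b_3\cong\R\times\R$ from $\mathfrak b_5\cong\mathbb{C}$. Both are unital with zero annihilator and no nonzero nilpotent $\mathrm{R}_X$. You would need something like whether some $\mathrm{R}_X$ has non-real eigenvalues. This does not affect the proposition as stated, which only claims that every flat structure is isomorphic to one in the tables.
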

{\renewcommand*{\arraystretch}{1.5}
\captionof{table}{Flat torsion-free connection on the affine algebra of the real line $\mathfrak{aff}(1,\R)$.}
\setcounter{table}{0}
\begin{small} 
\setlength{\tabcolsep}{10pt} 
\begin{longtable}{@{}cllllllc@{}} 
			\hline
		Algebra&&&&\\
			\hline
$\mathfrak{a}_1$&$\nabla_{e_1}e_2=\lambda e_1$&$\nabla_{e_2}e_1=(\lambda-1)e_1$&$\nabla_{e_2}e_2=\lambda e_2$&$\lambda\in\R$\\
$\mathfrak{a}_2$&$\nabla_{e_2}e_1=-e_1$&$\nabla_{e_2}e_2=\mu e_2$&&$\mu\in\R^\ast$\\
$\mathfrak{a}_3$&$\nabla_{e_2}e_1=-e_1$&$\nabla_{e_2}e_2= e_1-e_2$&&\\
$\mathfrak{a}_4$&$\nabla_{e_1}e_2= e_1$&$\nabla_{e_2}e_2=e_1+ e_2$&&\\
$\mathfrak{a}_5$&$\nabla_{e_1}e_1=\varepsilon e_2$&$\nabla_{e_2}e_1=-e_1$&$\nabla_{e_2}e_2=-2e_2$&$\varepsilon=\pm1$\\\hline
\end{longtable}\label{Flataffine}
			\end{small}	
			}

{\renewcommand*{\arraystretch}{1.5}
\captionof{table}{Flat torsion-free connection on the affine algebra of the real line $2\G_1\cong\R^2$.}
\setcounter{table}{1}
\begin{small} 
\setlength{\tabcolsep}{10pt} 
\begin{longtable}{@{}cllllllc@{}} 
			\hline
		Algebra&&&&\\	
			\hline
$\mathfrak{b}_0$&$\nabla\equiv0$&&&\\
$\mathfrak{b}_1$&$\nabla_{e_1}e_1=e_1$&&&\\
$\mathfrak{b}_2$&$\nabla_{e_2}e_2=e_1$&&&\\
$\mathfrak{b}_3$&$\nabla_{e_1}e_1=e_1$&$\nabla_{e_2}e_2=e_2$&&\\
$\mathfrak{b}_4$&$\nabla_{e_1}e_1=e_1$&$\nabla_{e_1}e_2=e_2$&$\nabla_{e_2}e_1=e_2$&\\
$\mathfrak{b}_5$&$\nabla_{e_1}e_1=e_1$&$\nabla_{e_1}e_2=e_2$&$\nabla_{e_2}e_1=e_2$&$\nabla_{e_2}e_2=-e_1$
\\\hline
\end{longtable}\label{FlatR2}
			\end{small}	
			}
\subsection{Three-dimensional real flat  Lie algebras}
It is well known that there is no flat torsion-free connetion on a semisimple Lie algebra. Therefore, over the real
field $\R$, besides $3$-dimensional simple Lie algebras $\mathfrak{sl}_2(\R)$ and $\mathfrak{so}(3)$, up to isomorphisms, there
are the following (non-isomorphic) Lie algebras (Mubarakzyanov's classification, we only give the nonzero products):
\begin{enumerate}
\item[] $ 3\G_1$, abelian.
\item[] $3\G_{2,1}\oplus\G_1=\langle e_1,e_2,e_3~|~[e_1,e_2]=e_1\rangle$, decomposable solvable.
\item[] $\G_{3,1}=\langle e_1,e_2,e_3~|~[e_1,e_2]=e_3\rangle$, Heisenberg-Weyl algebra, nilpotent.
\item[] $\G_{3,2}=\langle e_1,e_2,e_3~|~[e_1,e_3]=e_1,~[e_2,e_3]=e_1+e_2\rangle$, solvable.
\item[] $\G_{3,3}=\langle e_1,e_2,e_3~|~[e_1,e_3]=e_1,~[e_2,e_3]=e_2\rangle$, solvable.
\item[] $\G_{3,4}=\langle e_1,e_2,e_3~|~[e_1,e_3]=e_1,~[e_2,e_3]=\alpha e_2, -1\leq\alpha<1,\alpha\neq 0\rangle$, solvable, Poincar\'e\\
\phantom{xxxx} algebra $\mathfrak{p}(1,1)$ when $\alpha=-1$.
\item[] $\G_{3,5}=\langle e_1,e_2,e_3~|~[e_1,e_3]=\beta e_1-e_2,~[e_2,e_3]=e_1+\beta e_2, \beta\geq 0\rangle$, solvable.
\end{enumerate}

\subsubsection{Flat Lie algebra $3\G_1$}
A straightforward and systematic computation yields the following classification of torsion-free connections on the abelian Lie algebra $2\G_1\cong\R^2$:
\begin{Le}\label{ClassitorsionfreeR}
Let $\nabla^0$ be a torsion-free connection on $\mathbb{R}^2$. Then $\nabla^0$ is equivalent to  one of the following  torsion-free connections$:$
\begin{small}
\[
\begin{alignedat}{4}
\nabla^{1}_{e_1}e_1 &= 0, &\quad\quad \nabla^{1}_{e_1}e_2 &= 0,&\quad\quad
\nabla^{1}_{e_2}e_1 &=  0, &\quad\quad \nabla^{1}_{e_2}e_2 &=\delta\,e_1.\\
\nabla^2_{e_1}e_1 &= e_2, &\quad \nabla^2_{e_1}e_2 &= 0,&\quad
\nabla^2_{e_2}e_1 &=  0, &\quad \nabla^2_{e_2}e_2 &=0.\\
\nabla^3_{e_1}e_1 &= e_2, &\quad\quad \nabla^3_{e_1}e_2 &= 0,&\quad\quad
\nabla^3_{e_2}e_1 &=  0, &\quad\quad \nabla^3_{e_2}e_2 &=e_1.\\
	\nabla^4_{e_1}e_1 &=\lambda_1\,  e_1+e_2, &\quad \nabla^1_{e_1}e_2 &= e_2,&\quad
\nabla^4_{e_2}e_1 &=  e_2, &\quad \nabla^4_{e_2}e_2 &= \mu_1\,e_1+\nu_1\,e_2.\\
\nabla^5_{e_1}e_1 &=\lambda_2\,  e_1, &\quad \nabla^5_{e_1}e_2 &= e_2,&\quad
\nabla^5_{e_2}e_1 &=  e_2, &\quad \nabla^5_{e_2}e_2 &= \mu_2\,e_1+e_2.\\
\nabla^6_{e_1}e_1 &=\lambda_3\,  e_1, &\quad \nabla^6_{e_1}e_2 &= e_2,&\quad
\nabla^6_{e_2}e_1 &=  e_2, &\quad \nabla^6_{e_2}e_2 &= \delta_{\varepsilon_1}\,e_1.\\
\nabla^7_{e_1}e_1 &= e_1+e_2, &\quad \nabla^7_{e_1}e_2 &= 0,&\quad
\nabla^7_{e_2}e_1 &=  0, &\quad \nabla^7_{e_2}e_2 &= \mu_4\,e_1+\nu_4\,e_2.
\end{alignedat}
\]
\end{small}
Where, $\lambda_j,\mu_j,\nu_j\in\R$, $\delta=0,1$ and $\delta_{\varepsilon_1}=0,\pm1$.
\end{Le}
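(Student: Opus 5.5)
The plan is to identify a torsion-free connection on $\mathbb{R}^2$ with a commutative algebra structure, and then classify these up to $\mathrm{GL}_2(\mathbb{R})$. Since $\R^2$ is abelian, torsion-freeness means exactly that $\nabla^0_xy=\nabla^0_yx$. Thus $\nabla^0$ is the same thing as a symmetric bilinear map $\R^2\times\R^2\to\R^2$, that is, a commutative (not necessarily associative) two-dimensional algebra. Moreover, $\mathrm{Aut}(\R^2)=\mathrm{GL}_2(\R)$, and by Remark~\ref{autodirectsum} no extra condition $(c_4)$ is imposed in the central case. The equivalence $(\ref{equiconne})$ is therefore just isomorphism of commutative algebras.

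Equivalently, $\nabla^0$ is a pair of binary quadratic forms $(q_1,q_2)$, where $q(x)=\nabla^0_xx=q_1(x)e_1+q_2(x)e_2$, acted on by $\mathrm{GL}_2$ via $q\mapsto \Phi\circ q\circ\Phi^{-1}$. I would organize the case split by the rank of the product, that is, by $\dim\mathrm{span}\{\nabla^0_xy\}$.

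\textbf{Rank $0$.} We get $\nabla\equiv0$, which is $\nabla^1$ with $\delta=0$.

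\textbf{Rank $1$.} The image is a line $\R v$, and the connection is $q(x)=Q(x)v$ for a single quadratic form $Q$. The normalization depends on two pieces of data: the signature and rank of $Q$, and whether $v$ lies in the radical of $Q$ (or is isotropic for $Q$).
\begin{enumerate}
\item If $v$ is in $\ker Q$ with $Q$ of rank one, choose $v=e_1$ and $Q=x_2^2$. This gives $\nabla^1$ with $\delta=1$.
\item If $Q=x_1^2$ and $v=e_2$, we obtain $\nabla^2$.
\item If $v$ is not in $\ker Q$, then $Q(v)\neq0$ and rescaling gives idempotent-type forms. These are where $\nabla^7$ and the $e_1$-component families originate.
\end{enumerate}

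\textbf{Rank $2$.} Look at whether there exists $x$ with $\nabla^0_x$ invertible, and at the eigenstructure of multiplication operators. First consider the case where some element $e_1$ can be chosen with $\nabla_{e_1}e_2=e_2$ (an idempotent-like direction acting as identity on a complementary vector). Then $\nabla_{e_2}e_1=e_2$, and the remaining freedom is $\nabla_{e_1}e_1$ and $\nabla_{e_2}e_2$. Residual automorphisms of the form $e_2\mapsto ae_2$ and $e_1\mapsto e_1+be_2$ are then used to kill or normalize coefficients. This produces $\nabla^4,\nabla^5,\nabla^6$: in $\nabla^6$ the $e_2$-coefficient of $\nabla_{e_2}e_2$ is shifted to zero and the $e_1$-coefficient scaled to $0,\pm1$. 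Otherwise no such direction exists, and every $\nabla^0_x$ is nilpotent or degenerate, which leads to the forms $\nabla^3$ and $\nabla^7$.

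\textbf{Main obstacle.} The main difficulty is the exhaustive and non-redundant case analysis in rank $2$. One must show that every commutative algebra lands in one of the listed normal forms. My first approach is to use the classical classification of pencils of binary quadratic forms. Concretely, I would look at the cubic form $\det(x,q(x))$, whose real root structure (three real roots, one real root, repeated roots, or identically zero) is a $\mathrm{GL}_2$-invariant. Each root type would be matched to one of $\nabla^3$–$\nabla^7$, and then the stabilizer of the root configuration would be used to normalize the remaining coefficients. Non-redundancy need not be fully proved, since the lemma only claims equivalence to one of the forms.
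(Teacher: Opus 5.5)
\paragraph{Verdict: genuine gap (exhaustiveness is never proved).}
Your identification of $\nabla^0$ with a commutative product on $\R^2$, acted on by $\mathrm{GL}_2(\R)$, is correct. Your first rank-$2$ branch also works in substance. Given a basis with $\nabla^0_{e_1}e_2=e_2$, write $\nabla^0_{e_1}e_1=ae_1+be_2$ and $\nabla^0_{e_2}e_2=ce_1+de_2$, then split on $b\neq0$, then $d\neq0$, then $b=d=0$. Rescaling $e_2$ alone gives $\nabla^4,\nabla^5,\nabla^6$. The map $e_1\mapsto e_1+re_2$ does not preserve $\nabla^0_{e_1}e_2=e_2$ unless $c=d=0$, so the ``shift'' you describe for $\nabla^6$ is unavailable, but it is also unnecessary.

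The genuine gap is exhaustiveness, which you explicitly leave open.
\begin{itemize}
\item Your complementary branch (``every $\nabla^0_x$ nilpotent or degenerate, which leads to $\nabla^3$ and $\nabla^7$'') has no argument and is mis-described. $\nabla^7$ satisfies $\nabla_{e_1}(e_1+e_2)=e_1+e_2$. $\nabla^3$ satisfies $\nabla_{f_1}f_2=f_2$ for $f_1=\tfrac12e_1+2e_2$, $f_2=2e_1+e_2$. So both already lie in your first branch, and you never determine what the second branch actually contains.
\item In rank $1$, your cases 1 and 2 are the same class: in both, $v$ lies in the radical of $Q$.
\item A nondegenerate indefinite $Q$ with isotropic $v$ is not covered by your rank-$1$ cases. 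An example is the connection with only $\nabla_{e_1}e_2=e_2$, which is $\nabla^6$ with $\lambda_3=\delta_{\varepsilon_1}=0$.
\item The classes $\nabla_{e_1}e_1=e_1$, $\nabla_{e_2}e_2=\pm e_1$ are not on the list verbatim and need explicit identifications. The $+$ case is $\nabla^7$ with $\mu_4=\nu_4=1$, via $f_1=e_1+e_2$, $f_2=e_1-e_2$. The $-$ case is $\nabla^5$ with $\lambda_2=\mu_2=0$, via $f_1=e_1+e_2$, $f_2=e_1$.
\item The cubic-invariant programme in your last paragraph is only a plan. Matching root types of $\det(x,q(x))$ to the overlapping parametric families $\nabla^3$--$\nabla^7$ would itself require such explicit isomorphisms.
\end{itemize}

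\paragraph{The missing idea.}
What you need is a normalization that can \emph{always} be achieved: a basis with $\nabla^0_{e_1}e_2\in\R e_2$, which is the paper's $a_{12}=0$.
\begin{itemize}
\item The paper obtains it from a real zero of a binary cubic in the parameters of $\Phi=\begin{pmatrix}x&-z\\ z&x\end{pmatrix}$.
\item Equivalently, pick $e_2$ with $\det(e_2,\nabla^0_{e_2}e_2)\neq0$, and let $e_1$ span the kernel of the linear form $f\mapsto\det(e_2,\nabla^0_fe_2)$.
\item If your cubic vanishes identically, then $\nabla^0_xy=\tfrac12(\ell(x)y+\ell(y)x)$ for a linear form $\ell$, and any $e_2\in\ker\ell$ works.
\end{itemize}

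This turns your unproved dichotomy into a concrete one. If the $e_2$-coefficient of $\nabla^0_{e_1}e_2$ is nonzero, you are in your first branch. If it is zero, you have the explicit family $\nabla^0_{e_1}e_1=ae_1+be_2$, $\nabla^0_{e_1}e_2=0$, $\nabla^0_{e_2}e_2=ce_1+de_2$. Diagonal scalings cut this into finitely many pieces, and each piece is matched to the list by an explicit change of basis. For instance:
\begin{itemize}
\item $\nabla_{e_1}e_1=e_2$, $\nabla_{e_2}e_2=e_1+\nu e_2$ with $\nu\neq0$ becomes $\nabla^7$ with $\mu_4=\nu^{-3}$, $\nu_4=0$, via $f_1=\nu^{-1}e_2$, $f_2=\nu^{-2}e_1$.
\item $\nabla_{e_1}e_1=e_1$, $\nabla_{e_2}e_2=\mu e_1+e_2$ with $\mu\neq0$ becomes $\nabla^7$ with $\mu_4=0$, $\nu_4=\mu$, via $f_1=e_2$, $f_2=\mu e_1$.
\end{itemize}
Without this step, or a completed version of your invariant-theoretic argument, the proposal does not show that the seven families exhaust all torsion-free connections on $\R^2$.
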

\begin{proof}

Let $V = \langle e_1, e_2 \rangle$ be a two-dimensional vector space, viewed as an abelian Lie algebra $\G$, and let $\nabla$ be a torsion-free connection on $\G$. Then, with respect to the basis $\{e_1, e_2\}$, the  products are given by:
\begin{align}\label{GeneralfreeconnR}
	\nabla_{e_1}e_1 &=a_{11} e_1+a_{21}e_2, &\quad \nabla_{e_1}e_2 &= a_{12} e_1+a_{22}e_2,&\quad
	\nabla_{e_2}e_1 &=  a_{12} e_1+a_{22}e_2, &\quad \nabla_{e_2}e_2 &= b_{12}e_1+b_{22}e_2,
\end{align}
where, $a_{ij},b_{ij}\in\R$. Recall that two torsion-free connections $\nabla^1$ and $\nabla^2$ on a Lie algebra $\G$ are isomorphic if and only if there exists an automorphism $\Phi : \G \longrightarrow \G$ such that
\begin{align}\label{iso2torsion}
	\nabla^2_x&=\Phi\circ\nabla^1_{
		\Phi^{-1}(x)}\circ\Phi^{-1},\quad\quad\text{for all~~} x\in\G.
\end{align}
An automorphism of an abelian Lie algebra is simply an element $\Phi \in \mathrm{GL}_n(\mathbb{R})$.

Let $\nabla^{ij}$ denote the coefficients of
\[
\Phi\circ\nabla_{\Phi^{-1}(x)}\circ\Phi^{-1},
\quad\quad \text{for all } x\in\R^2.
\]
Fix $\Phi\in \mathrm{GL}_n(\R)$ given by
\begin{align}
	\Phi=
	\begin{pmatrix}
		x & y\\
		z & t
	\end{pmatrix},\quad xt-zy\neq0.
\end{align}
We may assume that $a_{12}=0$. Indeed, if $a_{12}\neq0$, then, by taking
\[
t=x,
\qquad
y=-z,
\]
We obtain that the numerator of $\nabla^{21}$ is 
\begin{align}
	a_{12}x^3
	+z(a_{11}-a_{22}-b_{12})x^2
	-z^2(a_{12}+a_{21}-b_{22})x
	+a_{22}z^3.
\end{align}
Hence, $\nabla^{12}$ necessarily admits a real solution, since it is a polynomial of odd degree.

From now on, assume that $a_{21}=0$. Applying the automorphism $\Psi$ to the connection given in \eqref{GeneralfreeconnR} yields the following equivalent connection:
\begin{align}\label{Auttcon}
	\nabla_{e_1}e_1 &=\tfrac{a_{11}}{x}  e_1+\tfrac{t\,a_{21}}{x^2} e_2, &\quad \nabla_{e_1}e_2 &= \tfrac{a_{22}}{x}e_2,&\quad
	\nabla_{e_2}e_1 &=  \tfrac{a_{22}}{x}e_2, &\quad \nabla_{e_2}e_2 &= \tfrac{x\,b_{12}}{x}e_1+\tfrac{b_{22}}{t}e_2,
\end{align}

Suppose that $a_{22}a_{21}\neq0$. Taking
\[
x=a_{22},
\qquad
t=\tfrac{a_{22}^2}{a_{21}},
\]
and setting
\[
\lambda=\tfrac{a_{11}}{a_{22}},
\qquad
\mu=\tfrac{a_{21}^2b_{12}}{a_{22}^3},
\qquad
\nu=\tfrac{a_{21}b_{22}}{a_{22}^2},
\]
we obtain 
\begin{align}\label{R2so1}
	\nabla^1_{e_1}e_1 &=\lambda\,  e_1+e_2, &\quad \nabla^1_{e_1}e_2 &= e_2,&\quad
	\nabla^1_{e_2}e_1 &=  e_2, &\quad \nabla^1_{e_2}e_2 &= \mu\,e_1+\nu\,e_2.
\end{align}
Note that this connection is flat if and only if $\mu=0$ and $\lambda=1-\nu$.  
In the same way, according to the values of the parameters defining the torsion-free connection \eqref{Auttcon}, we derive all possible cases and present them as follows:
\begin{small}
	\[
	\begin{alignedat}{4}\label{allabelian}
	\nabla^1_{e_1}e_1 &=\lambda_1\,  e_1+e_2, &\quad \nabla^1_{e_1}e_2 &= e_2,&\quad
	\nabla^1_{e_2}e_1 &=  e_2, &\quad \nabla^1_{e_2}e_2 &= \mu_1\,e_1+\nu_1\,e_2.\\
	\nabla^2_{e_1}e_1 &=\lambda_2\,  e_1, &\quad \nabla^2_{e_1}e_2 &= e_2,&\quad
	\nabla^2_{e_2}e_1 &=  e_2, &\quad \nabla^2_{e_2}e_2 &= \mu_2\,e_1+e_2.\\
	\nabla^3_{e_1}e_1 &=\lambda_3\,  e_1, &\quad \nabla^3_{e_1}e_2 &= e_2,&\quad
	\nabla^3_{e_2}e_1 &=  e_2, &\quad \nabla^3_{e_2}e_2 &= \delta_{\varepsilon_1}\,e_1.\\
	\nabla^4_{e_1}e_1 &= e_1+e_2, &\quad \nabla^4_{e_1}e_2 &= 0,&\quad
	\nabla^4_{e_2}e_1 &=  0, &\quad \nabla^4_{e_2}e_2 &= \mu_4\,e_1+\nu_4\,e_2.\\
	\nabla^5_{e_1}e_1 &= e_1, &\quad \nabla^5_{e_1}e_2 &= 0,&\quad
	\nabla^5_{e_2}e_1 &=  0, &\quad \nabla^5_{e_2}e_2 &= \mu_5\,e_1+e_2.\\
	\nabla^6_{e_1}e_1 &= e_1, &\quad \nabla^6_{e_1}e_2 &= 0,&\quad
	\nabla^6_{e_2}e_1 &=  0, &\quad \nabla^6_{e_2}e_2 &= \delta_{\varepsilon_2}\,e_1.\\
	\nabla^7_{e_1}e_1 &= e_2, &\quad \nabla^7_{e_1}e_2 &= 0,&\quad
	\nabla^7_{e_2}e_1 &=  0, &\quad \nabla^7_{e_2}e_2 &=e_1+\nu_7\,e_2.\\
	\nabla^8_{e_1}e_1 &= e_2, &\quad \nabla^8_{e_1}e_2 &= 0,&\quad
	\nabla^8_{e_2}e_1 &=  0, &\quad \nabla^8_{e_2}e_2 &=\delta_{\varepsilon_3}\,e_2.\\
	\nabla^9_{e_1}e_1 &= 0, &\quad \nabla^9_{e_1}e_2 &= 0,&\quad
	\nabla^9_{e_2}e_1 &=  0, &\quad \nabla^9_{e_2}e_2 &=e_1+e_2.\\
	\nabla^{10}_{e_1}e_1 &= 0, &\quad \nabla^{10}_{e_1}e_2 &= 0,&\quad
	\nabla^{10}_{e_2}e_1 &=  0, &\quad \nabla^{10}_{e_2}e_2 &=e_2.\\
	\nabla^{11}_{e_1}e_1 &= 0, &\quad \nabla^{11}_{e_1}e_2 &= 0,&\quad
	\nabla^{11}_{e_2}e_1 &=  0, &\quad \nabla^{11}_{e_2}e_2 &=\delta\,e_1.
	\end{alignedat}
	\]
\end{small}
Where, $\lambda_j,\mu_j,\nu_j\in\R$, $\delta=0,1$ and $\delta_\varepsilon=0,\pm1$.

A straightforward verification shows the following equivalences:
\begin{enumerate}
	\item $\nabla^{9}\cong \nabla^{10}$, \quad $\Phi:\quad e_1\mapsto e_1,\quad e_2\mapsto e_1+e_2$,
	\item  $\nabla^{6}\cong \nabla^{10}$, ($\delta_{\varepsilon_2}=0$) \quad $\Phi:\quad e_1\mapsto e_2,\quad e_2\mapsto e_1$,  (We may assume that $\delta_{\varepsilon_2}=\varepsilon=\pm1$)
	\item  $\nabla^{10}\cong \nabla^{4}$, ($\mu_4=\nu_4=0$) \quad $\Phi:\quad e_1\mapsto e_2,\quad e_2\mapsto e_1+e_2$,
	\item  $\nabla^{8}\cong \nabla^{6}$, ($\delta_{\varepsilon_3}\delta_{\varepsilon_2}\neq0$) \quad $\Phi:\quad e_1\mapsto z\,e_2,\quad e_2\mapsto \delta_{\varepsilon_3}\,e_1$, with $z^2=\frac{\delta_{\varepsilon_3}}{\delta_{\varepsilon_2}}$, (We may assume that $\delta_{\varepsilon_3}=0$)
		\item  $\nabla^{7}\cong \nabla^{4}$, ($\nu_7\neq0, ~~\nu_4=0,~~\mu_4=\frac{1}{\nu_7^3}$) \quad $\Phi:\quad e_1\mapsto \nu_7^2\,e_2,\quad e_2\mapsto \nu_7\,e_1$, (We may assume that $\nu_7=0$)
			\item  $\nabla^{6}\cong \nabla^{2}$, ($\delta_{\varepsilon_2}=-1$) \quad $\Phi:\quad e_1\mapsto e_2,\quad e_2\mapsto e_1-e_2$,  (We may assume that $\delta_{\varepsilon_2}=\varepsilon=1$)
				\item  $\nabla^{6}\cong \nabla^{4}$, ($\delta_{\varepsilon_2}=1,~~\nu_4=\mu_4=1$) \quad $\Phi:\quad e_1\mapsto \frac{1}{2}( e_1+e_2),\quad e_2\mapsto  \frac{1}{2}( e_1-e_2)$,  (This case can be omitted)
					\item  $\nabla^{5}\cong \nabla^{4}$, ($\mu_4=0,~~\mu_5=\nu_4\neq0$) \quad $\Phi:\quad e_1\mapsto  e_1+\frac{1}{\mu_5}e_2,\quad e_2\mapsto  e_2$, (We may assume that $\mu_5=0$)
					\item  $\nabla^{5}\cong \nabla^{3}$, ($\delta_{\varepsilon_1}=1,~~\lambda_3=1,~~\mu_5=0$) \quad $\Phi:\quad e_1\mapsto \frac{1}{2}( e_1-e_2),\quad e_2\mapsto  \frac{1}{2}( e_1+e_2)$,  (This case can be omitted)

\end{enumerate}
For the remaining cases, i.e., $\nabla^j$, $j=1,2,3,4$, the corresponding connections are isomorphic under certain conditions. In order to obtain an equivalence classification, it is not necessary to determine their representatives up to isomorphism.

\end{proof}

\begin{Le}\label{no flat to flat}
	Let $\nabla$ be a non flat  torsion-free connection on the abelian Lie algebra $\mathbb{R}^2$. Then there exist a flat  torsion-free connection $\nabla^{0}$ on $\R^2$, a vector $v \in \mathbb{R}^2$, and  a symmetric $(0,2)$-tensor $\mathcal{S} :\R\times\R\to\R$ such that
	\[
	\nabla_x y - \nabla^{0}_x y = \mathcal{S}(x,y)\,v,
	\]
	for all $x,y \in \mathbb{R}^2$.
\end{Le}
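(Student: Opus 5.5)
The statement is invariant under isomorphism, so I would reduce to the normal forms of Lemma~\ref{ClassitorsionfreeR} and treat each one directly. Suppose $\Phi\in\mathrm{GL}_2(\R)$ and $\nabla'_x=\Phi\circ\nabla_{\Phi^{-1}(x)}\circ\Phi^{-1}$. If $\nabla_xy-\nabla^0_xy=\mathcal{S}(x,y)v$, then transporting $\nabla^0$ by $\Phi$ gives a flat torsion-free connection ${\nabla^0}'$. Moreover
\[
\nabla'_x y-{\nabla^0}'_x y=\mathcal{S}\big(\Phi^{-1}x,\Phi^{-1}y\big)\,\Phi(v),
\]
so the conclusion passes to isomorphic connections. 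Two facts make the problem linear-algebraic:
\begin{itemize}
\item[(a)] The difference of two torsion-free connections on the abelian algebra $\R^2$ is an arbitrary symmetric bilinear map $\R^2\times\R^2\to\R^2$. What must be shown is that it can be chosen with image in a line $\R v$, i.e.\ with rank one in the values.
\item[(b)] Since $\R^2$ is abelian, flatness of a torsion-free connection reduces to the single condition $[\mathrm{L}_{e_1},\mathrm{L}_{e_2}]=0$.
\end{itemize}
So for each non-flat normal form I look for a flat member of a nearby family that differs from it only in the $v$-components of the products.

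The easy cases go as follows.
\begin{itemize}
\item[$\nabla^1,\nabla^2$:] Both are flat, since $\mathrm{L}_{e_1}$ and $\mathrm{L}_{e_2}$ commute, so they are excluded by hypothesis.
\item[$\nabla^3$:] Take $\nabla^0=\nabla^2$ (only $e_1e_1=e_2$), $v=e_1$, and $\mathcal{S}=e_2^\ast\otimes e_2^\ast$.
\item[$\nabla^4$:] The computation in the proof of Lemma~\ref{ClassitorsionfreeR} shows this connection is flat exactly when $\mu=0$ and $\lambda=1-\nu$. Replace $\lambda,\mu$ by $1-\nu,0$. The difference then lives only in the $e_1$-components, so $v=e_1$ and $\mathcal{S}=\mathrm{diag}(\lambda-1+\nu,\mu)$.
\item[$\nabla^5$:] In the family $e_1e_1=a e_1$, $e_1e_2=e_2$, $e_2e_2=b e_1+e_2$, commutation of $\mathrm{L}_{e_1}$ and $\mathrm{L}_{e_2}$ forces $a=1$ with $b$ free. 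Keeping $b=\mu_2$ gives $v=e_1$ and $\mathcal{S}=(\lambda_2-1)\,e_1^\ast\otimes e_1^\ast$.
\item[$\nabla^6$:] In the same way, flatness of $e_1e_1=ae_1$, $e_1e_2=e_2$, $e_2e_2=\delta e_1$ requires $\delta(a-1)=0$ together with $a=1$. So $a=1$, and again $v=e_1$.
\end{itemize}

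The main obstacle is $\nabla^7$: $e_1e_1=e_1+e_2$, $e_2e_2=\mu_4e_1+\nu_4e_2$. The naive families fail here.
\begin{itemize}
\item Varying only the $e_1$-components forces $\nu_4=0$.
\item Varying only the $e_2$-components forces $\mu_4=0$.
\end{itemize}
The subcases $\nu_4=0$ or $\mu_4=0$ are therefore handled as above; with $\mu_4=0$, for instance, one takes $d=0$, $v=e_2$. For $\mu_4\nu_4\neq0$ I would first try an ansatz allowing a nonzero mixed product. The candidate is
\[
\nabla^0_{e_1}e_1=e_1+e_2,\quad \nabla^0_{e_1}e_2=\nabla^0_{e_2}e_1=pe_1+qe_2,\quad \nabla^0_{e_2}e_2=re_1+we_2 .
\]
The difference is then supported on $(e_1,e_2)$ and $(e_2,e_2)$, so it suffices to impose
\[
(p,q)\parallel(\mu_4-r,\nu_4-w)
\]
together with the two quadratic equations $[\mathrm{L}_{e_1},\mathrm{L}_{e_2}]=0$. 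That is three equations in four unknowns, so a real solution should exist. I would check that by reducing to a one-variable polynomial of odd degree, in the spirit of the argument for $a_{12}=0$ in Lemma~\ref{ClassitorsionfreeR}.

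If this fails, I would let $v$ be a general vector $e_1+se_2$ and use the extra parameter $s$. As a last resort, I would transport $\nabla^7$ by a suitable $\Phi$ to a form already covered above.
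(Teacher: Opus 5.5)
Your reduction to the normal forms of Lemma~\ref{ClassitorsionfreeR} is the paper's own route, and so is the transport argument. Your treatment of $\nabla^3,\dots,\nabla^6$ is also correct. Your choices for $\nabla^3$ ($\nabla^0=\nabla^2$, $v=e_1$) and for $\nabla^4$ (changing $\lambda,\mu$ instead of the $(e_2,e_2)$-product) differ from the paper's but check out.

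The gap is $\nabla^7$ with $\mu_4\nu_4\neq 0$: you never produce a flat $\nabla^0$ there. The sentence ``three equations in four unknowns, so a real solution should exist'' is not an argument over $\mathbb{R}$, since underdetermined real systems can have no solutions. The odd-degree reduction and the fallbacks are only announced, not carried out. The last resort is doubtful anyway, because $\nabla^7$ is listed as a separate normal form precisely because it is not in general equivalent to the earlier ones. As written, this case of the lemma is unproved. Your observation that the ``naive families fail'' also holds only because you froze the mixed product $\nabla^0_{e_1}e_2$.

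The obstacle is illusory, and your own ansatz already contains the answer. With $\mathrm{L}_{e_1}$ having columns $(1,1)$ and $(p,q)$, and $\mathrm{L}_{e_2}$ having columns $(p,q)$ and $(r,w)$, flatness reduces to $r=pq$ and $w=p+q^2-q$. The $(1,2)$-entry of the commutator then vanishes identically. Parallelism becomes $p\nu_4-p^2+pq-q\mu_4=0$, which is solved by $p=q=r=w=0$.

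So take $\nabla^0$ with the single nonzero product $\nabla^0_{e_1}e_1=e_1+e_2$. It is flat because $\mathrm{L}_{e_2}=0$. Then $\nabla^7-\nabla^0$ is supported on $(e_2,e_2)$ alone, so $v=\mu_4e_1+\nu_4e_2$ and $\mathcal{S}=e_2^\ast\otimes e_2^\ast$, with no case split. Alternatively, $p=w=\nu_4$, $q=r=0$ gives $\mathrm{L}_{e_2}=\nu_4\,\mathrm{Id}$ and $v=e_1$.

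The paper uses yet another choice: $\nabla^0$ with $\mathrm{L}_{e_1}=0$ and $\nabla^0_{e_2}e_2=(\mu_4-1)e_1+(\nu_4-1)e_2$, together with $v=e_1+e_2$ and $\mathcal{S}=e_1^\ast\otimes e_1^\ast+e_2^\ast\otimes e_2^\ast$.
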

\begin{proof}
Let $V = \langle e_1, e_2 \rangle$ be a two-dimensional vector space, viewed as an abelian Lie algebra $\G$, and	let $\nabla$ be no flat, torsion-free connection on $\G$. Then, $\nabla$ is equivalent to one of the connections $\nabla^j$, $j=3,\ldots,7$ given in Lemma~\ref{ClassitorsionfreeR}. 
	Consider the first case, namely $\nabla=\nabla^3$, and consider the following data:
\begin{align}
	\mathcal{S}_3=e^1\otimes e^1,\quad v=e_2,\quad\text{and}\quad \nabla^{0,3}_{e_2}e_2=e_1.
\end{align}
Moreover, $\nabla^{0,3}$ is a flat, torsion-free connection on $\G$. 
	It is straightforward to show that 	\[
	\nabla^3_x y - \nabla^{0,3}_x y = \mathcal{S}_3(x,y)\,v,\quad\text{for all}\quad x,y\in\G.
	\]

Set 
\begin{align}
	\mathcal{S}_4=e^2\otimes e^2,\quad v=\mu_1\,e_1+(\nu_1+\lambda_1-1)\,e_2,
\end{align}
and
\begin{align}
	\nabla^{0,4}_{e_1}e_1&=\lambda_1\,e_1+e_2,&\nabla^{0,4}_{e_1}e_2&=e_2,&\nabla^{0,4}_{e_2}e_1&=e_2,&\nabla^{0,4}_{e_2}e_2&=(1-\lambda_1)\,e_2.
\end{align}
Note that the connection $\nabla^4$ is flat if and only if $\mu_1=0$ and $\nu_1=1-\lambda_1$.
Then, $\nabla^{0,4}$ is a flat, torsion-free connection on $\G$. 
We obtain,	\[
\nabla^4_x y - \nabla^{0,4}_x y = \mathcal{S}_4(x,y)\,v,\quad\text{for all}\quad x,y\in\G.
\]

Let now
\begin{align}
	\mathcal{S}_5=(\lambda_2-1)\,e^1\otimes e^1,\quad v=e_1,\quad\text{and}\quad \nabla^{0,5}_{e_1}e_1=e_1,~~\nabla^{0,5}_{e_1}e_2=e_2,~~\nabla^{0,5}_{e_2}e_1=e_2,~~\nabla^{0,5}_{e_2}e_2=\mu_2\,e_1+e_2.
\end{align}
Note that the connection $\nabla^5$ is flat if and only if $\lambda_2=1$.
Then, $\nabla^{0,5}$ is a flat, torsion-free connection on $\G$. 
We obtain,	\[
\nabla^5_x y - \nabla^{0,5}_x y = \mathcal{S}_5(x,y)\,v,\quad\text{for all}\quad x,y\in\G.
\]

	Consider
		\begin{align}
		\mathcal{S}_6=(\lambda_3-1)\,e^1\otimes e^1,\quad v=e_1,\quad\text{and}\quad \nabla^{0,6}_{e_1}e_1=e_1,~~\nabla^{0,6}_{e_1}e_2=e_2,~~\nabla^{0,6}_{e_2}e_1=e_2,~~\nabla^{0,6}_{e_2}e_2=\delta_{\varepsilon_1}\,e_1.
	\end{align}
	Note that the connection $\nabla^6$ is flat if and only if $\lambda_3=1$.
	Then, $\nabla^{0,6}$ is a flat, torsion-free connection on $\G$. 
	We obtain,	\[
	\nabla^6_x y - \nabla^{0,6}_x y = \mathcal{S}_6(x,y)\,v,\quad\text{for all}\quad x,y\in\G.
	\]

	Let
	\begin{align}
		\mathcal{S}_7=e^1\otimes e^1+e^2\otimes e^2,\quad v=e_1+e_2,\quad\text{and}\quad \nabla^{0,7}_{e_2}e_2=(\mu_4-1)\,e_1+(\nu_4-1)\,e_2.
	\end{align}
	Then, $\nabla^{0,7}$ is a flat, torsion-free connection on $\G$. 
	We obtain,	\[
	\nabla^7_x y - \nabla^{0,7}_x y = \mathcal{S}_7(x,y)\,v,\quad\text{for all}\quad x,y\in\G.
	\]
\end{proof}

\begin{pr}\label{FlatinR3}
Let $(\G, \nabla)$ be a three-dimensional real flat  abelian Lie algebra. Then $(\G, \nabla)$ is isomorphic to exactly one of the flat Lie algebras listed in Table~$\ref{FlatR3}$.

{\renewcommand*{\arraystretch}{1.8}
\captionof{table}{Flat torsion-free connection on the Lie algebra $3\G_1$.}
\setcounter{table}{2}
\begin{footnotesize} 
\setlength{\tabcolsep}{5pt} 
\begin{longtable}{@{}cllllllc@{}} 
			\hline
		Algebra&\multicolumn{2}{@{}l@{}}{~~Flat torsion-free connection}&&&&Remarks \\
			\hline
			$\h_{0,0}$&$\nabla\equiv0$&&&&&\\
$\h_{0,1}$&$\nabla_{e_2}e_2=e_1$&$\nabla_{e_2}e_3=e_1$&$\nabla_{e_3}e_2=e_1$&$\nabla_{e_3}e_3=e_2-e_3$&&\\
$\h_{0,2}$&$\nabla_{e_3}e_3=e_2+e_3$&&&&&\\
$\h_{0,3}$&$\nabla_{e_2}e_3=e_2$&$\nabla_{e_3}e_2=e_2$&$\nabla_{e_3}e_3=e_3$&&&\\
$\h_{0,4}$&$\nabla_{e_2}e_3=e_1$&$\nabla_{e_3}e_2=e_1$&$\nabla_{e_3}e_3=e_2$&&&\\
$\h_{0,5}$&$\nabla_{e_2}e_2=\varepsilon e_3$&$\nabla_{e_2}e_3=e_2$&$\nabla_{e_3}e_2=e_2$&$\nabla_{e_3}e_3=e_3$&&$\varepsilon=\pm1$\\
$\h_{0,6}$&$\nabla_{e_3}e_3=e_2$&&&&&\\
$\h_{0,7}$&$\nabla_{e_1}e_1=e_3$&$\nabla_{e_2}e_2=\varepsilon e_3$&&&&$\varepsilon=\pm1$\\
$\h_{0,8}$&$\nabla_{e_1}e_3=e_1$&$\nabla_{e_2}e_3=e_2$&$\nabla_{e_3}e_j=e_j$&&&$j=1,2,3$\\\hline
$\h_{1,1}$&$\nabla_{e_1}e_1=e_1$&$\nabla_{e_3}e_3=e_3$&&&&\\
$\h_{1,2}$&$\nabla_{e_1}e_1 =e_1$&$\nabla_{e_2}e_3=e_2$&$\nabla_{e_3}e_2=e_2$&$\nabla_{e_3}e_3=e_3$&&\\
$\h_{1,3}$& $\nabla_{e_1}e_1 = e_1$ & $\nabla_{e_2}e_2= e_2$& $\nabla_{e_2}e_3 = e_3$ &
$\nabla_{e_3}e_2 = e_3$ &
$\nabla_{e_3}e_3 = e_2$&\\\hline
$\h_{2,1}$&$\nabla_{e_1}e_3=e_1$&$\nabla_{e_2}e_2=e_1$&$\nabla_{e_2}e_3=e_1+e_2$&$ \nabla_{e_3}e_1=e_1$&$\nabla_{e_3}e_2=e_1+e_2$&\\
&$\nabla_{e_3}e_3=e_2+e_3$&&&&&\\\hline
$\h_{3,1}$&$\nabla_{e_1} e_1 = e_1$&$\nabla_{e_2} e_2 = e_2,$&$\nabla_{e_2} e_3 = e_3$&$\nabla_{e_3} e_2 = e_3$&$\nabla_{e_3} e_3 = - e_2$&\\\hline
$\h_{4,1}$&$	\nabla_{e_1} e_1 = e_1$&$\nabla_{e_1} e_2=e_2$&$\nabla_{e_1} e_3=e_3$&$\nabla_{e_2} e_1=e_2$&$\nabla_{e_2} e_2= e_3$&\\
&$\nabla_{e_2} e_3=\lambda e_1+\varepsilon_1e_2$&$\nabla_{e_3} e_1=e_3$&$\nabla_{e_3}e_2=\lambda e_1+\varepsilon_1e_2$&$\nabla_{e_3}e_3=\lambda e_2+\varepsilon_1e_3$&&$\lambda\in\R^\ast,~\varepsilon_1=\pm1$

\\\hline		

			\end{longtable}
			\label{FlatR3}
			\end{footnotesize}	
			}

\end{pr}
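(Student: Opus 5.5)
We follow the reduction strategy of Section~\ref{Sec3}. Write $3\G_1=\R\ell\oplus\G_0$ with $\G_0=\R^2=\langle e_1,e_2\rangle$ and $\ell=e_3$. Here $\mathrm{D}=0$, so $\eta=\beta$. Every torsion-free connection then has the form (\ref{Connetorsionfre}), and flatness is equivalent to conditions $(i)$--$(vi)$ of the curvature lemma with $\mathrm{D}=0$. By Remark~\ref{autodirectsum}, the automorphisms are all of $\mathrm{GL}_3(\R)$. We may therefore first normalize $\nabla^0$ by any $\Phi\in\mathrm{GL}_2(\R)$, and afterwards still use $u\in\G_0$, $\sigma\in\G_0^\ast$ and $\tau\neq0$ (Lemma~\ref{isoofconnection}) to normalize the remaining data $(\theta,\beta,\gamma,\zeta,\lambda)$.

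\textbf{Step 1: normalize $\nabla^0$.} By Lemma~\ref{ClassitorsionfreeR}, $\nabla^0$ is one of $\nabla^1,\dots,\nabla^7$. Condition $(i)$ reads $\mathcal{K}^{\nabla^0}(x,y)z=\theta(x,z)\beta(y)-\theta(y,z)\beta(x)$. So either $\nabla^0$ is flat (the cases of Table~\ref{FlatR2}, i.e.\ $\nabla^1,\nabla^2$ and the flat parameter values of $\nabla^4$--$\nabla^7$), or its curvature has the rank-one form dictated by $\theta$ and $\beta$. In the non-flat case we use Lemma~\ref{no flat to flat} to write $\nabla^0=\widetilde\nabla^0+\mathcal{S}\otimes v$ with $\widetilde\nabla^0$ flat. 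The idea is that the defect $\mathcal{S}\otimes v$ can then be absorbed by a change of complement $\ell\mapsto \ell+u$, $x\mapsto x+\sigma(x)\ell$, as in $(i)$ of Lemma~\ref{isoofconnection}. This reduces everything to the case where $\nabla^0$ is one of the flat structures $\mathfrak{b}_0,\dots,\mathfrak{b}_5$, at the cost of modifying $\theta,\beta,\zeta$.

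\textbf{Step 2: solve the linear--quadratic system for each $\mathfrak{b}_k$.} For fixed flat $\nabla^0=\mathfrak{b}_k$, condition $(i)$ becomes $\theta(x,z)\beta(y)=\theta(y,z)\beta(x)$. This forces either $\theta=0$, or $\beta=0$, or $\operatorname{rank}\beta\le1$ with $\theta$ of a compatible rank-one shape; we split into these subcases. Condition $(ii)$ is linear in $\theta$ given $\gamma$. Condition $(v)$ says $\beta$ is a derivation-like operator for $\nabla^0$ up to $\theta\otimes\zeta-\beta\otimes\gamma$. Conditions $(iii)$, $(iv)$, $(vi)$ then determine $\zeta$ and $\lambda$. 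Solving for each $k$ gives finitely many families. We then apply the residual symmetries of Remark~\ref{sympleofauto} (the stabilizer of $\mathfrak{b}_k$ in $\mathrm{GL}_2$, together with rescaling $\tau$ and shifts $u,\sigma$) to bring each family to a normal form. We expect these to match $\h_{i,j}$, where the first index appears to track which $\mathfrak{b}_k$ occurs on a two-dimensional subalgebra, e.g.\ $\mathfrak{b}_0$ for $\h_{0,\ast}$ and $\mathfrak{b}_5$ for $\h_{3,1}$.

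\textbf{Step 3: pairwise non-isomorphism.} To show the list is irredundant we use invariants of the LSA $(\R^3,\cdot)$: the dimension of $\G\cdot\G$; the dimensions of the left and right annihilators; the existence of a right identity or idempotents; nilpotency and the eigenvalues of $\mathrm{R}_X$ (completeness); whether the structure is associative or Novikov; and the signature of the induced quadratic form, which distinguishes the signs $\varepsilon$. Parameters such as $\lambda$ in $\h_{4,1}$ are separated by trace invariants, e.g.\ $\operatorname{tr}\mathrm{L}_X$ and $\operatorname{tr}\mathrm{R}_X$ as linear forms, and characteristic polynomials evaluated on canonical elements.

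\textbf{Main obstacle.} The hardest part is Step~2 combined with the reduction of Step~1. Cases where $\theta$ and $\beta$ are both nonzero and $\nabla^0$ is not flat give genuinely quadratic conditions whose real solutions split by sign. Deciding when two solutions are related by the full automorphism group, not only by the simplified one of Remark~\ref{sympleofauto}, requires using $u$ and $\sigma$ nontrivially. We would first attempt to prove that $\ell$ can always be chosen so that $\R^2$ is a left ideal ($\theta=0$) or a subalgebra with $\beta$ nilpotent. This would make the system essentially linear, and the exceptional structures such as $\h_{4,1}$, which admit no such choice, would be handled separately by direct computation.
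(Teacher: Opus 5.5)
Your plan is essentially the paper's own route: block decomposition $\R^2\oplus\R e_3$, reduction to a flat upper block $\nabla^0\in\{\mathfrak{b}_0,\dots,\mathfrak{b}_5\}$, case-by-case solution of the flatness equations, and normalization in $\mathrm{GL}_3(\R)$. Step~3, however, cannot succeed, because Table~\ref{FlatR3} is not irredundant: the ``exactly one'' clause is false.

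An observation your proposal never uses makes this easy to see. On $3\G_1$, torsion-freeness means $XY=YX$. Combined with $X(YZ)=Y(XZ)$ this gives $(XY)Z=Z(XY)=X(ZY)=X(YZ)$, so $(\R^3,\cdot)$ is just a commutative associative algebra. In $\h_{4,1}$ the vector $e_1$ is a unit, $e_2^2=e_3$ and $e_2^3=\lambda e_1+\varepsilon_1e_2$. Hence $\h_{4,1}\cong\R[x]/(x^3-\varepsilon_1x-\lambda)$, whose discriminant is $4\varepsilon_1-27\lambda^2$. It follows that:
\begin{itemize}
\item $\h_{4,1}\cong\R\times\mathbb{C}\cong\h_{3,1}$ whenever $\varepsilon_1=-1$ or $27\lambda^2>4$;
\item $\h_{4,1}\cong\R\times\R[x]/(x^2)\cong\h_{1,2}$ when $\varepsilon_1=1$ and $27\lambda^2=4$;
\item $\h_{4,1}\cong\R^3\cong\h_{1,3}$ when $\varepsilon_1=1$ and $0<27\lambda^2<4$.
\end{itemize}
So no trace or characteristic-polynomial invariant can separate different values of $\lambda$. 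Likewise $\h_{0,5}$ with $\varepsilon=1$ is $\R^2\times 0\cong\h_{1,1}$: send $e_1,e_2,e_3$ of $\h_{1,1}$ to $\tfrac12(e_2+e_3),\,e_1,\,\tfrac12(e_3-e_2)$.

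There are exactly fifteen classes: five nilpotent algebras; $\R\times N$ with $N$ two-dimensional nilpotent; $B\times 0$ with $B\in\{\R^2,\mathbb{C},\R[x]/(x^2)\}$; and five unital algebras. All fifteen occur in the table, so what can actually be proved is ``isomorphic to at least one entry''. The paper only asserts pairwise non-isomorphism. It does identify $\h_{4,1}$ at $\lambda=0$ with $\h_{1,3}$ and $\h_{3,1}$, but it misses the remaining coincidences.

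Step~1 also has a real gap. By Lemma~\ref{isoofconnection}$(i)$ with $\Phi=\mathrm{id}$ and $\eta=\beta$, a change of complement modifies the upper block by $\theta_2(x,y)u-\sigma(x)\beta_1(y)-\sigma(y)\beta_1(x)-\sigma(x)\sigma(y)\zeta_1$. This term is dictated by the data of $\nabla$ itself. The rank-one defect $\mathcal{S}\otimes v$ of Lemma~\ref{no flat to flat} is computed from $\nabla^0$ alone, and nothing relates the two, so ``absorbing'' the defect is unjustified. The paper has the same gap, since it merely cites that lemma.

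A correct reduction is to take $\R^2$ to be a two-dimensional subalgebra of $(\R^3,\cdot)$. Then $\theta=0$ and $\nabla^0$ is the restricted product, hence flat. Such a subalgebra always exists, as one checks from the splitting $A=A_1\times N$ (with $A_1$ unital and $N$ nilpotent) given by a maximal idempotent. Once you have that splitting, though, it is far more economical to classify directly: take $A_1$ unital of dimension $3,2,1,0$ times a nilpotent algebra of complementary dimension. This gives the fifteen classes above without any quadratic systems, and it settles existence and uniqueness at once.
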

\begin{proof}
	Let $\nabla$ be a linear connection on $\R^3$,  viewed as $\R^2\oplus\R e_3$, with basis $\{e_1,e_2,e_3\}$. Then, $\nabla$ can be expressed as 
	\begin{align}\label{Connegeneral}
		\begin{split}
			\nabla_xy&=\nabla^0_xy+\theta(x,y)e_3,\\
			\nabla_xe_3&=\beta(x)+\gamma(x)e_3,\\
			\nabla_{e_3}x&=\beta(x)+\gamma(x)e_3,\\
			\nabla_{e_3}e_3&=\zeta+\lambda e_3,
		\end{split}
	\end{align}
	
	for all $x, y \in \R^2$, where $\zeta\in\R^2$,  $\theta \in \mathcal{S}^2(\R^2)$ is a symmetric form, $\beta, \eta: \R^2 \to \R^2$ are endomorphisms of $\R^2$, $\gamma: \R \to \R$ is a one-form, and $\nabla^0$ is a torsion-free connection on $\R^2$. The curvature tensor $\mathcal{R}^\nabla$ of $\nabla$ is given by
	\begin{align}\label{flatness-equations}
		\mathcal{R}^\nabla(x,y)z=\nabla_x\nabla_yz-\nabla_y\nabla_xz,\quad\quad\text{for all~} x,y,z\in\R^3.
	\end{align}
	The condition for $\nabla$ to be flat is $\mathcal{R}^\nabla = 0$. We will refer to the corresponding system of equations as the \textit{flatness-equations}. In the basis $\{e_1, e_2, e_3\}$, the operators $\nabla_{e_1}$, $\nabla_{e_2}$, and $\nabla_{e_3}$ are given respectively by the matrices:
	\begin{align}
		\nabla_{e_1}&=\left( \begin {array}{ccc} a_{11}&a_{12}&a_{{13}}\\ \noalign{\medskip}a_{21}&a_{22}&a_{{23}}\\ \noalign{\medskip}a_{{31}}&a_{{32}}&a_{{33}}\end {array}
		\right),&\nabla_{e_2}&=\left( \begin {array}{ccc} a_{12}&b_{12}&b_{{13}}\\ \noalign{\medskip}a_{22}&b_{22}&b_{{23}}\\ \noalign{\medskip}a_{{32}}&b_{{32}}&b_{{33}}\end {array}
		\right),&\nabla_{e_3}&=\left( \begin {array}{ccc} a_{{13}}&b_{{13}}&c_{{13}}
		\\ \noalign{\medskip}a_{{23}}&b_{{23}}&c_{{23}}
		\\ \noalign{\medskip}a_{{33}}&b_{{33}}&c_{{33}}\end {array}
		\right),
	\end{align}
	where, $a_{ij}, b_{ij}, c_{ij}\in\R$.

	According to Lemma~\ref{no flat to flat}, we may assume that $\nabla^0$ is flat, torsion-free. In this case, solving the flatness-equations is straightforward by considering the six cases of flat, torsion-free connections listed in Table~\ref{FlatR2}. We refer the reader to Appendix~\ref{App}  for the detailed proof of this classification.
\end{proof}
\begin{co}
	With the notations as above, among the flat Lie algebras on $3\G_{1}$, we have
	\begin{enumerate}
		\item[i)] Associative algebras$:$\hspace{0.275cm} $\h_{0,0}$, $\h_{0,2}$, $\h_{0,3}$, $\h_{0,4}$, $\h_{0,5}$, $\h_{0,6}$, $\h_{0,7}$, $\h_{0,8}$, $\h_{1,1}$, $\h_{1,2}$, $\h_{3,1}$, $\h_{4,1}$.
		\item[ii)] Novikov algebras$:$\hspace{0.73cm} $\h_{0,0}$, $\h_{0,2}$, $\h_{0,3}$, $\h_{0,4}$, $\h_{0,5}$, $\h_{0,6}$, $\h_{0,7}$, $\h_{0,8}$, $\h_{1,1}$, $\h_{1,2}$, $\h_{3,1}$, $\h_{4,1}$.
		\item[iii)] Bi-symmetric algebras$:$ $\h_{0,0}$, $\h_{0,2}$, $\h_{0,3}$, $\h_{0,4}$, $\h_{0,5}$, $\h_{0,6}$, $\h_{0,7}$, $\h_{0,8}$, $\h_{1,1}$, $\h_{1,2}$, $\h_{3,1}$, $\h_{4,1}$.
		\item[iv)] Complete algebras$:$\hspace{0.62cm}$\h_{0,0}$, $\h_{0,4}$, $\h_{0,6}$, $\h_{0,7}$.
	\end{enumerate}
\end{co}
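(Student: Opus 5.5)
The plan is to exploit the fact that the underlying Lie algebra $3\G_1$ is abelian. By \eqref{admi}, torsion-freeness then says $X\cdot Y=Y\cdot X$, so every flat Lie algebra on $3\G_1$ is a \emph{commutative} left-symmetric algebra. This gives $\mathrm{R}_X=\mathrm{L}_X$ for all $X$, and each of the four properties reduces to a statement about the left multiplications $\mathrm{L}_{e_1},\mathrm{L}_{e_2},\mathrm{L}_{e_3}$, which can be read off directly from Table~\ref{FlatR3}.

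First I will prove a structural lemma. Let $(\G,\cdot)$ be commutative and left-symmetric. Then
\[
(z,y,x)=(zy)x-z(yx)=x(yz)-(xy)z=-(x,y,z),
\]
while \eqref{repre} gives $(x,y,z)=(y,x,z)$. Alternating these two relations yields
\[
(x,y,z)=(y,x,z)=-(z,x,y)=-(x,z,y)=(y,z,x)=(z,y,x)=-(x,y,z),
\]
so the associator vanishes identically. Consequently every such algebra is associative, and it is bi-symmetric because both sides of $(X,Y,Z)=(X,Z,Y)$ are zero. It is also Novikov, since $\mathrm{R}_X\mathrm{R}_Y=\mathrm{L}_X\mathrm{L}_Y=\mathrm{L}_{XY}=\mathrm{L}_{YX}=\mathrm{R}_Y\mathrm{R}_X$. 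As a sanity check I will verify items i)--iii) entrywise on each row of Table~\ref{FlatR3} by computing the products $\mathrm{L}_{e_i}\mathrm{L}_{e_j}$ and comparing them with $\mathrm{L}_{e_i\cdot e_j}$.

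For item iv), completeness means that $\mathrm{L}_X$ is nilpotent for every $X$. Because the $\mathrm{L}_X$ commute, it suffices to check that $\mathrm{L}_{e_1},\mathrm{L}_{e_2},\mathrm{L}_{e_3}$ are each nilpotent, or equivalently that $\mathrm{tr}\,\mathrm{L}_X=0$ for all $X$ together with nilpotency of the generators. Any row containing a relation $e_j e_j=e_j+\cdots$ with nonzero diagonal contribution is immediately excluded. This rules out $\h_{0,1}$ (since $\mathrm{L}_{e_3}$ has $-1$ on the diagonal), $\h_{0,2}$, $\h_{0,3}$, $\h_{0,5}$, $\h_{0,8}$, and all the $\h_{k,\cdot}$ with $k\ge1$. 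For the remaining algebras $\h_{0,0}$, $\h_{0,4}$, $\h_{0,6}$ and $\h_{0,7}$, the generators are strictly triangular in a suitable ordering of the basis, hence nilpotent, and so these four are complete.

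The main obstacle is reconciling items i)--iii) with the structural lemma. The lemma forces $\h_{0,1}$, $\h_{1,3}$ and $\h_{2,1}$ to be associative, Novikov and bi-symmetric as well, yet they are absent from the stated lists. A direct check on $\h_{0,1}$, for instance $e_3(e_3e_3)=e_1-e_2+e_3=(e_3e_3)e_3$, confirms associativity there. My first step will therefore be to recompute these three rows by hand, checking both flatness and associativity. If they are genuinely flat, the lists in i)--iii) should be extended to include them; otherwise the error lies in Table~\ref{FlatR3}. Item iv) is unaffected either way.
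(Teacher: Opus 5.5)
Your argument is correct, and it takes a different, more conceptual route than the paper. The paper states this corollary with no argument beyond Table~\ref{FlatR3}, so the implied method is a row-by-row check of associators, of $\mathrm{R}_X\mathrm{R}_Y-\mathrm{R}_Y\mathrm{R}_X$, and of nilpotency of the $\mathrm{R}_X$.

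Your key step replaces that check. Torsion-freeness on the abelian algebra $3\G_1$ forces commutativity, and a commutative left-symmetric algebra has identically vanishing associator. This settles i)--iii) for every row at once. For ii) you do not even need associativity: $\mathrm{R}_X=\mathrm{L}_X$, and flatness gives $[\mathrm{L}_X,\mathrm{L}_Y]=\mathrm{L}_{[X,Y]}=0$.

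The uniform argument also exposes exactly the discrepancy you flagged, and your suspicion is justified. The rows $\h_{0,1}$, $\h_{1,3}$ and $\h_{2,1}$ are genuinely flat; in each one you can check $[\mathrm{L}_{e_i},\mathrm{L}_{e_j}]=0$ directly. For $\h_{0,1}$, $\mathrm{L}_{e_1}=0$ and $\mathrm{L}_{e_2}\mathrm{L}_{e_3}=\mathrm{L}_{e_3}\mathrm{L}_{e_2}=0$. The row $\h_{1,3}$ is just the commutative associative algebra $\R\oplus\R[t]/(t^2-1)$. So these three are associative, Novikov and bi-symmetric as well. Lists i)--iii) are therefore correct only as lists of examples. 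Read as exhaustive, they should contain all fifteen entries of Table~\ref{FlatR3}, and your proof establishes that stronger statement.

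For iv), your conclusion matches the stated list. Two small clean-ups:
\begin{itemize}
\item The exclusion criterion should be ``$\mathrm{tr}\,\mathrm{L}_{e_j}\neq0$ for some $j$'' rather than ``nonzero diagonal contribution''. This holds in every excluded row.
\item Your ``or equivalently'' clause is redundant as written. What your lemma actually gives is $\mathrm{L}_{X\cdot Y}=\mathrm{L}_X\mathrm{L}_Y$, hence $\mathrm{tr}\,\mathrm{L}_X^k=\mathrm{tr}\,\mathrm{L}_{X^k}$. So $\mathrm{tr}\,\mathrm{L}_X=0$ for all $X$ is by itself equivalent to completeness, and you need not check nilpotency of the generators separately.
\end{itemize}
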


\subsubsection{Flat Lie algebra $2\G_{2,1}\oplus\G_1$}

\begin{Le}\label{Torsionfreeaffine}
Let $\nabla^0$ be a torsion-free connection on $\mathfrak{aff}(1,\R)$. Then $\nabla^0$ is equivalent to either one of the flat, torsion-free connections on $\mathfrak{aff}(1,\R)$ given in Table~$\ref{Flataffine}$, or to one of the following no flat torsion-free connections$:$
\begin{small}
\[
\begin{alignedat}{4}
	\nabla^{0,1}_{e_1}e_1&=\mu e_1+\varepsilon e_2,\quad\quad &\nabla^{0,1}_{e_1}e_2&=\lambda e_1,\quad\quad &\nabla^{0,1}_{e_2}e_1&=(\lambda-1) e_1,\quad\quad  &\nabla^{0,1}_{e_2}e_2&=\nu e_1+\eta e_2,\\
	 \nabla^{0,2}_{e_1}e_1&=\mu_1 e_1,\quad\quad &\nabla^{0,2}_{e_1}e_2&= e_2, \quad\quad&\nabla^{0,2}_{e_2}e_1&=-e_1+e_2, \quad\quad &\nabla^{0,2}_{e_2}e_2&=\nu_1 e_1+\eta_1 e_2,\\
	 \nabla^{0,3}_{e_1}e_1&=e_1,\quad\quad &\nabla^{0,3}_{e_1}e_2&=0,\quad\quad &\nabla^{0,3}_{e_2}e_1&=- e_1,\quad\quad  &\nabla^{0,3}_{e_2}e_2&=\nu_2 e_1+\eta_2 e_2,\\
	 \nabla^{0,4}_{e_1}e_1&=e_1,\quad\quad &\nabla^{0,4}_{e_1}e_2&=\lambda_3 e_1+e_2,\quad\quad &\nabla^{0,4}_{e_2}e_1&=(\lambda_3-1) e_1, \quad\quad &\nabla^{0,4}_{e_2}e_2&=\nu_3 e_1,\\
	 \nabla^{0,5}_{e_1}e_1&=0,\quad\quad &\nabla^{0,5}_{e_1}e_2&=\lambda_4 e_1,\quad\quad &\nabla^{0,5}_{e_2}e_1&=(\lambda_4-1) e_1, \quad\quad &\nabla^{0,5}_{e_2}e_2&=\delta e_1+\eta_4 e_2,
\end{alignedat}
\]
\end{small}
where the parameters associated with the above connections satisfy the assumptions that $\delta=0$ whenever $\eta_4-2\lambda_4+1\neq0$, and
\[
\lambda_4(\lambda_4-\eta_4)\neq 0,
\qquad
(\mu,\lambda,\nu,\eta)\neq(0,0,0,-2).
\]
\end{Le}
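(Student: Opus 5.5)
We follow the template of the proof of Lemma~\ref{ClassitorsionfreeR}. First we write a general torsion-free connection on $\mathfrak{aff}(1,\R)=\langle e_1,e_2\mid [e_1,e_2]=e_1\rangle$ in terms of six free parameters:
\[
\nabla_{e_1}e_1=a e_1+b e_2,\qquad \nabla_{e_1}e_2=c e_1+d e_2,\qquad \nabla_{e_2}e_1=(c-1)e_1+d e_2,\qquad \nabla_{e_2}e_2=p e_1+q e_2 .
\]
The torsion condition gives $\nabla_{e_2}e_1=\nabla_{e_1}e_2-e_1$, which is why only six parameters appear. Next we recall that
\[
\mathrm{Aut}(\mathfrak{aff}(1,\R))=\{\Phi:\ e_1\mapsto x e_1,\ e_2\mapsto y e_1+e_2,\ x\neq0\}.
\]
We compute the transformed coefficients of $\Phi\circ\nabla_{\Phi^{-1}(\cdot)}\circ\Phi^{-1}$ as rational functions of $(x,y)$. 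The key observation is that the coefficient $b$ of $e_2$ in $\nabla_{e_1}e_1$ transforms as $b\mapsto b/x$. Hence $b$ can be normalized to $\varepsilon$ whenever $b\neq0$; indeed, with $x$ free of sign constraints, even to $1$. The same dichotomy then drives the case split.

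\textbf{Case $b\neq0$.} We use $y$ to kill one further coefficient. The translation $e_2\mapsto e_2+ye_1$ shifts $d$ by a multiple of $b y$, so we can arrange $d=0$. What remains is the family $\nabla^{0,1}$, with parameters $(\mu,\lambda,\nu,\eta)$.

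\textbf{Case $b=0$, $d\neq0$.} The $y$-translation acts on $c$ (roughly $c\mapsto c-dy$, up to the exact formula), and $x$ rescales $d$. If $d\neq 1$ in some invariant sense, we expect to reach $\nabla^{0,2}$, where $\nabla_{e_2}e_1=-e_1+e_2$, i.e.\ $c=0$, $d=1$. Otherwise we land in $\nabla^{0,4}$. Note that $d$ is actually invariant under $x$, since $e_2$ is fixed modulo $e_1$. So the split should really be governed by whether $d=1$ or whether $d$ can be normalized via the combination with $a$. This point needs care, and we would follow the actual transformation formulas to decide it.

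\textbf{Case $b=d=0$.} Here $\nabla_{e_1}$ and $\nabla_{e_2}$ preserve $\langle e_1\rangle$. The translation acts on $a$ through $c$, and $x$ rescales $a$, so either $a\neq0$, in which case we normalize $a=1$ and use $y$ to set $c=0$, giving $\nabla^{0,3}$ (or $\nabla^{0,4}$ when additionally $d$ survives); or $a=0$, giving $\nabla^{0,5}$. In $\nabla^{0,5}$ the coefficient $p=\delta$ of $e_1$ in $\nabla_{e_2}e_2$ transforms by a term proportional to $(\eta_4-2\lambda_4+1)y$. This explains the stated normalization: $\delta=0$ when that quantity is nonzero, and otherwise $\delta$ is scaled by $x$ to $0$ or $1$.

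In every branch we then impose flatness, i.e.\ the vanishing of $R(e_1,e_2)e_1$ and $R(e_1,e_2)e_2$, which amounts to four polynomial equations in the normalized parameters. We identify the flat members with entries of Table~\ref{Flataffine}, which is already known by Proposition~\ref{Pr 2flat}. For instance, $(\mu,\lambda,\nu,\eta)=(0,0,0,-2)$ in $\nabla^{0,1}$ is exactly $\mathfrak a_5$, and $\lambda_4(\lambda_4-\eta_4)=0$ in $\nabla^{0,5}$ recovers $\mathfrak a_1$ and $\mathfrak a_2$. Excluding these flat members yields the stated inequalities. The main obstacle is the bookkeeping in the $b=0$ branch, namely determining exactly which coefficients the one-parameter translation $y$ can kill and which combinations are genuine invariants. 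This is where the conditions on $\delta$ and the split between $\nabla^{0,2}$, $\nabla^{0,3}$ and $\nabla^{0,4}$ originate. We would handle it by writing the transformed connection explicitly, as in \eqref{Auttcon}, and reading off the invariants before normalizing.
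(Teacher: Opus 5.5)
Your strategy is the paper's. Your $a,b,c,d,p,q$ are its $a_{11},a_{21},a_{12},a_{22},b_{12},b_{22}$. You act by $\Phi(e_1)=xe_1$, $\Phi(e_2)=ye_1+e_2$, split first on $b$ and then on $a,d$, and discard the flat members using Table~\ref{Flataffine}. However, two of the transformation laws your case split rests on are wrong, and the branch that yields $\nabla^{0,2}$ and $\nabla^{0,4}$ is left unresolved. For $\nabla'=\Phi\circ\nabla_{\Phi^{-1}(\cdot)}\circ\Phi^{-1}$ and $s=y/x$ one finds
\begin{align*}
a'&=\tfrac{a+sb}{x}, & b'&=\tfrac{b}{x^{2}}, & c'&=c-s(a-d)-s^{2}b,\\
d'&=\tfrac{d-sb}{x}, & q'&=q-2sd+s^{2}b, & p'&=x\bigl(p+s(q-2c+1)+s^{2}(a-2d)+s^{3}b\bigr).
\end{align*}
So $b\mapsto b/x^{2}$, not $b/x$. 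The sign of $b$ is therefore an invariant: $b$ can be normalized only to $\varepsilon=\pm1$, and your claim that it can be made equal to $1$ is false. This is why the paper takes $x=\sqrt{\varepsilon a_{21}}/\varepsilon$, and why $\mathfrak{a}_5$, the flat member of this family, keeps $\varepsilon=\pm1$. The rest of your $b\neq0$ branch is correct: $s=d/b$ kills $d$.

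In the branch $b=0$, both of your heuristics fail. First, $d$ is not invariant under $x$: it is an entry of $\nabla_{e_1}$, and $d'=d/x$. Second, the translation moves $c$ by $s(a-d)$, not by a multiple of $d$. The missing observation is that the relevant dichotomy is $a\neq d$ versus $a=d$, not whether $d=1$; here, for $d\neq0$, the ratio $a/d$ is the invariant. The three subcases then go as follows.
\begin{itemize}
\item If $d\neq0$ and $a\neq d$, take $x=d$, $s=c/(a-d)$; this gives $\nabla^{0,2}$ with $\mu_1=a/d$.
\item If $a=d\neq0$, then $c=\lambda_3$ is invariant, and $x=d$, $s=q/(2d)$ normalize $a=d=1$ and kill $q$, giving $\nabla^{0,4}$.
\item If $d=0\neq a$, then $x=a$, $s=c/a$ give $\nabla^{0,3}$.
\end{itemize}
In particular $\nabla^{0,4}$ has $d=1$ and cannot arise from your branch $b=d=0$. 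Doing the case $a=d$ correctly also gives $\nabla^{0,4}_{e_2}e_1=(\lambda_3-1)e_1+e_2$, as in the paper's matrix for this case. The displayed statement drops the $e_2$ term, and as printed that connection is not torsion-free. Your handling of $\nabla^{0,5}$ through $p'=x\bigl(p+s(\eta_4-2\lambda_4+1)\bigr)$, and of the flatness conditions, is fine.
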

\begin{proof}
Let $\{e_1,e_2\}$ be a basis of $\mathfrak{aff}(1,\mathbb{R})$, and let $\nabla$ be a non-flat, torsion-free connection on $\mathfrak{aff}(1,\mathbb{R})$. Then, with respect to the basis $\{e_1,e_2\}$, $\nabla$ is given by
\begin{equation}\label{conaff1}
	\nabla_{e_1}=\begin{pmatrix}
		a_{11}&a_{12}\\
		a_{21}&a_{22}
	\end{pmatrix},
	\qquad
	\nabla_{e_2}=\begin{pmatrix}
		-1+a_{12}&b_{12}\\
		a_{22}&b_{22}
	\end{pmatrix}.
\end{equation}
Recall that two torsion-free connections $\nabla^1$ and $\nabla^2$ on a Lie algebra $\mathfrak{g}$ are isomorphic if and only if there exists an automorphism $\Psi : \mathfrak{g} \to \mathfrak{g}$ such that
\begin{align}\label{eqiso2torsion}
	\nabla^2_x = \Psi \circ \nabla^1_{\Psi^{-1}(x)} \circ \Psi^{-1}, \quad \text{for all } x \in \mathfrak{g}.
\end{align}
An automorphism of $\mathfrak{aff}(1,\mathbb{R})$ is given by
\begin{align}
	\Psi=\begin{pmatrix}
		x_{11}&x_{12}\\
		0&1
	\end{pmatrix},\qquad x_{11}\neq0.
\end{align}
Assume that $a_{21} \neq 0$, and consider the following automorphism
\begin{align*}
	\Psi(e_1) &= \tfrac{\sqrt{\varepsilon\, a_{21}}}{\varepsilon}\, e_1, &
	\Psi(e_2) &= \tfrac{a_{22}\sqrt{\varepsilon\, a_{21}}}{\varepsilon\, a_{21}}\, e_1 + e_2, \qquad \varepsilon = \pm 1.
\end{align*}
Applying it to the connection given in \eqref{conaff1} yields the following equivalent connection:
\begin{equation}\label{aff1}
	\nabla^1_{e_1}=\begin{pmatrix}
		\mu&\lambda\\
		\varepsilon&0
	\end{pmatrix},
	\qquad
	\nabla^1_{e_2}=\begin{pmatrix}
		\lambda-1&\nu\\
		0&\eta
	\end{pmatrix},\qquad \lambda,\mu,\nu,\eta\in\R, ~~\varepsilon=\pm1.
\end{equation}
This connection is flat if and only if $\mu = \lambda = \nu = 0$ and $\eta = -2$. We can also verify that $\nabla^{\mu,\eta,\lambda,\nu} \cong \nabla^{\mu,\eta,-\lambda,-\nu}$. Thus, $\nabla^{\mu,\eta,\lambda,\nu}$ is defined as in \eqref{aff1} under the assumptions $\lambda, \nu \in \mathbb{R}^+$.

Assume now that $a_{21} = 0$ and $(a_{11} - a_{22})a_{22} \neq 0$, and consider the following automorphism
\begin{align*}
	\Psi(e_1) &= a_{22}\, e_1, &
	\Psi(e_2) &= \tfrac{a_{12}a_{22}}{a_{11}-a_{22}}\, e_1 + e_2,
\end{align*}
 Applying it to the connection \eqref{conaff1} yields the following equivalent connection:
\begin{equation}\label{aff2}
	\nabla^2_{e_1}=\begin{pmatrix}
		\mu_1&0\\
		0&1
	\end{pmatrix},
	\qquad
	\nabla^2_{e_2}=\begin{pmatrix}
		-1&\nu_1\\
		1&\eta_1
	\end{pmatrix},\qquad \mu_1,\nu_1,\eta_1\in\R.
\end{equation}

If $a_{21} = 0$, $a_{11} \neq 0$, and $a_{22} = 0$, consider the following automorphism
\begin{align*}
	\Psi(e_1) &= a_{11}\, e_1, &
	\Psi(e_2) &=a_{12}e_1 + e_2.
\end{align*}
Applying it to the connection given in \eqref{conaff1} yields the following equivalent connection:
\begin{equation}\label{aff3}
	\nabla^3_{e_1}=\begin{pmatrix}
		1&0\\
		0&0
	\end{pmatrix},
	\qquad
	\nabla^3_{e_2}=\begin{pmatrix}
		-1&\nu_2\\
		0&\eta_2
	\end{pmatrix},\qquad \nu_2,\eta_2\in\R.
\end{equation}

If $a_{21} = 0$, $a_{11}=a_{22}$, and $a_{22} \neq 0$, consider the following automorphism
\begin{align*}
	\Psi(e_1) &= a_{22}\, e_1, &
	\Psi(e_2) &=\tfrac{b_{22}}{2} e_1 + e_2.
\end{align*}
Applying it to the connection given in \eqref{conaff1} yields the following equivalent connection:
\begin{equation}\label{aff4}
	\nabla^4_{e_1}=\begin{pmatrix}
		1&\lambda_3\\
		0&1
	\end{pmatrix},
	\qquad
	\nabla^4_{e_2}=\begin{pmatrix}
		-1+\lambda_3&\nu_3\\
		1&0
	\end{pmatrix},\qquad \lambda_3,\nu_3\in\R.
\end{equation}

If $a_{21} = 0$ and $a_{11} = a_{22} = 0$, consider the following automorphism
\begin{align*}
	\Psi(e_1) &= e_1, &
	\Psi(e_2) &= x\, e_1 + e_2,
\end{align*}
where $x \in \mathbb{R}^\ast$. Applying it, for a suitable choice of $x$, to the connection given in \eqref{conaff1} yields the following equivalent connection:
\begin{equation}\label{aff4}
	\nabla^4_{e_1}=\begin{pmatrix}
		0&\lambda_4\\
		0&0
	\end{pmatrix},
	\qquad
	\nabla^4_{e_2}=\begin{pmatrix}
		-1+\lambda_4&\delta\\
		0&\eta_4
	\end{pmatrix},\qquad \lambda_4,\eta_4\in\R,~~\delta=0,1.
\end{equation}
This connection is flat if and only if $\lambda_4(\eta_4-\lambda_4)=0$. A straightforward computation shows that $\delta=0$ whenever $\eta_4-2\lambda_4+1\neq0$.

\end{proof}
\begin{pr}
Let $(\G, \nabla)$ be a three-dimensional real flat  Lie algebra with $\G = 2\G_{2,1}\oplus\G_1$. Then $(\G, \nabla)$ is isomorphic to exactly one of the flat Lie algebras listed in Table~$\ref{g2}$.
{\renewcommand*{\arraystretch}{1.8}
\captionof{table}{Flat torsion-free connection on the Lie algebra $3\G_{2,1}\oplus\G_1$.}
\setcounter{table}{3}
\begin{footnotesize} 
\setlength{\tabcolsep}{1pt} 
\begin{longtable}{@{}cllllllc@{}}
			\hline
		Flat algebra&\multicolumn{2}{@{}l@{}}{~~Flat torsion-free connection}&&&&Remarks & \\
			\hline
$\h_{1,1}$&$\nabla_{e_1}e_2=\lambda e_1$&$\nabla_{e_2}e_1=(\lambda-1)e_1$&$\nabla_{e_2}e_2=\lambda e_2$&$\nabla_{e_3}e_3=\delta e_3$&&$\lambda\in\R$, $\delta=0,1$&\\		
$\h_{1,2}$&$\nabla_{e_1}e_2=-e_1+e_3$&$\nabla_{e_2}e_1=-2e_1+e_3$&$\nabla_{e_2}e_2=-e_2$&$\nabla_{e_2}e_3=-e_3$&$\nabla_{e_3}e_2=-e_3$&&\\
$\h_{1,3}$&$\nabla_{e_1}e_2=e_1$&$\nabla_{e_2}e_2=e_2$&$\nabla_{e_3}e_3= e_1+ \delta e_3$&&&$\delta=0,1$&\\
$\h_{1,4}$&$\nabla_{e_1}e_1=\varepsilon_1 e_3$&$\nabla_{e_1}e_2=2 e_1$&$\nabla_{e_2}e_1=e_1$&$\nabla_{e_2}e_2=2e_2$&&$\varepsilon_1=\pm1$&\\
$\h_{1,5}$&$\nabla_{e_1}e_2=\lambda e_1$&$\nabla_{e_2}e_1=(\lambda-1)e_1$&$\nabla_{e_2}e_2=\lambda e_2+\delta e_3$&$\nabla_{e_2}e_3=\lambda e_3$&$\nabla_{e_3}e_2=\lambda e_3$&$\lambda\in\R$, $\delta=0,1$&\\
$\h_{1,6}$&$\nabla_{e_1}e_2=e_1$&$\nabla_{e_2}e_2=e_2+\delta e_3$&$\nabla_{e_2}e_3=e_1+e_3$&$\nabla_{e_3}e_2=e_1+e_3$&&$\delta=0,1$&\\
$\h_{1,7}$&$\nabla_{e_1}e_2=-e_1$&$\nabla_{e_1}e_3=e_1$&$\nabla_{e_2}e_1=-2e_1$&$\nabla_{e_2}e_2=-e_2$&$\nabla_{e_2}e_3=e_2$&&\\
&$\nabla_{e_3}e_1=e_1$&$\nabla_{e_3}e_2=e_2$&$\nabla_{e_3}e_3=e_3$&&&&\\
$\h_{1,8}$&$\nabla_{e_2}e_1=-e_1$&$\nabla_{e_2}e_2=\lambda_1 e_3$&$\nabla_{e_2}e_3=e_2$&$\nabla_{e_3}e_2=e_2$&$\nabla_{e_3}e_3=e_3$&$\lambda_1\in\R$&\\
$\h_{1,9}$&$\nabla_{e_1}e_2=\lambda e_1$&$\nabla_{e_1}e_3=e_1$&$\nabla_{e_2}e_1=(\lambda-1)e_1$&$\nabla_{e_2}e_2=\lambda e_2$&$\nabla_{e_2}e_3=e_2$&$\lambda\in\R,\lambda_1\in\R^\ast$&\\
&$\nabla_{e_3}e_1=e_1$&$\nabla_{e_3}e_2=e_2$&$\nabla_{e_3}e_3=\lambda_1 e_2+(1-\lambda\lambda_1)e_3$&&&&\\
$\h_{1,10}$&$\nabla_{e_2}e_1=-e_1$&$\nabla_{e_3}e_3=\varepsilon e_2+\lambda_1 e_3$&&&&$\varepsilon=\pm1$, $\lambda_1\in\R$&\\
$\h_{1,11}$&$\nabla_{e_1}e_2=e_1$&$\nabla_{e_1}e_3=e_1$&$\nabla_{e_2}e_2=e_2$&$\nabla_{e_2}e_3=e_2$&$\nabla_{e_3}e_1=e_1$&&\\
&$\nabla_{e_3}e_2=e_2$&$\nabla_{e_3}e_3=e_1+e_3$&&&&&\\
$\h_{1,12}$&$\nabla_{e_1}e_3=e_1$&$\nabla_{e_2}e_1=-e_1$&$\nabla_{e_2}e_3=e_2$&$\nabla_{e_3}e_1=e_1$&$\nabla_{e_3}e_2=e_2$&&\\
&$\nabla_{e_3}e_3=\lambda_1 e_2+e_3$&&&&&$\lambda_1\in\R$&\\
$\h_{1,13}$&$\nabla_{e_1}e_3=e_1$&$\nabla_{e_2}e_1=-e_1$&$\nabla_{e_3}e_1=e_1$&$\nabla_{e_3}e_3=\lambda_2 e_2+e_3$&&$\lambda_2\in\R$&\\
\\\hline
$\h_{2,1}$&$\nabla_{e_2}e_1=-e_1$&$\nabla_{e_2}e_2=\mu e_2$&$\nabla_{e_3}e_3=\delta e_3$&&&$\mu\in\R^\ast$, $\delta=0,1$&\\
$\h_{2,2}$&$\nabla_{e_2}e_1=-e_1$&$\nabla_{e_2}e_2=\mu e_2$&$\nabla_{e_2}e_3=\mu e_3$&$\nabla_{e_3}e_2=\mu e_3$&&$\mu\in\R^\ast$&\\
$\h_{2,3}$&$\nabla_{e_2}e_1=-e_1$&$\nabla_{e_2}e_2=- e_2$&$\nabla_{e_2}e_3=- e_3$&$\nabla_{e_3}e_2=- e_3$&&&\\
$\h_{2,4}$&$\nabla_{e_2}e_1=-e_1$&$\nabla_{e_2}e_2=- e_2$&$\nabla_{e_2}e_3=-e_3$&$\nabla_{e_3}e_2=- e_3$&$\nabla_{e_3}e_3=e_1+\delta e_3$&$\delta=0,1$&\\
$\h_{2,5}$&$\nabla_{e_1}e_1=\varepsilon_1 e_3$&$\nabla_{e_2}e_1=-e_1$&$\nabla_{e_2}e_2=- 2e_2$&$\nabla_{e_2}e_3=-2e_3$&$\nabla_{e_3}e_2=- 2e_3$&$\varepsilon_1=\pm1$&\\
$\h_{2,6}$&$\nabla_{e_1}e_2= e_3$&$\nabla_{e_2}e_1=-e_1+e_3$&$\nabla_{e_2}e_2=e_2$&&&&\\
$\h_{2,7}$&$\nabla_{e_1}e_3= e_1$&$\nabla_{e_2}e_1=-e_1$&$\nabla_{e_2}e_2=\mu e_2$&$\nabla_{e_3}e_1=e_1$&$\nabla_{e_3}e_3=e_3$&$\mu\in\R^\ast$&\\
$\h_{2,8}$&$\nabla_{e_1}e_3=\delta_1 e_1+\delta_2e_2$&$\nabla_{e_2}e_1=-e_1$&$\nabla_{e_2}e_2=-e_2$&$\nabla_{e_3}e_1=\delta_1 e_1+\delta_2e_2$&$\nabla_{e_3}e_3=\delta_1 e_3$&$\delta_1^2+\delta_2^2\neq0$, $\delta_1,\delta_2=0,1$&\\
$\h_{2,9}$&$\nabla_{e_2}e_1=-e_1$&$\nabla_{e_2}e_2=\mu e_2$&$\nabla_{e_2}e_3=\mu e_3$&$\nabla_{e_3}e_2=\mu e_3$&$\nabla_{e_3}e_3=\varepsilon e_2+\mu_1 e_3$&$\varepsilon=\pm1$, $\mu\in\R^\ast,\mu_1\in\R$&\\
$\h_{2,10}$&$\nabla_{e_1}e_3=e_1$&$\nabla_{e_2}e_1=-e_1$&$\nabla_{e_2}e_2=\mu e_2$&$\nabla_{e_2}e_3=\mu_1e_2$&$\nabla_{e_3}e_1=e_1$&$\mu\in\R^\ast,\mu_1\in\R$&\\
&$\nabla_{e_3}e_2=\mu_1e_2$&$\nabla_{e_3}e_3=\frac{\mu_1(\mu_1-1)}{\mu}e_2+e_3$&&&&&\\
$\h_{2,11}$&$\nabla_{e_2}e_1=-e_1$&$\nabla_{e_2}e_2=\mu e_2$&$\nabla_{e_2}e_3= e_2$&$\nabla_{e_3}e_2= e_2$&$\nabla_{e_3}e_3=\frac{1}{\mu}e_2$&$\mu\in\R^\ast$&\\
$\h_{2,12}$&$\nabla_{e_2}e_1=-e_1$&$\nabla_{e_2}e_2=-e_2$&$\nabla_{e_2}e_3=e_2$&$\nabla_{e_3}e_2=e_2$&$\nabla_{e_3}e_3=e_3$&&\\
$\h_{2,13}$&$\nabla_{e_2}e_1=-e_1$&$\nabla_{e_2}e_2=-e_2$&$\nabla_{e_2}e_3=e_1$&$\nabla_{e_3}e_2=e_1$&&&\\
$\h_{2,14}$&$\nabla_{e_1}e_3=e_1$&$\nabla_{e_2}e_1=-e_1$&$\nabla_{e_2}e_2=-e_2$&$\nabla_{e_2}e_3=e_1+e_2$&$\nabla_{e_3}e_1=e_1$&&\\
&$\nabla_{e_3}e_2=e_1+e_2$&$\nabla_{e_3}e_3=-e_1+e_3$&&&&&\\
$\h_{2,15}$&$\nabla_{e_2}e_1=-e_1$&$\nabla_{e_2}e_2=-e_2+\mu_2e_3$&$\nabla_{e_2}e_3=e_2$&$\nabla_{e_3}e_2=e_2$&$\nabla_{e_3}e_3=e_3$&$\mu_2\in\R^\ast$&\\
$\h_{2,16}$&$\nabla_{e_2}e_1=-e_1$&$\nabla_{e_2}e_2=-e_2+\delta e_3$&$\nabla_{e_2}e_3=e_1$&$\nabla_{e_3}e_2=e_1$&&$\delta=0,1$&
\\\hline
$\h_{3,1}$&$\nabla_{e_2}e_1=-e_1$&$\nabla_{e_2}e_2=e_1- e_2$&$\nabla_{e_3}e_3=\delta e_3$&&&$\delta=0,1$&\\
$\h_{3,2}$&$\nabla_{e_2}e_1=-e_1$&$\nabla_{e_2}e_2=e_1- e_2$&$\nabla_{e_2}e_3=-e_3$&$\nabla_{e_3}e_2=- e_3$&&&\\
$\h_{3,3}$&$\nabla_{e_2}e_1=-e_1$&$\nabla_{e_2}e_2=e_1- e_2$&$\nabla_{e_2}e_3=-e_3$&$\nabla_{e_3}e_2=- e_3$&$\nabla_{e_3}e_3=\lambda_1e_1+e_3$&$\lambda_1\in\R^\ast$&\\
$\h_{3,4}$&$\nabla_{e_2}e_1=-e_1$&$\nabla_{e_2}e_2=e_1- e_2$&$\nabla_{e_2}e_3=-e_3$&$\nabla_{e_3}e_2=- e_3$&$\nabla_{e_3}e_3=\varepsilon_1 e_1$&$\varepsilon_1=\pm1$&\\
$\h_{3,5}$&$\nabla_{e_1}e_3=e_1$&$\nabla_{e_2}e_1=-e_1$&$\nabla_{e_2}e_2=e_1-e_2$&$\nabla_{e_2}e_3=e_2$&$\nabla_{e_3}e_1=e_1$&&\\
&$\nabla_{e_3}e_2=e_2$&$\nabla_{e_3}e_3=e_3$&&&&&\\
$\h_{3,6}$&$\nabla_{e_1}e_3=e_1$&$\nabla_{e_2}e_1=-e_1$&$\nabla_{e_2}e_2=e_1-e_2$&$\nabla_{e_2}e_3=\lambda_2 e_1+e_2$&$\nabla_{e_3}e_1=e_1$&$\lambda_2\in\R^\ast$&\\
&$\nabla_{e_3}e_2=\lambda_2e_1+e_2$&$\nabla_{e_3}e_3=-\lambda_2e_1+e_3$&&&&&
\\\hline
$\h_{4,1}$&$\nabla_{e_1}e_2=e_1$&$\nabla_{e_2}e_2=e_1+e_2$&$\nabla_{e_3}e_3=\delta e_3$&&&$\delta=0,1$&\\
$\h_{4,2}$&$\nabla_{e_1}e_2=e_1$&$\nabla_{e_2}e_2=e_1+e_2$&$\nabla_{e_2}e_3=e_3$&$\nabla_{e_3}e_2=e_3$&&&\\
$\h_{4,3}$&$\nabla_{e_1}e_2=e_1$&$\nabla_{e_2}e_2=e_1+e_2$&$\nabla_{e_3}e_3=\lambda_1 e_1+ e_3$&&&$\lambda_1\in\R^\ast$&\\
$\h_{4,4}$&$\nabla_{e_1}e_2=e_1$&$\nabla_{e_2}e_2=e_1+e_2$&$\nabla_{e_3}e_3=\varepsilon_1 e_1$&&&$\varepsilon=\pm1$&\\
$\h_{4,5}$&$\nabla_{e_1}e_2=e_1$&$\nabla_{e_1}e_3=e_1$&$\nabla_{e_2}e_2=e_1+e_2$&$\nabla_{e_2}e_3=e_2$&$\nabla_{e_3}e_1=e_1$&$\lambda_1\in\R^\ast$&\\
&$\nabla_{e_3}e_2=e_2$&$\nabla_{e_3}e_3=\lambda_1 e_1+e_3$&&&&&\\
$\h_{4,6}$&$\nabla_{e_1}e_2=e_1$&$\nabla_{e_2}e_2=e_1+e_2$&$\nabla_{e_3}e_3=\varepsilon e_1+ \lambda_2e_3$&&& $\varepsilon=\pm1$, $\lambda_2\in\R, \lambda_2\neq\pm1$&
\\\hline
$\h_{5,1}$&$\nabla_{e_1}e_1=\varepsilon e_2$&$\nabla_{e_2}e_1=-e_1$&$\nabla_{e_2}e_2=-2e_2$&$\nabla_{e_3}e_3=\delta e_3$&&$\delta=0,1$&\\
$\h_{5,2}$&$\nabla_{e_1}e_1=\varepsilon e_2$&$\nabla_{e_1}e_3=e_1$&$\nabla_{e_2}e_1=-e_1$&$\nabla_{e_2}e_2=-2e_2$&$\nabla_{e_2}e_3=e_2$&$\varepsilon=\pm1$&\\
&$\nabla_{e_3}e_1=e_1$&$\nabla_{e_3}e_2=e_2$&$\nabla_{e_3}e_3=e_3$&&&&
\\\hline		
			\end{longtable}
			\label{g2} 
			\end{footnotesize}	
			}

\end{pr}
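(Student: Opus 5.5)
We view $\G=\G_{2,1}\oplus\G_1$ as $\R e_3\ltimes\G_0$ with $\G_0=\mathfrak{aff}(1,\R)=\langle e_1,e_2\rangle$, $[e_1,e_2]=e_1$, and $\ell=e_3$ acting trivially, so $\mathrm D=0$. A torsion-free connection then has the form \eqref{Connetorsionfre} with $\eta=\beta$. Flatness is equivalent to conditions $(i)$--$(vi)$ of the curvature lemma, which simplify considerably when $\mathrm D=0$. For the equivalence we use Remark~\ref{autodirectsum}: automorphisms are $\Psi(x)=\Phi(x)+\sigma(x)e_3$ and $\Psi(e_3)=u+\tau e_3$, with $\Phi\in\mathrm{Aut}(\mathfrak{aff}(1,\R))$, $\sigma\in Z^1(\G_0)$ (so $\sigma$ is a multiple of $e^2$), and $u\in Z(\G_0)=0$. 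Hence $u=0$ always, and Lemma~\ref{isoofconnection} gives the concrete normalization conditions.

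First, we reduce to a flat $\nabla^0$, in the spirit of Lemma~\ref{no flat to flat}. By $(i)$, $\mathcal K^{\nabla^0}(x,y)z=\theta(x,z)\beta(y)-\theta(y,z)\beta(x)$. Comparing with the non-flat normal forms $\nabla^{0,1},\dots,\nabla^{0,5}$ of Lemma~\ref{Torsionfreeaffine}, we would show that either these are incompatible with $(i)$--$(vi)$, or that a change of splitting moves the defect into $\theta$. The change of splitting uses $\sigma=s\,e^2$, replacing $e_2$ by $e_2+se_3$, and this alters $\nabla^0$ by terms $s\,\beta$, $s\,\zeta$. If a genuinely non-flat $\nabla^0$ survives, it is handled directly. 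We then run the case analysis over the five flat connections $\mathfrak a_1,\dots,\mathfrak a_5$ of Table~\ref{Flataffine}. This produces the five blocks $\h_{1,\cdot},\dots,\h_{5,\cdot}$.

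For each $\mathfrak a_j$ we solve in a fixed order:
\begin{enumerate}
\item Condition $(ii)$, linear in $\theta$ given $\gamma$, together with $(vi)$: this determines $\theta,\gamma,\lambda$.
\item Condition $(v)$, which with $\eta=\beta$ says $\beta$ is a ``derivation-like'' map, $\nabla^0_x\beta(y)-\beta(\nabla^0_xy)=\theta(x,y)\zeta-\gamma(y)\beta(x)$: this fixes $\beta$ up to few parameters.
\item Conditions $(iii)$ and $(iv)$: these determine $\zeta$.
\end{enumerate}
Typically $\gamma$ and $\beta$ are forced to be $0$ or proportional to the identity/radiant-type maps. This explains entries such as $\nabla_{e_3}e_j=e_j$ and $\nabla_{e_2}e_3=\mu e_3$.

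Next we normalize with the residual group: the stabilizer of $\nabla^0$ in $\mathrm{Aut}(\G_0)$, together with $\tau$ and $\sigma$. Here we use Remark~\ref{sympleofauto} ($\theta\mapsto\tau^{-1}\Phi^*\theta$, $\beta\mapsto\tau\Phi^{-1}\beta\Phi$, $\zeta\mapsto\tau^2\Phi^{-1}\zeta$, $\lambda\mapsto\tau\lambda$) to scale parameters to $\delta\in\{0,1\}$ or $\varepsilon=\pm1$. Separately, $\sigma\neq0$ is needed in the general conditions $(i)$--$(vi)$ of Lemma~\ref{isoofconnection}, for instance to kill $\theta$ or to shift $\nabla^0$ between the families $\mathfrak a_j$.

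The main obstacle is proving ``exactly one'', i.e.\ non-isomorphism across the table. Families coming from different $\mathfrak a_j$ can be identified once $\sigma\neq0$ is allowed, because changing the complement $\G_0$ changes $\nabla^0$. I would first separate classes by invariants: the Lie algebra structure of $(\G,\cdot)$ restricted to the derived ideal $\langle e_1\rangle$; the spectra of $\mathrm{tr}\,\mathrm R$ and $\mathrm L_{e_2},\mathrm L_{e_3}$ on suitable quotients; whether the algebra is associative, Novikov, or radiant; and the dimension of the two-sided annihilator. Only where these invariants coincide would I solve the conditions of Lemma~\ref{isoofconnection} explicitly with $u=0$.
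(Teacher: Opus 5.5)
Your skeleton is the paper's. You view $\G=\R e_3\ltimes\mathfrak{aff}(1,\R)$ with $\mathrm D=0$, so $u=0$ and $\sigma\in\R e^2$. You push a non-flat $\nabla^0$ to a flat one by $e_2\mapsto e_2+se_3$, then run through $\mathfrak a_1,\dots,\mathfrak a_5$. The paper, however, solves the flatness equations for the matrices of $\nabla_{e_1},\nabla_{e_2},\nabla_{e_3}$ directly. It carries out only the non-flat reductions in the text and lists raw solution families for the flat cases in the appendix. Two steps of your plan fail as stated.

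\textbf{The curvature lemma as printed is wrong.} You propose to solve conditions $(i)$--$(vi)$ of the curvature lemma, but for non-abelian $\G_0$ two of them are misstated.
\begin{itemize}
\item The $\ell$-component of $R(x,y)z$ is $\theta(x,\nabla^0_yz)-\theta(y,\nabla^0_xz)-\theta([x,y],z)=\theta(x,z)\gamma(y)-\theta(y,z)\gamma(x)$. The printed $(ii)$ is missing the term $-\theta([x,y],z)$.
\item The $\G_0$-component of $R(x,\ell)\ell$ gives $-\gamma(x)\zeta$ in $(iii)$, not $+\gamma(x)\zeta$.
\end{itemize}
With the printed versions you would discard genuine table entries. $\h_{1,4}$ (that is, $\mathfrak a_1$ with $\lambda=2$ and $\theta=\varepsilon_1e^1\otimes e^1$) violates printed $(ii)$ at $(x,y,z)=(e_1,e_2,e_1)$. $\h_{2,9}$ ($\beta=0$, $\gamma(e_2)=\mu$, $\zeta=\varepsilon e_2$) violates printed $(iii)$ at $x=e_2$. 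Both are flat by direct check. You must re-derive these conditions, or work with the matrices as the paper does.

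\textbf{The ``exactly one'' step cannot succeed.} No choice of invariants will work, because the table contains isomorphic entries. In each case below the new basis is the image of $(e_1,e_2,e_3)$ under an automorphism of $\G$.
\begin{itemize}
\item $\h_{4,6}$ with $\lambda_2=0$ is literally $\h_{4,4}$.
\item For $\lambda_2\neq0$, rewriting $\h_{4,6}$ in the basis $(e_1,e_2,\lambda_2^{-1}e_3)$ gives $\h_{4,3}$ with $\lambda_1=\varepsilon/\lambda_2^2$.
\item $\h_{1,9}$ with $\lambda=0$ is literally $\h_{1,12}$.
\item $e_3\mapsto-e_3$ identifies $\h_{1,10}$ with parameters $\lambda_1$ and $-\lambda_1$.
\end{itemize}
The change of complement you yourself flag produces cross-block coincidences.
\begin{itemize}
\item In the basis $(e_1,\,e_2-\mu e_3,\,\sqrt{|\mu|}\,e_3)$, $\h_{2,11}^{\mu}$ has exactly the structure constants of $\h_{1,10}$ with $\varepsilon=\mathrm{sgn}\,\mu$ and $\lambda_1=\sqrt{|\mu|}$.
\item In $(e_1,\,e_2+e_3,\,e_3)$, $\h_{1,7}$ becomes $\h_{2,10}$ with $\mu=\mu_1=1$.
\item In $((1+\lambda_1)e_1,\,e_2-e_3,\,e_3)$, $\h_{4,5}^{\lambda_1}$ becomes $\h_{3,6}$ with $\lambda_2=-\lambda_1/(1+\lambda_1)$.
\item When $\lambda_1=-1$, using the basis $(e_1,\,e_2-e_3,\,e_3)$ instead, $\h_{4,5}^{\lambda_1}$ becomes $\h_{2,14}$.
\end{itemize}
So the uniqueness claim is false as stated, and the honest outcome of your last step is a pruned table. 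The paper's proof never addresses non-isomorphism for this proposition, so neither argument establishes it. When you redo that step, build invariants from data independent of the chosen complement $\langle e_1,e_2\rangle$. Quantities such as $\mathrm L_{e_2},\mathrm L_{e_3}$ for a fixed splitting are not invariant, precisely because $\sigma\neq0$ is allowed.
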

\begin{proof}
	Let $\nabla$ be a linear connection on $2\G_{2,1}\oplus\G_1=\mathfrak{aff}(1,\R)\oplus\R$, with basis $\{e_1,e_2,e_3\}$. Then, $\nabla$ can be expressed as 
	\begin{align}\label{exten}
		\begin{split}
			\nabla_xy&=\nabla^0_xy+\theta(x,y)e_3,\\
			\nabla_xe_3&=\beta(x)+\gamma(x)e_3,\\
			\nabla_{e_3}x&=\beta(x)+\gamma(x)e_3,\\
			\nabla_{e_3}e_3&=\zeta+\lambda e_3,
		\end{split}
	\end{align}
	
	for all $x, y \in\mathfrak{aff}(1,\R)$, where $\zeta\in\mathfrak{aff}(1,\R)$,  $\theta \in \mathcal{S}^2(\mathfrak{aff}(1,\R)$ is a symmetric form, $\beta: \mathfrak{aff}(1,\R) \to \mathfrak{aff}(1,\R)$ is an endomorphism of $\mathfrak{aff}(1,\R)$, $\gamma: \mathfrak{aff}(1,\R) \to \R$ is a one-form, and $\nabla^0$ is a torsion-free connection on $\mathfrak{aff}(1,\R)$. The curvature tensor $\mathcal{R}^\nabla$ of $\nabla$ is given by
	\begin{align}\label{flatness-equations1}
		\mathcal{R}^\nabla(x,y)z=\nabla_x\nabla_yz-\nabla_y\nabla_xz-\nabla_{[x,y]}z,\quad\quad\text{for all~} x,y,z\in\mathfrak{aff}(1,\R)\oplus\R.
	\end{align}
	The condition for $\nabla$ to be flat is $\mathcal{R}^\nabla = 0$. We will refer to the corresponding system of equations as the \textit{flatness-equations}. In the basis $\{e_1, e_2, e_3\}$, the operators $\nabla_{e_1}$, $\nabla_{e_2}$, and $\nabla_{e_3}$ are given respectively by the matrices:
	\begin{align}\label{AFF}
		\nabla_{e_1}&=\left( \begin {array}{ccc} a_{11}&a_{12}&a_{{13}}\\ \noalign{\medskip}a_{21}&a_{22}&a_{{23}}\\ \noalign{\medskip}a_{{31}}&a_{{32}}&a_{{33}}\end {array}
		\right),&\nabla_{e_2}&=\left( \begin {array}{ccc} a_{12}-1&b_{12}&b_{{13}}\\ \noalign{\medskip}a_{22}&b_{22}&b_{{23}}\\ \noalign{\medskip}a_{{32}}&b_{{32}}&b_{{33}}\end {array}
		\right),&\nabla_{e_3}&=\left( \begin {array}{ccc} a_{{13}}&b_{{13}}&c_{{13}}
		\\ \noalign{\medskip}a_{{23}}&b_{{23}}&c_{{23}}
		\\ \noalign{\medskip}a_{{33}}&b_{{33}}&c_{{33}}\end {array}
		\right),
	\end{align}
	where, $a_{ij}, b_{ij}, c_{ij}\in\R$.

	As a result of Lemma~\ref{isoofconnection} and Remark~\ref{sympleofauto}, we can choose $\nabla^0$ up to isomorphism.  Assume that $\nabla^0$ is a non-flat torsion-free connection. In this case, $\nabla^0$ is isomorphic to one of the torsion-free connections listed in  Lemma~\ref{Torsionfreeaffine}.

	The case where $\nabla^0$ is a flat torsion-free connection is treated separately outside the scope of this paper. All solutions to the flatness equations are given in Appendix~\ref{App}, since this case is comparatively simple. Furthermore, as will be seen in the subsequent analysis, the non-flat torsion-free cases reduce, up to isomorphism, to the study of the flat torsion-free case.

	$\bullet$ \textbf{Case 1 : } In the case where $\nabla^0=\nabla^{0,1}$, Equation~\eqref{AFF} reduces to
		\begin{align}\label{AFF1}
		\nabla_{e_1}&=\left( \begin {array}{ccc} \mu&\lambda&a_{{13}}\\ \noalign{\medskip}\varepsilon&0&a_{{23}}\\ \noalign{\medskip}a_{{31}}&a_{{32}}&a_{{33}}\end {array}
		\right),&\nabla_{e_2}&=\left( \begin {array}{ccc} \lambda-1&\nu&b_{{13}}\\ \noalign{\medskip}0&\eta&b_{{23}}\\ \noalign{\medskip}a_{{32}}&b_{{32}}&b_{{33}}\end {array}
		\right),&\nabla_{e_3}&=\left( \begin {array}{ccc} a_{{13}}&b_{{13}}&c_{{13}}
		\\ \noalign{\medskip}a_{{23}}&b_{{23}}&c_{{23}}
		\\ \noalign{\medskip}a_{{33}}&b_{{33}}&c_{{33}}\end {array}
		\right),
	\end{align}
	where, $a_{ij}, b_{ij}, c_{ij}\in\R$, $\varepsilon=\pm1$ and $(\mu,\lambda,\nu,\eta)\neq(0,0,0,-2)$.
	
Based on a straightforward calculation, the flatness-equations associated with connection~\eqref{AFF1} have a unique solution, given by the following flat torsion-free connection:
	\begin{equation}
		\begin{aligned}\label{AffSOL1}
			\nabla_{e_1} e_1 &=\varepsilon e_2+a_{31}e_3, &\nabla_{e_1}e_2&=-\varepsilon\,c_{33}a_{31}e_1,&\nabla_{e_2}e_1&=(-\varepsilon\,c_{33}a_{31}-1)e_1, \\\nabla_{e_2}e_2&=(-2\,\varepsilon\,c_{33}a_{31}-2)e_2+(-a_{31}^2c_{33}-2\,\varepsilon\,a_{31})e_3,&
			 \nabla_{e_j}e_3&=c_{33}e_j,&j&=1,2,3.
		\end{aligned}
	\end{equation}
	Let us consider the following automorphism
	\begin{align*}
		\Psi(e_1) &= e_1, &
		\Psi(e_2) &=e_2-\varepsilon\,a_{31}\,x e_3, &
		\Psi(e_3) &= x\, e_3,
	\end{align*}
	and applying it to the connection given in \eqref{AffSOL1}, for a suitable choice of $x \in \mathbb{R}^\ast$, yields the following isomorphic connection:
		\begin{equation}
		\begin{aligned}\label{AffSOL1,1}
			\nabla_{e_1} e_1 &=\varepsilon e_2, &\nabla_{e_2}e_1&=-e_1, &\nabla_{e_2}e_2&=-2\,e_2,&
			\nabla_{e_j}e_3&=\delta_1\,e_j,\quad\delta_1=0,1
		\end{aligned}
	\end{equation}
Observe first that $\nabla^0$ associated with this connection coincides with the flat torsion-free connection corresponding to the flat Lie algebra $\mathfrak{a}_5$ listed in Table~\ref{Flataffine}. If $\delta_1=1$, the connection given in \eqref{AffSOL1,1} coincides with the one associated with the flat Lie algebra $\h_{5,2}$ listed in Table~\ref{g2}. Otherwise, if $\delta_1=0$, this connection coincides with the one associated with the flat Lie algebra $\h_{5,1}$ listed in Table~\ref{g2}, in which case $\delta=0$.

	$\bullet$ \textbf{Case 2 : } If $\nabla^0=\nabla^{0,2}$. A straightforward calculation shows that the flatness-equations have a unique solution, given by the following flat torsion-free connection:
		\begin{equation}
		\begin{aligned}\label{AffSOL2}
			\nabla_{e_1} e_2 &=e_2+a_{32}e_3, &\nabla_{e_1}e_3&=-\tfrac{1}{a_{32}}e_2-e_3, &\nabla_{e_2}e_1&=-e_1+e_2+a_{32}e_3,\\
			\nabla_{e_2}e_2&=(-2\,a_{32}b_{23}-1)e_2-a_{32}(a_{32}b_{23}+1)e_3, &\nabla_{e_2}e_3&=b_{23}e_2, &\nabla_{e_3}e_1&=-\tfrac{1}{a_{32}}e_2-e_3,\\
			\nabla_{e_3}e_2&=b_{23}e_2, &\nabla_{e_3}e_3&=b_{23}e_3.
		\end{aligned}
	\end{equation}
	
	Consider the following automorphism
	\begin{align*}
		\Psi(e_1) &= e_1, &
		\Psi(e_2) &=e_2-\varepsilon\,a_{31}\,x e_3, &
		\Psi(e_3) &= x\, e_3,
	\end{align*}
	and applying it to the connection given in \eqref{AffSOL2}, for a suitable choice of $x \in \R^\ast$, yields the following isomorphic connection:
	\begin{equation}
		\begin{aligned}\label{AffSOL2,1}
			\nabla_{e_1} e_3 &=e_2, &\nabla_{e_2}e_1&=-e_1, &\nabla_{e_2}e_2&=-e_2,&
			\nabla_{e_2}e_3&=\delta_1\,e_2, &\nabla_{e_3}e_1&=e_2, &\nabla_{e_3}e_2&=\delta_1\,e_2,\\ \nabla_{e_3}e_3&=\delta_1\,e_3, &\delta_1&=0,1.
		\end{aligned}
	\end{equation}
	Observe that $\nabla^0$ associated with this connection coincides with the flat torsion-free connection corresponding to the flat Lie algebra $\mathfrak{a}_2$ listed in Table~\ref{Flataffine}, in which $\mu=-1$.  If $\delta_1=0$, the connection given in \eqref{AffSOL2,1} coincides with the one associated with the flat Lie algebra $\h_{2,8}$ listed in Table~\ref{g2}, in which $\delta_1=0$ and $\delta_2=1$. Otherwise, the following automorphism
	\begin{align*}
		\Psi(e_1) &=  e_1, &
		\Psi(e_2) &= e_1+ e_2, &
		\Psi(e_3) &= e_3,
	\end{align*}
	establishes an isomorphism between the connection given in \eqref{AffSOL2,1} and the one associated with the flat Lie algebra $\h_{2,8}$.

		$\bullet$ \textbf{Cases 3}  and \textbf{4 : } If $\nabla^0=\nabla^{0,3}$ or $\nabla^0=\nabla^{0,4}$. Simple and straightforward calculations show that there is no solution to the flatness-equations.

$\bullet$ \textbf{Case 5 : } If $\nabla^0=\nabla^{0,5}$.	Suppose that $\eta_4-2\,\lambda_4+1\neq0$, then $\delta=0$. A straightforward computation also leads to a unique solution, namely the following flat  torsion-free connection :
	\begin{equation}
		\begin{aligned}\label{AffSOL5}
			\nabla_{e_1} e_2 &=\lambda_4 e_1, \quad\nabla_{e_1}e_3=\nabla_{e_3}e_1=\tfrac{\lambda_4\,b_{33}+b_{32}c_{33}-b_{33}^2}{b_{32}}e_1, &\nabla_{e_2}e_1&=(\lambda_4-1)e_1, \\\nabla_{e_2}e_2&=\tfrac{\lambda_4^2-\lambda_4\,b_{33}-b_{32}c_{33}+b_{33}^2}{\lambda_4}e_2+b_{32}e_3,
			&\nabla_{e_2}e_3& =\nabla_{e_3}e_2=\tfrac{(\lambda_4\,b_{33}+b_{32}c_{33}-b_{33}^2)(\lambda_4-b_{33})}{\lambda_4\,b_{32}}e_2+b_{33}e_3,\\
			\nabla_{e_3}e_3&=\tfrac{b_{33}(\lambda_4\,b_{33}+b_{32}c_{33}-b_{33}^2)(\lambda_4-b_{33})}{\lambda_4\,b_{32}^2}e_2+c_{33}e_3
		\end{aligned}
	\end{equation}
	Assume that $\lambda_4\,b_{33}+b_{32}c_{33}-b_{33}^2\neq0$, and consider the following automorphism
	\begin{align*}
		\Psi(e_1) &= e_1, &
		\Psi(e_2) &=e_2+\lambda_4\, e_3, &
		\Psi(e_3) &= \tfrac{\lambda_4\,b_{33}+b_{32}c_{33}-b_{33}^2}{b_{32}} e_3.
	\end{align*}
	The following family of isomorphic connections (three connections) is obtained by applying the automorphism $\Psi$ to the connections given in \eqref{AffSOL5}, together with a simple change of variables:
	\begin{equation}
	\begin{aligned}\label{AffSOL5,1}
		\nabla_{e_1} e_3 &=e_1, &\nabla_{e_2}e_1&=-e_1, &\nabla_{e_2}e_2&=\mu_0\,e_2, &\nabla_{e_2}e_3&=\lambda_0 \,e_2, &\nabla_{e_3}e_1&=e_1, &\nabla_{e_3}e_2&=\lambda_0e_2, \\\nabla_{e_3}e_3&=\tfrac{\lambda_0(\lambda_0-1)}{\mu_0}e_2+e_3.
	\end{aligned}
\end{equation}
This one actually coincides with the connection associated with the flat Lie algebra $\h_{2,10}$ listed in Table~\ref{g2}, where $\mu_0=\mu$ and $\lambda_0=\mu_1$.
	
The second isomorphic connection is given by	
	\begin{equation}
		\begin{aligned}\label{AffSOL5,2}
			\nabla_{e_1} e_3 &=e_1, &\nabla_{e_2}e_1&=-e_1,  &\nabla_{e_2}e_3&=e_2, &\nabla_{e_3}e_1&=e_1, &\nabla_{e_3}e_2&=e_2, &\nabla_{e_3}e_3&=\mu_2\, e_2+e_3,\quad\mu_2\in\R.
		\end{aligned}
	\end{equation}
	This one actually coincides with the connection associated with the flat Lie algebra $\h_{1,12}$ listed in Table~\ref{g2}, where $\mu_2=\lambda_1$.
	
	The third isomorphic connection is given by	
	\begin{equation}
		\begin{aligned}\label{AffSOL5,3}
			\nabla_{e_1} e_3 &=e_1, &\nabla_{e_2}e_1&=-e_1,   &\nabla_{e_3}e_1&=e_1,  &\nabla_{e_3}e_3&=\mu_3\, e_2+e_3,\quad\mu_3\in\R.
		\end{aligned}
	\end{equation}
		This one actually coincides with the connection associated with the flat Lie algebra $\h_{1,13}$ listed in Table~\ref{g2}, where $\mu_2=\lambda_2$.

Suppose now that $\eta_4 - 2\lambda_4 + 1 = 0$ and $\delta = 0$. In this case, $\lambda_4\neq0,1$. It is straightforward to solve the flatness equations and show that they admit a unique solution, given by the following flat torsion-free connection:
\begin{equation}
	\begin{aligned}\label{AffSOL6}
		\nabla_{e_1} e_2 &=\lambda_4\,e_1, ~~\nabla_{e_1}e_3=-\tfrac{(\lambda_4-1)\lambda_4}{b_{32}}e_1, ~~\nabla_{e_2}e_1=(\lambda_4-1)e_1, ~~\nabla_{e_2}e_2=(2\,\lambda_4-1)e_2+b_{32}e_3,\\
		\nabla_{e_2}e_3&=b_{13}e_1-\tfrac{(\lambda_4-1)(\lambda_4-b_{33})}{b_{32}}e_2+b_{33}e_3,  ~~\nabla_{e_3}e_1=-\tfrac{(\lambda_4-1)\lambda_4}{b_{32}}e_1,~~\nabla_{e_3}e_2=b_{13}e_1-\tfrac{(\lambda_4-1)(\lambda_4-b_{33})}{b_{32}}e_2+b_{33}e_3,\\
		\nabla_{e_3}e_3&=-\tfrac{b_{13}(\lambda_4-b_{33})}{b_{32}}e_1-\tfrac{(\lambda_4-1)(\lambda_4-b_{33})b_{33}}{b_{32}^2}e_2+\tfrac{\lambda_4-\lambda_4^2+b_{33}^2-\lambda_4b_{33}}{b_{32}}e_3.
	\end{aligned}
\end{equation}
	Suppose $b_{33}\neq0$, and consider the following automorphism
	\begin{align*}
		\Psi(e_1) &=  e_1, &
		\Psi(e_2) &=-\tfrac{b_{13}b_{32}}{b_{33}(\lambda_4-1)} e_1+ e_2+\lambda_4\,e_3, &
		\Psi(e_3) &=-\tfrac{(\lambda_4-1)\lambda_4}{b_{32}} e_3.
	\end{align*}
	Applying it to the connection given in  \eqref{AffSOL6} and changing variables results in the following equivalent connection:
\begin{equation}
	\begin{aligned}\label{AffSOL6,1}
		\nabla_{e_1} e_3 &=e_1, &\nabla_{e_2}e_1&=-e_1, &\nabla_{e_2}e_2&=a\,e_2, &\nabla_{e_2}e_3&=b\,e_2, &\nabla_{e_3}e_1&=e_1, &\nabla_{e_3}e_2&=b\,e_2, \\\nabla_{e_3}e_3&=c\,e_2+e_3, &ac&=b-b^2.
	\end{aligned}
\end{equation}
This family of connections coincides with the three flat torsion-free connections given in \eqref{AffSOL5,1}, \eqref{AffSOL5,2}, and \eqref{AffSOL5,3}.

Suppose that $b_{33}=0$ and $(\lambda_4-2)b_{13}\neq0$. Consider the following automorphism
		\begin{align*}
		\Psi(e_1) &= \tfrac{(\lambda_4-1)^2}{(\lambda_4-2)b_{13}b_{32}} e_1, &
		\Psi(e_2) &= e_2+\lambda_4\,e_3, &
		\Psi(e_3) &=-\tfrac{(\lambda_4-1)\lambda_4}{b_{32}} e_3.
	\end{align*}
	Applying it to the connection given in  \eqref{AffSOL6} and changing variables results in the following equivalent connection:
\begin{equation}
	\begin{aligned}\label{AffSOL6,2}
		\nabla_{e_1} e_3 &=e_1, &\nabla_{e_2}e_1&=-e_1, &\nabla_{e_2}e_2&=e_1-e_2, &\nabla_{e_2}e_3&=\tfrac{1}{(\lambda_4-2)\,\lambda_4}e_1+e_2, \\\nabla_{e_3}e_1&=e_1, &\nabla_{e_3}e_2&=\tfrac{1}{(\lambda_4-2)\,\lambda_4}e_1+e_2, &\nabla_{e_3}e_3&=-\tfrac{1}{(\lambda_4-2)\,\lambda_4}e_1+e_3.
	\end{aligned}
\end{equation}
	This connection coincides with the one associated with the flat Lie algebra $\h_{3,6}$ listed in Table~\ref{g2}, in which case $\lambda_2=\tfrac{1}{(\lambda_4-2)\,\lambda_4}$.

Suppose that $b_{33}=0$ and $(\lambda_4-2)b_{13}=0$. We begin with the case $\lambda_4=2$. Consider the following automorphism 
\begin{align*}
	\Psi(e_1) &=x\,  e_1, &
	\Psi(e_2) &= e_2+2\,e_3, &
	\Psi(e_3) &=-\tfrac{2}{b_{32}} e_3.
\end{align*}
Applying it to the connection given in \eqref{AffSOL6}, for a suitable parameter $x\in\mathbb{R}^\ast$, yields the following equivalent connection:
\begin{equation}
	\begin{aligned}\label{AffSOL6,3}
		\nabla_{e_1} e_3 &=e_1, &\nabla_{e_2}e_1&=-e_1, &\nabla_{e_2}e_2&=-e_2, &\nabla_{e_2}e_3&=\delta_0\,e_1+e_2, \\\nabla_{e_3}e_1&=e_1, &\nabla_{e_3}e_2&=\delta_0\,e_1+e_2, &\nabla_{e_3}e_3&=-\delta_0\,e_1+e_3,\quad\delta_0=0,1.
	\end{aligned}
\end{equation}
If $\delta_0=1$, this connection coincides with the one associated with the flat Lie algebra $\h_{2,14}$. Otherwise, if $\delta_0=0$, the connection \eqref{AffSOL6,3} coincides with the one associated with the flat Lie algebra $\h_{2,10}$, in which case $\mu_1=1$ and $\mu=-1$.

Suppose that $b_{33}=0$ and $(\lambda_4-2)b_{13}=0$. Now let's examine the second case, namely  $b_{13}=0$. Consider the following automorphism:
\begin{align*}
	\Psi(e_1) &= e_1, &
	\Psi(e_2) &= e_2+\lambda_4\,e_3, &
	\Psi(e_3) &=-\tfrac{(\lambda_4-1)\lambda_4}{b_{32}} e_3.
\end{align*}
Applying it to the connection given in  \eqref{AffSOL6}, yields the following equivalent connection:
\begin{equation}
	\begin{aligned}\label{AffSOL6,4}
		\nabla_{e_1} e_3 &=e_1, &\nabla_{e_2}e_1&=-e_1, &\nabla_{e_2}e_2&=-e_2, &\nabla_{e_2}e_3&=e_2, \\\nabla_{e_3}e_1&=e_1, &\nabla_{e_3}e_2&=e_2, &\nabla_{e_3}e_3&=e_3.
	\end{aligned}
\end{equation}
In fact, this connection coincides with the previous one in \eqref{AffSOL6,3}, in which case $\delta_0=0$, so no further analysis is needed.

Suppose now that $\eta_4 - 2\lambda_4 + 1 = 0$ and $\delta = 1$. In this case, $\lambda_4\neq0,1$. Solving flatness-equations and showing they admit a unique solution is straightforward, given by the following flat torsion-free connection:
\begin{equation}
	\begin{aligned}\label{AffSOL7}
		\nabla_{e_1} e_2 &=\lambda_4\,e_1, &\nabla_{e_1}e_3&=-\tfrac{(\lambda_4-1)\,\lambda_4}{b_{32}}e_1, &\nabla_{e_2}e_1&=(\lambda_4-1)e_1, \\\nabla_{e_2}e_2&=e_1+(2\,\lambda_4-1)e_2+b_{32}e_3, &\nabla_{e_2}e_3&=b_{13}e_1-\tfrac{(\lambda_4-1)\,\lambda_4}{b_{32}}e_2, &\nabla_{e_3}e_1&=-\tfrac{(\lambda_4-1)\,\lambda_4}{b_{32}}e_1,\\ \nabla_{e_3}e_2&=b_{13}e_1-\tfrac{(\lambda_4-1)\,\lambda_4}{b_{32}}e_2, &\nabla_{e_3}e_3&=-\tfrac{b_{13}\,\lambda_4}{b_{32}}e_1-\tfrac{(\lambda_4-1)\,\lambda_4}{b_{32}}e_3.
	\end{aligned}
\end{equation}
Suppose $b_{13}\neq0$. Consider the following automorphism
\begin{align*}
	\Psi(e_1) &=\tfrac{(\lambda_4-1)^2\,\lambda_4}{b_{13}b_{32}} e_1, &
	\Psi(e_2) &= e_2+\lambda_4\,e_3, &
	\Psi(e_3) &=-\tfrac{(\lambda_4-1)\lambda_4}{b_{32}} e_3.
\end{align*}
Applying it to the connection given in  \eqref{AffSOL7}, yields the following equivalent connection:
\begin{equation}
	\begin{aligned}\label{AffSOL7,1}
		\nabla_{e_1} e_3 &=e_1, &\nabla_{e_2}e_1&=-e_1, &\nabla_{e_2}e_2&=\lambda_0\,e_1-e_2, &\nabla_{e_2}e_3&=e_1+e_2, &\nabla_{e_3}e_1&=e_1,\\
		\nabla_{e_3}e_2&=e_1+e_2, &\nabla_{e_3}e_3&=-e_1+e_3.
			\end{aligned}
\end{equation}
If $\lambda_0=0$, this connection coincides with the one associated with the flat Lie algebra $\h_{2,14}$ listed in Table~\ref{g2}. Otherwise, if $\lambda_0\neq0$, one can easily verify that this connection is in fact isomorphic to the one associated with the flat Lie algebra $\h_{3,6}$ listed in Table~\ref{g2}, in which case $\lambda_2=\frac{1}{\lambda_0}$, via the following automorphism:
\begin{align*}
	\Psi(e_1) &=\tfrac{1}{\lambda_0} e_1, &
	\Psi(e_2) &= e_2, &
	\Psi(e_3) &=e_3.
\end{align*}

Suppose $b_{13}=0$. Consider the following automorphism
\begin{align*}
	\Psi(e_1) &=e_1, &
	\Psi(e_2) &= e_2+\lambda_4\,e_3, &
	\Psi(e_3) &=-\tfrac{(\lambda_4-1)\lambda_4}{b_{32}} e_3.
\end{align*}
Applying it to the connection given in  \eqref{AffSOL7}, yields the following equivalent connection:
\begin{equation}
	\begin{aligned}\label{AffSOL7,2}
		\nabla_{e_1} e_3 &=e_1, &\nabla_{e_2}e_1&=-e_1, &\nabla_{e_2}e_2&=e_1-e_2, &\nabla_{e_2}e_3&=e_2, &\nabla_{e_3}e_1&=e_1,\\
		\nabla_{e_3}e_2&=e_2, &\nabla_{e_3}e_3&=e_3.
	\end{aligned}
\end{equation}
In fact, this connection corresponds with that associated with the flat Lie algebra  $\h_{3,5}$ listed in Table~~\ref{g2}.

 With this procedure, one can start with a no flat torsion-free connection in dimension $2$ and extend it, as in \eqref{exten}, via an isomorphism, to an upper block flat torsion-free connection in dimension $3$.

\end{proof}

\begin{co}
	With the notations as above, among the flat Lie algebras on $2\G_{2,1}\oplus\G_1$, we have
	\begin{enumerate}
		\item[i)] Associative algebras$:$\hspace{0.275cm} $\h_{1,1}^{\lambda=1}$, $\h_{1,5}^{\lambda=1}$, $\h_{2,1}^{\mu=-1}$, $\h_{2,2}^{\mu=-1}$, $\h_{2,3}$, $\h_{2,10}^{\mu=-1, \mu_1=1}$.
		\item[ii)] Novikov algebras$:$\hspace{0.73cm} $\h_{1,1}$, $\h_{1,5}$, $\h_{4,1}$, $\h_{4,2}$.
		\item[iii)] Bi-symmetric algebras$:$ $\h_{1,1}^{\lambda=1}$, $\h_{1,5}^{\lambda=1}$, $\h_{2,1}^{\mu=-1}$, $\h_{2,2}^{\mu=-1}$, $\h_{2,3}$, $\h_{3,1}$, $\h_{3,2}$, $\h_{3,5}$, $\h_{4,1}$, $\h_{4,2}$.
		\item[iv)] Complete algebras$:$\hspace{0.62cm}$\h_{1,1}^{\lambda=0,\delta=0}$, $\h_{1,5}^{\lambda=0}$, $\h_{1,10}^{\lambda_1=0}$.
	\end{enumerate}
\end{co}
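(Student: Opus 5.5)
The corollary is a membership check against Table~\ref{g2}: for each listed flat Lie algebra $(\G,\nabla)$ on $\mathfrak{aff}(1,\R)\oplus\R$ we test four properties. We work with the left multiplications $\mathrm{L}_{e_i}=\nabla_{e_i}$, written as $3\times 3$ matrices from the table, and the right multiplications $\mathrm{R}_{e_j}$, whose $i$-th column is $\nabla_{e_i}e_j$. Everything is bilinear or trilinear, so it is enough to verify each condition on basis vectors. The four tests are:
\begin{enumerate}
\item[(a)] associativity: $(e_i,e_j,e_k)=0$ for all $i,j,k$, equivalently $\mathrm{L}_{e_ie_j}=\mathrm{L}_{e_i}\mathrm{L}_{e_j}$;
\item[(b)] the Novikov condition: $[\mathrm{R}_{e_i},\mathrm{R}_{e_j}]=0$ for $i<j$;
\item[(c)] bi-symmetry: $(e_i,e_j,e_k)=(e_i,e_k,e_j)$;
\item[(d)] completeness: every $\mathrm{R}_X$ is nilpotent.
\end{enumerate}

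For (d), nilpotency of all $\mathrm{R}_X$ with $X=x_1e_1+x_2e_2+x_3e_3$ is equivalent to $\mathrm{tr}\,\mathrm{R}_X^k=0$ for $k=1,2,3$ as polynomials in $x$. In practice the trace condition $\mathrm{tr}\,\mathrm{R}_X=0$ already discards most families. For instance, $\mathrm{tr}\,\mathrm{R}_{e_2}$ picks up the $e_2$-coefficient of $\nabla_{e_2}e_2$, so $\lambda=0$ is forced in $\h_{1,1}$ and $\h_{1,5}$, and $\delta=0$ is forced whenever $\nabla_{e_3}e_3=\delta e_3$. For the survivors we exhibit a flag on which every $\mathrm{R}_X$ is strictly triangular. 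For $\h_{1,10}^{\lambda_1=0}$ the only nonzero products are $\nabla_{e_2}e_1=-e_1$ and $\nabla_{e_3}e_3=\varepsilon e_2$. The nonzero right multiplications are $\mathrm{R}_{e_1}\colon e_2\mapsto -e_1$ and $\mathrm{R}_{e_3}\colon e_3\mapsto \varepsilon e_2$, and both are strictly upper triangular in the basis $(e_1,e_2,e_3)$.

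For the algebraic conditions (a)--(c) we proceed family by family. We first compute the associators on the two-dimensional part; these coincide with those of the underlying $\mathfrak{a}_j$ in Table~\ref{Flataffine}, where the products close on $\langle e_1,e_2\rangle$. This already restricts the parameters: for $\mathfrak{a}_1$, $(e_2,e_2,e_1)=(1-\lambda)(\lambda-1)e_1$, hence $\lambda=1$; for $\mathfrak{a}_2$ the analogous associator yields $\mu=-1$. The families built on $\mathfrak{a}_3,\mathfrak{a}_4,\mathfrak{a}_5$ then fail associativity through a single nonzero associator. We then add the associators involving $e_3$, which cut the list down to the parameter values stated in the corollary.

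The Novikov and bi-symmetric lists are obtained the same way. Note that bi-symmetry together with left-symmetry does not force associativity, which is why $\h_{3,1}$, $\h_{3,2}$, $\h_{3,5}$, $\h_{4,1}$, $\h_{4,2}$ are bi-symmetric but not associative. To exclude a family it suffices to exhibit one nonzero associator or commutator with parameter-independent support.

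The main obstacle is bookkeeping rather than ideas: there are roughly forty families, several with free parameters, and for each family we must pin down the exact parameter locus where a property holds. The delicate cases are those where the parameters enter the associators polynomially, namely $\h_{1,9}$, $\h_{2,9}$, $\h_{2,10}$, $\h_{4,6}$. For these we write the associator conditions as polynomial equations in $(\mu,\mu_1,\lambda,\lambda_1,\dots)$ and solve them. For example, in $\h_{2,10}$ associativity should force $\mu=-1$ and $\mu_1=1$.
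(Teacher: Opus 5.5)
Your method, testing each entry of Table~\ref{g2} on basis vectors, is the right one. The paper states this corollary without proof, so a case-by-case check is the intended argument. Carried out correctly, it confirms i), ii) and iv) as stated. It does \emph{not} confirm iii), and your proposal asserts that the bi-symmetric list ``is obtained the same way'' without noticing the problem.

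The cheapest sanity check already exposes it. If all associators vanish, then $(X,Y,Z)=(X,Z,Y)$ holds trivially, so the list in iii) must contain the list in i). It does not: $\h_{2,10}^{\mu=-1,\mu_1=1}$ is missing. At these parameters the nonzero products are $\nabla_{e_1}e_3=\nabla_{e_3}e_1=e_1$, $\nabla_{e_2}e_3=\nabla_{e_3}e_2=e_2$, $\nabla_{e_3}e_3=e_3$, $\nabla_{e_2}e_1=-e_1$, $\nabla_{e_2}e_2=-e_2$. Thus $e_3$ is a two-sided unit and $\langle e_1,e_2\rangle$ is the associative algebra $\mathfrak{a}_2^{\mu=-1}$, so every associator vanishes.

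For general parameters, $(e_2,e_2,e_1)=-(\mu+1)e_1$ while $(e_2,e_1,e_2)=0$. At $\mu=-1$, $(e_2,e_3,e_1)=(1-\mu_1)e_1$ while $(e_2,e_1,e_3)=0$. Hence $\h_{2,10}$ is bi-symmetric exactly on its associative locus $\mu=-1$, $\mu_1=1$.

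It is not isomorphic to anything already in iii). Having a two-sided unit is an isomorphism invariant, and the only member of iii) with one is $\h_{3,5}$. That algebra is not associative, since $(e_2,e_2,e_2)=e_1$. So the statement as written is not provable: your computation, done honestly, ends by adding $\h_{2,10}^{\mu=-1,\mu_1=1}$ to iii).

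There are also three smaller inaccuracies, none of which changes a conclusion.
\begin{itemize}
\item In $\mathfrak{a}_1$ one has $(e_2,e_2,e_1)=(\lambda-1)e_1$, not $(1-\lambda)(\lambda-1)e_1$; the zero locus is the same.
\item The associators on $\langle e_1,e_2\rangle$ do not coincide with those of $\mathfrak{a}_j$ when $\theta\neq0$. For instance, in $\h_{1,4}$ one has $(e_1,e_1,e_2)=-2\varepsilon_1e_3$, whereas this associator vanishes in $\mathfrak{a}_1$. What your screening actually uses is that the $\langle e_1,e_2\rangle$-components agree. In the notation of \eqref{exten}, these components differ from the $\mathfrak{a}_j$-associators by $\theta(x,y)\beta(z)-\theta(y,z)\beta(x)$. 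This correction happens to vanish on every entry of the table, but you need to check it rather than assert it.
\item $\mathrm{tr}\,\mathrm{R}_{e_2}$ is $2\lambda$ in $\h_{1,1}$ and $3\lambda$ in $\h_{1,5}$. It also collects the $e_1$-coefficient of $\nabla_{e_1}e_2$ and the $e_3$-coefficient of $\nabla_{e_3}e_2$, not just the $e_2$-coefficient of $\nabla_{e_2}e_2$. Either way $\lambda=0$ is forced.
\end{itemize}
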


\subsubsection{Flat Lie algebra $\G_{3,j}$}

\begin{Le}\label{Lemg3j}
Let $\nabla^0$ be a torsion-free connection on $\R^2$ extended to a flat torsion-free connection on a three-dimensional solvable Lie algebra $\G_{3,j}$, $j = 1, \ldots, 5$. Then $\nabla^0$ is equivalent to  one of the following  torsion-free connections:\\
$\triangleright$ Lie algebra $\G_{3,1} :$
\begin{small}
		\[
	\begin{alignedat}{4}
		\nabla^1_{e_1}e_1&=\lambda_1\,e_1+\mu_1\,e_2,\quad\quad &\nabla^1_{e_1}e_2&=\nu_1\,e_2,\quad\quad &\nabla^1_{e_2}e_1&=\nu_1\,e_2,\quad\quad &\nabla^1_{e_2}e_2&=e_1+e_2.\\
		\nabla^2_{e_1}e_1&=\lambda_2\,e_1+\mu_2\,e_2,\quad\quad &\nabla^2_{e_1}e_2&=\varepsilon_1\,e_2,\quad\quad &\nabla^2_{e_2}e_1&=\varepsilon_1\,e_2, \quad\quad&\nabla^2_{e_2}e_2&=e_1.\\
		\nabla^3_{e_1}e_1&=\lambda_3\,e_1+e_2,\quad\quad &\nabla^3_{e_1}e_2&=0,\quad\quad &\nabla^3_{e_2}e_1&=0,\quad\quad &\nabla^3_{e_2}e_2&=e_1.\\
		\nabla^4_{e_1}e_1&=\delta_\varepsilon\,e_1,\quad\quad &\nabla^4_{e_1}e_2&=0,\quad\quad &\nabla^4_{e_2}e_1&=0,\quad\quad &\nabla^4_{e_2}e_2&=e_1.\\
			\nabla^5_{e_1}e_1&=\lambda_5\,e_2,\quad\quad &\nabla^5_{e_1}e_2&=e_1+e_2,\quad\quad &\nabla^5_{e_2}e_1&=e_1+e_2,\quad\quad &\nabla^5_{e_2}e_2&=\mu_5\,e_2.\\
			\nabla^6_{e_1}e_1&=\delta_\varepsilon\,e_2,\quad\quad &\nabla^6_{e_1}e_2&=e_1,\quad\quad &\nabla^6_{e_2}e_1&=e_1,\quad\quad &\nabla^6_{e_2}e_2&=\mu_6\,e_2.\\
			\nabla^7_{e_1}e_1&=e_1+\lambda_7\,e_2, \quad\quad&\nabla^7_{e_1}e_2&=0,\quad\quad &\nabla^7_{e_2}e_1&=0,\quad\quad &\nabla^7_{e_2}e_2&=e_2.\\
			\nabla^8_{e_1}e_1&=\delta_{\varepsilon_1}\,e_2,\quad\quad &\nabla^8_{e_1}e_2&=0,\quad\quad &\nabla^8_{e_2}e_1&=0,\quad\quad &\nabla^8_{e_2}e_2&=e_2.\\
			\nabla^9_{e_1}e_1&=\lambda_9\,e_1, \quad\quad &\nabla^9_{e_1}e_2&=e_2,\quad\quad  &\nabla^9_{e_2}e_1&=e_2,\quad\quad  &\nabla^9_{e_2}e_2&=0.\\
				\nabla^{10}_{e_1}e_1&=2\,e_1+e_2,\quad\quad  &\nabla^{10}_{e_1}e_2&=e_2,\quad\quad  &\nabla^{10}_{e_2}e_1&=e_2, &\nabla^{10}_{e_2}e_2&=0.\\
				\nabla^{11}_{e_1}e_1&=e_1,\quad\quad  &\nabla^{11}_{e_1}e_2&=0,\quad\quad  &\nabla^{11}_{e_2}e_1&=0,\quad\quad  &\nabla^{11}_{e_2}e_2&=0.\\
				\nabla^{12}_{e_1}e_1&=\delta\,e_2,\quad\quad  &\nabla^{12}_{e_1}e_2&=0,\quad\quad  &\nabla^{12}_{e_2}e_1&=0,\quad\quad  &\nabla^{12}_{e_2}e_2&=0.
	\end{alignedat}
	\]
	
\end{small}
$\triangleright$ Lie algebra $\G_{3,2} :$
\begin{small}
	\[
	\begin{alignedat}{4}
	\nabla^1_{e_1}e_1&=e_2,\quad\quad &\nabla^1_{e_1}e_2&=\lambda_1\,e_1+\mu_1\,e_2,\quad\quad &\nabla^1_{e_2}e_1&=\lambda_1\,e_1+\mu_1\,e_2,\quad\quad &\nabla^1_{e_2}e_2&=\nu_1\,e_1+\eta_1\,e_2.\\
		\nabla^2_{e_1}e_1&=\lambda_2\,e_1,\quad\quad &\nabla^2_{e_1}e_2&=e_2,\quad\quad &\nabla^2_{e_2}e_1&=e_2,\quad\quad &\nabla^2_{e_2}e_2&=\nu_2\,e_1+\eta_2\,e_2.\\
			\nabla^3_{e_1}e_1&=e_1,\quad\quad &\nabla^3_{e_1}e_2&=0,\quad\quad &\nabla^3_{e_2}e_1&=0,\quad\quad &\nabla^3_{e_2}e_2&=\nu_3\,e_1+\eta_3\,e_2.\\
	\nabla^4_{e_1}e_1&=e_1,\quad\quad &\nabla^4_{e_1}e_2&=\lambda_4\,e_1+e_2,\quad\quad &\nabla^4_{e_2}e_1&=\lambda_4\,e_1+e_2,\quad\quad &\nabla^4_{e_2}e_2&=\nu_4\,e_1.\\
	\nabla^5_{e_1}e_1&=0,\quad\quad &\nabla^5_{e_1}e_2&=e_1,\quad\quad &\nabla^5_{e_2}e_1&=e_1,\quad\quad &\nabla^5_{e_2}e_2&=\eta_5\,e_2.\\
	\nabla^6_{e_1}e_1&=0,\quad\quad &\nabla^6_{e_1}e_2&=e_1,\quad\quad &\nabla^6_{e_2}e_1&=e_1,\quad\quad &\nabla^6_{e_2}e_2&=\eta_6\,e_2+2\,e_2.\\
	\nabla^7_{e_1}e_1&=0,\quad\quad &\nabla^7_{e_1}e_2&=0,\quad\quad &\nabla^7_{e_2}e_1&=0,\quad\quad &\nabla^7_{e_2}e_2&=e_2.\\
	\nabla^8_{e_1}e_1&=0,\quad\quad &\nabla^8_{e_1}e_2&=0,\quad\quad &\nabla^8_{e_2}e_1&=0,\quad\quad &\nabla^8_{e_2}e_2&=\delta\,e_1.		
	\end{alignedat}
	\]
\end{small}
$\triangleright$ Lie algebra $\G_{3,3} :$
\begin{small}
	\[
	\begin{alignedat}{4}
		\nabla^{1}_{e_1}e_1 &= 0, &\quad\quad \nabla^{1}_{e_1}e_2 &= 0,&\quad\quad
		\nabla^{1}_{e_2}e_1 &=  0, &\quad\quad \nabla^{1}_{e_2}e_2 &=\delta\,e_1.\\
		\nabla^2_{e_1}e_1 &= e_2, &\quad \nabla^2_{e_1}e_2 &= 0,&\quad
		\nabla^2_{e_2}e_1 &=  0, &\quad \nabla^2_{e_2}e_2 &=0.\\
		\nabla^3_{e_1}e_1 &= e_2, &\quad\quad \nabla^3_{e_1}e_2 &= 0,&\quad\quad
		\nabla^3_{e_2}e_1 &=  0, &\quad\quad \nabla^3_{e_2}e_2 &=e_1.\\
		\nabla^4_{e_1}e_1 &=\lambda_1\,  e_1+e_2, &\quad \nabla^1_{e_1}e_2 &= e_2,&\quad
		\nabla^4_{e_2}e_1 &=  e_2, &\quad \nabla^4_{e_2}e_2 &= \mu_1\,e_1+\nu_1\,e_2.\\
		\nabla^5_{e_1}e_1 &=\lambda_2\,  e_1, &\quad \nabla^5_{e_1}e_2 &= e_2,&\quad
		\nabla^5_{e_2}e_1 &=  e_2, &\quad \nabla^5_{e_2}e_2 &= \mu_2\,e_1+e_2.\\
		\nabla^6_{e_1}e_1 &=\lambda_3\,  e_1, &\quad \nabla^6_{e_1}e_2 &= e_2,&\quad
		\nabla^6_{e_2}e_1 &=  e_2, &\quad \nabla^6_{e_2}e_2 &= \delta_{\varepsilon_1}\,e_1.\\
		\nabla^7_{e_1}e_1 &= e_1+e_2, &\quad \nabla^7_{e_1}e_2 &= 0,&\quad
		\nabla^7_{e_2}e_1 &=  0, &\quad \nabla^7_{e_2}e_2 &= \mu_4\,e_1+\nu_4\,e_2.
	\end{alignedat}
	\]
\end{small}
\\\\
$\triangleright$ Lie algebra $\G_{3,4} :$
\begin{small}
\[
\begin{alignedat}{4}
	\nabla^1_{e_1}e_1&=\lambda_1\,e_1+\mu_1\,e_2,\quad\quad &\nabla^1_{e_1}e_2&=e_1+e_2,\quad\quad &\nabla^1_{e_2}e_1&=e_1+e_2,\quad\quad &\nabla^1_{e_2}e_2&=\nu_1\,e_1+\eta_1\,e_2.\\
	\nabla^2_{e_1}e_1&=e_1+\mu_2\,e_2,\quad\quad &\nabla^2_{e_1}e_2&=e_1,\quad\quad &\nabla^2_{e_2}e_1&=e_1,\quad\quad &\nabla^2_{e_2}e_2&=\nu_2\,e_1+\eta_2\,e_2.\\
	\nabla^3_{e_1}e_1&=\varepsilon_1\,e_2,\quad\quad &\nabla^3_{e_1}e_2&=e_1,\quad\quad &\nabla^3_{e_2}e_1&=e_1,\quad\quad &\nabla^3_{e_2}e_2&=\nu_3\,e_1+\eta_3\,e_2.\\
	\nabla^4_{e_1}e_1&=0,\quad\quad &\nabla^4_{e_1}e_2&=e_1,\quad\quad &\nabla^4_{e_2}e_1&=e_1,\quad\quad &\nabla^4_{e_2}e_2&=e_1+\eta_4\,e_2.
	\\
	\nabla^5_{e_1}e_1&=0,\quad\quad &\nabla^5_{e_1}e_2&=e_1,\quad\quad &\nabla^5_{e_2}e_1&=e_1,\quad\quad &\nabla^5_{e_2}e_2&=\eta_5\,e_2.\\	\nabla^6_{e_1}e_1&=\lambda_6\,e_1+e_2,\quad\quad &\nabla^6_{e_1}e_2&=e_2,\quad\quad &\nabla^6_{e_2}e_1&=e_2,\quad\quad &\nabla^6_{e_2}e_2&=\nu_6\,e_1+\eta_6\,e_2.
	\\	\nabla^7_{e_1}e_1&=\lambda_7\,e_1,\quad\quad &\nabla^7_{e_1}e_2&=e_2,\quad\quad &\nabla^7_{e_2}e_1&=e_2,\quad\quad &\nabla^7_{e_2}e_2&=\varepsilon_2\,e_1+\eta_7\,e_2.
	\\	\nabla^8_{e_1}e_1&=\lambda_8\,e_1,\quad\quad &\nabla^8_{e_1}e_2&=e_2,\quad\quad &\nabla^8_{e_2}e_1&=e_2,\quad\quad &\nabla^8_{e_2}e_2&=\delta_1\,e_2.
	\\	\nabla^9_{e_1}e_1&=e_1+e_2,\quad\quad &\nabla^9_{e_1}e_2&=0,\quad\quad &\nabla^9_{e_2}e_1&=0,\quad\quad &\nabla^9_{e_2}e_2&=\nu_9\,e_1+\eta_9\,e_2.	\\	\nabla^{10}_{e_1}e_1&=e_1,\quad\quad &\nabla^{10}_{e_1}e_2&=0,\quad\quad &\nabla^{10}_{e_2}e_1&=0,\quad\quad &\nabla^{10}_{e_2}e_2&=\varepsilon_2\,e_1+\eta_{10}\,e_2.\\	\nabla^{11}_{e_1}e_1&=e_2,\quad\quad &\nabla^{11}_{e_1}e_2&=0,\quad\quad &\nabla^{11}_{e_2}e_1&=0,\quad\quad &\nabla^{11}_{e_2}e_2&=e_1+\eta_{11}\,e_2.\\
	\nabla^{12}_{e_1}e_1&=e_2,\quad\quad &\nabla^{12}_{e_1}e_2&=0,\quad\quad &\nabla^{12}_{e_2}e_1&=0,\quad\quad &\nabla^{12}_{e_2}e_2&=\delta_{\varepsilon}\,e_2.\\
		\nabla^{13}_{e_1}e_1&=0,\quad\quad &\nabla^{13}_{e_1}e_2&=0,\quad\quad &\nabla^{13}_{e_2}e_1&=0,\quad\quad &\nabla^{13}_{e_2}e_2&=e_1+e_2.\\
		\nabla^{14}_{e_1}e_1&=0,\quad\quad &\nabla^{14}_{e_1}e_2&=0,\quad\quad &\nabla^{14}_{e_2}e_1&=0,\quad\quad &\nabla^{14}_{e_2}e_2&=e_1.\\	\nabla^{15}_{e_1}e_1&=0,\quad\quad &\nabla^{15}_{e_1}e_2&=0,\quad\quad &\nabla^{15}_{e_2}e_1&=0,\quad\quad &\nabla^{15}_{e_2}e_2&=\delta_2\,e_2.
\end{alignedat}
\]
\end{small}
$\triangleright$ Lie algebra $\G_{3,5} :$
\begin{small}
\[
\begin{alignedat}{4}
\nabla^{1}_{e_1}e_1 &=\lambda_1\, e_1+\mu_1\,e_2, &\quad\quad \nabla^{1}_{e_1}e_2 &= e_2,&\quad\quad
\nabla^{1}_{e_2}e_1 &= e_2, &\quad\quad \nabla^{1}_{e_2}e_2 &=\nu_1\, e_1+\eta_1 \, e_2.\\
\nabla^{2}_{e_1}e_1 &=\lambda_2\, e_1+e_2, &\quad\quad \nabla^{2}_{e_1}e_2 &=0,&\quad\quad
\nabla^{2}_{e_2}e_1 &=0, &\quad\quad \nabla^{2}_{e_2}e_2 &=\nu_2\, e_1+\eta_2 \, e_2.\\
\nabla^{3}_{e_1}e_1 &=e_1, &\quad\quad \nabla^{3}_{e_1}e_2 &=0,&\quad\quad
\nabla^{3}_{e_2}e_1 &=0, &\quad\quad \nabla^{3}_{e_2}e_2 &=\nu_3\, e_1+\eta_3 \, e_2.\\
\nabla^{4}_{e_1}e_1 &=0, &\quad\quad \nabla^{4}_{e_1}e_2 &=0,&\quad\quad
\nabla^{4}_{e_2}e_1 &=0, &\quad\quad \nabla^{4}_{e_2}e_2 &= e_1+\eta_4 \, e_2.\\
\nabla^{5}_{e_1}e_1 &=0, &\quad\quad \nabla^{5}_{e_1}e_2 &=0,&\quad\quad
\nabla^{5}_{e_2}e_1 &=0, &\quad\quad \nabla^{5}_{e_2}e_2 &=\delta \, e_2.
\end{alignedat}
\]
\end{small}
Where, $\lambda_j,\mu_j,\nu_j,\eta_j\in\R$, $\varepsilon_j=\pm1$,  $\delta_j=0,1$, and $\delta_{\varepsilon}=0,\pm1$.\\\\

\end{Le}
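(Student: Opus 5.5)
Each $\G_{3,j}$, $j=1,\dots,5$, is written as $\R e_3\ltimes\R^2$ with $\G_0=\langle e_1,e_2\rangle$ abelian, and $\mathrm{D}=\mathrm{ad}_{e_3}|_{\G_0}$ read off from the brackets. For $\G_{3,1}$ I first choose a basis in which $e_3$ is central and $\G_0$ is an abelian ideal containing the derived algebra. Following Lemma~\ref{isoofconnection} and Remark~\ref{sympleofauto} (with $u=0$, $\sigma=0$), the admissible changes of basis on $\G_0$ are those $\Phi\in\mathrm{GL}_2(\R)$ satisfying $(c_4)$ with $u=0$, $\sigma=0$, that is $\Phi\circ\mathrm{D}=\tau\,\mathrm{D}\circ\Phi$ for some $\tau\neq0$. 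So the first step is to compute, for each $j$, the group $N_j=\{\Phi\in\mathrm{GL}_2(\R):\Phi\mathrm{D}\Phi^{-1}\in\R^\ast\mathrm{D}\}$:
\begin{itemize}
\item for $\G_{3,3}$, $\mathrm{D}$ is a multiple of the identity, so $N_3=\mathrm{GL}_2(\R)$ and the list must coincide with Lemma~\ref{ClassitorsionfreeR};
\item for $\G_{3,4}$ with $\alpha\neq-1$, $N_4$ is the diagonal torus;
\item for $\alpha=-1$, $N_4$ also contains the swap of $e_1,e_2$;
\item for $\G_{3,2}$, $N_2$ consists of upper-triangular Jordan-commuting matrices;
\item for $\G_{3,5}$, $N_5$ is $\R^\ast\,\mathrm{SO}(2)$, together with reflections when $\beta=0$;
\item for $\G_{3,1}$, $N_1$ is the stabilizer of the line $\mathrm{Im}\,\mathrm{D}$, up to scaling.
\end{itemize}

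Next I write a general torsion-free connection on $\R^2$ as in \eqref{GeneralfreeconnR}, with six parameters $a_{11},a_{21},a_{12},a_{22},b_{12},b_{22}$. I compute how $\Phi\in N_j$ acts on them by $\nabla\mapsto\Phi\circ\nabla_{\Phi^{-1}(\cdot)}\circ\Phi^{-1}$. The normalization then proceeds by a case tree on vanishing or non-vanishing of the off-diagonal coefficients, exactly as in the proof of Lemma~\ref{ClassitorsionfreeR}. For example, for $\G_{3,4}$ with $\Phi=\mathrm{diag}(x,t)$, the coefficients scale by monomials in $x,t$, for instance $a_{21}\mapsto t a_{21}/x^2$ and $a_{12}\mapsto a_{12}/t$. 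Whenever a coefficient is nonzero, I use it to fix one free scaling to $1$, or to $\pm1$ when only a square is available. Parameters that cannot be scaled away remain as the continuous $\lambda_j,\mu_j,\nu_j,\eta_j$. Zero cases branch further, and the bottom of each branch yields $\delta,\delta_\varepsilon$ entries. For $\G_{3,5}$ the rotation group acts transitively on directions, so I would first rotate so that a chosen cubic form in the coefficients vanishes, as was done for $a_{12}$ in Lemma~\ref{ClassitorsionfreeR}. This is possible since a real cubic has a root. After that, I scale with the remaining $\R^\ast$.

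The hypothesis that $\nabla^0$ extends to a flat connection on $\G_{3,j}$ is used to prune branches. Condition $(i)$ of the curvature lemma says the curvature of $\nabla^0$ has the rank-one form $\theta(x,z)\beta(y)-\theta(y,z)\beta(x)$. Conditions $(v)$ and $(vi)$ tie $\nabla^0$ to $\mathrm{D}=\eta-\beta$. For a candidate normal form I can therefore discard it when these equations, together with $\eta=\beta+\mathrm{D}$, have no solution in $(\theta,\beta,\gamma,\zeta,\lambda)$. This is why, for instance, $\G_{3,1}$ lists forms with $\nabla_{e_2}e_2$ involving $e_1$, adapted to $\mathrm{Im}\,\mathrm{D}$. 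In practice I would keep every normal form that survives a quick check of $(i)$ and $(vi)$, and leave the full flatness analysis to the subsequent classification.

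The main obstacle is the bookkeeping of the case tree for the small groups $N_1,N_2,N_4$, where only one or two scalings are available. There one must check both exhaustiveness and which parameters truly remain free. I would handle this by tracking, in each branch, the weight vector of each coefficient under the torus $\mathrm{diag}(x,t)$ (and the extra unipotent part for $\G_{3,2}$, used first to kill a coefficient). I would then normalize greedily in the order $a_{21},a_{12},b_{12}$.
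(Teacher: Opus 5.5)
Your route is the paper's. Its matrices $\Phi_{3,1},\dots,\Phi_{3,5}$ are exactly your groups $N_j$; it does not use the extra reflections available for $\G_{3,5}$ when $\beta=0$. It normalizes the six coefficients by the same kind of case tree, with the real-root-of-a-cubic argument for $\G_{3,5}$.

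Two corrections to your set-up.
\begin{itemize}
\item For $\G_{3,1}$ the complement $\ell$ cannot be central. That forces $\mathrm{D}=0$ and $N_1=\mathrm{GL}_2(\R)$, and you would reproduce the abelian list. The paper takes $\ell=e_1$ and $\G_0=\langle e_2,e_3\rangle$. Then $\mathrm{D}$ is nilpotent of rank one with image the center, and $N_1$ is the lower-triangular stabilizer of that line, as you go on to say.
\item The paper never uses the flat-extension hypothesis in this lemma. The lists are bare orbit normal forms and contain many forms that do not extend; by the paper's own later computations this includes $\nabla^1,\dots,\nabla^8$ for $\G_{3,1}$. So your pruning via $(i)$--$(vi)$ is superfluous, and it is not what shapes the $\G_{3,1}$ list.
\end{itemize}

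The genuine problem is the exhaustiveness check you single out: done carefully, it does not end in the printed list. For $\G_{3,4}$ with $\alpha\neq-1$, $N_4$ is the diagonal torus. It multiplies each of $a_{11},a_{21},a_{12},a_{22},b_{12},b_{22}$ by a nonzero monomial in $x,t$, so it preserves the set of nonvanishing coefficients.

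The branch $a_{12}=a_{22}=a_{21}=b_{12}=0\neq a_{11}$ normalizes to $\nabla_{e_1}e_1=e_1$, $\nabla_{e_2}e_2=\delta e_2$. No listed form has this support: the printed $\nabla^{10}$ requires $\varepsilon_2=\pm1$, i.e.\ $b_{12}\neq0$.

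Its $\delta=0$ member cannot be pruned. Consider
\[
\nabla_{e_1}e_1=e_1+e_3,\quad \nabla_{e_1}e_3=-e_1-e_3,\quad \nabla_{e_3}e_1=-2e_1-e_3,\quad \nabla_{e_3}e_2=-\alpha e_2,\quad \nabla_{e_3}e_3=-e_3,
\]
with all other products zero. This is a flat torsion-free connection on $\G_{3,4}$: it is the image of $\h_{0,7}$ (from the $\G_{3,4}^{\alpha\neq-1}$ table, with $\varepsilon=1$) under the automorphism $e_3\mapsto e_3+e_1$. Its $\G_0$-part is $\nabla^0_{e_1}e_1=e_1$, $\nabla^0_{e_1}e_2=\nabla^0_{e_2}e_2=0$.

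So, for the conjugation equivalence that both you and the paper use, neither your case tree nor the paper's ``the other cases are handled similarly'' can establish the statement as printed. What the method actually proves is the list with $\varepsilon_2=0$ also allowed in $\nabla^{10}$.

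A similar slip occurs for $\G_{3,2}$. In the branch $a_{11}=a_{21}=a_{22}=0$, $b_{22}=2a_{12}\neq0$, the coefficient $b_{12}$ is invariant. The normal form $\nabla^6$ is therefore $\nabla_{e_2}e_2=\nu_6e_1+2e_2$, not the printed $\eta_6e_2+2e_2$.
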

\begin{proof}
	Let $\{e_1,e_2\}$ be a basis of $\R^2$, and let $\nabla$ be a non-flat, torsion-free connection on $\R^2$. Then, with respect to the basis $\{e_1,e_2\}$, $\nabla$ is given by
	\begin{equation}\label{conabelian1}
		\nabla_{e_1}=\begin{pmatrix}
			a_{11}&a_{12}\\
			a_{21}&a_{22}
		\end{pmatrix},
		\qquad
		\nabla_{e_2}=\begin{pmatrix}
			a_{12}&b_{12}\\
			a_{22}&b_{22}
		\end{pmatrix}.
	\end{equation}
	Every solvable Lie algebra $\G_{3,j}$ can be viewed as a semidirect sum $\mathbb{R}\ltimes\mathbb{R}^2$.  Lemma~\ref{Auto} describes precisely how an automorphism of these algebras must satisfy certain conditions in order to be restricted to $\mathbb{R}^2$. Let $\Phi$ be an automorphism of $\mathbb{R}^2$. According to Lemma~\ref{Auto}, $\Phi$ has the following form:
	\begin{align*}
		\Phi_{3,1}&=\begin{pmatrix}
			x&0\\
			z&t
		\end{pmatrix},&\Phi_{3,2}&=\begin{pmatrix}
			x&y\\
			0&x
		\end{pmatrix},&\Phi_{3,3}&=\begin{pmatrix}
			x&y\\
			z&t
		\end{pmatrix},\\
		\quad\Phi_{3,4}^{\alpha\neq-1}&=\begin{pmatrix}
			x&0\\
			0&t
		\end{pmatrix}, &\Phi_{3,4}^{\alpha=-1}&=\begin{pmatrix}
		x&0\\
		0&t
		\end{pmatrix},\begin{pmatrix}
			0&y\\
			z&0
		\end{pmatrix},&\Phi_{3,5}&=\begin{pmatrix}
			x&-z\\
			z&x
		\end{pmatrix}.
	\end{align*}
	Let $\nabla^{ij}_{e_\ell}$ denote the coefficients of
	\[
	\Phi_{3,k}\circ\nabla_{\Phi_{3,k}^{-1}(e_\ell)}\circ\Phi_{3,k}^{-1},
	\quad\quad \text{for all } x\in\R^2.
	\]
	$\bullet$ \textbf{Lie algebra $\G_{3,1} :$ }  
	
	$\triangleright$ Suppose that $b_{12}\neq0$. If $a_{12}+b_{22}\neq0$, consider the following automorphism:
	\begin{align*}
		\Phi(e_1) &=\tfrac{(a_{12}+b_{22})^2}{b_{12}} e_1+\tfrac{a_{12}(a_{12}+b_{22})}{b_{12}} e_2, &
		\Phi(e_2) &=(a_{12}+b_{22})\,e_2.
	\end{align*}
	Applying the automorphism $\Phi$  to the connection given in \eqref{conabelian1} yields the following equivalent connection:	
		\begin{equation}\label{g3,1, c1}
			\begin{aligned}
				\nabla_{e_1}e_1&=\lambda_1\,e_1+\mu_1\,e_2, &\nabla_{e_1}e_2&=\nu_1\,e_2, &\nabla_{e_2}e_1&=\nu_1\,e_2, &\nabla_{e_2}e_2&=e_1+e_2.
			\end{aligned}
		\end{equation}
		If $a_{12}+b_{22}=0$, and $a_{12}^2+a_{22}b_{12}\neq0$. Set $b_{22}=-a_{12}$, and consider the following automorphism
		\begin{align*}
			\Phi(e_1) &=\tfrac{t^2}{b_{12}} e_1+\tfrac{t\,a_{12}}{b_{12}}e_2, &
			\Phi(e_2) &=t\,e_2.
		\end{align*}
		For a suitable choice of the parameter $t\in\R^\ast$, applying the automorphism $\Phi$  to the connection given in \eqref{conabelian1} yields the following equivalent connection:
		\begin{equation}\label{g3,1, c2}
			\begin{aligned}
				\nabla_{e_1}e_1&=\lambda_2\,e_1+\mu_2\,e_2, &\nabla_{e_1}e_2&=\varepsilon_1\,e_2, &\nabla_{e_2}e_1&=\varepsilon_1\,e_2, &\nabla_{e_2}e_2&=e_1.
			\end{aligned}
		\end{equation}

		If $a_{12}+b_{22}=0$,  $a_{12}^2+a_{22}b_{12}=0$ and $a_{1 1}a_{1 2} + a_{2 1}b_{1 2}\neq0$. Set $b_{22}=-a_{12}$ and $a_{22}=\frac{a_{12}^2}{b_{12}}$, and consider the following automorphism
		\begin{align*}
			\Phi(e_1) &=\tfrac{t^2}{b_{12}} e_1+\tfrac{t\,a_{12}}{b_{12}}e_2, &
			\Phi(e_2) &=t\,e_2.
		\end{align*}
		For a suitable choice of the parameter $t\in\R^\ast$, applying the automorphism $\Phi$  to the connection given in \eqref{conabelian1} yields the following equivalent connection:
		\begin{equation}\label{g3,1, c3}
			\begin{aligned}
				\nabla_{e_1}e_1&=\lambda_3\,e_1+e_2, &\nabla_{e_1}e_2&=0, &\nabla_{e_2}e_1&=0, &\nabla_{e_2}e_2&=e_1.
			\end{aligned}
		\end{equation}

			If $a_{12}+b_{22}=0$,  $a_{12}^2+a_{22}b_{12}=0$ and $a_{1 1}a_{1 2} + a_{2 1}b_{1 2}=0$. Set $b_{22}=-a_{12}$,  $a_{22}=\frac{a_{12}^2}{b_{12}}$ and $a_{21}=-\frac{a_{11}a_{12}}{b_{12}}$, and consider the following automorphism
		\begin{align*}
			\Phi(e_1) &=\tfrac{t^2}{b_{12}} e_1+\tfrac{t\,a_{12}}{b_{12}}e_2, &
			\Phi(e_2) &=t\,e_2.
		\end{align*}
		For a suitable choice of the parameter $t\in\R^\ast$, applying the automorphism $\Phi$  to the connection given in \eqref{conabelian1} yields the following equivalent connection:
		\begin{equation}\label{g3,1, c4}
			\begin{aligned}
				\nabla_{e_1}e_1&=\delta_\varepsilon\,e_1, &\nabla_{e_1}e_2&=0, &\nabla_{e_2}e_1&=0, &\nabla_{e_2}e_2&=e_1.
			\end{aligned}
		\end{equation}

$\triangleright$ Suppose that $b_{12}=0$. If $a_{12}\neq0$ and $(a_{1 1} + 2\,a_{2 2})a_{1 2} - a_{1 1}b_{2 2}\neq0$, consider the following automorphism:
\begin{align*}
	\Phi(e_1) &=x\,e_1+\tfrac{a_{11}}{2}e_2, &
	\Phi(e_2) &=a_{12}e_2.
\end{align*}
For a suitable choice of the parameter $x\in\R^\ast$, applying the automorphism $\Phi$  to the connection given in \eqref{conabelian1} yields the following equivalent connection:	
\begin{equation}\label{g3,1, c5}
	\begin{aligned}
		\nabla_{e_1}e_1&=\lambda_5\,e_2, &\nabla_{e_1}e_2&=e_1+e_2, &\nabla_{e_2}e_1&=e_1+e_2, &\nabla_{e_2}e_2&=\mu_5\,e_2.
	\end{aligned}
\end{equation}

If $a_{12}\neq0$, $(a_{1 1} + 2\,a_{2 2})a_{1 2} - a_{1 1}b_{2 2}=0$ and $(2\,a_{1 2} - b_{2 2})a_{1 1}^2 + 4\,a_{2 1}a_{1 2}^2\neq0$. Set $a_{22}=\frac{a_{11}(b_{22}-a_{12})}{2\,a_{12}}$, and consider the following automorphism:
\begin{align*}
	\Phi(e_1) &=x\,e_1+\tfrac{a_{11}}{2}e_2, &
	\Phi(e_2) &=a_{12}e_2.
\end{align*}
For a suitable choice of the parameter $x\in\R^\ast$, applying the automorphism $\Phi$  to the connection given in \eqref{conabelian1} yields the following equivalent connection:	
\begin{equation}\label{g3,1, c6}
	\begin{aligned}
		\nabla_{e_1}e_1&=\delta_\varepsilon\,e_2, &\nabla_{e_1}e_2&=e_1, &\nabla_{e_2}e_1&=e_1, &\nabla_{e_2}e_2&=\mu_6\,e_2.
	\end{aligned}
\end{equation}		

$\triangleright$ Suppose that $b_{12}=a_{12}=0$. If $b_{22}\neq0$ and $a_{11}\neq0$, consider the following automorphism:
\begin{align*}
	\Phi(e_1) &=a_{11}\,e_1+a_{22}\,e_2, &
	\Phi(e_2) &=b_{22}\,e_2.
\end{align*}
Applying the automorphism $\Phi$  to the connection given in \eqref{conabelian1} yields the following equivalent connection:	
\begin{equation}\label{g3,1, c7}
	\begin{aligned}
		\nabla_{e_1}e_1&=e_1+\lambda_7\,e_2, &\nabla_{e_1}e_2&=0, &\nabla_{e_2}e_1&=0, &\nabla_{e_2}e_2&=e_2.
	\end{aligned}
\end{equation}

 If $b_{22}\neq0$ and $a_{11}=0$, consider the following automorphism:
\begin{align*}
	\Phi(e_1) &=x\,\,e_1+a_{22}\,e_2, &
	\Phi(e_2) &=b_{22}\,e_2.
\end{align*}
For a suitable choice of the parameter $x\in\R^\ast$, applying the automorphism $\Phi$  to the connection given in \eqref{conabelian1} yields the following equivalent connection:	
\begin{equation}\label{g3,1, c8}
	\begin{aligned}
		\nabla_{e_1}e_1&=\delta_{\varepsilon_1}\,e_2, &\nabla_{e_1}e_2&=0, &\nabla_{e_2}e_1&=0, &\nabla_{e_2}e_2&=e_2.
	\end{aligned}
\end{equation}

$\triangleright$ Suppose that $b_{12}=a_{12}=b_{22}=0$. If $a_{22}\neq0$ and $a_{11}-2\,a_{22}\neq0$, consider the following automorphism:
\begin{align*}
	\Phi(e_1) &=a_{22}\,e_1+\tfrac{a_{21}}{2\,a_{22}-a_{11}} e_2, &
	\Phi(e_2) &=e_2.
\end{align*}
Applying the automorphism $\Phi$  to the connection given in \eqref{conabelian1} yields the following equivalent connection:	
\begin{equation}\label{g3,1, c9}
	\begin{aligned}
		\nabla_{e_1}e_1&=\lambda_9\,e_1, &\nabla_{e_1}e_2&=e_2, &\nabla_{e_2}e_1&=e_2, &\nabla_{e_2}e_2&=0.
	\end{aligned}
\end{equation}	

If $a_{22}\neq0$ and $a_{11}-2\,a_{22}=0$. Set $a_{11}=2\,a_{22}$, and consider the following automorphism:
\begin{align*}
	\Phi(e_1) &=a_{22}\,e_1, &
	\Phi(e_2) &=t\,e_2.
\end{align*}
For a suitable choice of the parameter $t\in\R^\ast$, applying the automorphism $\Phi$  to the connection given in \eqref{conabelian1} yields the following equivalent connection:	
\begin{equation}\label{g3,1, c10}
	\begin{aligned}
		\nabla_{e_1}e_1&=2\,e_1+\delta\,e_2, &\nabla_{e_1}e_2&=e_2, &\nabla_{e_2}e_1&=e_2, &\nabla_{e_2}e_2&=0.
	\end{aligned}
\end{equation}	
Note that if $\delta=0$, then this torsion-free connection coincides with the one given in~\eqref{g3,1, c9}, in which case $\lambda_9=2$. Therefore, for the equivalence classification, we may assume that $\delta=1$.

$\triangleright$ Suppose that $b_{12}=a_{12}=b_{22}=a_{22}=0$. If $a_{11}\neq0$, consider the following automorphism:
\begin{align*}
	\Phi(e_1) &=a_{11}\,e_1-\tfrac{a_{21}}{a_{11}} e_2, &
	\Phi(e_2) &=e_2.
\end{align*}
Applying the automorphism $\Phi$  to the connection given in \eqref{conabelian1} yields the following equivalent connection:	
\begin{equation}\label{g3,1, c11}
	\begin{aligned}
		\nabla_{e_1}e_1&=e_1, &\nabla_{e_1}e_2&=0, &\nabla_{e_2}e_1&=0, &\nabla_{e_2}e_2&=0.
	\end{aligned}
\end{equation}	
If $a_{11}=0$, consider the following automorphism:
\begin{align*}
	\Phi(e_1) &=e_1, &
	\Phi(e_2) &=t\,e_2.
\end{align*}
 For a suitable choice of the parameter $t\in\R^\ast$, applying the automorphism $\Phi$  to the connection given in \eqref{conabelian1} yields the following equivalent connection:	
\begin{equation}\label{g3,1, c12}
	\begin{aligned}
		\nabla_{e_1}e_1&=\delta\,e_2, &\nabla_{e_1}e_2&=0, &\nabla_{e_2}e_1&=0, &\nabla_{e_2}e_2&=0.
	\end{aligned}
\end{equation}	
	$\bullet$ \textbf{Lie algebra $\G_{3,2} :$ } \\ $\triangleright$ Suppose that $a_{21}\neq0$, and consider the following automorphism:
	\begin{align*}
		\Phi(e_1) &=a_{21}\, e_1, &
		\Phi(e_2) &=-a_{11}\,e_1+a_{21}\,e_2.
	\end{align*}
Applying the automorphism $\Phi$  to the connection given in \eqref{conabelian1} yields the following equivalent connection:	
	\begin{equation}\label{g3,2, c1}
		\begin{aligned}
		\nabla_{e_1}e_1&=e_2, &\nabla_{e_1}e_2&=\lambda_1\,e_1+\mu_1\,e_2, &\nabla_{e_2}e_1&=\lambda_1\,e_1+\mu_1\,e_2, &\nabla_{e_2}e_2&=\nu_1\,e_1+\eta_1\,e_2.
		\end{aligned}
	\end{equation}

 $\triangleright$ Suppose that $a_{21}=0$. If $a_{11}-a_{22}\neq0$ and $a_{22}\neq0$, consider the following automorphism:
\begin{align*}
	\Phi(e_1) &=a_{22}\, e_1, &
	\Phi(e_2) &=\tfrac{a_{12}a_{22}}{a_{11}-a_{22}}e_1+a_{22}\,e_2.
\end{align*}
Applying the automorphism $\Phi$  to the connection given in \eqref{conabelian1} yields the following equivalent connection:	
\begin{equation}\label{g3,2, c2}
	\begin{aligned}
		\nabla_{e_1}e_1&=\lambda_2\,e_1, &\nabla_{e_1}e_2&=e_2, &\nabla_{e_2}e_1&=e_2, &\nabla_{e_2}e_2&=\nu_2\,e_1+\eta_2\,e_2.
	\end{aligned}
\end{equation}

If $a_{11}-a_{22}\neq0$ and $a_{22}=0$, consider the following automorphism:
\begin{align*}
	\Phi(e_1) &=a_{11}\, e_1, &
	\Phi(e_2) &=a_{12}\,e_1+a_{11}\,e_2.
\end{align*}
Applying the automorphism $\Phi$  to the connection given in \eqref{conabelian1} yields the following equivalent connection:	
\begin{equation}\label{g3,2, c3}
	\begin{aligned}
		\nabla_{e_1}e_1&=e_1, &\nabla_{e_1}e_2&=0, &\nabla_{e_2}e_1&=0, &\nabla_{e_2}e_2&=\nu_3\,e_1+\eta_3\,e_2.
	\end{aligned}
\end{equation}

If $a_{22}=a_{11}$ and $a_{11}\neq0$, consider the following automorphism:
\begin{align*}
	\Phi(e_1) &=a_{11}\, e_1, &
	\Phi(e_2) &=\tfrac{b_{22}}{a_{11}}e_1+a_{11}\,e_2.
\end{align*}
Applying the automorphism $\Phi$  to the connection given in \eqref{conabelian1} yields the following equivalent connection:	
\begin{equation}\label{g3,2, c4}
	\begin{aligned}
		\nabla_{e_1}e_1&=e_1, &\nabla_{e_1}e_2&=\lambda_4\,e_1+e_2, &\nabla_{e_2}e_1&=\lambda_4\,e_1+e_2, &\nabla_{e_2}e_2&=\nu_4\,e_1.
	\end{aligned}
\end{equation}

 $\triangleright$ Suppose that $a_{21}=a_{11}=a_{22}=0$. If $a_{12}\neq0$ and $2\,a_{12}-b_{22}\neq0$, consider the following automorphism:
\begin{align*}
	\Phi(e_1) &=a_{12}\, e_1, &
	\Phi(e_2) &=\tfrac{b_{12}a_{12}}{2\,a_{12}-b_{22}}e_1+a_{12}\,e_2.
\end{align*}
Applying the automorphism $\Phi$  to the connection given in \eqref{conabelian1} yields the following equivalent connection:	
\begin{equation}\label{g3,2, c5}
	\begin{aligned}
		\nabla_{e_1}e_1&=0, &\nabla_{e_1}e_2&=e_1, &\nabla_{e_2}e_1&=e_1, &\nabla_{e_2}e_2&=\eta_5\,e_2.
	\end{aligned}
\end{equation}

	If $a_{12}\neq0$ and $b_{22}=2\,a_{12}$, consider the following automorphism:
	\begin{align*}
		\Phi(e_1) &=a_{12}\, e_1, &
		\Phi(e_2) &=a_{12}\,e_2.
	\end{align*}
	Applying the automorphism $\Phi$  to the connection given in \eqref{conabelian1} yields the following equivalent connection:	
	\begin{equation}\label{g3,2, c6}
		\begin{aligned}
			\nabla_{e_1}e_1&=0, &\nabla_{e_1}e_2&=e_1, &\nabla_{e_2}e_1&=e_1, &\nabla_{e_2}e_2&=\eta_6\,e_2+2\,e_2.
		\end{aligned}
	\end{equation}

$\triangleright$ Suppose that $a_{21}=a_{11}=a_{22}=a_{12}=0$. If $b_{22}\neq0$, consider the following automorphism:
\begin{align*}
	\Phi(e_1) &=b_{22}\, e_1, &
	\Phi(e_2) &=-b_{12}\,e_1+b_{22}\,e_2.
\end{align*}
Applying the automorphism $\Phi$  to the connection given in \eqref{conabelian1} yields the following equivalent connection:	
\begin{equation}\label{g3,2, c7}
	\begin{aligned}
		\nabla_{e_1}e_1&=0, &\nabla_{e_1}e_2&=0, &\nabla_{e_2}e_1&=0, &\nabla_{e_2}e_2&=e_2.
	\end{aligned}
\end{equation}

	$\triangleright$ Suppose that $a_{21}=a_{11}=a_{22}=a_{12}=b_{22}=0$. If $b_{22}\neq0$, consider the following automorphism:
	\begin{align*}
		\Phi(e_1) &=x\, e_1, &
		\Phi(e_2) &=x\,e_2.
	\end{align*}
	For a suitable choice of the parameter $x\in\R^\ast$, applying the automorphism $\Phi$  to the connection given in \eqref{conabelian1} yields the following equivalent connection:	
	\begin{equation}\label{g3,2, c8}
		\begin{aligned}
			\nabla_{e_1}e_1&=0, &\nabla_{e_1}e_2&=0, &\nabla_{e_2}e_1&=0, &\nabla_{e_2}e_2&=\delta\,e_1,\quad\delta=0,1.
		\end{aligned}
	\end{equation}

	$\bullet$ \textbf{Lie algebra $\G_{3,3} :$ } Since every restricted automorphism $\Phi=\Psi|_{\R^2}$ of $\G_{3,3}=\R\ltimes\R^2$ is an element of $\mathrm{GL}_2(\R)$, this situation coincides with that of the abelian Lie algebra $\R^2$. Consequently, the non-flat torsion-free connections extended to $\G_{3,3}$ are precisely those arising in the case of $\R^2$.
	\\\\
	$\bullet$ \textbf{Lie algebra $\G_{3,4} :$ } Assume that $\alpha\neq-1$.  Applying the automorphism $\Phi_{3,4}^{\alpha\neq-1}$  to the connection given in \eqref{conabelian1} yields the following equivalent connection:
	\begin{align}
		\nabla_{e_1}e_1&=\tfrac{a_{11}}{x}e_1+\tfrac{t\,a_{21}}{x^2}e_2, &\nabla_{e_1}e_2&=\tfrac{a_{12}}{t}e_1+\tfrac{a_{22}}{x}e_2, &\nabla_{e_2}e_1&=\tfrac{a_{12}}{t}e_1+\tfrac{a_{22}}{x}e_2, &\nabla_{e_2}e_2&=\tfrac{x\,b_{12}}{t^2}e_1+\tfrac{b_{22}}{t}e_2.
	\end{align}
	Since the parameters $x,t\in\mathbb{R}^\ast$ are arbitrary, the parameters of the connection can be normalized by an appropriate choice of $x$ and $t$. For example, when $a_{12}\neq0$ and $a_{22}\neq0$, setting $t=a_{12}$ and $x=a_{22}$ yields precisely the first connection associated with the Lie algebra $\G_{3,4}$. As for the other cases, they are handled similarly.

	Let us now assume that $\alpha=-1$. Applying the automorphism $\Phi_{3,4}^{\alpha=-1}$  to the connection given in \eqref{conabelian1} yields the following equivalent connection:
	\begin{align}
		\nabla_{e_1}e_1&=\tfrac{b_{22}}{y}e_1+\tfrac{z\,b_{12}}{y^2}e_2, &\nabla_{e_1}e_2&=\tfrac{a_{22}}{z}e_1+\tfrac{a_{12}}{y}e_2, &\nabla_{e_2}e_1&=\tfrac{a_{22}}{z}e_1+\tfrac{a_{12}}{y}e_2, &\nabla_{e_2}e_2&=\tfrac{y\,a_{21}}{z^2}e_1+\tfrac{a_{11}}{z}e_2.
	\end{align}
	Observe first that the parameters $x,t$ in the automorphism  $\Phi^{\alpha\neq-1}_{3,4}$ are simply replaced by $y$ and $z$. Moreover, there is a permutation of the parameters of the connection given in~\eqref{conabelian1} under the action of $\Phi^{\alpha=-1}_{3,4}$. Therefore, without loss of generality and up to isomorphism, all flat, torsion-free connections under the action of $\Phi^{\alpha=-1}_{3,4}$ are precisely those obtained  under the action of $\Phi^{\alpha\neq-1}_{3,4}$. 
	Depending on the parameters of the connection, there may be isomorphisms between some flat, torsion-free connections via the second automorphism. The classification of these cases does not require us to take into account these cases.
	\\\\
	$\bullet$ \textbf{Lie algebra $\G_{3,5} :$ } We may assume that $a_{12}=0$. Indeed, if $a_{12}\neq0$,  then, we obtain that the numerator of $\nabla^{12}_{e_1}$ is 
	\begin{align}
		a_{12}x^3
		+z(a_{11}-a_{22}-b_{12})x^2
		-z^2(a_{12}+a_{21}-b_{22})x
		+a_{22}z^3.
	\end{align}
	Hence, $\nabla^{12}$ necessarily admits a real solution, since it is a polynomial of odd degree.

	From now on, we assume that $a_{12}=0$. Applying the automorphism $\Phi_{3,5}$ with $z=0$ to the connection given in \eqref{conabelian1} yields the following equivalent connection:
\begin{align}
	\nabla_{e_1}e_1&=\tfrac{a_{11}}{x}e_1+\tfrac{a_{21}}{x}e_2, &\nabla_{e_1}e_2&=\tfrac{a_{22}}{x}e_2, &\nabla_{e_2}e_1&=\tfrac{a_{22}}{x}e_2, &\nabla_{e_2}e_2&=\tfrac{b_{12}}{x}e_1+\tfrac{b_{22}}{x}e_2.
\end{align}
	
	We then normalize the parameters of this connection by choosing a suitable parameter $x\in\R^{\ast}$. For each admissible choice of the parameters, an appropriate change of variables allows us to reduce the connection to one of the normal forms listed in this lemma. For example, when $a_{22}\neq 0$, we set $x=a_{22}$; after a suitable change of variables, this yields the first connection appearing in the statement. The remaining three connections are obtained in an analogous manner. 
	
	The case $a_{22}=a_{21}=a_{11}=0$ corresponds to a flat torsion-free connection and is therefore excluded from the present classification.

	Finally, 
	a straightforward computation shows that the connections listed in Lemma~\ref{Lemg3j} on the flat Lie algebra $\G_{3,5}$ are pairwise non-isomorphic, except for certain cases that are not taken into consideration, since the corresponding isomorphisms hold only locally and not globally.

	Based on the assumptions in Lemma~\ref{Lemg3j}, it can be observed that the connections under consideration are not flat. As part of the proof, only conditions are derived that guarantee that the resulting connections are not isomorphic pairwise. Our aim is merely to establish an equivalence class and not a classification up to isomorphism, so this requirement is not essential for our purposes.
	
\end{proof}

\begin{pr}
Let $(\G, \nabla)$ be a three-dimensional real flat  Lie algebra with $\G = \G_{3,1}$. Then $(\G, \nabla)$ is isomorphic to exactly one of the flat Lie algebras listed in Table~$\ref{g3,1}$.
{\renewcommand*{\arraystretch}{1.8}
\captionof{table}{Flat torsion-free connection on the Lie algebra $\G_{3,1}$.}
\setcounter{table}{4}
\begin{footnotesize} 
\setlength{\tabcolsep}{5pt} 
\begin{longtable}{@{}cllllllc@{}} 
			\hline
		Flat algebra&\multicolumn{2}{@{}l@{}}{~~Flat torsion-free connection} &&&& Remarks \\
			\hline
$\h_{0,1}$&$\nabla_{e_1}e_1=e_1$&$\nabla_{e_1}e_2=e_3$&&&&&\\
$\h_{0,2}$&$\nabla_{e_1}e_1=e_2$&$\nabla_{e_1}e_2=e_3$&&&&&\\
$\h_{0,3}$&$\nabla_{e_1}e_2=e_3$&&&&&&\\
$\h_{0,4}$&$\nabla_{e_1}e_1=e_1$&$\nabla_{e_1}e_2=e_2+e_3$&$\nabla_{e_1}e_3=e_3$&$\nabla_{e_2}e_1=e_2$&$\nabla_{e_3}e_1=e_3$&&\\
$\h_{0,5}$&$\nabla_{e_1}e_1=e_2$&$\nabla_{e_1}e_2=(1+\lambda)e_3$&$\nabla_{e_2}e_1=\lambda e_3$&&&$\lambda\in\R^\ast$&\\
$\h_{0,6}$&$\nabla_{e_1}e_2=\frac{1}{2}e_3$&$\nabla_{e_2}e_1=-\frac{1}{2}e_3$&&&&&
\\\hline
$\h_{1,1}$&$\nabla_{e_1}e_1=e_3$&$\nabla_{e_2}e_1=-e_3$&$\nabla_{e_2}e_2=e_2$&&&&\\\hline
$\h_{4,1}$&$\nabla_{e_1}e_1=e_3$&$\nabla_{e_1}e_2=e_1$&$\nabla_{e_2}e_1=e_1-e_3$&$\nabla_{e_2}e_2=e_2$&$\nabla_{e_2}e_3=e_3$&&\\
&$\nabla_{e_3}e_2=e_3$&&&&&&\\\hline
$\h_{6,1}$&$\nabla_{e_1}e_1=\lambda e_3$&$\nabla_{e_1}e_2=e_3$&$\nabla_{e_2}e_2= e_3$&&&$\lambda\in\R^\ast$&
			\\\hline		
			\end{longtable}
			\label{g3,1}
			\end{footnotesize}	
			}
	
\end{pr}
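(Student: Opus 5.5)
We follow the scheme used for $3\G_1$ and $2\G_{2,1}\oplus\G_1$. Write the Heisenberg algebra $\G_{3,1}=\langle e_1,e_2,e_3\mid [e_1,e_2]=e_3\rangle$ as a semidirect sum $\R\ell\ltimes\G_0$ with abelian ideal $\G_0=\langle e_2,e_3\rangle\cong\R^2$ and $\ell=e_1$. The derivation is then $\mathrm{D}=\mathrm{ad}_{e_1}|_{\G_0}$, with $\mathrm{D}(e_2)=e_3$ and $\mathrm{D}(e_3)=0$, a nilpotent Jordan block (relabelled so that the connections on $\R^2$ use the basis of Lemma~\ref{Lemg3j}). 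Any torsion-free connection $\nabla$ has the form \eqref{Connetorsionfre}, with $\eta-\beta=\mathrm{D}$, and $\nabla^0$ is a torsion-free connection on $\R^2$.

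By Lemma~\ref{Auto}, the automorphisms restricting to $\G_0$ are the lower triangular matrices $\Phi_{3,1}$ satisfying $(c_4)$. By Lemma~\ref{isoofconnection} and Remark~\ref{sympleofauto}, we may therefore replace $\nabla^0$ by one of the twelve normal forms listed for $\G_{3,1}$ in Lemma~\ref{Lemg3j}, or by a flat connection on $\R^2$ from Table~\ref{FlatR2}, taken up to the restricted automorphism group.

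For each normal form we substitute into the flatness conditions $(i)$–$(vi)$ of the curvature lemma. Since $\G_0$ is abelian, condition $(i)$ states that the curvature of $\nabla^0$ equals $\theta(x,z)\beta(y)-\theta(y,z)\beta(x)$. This is the analogue of Lemma~\ref{no flat to flat}: it forces either $\nabla^0$ flat, or $\theta$ of rank one with $\beta$ compensating. We expect most non-flat normal forms (for instance $\nabla^{1},\nabla^2,\nabla^5,\nabla^7$ generically) to admit no solution of $(i)$–$(ii)$ together with $(v)$. The few that survive should, after an explicit automorphism $\Psi$ of type \eqref{Autg} with $u,\sigma\neq 0$, be isomorphic to a connection whose $\G_0$-part is flat. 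This mirrors the reduction in Cases 1, 2 and 5 of the $2\G_{2,1}\oplus\G_1$ proposition.

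We then solve the remaining linear and quadratic equations for $\beta,\gamma,\theta,\zeta,\lambda$ when $\nabla^0$ is flat, going through $\mathfrak{b}_0,\dots,\mathfrak{b}_5$. Conditions $(iii)$–$(vi)$ become polynomial equations in a few parameters, and $(v)$ couples $\eta=\beta+\mathrm{D}$ with $\nabla^0$. The nilpotency of $\mathrm{D}$ should force $\gamma$ and most of $\beta$ to vanish in many branches. Each solution family is then normalized using automorphisms $\Psi(e_1)=\tau e_1+u$, $\Psi(x)=\Phi(x)+\sigma(x)e_1$ with $\sigma$ vanishing on $e_3=\mathrm{Im}\,\mathrm{D}$, which reduces it to the representatives $\h_{0,j}$, $\h_{1,1}$, $\h_{4,1}$, $\h_{6,1}$.

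The main obstacle is the final step: proving the listed representatives are pairwise non-isomorphic and that the parameters $\lambda$ in $\h_{0,5}$ and $\h_{6,1}$ are genuine invariants. For this we plan to use isomorphism invariants of the left-symmetric algebra: $\dim$ of $\G\cdot\G$; the rank of $\mathrm{L}$ and $\mathrm{R}$; whether $\mathrm{R}_X$ is nilpotent for all $X$ (completeness); existence of a right or left identity; and the trace form $\mathrm{tr}\,\mathrm{R}_X$. For the continuous parameters we would compute a scale-invariant ratio, such as the ratio of the symmetric and antisymmetric parts of $\G_0\times\G_0\to\langle e_3\rangle$, which should recover $\lambda$ up to the identifications allowed by $(c_4)$.
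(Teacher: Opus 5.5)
Your outline matches the paper's route up to the choice of normal forms for $\nabla^0$. Where you depart from it, completeness breaks.

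You correctly note that the admissible $\Phi$ are lower triangular, i.e.\ they must preserve $\mathrm{Im}\,\mathrm{D}=\langle e_3\rangle$. For exactly that reason a flat $\nabla^0$ cannot be replaced by one of $\mathfrak{b}_0,\dots,\mathfrak{b}_5$. Table~\ref{FlatR2} lists normal forms under all of $\mathrm{GL}_2(\R)$, and under the lower-triangular group each of these classes splits according to its position relative to the line $\mathrm{Im}\,\mathrm{D}$. In the basis of Lemma~\ref{Lemg3j}, where $e_2$ spans $\mathrm{Im}\,\mathrm{D}$:
\begin{itemize}
\item $\mathfrak{b}_1$ occurs both as $\nabla^{11}$ ($\nabla_{e_1}e_1=e_1$) and as $\nabla^{8}$ with $\delta_{\varepsilon_1}=0$ ($\nabla_{e_2}e_2=e_2$);
\item $\mathfrak{b}_2$ occurs both as $\nabla^{12}$ with $\delta=1$ ($\nabla_{e_1}e_1=e_2$) and as $\nabla^{4}$ with $\delta_\varepsilon=0$ ($\nabla_{e_2}e_2=e_1$);
\item $\mathfrak{b}_4$ occurs both as $\nabla^{9}$ with $\lambda_9=1$ and as $\nabla^{6}$ with $\mu_6=1$, $\delta_\varepsilon=0$.
\end{itemize}
These are inequivalent for $\G_{3,1}$, and they lead to different flatness systems. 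In the paper, $\nabla^{12}$ with $\delta=1$ yields three solution families, while $\nabla^{4}$ yields none. Whichever placement of each $\mathfrak{b}_j$ you choose, the others are never examined. So going through $\mathfrak{b}_0,\dots,\mathfrak{b}_5$ does not show that every flat connection on $\G_{3,1}$ appears in the table.

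The fix is what the paper does: solve the flatness equations for every one of the twelve $\G_{3,1}$ normal forms of Lemma~\ref{Lemg3j}, flat or not. There $\nabla^1,\dots,\nabla^8$ give nothing, and $\nabla^9,\dots,\nabla^{12}$ produce the table after normalization.

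Your treatment of the non-flat normal forms is also only a forecast. Condition $(i)$ does not force $\theta$ to have rank one: $K^{\nabla^0}(e_2,e_3)=\beta(e_3)\,\theta(e_2,\cdot)-\beta(e_2)\,\theta(e_3,\cdot)$ can have rank two when $\theta$ is nondegenerate. Moreover, the automorphisms with $\Phi=\mathrm{id}$, $\sigma=0$, $\Psi(\ell)=\ell+u$ change $\nabla^0$ only by $-\theta\otimes u$, with $\theta$ dictated by the connection. Hence Lemma~\ref{no flat to flat} cannot simply be invoked; you would have to show the cancellation case by case. In the paper a non-flat form ($\nabla^{10}$) does admit a solution, found by direct computation and only afterwards normalized to a connection with flat $\G_0$-part.

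Finally, in the non-isomorphism step (which the paper merely asserts), your proposed invariant is misplaced. The parameters of $\h_{0,5}$ and $\h_{6,1}$ sit in products involving $e_1\notin\G_0$; the product vanishes on $\G_0\times\G_0$ in both cases. The relevant object is the $e_3$-valued form induced on $\G/\langle e_3\rangle$. For $\h_{6,1}$, the determinant of its symmetric part, normalized by the bracket, equals $4\lambda-1$. For $\h_{0,5}$, any isomorphism must satisfy $\Psi(e_2)=\Psi(e_1)\cdot\Psi(e_1)$. This forces $\Psi(e_3)=x_{11}^3e_3$, and comparing $\Psi(e_1\cdot e_2)$ with $\Psi(e_1)\cdot\Psi(e_2)$ then gives equality of the two values of $\lambda$.
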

\begin{proof}
Consider the Heisenberg algebra $\G_{3,1}$. Then, $\G_{3,1}$ can be viewed as the semidirect sum of $\R e_1$ and $\R^2=\langle e_2,e_3\rangle$. In the basis $\lbrace e_1, e_2, e_3 \rbrace$, the operators $\nabla_{e_1}$, $\nabla_{e_2}$ and $\nabla_{e_3}$ are given respectively by: 
	\begin{equation}\label{HeisCon}
		\nabla_{e_1}=\left( \begin {array}{ccc} a_{11}&a_{12}&a_{13}\\ \noalign{\medskip}
		a_{21}&a_{22}&a_{23}\\ \noalign{\medskip}a_{31}&a_{32}&a_{33}\end {array} \right),\quad
		\nabla_{e_2}=\left( \begin {array}{ccc} a_{12}&b_{12}&b_{13}\\ \noalign{\medskip}
		a_{22}&b_{22}&b_{23}\\ \noalign{\medskip}a_{32}-1&b_{32}&b_{33}\end {array} \right),\quad
		\nabla_{e_3}=\left( \begin {array}{ccc} a_{13} &b_{13}&c_{13}\\ \noalign{\medskip}
		a_{23}&b_{23}&c_{23}\\ \noalign{\medskip}a_{33}&b_{33}&c_{33}\end {array} \right),\quad
	\end{equation}
	where $a_{ij}$, $b_{ij}$, $c_{ij}\in \mathbb{R}$.

	Assume that $\nabla^0$ is a  torsion-free connection. Then $\nabla^0$ is equivalent to one of the connections listed in Lemma~\ref{Lemg3j} under the Lie algebra $\G_{3,1}$. We first consider the case $\nabla^0=\nabla^1$. Then, the connection given in \eqref{HeisCon}, becomes
	\begin{equation}\label{HeisCon1}
		\nabla_{e_1}=\left( \begin {array}{ccc} a_{11}&a_{12}&a_{13}\\ \noalign{\medskip}
		a_{21}&a_{22}&a_{23}\\ \noalign{\medskip}a_{31}&a_{32}&a_{33}\end {array} \right),\quad
		\nabla_{e_2}=\left( \begin {array}{ccc} a_{12}&b_{12}&b_{13}\\ \noalign{\medskip}
		a_{22}&\lambda_1&0\\ \noalign{\medskip}a_{32}-1&\mu_1&\nu_1\end {array} \right),\quad
		\nabla_{e_3}=\left( \begin {array}{ccc} a_{13} &b_{13}&c_{13}\\ \noalign{\medskip}
		a_{23}&0&1\\ \noalign{\medskip}a_{33}&\nu_1&1\end {array} \right),\quad
	\end{equation}
	A straightforward computation shows that the flatness-equations associated with the connection given in~\eqref{HeisCon1} admits no solution.

Similarly, if $\nabla^0=\nabla^i$, $j=2,\ldots,8$ a straightforward computation and analysis show that the flatness-equation admits no solution.

If $\nabla^0=\nabla^{9}$, then a straightforward computation shows that the flatness-equation admits three solutions: The first solution is given by the following flat, torsion-free connection:
\begin{equation}\label{g3,1,nabla9,sol1}
\begin{aligned}
\nabla_{e_1}e_1&=a_{31}e_3, &\nabla_{e_1}e_2&=e_1+a_{32}e_3, &\nabla_{e_2}e_1&=e_1+(a_{32}-1)e_3, \\\nabla_{e_2}e_2&=\tfrac{a_{32}}{a_{31}}e_1+e_2, &\nabla_{e_2}e_3&=e_3,
&\nabla_{e_3}e_2&=e_3.
\end{aligned}
	\end{equation}
	Consider the following automorphism
	\begin{align*}
		\Psi(e_1) &=a_{31}\,e_1,&
		\Psi(e_2) &=a_{32}\,e_1+e_2+(a_{32}-a_{32}^2)e_3,&
		\Psi(e_3) &=a_{31}\,e_3.
	\end{align*}
	Applying $\Psi$ to the connection given in~\eqref{g3,1,nabla9,sol1} yields the following equivalent connection:
	\begin{equation}\label{g3,1,nabla9,sol1,1}
		\begin{aligned}
			\nabla_{e_1}e_1&=e_3, &\nabla_{e_1}e_2&=e_1, &\nabla_{e_2}e_1&=e_1-e_3, &\nabla_{e_2}e_2&=e_2, &\nabla_{e_2}e_3&=e_3,
			&\nabla_{e_3}e_2&=e_3.
		\end{aligned}
	\end{equation}
		This connection is precisely the one appearing in Table~\ref{g3,1}, associated with the flat Lie algebra $\h_{4,1}$.

	The second solution is given by the following flat, torsion-free connection:
		\begin{equation}\label{g3,1,nabla9,sol2}
		\begin{aligned}
			\nabla_{e_1}e_1&=2\,a_{33}e_1-a_{33}^2e_2+a_{31}e_3, &\nabla_{e_1}e_2&=e_1, &\nabla_{e_1}e_3&=a_{33}e_3, &\nabla_{e_2}e_1&=e_1-e_3, &\nabla_{e_2}e_2&=e_2,\\
			\nabla_{e_2}e_3&=e_3, &\nabla_{e_3}e_1&=a_{33}e_3, &\nabla_{e_3}e_2&=e_3.
		\end{aligned}
	\end{equation}
	 Consider then the following automorphism
	\begin{align*}
		\Psi(e_1) &=a_{33}\,e_1+x\,e_2,&
		\Psi(e_2) &= e_1,&
		\Psi(e_3) &=-x\,e_3.
	\end{align*}
	For a suitable choice of the parameter $x\in\R^\ast$, applying $\Psi$ to the connection given in~\eqref{g3,1,nabla9,sol2} yields the following equivalent connection:
	\begin{equation}\label{g3,1,nabla9,sol2,1}
		\begin{aligned}
			\nabla_{e_1}e_1&=e_1, &\nabla_{e_1}e_2&=e_2+e_3, &\nabla_{e_1}e_3&=e_3, &\nabla_{e_2}e_1&=e_2, &\nabla_{e_2}e_2&=\delta\,e_3, &\nabla_{e_3}e_1&=e_3,\quad\delta=0,1.
		\end{aligned}
	\end{equation}
	Note that if $\delta=0$, then this connection coincides with the one associated with the flat Lie algebra $\h_{0,4}$. On the other hand, if $\delta=1$, then the following automorphism
	\begin{align*}
		\Psi(e_1) &=e_2,&
		\Psi(e_2) &=- e_1,&
		\Psi(e_3) &=e_3.
	\end{align*}
	establishes an isomorphism between the connection given in~\eqref{g3,1,nabla9,sol2,1} and the one given  in~\eqref{g3,1,nabla9,sol1,1}.

	The third solution corresponds to the following flat, torsion-free connection:
		\begin{equation}\label{g3,1,nabla9,sol3}
		\begin{aligned}
			\nabla_{e_1}e_1&=a_{3 3}(a_{3 3}b_{1 2} + 2)\,e_1-(a_{3 3}^3b_{1 2} - a_{3 3}^2)\,e_2+\tfrac{(2\,a_{3 2}a_{3 3} - a_{3 3})b_{1 2} + a_{3 2}}{b_{12}}\,e_3,\\\nabla_{e_1}e_2&=(a_{33}b_{12}+1)e_1-a_{33}^2b_{12}e_2+a_{32}\,e_3,\quad\nabla_{e_1}e_3=a_{33}\,e_3,\\
			\nabla_{e_1}e_2&=(a_{33}b_{12}+1)\,e_1-a_{33}^2b_{12}\,e_2+(a_{32}-1)\,e_3,\quad\nabla_{e_2}e_2=b_{12}\,e_1+(1-a_{33}b_{12})\,e_2,\\
			\nabla_{e_2}e_3&=e_3,\quad\nabla_{e_3}e_1=a_{33}\,e_3, \quad\nabla_{e_3}e_2=e_2.
		\end{aligned}
	\end{equation}
Suppose that $a_{33}b_{12}+1\neq0$, and consider the following automorphism	
	\begin{align*}
		\Psi(e_1) &=x\,e_1+a_{33}\,e_2,&
		\Psi(e_2) &=\tfrac{x\,b_{12}}{a_{33}b_{12}+1} e_1+e_2+\tfrac{x\,b_{12}\,(1-a_{32})}{(a_{33}b_{12}+1)^2}e_3,&
		\Psi(e_3) &=\tfrac{x}{a_{33}b_{12}+1} e_3.
	\end{align*}
	For a suitable choice of the parameter $x\in\R^\ast$, applying $\Psi$ to the connection given in~\eqref{g3,1,nabla9,sol3} yields the following equivalent connection:
		\begin{equation}\label{g3,1,nabla9,sol3,1}
		\begin{aligned}
			\nabla_{e_1}e_1&=\delta\,e_3, &\nabla_{e_1}e_2&=e_1, &\nabla_{e_2}e_1&=e_1-e_3, &\nabla_{e_2}e_2&=e_2, &\nabla_{e_2}e_3&=e_3,
			&\nabla_{e_3}e_2&=e_3.
		\end{aligned}
	\end{equation}
In fact, this connection is isomorphic to the one given in \eqref{g3,1,nabla9,sol2,1} via the following automorphism
	\begin{align*}
		\Psi(e_1) &=e_2,&
		\Psi(e_2) &=- e_1,&
		\Psi(e_3) &=e_3.
	\end{align*}
	There is therefore no need to perform any further analysis.
	
	Suppose now that $a_{33}b_{12}+1=0$. Set $a_{33}=-\frac{1}{b_{12}}$, and consider the following automorphsim
	\begin{align*}
		\Psi(e_1) &=-\tfrac{1}{b_{12}}e_2+\tfrac{x\,(a_{32}-1)}{b_{12}}e_3,&
		\Psi(e_2) &=x\,e_1+e_2,&
		\Psi(e_3) &=\tfrac{x}{b_{12}}e_3.
	\end{align*}
	For a suitable choice of the parameter $x\in\R^\ast$, applying $\Psi$ to the connection given in~\eqref{g3,1,nabla9,sol3} yields exactly the one given in \eqref{g3,1,nabla9,sol3,1}. 
	Therefore, no further analysis is required.

	If $\nabla^0=\nabla^{10}$.
	Then, a straightforward computation shows that the flatness-equation admits a unique solution, which is given by the following flat, torsion-free connection:
	\begin{equation}\label{g3,1,nabla10,sol1}
		\begin{aligned}
			\nabla_{e_1}e_1&=a_{33}\,e_1+a_{33}(a_{32}-1)\,e_3, &\nabla_{e_1}e_2&=a_{33}\,e_2+a_{32}\,e_3, &\nabla_{e_1}e_3&=a_{33}\,e_3, \\\nabla_{e_2}e_1&=a_{33}\,e_2+(a_{32}-1)\,e_3, &\nabla_{e_2}e_2&=-\tfrac{1}{a_{33}}e_1+2\,e_2+\delta\,e_3, &\nabla_{e_2}e_3&=e_3,\\
			\nabla_{e_3}e_1&=a_{33}\,e_3, &\nabla_{e_3}e_2&=e_3.
		\end{aligned}
	\end{equation}
	Consider the following automorphsim
	\begin{align*}
		\Psi(e_1) &=a_{33}\,e_1+a_{33}\,x\,(a_{32}-1)\,e_3,&
		\Psi(e_2) &=e_1+x\,e_2,&
		\Psi(e_3) &=a_{33}\,x\,e_3.
	\end{align*}
	For a suitable choice of the parameter $x\in\R^\ast$, applying $\Psi$ to the connection given in~\eqref{g3,1,nabla10,sol1} yields exactly the one given in \eqref{g3,1,nabla9,sol2,1}. 
	Therefore, no further analysis is required.

	If $\nabla^0=\nabla^{11}$, then a straightforward computation shows that the flatness-equation admits three solutions: The first solution is given by the following flat, torsion-free connection:
		\begin{equation}\label{g3,1,nabla11,sol1}
		\begin{aligned}
			\nabla_{e_1}e_1&=a_{31}\,e_3, &\nabla_{e_1}e_2&=-a_{31}b_{12}\,e_3, &\nabla_{e_2}e_1&=-(a_{31}b_{12}+1)\,e_3, &\nabla_{e_2}e_2&=b_{12}\,e_1+e_2.
		\end{aligned}
	\end{equation}
	Consider the following automorphsim
	\begin{align*}
		\Psi(e_1) &=x\,e_1,&
		\Psi(e_2) &=-x\,b_{12}\,e_1+e_2+x\,b_{12}\,(a_{31}b_{12}+1)e_3,&
		\Psi(e_3) &=x\,e_3.
	\end{align*}
	For a suitable choice of the parameter $x\in\R^\ast$, applying $\Psi$ to the connection given in~\eqref{g3,1,nabla11,sol1} yields the following equivalent connection:
		\begin{equation}\label{g3,1,nabla11,sol1,1}
		\begin{aligned}
			\nabla_{e_1}e_1&=\delta\,e_3, &\nabla_{e_2}e_1&=-e_3,  &\nabla_{e_2}e_2&=e_2,\quad\delta=0,1.
		\end{aligned}
	\end{equation}
Note that if $\delta=1$, then this conection coincides with the one associated to the flat Lie algebra $\h_{1,1}$. On the other hand, if $\delta=0$, then the following automorphism 
	\begin{align*}
	\Psi(e_1) &=e_2,&
	\Psi(e_2) &=e_1,&
	\Psi(e_3) &=-e_3.
\end{align*}
maps the  connection given in~\eqref{g3,1,nabla11,sol1,1} to the one associated with the flat Lie algebra $\h_{0,1}$.

The second solution is given by the following flat, torsion-free connection:	
		\begin{equation}\label{g3,1,nabla11,sol2}
		\begin{aligned}
			\nabla_{e_1}e_1&=a_{22}^2\,e_2+a_{31}e_3, &\nabla_{e_1}e_2&=a_{22}\,e_2, &\nabla_{e_2}e_1&=a_{22}\,e_2-e_3,&\nabla_{e_2}e_2&=e_2.
		\end{aligned}
	\end{equation}
	Consider the following automorphsim
	\begin{align*}
		\Psi(e_1) &=x\,e_1+a_{22}\,e_2,&
		\Psi(e_2) &=e_2,&
		\Psi(e_3) &=x\,e_3.
	\end{align*}
	For a suitable choice of the parameter $x\in\R^\ast$, applying $\Psi$ to the connection given in~\eqref{g3,1,nabla11,sol2}  yields exactly the one given in \eqref{g3,1,nabla11,sol1,1}. 
	Therefore, no further analysis is required.

		The third solution corresponds to the following flat, torsion-free connection:
	\begin{equation}\label{g3,1,nabla11,sol3}
		\begin{aligned}
			\nabla_{e_1}e_1&=\tfrac{a_{12}^2}{b_{12}}e_1+\tfrac{a_{12}^2}{b_{12}^2}e_2+\tfrac{(a_{32}-1)a_{12}-a_{32}}{b_{12}}e_3, \quad\nabla_{e_1}e_2=a_{12}\,e_1+\tfrac{a_{12}}{b_{12}}e_2+a_{32}\,e_3,\\
			\nabla_{e_2}e_1&=a_{12}\,e_1+\tfrac{a_{12}}{b_{12}}e_2+(a_{32}-1)\,e_3,~~~\quad\nabla_{e_2}e_2=b_{12}\,e_1+e_2.
		\end{aligned}
	\end{equation}
	
	Suppose that $a_{12}+1\neq0$, and	consider the following automorphsim
	\begin{align*}
		\Psi(e_1) &=\tfrac{a_{12}\,(a_{12}+1)}{b_{12}}e_1+x\,e_2,&
		\Psi(e_2) &=(a_{12}+1)\,e_1-b_{12}\,x\,e_2+x\,b_{12}\,(a_{32}-1)(a_{12}+1)\,e_3,&
		\Psi(e_3) &=-x\,(a_{12}+1)^2e_3.
	\end{align*}
	For a suitable choice of the parameter $x\in\R^\ast$, applying $\Psi$ to the connection given in~\eqref{g3,1,nabla11,sol3} yields the following equivalent connection:
		\begin{equation}\label{g3,1,nabla11,sol3,1}
		\begin{aligned}
			\nabla_{e_1}e_1&=e_1, &\nabla_{e_1}e_2&=e_3, &\nabla_{e_2}e_2&=\delta\,e_3,\quad\delta=0,1.
		\end{aligned}
	\end{equation}
Observe first that if $\delta=0$, the this connection coincides with the one associated to the flat Lie algebra $\h_{0,1}$. On the other hand, if $\delta=1$, then the following automorphism 
\begin{align*}
	\Psi(e_1) &=e_2,&
	\Psi(e_2) &=-e_1,&
	\Psi(e_3) &=e_3.
\end{align*}
maps the  connection given in~\eqref{g3,1,nabla11,sol3,1} to the one associated with the flat Lie algebra $\h_{1,1}$.

	Suppose now that $a_{12}=-1$, and	consider the following automorphsim
	\begin{align*}
		\Psi(e_1) &=e_1,&
		\Psi(e_2) &=-b_{12}\,e_1+b_{12}^2e_2+b_{13}^3(2\,a_{32}-1)e_3,&
		\Psi(e_3) &=b_{12}^2e_3.
	\end{align*}
Applying $\Psi$ to the connection given in~\eqref{g3,1,nabla11,sol3} yields the following equivalent connection:
	\begin{equation}\label{g3,1,nabla11,sol3,2}
		\begin{aligned}
			\nabla_{e_1}e_1&=e_2, &\nabla_{e_1}e_2&=(\lambda+1)\,e_3, &\nabla_{e_2}e_2&=\lambda\,e_3,\quad\lambda\in\R.
		\end{aligned}
	\end{equation}
	In this case, $\lambda=-a_{32}$.  This connection is precisely the one appearing in Table~\ref{g3,1}, associated with the flat Lie algebra $\h_{0,5}$.  The additional conditions $\lambda\neq0$ in the flat Lie algebra $\h_{0,5}$ is imposed to ensure that the corresponding connection is not isomorphic to the  algebras $\h_{0,1}$.

	If $\nabla^0=\nabla^{12}$ and $\delta=1$, then a straightforward computation shows that the flatness-equation admits three solutions: The first solution is given by the following flat, torsion-free connection:	
		\begin{equation}\label{g3,1,nabla12,delta=1,sol1}
		\begin{aligned}
			\nabla_{e_1}e_1&=a_{22}\,e_1+a_{22}\,(a_{32}-1)\,e_2+a_{31}\,e_3,&\nabla_{e_1}e_2&=a_{22}\,e_2+a_{32}\,e_3,&\nabla_{e_1}e_3&=a_{22}\,e_3,\\
			\nabla_{e_2}e_1&=a_{22}\,e_2+(a_{32}-1)\,e_3, &\nabla_{e_2}e_2&=e_3, &\nabla_{e_3}e_1&=a_{22}\,e_3.
		\end{aligned}
	\end{equation}
Suppose that $a_{22}\neq0$, and consider the following automorphism	
\begin{align*}
	\Psi(e_1) &=(a_{22}-a_{22}a_{32})\,e_1+a_{22}\,e_2+a_{22}\,(-a_{32}^2+a_{31}+a_{32})\,e_3,&
	\Psi(e_2) &=-a_{22}\,e_1,&
	\Psi(e_3) &=a_{22}^2\,e_2.
\end{align*}
Applying $\Psi$ to the connection given in~\eqref{g3,1,nabla12,delta=1,sol1}  yields exactly the one given in \eqref{g3,1,nabla9,sol1,1}. 
Therefore, no further analysis is required.

	Suppose now that $a_{22}=0$, and consider the following automorphism	
	\begin{align*}
		\Psi(e_1) &=(a_{22}-a_{22}a_{32})\,e_1+a_{22}\,e_2+a_{22}\,(-a_{32}^2+a_{31}+a_{32})\,e_3,&
		\Psi(e_2) &=-a_{22}\,e_1,&
		\Psi(e_3) &=a_{22}^2\,e_2.
	\end{align*}
	Applying $\Psi$ to the connection given in~\eqref{g3,1,nabla12,delta=1,sol1}  yields the following equivalent connection:
	\begin{equation}\label{g3,1,nabla12,delta=1,sol1,2}
		\begin{aligned}
			\nabla_{e_1}e_1&=\lambda\,e_3, &\nabla_{e_1}e_2&=e_3, &\nabla_{e_2}e_2&=e_3.
		\end{aligned}
	\end{equation}
	
	 	In this case, $\lambda=-a_{32}^2+a_{31}+a_{32}$.  This connection is precisely the one appearing in Table~\ref{g3,1}, associated with the flat Lie algebra $\h_{6,1}$.  The additional conditions $\lambda\neq0$ in the flat Lie algebra $\h_{6,1}$ is imposed to ensure that the corresponding connection is not isomorphic to the  algebras $\h_{0,3}$.

	The second solution is given by the following flat, torsion-free connection:
		\begin{equation}\label{g3,1,nabla12,delta=1,sol2}
		\begin{aligned}
			\nabla_{e_1}e_1&=a_{11}\,e_1+(a_{11}-a_{11}a_{32})\,e_2+a_{31}\,e_3, &\nabla_{e_1}e_2&=a_{32}\,e_3, &\nabla_{e_2}e_1&=(a_{32}-1)\,e_3, &\nabla_{e_2}e_2&=e_3.
		\end{aligned}
	\end{equation}
	Suppose that $a_{11}\neq0$, and consider the following automorphism
	\begin{align*}
		\Psi(e_1) &=(a_{11}-a_{11}a_{32})\,e_1+a_{11}\,e_2+a_{11}\,(-a_{32}^2+a_{31}+a_{32})\,e_3,&
		\Psi(e_2) &=-a_{11}\,e_1,&
		\Psi(e_3) &=a_{11}^2\,e_3.
	\end{align*}
	Applying $\Psi$ to the connection given in~\eqref{g3,1,nabla12,delta=1,sol2}  yields exactly the one given in \eqref{g3,1,nabla11,sol1,1}, in which case $\delta=1$. 
	Therefore, no further analysis is required.

	Suppose now that $a_{11}=0$, and consider the following automorphism	
	\begin{align*}
		\Psi(e_1) &=e_1+(a_{32}-1)\,e_2,&
		\Psi(e_2) &=e_2,&
		\Psi(e_3) &=e_3.
	\end{align*}
	Applying $\Psi$ to the connection given in~\eqref{g3,1,nabla12,delta=1,sol2}  yields exactly the one given in \eqref{g3,1,nabla12,delta=1,sol1,2}. 
	Therefore, no further analysis is required.

		The third solution corresponds to the following flat, torsion-free connection:
		\begin{equation}\label{g3,1,nabla12,delta=1,sol3}
		\begin{aligned}
			\nabla_{e_1}e_1&=\tfrac{a_{12}^2}{b_{12}}e_1+\tfrac{a_{12}\,(a_{32}-1)}{b_{12}}e_3, &\nabla_{e_1}e_2&=a_{12}\,e_1+a_{32}\,e_2, &\nabla_{e_2}e_1&=a_{12}\,e_1+(a_{32}-1)\,e_3, \\\nabla_{e_2}e_2&=b_{12}\,e_1+e_3.
		\end{aligned}
	\end{equation}

	Suppose that $a_{12}\neq0$, and consider the following automorphism	
	\begin{align*}
		\Psi(e_1) &=\tfrac{a_{12}^2}{b_{12}}e_1-\tfrac{x\,a_{12}\,(a_{32}-1)}{b_{12}}e_3,&
		\Psi(e_2) &=a_{12}\,e_1+x\,e_2,&
		\Psi(e_3) &=\tfrac{x\,a_{12}^2}{b_{12}}e_3.
	\end{align*}
	For a suitable choice of the parameter $x\in\R^\ast$, applying $\Psi$ to the connection given in~\eqref{g3,1,nabla12,delta=1,sol3} yields the following equivalent connection:
	\begin{equation}\label{g3,1,nabla12,delta=1,sol3,1}
		\begin{aligned}
			\nabla_{e_1}e_1&=e_1, &\nabla_{e_1}e_2&=e_3, &\nabla_{e_2}e_2&=\delta\,e_3,\quad\delta=0,1.
		\end{aligned}
	\end{equation}
	Note that if $\delta=0$, then this connection coincides with the one associated to the flat Lie algebra $\h_{0,1}$. On the other hand, if $\delta=1$, then the following automorphism 
	\begin{align*}
		\Psi(e_1) &=e_2,&
		\Psi(e_2) &=-e_1,&
		\Psi(e_3) &=e_3.
	\end{align*}
	maps the  connection given in~\eqref{g3,1,nabla12,delta=1,sol3,1} to the one associated with the flat Lie algebra $\h_{1,1}$.

		Suppose that $a_{12}=0$, and consider the following automorphism	
	\begin{align*}
		\Psi(e_1) &=\tfrac{1}{b_{12}}e_2+\tfrac{1}{b_{12}^2}e_3,&
		\Psi(e_2) &=e_1,&
		\Psi(e_3) &=-\tfrac{1}{b_{12}}e_3.
	\end{align*}
Applying $\Psi$ to the connection given in~\eqref{g3,1,nabla12,delta=1,sol3}  yields exactly the one given in \eqref{g3,1,nabla11,sol3,2}, in this case $\lambda=-a_{32}$. 
Therefore, no further analysis is required.

If $\nabla^0=\nabla^{12}$ and $\delta=0$, then a straightforward computation shows that the flatness-equation admits five solutions: The first solution is given by the following flat, torsion-free connection:	
\begin{equation}\label{g3,1,nabla12,delta=0,sol1}
	\begin{aligned}
		\nabla_{e_1}e_1&=\tfrac{a_{12}^2}{b_{12}}e_1+a_{31}\,e_3, &\nabla_{e_1}e_2&=a_{12}\,e_1+\tfrac{a_{12}+a_{31}b_{12}}{a_{12}}e_3,  &\nabla_{e_2}e_1&=a_{12}\,e_1+\tfrac{a_{31}b_{12}}{a_{12}}e_3, &\nabla_{e_2}e_2&=b_{12}\,e_1.
	\end{aligned}
\end{equation}

Consider the following automorphism	
\begin{align*}
	\Psi(e_1) &=\tfrac{a_{12}^2}{b_{12}}e_2+x\,a_{31}\,e_3,&
	\Psi(e_2) &=x\,e_1+a_{12}\,e_2,&
	\Psi(e_3) &=-\tfrac{x\,a_{12}^2}{b_{12}}e_3.
\end{align*}
For a suitable choice of the parameter $x\in\R^\ast$, applying $\Psi$ to the connection given in~\eqref{g3,1,nabla12,delta=0,sol1} yields the following equivalent connection:
\begin{equation}\label{g3,1,nabla12,delta=0,sol1,1}
	\begin{aligned}
		\nabla_{e_1}e_1&=\delta\,e_3,  &\nabla_{e_2}e_1&=-e_3, &\nabla_{e_2}e_2&=e_2,\quad\delta=0,1.
	\end{aligned}
\end{equation}

Note that if $\delta=1$, then this connection coincides with the one associated to the flat Lie algebra $\h_{1,1}$. On the other hand, if $\delta=0$, then the following automorphism 
\begin{align*}
	\Psi(e_1) &=e_2,&
	\Psi(e_2) &=e_1,&
	\Psi(e_3) &=-e_3.
\end{align*}
maps the  connection given in~\eqref{g3,1,nabla12,delta=0,sol1,1} to the one associated with the flat Lie algebra $\h_{0,1}$.

The second solution is given by the following flat, torsion-free connection:
\begin{equation}\label{g3,1,nabla12,delta=0,sol2}
	\begin{aligned}
		\nabla_{e_1}e_1&=a_{22}\,e_1+a_{21}\,e_2+a_{31}\,e_3, &\nabla_{e_1}e_2&=a_{22}\,e_2+e_3, &\nabla_{e_1}e_3&=a_{22}\,e_3, &\nabla_{e_2}e_1&=a_{22}\,e_2, \\\nabla_{e_3}e_1&=a_{22}\,e_3.
	\end{aligned}
\end{equation}
Suppose that $a_{22}\neq0$, and consider the following automorphism
\begin{align*}
	\Psi(e_1) &=e_2,&
	\Psi(e_2) &=e_1,&
	\Psi(e_3) &=-e_3.
\end{align*}
 Applying $\Psi$ to the connection given in~\eqref{g3,1,nabla12,delta=0,sol2} yields exactly the one given in Table~\ref{g3,1}, under the flat Lie algebra $\h_{0,4}$.

 Suppose that $a_{22}=0$, and consider the following automorphism
 \begin{align*}
 	\Psi(e_1) &=e_1+x\,a_{31}\,e_2,&
 	\Psi(e_2) &=x\,e_2,&
 	\Psi(e_3) &=x\,e_3.
 \end{align*}
For a suitable choice of the parameter $x\in\R^\ast$, applying  $\Psi$ to the connection given in~\eqref{g3,1,nabla12,delta=0,sol2} yields the following equivalent connection:
 \begin{equation}\label{g3,1,nabla12,delta=0,sol2,1}
 	\begin{aligned}
 		\nabla_{e_1}e_1&=\delta\,e_2, &\nabla_{e_1}e_2&=e_3, \quad\delta=0,1.
 	\end{aligned}
 \end{equation}
 Observe that if $\delta=0$, then this flat, torsion-free connection coincides with the one given in Table~\ref{g3,1} under the flat Lie algebra $\h_{0,3}$. On the other hand, if $\delta=1$, then it is clear that the connection given in~\eqref{g3,1,nabla12,delta=0,sol2,1} is identical to the one associated with the flat Lie algebra $\h_{0,2}$.

 	The third solution corresponds to the following flat, torsion-free connection:
 \begin{equation}\label{g3,1,nabla12,delta=0,sol3}
 	\begin{aligned}
 		\nabla_{e_1}e_2&=a_{32}\,e_3, &\nabla_{e_2}e_1&=(a_{32}-1)\,e_3, &\nabla_{e_2}e_2&=b_{12}\,e_1.
 	\end{aligned}
 \end{equation}
 
 Consider the following automorphism
 \begin{align*}
 	\Psi(e_1) &=x\,e_2,&
 	\Psi(e_2) &=e_1,&
 	\Psi(e_3) &=-x\,e_3.
 \end{align*}
 For a suitable choice of the parameter $x\in\R^\ast$, applying  $\Psi$ to the connection given in~\eqref{g3,1,nabla12,delta=0,sol3} yields the following equivalent connection:
 \begin{equation}\label{g3,1,nabla12,delta=0,sol3,1}
 	\begin{aligned}
 		\nabla_{e_1}e_1&=\delta\,e_2, &\nabla_{e_1}e_2&=(\lambda_1+1)\,e_3 &\nabla_{e_2}e_1&=\lambda_1\,e_3,\quad\delta=0,1.
 	\end{aligned}
 \end{equation}
 In this case, $\lambda_1=-a_{32}$. Note that if $\delta=1$, then this connection coincides with the one associated to the flat Lie algebra $\h_{0,5}$. On the other hand, if $\delta=0$  and $\lambda_1=0$, then this connection coincides with the one associated to the flat Lie algebra $\h_{0,3}$. If $\delta=0$ and $\lambda_1=-1$, then the following automorphism 
 \begin{align*}
 	\Psi(e_1) &=e_2,&
 	\Psi(e_2) &=e_1,&
 	\Psi(e_3) &=-e_3.
 \end{align*}
 maps the  connection given in~\eqref{g3,1,nabla12,delta=0,sol3,1} to the one associated with the flat Lie algebra $\h_{0,3}$.

 In addition, if $\delta=0$ and $\lambda_1\neq-\frac{1}{2}$, then the following automorphism 
 \begin{align*}
 	\Psi(e_1) &=-\tfrac{1+\lambda}{\lambda_1}e_1+e_2,&
 	\Psi(e_2) &=e_1+\tfrac{\lambda_1}{\lambda}e_2,&
 	\Psi(e_3) &=-\tfrac{1+\lambda}{\lambda} e_3.
 \end{align*}
 maps the  connection given in~\eqref{g3,1,nabla12,delta=0,sol3,1} to the one associated with the flat Lie algebra $\h_{0,5}$.
  Moreover, if $\delta=0$ and $\lambda_1=-\frac{1}{2}$, then the connection given in \eqref{g3,1,nabla12,delta=0,sol3,1}   is precisely the one appearing in Table~\ref{g3,1}, associated with the flat Lie algebra $\h_{0,6}$

 The fourth solution corresponds to the following flat, torsion-free connection:
  \begin{equation}\label{g3,1,nabla12,delta=0,sol4}
 	\begin{aligned}
 		\nabla_{e_1}e_1&=a_{21}\,e_2+a_{31}\,e_3, &\nabla_{e_1}e_2&=a_{32}\,e_3,&\nabla_{e_2}e_1&=(a_{32}-1)\,e_3.
 	\end{aligned}
 \end{equation}
 Suppose that $a_{21}\neq0$, and consider the following automorphism
  \begin{align*}
 	\Psi(e_1) &=e_1,&
 	\Psi(e_2) &=\tfrac{1}{a_{21}}e_2-\tfrac{a_{31}}{a_{21}^2}e_3,&
 	\Psi(e_3) &=\tfrac{1}{a_{21}}e_3.
 \end{align*}
 Applying  $\Psi$ to the connection given in~\eqref{g3,1,nabla12,delta=0,sol4} yields the following equivalent connection:
  \begin{equation}\label{g3,1,nabla12,delta=0,sol4,1}
 	\begin{aligned}
 		\nabla_{e_1}e_1&=e_2, &\nabla_{e_1}e_2&=\lambda_2\,e_3,&\nabla_{e_2}e_1&=(\lambda_2-1)\,e_3.
 	\end{aligned}
 \end{equation}
In this case, $\lambda=a_{32}$. Note that this connection coincides with the one given in \eqref{g3,1,nabla12,delta=0,sol3,1}, in which case $\delta=1$ and $\lambda_1=\lambda_2-1$.
 
Suppose now that $a_{21}=0$, and consider the following automorphism 
 \begin{align*}
 	\Psi(e_1) &=e_1,&
 	\Psi(e_2) &=x\,e_2,&
 	\Psi(e_3) &=x\,e_3.
 \end{align*}
 For a suitable choice of the parameter $x\in\R^\ast$, applying  $\Psi$ to the connection given in~\eqref{g3,1,nabla12,delta=0,sol4} yields the following equivalent connection:
  \begin{equation}\label{g3,1,nabla12,delta=0,sol4,2}
 	\begin{aligned}
 		\nabla_{e_1}e_1&=\delta\,e_3, &\nabla_{e_1}e_2&=\lambda_3\,e_3,&\nabla_{e_2}e_1&=(\lambda_3-1)\,e_3.
 	\end{aligned}
 \end{equation}
 In this case, $\lambda_3=a_{32}$. Observe first that if $\delta=0$, then this connection coincides with the one given in \eqref{g3,1,nabla12,delta=0,sol3,1}, in which case, $\delta=0$ and $\lambda_1=\lambda_3-1$. On the other hand, if $\delta=1$, then the connection given in \eqref{g3,1,nabla12,delta=0,sol4,2}, is in fact, equivalent (up to isomorphism) to the connection associated with the flat Lie algebra $\h_{6,1}$ through the following automorphism:
  \begin{align*}
 	\Psi(e_1) &=-e_2,&
 	\Psi(e_2) &=e_1-\lambda_3\,e_2,&
 	\Psi(e_3) &=e_3.
 \end{align*}

 The fifth solution corresponds to the following flat, torsion-free connection:
 \begin{equation}\label{g3,1,nabla12,delta=0,sol5}
 	\begin{aligned}
 		\nabla_{e_1}e_1&=a_{11}\,e_1+a_{21}\,e_2+a_{31}\,e_3, &\nabla_{e_1}e_2&=e_3.
 	\end{aligned}
 \end{equation}
 Suppose that $a_{11}\neq0$, and consider the following automorphism
 \begin{align*}
 	\Psi(e_1) &=a_{11}\,e_1-\tfrac{a_{22}}{a_{11}}e_2-\tfrac{a_{11}a_{31}+a_{21}}{a_{11}}e_3,&
 	\Psi(e_2) &=e_2,&
 	\Psi(e_3) &=a_{11}\,e_3.
 \end{align*}
Applying  $\Psi$ to the connection given in~\eqref{g3,1,nabla12,delta=0,sol5} yields the following equivalent connection:
\begin{equation}\label{g3,1,nabla12,delta=0,sol5,1}
	\begin{aligned}
		\nabla_{e_1}e_1&=e_1, &\nabla_{e_1}e_2&=e_3,
	\end{aligned}
\end{equation} 
 which is exactly the one associated to the flat Lie algebra $\h_{0,1}$.
 
 Suppose now that $a_{11}=0$, and consider the following automorphism
 \begin{align*}
 	\Psi(e_1) &=e_1+a_{31}\,x\,e_2,&
 	\Psi(e_2) &=x\,e_2,&
 	\Psi(e_3) &=x\,e_3.
 \end{align*}
 For a suitable choice of the parameter $x\in\R^\ast$, applying  $\Psi$ to the connection given in~\eqref{g3,1,nabla12,delta=0,sol5} yields the following equivalent connection:
 \begin{equation}\label{g3,1,nabla12,delta=0,sol5,2}
 	\begin{aligned}
 		\nabla_{e_1}e_1&=\delta\,e_2, &\nabla_{e_1}e_2&=e_3,\quad\delta=0,1.
 	\end{aligned}
 \end{equation} 
 Observe that if $\delta=1$, then this connection coincides with the one associated to the flat Lie algebra $\h_{0,2}$. Otherwise, if $\delta=0$, then the connection given in \eqref{g3,1,nabla12,delta=0,sol5,2}, is identical to the one associated to the flat Lie algebra $\h_{0,3}$.

 Lastly, it is not difficult to verify that all flat, torsion-free connections presented in Table~\ref{g3,1} are not pairwise isomorphic. We have now completed the classification of flat, torsion-free connections on the flat Lie algebra $\G_{3,1}$.
\end{proof}

\begin{co}
	With the notations as above, among the flat Lie algebras on $\G_{3,1}$, we have
	\begin{enumerate}
		\item[i)] Associative algebras$:$\hspace{0.275cm} $\h_{0,3}$, $\h_{0,6}$, $\h_{6,1}$.
		\item[ii)] Novikov algebras$:$\hspace{0.73cm} $\h_{0,2}$, $\h_{0,3}$, $\h_{0,4}$, $\h_{0,5}$, $\h_{0,6}$, $\h_{4,1}$, $\h_{6,1}$.
		\item[iii)] Bi-symmetric algebras$:$ $\h_{0,2}$, $\h_{0,3}$, $\h_{0,5}$, $\h_{0,6}$, $\h_{6,1}$.
		\item[iv)] Complete algebras$:$\hspace{0.62cm}$\h_{0,2}$, $\h_{0,3}$, $\h_{0,5}$, $\h_{0,6}$, $\h_{6,1}$.
	\end{enumerate}
\end{co}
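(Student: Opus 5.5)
The corollary asks, for each flat algebra in Table~\ref{g3,1}, which of the four properties (associativity, Novikov, bi-symmetry, completeness) hold. I will verify each property directly on the explicit product tables, since each algebra is given by a handful of nonzero products in the basis $\{e_1,e_2,e_3\}$. For each of $\h_{0,1},\dots,\h_{0,6}$, $\h_{1,1}$, $\h_{4,1}$, $\h_{6,1}$ I first write the left multiplication matrices $\mathrm{L}_{e_i}=\nabla_{e_i}$ and the right multiplication matrices $\mathrm{R}_{e_i}$, where the $j$-th column of $\mathrm{R}_{e_i}$ is $e_j\cdot e_i$. All four conditions are multilinear or linear-algebraic, so it suffices to test them on basis vectors, or on a general $X=\sum x_ie_i$ for completeness.

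\textbf{Associativity and bi-symmetry.} Every algebra here is already left-symmetric. Associativity is therefore equivalent to $\mathrm{L}_{X\cdot Y}=\mathrm{L}_X\mathrm{L}_Y$ on basis elements. Bi-symmetry is equivalent to $(X,Y,Z)=(X,Z,Y)$, which combined with left-symmetry makes the associator totally symmetric. So for each algebra I compute the $27$ associators $(e_i,e_j,e_k)=(e_ie_j)e_k-e_i(e_je_k)$; most vanish because products land in $\langle e_3\rangle$, which is annihilated in most cases. For example, in $\h_{0,1}$ we get $(e_1,e_1,e_2)=e_1e_2-e_1(e_1e_2)=e_3-e_1e_3=e_3\neq0$, while $(e_1,e_2,e_1)=0$. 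Hence $\h_{0,1}$ is neither associative nor bi-symmetric, consistent with the list. In $\h_{0,5}$ I expect only associators with an $e_1e_1=e_2$ factor to be nonzero, and I will check that they are symmetric in the last two slots exactly as required.

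\textbf{Novikov condition.} Here I check $[\mathrm{R}_{e_i},\mathrm{R}_{e_j}]=0$ for $i<j$, which gives three matrix commutators per algebra. In the nilpotent-type examples the matrices $\mathrm{R}_{e_i}$ are strictly triangular with few entries, so this is quick.

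\textbf{Completeness.} Here I compute $\mathrm{R}_X$ for general $X$ and test nilpotency, equivalently that $\mathrm{tr}\,\mathrm{R}_X^k=0$ for $k=1,2,3$, or simply that $\mathrm{R}_X$ is strictly triangular in a suitable basis. For the algebras excluded from the list, one diagonal entry suffices. For instance $\h_{0,1}$, $\h_{0,4}$, $\h_{1,1}$ and $\h_{4,1}$ contain $e_1e_1=e_1$ or $e_2e_2=e_2$, giving $\mathrm{R}_{e_1}$ or $\mathrm{R}_{e_2}$ a nonzero eigenvalue. For $\h_{0,2}$, $\h_{0,3}$, $\h_{0,5}$, $\h_{0,6}$ and $\h_{6,1}$, every product lies in the flag $\langle e_2,e_3\rangle\supset\langle e_3\rangle$ and is compatible with it, so each $\mathrm{R}_X$ is strictly lower triangular.

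The main obstacle is bookkeeping rather than any conceptual step. The hardest individual cases are $\h_{4,1}$ and $\h_{0,4}$ for the Novikov test, where nontrivial diagonal parts appear and the commutators must be computed carefully. The other delicate point is making sure the non-membership claims are witnessed by an explicit nonzero associator or commutator, for example for $\h_{0,2}$, $\h_{0,4}$, $\h_{0,5}$ and $\h_{4,1}$ in the associativity list. I would record one such witness for each excluded algebra.
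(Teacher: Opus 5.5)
Your plan is correct and is essentially the paper's own route: the paper states this corollary without proof as a direct read-off from Table~\ref{g3,1}, and your basis-by-basis checks confirm every list as stated. The checks are easier than you anticipate. In $\h_{0,2}$ and $\h_{0,5}$ the only nonzero associator is $(e_1,e_1,e_1)=-e_3$, which settles both associativity and bi-symmetry. The idempotents $e_1e_1=e_1$ or $e_2e_2=e_2$ give $\mathrm{R}_X X=X$ and so rule out completeness. Finally, $\mathrm{R}_{e_1}=\mathrm{Id}$ in $\h_{0,4}$ and $\mathrm{R}_{e_2}=\mathrm{Id}$ in $\h_{4,1}$, so the Novikov test there is immediate.
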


\begin{pr}
Let $(\G, \nabla)$ be a three-dimensional real flat  Lie algebra with $\G = \G_{3,2}$. Then $(\G, \nabla)$ is isomorphic to exactly one of the flat Lie algebras listed in Table~$\ref{g3,2}$.
{\renewcommand*{\arraystretch}{1.8}
\captionof{table}{Flat torsion-free connection on the Lie algebra $\G_{3,2}$.}
\setcounter{table}{5}
\begin{footnotesize} 
\setlength{\tabcolsep}{5pt} 
\begin{longtable}{@{}cllllllc@{}} 
			\hline
		Flat algebra&\multicolumn{2}{@{}l@{}}{~~Flat torsion-free connection}&&&&Remarks  \\
			\hline
$\h_{0,1}$&$\nabla_{e_2}e_2=\varepsilon e_3$&$\nabla_{e_3}e_1=-e_1$&$\nabla_{e_3}e_2=-e_1-e_2$&$\nabla_{e_3}e_3=-2e_3$&&$\varepsilon=\pm1$&\\
$\h_{0,2}$&$\nabla_{e_3}e_1=-e_1$&$\nabla_{e_3}e_2=-e_1-e_2$&$\nabla_{e_3}e_3=\lambda e_3$&&&$\lambda\in\R$&\\
$\h_{0,3}$&$\nabla_{e_3}e_1=-e_1$&$\nabla_{e_3}e_2=-e_1-e_2$&$\nabla_{e_3}e_3=e_2-e_3$&&&&\\
$\h_{0,4}$&$\nabla_{e_2}e_3=\lambda e_1$&$\nabla_{e_3}e_1=-e_1$&$\nabla_{e_3}e_2=(\lambda-1)e_1-e_2$&&&$\lambda\in\R^\ast$&\\
$\h_{0,5}$&$\nabla_{e_1}e_3=\lambda e_1$&$\nabla_{e_2}e_3=\lambda e_2$&$\nabla_{e_3}e_1=(\lambda-1)e_1$&$\nabla_{e_3}e_2=-e_1+(\lambda-1)e_2$&$\nabla_{e_3}e_3=\lambda e_3$&$\lambda\in\R^\ast$&\\
$\h_{0,6}$&$\nabla_{e_1}e_3= e_1$&$\nabla_{e_2}e_3= e_2$&$\nabla_{e_3}e_2=-e_2$&$\nabla_{e_3}e_3=e_2+e_3$&&&\\\hline
$\h_{2,1}$&$\nabla_{e_2}e_2=e_1$&$\nabla_{e_2}e_3=e_2$&$\nabla_{e_3}e_1=-e_1$&$\nabla_{e_3}e_2=-e_1$&$\nabla_{e_3}e_3=-e_2+e_3$&&\\
$\h_{2,2}$&$\nabla_{e_1}e_3=-e_1$&$\nabla_{e_2}e_2=e_1$&$\nabla_{e_3}e_1=-2e_1$&$\nabla_{e_3}e_2=-e_1-e_2$&$\nabla_{e_3}e_3=-e_2-e_3$&&
			\\\hline		
			\end{longtable}
			\label{g3,2}
			\end{footnotesize}	
			}
	
\end{pr}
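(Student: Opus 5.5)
Following the strategy already used for $\G_{3,1}$, I will view $\G_{3,2}$ as the semidirect sum $\R e_3\ltimes\R^2$ with $\G_0=\langle e_1,e_2\rangle$ abelian and $\ell=e_3$, acting by
\[
\mathrm{D}=\mathrm{ad}_{e_3}|_{\R^2}=-\begin{pmatrix}1&1\\0&1\end{pmatrix},
\]
so that $[e_1,e_3]=e_1$ and $[e_2,e_3]=e_1+e_2$. Any torsion-free connection then has the form \eqref{Connetorsionfre}, with $\eta-\beta=\mathrm{D}$. In the basis $\{e_1,e_2,e_3\}$, $\nabla_{e_1},\nabla_{e_2}$ are general matrices, and $\nabla_{e_3}$ has first two columns equal to the third columns of $\nabla_{e_1},\nabla_{e_2}$ corrected by $\mathrm{D}$. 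By Lemma~\ref{Auto}, the automorphisms preserving the splitting restrict to $\Phi_{3,2}=\begin{pmatrix}x&y\\0&x\end{pmatrix}$ on $\R^2$. Together with Lemma~\ref{isoofconnection} and Remark~\ref{sympleofauto}, this lets me replace $\nabla^0$ by one of the eight normal forms $\nabla^1,\dots,\nabla^8$ listed for $\G_{3,2}$ in Lemma~\ref{Lemg3j}.

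For each $\nabla^0=\nabla^k$ I will substitute into the matrices and impose conditions $(i)$–$(vi)$ of the curvature lemma. I will treat them in order, since they are successively more constrained:
\begin{itemize}
\item $(i)$ compares the curvature of $\nabla^0$ with $\theta\otimes\beta$, and so forces $\theta$ to have rank at most one, with $\beta$ aligned with the curvature.
\item $(ii)$ and $(vi)$ are linear in $\theta$ and $\gamma$.
\item $(iii)$–$(v)$ then determine $\zeta$ and $\lambda$.
\end{itemize}
For the generic families $\nabla^1,\dots,\nabla^4$, I expect, as for $\G_{3,1}$, that the system is inconsistent or collapses onto parameter values where $\nabla^0$ is actually flat. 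In that case the analysis reduces to the flat two-dimensional connections on $\R^2$ of Table~\ref{FlatR2}, taken modulo $\Phi_{3,2}$ rather than $\mathrm{GL}_2$. The degenerate families $\nabla^5,\dots,\nabla^8$, including $\nabla^0=0$ and $\nabla^8$ with $\delta=0,1$, are where the genuine solutions live. Since $\mathrm{D}$ is a non-semisimple Jordan block, I expect the solutions to be governed by the requirement that $\eta=\mathrm{D}+\beta$ normalise the image of $\nabla^0$. This should give the two groups of the table:
\begin{itemize}
\item the $\h_{0,\ast}$ group, with $\nabla^0$ trivial or nilpotent;
\item the $\h_{2,\ast}$ group, with $\nabla_{e_2}e_2=e_1$, coming from $\nabla^8$ with $\delta=1$.
\end{itemize}

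For each solution family I will then normalise. This uses automorphisms of the full algebra of the form $\Psi(e_1)=xe_1$, $\Psi(e_2)=ye_1+xe_2$, $\Psi(e_3)=u+e_3$, with $u\in\R^2$; condition $(c_4)$ forces $\tau=1$. Using the translation part $u$, I will kill the $\G_0$-component $\zeta$ of $\nabla_{e_3}e_3$ wherever possible, and use $x,y$ to scale the remaining entries to $0,\pm1$. For example, the coefficient of $e_3$ in $\nabla_{e_2}e_2$ scales by $x^2$, which yields $\varepsilon=\pm1$ in $\h_{0,1}$. The surviving continuous invariants, such as $\lambda$ in $\h_{0,2},\h_{0,4},\h_{0,5}$, should be traces or eigenvalues of $\nabla_{e_3}$ and of $\mathrm{R}_{e_3}$, which are unchanged by these substitutions.

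The main obstacle is the final pairwise non-isomorphism, and the case splits where a parameter family degenerates, for instance $\lambda=0$ in $\h_{0,4}$ and $\h_{0,5}$, which must be excluded or merged. To separate the algebras I will use invariants:
\begin{itemize}
\item the rank of the commutative part $x\cdot y+y\cdot x$ on $[\G,\G]=\langle e_1,e_2\rangle$;
\item whether $[\G,\G]$ is a left ideal or has trivial product;
\item the spectrum of $\mathrm{L}_{e_3}$ modulo $[\G,\G]$, which pins down $\lambda$ because $\tau=1$;
\item completeness and the existence of a radiant vector $\zeta$, as in the Corollaries.
\end{itemize}
If these do not separate $\h_{0,3}$ from the $\lambda=-1$ member of $\h_{0,2}$, I would try the sign $\varepsilon$ and the nilpotency index of $\mathrm{R}_{e_3}$.
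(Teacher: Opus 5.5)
Your reduction and solving strategy is the paper's. You split $\G_{3,2}=\R e_3\ltimes\langle e_1,e_2\rangle$, use $(c_3)$--$(c_4)$ to get $\sigma=0$, $\tau=1$, $\Phi(e_1)=xe_1$, $\Phi(e_2)=ye_1+xe_2$, run through the normal forms of Lemma~\ref{Lemg3j}, and normalise with $\Psi(e_3)=e_3+u$. In the paper only $\nabla^7$ and $\nabla^8$ ($\delta=0,1$) admit flat extensions; $\nabla^1,\dots,\nabla^6$ give nothing, so your guess about $\nabla^5,\nabla^6$ is off.

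One caution about working through conditions $(i)$--$(vi)$ instead of the raw matrix equations: $(iii)$ is misprinted. In $R(x,\ell)\ell$ the term $\gamma(x)\zeta$ comes from $\nabla_\ell\nabla_x\ell$ and enters with a minus sign, so the correct condition is $\nabla^0_x\zeta-\eta\beta(x)-\gamma(x)\zeta=-\lambda\beta(x)-\beta\mathrm{D}(x)$. With the printed sign you would reject the paper's genuinely flat $\nabla^7$-family, where $\nabla_{e_2}e_3=-\tfrac{1}{b_{32}}e_2-e_3$, so $\gamma(e_2)=-1$ and $\zeta\neq0$. The loss is harmless here only because that family is isomorphic to $\h_{0,1}$, which reappears from $\nabla^8$.

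Also, $(i)$ does not bound the rank of $\theta$ when $\beta=0$. For $\h_{0,1}$ the bound comes from $(vi)$: with $\nabla^0=0$, $\beta=\gamma=0$ it reads $\theta(\mathrm{D}x,z)+\theta(x,\mathrm{D}z)=\lambda\theta(x,z)$. Since $\mathrm{D}$ is a Jordan block, this forces $\lambda=-2$ and $\theta\propto e^2\otimes e^2$.

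The real gap is in the uniqueness half, which the paper only asserts. Your invariants do not separate $\h_{0,3}$ from $\h_{0,2}$ with $\lambda=-1$:
in both, $\nabla_{e_1}=\nabla_{e_2}=0$, so $[\G,\G]$ is a two-sided ideal with $[\G,\G]\cdot\G=0$;
in both, $\nabla_{e_3}$ induces $-1$ on $\G/[\G,\G]$;
neither is complete, radiant or Novikov;
your fallback is vacuous, since there is no $\varepsilon$ and $\mathrm{R}_{e_3}$ is not nilpotent in either.

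What does work: every automorphism sends $e_3$ into $e_3+[\G,\G]$, and $\nabla_{[\G,\G]}=0$ in both algebras. So an isomorphism would conjugate $\nabla_{e_3}$ to $\nabla_{e_3}$. But $\nabla_{e_3}+\mathrm{id}$ has rank $1$ in $\h_{0,2}^{\lambda=-1}$ and rank $2$ in $\h_{0,3}$ (Jordan blocks of sizes $2,1$ versus one block of size $3$).

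Similarly, the parameter of $\h_{0,4}$ is not a trace or eigenvalue: for every $\lambda$, $\nabla_{e_3}$ has spectrum $\{-1,-1,0\}$ and $\mathrm{R}_{e_3}$ is nilpotent. It is detected instead by $e_2\cdot e_3=\lambda e_1$, which is preserved because $\Psi(e_2)\cdot\Psi(e_3)=x\lambda e_1=\lambda\Psi(e_1)$. The same fact, $[\G,\G]\cdot\G\neq0$, is what separates $\h_{0,4}$ from $\h_{0,2}^{\lambda=0}$. Completeness and the induced eigenvalue do not, since both algebras are complete and both have induced eigenvalue $0$.
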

\begin{proof}
	 In the basis $\lbrace e_1, e_2, e_3 \rbrace$, the operators $\nabla_{e_1}$, $\nabla_{e_2}$ and $\nabla_{e_3}$ are given respectively by: 
	\begin{equation}
		\nabla_{e_1}=\left( \begin {array}{ccc} a_{11}&a_{12}&a_{13}\\ \noalign{\medskip}
		a_{21}&a_{22}&a_{23}\\ \noalign{\medskip}a_{31}&a_{32}&a_{33}\end {array} \right),\quad
		\nabla_{e_2}=\left( \begin {array}{ccc} a_{12}&b_{12}&b_{13}\\ \noalign{\medskip}
		a_{22}&b_{22}&b_{23}\\ \noalign{\medskip}a_{32}&b_{32}&b_{33}\end {array} \right),\quad
		\nabla_{e_3}=\left( \begin {array}{ccc} a_{13}-1 &b_{13}-1&c_{13}\\ \noalign{\medskip}
		a_{23}&b_{23}-1&c_{23}\\ \noalign{\medskip}a_{33}&b_{33}&c_{33}\end {array} \right),\quad
	\end{equation}
	where $a_{ij}$, $b_{ij}$, $c_{ij}\in \mathbb{R}$.

		Assume that $\nabla^0$ is a  torsion-free connection. Then $\nabla^0$ is equivalent to one of the connections listed in Lemma~\ref{Lemg3j} under the Lie algebra $\G_{3,3}$.
		We begin by considering the cases  $\nabla^0=\nabla^j$, $j=1,\ldots,6$. A straightforward computation shows that the corresponding flatness-equations admit no solutions.
		
		If $\nabla^0=\nabla^{7}$, then a straightforward computation shows that the flatness-equation admits a unique solution, and the corresponding flat, torsion-free connection is given as follows:
	\begin{equation}\label{g3,2,nabla7,sol1}
		\begin{aligned}
	 \nabla_{e_2}e_2&=e_2+b_{32}e_3, \quad\nabla_{e_2}e_3=-\tfrac{1}{b_{32}}e_2-e_3, &\nabla_{e_3}e_1&=-e_1, &\nabla_{e_3}e_2&=-e_1-\tfrac{b_{32}+1}{b_{32}}e_2-e_3,\\
	 \nabla_{e_3}e_3&=\tfrac{1}{b_{32}}e_1+\tfrac{1-b_{32}}{b_{32}^2}e_2+\tfrac{1-2\,b_{32}}{b_{32}}e_3.
		\end{aligned}
	\end{equation}
		Consider the following automorphism
		\begin{align*}
			\Psi(e_1) &=\tfrac{\sqrt{\varepsilon\,b_{32}}}{\varepsilon}e_1,&
			\Psi(e_2) &=\tfrac{\sqrt{\varepsilon\,b_{32}}}{\varepsilon}e_2,&
			\Psi(e_3) &=-\tfrac{\sqrt{\varepsilon\,b_{32}}}{\varepsilon\,b_{32}}e_3.
		\end{align*}
		Applying $\Psi$ to the connection given in~\eqref{g3,2,nabla7,sol1} yields the following equivalent connection:
		\begin{equation}\label{g3,2,nabla7,sol1,1}
		\begin{aligned}
			\nabla_{e_2}e_2&=\varepsilon\, e_3, &\nabla_{e_3}e_1&=-e_1, &\nabla_{e_3}e_2&=-e_1-e_2,&
			\nabla_{e_3}e_3&=-2\,e_3.
		\end{aligned}
	\end{equation}
	This connection is precisely the one appearing in Table~\ref{g3,2}, associated with the flat Lie algebra $\h_{0,1}$.
	
	Suppose that $\nabla^0=\nabla^8$, and $\delta=1$. By a straightforward computation, the flatness-equations admit three real solutions.
	The flat, torsion-free connection associated with first solution is given by the following connection: 
	\begin{equation}\label{g3,2,nabla8,sol1}
		\begin{aligned}
			\nabla_{e_1}e_3&=-e_1, \quad\nabla_{e_2}e_2=e_1, &\nabla_{e_2}e_3&=b_{13}e_1, &\nabla_{e_3}e_1&=-2\,e_1, &\nabla_{e_3}e_2&=(b_{13}-1)e_1-e_2,\\
			\nabla_{e_3}e_3&=c_{13}e_1-e_2-e_3.
		\end{aligned}
	\end{equation}
	Consider the following automorphism
	\begin{align*}
		\Psi(e_1) &=e_1,&
		\Psi(e_2) &=e_2,&
		\Psi(e_3) &=\tfrac{b_{13}^2-b_{13}-c_{13}}{2}e_1+b_{13}e_2+e_3.
	\end{align*}
	Applying $\Psi$ to the connection given in~\eqref{g3,2,nabla8,sol1} yields the following equivalent connection:
	\begin{equation}\label{g3,2,nabla8,sol1,1}
		\begin{aligned}
			\nabla_{e_1}e_3&=-e_1, \quad\nabla_{e_2}e_2=e_1,  &\nabla_{e_3}e_1&=-2\,e_1, &\nabla_{e_3}e_2&=-e_1-e_2,&
			\nabla_{e_3}e_3&=-e_2-e_3.
		\end{aligned}
	\end{equation}
		This connection is precisely the one appearing in Table~\ref{g3,2}, associated with the flat Lie algebra $\h_{2,2}$.

The second solution is given by the following flat, torsion-free connection:		
		\begin{equation}\label{g3,2,nabla8,sol2}
			\begin{aligned}
\nabla_{e_2}e_2&=e_1, &\nabla_{e_2}e_3&=b_{13}e_1+e_2, &\nabla_{e_3}e_1&=-e_1, &\nabla_{e_3}e_2&=(b_{13}-1)e_1,&
				\nabla_{e_3}e_3&=c_{13}e_1-e_2+e_3.
			\end{aligned}
		\end{equation}
Consider the following automorphism
\begin{align*}
	\Psi(e_1) &=e_1,&
	\Psi(e_2) &=e_2,&
	\Psi(e_3) &=\tfrac{b_{13}^2-b_{13}-c_{13}}{2}e_1+b_{13}e_2+e_3.
\end{align*}
Applying $\Psi$ to the connection given in~\eqref{g3,2,nabla8,sol2} yields the following equivalent connection:		
		\begin{equation}\label{g3,2,nabla8,sol2,1}
			\begin{aligned}
				\nabla_{e_2}e_2&=e_1, &\nabla_{e_2}e_3&=e_2, &\nabla_{e_3}e_1&=-e_1, &\nabla_{e_3}e_2&=-e_1,&
				\nabla_{e_3}e_3&=-e_2+e_3.
			\end{aligned}
		\end{equation}
			This connection is precisely the one appearing in Table~\ref{g3,2}, associated with the flat Lie algebra $\h_{2,1}$.

	The third solution corresponds to the following flat, torsion-free connection:
		\begin{equation}\label{g3,2,nabla8,sol3}
		\begin{aligned}
			\nabla_{e_2}e_2&=e_1+b_{32}e_3, &\nabla_{e_3}e_1&=-e_1, &\nabla_{e_3}e_2&=-e_1-e_2, &\nabla_{e_3}e_3&=-\tfrac{1}{b_{31}}e_1-2\,e_3.
		\end{aligned}
	\end{equation}
	Consider the following automorphism
	\begin{align*}
		\Psi(e_1) &=\tfrac{\sqrt{\varepsilon\,b_{32}}}{\varepsilon}e_1,&
		\Psi(e_2) &=\tfrac{\sqrt{\varepsilon\,b_{32}}}{\varepsilon}e_2,&
		\Psi(e_3) &=-\tfrac{\sqrt{\varepsilon\,b_{32}}}{\varepsilon\,b_{32}}e_1+e_3.
	\end{align*}
		Applying $\Psi$ to the connection given in~\eqref{g3,2,nabla8,sol3} yields the following equivalent connection:
	\begin{equation}\label{g3,2,nabla8,sol3,1}
		\begin{aligned}
			\nabla_{e_2}e_2&=\varepsilon\, e_3, &\nabla_{e_3}e_1&=-e_1, &\nabla_{e_3}e_2&=-e_1-e_2,&
			\nabla_{e_3}e_3&=-2\,e_3.
		\end{aligned}
	\end{equation}
	This connection is precisely the one appearing in Table~\ref{g3,2}, associated with the flat Lie algebra $\h_{0,1}$.

	Suppose now that $\nabla^0=\nabla^8$, and $\delta=0$. By a straightforward computation, the flatness-equations admit four real solutions.
	The flat, torsion-free connection associated with first solution is given by the following connection:
		\begin{equation}\label{g3,2,nabla8,delta=0,sol1}
		\begin{aligned}
	\nabla_{e_1}e_3&=b_{23}e_1, \quad\nabla_{e_2}e_3=b_{23}e_2,
	&\nabla_{e_3}e_1&=(b_{23}-1)e_1, &\nabla_{e_3}e_2&=-e_1+(b_{23}-1)e_2,\\
	\nabla_{e_3}e_3&=c_{13}e_1+c_{23}e_2+b_{23}e_3.
		\end{aligned}
	\end{equation}
	Suppose that $b_{23}\neq1$, and consider the following automorphism
	\begin{align*}
		\Psi(e_1) &=e_1,&
		\Psi(e_2) &=e_2,&
		\Psi(e_3) &=\tfrac{(b_{23}-1)c_{13}+c_{23}}{(b_{23}-1)^2}e_1+\tfrac{c_{23}}{b_{23}-1}e_2+e_3.
	\end{align*}
	Applying $\Psi$ to the connection given in~\eqref{g3,2,nabla8,delta=0,sol1} yields the following equivalent connection:
	\begin{equation}\label{g3,2,nabla8,delta=0,sol1,1}
		\begin{aligned}
			\nabla_{e_1}e_3&=\lambda\,e_1, &\nabla_{e_2}e_3&=\lambda\,e_2,
			&\nabla_{e_3}e_1&=(\lambda-1)e_1, &\nabla_{e_3}e_2&=-e_1+(\lambda-1)e_2,&
			\nabla_{e_3}e_3&=\lambda\,e_3.
		\end{aligned}
	\end{equation}
	In this case, $\lambda=b_{23}\neq1$. 
	
	Suppose that $b_{23}=1$. Consider then the following automorphism
	\begin{align*}
		\Psi(e_1) &=x\,e_1,&
		\Psi(e_2) &= x\,e_2,&
		\Psi(e_3) &=-c_{13}\,x\,e_2+e_3.
	\end{align*}
	For a suitable choice of the parameter $x\in\R^\ast$, applying $\Psi$ to the connection given in~\eqref{g3,2,nabla8,delta=0,sol1} yields the following equivalent connection:
		\begin{equation}\label{g3,2,nabla8,delta=0,sol1,2}
		\begin{aligned}
			\nabla_{e_1}e_3&=e_1, &\nabla_{e_2}e_3&=e_2,
		 &\nabla_{e_3}e_2&=-e_1,&
			\nabla_{e_3}e_3&=\delta\,e_2+e_3,\quad\delta=0,1.
		\end{aligned}
	\end{equation}
	Observe first that if $\delta=0$, then this connection coincides with the one considered in~\eqref{g3,2,nabla8,delta=0,sol1,1}, in which case $\lambda=1$. Therefore, we set $\lambda\in\mathbb{R}$ in~ \eqref{g3,2,nabla8,delta=0,sol1,1} and assume that $\delta=1$. 
	In view of the previous discussion, the connections given in \eqref{g3,2,nabla8,delta=0,sol1,1} and \eqref{g3,2,nabla8,delta=0,sol1,2} correspond to the flat Lie algebras $\h_{0,5}$ and $\h_{0,6}$, respectively.   The additional conditions $\lambda\neq0$ in the flat Lie algebra $\h_{0,5}$ is imposed to ensure that the corresponding connection is not isomorphic to the  algebras $\h_{0,2}$.

	The second solution is given by the following flat, torsion-free connection:
	\begin{equation}\label{g3,2,nabla8,delta=0,sol2}
		\begin{aligned}
			\nabla_{e_3}e_1&=-e_1, &\nabla_{e_3}e_2&=-e_1-e_2, &\nabla_{e_3}e_3&=c_{13}e_1+c_{23}e_2+c_{33}e_3.
		\end{aligned}
	\end{equation}
	Suppose that $c_{33}\neq-1$, and consider the following automorphism
	\begin{align*}
		\Psi(e_1) &=e_1,&
		\Psi(e_2) &=e_2,&
		\Psi(e_3) &=\tfrac{(-c_{33}-1)c_{13}+c_{23}}{(c_{33}+1)^2}e_1-\tfrac{c_{23}}{c_{33}+1}e_2+e_3.
	\end{align*}
	Applying $\Psi$ to the connection given in~\eqref{g3,2,nabla8,delta=0,sol2} yields the following equivalent connection:
	\begin{equation}\label{g3,2,nabla8,delta=0,sol2,1}
		\begin{aligned}
			\nabla_{e_3}e_1&=-e_1, &\nabla_{e_3}e_2&=-e_1-e_2, &\nabla_{e_3}e_3&=\lambda\,e_3.
		\end{aligned}
	\end{equation}
	In this case, $\lambda=c_{33}\neq-1$.

	Suppose that $c_{33}=-1$. Consider then the following automorphism
	\begin{align*}
		\Psi(e_1) &=x\,e_1,&
	\Psi(e_2) &= x\,e_2,&
	\Psi(e_3) &=-c_{13}\,x\,e_2+e_3.
	\end{align*}
	For a suitable choice of the parameter $x\in\R^\ast$, applying $\Psi$ to the connection given in~\eqref{g3,2,nabla8,delta=0,sol2} yields the following equivalent connection:
	\begin{equation}\label{g3,2,nabla8,delta=0,sol2,2}
		\begin{aligned}
			\nabla_{e_3}e_1&=-e_1, &\nabla_{e_3}e_2&=-e_1-e_2,
			&\nabla_{e_3}e_3&=\delta\,e_2-e_3,\quad\delta=0,1.
		\end{aligned}
	\end{equation}
	Observe first that if $\delta=0$, then this connection coincides with the one considered in~\eqref{g3,2,nabla8,delta=0,sol2,1}, in which case $\lambda=-1$. Therefore, we set $\lambda\in\mathbb{R}$ in~ \eqref{g3,2,nabla8,delta=0,sol2,1} and assume that $\delta=1$. 
	In view of the previous discussion, the connections given in \eqref{g3,2,nabla8,delta=0,sol2,1} and \eqref{g3,2,nabla8,delta=0,sol2,2} correspond to the flat Lie algebras $\h_{0,2}$ and $\h_{0,3}$, respectively.

	 The third solution corresponds to the following flat, torsion-free connection:
		\begin{equation}\label{g3,2,nabla8,delta=0,sol3}
		\begin{aligned}
			\nabla_{e_2}e_3&=b_{13}e_1,
			&\nabla_{e_3}e_1&=-e_1, &\nabla_{e_3}e_2&=(b_{13}-1)e_1-e_2,
			&\nabla_{e_3}e_3&=c_{13}e_1+c_{23}e_2.
		\end{aligned}
	\end{equation}
	 Consider then the following automorphism
	\begin{align*}
		\Psi(e_1) &=e_1,&
		\Psi(e_2) &= e_2,&
		\Psi(e_3) &=(c_{23}-c_{13}-2\,c_{23}b_{13})e_1-c_{23}e_2+e_3.
	\end{align*}
Applying $\Psi$ to the connection given in~\eqref{g3,2,nabla8,delta=0,sol3} yields the following equivalent connection:
		\begin{equation}\label{g3,2,nabla8,delta=0,sol3,1}
		\begin{aligned}
			\nabla_{e_2}e_3&=\lambda\,e_1,
			&\nabla_{e_3}e_1&=-e_1, &\nabla_{e_3}e_2&=(\lambda-1)e_1-e_2.
		\end{aligned}
	\end{equation}
	This connection is precisely the one appearing in Table~\ref{g3,2}, associated with the flat Lie algebra $\h_{0,4}$.

	The fourth solution corresponds to the following flat, torsion-free connection:
	\begin{equation}\label{g3,2,nabla8,delta=0,sol4}
		\begin{aligned}
			\nabla_{e_2}e_2&=b_{32}e_3, &\nabla_{e_3}e_1&=-e_1, &\nabla_{e_3}e_2&=-e_1-e_2, &\nabla_{e_3}e_3&=-2\,e_3.
		\end{aligned}
	\end{equation}
	Consider then the following automorphism
	\begin{align*}
		\Psi(e_1) &=x\,e_1,&
		\Psi(e_2) &= x\,e_2,&
		\Psi(e_3) &=e_3.
	\end{align*}
	For a suitable choice of the parameter $x\in\R^\ast$, applying $\Psi$ to the connection given in~\eqref{g3,2,nabla8,delta=0,sol4} yields the following equivalent connection:
		\begin{equation}\label{g3,2,nabla8,delta=0,sol4,1}
		\begin{aligned}
			\nabla_{e_2}e_2&=\delta_\varepsilon e_3, &\nabla_{e_3}e_1&=-e_1, &\nabla_{e_3}e_2&=-e_1-e_2, &\nabla_{e_3}e_3&=-2\,e_3,\quad\delta_\varepsilon=0,\pm1.
		\end{aligned}
	\end{equation}
	Note that if $\delta_\varepsilon=0$, then this connection coincides with the one associated to the flat Lie algebra $\h_{0,2}$, in which case, $\lambda=-2$. On the other hand, if $\delta_\varepsilon=\varepsilon=\pm1$, we obtai the one associated to the flat Lie algebra $\h_{0,1}$ listed in Table~\ref{g3,2}. torsion-free connections on the flat Lie algebra $\G_{3,2}$.   It is straightforward to verify that all flat, torsion-free connections listed in Table~\ref{g3,2}5 are pairwise non-isomorphic.

\end{proof}

\begin{co}
	With the notations as above, among the flat Lie algebras on $\G_{3,2}$, we have
	\begin{enumerate}
		\item[i)] Associative algebras$:$
		\item[ii)] Novikov algebras$:$\hspace{0.73cm} 
		$\h_{0,2}^{\lambda=0}$, $\h_{0,5}$, $\h_{0,6}$.
		\item[iii)] Bi-symmetric algebras$:$
		\item[iv)] Complete algebras$:$\hspace{0.62cm}$\h_{0,2}^{\lambda=0}$, $\h_{0,4}$.
	\end{enumerate}
\end{co}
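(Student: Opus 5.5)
The plan is a direct check, family by family, of the eight normal forms in Table~\ref{g3,2}: $\h_{0,1},\dots,\h_{0,6},\h_{2,1},\h_{2,2}$, with their parameters. For each one I will write the left multiplications $\mathrm{L}_{e_i}=\nabla_{e_i}$ as $3\times 3$ matrices. The right multiplications $\mathrm{R}_{e_i}$ are then read off from the columns: the $j$-th column of $\mathrm{R}_{e_i}$ is $\nabla_{e_j}e_i$. The four properties in the corollary are tested as follows.

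\begin{itemize}
\item \emph{Associativity:} $\mathrm{L}_{X\cdot Y}=\mathrm{L}_X\mathrm{L}_Y$ on basis pairs.
\item \emph{Novikov:} $[\mathrm{R}_{e_i},\mathrm{R}_{e_j}]=0$ for $i<j$.
\item \emph{Bi-symmetry:} $(e_i,e_j,e_k)=(e_i,e_k,e_j)$.
\item \emph{Completeness:} every $\mathrm{R}_X$ is nilpotent. To simplify this I will use the standard criterion (Segal, Helmstetter) that an LSA is complete if and only if $\operatorname{tr}\mathrm{R}_X=0$ for all $X$. This reduces completeness to the linear conditions $\operatorname{tr}\mathrm{R}_{e_i}=0$. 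If one prefers to avoid citing the criterion, one checks nilpotency directly, since in each case the $\mathrm{R}_{e_i}$ are visibly triangular.
\end{itemize}

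For the negative claims (no associative and no bi-symmetric members) it suffices to exhibit one failing triple in each family, uniformly in the parameters. Take $\h_{0,2}$, where only $\nabla_{e_3}$ is nonzero.
\begin{itemize}
\item Associativity: $e_3(e_3e_2)=2e_1+e_2$, whereas $(e_3e_3)e_2=-\lambda(e_1+e_2)$. These never agree, so $\h_{0,2}$ is not associative for any $\lambda$.
\item Bi-symmetry: $(e_3,e_3,e_1)=(1+\lambda)e_1$ and $(e_3,e_1,e_3)=0$, which forces $\lambda=-1$. Then $(e_3,e_3,e_2)=e_1\neq 0=(e_3,e_2,e_3)$.
\end{itemize}
For the other families I will use the same kind of witness. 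The key input is that $\nabla_{e_3}$ restricted to $\langle e_1,e_2\rangle$ always contains the Jordan-type block coming from $\mathrm{D}$, with $e_2\mapsto -e_1+(\ast)e_2$. Iterating $\nabla_{e_3}$ on $e_2$ then produces an $e_1$-component that the products $(e_3e_3)\cdot$ or $(e_3e_2)\cdot$ cannot match.

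For the positive Novikov claims, the case $\h_{0,2}$ runs as follows. The only nonzero right multiplications are
\[
\mathrm{R}_{e_1}(e_3)=-e_1,\qquad \mathrm{R}_{e_2}(e_3)=-e_1-e_2,\qquad \mathrm{R}_{e_3}(e_3)=\lambda e_3 .
\]
Hence $\mathrm{R}_{e_1}\mathrm{R}_{e_3}(e_3)=-\lambda e_1$ while $\mathrm{R}_{e_3}\mathrm{R}_{e_1}(e_3)=0$, so $\h_{0,2}$ is Novikov exactly when $\lambda=0$. The same computation gives $\operatorname{tr}\mathrm{R}_{e_3}=\lambda$, so $\h_{0,2}$ is complete exactly when $\lambda=0$. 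I will verify the commutation of the $\mathrm{R}_{e_i}$ for $\h_{0,5}$ (all $\lambda$) and $\h_{0,6}$ in the same way. I will also show non-commutation for $\h_{0,1},\h_{0,3},\h_{0,4},\h_{2,1},\h_{2,2}$; typically the pair $\mathrm{R}_{e_2},\mathrm{R}_{e_3}$ already fails there.

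For completeness I will compute the three traces in each family.
\begin{itemize}
\item $\h_{0,4}$: the only diagonal contributions cancel, so all traces vanish and the algebra is complete.
\item $\h_{0,1}$ and $\h_{2,2}$: $\operatorname{tr}\mathrm{R}_{e_3}\neq 0$ because of the $-2e_3$ or $-e_3$ term in $\nabla_{e_3}e_3$.
\item $\h_{0,3}$ and $\h_{2,1}$: again the $\pm e_3$ term gives a nonzero trace.
\item $\h_{0,5}$ and $\h_{0,6}$: here $\nabla_{e_1}e_3=\lambda e_1$ (resp.\ $e_1$) and the relation for $\nabla_{e_3}e_3$ give $\operatorname{tr}\mathrm{R}_{e_3}=3\lambda\neq 0$ (resp.\ $3$), so they are not complete.
\end{itemize}

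The main obstacle is bookkeeping rather than ideas. One has to be certain that the negative witnesses work for \emph{every} parameter value in the parametrized families $\h_{0,2}$, $\h_{0,4}$ and $\h_{0,5}$, including the special values excluded in the table. One also has to check the trace criterion on the full $\mathrm{R}_X$ for $X=\sum x_ie_i$, which is automatic by linearity of the trace. I will organize the verification as a short table of $\mathrm{R}_{e_i}$ per family and then read off the four properties.
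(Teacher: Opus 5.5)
Your proposal is correct and is essentially the verification the paper has in mind: the corollary is stated without proof, as a direct read-off from Table~\ref{g3,2}, and your checks of the four properties on each of the eight families reproduce exactly the stated lists. Invoking the Segal--Helmstetter trace criterion for completeness is a harmless shortcut, since in the two positive cases $\h_{0,2}^{\lambda=0}$ and $\h_{0,4}$ every $\mathrm{R}_X$ is already strictly upper triangular in the basis $(e_1,e_2,e_3)$; one bookkeeping remark is that the table entry $\nabla_{e_3}e_2=-e_2$ for $\h_{0,6}$ is a misprint for $-e_1$ (as in the proof, and as torsion-freeness forces), but your Novikov and trace conclusions for $\h_{0,6}$ are unaffected.
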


\begin{pr}
Let $(\G, \nabla)$ be a three-dimensional real flat  Lie algebra with $\G = \G_{3,3}$. Then $(\G, \nabla)$ is isomorphic to exactly one of the flat Lie algebras listed in Table~$\ref{g3,3}$.
{\renewcommand*{\arraystretch}{1.8}
\captionof{table}{Flat torsion-free connection on the Lie algebra $\G_{3,3}$.}
\setcounter{table}{6}
\begin{footnotesize} 
\setlength{\tabcolsep}{5pt} 
\begin{longtable}{@{}cllllllc@{}} 
			\hline
		Flat algebra& \multicolumn{2}{@{}l@{}}{~~Flat torsion-free connection}&&&&Remarks\\
			\hline
$\h_{0,1}$&$\nabla_{e_1}e_3=\lambda e_1$&$\nabla_{e_2}e_3=\lambda e_2$&$\nabla_{e_3}e_1=(\lambda-1) e_1$&$\nabla_{e_3}e_2=(\lambda-1) e_2$&$\nabla_{e_3}e_3=\lambda e_3$&$\lambda\in\R$&\\
$\h_{0,2}$&$\nabla_{e_1}e_3= e_1$&$\nabla_{e_2}e_3= e_2$&$\nabla_{e_3}e_3=e_2+ e_3$&&&&\\
$\h_{0,3}$&$\nabla_{e_1}e_3=\lambda e_1$&$\nabla_{e_3}e_1=(\lambda-1) e_1$&$\nabla_{e_3}e_2=-e_2$&$\nabla_{e_3}e_3=\lambda e_3$&&$\lambda\in\R^{\ast}$&\\
$\h_{0,4}$&$\nabla_{e_1}e_3=- e_1$&$\nabla_{e_3}e_1=-2 e_1$&$\nabla_{e_3}e_2=-e_2$&$\nabla_{e_3}e_3=e_2- e_3$&&&\\
$\h_{0,5}$&$\nabla_{e_1}e_3= e_1$&$\nabla_{e_3}e_2=-e_2$&$\nabla_{e_3}e_3=e_1+ e_3$&&&&\\
$\h_{0,6}$&$\nabla_{e_2}e_3= e_1$&$\nabla_{e_3}e_1=-e_1$&$\nabla_{e_3}e_2=e_1- e_2$&&&&\\
$\h_{0,7}$&$\nabla_{e_3}e_1=- e_1$&$\nabla_{e_3}e_2=-e_2$&$\nabla_{e_3}e_3=\lambda e_3$&&&$\lambda\in\R^\ast$&\\
$\h_{0,8}$&$\nabla_{e_3}e_1=- e_1$&$\nabla_{e_3}e_2=-e_2$&$\nabla_{e_3}e_3=e_1- e_3$&&&&\\
$\h_{0,9}$&$\nabla_{e_2}e_2=\varepsilon e_3$&$\nabla_{e_3}e_1=- e_1$&$\nabla_{e_3}e_2=-e_2$&$\nabla_{e_3}e_3=-2 e_3$&&$\varepsilon=\pm1$&\\
$\h_{0,10}$&$\nabla_{e_1}e_2= e_3$&$\nabla_{e_2}e_1= e_3$&$\nabla_{e_3}e_1=- e_1$&$\nabla_{e_3}e_2=-e_2$&$\nabla_{e_3}e_3=-2 e_3$&&\\
$\h_{0,11}$&$\nabla_{e_1}e_1=\varepsilon_1 e_3$&$\nabla_{e_2}e_2=\varepsilon_2 e_3$&$\nabla_{e_3}e_1=-e_1$&$\nabla_{e_3}e_2=-e_2$&$\nabla_{e_3}e_3=-2e_3$&$\varepsilon_1,\varepsilon_2=\pm1$&
\\\hline
$\h_{2,1}$&$\nabla_{e_1}e_3= -e_1$&$\nabla_{e_2}e_2= e_1$&$\nabla_{e_3}e_1=-2 e_1$&$\nabla_{e_3}e_2= -e_2$&$\nabla_{e_3}e_3= -e_3$&&\\
$\h_{2,2}$&$\nabla_{e_2}e_2= e_1$&$\nabla_{e_2}e_3= e_2$&$\nabla_{e_3}e_1= -e_1$&$\nabla_{e_3}e_3= e_3$&&&
\\\hline		
			\end{longtable}
			\label{g3,3}
			\end{footnotesize}	
			}

\end{pr}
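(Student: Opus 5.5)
Following the proofs for $\G_{3,1}$ and $\G_{3,2}$, we view $\G_{3,3}=\R e_3\ltimes\R^2$ with $\G_0=\langle e_1,e_2\rangle$ abelian and $\mathrm{D}=\mathrm{ad}_{e_3}|_{\G_0}=-\mathrm{Id}$. We then write a general torsion-free connection in the basis $\{e_1,e_2,e_3\}$ as three $3\times 3$ matrices. The torsion-free condition fixes the third columns of $\nabla_{e_1},\nabla_{e_2}$ against the first two columns of $\nabla_{e_3}$ up to the shift by $-\mathrm{Id}$, so that $\nabla_{e_3}e_j=\nabla_{e_j}e_3-e_j$ for $j=1,2$. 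In the notation of (\ref{Connetorsionfre}) this means $\eta=\beta-\mathrm{Id}$. Since $\mathrm{D}$ is a scalar, condition $(c_4)$ of Lemma~\ref{Auto} is satisfied with $\tau=1$ for every $\Phi\in\mathrm{GL}_2(\R)$, and $(c_3)$ forces $\sigma=0$. Hence the restricted automorphism group is the whole of $\mathrm{GL}_2(\R)$, together with arbitrary translations $u\in\R^2$ in $\Psi(e_3)=u+e_3$. By Lemma~\ref{Lemg3j} (the $\G_{3,3}$ part), which coincides with Lemma~\ref{ClassitorsionfreeR}, we may therefore take $\nabla^0$ to be one of the non-flat normal forms $\nabla^1,\ldots,\nabla^7$ there or one of the flat forms $\mathfrak b_0,\ldots,\mathfrak b_5$ of Table~\ref{FlatR2}.

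For each normal form of $\nabla^0$ we will solve the six flatness conditions $(i)$--$(vi)$ of the curvature lemma, with $\mathrm{D}=-\mathrm{Id}$ and $\eta=\beta-\mathrm{Id}$. Condition $(i)$ says that the curvature of $\nabla^0$ equals $\theta(x,z)\beta(y)-\theta(y,z)\beta(x)$. For the non-flat forms this is very restrictive and should kill most of them, in the way Lemma~\ref{no flat to flat} suggests. We expect, as in the $\G_{3,1}$ and $\G_{3,2}$ cases, that only a few forms survive and that they produce families reducing to flat $\nabla^0$. Condition $(v)$ becomes $-\nabla^0_xy-\eta(\nabla^0_xy)+\nabla^0_x\eta(y)=\theta(x,y)\zeta-\gamma(y)\beta(x)$, and condition $(vi)$ becomes a quadratic relation among $\theta$, $\gamma$ and $\lambda$. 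Solving these linearises the rest: $(iii)$ and $(iv)$ determine $\zeta$ up to a few free parameters. We will organize the solutions by the rank of $\theta$ (zero, one, or two, the last either definite or indefinite, giving $\varepsilon_1,\varepsilon_2$). We will also use whether $\beta$ is a scalar (the $\lambda$-families $\h_{0,1},\h_{0,3}$) or nilpotent (the $\h_{2,j}$, coming from $\nabla^0$ with $\nabla_{e_2}e_2=e_1$).

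Next we normalize each solution family using Remark~\ref{sympleofauto} and Lemma~\ref{isoofconnection}. The translation $u$ kills the $\G_0$-component of $\nabla_{e_3}e_3$ whenever $\lambda$ is not an eigenvalue-type resonant value, exactly as with the automorphisms $\Psi(e_3)=\ast e_1+\ast e_2+e_3$ used for $\G_{3,2}$. In the resonant cases, for example $\nabla_{e_3}e_3$ with coefficient $-1$ or $1$, a residual term survives and is scaled to $1$, which produces $\h_{0,2}$, $\h_{0,4}$, $\h_{0,5}$ and $\h_{0,8}$. A $\mathrm{GL}_2$-change then puts $\theta$ into Sylvester normal form, giving $\h_{0,9}$, $\h_{0,10}$ and $\h_{0,11}$. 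Scaling normalizes the remaining off-diagonal constants.

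The main obstacle is the final step: proving that the listed algebras are pairwise non-isomorphic and that the parameter ranges are exact, for instance $\lambda\in\R^\ast$ in $\h_{0,3}$ and $\h_{0,7}$. For this we will use invariants that are preserved because $\mathrm{Aut}(\G_{3,3})$ acts with $\tau=1$ and $\sigma=0$. The first is the conjugacy class of $\beta$ together with the scalar $\lambda$, which is invariant up to shifts by $u$ and is visible through the spectrum of $\mathrm{L}_{e_3}$ on the quotient. The others are the rank and signature of $\theta$ up to scaling, and whether $\gamma$ vanishes. We will check in particular that the excluded values $\lambda=0$ for $\h_{0,3}$ and $\h_{0,7}$ coincide with members of other families, such as $\h_{0,1}$ or a $\delta$-type degenerate case.
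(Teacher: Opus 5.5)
Your setup agrees with the paper's and is correct. You write $\G_{3,3}=\R e_3\ltimes\R^2$ with $\mathrm{D}=-\mathrm{Id}$ and $\eta=\beta-\mathrm{Id}$, and every automorphism is some $\Phi\in\mathrm{GL}_2(\R)$ on $\R^2$ together with $e_3\mapsto e_3+u$, so $\sigma=0$ and $\tau=1$.

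The genuine gap is the step you yourself flag as the main obstacle, and it cannot be closed because the uniqueness claim is false. Since $\sigma=0$ and $\tau=1$, the form $\theta$ changes only by congruence, $\theta\mapsto\Phi^\ast\theta$. Take $\Psi(e_1)=e_1+\tfrac12e_2$, $\Psi(e_2)=e_1-\tfrac12e_2$, $\Psi(e_3)=e_3$, and let $\nabla$ be the connection of $\h_{0,11}$ with $(\varepsilon_1,\varepsilon_2)=(1,-1)$. Then $\nabla^{\h_{0,10}}_X=\Psi\circ\nabla_{\Psi^{-1}X}\circ\Psi^{-1}$:
\begin{itemize}
\item $\Psi$ carries the form $\mathrm{diag}(1,-1)$ to $e^1\otimes e^2+e^2\otimes e^1$, which is the $\theta$ of $\h_{0,10}$;
\item it commutes with $\nabla_{e_3}|_{\R^2}=-\mathrm{Id}$;
\item it preserves $\nabla_{e_3}e_3=-2e_3$ and $\nabla_xe_3=0$.
\end{itemize}
The swap $e_1\leftrightarrow e_2$ likewise identifies $(1,-1)$ with $(-1,1)$.

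So your invariant (rank and signature of $\theta$) cannot separate $\h_{0,10}$, $\h_{0,11}^{(1,-1)}$ and $\h_{0,11}^{(-1,1)}$, because they are isomorphic. Consistently, your Sylvester step only produces diagonal $\theta$ and never yields $\h_{0,10}$ as a separate class. What your method actually proves is the table with $\h_{0,10}$ deleted and $(\varepsilon_1,\varepsilon_2)\in\{(1,1),(1,-1),(-1,-1)\}$. The paper's proof has the same defect: it asserts pairwise non-isomorphism without checking it.

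The signature must also be used literally, not up to scaling. Since $e_3$ cannot be rescaled, only the sign separates $\h_{0,9}^{\varepsilon=1}$ from $\h_{0,9}^{\varepsilon=-1}$, and $\h_{0,11}^{(1,1)}$ from $\h_{0,11}^{(-1,-1)}$.

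Your other invariants also need repair. The conjugacy class of $\beta$ is not invariant under the shifts. Re-expressing the connection in the basis $e_1,e_2,e_3+u$ replaces $\gamma$ by $\gamma+\theta(\cdot,u)$, and $\beta$ by $\beta+\nabla^0_{(\cdot)}u-\bigl(\theta(\cdot,u)+\gamma\bigr)u$. For instance, in $\h_{0,9}$ the shift $u=e_2$ turns $\beta=0$ into $\mathrm{diag}(0,-\varepsilon)$. An invariant that does work is the characteristic polynomial of $\mathrm{L}_{e_3}$, because $\mathrm{L}$ is a representation of the solvable algebra $\G$ and $u\in[\G,\G]$.

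Your descriptions of $\beta$ are also inaccurate. It is $\mathrm{diag}(\lambda,0)$ in $\h_{0,3}$, $\mathrm{diag}(-1,0)$ in $\h_{2,1}$ and $\mathrm{diag}(0,1)$ in $\h_{2,2}$; the nilpotent case is $\h_{0,6}$.

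Finally, do not merely expect the non-flat $\nabla^0$ to drop out; the paper's appeal to Lemma~\ref{no flat to flat} does not justify this either. Use the same shift instead. For nondegenerate $\theta$, choose $u$ with $\gamma+\theta(\cdot,u)=0$. Then:
\begin{itemize}
\item $(iv)$ gives $\zeta=0$;
\item $(vi)$ gives $\beta=(\lambda+2)\mathrm{Id}$;
\item $(iii)$ gives $\beta^2=\lambda\beta$, hence $\lambda=-2$ and $\beta=0$;
\item $(v)$ then gives $\nabla^0=0$.
\end{itemize}
So the nondegenerate case consists exactly of $\nabla_xy=\theta(x,y)e_3$, $\nabla_xe_3=0$, $\nabla_{e_3}x=-x$, $\nabla_{e_3}e_3=-2e_3$, with one isomorphism class per signature of $\theta$. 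This is the computation that exposes the redundancy above.
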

\begin{proof}
	 In the basis $\lbrace e_1, e_2, e_3 \rbrace$, the operators $\nabla_{e_1}$, $\nabla_{e_2}$ and $\nabla_{e_3}$ are given respectively by: 
	\begin{equation}
		\nabla_{e_1}=\left( \begin {array}{ccc} a_{11}&a_{12}&a_{13}\\ \noalign{\medskip}
		a_{21}&a_{22}&a_{23}\\ \noalign{\medskip}a_{31}&a_{32}&a_{33}\end {array} \right),\quad
		\nabla_{e_2}=\left( \begin {array}{ccc} a_{12}&b_{12}&b_{13}\\ \noalign{\medskip}
		a_{22}&b_{22}&b_{23}\\ \noalign{\medskip}a_{32}&b_{32}&b_{33}\end {array} \right),\quad
		\nabla_{e_3}=\left( \begin {array}{ccc} a_{13}-1 &b_{13}&c_{13}\\ \noalign{\medskip}
		a_{23}&b_{23}-1&c_{23}\\ \noalign{\medskip}a_{33}&b_{33}&c_{33}\end {array} \right),\quad
	\end{equation}
	where $a_{ij}$, $b_{ij}$, $c_{ij}\in \mathbb{R}$.

Assume that $\nabla^0$ is a torsion-free connection. Then, by Lemma~\ref{no flat to flat}, we may assume that $\nabla^0$ is flat and torsion-free. Consequently, $\nabla^0$ is isomorphic to one of the flat, torsion-free connections presented in Table~\ref{FlatR2}.
 We first consider the case $\nabla^0\equiv0$. Using a straightforward computation, the flatness-equations can be solved with seven real solutions. The first one is given by the following flat, torsion-free connection:
 \begin{equation}\label{g3,3,nabla1,sol1}
 	\begin{aligned}
 		\nabla_{e_1} e_1 &=a_{31}e_3, &\nabla_{e_1}e_2&=a_{32}e_3, &\nabla_{e_2}e_1&=a_{32}e_3, &\nabla_{e_2}e_2&=b_{32}e_3,\\ \nabla_{e_3}e_1&=-e_1, &\nabla_{e_3}e_2&=-e_2, &\nabla_{e_3}e_3&=-2\,e_3.
 	\end{aligned}
 \end{equation}
 
 Suppose that $b_{32}\neq0$, and consider the following automorphism
 \begin{align*}
 	\Psi(e_1) &=x\, e_1+\tfrac{a_{32}\sqrt{\varepsilon_2\,b_{32}}}{\varepsilon_2\,b_{32}}e_2,&
 	\Psi(e_2) &=\tfrac{\sqrt{\varepsilon_2\,b_{32}}}{\varepsilon_2}e_2,&
 	\Psi(e_3) &=e_3,\quad\varepsilon_2=\pm1.
 \end{align*}
 For a suitable choice of the parameter $x\in\R^\ast$, applying $\Psi$ to the connection given in~\eqref{g3,3,nabla1,sol1} yields the following equivalent connection:
 \begin{equation}\label{g3,3,nabla1,sol1,1}
 	\begin{aligned}
 		\nabla_{e_1} e_1 &=\delta_\varepsilon\, e_3,  &\nabla_{e_2}e_2&=\epsilon_2\, e_3,&\nabla_{e_3}e_1&=-e_1, &\nabla_{e_3}e_2&=-e_2, &\nabla_{e_3}e_3&=-2\,e_3.
 	\end{aligned}
 \end{equation}
 If $\delta_\varepsilon=0$, we obtain the flat, torsion-free connection associated with the flat Lie algebra $\h_{0,9}$, in which case $\varepsilon_2=\varepsilon$. Otherwise, if $\delta_\varepsilon=\pm1$, we obtain the flat, torsion-free connection associated with the Lie algebra $\h_{0,11}$, in this case $\delta_\varepsilon=\varepsilon_1$.

 Suppose that $b_{32}=0$ and $a_{32}\neq0$, and consider the following automorphism
 \begin{align*}
 	\Psi(e_1) &=\tfrac{a_{32}}{2}e_1+e_2,&
 	\Psi(e_2) &=a_{32}e_2,&
 	\Psi(e_3) &=e_3.
 \end{align*}
 Applying $\Psi$ to the connection given in~\eqref{g3,3,nabla1,sol1} yields the following equivalent connection:
  \begin{equation}\label{g3,3,nabla1,sol1,2}
 	\begin{aligned}
 		\nabla_{e_1} e_2 &= e_3,  &\nabla_{e_2}e_1&=e_3,&\nabla_{e_3}e_1&=-e_1, &\nabla_{e_3}e_2&=-e_2, &\nabla_{e_3}e_3&=-2\,e_3.
 	\end{aligned}
 \end{equation}
 Then  this connection is precisely the one associated with the flat Lie algebra $\h_{0,10}$ listed in Table~\ref{g3,3}.

  Suppose now that $b_{32}=0$ and $a_{32}=0$, and consider the following automorphism
  \begin{align*}
 	\Psi(e_1) &=x\,e_2,&
 	\Psi(e_2) &=e_1,&
 	\Psi(e_3) &=e_3.
 \end{align*}
  For a suitable choice of the parameter $x\in\R^\ast$, applying $\Psi$ to the connection given in~\eqref{g3,3,nabla1,sol1} yields the following equivalent connection:
  \begin{equation}\label{g3,3,nabla1,sol1,3}
 	\begin{aligned}
 		\nabla_{e_2} e_2 &=\delta_\varepsilon\, e_3,  &\nabla_{e_3}e_1&=-e_1, &\nabla_{e_3}e_2&=-e_2, &\nabla_{e_3}e_3&=-2\,e_3,\quad\delta_\varepsilon=0,\pm1.
 	\end{aligned}
 \end{equation}
 	If $\delta_\varepsilon=0$, then this connection coincides with the one associated with the flat Lie algebra $\h_{0,7}$, in which case, $\lambda=-2$. On the other hand, if $\delta_\varepsilon=\pm1$, it coincides with the one associated with the flat Lie algebra $\h_{0,9}$, in this case, $\delta_\varepsilon=\varepsilon$.
 	
 	The second solution is given by the following flat, torsion-free connection:
  \begin{equation}\label{g3,3,nabla1,sol2}
 	\begin{aligned}
 		\nabla_{e_1} e_3 &=a_{13}e_1+a_{23}e_2, &\nabla_{e_2}e_3&=\tfrac{b_{23}a_{13}}{a_{23}}e_1+b_{23}e_2, &\nabla_{e_3}e_1&=(a_{13}-1)e_1+a_{23}e_2,\\
 		\nabla_{e_3}e_2&=\tfrac{b_{23}a_{13}}{a_{23}}e_1+(b_{23}-1)e_2, &\nabla_{e_3}e_3&=c_{13}e_1+c_{23}e_2+(a_{13}+b_{23})e_3.
 	\end{aligned}
 \end{equation}
 Suppose that $a_{13}\neq0$ and $a_{1 3} + b_{2 3}\neq\pm1$. Consider the following automorphism
  \begin{align*}
 	\Psi(e_1) &=e_1-\tfrac{a_{23}}{a_{13}}e_2,&
 	\Psi(e_2) &=\tfrac{b_{23}}{a_{23}} e_1+e_2,&
 	\Psi(e_3) &=\tfrac{a_{23}c_{13}+b_{23}c_{23}}{a_{23}(a_{13}+b_{23}-1)}e_1+\tfrac{a_{23}c_{13}-a_{13}c_{23}}{a_{23}(a_{13}+b_{23}+1)}e_2+e_3.
 \end{align*}
Applying $\Psi$ to the connection given in~\eqref{g3,3,nabla1,sol2} yields the following equivalent connection:
 \begin{equation}\label{g3,3,nabla1,sol2,1}
 	\begin{aligned}
 		\nabla_{e_1} e_3 &=\lambda\,e_1, &\nabla_{e_3}e_1&=(\lambda-1)e_1, &\nabla_{e_3}e_2&=-e_2, &\nabla_{e_3}e_3&=\lambda\,e_3.
 	\end{aligned}
 \end{equation}
In this case $\lambda=a_{13}+b_{23}\neq\pm1$.

 Suppose that $a_{13}\neq0$ and $a_{1 3} + b_{2 3}=-1$. Consider the following automorphism
 \begin{align*}
 	\Psi(e_1) &=e_1-\tfrac{x\,a_{23}}{a_{13}}e_2,&
 	\Psi(e_2) &=\tfrac{b_{23}}{a_{23}} e_1+x\,e_2,&
 	\Psi(e_3) &=\tfrac{a_{13}c_{23}-c_{13}a_{23}+c_{23}}{2\,a_{23}}e_1+e_3.
 \end{align*}
 For a suitable choice of the parameter $x\in\R^\ast$, applying $\Psi$ to the connection given in~\eqref{g3,3,nabla1,sol2} yields the following equivalent connection:
 \begin{equation}\label{g3,3,nabla1,sol2,2}
 	\begin{aligned}
 		\nabla_{e_1} e_3 &=-e_1, &\nabla_{e_3}e_1&=-2\,e_1, &\nabla_{e_3}e_2&=-e_2, &\nabla_{e_3}e_3&=\delta_1\,e_2-e_3,\quad\delta_1=0,1.
 	\end{aligned}
 \end{equation}

  Suppose now that $a_{13}\neq0$ and $a_{1 3} + b_{2 3}=1$. Consider the following automorphism
 \begin{align*}
 	\Psi(e_1) &=x\,e_1-\tfrac{x\,a_{23}}{a_{13}}e_2,&
 	\Psi(e_2) &=\tfrac{x\,b_{23}}{a_{23}} e_1+e_2,&
 	\Psi(e_3) &=\tfrac{c_{13}a_{23}-a_{13}c_{23}}{2\,a_{23}}e_2+e_3.
 \end{align*}
 For a suitable choice of the parameter $x\in\R^\ast$, applying $\Psi$ to the connection given in~\eqref{g3,3,nabla1,sol2} yields the following equivalent connection:
 \begin{equation}\label{g3,3,nabla1,sol2,3}
 	\begin{aligned}
 		\nabla_{e_1} e_3 &=e_1,  &\nabla_{e_3}e_2&=-e_2, &\nabla_{e_3}e_3&=\delta_2\,e_1+e_3,\quad\delta_2=0,1.
 	\end{aligned}
 \end{equation}

 Observe  that the flat, torsion-free connections given in \eqref{g3,3,nabla1,sol2,1} and  \eqref{g3,3,nabla1,sol2,2} are isomorphic if and only if $\delta_1=0$ and $\lambda=-1$. For this reason, we set $\lambda\in\R \setminus{\{-1\}}$ and assume that $\delta_1=1$. Similarly, one can easily show that \eqref{g3,3,nabla1,sol2,1} and  \eqref{g3,3,nabla1,sol2,3} are isomorphic if and only if $\delta_2=0$ and $\lambda=1$.  For this reason, we set $\lambda\in\R$ and assume that $\delta_2=1$.

 The connections given in \eqref{g3,3,nabla1,sol2,1}, \eqref{g3,3,nabla1,sol2,2}, and \eqref{g3,3,nabla1,sol2,3} correspond to the flat, torsion-free connections associated with  the flat Lie algebras $\h_{0,3}$, $\h_{0,4}$ and $\h_{0,5}$, respectively. The additional conditions $\lambda\neq0$ in the flat Lie algebra $\h_{0,3}$ is imposed to ensure that the corresponding connection is not isomorphic to the  algebras $\h_{0,1}$, as will be shown later.

 Suppose that $a_{13}=0$. If $b_{23}\neq\pm1$, consider the following automorphism
 \begin{align*}
 	\Psi(e_1) &=e_1,&
 	\Psi(e_2) &=\tfrac{b_{23}}{a_{23}} e_1+\tfrac{1}{a_{23}} e_2,&
 	\Psi(e_3) &=\tfrac{c_{13}a_{23}+b_{23}c_{23}}{a_{23}(b_{23}-1)}e_1+\tfrac{2\,c_{13}a_{23}+b_{23}c_{23}+c_{23}}{a_{23}(b_{23}^2-1)}e_2+e_3.
 \end{align*}
  Applying $\Psi$ to the connection given in~\eqref{g3,3,nabla1,sol2} yields the following equivalent connection:
  \begin{equation}\label{g3,3,nabla1,sol2,4}
  	\begin{aligned}
  		\nabla_{e_1} e_3 &=\lambda_2\,e_1+e_2,  &\nabla_{e_3}e_1&=(\lambda_2-1)e_1+e_2, &\nabla_{e_3}e_2&=-e_2, &\nabla_{e_3}e_3&=\lambda_2\,e_3.
  	\end{aligned}
  \end{equation}
  In this case $\lambda_2=b_{23}\neq\pm1$.
 This flat, torsion-free connection is, in fact, equivalent (up to isomorphism) to the connection associated with the flat Lie algebra $\h_{0,3}$ through the following automorphism:
\begin{align*}
	\Psi(e_1) &=e_1+e_2,&
	\Psi(e_2) &=-\lambda_2\, e_2,&
	\Psi(e_3) &=e_3,
\end{align*} 
 with $\lambda_1=\lambda_2\neq0$.

 If $\lambda_2=0$, consider the following automorphism:
 \begin{align*}
 	\Psi(e_1) &= e_2, &
 	\Psi(e_2) &= e_1, &
 	\Psi(e_3) &= e_3.
 \end{align*}
 Applying this automorphism to the connection given in~\eqref{g3,3,nabla1,sol2,4} yields an equivalent connection
 \begin{equation}\label{g3,3,nabla1,sol2,5}
 	\begin{aligned}
 		\nabla_{e_2} e_3 &=e_1,  &\nabla_{e_3}e_1&=-e_1, &\nabla_{e_3}e_2&=e_1-e_2,
 	\end{aligned}
 \end{equation}
  which coincides with the one associated with the flat Lie algebra $\h_{0,6}$ listed in Table~\ref{g3,3}.

  Suppose that $a_{13}=0$ and  $b_{23}=-1$, Consider the following automorphism
  \begin{align*}
  	\Psi(e_1) &=x\,a_{23}\,e_1,&
  	\Psi(e_2) &=-x\, e_1+x\,e_2,&
  	\Psi(e_3) &=\tfrac{x\,c_{23}}{2}e_1+e_3.
  \end{align*}
  For a suitable choice of the parameter $x\in\R^\ast$, applying $\Psi$ to the connection given in~\eqref{g3,3,nabla1,sol2} yields the following equivalent connection:
  \begin{equation}\label{g3,3,nabla1,sol2,6}
 	\begin{aligned}
 		\nabla_{e_1} e_3 &=-e_1+e_2,  &\nabla_{e_3}e_1&=-2\,e_1+e_2, &\nabla_{e_3}e_2&=-e_2, &\nabla_{e_3}e_3&=\delta_3\,e_1-e_3,\quad\delta_3=0,1.
 	\end{aligned}
 \end{equation}

 Note that if  $\delta_3=0$, then this connection coincides with the one given in~\eqref{g3,3,nabla1,sol2,4} with $\lambda_2=-1$. Therefore, it is isomorphic to the connection associated with the flat Lie algebra $\h_{0,3}$, which corresponds to the case $\lambda_1=-1$.
 
 Otherwise, if $\delta_3=1$, the following automorphism
 \begin{align*}
 	\Psi(e_1) &= -2\,e_1+e_2, &
 	\Psi(e_2) &= e_2, &
 	\Psi(e_3) &=e_1+e_3,
 \end{align*}
 establishes an isomorphism between the connection given in~\eqref{g3,3,nabla1,sol2,6} and the one given in~\eqref{g3,3,nabla1,sol2,2} with $\delta_1=1$.

  Suppose that $a_{13}=0$ and  $b_{23}=1$, consider the following automorphism
 \begin{align*}
 	\Psi(e_1) &=x\,a_{23}\,e_1,&
 	\Psi(e_2) &=x\, e_1+x\,e_2,&
 	\Psi(e_3) &=\tfrac{x\,c_{23}}{2}e_1+e_3.
 \end{align*}
 For a suitable choice of the parameter $x\in\R^\ast$, applying $\Psi$ to the connection given in~\eqref{g3,3,nabla1,sol2} yields the following equivalent connection:
 \begin{equation}\label{g3,3,nabla1,sol2,7}
 	\begin{aligned}
 		\nabla_{e_1} e_3 &=e_1+e_2,  &\nabla_{e_3}e_2&=-e_2, &\nabla_{e_3}e_3&=\delta_4\,e_1+e_3,\quad\delta_4=0,1.
 	\end{aligned}
 \end{equation}

 Note that if  $\delta_4=0$, then this connection coincides with the one given in~\eqref{g3,3,nabla1,sol2,4} with $\lambda_2=1$. Therefore, it is isomorphic to the connection associated with the flat Lie algebra $\h_{0,3}$, which corresponds to the case $\lambda_1=1$.
 
 Otherwise, if $\delta_4=1$, the following automorphism
 \begin{align*}
 	\Psi(e_1) &= e_1-2\,e_2, &
 	\Psi(e_2) &= 2\,e_2, &
 	\Psi(e_3) &=e_2+e_3,
 \end{align*}
 establishes an isomorphism between the connection given in~\eqref{g3,3,nabla1,sol2,6} and the one given in~\eqref{g3,3,nabla1,sol2,3} with $\delta_2=1$.
 
 The third solution corresponds to the following flat, torsion-free connection:
 \begin{equation}\label{g3,3,nabla1,sol3}
 	\begin{aligned}
 		\nabla_{e_1} e_3 &=a_{13}e_1, \quad\nabla_{e_2}e_3=b_{13}e_1, &\nabla_{e_3}e_1&=(a_{13}-1)e_1, &\nabla_{e_3}e_2&=b_{13}e_1-e_2, \\\nabla_{e_3}e_3&=c_{13}e_1+c_{23}e_2+a_{13}e_3.
 	\end{aligned}
 \end{equation}
 Suppose that $a_{13}\neq0$. If $a_{13}\neq\pm1$, consider the following automorphism
 \begin{align*}
 	\Psi(e_1) &= e_1+\tfrac{1}{a_{13}}e_2, &
 	\Psi(e_2) &= \tfrac{b_{13}}{a_{13}}e_1+e_2, &
 	\Psi(e_3) &=\tfrac{a_{13}c_{13}+b_{13}c_{23}}{a_{13}\,(a_{13}-1)}e_1+\tfrac{-a_{13}^2c_{23}+(c_{13}+c_{23})a_{13}+2\,b_{13}c_{23}+c_{13}}{a_{13}\,(a_{13}^2-1)}e_2+e_3.
 \end{align*}
 
 Applying $\Psi$ to the connection given in~\eqref{g3,3,nabla1,sol3} yields the following equivalent connection:
 \begin{equation}\label{g3,3,nabla1,sol3,1}
 	\begin{aligned}
 		\nabla_{e_1} e_3 &=\lambda_3\,e_1+e_2,  &\nabla_{e_3}e_1&=(\lambda_3-1)e_1+e_2, &\nabla_{e_3}e_2&=-e_2, &\nabla_{e_3}e_3&=\lambda_3\,e_3.
 	\end{aligned}
 \end{equation}
 In this case $\lambda_3=a_{13}\neq0,\pm1$. 
 Observe that this connection coincides with the one given in~\eqref{g3,3,nabla1,sol2,4}. Therefore, $\lambda_3=\lambda_2$, and the result follows without further analysis.

  Suppose that $a_{13}\neq0$. If $a_{13}=-1$, consider the following automorphism
  \begin{align*}
  	\Psi(e_1) &=-y\,e_1+y\,e_2,&
  	\Psi(e_2) &=y\,b_{13}\, e_1+x\,e_2,&
  	\Psi(e_3) &=\tfrac{x\,c_{23}+y\,c_{13}}{2}e_1+e_3.
  \end{align*}
  For a suitable choice of the parameters $x\in\R$ or $y\in\R^\ast$, applying $\Psi$ to the connection given in~\eqref{g3,3,nabla1,sol3} yields the following equivalent connection:
  \begin{equation}\label{g3,3,nabla1,sol3,2}
 	\begin{aligned}
 		\nabla_{e_1} e_3 &=-e_1+e_2,  &\nabla_{e_3}e_1&=-2\,e_1+e_2, &\nabla_{e_3}e_2&=-e_2, &\nabla_{e_3}e_3&=\delta_5\,e_1-e_3,\quad\delta_5=0,1.
 	\end{aligned}
 \end{equation}
This flat, torsion-free connection coincides with the one considered in \eqref{g3,3,nabla1,sol2,6}, in which case, $\delta_5=\delta_3$.

 Suppose that $a_{13}\neq0$. If $a_{13}=1$, consider the following automorphism
 \begin{align*}
 	\Psi(e_1) &=y\,e_1+y\,e_2,&
 	\Psi(e_2) &=y\,b_{13}\, e_1+x\,e_2,&
 	\Psi(e_3) &=\tfrac{x\,c_{23}+y\,c_{13}}{2}e_1+e_3.
 \end{align*}
 For a suitable choice of the parameters $x\in\R^\ast$ or $y\in\R$, applying $\Psi$ to the connection given in~\eqref{g3,3,nabla1,sol3} yields the following equivalent connection:
 \begin{equation}\label{g3,3,nabla1,sol3,3}
 	\begin{aligned}
 		\nabla_{e_1} e_3 &=e_1+e_2,   &\nabla_{e_3}e_2&=-e_2, &\nabla_{e_3}e_3&=\delta_6\,e_1+e_3,\quad\delta_6=0,1.
 	\end{aligned}
 \end{equation}
 This flat, torsion-free connection coincides with the one considered in \eqref{g3,3,nabla1,sol2,7}, in which case, $\delta_6=\delta_4$.
 
Suppose that $a_{13}=0$. Consider the following automorphism 
\begin{align*}
	\Psi(e_1) &=e_2,&
	\Psi(e_2) &=x\, e_1,&
	\Psi(e_3) &=-x\,c_{23}e_1+(-2\,b_{13}c_{23}-c_{13})e_2+e_3.
\end{align*}
For a suitable choice of the parameters $x\in\R^\ast$, applying $\Psi$ to the connection given in~\eqref{g3,3,nabla1,sol3} yields the following equivalent connection:
 \begin{equation}\label{g3,3,nabla1,sol3,4}
 	\begin{aligned}
 		\nabla_{e_1} e_3 &=\delta\,e_2,
 		&\nabla_{e_3}e_1&=-e_1+\delta\,e_2,   &\nabla_{e_3}e_2&=-e_2,\quad\delta=0,1.
 	\end{aligned}
 \end{equation}
 Observe that when $\delta=1$, this flat, torsion-free connection coincides with the one considered in~\eqref{g3,3,nabla1,sol3,1}, yielding the case $\lambda_3=0$.
 
 On the other hand, when $\delta=0$, the connection given in~\eqref{g3,3,nabla1,sol3,4} is equivalent to the one associated with the flat Lie algebra listed in Table~\ref{g3,3}, which corresponds to the case $\lambda=0$.

 The fourth solution corresponds to the following flat, torsion-free connection:
 \begin{equation}\label{g3,3,nabla1,sol4}
 	\begin{aligned}
 		\nabla_{e_1} e_3 &=b_{23}e_1, \quad\nabla_{e_2}e_3=b_{23}e_2, &\nabla_{e_3}e_1&=(b_{23}-1)e_1, &\nabla_{e_3}e_2&=(b_{23}-1)e_2, \\\nabla_{e_3}e_3&=c_{13}e_1+c_{23}e_2+b_{23}e_3.
 	\end{aligned}
 \end{equation}
 
 If $b_{23}\neq1$. Consider the following automorphism
 \begin{align*}
 	\Psi(e_1) &=e_1,&
 	\Psi(e_2) &=e_2,&
 	\Psi(e_3) &=\tfrac{c_{13}}{b_{23}-1} e_1+\tfrac{c_{23}}{b_{23}-1}e_2+e_3.
 \end{align*}
 Applying $\Psi$ to the connection given in~\eqref{g3,3,nabla1,sol4} yields the following equivalent connection:
 \begin{equation}\label{g3,3,nabla1,sol4,1}
 	\begin{aligned}
 		\nabla_{e_1} e_3 &=\lambda\,e_1, \quad\nabla_{e_2}e_3=\lambda\,e_2, &\nabla_{e_3}e_1&=(\lambda-1)e_1, &\nabla_{e_3}e_2&=(\lambda-1)e_2, &\nabla_{e_3}e_3&=\lambda\,e_3.
 	\end{aligned}
 \end{equation}
 In this case, $\lambda=b_{23}\neq1$.

  If $b_{23}=1$. Consider the following automorphism
 \begin{align*}
 	\Psi(e_1) &=x\,e_1,&
 	\Psi(e_2) &=y\,e_2,&
 	\Psi(e_3) &=e_3.
 \end{align*}
 For a suitable choice of the parameters $x\in\R^\ast$, applying $\Psi$ to the connection given in~\eqref{g3,3,nabla1,sol4} yields the following equivalent connection:
 \begin{equation}\label{g3,3,nabla1,sol4,2}
 	\begin{aligned}
 		\nabla_{e_1} e_3 &=e_1, &\nabla_{e_2}e_3&=e_2,  &\nabla_{e_3}e_3&=\delta_1\,e_1+\delta_2\,e_2+e_3.
 	\end{aligned}
 \end{equation}
 Observe first that if $\delta_1=\delta_2=0$, then this connection coincides with the one considered in~\eqref{g3,3,nabla1,sol4,1}, in which case $\lambda=1$. Therefore, we set $\lambda\in\mathbb{R}$ in~ \eqref{g3,3,nabla1,sol4,1} and assume that $\delta_1^2+\delta_2^2\neq0$. On the other hand, we may assume that $\delta_1=0$ and $\delta_2=1$. Indeed, the connection given in~\eqref{g3,3,nabla1,sol4,2}  with $\delta_1=1$ and $\delta_2=1$ or $\delta_1=1$ and $\delta_2=0$ is isomorphic to the same connection with $\delta_1=0$ and $\delta_2=1$ via the following automorphisms:
 \begin{align*}
 	\Psi(e_1) &=-e_1+e_2,&
 	\Psi(e_2) &=e_1,&
 	\Psi(e_3) &=e_3.
 \end{align*}
 or
 \begin{align*}
 	\Psi(e_1) &=e_1-e_2,&
 	\Psi(e_2) &=e_2,&
 	\Psi(e_3) &=e_3.
 \end{align*}
 
 In view of the previous discussion, the connections given in \eqref{g3,3,nabla1,sol4,1} and \eqref{g3,3,nabla1,sol4,2} correspond to the flat Lie algebras $\h_{0,1}$ and $\h_{0,2}$, respectively.

 	The fifth solution corresponds to the following flat, torsion-free connection:
 \begin{equation}\label{g3,3,nabla1,sol5}
 	\begin{aligned}
 		\nabla_{e_1} e_3 &=a_{23}e_2, \quad\nabla_{e_2}e_3=c_{33}e_2, &\nabla_{e_3}e_1&=-e_1+a_{23}e_2, &\nabla_{e_3}e_2&=(c_{33}-1)e_2, \\\nabla_{e_3}e_3&=c_{13}e_1+c_{23}e_2+c_{33}e_3.
 	\end{aligned}
 \end{equation}

 Suppose that $c_{33}\neq0$. If $c_{33}\neq\pm1$, and consider the following automorphism
 \begin{align*}
 	\Psi(e_1) &=\tfrac{a_{23}}{c_{33}} e_1+e_2,&
 	\Psi(e_2) &=e_1,&
 	\Psi(e_3) &=\tfrac{c_{13}a_{23}+c_{23}c_{33}}{c_{33}(c_{33}-1)}e_1-\tfrac{c_{13}}{c_{33}+1}e_2+e_3.
 \end{align*}
 
  Applying $\Psi$ to the connection given in~\eqref{g3,3,nabla1,sol5} yields the following equivalent connection:
 \begin{equation}\label{g3,3,nabla1,sol5,1}
 	\begin{aligned}
 		\nabla_{e_1} e_3 &=\lambda\,e_1,  &\nabla_{e_3}e_1&=(\lambda-1)e_1, &\nabla_{e_3}e_2&=-e_2, &\nabla_{e_3}e_3&=\lambda\,e_3.
 	\end{aligned}
 \end{equation}
 In this case, $\lambda=c_{33}\neq0,\pm1$. 
 Moreover, it is evident that this  connection is identical to the one given in~\eqref{g3,3,nabla1,sol2,1}; therefore, no further analysis is required.

 Suppose that $c_{33}\neq0$. If $c_{33}=-1$, and consider the following automorphism
 \begin{align*}
 	\Psi(e_1) &=-a_{23}e_1+x\,e_2,&
 	\Psi(e_2) &=e_1,&
 	\Psi(e_3) &=\tfrac{c_{13}a_{23}-c_{23}}{2}e_1+e_3.
 \end{align*}
 For a suitable choice of the parameter $x\in\R^\ast$, applying $\Psi$ to the connection given in~\eqref{g3,3,nabla1,sol5} yields the following equivalent connection:
 \begin{equation}\label{g3,3,nabla1,sol5,2}
 	\begin{aligned}
 		\nabla_{e_1} e_3 &=-e_1, &\nabla_{e_3}e_1&=-2\,e_1, &\nabla_{e_3}e_2&=-e_2, &\nabla_{e_3}e_3&=\delta\,e_2-e_3,\quad\delta=0,1.
 	\end{aligned}
 \end{equation}
 Similarly, this connection is identical to the one given in~\eqref{g3,3,nabla1,sol2,2}; therefore, no further analysis is required.

 Suppose now that $c_{33}\neq0$. If $c_{33}=1$, and consider the following automorphism
 \begin{align*}
 	\Psi(e_1) &=a_{23}\,x\,e_1+x\,e_2,&
 	\Psi(e_2) &=x\,e_1,&
 	\Psi(e_3) &=-\tfrac{c_{13}}{2}e_2+e_3.
 \end{align*}
 For a suitable choice of the parameter $x\in\R^\ast$, applying $\Psi$ to the connection given in~\eqref{g3,3,nabla1,sol5} yields the following equivalent connection:
 \begin{equation}\label{g3,3,nabla1,sol5,2}
 	\begin{aligned}
 		\nabla_{e_1} e_3 &=e_1, &\nabla_{e_3}e_2&=-e_2, &\nabla_{e_3}e_3&=\delta\,e_1+e_3,\quad\delta=0,1.
 	\end{aligned}
 \end{equation}
 Similarly, this connection is identical to the one given in~\eqref{g3,3,nabla1,sol2,3}; therefore, no further analysis is required.

 If $c_{33}=0$. Consider the following automorphism
 \begin{align*}
 	\Psi(e_1) &=e_2,&
 	\Psi(e_2) &=x\, e_1,&
 	\Psi(e_3) &=x(-2\,c_{13}a_{23}-c_{23})e_1-c_{13}e_2+e_3.
 \end{align*}
 For a suitable choice of the parameter $x\in\R^\ast$, applying $\Psi$ to the connection given in~\eqref{g3,3,nabla1,sol5} yields the following equivalent connection:
 \begin{equation}\label{g3,3,nabla1,sol5,3}
 	\begin{aligned}
 		\nabla_{e_2} e_3 &=\delta\,e_1, &\nabla_{e_3}e_1&=-e_1, &\nabla_{e_3}e_2&=\delta\,e_1-e_2,\quad\delta=0,1.
 	\end{aligned}
 \end{equation}

 If $\delta=1$, then this connection corresponds to the one associated with the affine manifold $\h_{0,6}$, listed in Table~\ref{g3,3}. Otherwise, if $\delta=0$, then this connection coincides with the one associated with the flat Lie algebra $\h_{0,7}$, in which case $\lambda=0$.

 The sixth solution corresponds to the flat, torsion-free connection:
 \begin{equation}\label{g3,3,nabla1,sol6}
 	\begin{aligned}
 		\nabla_{e_2} e_3 &=b_{13}e_1+c_{33}e_2, &\nabla_{e_3}e_1&=-e_1, &\nabla_{e_3}e_2&=b_{13}e_1+(c_{33}-1)e_2, &\nabla_{e_3}e_3&=c_{13}e_1+c_{23}e_2+c_{33}e_3.
 	\end{aligned}
 \end{equation}
 Suppose that $c_{33}\neq0$. If $c_{33}\neq\pm1$, consider then the following automorphism
 \begin{align*}
 	\Psi(e_1) &=e_2,&
 	\Psi(e_2) &= e_1-\tfrac{b_{13}}{c_{33}}e_2,&
 	\Psi(e_3) &=\tfrac{c_{23}}{c_{33}-1}e_1+\tfrac{b_{13}c_{23}-c_{13}c_{33}}{c_{33}(c_{33}+1)}e_2+e_3.
 \end{align*}
 Applying $\Psi$ to the connection given in~\eqref{g3,3,nabla1,sol6} yields the following equivalent connection:
 \begin{equation}\label{g3,3,nabla1,sol6,1}
 	\begin{aligned}
 		\nabla_{e_1} e_3 &=\lambda\,e_1, &\nabla_{e_3}e_1&=(\lambda-1)e_1, &\nabla_{e_3}e_2&=-e_2, &\nabla_{e_3}e_3&=\lambda\,e_3.
 	\end{aligned}
 \end{equation}
 In this case $\lambda=c_{33}\neq0,\pm1$. Moreover, it is evident that this  connection is identical to the one given in~\eqref{g3,3,nabla1,sol2,1}; therefore, no further analysis is required.

  Suppose that $c_{33}\neq0$. If $c_{33}=-1$, consider then the following automorphism
 \begin{align*}
 	\Psi(e_1) &=x\,e_2,&
 	\Psi(e_2) &= e_1+x\,b_{13}e_2,&
 	\Psi(e_3) &=\tfrac{c_{23}}{2}e_1+e_3.
 \end{align*}
 For a suitable choice of the parameter $x\in\R^\ast$, applying $\Psi$ to the connection given in~\eqref{g3,3,nabla1,sol6} yields the following equivalent connection:
 \begin{equation}\label{g3,3,nabla1,sol6,2}
 \begin{aligned}
 	\nabla_{e_1} e_3 &=-e_1, &\nabla_{e_3}e_1&=-2\,e_1, &\nabla_{e_3}e_2&=-e_2, &\nabla_{e_3}e_3&=\delta_1\,e_2-e_3,\quad\delta_1=0,1.
 \end{aligned}
 \end{equation}
 Moreover,  this  connection is identical to the one given in~\eqref{g3,3,nabla1,sol2,2}; therefore, no further analysis is required.

  Suppose that $c_{33}\neq0$. If $c_{33}=1$, consider then the following automorphism
 \begin{align*}
 	\Psi(e_1) &=e_2,&
 	\Psi(e_2) &= x\,e_1-b_{13}e_2,&
 	\Psi(e_3) &=\tfrac{b_{13}c_{23}-c_{13}}{2}e_2+e_3.
 \end{align*}
 For a suitable choice of the parameter $x\in\R^\ast$, applying $\Psi$ to the connection given in~\eqref{g3,3,nabla1,sol6} yields the following equivalent connection:
 \begin{equation}\label{g3,3,nabla1,sol6,3}
 	\begin{aligned}
 		\nabla_{e_1} e_3 &=e_1,  &\nabla_{e_3}e_2&=-e_2, &\nabla_{e_3}e_3&=\delta_2\,e_1+e_3,\quad\delta_2=0,1.
 	\end{aligned}
 \end{equation}
 Moreover,  this  connection is identical to the one given in~\eqref{g3,3,nabla1,sol2,3}; therefore, no further analysis is required.

 Suppose now that $c_{33}=0$.  Consider then the following automorphism
 \begin{align*}
 	\Psi(e_1) &=e_1,&
 	\Psi(e_2) &= x\,e_2,&
 	\Psi(e_3) &=\-(2\,b_{13}c_{23}+c_{13})e_1-x\,c_{23}e_2+e_3.
 \end{align*}
 For a suitable choice of the parameter $x\in\R^\ast$, applying $\Psi$ to the connection given in~\eqref{g3,3,nabla1,sol6} yields the following equivalent connection:
 \begin{equation}\label{g3,3,nabla1,sol6,4}
 	\begin{aligned}
 		\nabla_{e_2} e_3 &=\delta\,e_1, &\nabla_{e_3}e_1&=-e_1,
 		 &\nabla_{e_3}e_2&=\delta\,e_1-e_2, \quad\delta=0,1.
 	\end{aligned}
 \end{equation}
Observe first that, if $\delta=1$, then this connection coincides with the one associated to the flat Lie algebra $\h_{0,6}$. Otherwise, if $\delta=0$, then the connection defined in \eqref{g3,3,nabla1,sol6,4} coincides with the one associated to the flat Lie algebra $\h_{0,7}$, in which case $\lambda=0$.

 The seventh solution is given by the following flat, torsion-free connection:
 \begin{equation}\label{g3,3,nabla1,sol7}
 	\begin{aligned}
 		\nabla_{e_3} e_1 &=-e_1, &\nabla_{e_3}e_2&=-e_2,
 		&\nabla_{e_3}e_3&=c_{13}e_1+c_{23}e_2+c_{33}e_3
 	\end{aligned}
 \end{equation}
 Suppose that $c_{33}\neq-1$, and consider the following automorphism
 \begin{align*}
 	\Psi(e_1) &=e_1,&
 	\Psi(e_2) &= e_2,&
 	\Psi(e_3) &=-\tfrac{c_{13}}{c_{33}+1}e_1-\tfrac{c_{23}}{c_{33}+1}e_2+e_3.
 \end{align*}
 Applying $\Psi$ to the connection given in~\eqref{g3,3,nabla1,sol7} yields the following equivalent connection:
 \begin{equation}\label{g3,3,nabla1,sol7,1}
 	\begin{aligned}
 		\nabla_{e_3} e_1 &=-e_1, &\nabla_{e_3}e_2&=-e_2,
 		&\nabla_{e_3}e_3&=\lambda\,e_3.
 	\end{aligned}
 \end{equation}
 In this case, $\lambda=c_{33}\neq-1$.
 
 Suppose now that $c_{33}=-1$.  Consider then the following automorphism
 \begin{align*}
 	\Psi(e_1) &=x\,e_1,&
 	\Psi(e_2) &= y\,e_2,&
 	\Psi(e_3) &=e_3.
 \end{align*}
 For a suitable choice of the parameters $x,y\in\R^\ast$, applying $\Psi$ to the connection given in~\eqref{g3,3,nabla1,sol7} yields the following equivalent connection:
 \begin{equation}\label{g3,3,nabla1,sol7,2}
 	\begin{aligned}
 		\nabla_{e_3} e_1 &=-e_1, &\nabla_{e_3}e_2&=-e_2,
 		&\nabla_{e_3}e_3&=\delta_1\,e_1+\delta_2\,e_2-e_3,\quad\delta_1,\delta_2=0,1.
 	\end{aligned}
 \end{equation}

 Observe first that if $\delta_1=\delta_2=0$, then this connection coincides with the one considered in~\eqref{g3,3,nabla1,sol7,1}, in which case $\lambda=-1$. Therefore, we set $\lambda\in\mathbb{R}$ in~ \eqref{g3,3,nabla1,sol7,1} and assume that $\delta_1^2+\delta_2^2\neq0$. On the other hand, we may assume that $\delta_1=0$ and $\delta_2=1$. Indeed, the connection given in~\eqref{g3,3,nabla1,sol7,2}  with $\delta_1=1$ and $\delta_2=1$ or $\delta_1=1$ and $\delta_2=0$ is isomorphic to the same connection with $\delta_1=0$ and $\delta_2=1$ via the following automorphisms:
 \begin{align*}
 	\Psi(e_1) &=e_1,&
 	\Psi(e_2) &=e_1+e_2,&
 	\Psi(e_3) &=e_3.
 \end{align*}
 or
 \begin{align*}
 	\Psi(e_1) &=e_2,&
 	\Psi(e_2) &=e_1,&
 	\Psi(e_3) &=e_3.
 \end{align*}
 
 In view of the previous discussion, the connections given in \eqref{g3,3,nabla1,sol7,1} and \eqref{g3,3,nabla1,sol7,2} correspond to the flat Lie algebras $\h_{0,7}$ and $\h_{0,8}$, respectively.   The additional conditions $\lambda\neq0$ in the flat Lie algebra $\h_{0,7}$ is imposed to ensure that the corresponding connection is not isomorphic to the  algebras $\h_{0,1}$.

If $\nabla^0\equiv\nabla^1$. Using a straightforward computation, the flatness-equations can be solved with a unique real solution. 
The flat, torsion-free connection associated with this solution is given by the following connection:
\begin{equation}\label{g3,3,nabla2,sol1}
	\begin{aligned}
		\nabla_{e_1} e_1 &=e_1+a_{31}e_3, \quad\nabla_{e_1}e_3=-\tfrac{1}{a_{31}}e_1-e_3, &\nabla_{e_3}e_1&=-\tfrac{a_{31}+1}{a_{31}}e_1-e_3, &\nabla_{e_3}e_2&=-e_2, \\\nabla_{e_3}e_3&=\tfrac{1-a_{31}}{a_{31}^2}e_1+\tfrac{1-2\,a_{31}}{a_{31}}e_3.
	\end{aligned}
\end{equation}
Consider the following automorphism:
\begin{align*}
	\Psi(e_1) &=\tfrac{\sqrt{\varepsilon_0\,a_{31}}}{\varepsilon_0} e_2,&
	\Psi(e_2) &=e_1,&
	\Psi(e_3) &=-\tfrac{\sqrt{\varepsilon_0\,a_{31}}}{\varepsilon_0\,a_{31}}e_2+e_3.
\end{align*}
Applying it to the connection given in~\eqref{g3,3,nabla2,sol1} yields the following equivalent connection:
\begin{equation}\label{g3,3,nabla2,sol1,1}
	\begin{aligned}
		\nabla_{e_2} e_2 &=\varepsilon_0\, e_3,  &\nabla_{e_3}e_1&=-e_1, &\nabla_{e_3}e_2&=-e_2, &\nabla_{e_3}e_3&=-2\,e_3,\quad\varepsilon_0=\pm1.
	\end{aligned}
\end{equation}
 This  connection is identical to the one associated to the flat Lie algebra $\h_{0,9}$.

 If $\nabla^0\equiv\nabla^2$. 
 By a straightforward computation, the flatness equations admit three real solutions.
The flat, torsion-free connection associated with first solution is given by the following connection:
 \begin{equation}\label{g3,3,nabla2,sol2,1}
 	\begin{aligned}
\nabla_{e_1}e_3&=-e_1, &\nabla_{e_2}e_2&=e_1, &\nabla_{e_2}e_3&=b_{13}e_1, \\\nabla_{e_3}e_1&=-2\,e_1, &\nabla_{e_3}e_2&=b_{13}e_1-e_2, &\nabla_{e_3}e_3&=c_{13}e_1-e_3.
 	\end{aligned}
 \end{equation}
 Consider the following automorphism:
 \begin{align*}
 	\Psi(e_1) &= e_1,&
 	\Psi(e_2) &=-b_{13}e_1+e_2,&
 	\Psi(e_3) &=-\tfrac{c_{13}}{2}e_1+e_3.
 \end{align*}
 Applying it to the connection given in~\eqref{g3,3,nabla2,sol2,1} yields the following equivalent connection:
 \begin{equation}\label{g3,3,nabla2,sol2,1,1}
 	\begin{aligned}
 		\nabla_{e_1}e_3&=-e_1, &\nabla_{e_2}e_2&=e_1,  &\nabla_{e_3}e_1&=-2\,e_1, &\nabla_{e_3}e_2&=-e_2, &\nabla_{e_3}e_3&=-e_3.
 	\end{aligned}
 \end{equation}
 This flat, torsion-free connection corresponds to the flat Lie algebra $\h_{2,1}$ listed in Table~\ref{g3,3}.

The second solution is given by the following flat, torsion-free connection:
\begin{equation}\label{g3,3,nabla2,sol2,2}
	\begin{aligned}
 \nabla_{e_2}e_2&=e_1, &\nabla_{e_2}e_3&=b_{13}e_1+e_2, &\nabla_{e_3}e_1&=-e_1, &\nabla_{e_3}e_2&=b_{13}e_1, &\nabla_{e_3}e_3&=c_{13}e_1+e_3.
	\end{aligned}
\end{equation}
Consider the following automorphism:
\begin{align*}
	\Psi(e_1) &= e_1,&
	\Psi(e_2) &=e_2,&
	\Psi(e_3) &=\tfrac{b_{13}^2-c_{13}}{2}e_1+b_{13}e_2+e_3.
\end{align*} 
 Applying it to the connection given in~\eqref{g3,3,nabla2,sol2,2} yields the following equivalent connection:
 \begin{equation}\label{g3,3,nabla2,sol2,2,1}
 	\begin{aligned}
 		\nabla_{e_2}e_2&=e_1,
 		 &\nabla_{e_2}e_3&=e_2, &\nabla_{e_3}e_1&=-e_1, &\nabla_{e_3}e_3&=e_3.
 	\end{aligned}
 \end{equation}
 This flat, torsion-free connection corresponds to the flat Lie algebra $\h_{2,2}$ listed in Table~\ref{g3,3}.

 The third solution corresponds to the following flat, torsion-free connection:
 \begin{equation}\label{g3,3,nabla2,sol2,3}
 	\begin{aligned}
 		\nabla_{e_2}e_2&=e_1+b_{32}e_3, &\nabla_{e_3}e_1&=-e_1, &\nabla_{e_3}e_2&=-e_2, &\nabla_{e_3}e_3&=-\tfrac{1}{b_{32}}e_1-2\,e_3.
 	\end{aligned}
 \end{equation}
 Consider the following automorphism:
 \begin{align*}
 	\Psi(e_1) &= e_1,&
 	\Psi(e_2) &=\tfrac{\sqrt{\varepsilon_1\,b_{32}}}{\varepsilon_1}e_2,&
 	\Psi(e_3) &=-\tfrac{1}{b_{32}}e_1+e_3.
 \end{align*}
 Applying it to the connection given in~\eqref{g3,3,nabla2,sol2,3} yields the following equivalent connection:
 \begin{equation}\label{g3,3,nabla2,sol2,3,1}
 	\begin{aligned}
 		\nabla_{e_2} e_2 &=\varepsilon_1\, e_3,  &\nabla_{e_3}e_1&=-e_1, &\nabla_{e_3}e_2&=-e_2, &\nabla_{e_3}e_3&=-2\,e_3,\quad\varepsilon_1=\pm1.
 	\end{aligned}
 \end{equation}
 This  connection is identical to the one associated to the flat Lie algebra $\h_{0,9}$. 
\end{proof}

If $\nabla^0\equiv \nabla^3$ or $\nabla^0\equiv \nabla^4$ or $\nabla^0\equiv\nabla^5$. 
A straightforward computation shows that the flatness-equation admits no solution. This completes the classification of flat, torsion-free connections on the flat Lie algebra $\G_{3,3}$. 
All flat, torsion-free connections listed in Table~\ref{g3,3} are pairwise non-isomorphic, and this can be verified by a straightforward computation.

\begin{co}
	With the notations as above, among the flat Lie algebras on $\G_{3,3}$, we have
	\begin{enumerate}
		\item[i)] Associative algebras$:$\hspace{0.275cm} $\h_{0,1}^{\lambda=1}$, $\h_{0,7}^{\lambda=-1}$,
		\item[ii)] Novikov algebras$:$\hspace{0.73cm} 
		$\h_{0,1}$, $\h_{0,2}$,
		\item[iii)] Bi-symmetric algebras$:$ $\h_{0,1}^{\lambda=1}$, $\h_{0,2}$, $\h_{0,7}^{\lambda=-1}$, $\h_{0,8}$.
		\item[iv)] Complete algebras$:$\hspace{0.62cm}$\h_{0,1}^{\lambda=0}$, $\h_{0,6}$.
	\end{enumerate}
\end{co}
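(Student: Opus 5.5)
The statement to prove is the Corollary for $\G_{3,3}$: for each flat Lie algebra in Table~\ref{g3,3} we must decide which of the four properties (associative, Novikov, bi-symmetric, complete) hold. I will do this by direct verification on the normal forms listed there. The first step is to write, for each algebra $\h_{0,1},\ldots,\h_{0,11},\h_{2,1},\h_{2,2}$, the left multiplication matrices $\mathrm{L}_{e_i}=\nabla_{e_i}$ and the right multiplication matrices $\mathrm{R}_{e_i}$ in the basis $\{e_1,e_2,e_3\}$. Both can be read off the table. Since all four conditions are polynomial identities in the structure constants, the parametrized families reduce to polynomial equations in $\lambda$ (or signs $\varepsilon_i$).

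Each property is then tested on basis elements, which suffices because every condition is multilinear.
\begin{enumerate}
\item \emph{Associativity.} Check $\mathrm{L}_{e_i\cdot e_j}=\mathrm{L}_{e_i}\mathrm{L}_{e_j}$ for all $i,j$.
\item \emph{Novikov.} Check $[\mathrm{R}_{e_i},\mathrm{R}_{e_j}]=0$ for $i<j$.
\item \emph{Bi-symmetric.} Check $(e_i,e_j,e_k)=(e_i,e_k,e_j)$, i.e.\ $\mathrm{L}_{e_i}\mathrm{R}_{e_j}-\mathrm{R}_{e_j}\mathrm{L}_{e_i}=\mathrm{L}_{e_i}\mathrm{R}_{e_k}$-type identities, written out on basis triples.
\item \emph{Completeness.} Use the classical criterion (Segal/Helmstetter) that an LSA is complete if and only if $\operatorname{tr}\mathrm{R}_X=0$ for all $X$. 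This reduces completeness to the three linear conditions $\operatorname{tr}\mathrm{R}_{e_i}=0$.
\end{enumerate}

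As an illustration, take $\h_{0,1}$. Here $\mathrm{R}_{e_3}=\lambda\,\mathrm{Id}$, so $\operatorname{tr}\mathrm{R}_{e_3}=3\lambda$, while $\mathrm{R}_{e_1}$ and $\mathrm{R}_{e_2}$ are nilpotent. Completeness therefore forces $\lambda=0$.

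For associativity in the same example, compare $(e_3e_3)e_1=\lambda(\lambda-1)e_1$ with $e_3(e_3e_1)=(\lambda-1)^2e_1$. These agree only if $\lambda=1$, and one then checks that all remaining products are consistent.

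For the Novikov condition in $\h_{0,1}$, $\mathrm{R}_{e_3}$ is scalar, and $\mathrm{R}_{e_1}$, $\mathrm{R}_{e_2}$ only send $e_3$ into $\langle e_1,e_2\rangle$. These commute with each other, so $\h_{0,1}$ is Novikov for every $\lambda$.

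The same computations are carried out for $\h_{0,7}$, where the scalar action $\nabla_{e_3}=-\mathrm{Id}$ on $\langle e_1,e_2\rangle$ gives associativity exactly when $\lambda=-1$. They are also carried out for $\h_{0,2}$ and $\h_{0,8}$, which are the degenerate partners of $\h_{0,1}$ at $\lambda=1$ and of $\h_{0,7}$ at $\lambda=-1$ respectively.

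For the remaining algebras the aim is to exhibit one explicit failing basis instance for each property. For instance, in $\h_{0,9}$ one has $\operatorname{tr}\mathrm{R}_{e_3}=-2\neq0$, so it is not complete. In $\h_{2,1}$ and $\h_{2,2}$ the right multiplications $\mathrm{R}_{e_2}$ and $\mathrm{R}_{e_3}$ do not commute, so they are not Novikov.

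The main obstacle is bookkeeping, not any single idea. The bi-symmetric and Novikov checks for $\h_{0,3},\h_{0,4},\h_{0,5},\h_{0,6}$ and $\h_{0,9}$–$\h_{0,11}$ require many associator evaluations, and one must be careful with the parameter restrictions (e.g.\ $\lambda\neq0$ in $\h_{0,3}$ and $\h_{0,7}$) so as not to list a property that holds only at an excluded parameter value. To control this, I would first compute the associator in the compact form $(X,Y,Z)=X(YZ)-(XY)Z$ for $X=e_3$, where all the nontrivial structure sits. Only afterwards would I check the few remaining products involving $e_1,e_2$.
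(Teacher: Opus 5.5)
Your plan is correct and follows the same case-by-case verification on the normal forms of Table~\ref{g3,3} that the paper leaves implicit; the corollary is stated there without proof, and I confirmed its four lists by the same computations. Your sample computations ($\operatorname{tr}\mathrm{R}_{e_3}=3\lambda$ and $(e_3e_3)e_1-e_3(e_3e_1)=(\lambda-1)e_1$ in $\h_{0,1}$), your caveat about the excluded value $\lambda=0$ in $\h_{0,3},\h_{0,7}$, and your non-Novikov witnesses for $\h_{2,1},\h_{2,2}$ are all right. The one slip is the operator paraphrase of bi-symmetry, which should read $(X,Y,Z)=[\mathrm{R}_Z,\mathrm{L}_X]Y$, i.e.\ $[\mathrm{R}_Z,\mathrm{L}_X]Y=[\mathrm{R}_Y,\mathrm{L}_X]Z$; since you check $(e_i,e_j,e_k)=(e_i,e_k,e_j)$ directly on basis triples, this does not affect the argument.
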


\begin{pr}
Let $(\G, \nabla)$ be a three-dimensional real flat  Lie algebra with $\G = \G_{3,4}$. Then $(\G, \nabla)$ is isomorphic to exactly one of the flat Lie algebras listed in Table~$\ref{g3,4}$ and $\ref{g3,4,alpha}$.
{\renewcommand*{\arraystretch}{1.8}
\captionof{table}{Flat torsion-free connection on the Lie algebra $\G_{3,4}^{\alpha=-1}$.}
\setcounter{table}{7}
\begin{footnotesize} 
\setlength{\tabcolsep}{3pt} 
\begin{longtable}{@{}cllllllc@{}} 
			\hline
		Flat algebra&\multicolumn{2}{@{}l@{}}{~~Flat torsion-free connection}&&&&Remarks \\
			\hline
$\h_{0,1}$&$\nabla_{e_2}e_3=\delta e_1$&$\nabla_{e_3}e_1=-e_1$&$\nabla_{e_3}e_2=\delta 
e_1+e_2$&$\nabla_{e_3}e_3=-2e_3$&&$\delta=0,1$&\\
$\h_{0,2}$&$\nabla_{e_1}e_3=2e_1$&$\nabla_{e_2}e_3=\delta e_1+2e_2$&$\nabla_{e_3}e_1=e_1$&$\nabla_{e_3}e_2=\delta e_1+3e_2$&$\nabla_{e_3}e_3=2e_3$&$\delta=0,1$&\\
$\h_{0,3}$&$\nabla_{e_1}e_1=\varepsilon e_3$&$\nabla_{e_3}e_1=-e_1$&$\nabla_{e_3}e_2=e_2$&$\nabla_{e_3}e_3=-2e_3$&&$\varepsilon\pm1$&\\
$\h_{0,4}$&$\nabla_{e_3}e_1=-e_1$&$\nabla_{e_3}e_2=e_2$&$\nabla_{e_3}e_3=\lambda e_3$&&&$\lambda\in\R,~\lambda\neq\pm2$&\\
$\h_{0,5}$&$\nabla_{e_3}e_1=-e_1$&$\nabla_{e_3}e_2=e_2$&$\nabla_{e_3}e_3=e_1-e_3$&&&&\\
$\h_{0,6}$&$\nabla_{e_3}e_1=-e_1$&$\nabla_{e_3}e_2=e_2$&$\nabla_{e_3}e_3=e_2+e_3$&&&&\\
$\h_{0,7}$&$\nabla_{e_2}e_3=\lambda e_2$&$\nabla_{e_3}e_1=-e_1$&$\nabla_{e_3}e_2=(1+\lambda)e_2$&$\nabla_{e_3}e_3=\lambda e_3$&&$\lambda\in\R^\ast$&\\
$\h_{0,8}$&$\nabla_{e_2}e_3=-e_2$&$\nabla_{e_3}e_1=-e_1$&$\nabla_{e_3}e_3=\delta_1 e_1+\delta_2 e_2-e_3$&&&$\delta_1,\delta_2=0,1,~\delta_1^2+\delta_2^2\neq0$&\\
$\h_{0,9}$&$\nabla_{e_1}e_3=\lambda e_1$&$\nabla_{e_2}e_3=\lambda e_2$&$\nabla_{e_3}e_1=(\lambda-1) e_1$&$\nabla_{e_3}e_2=(1+\lambda) e_2$&$\nabla_{e_3}e_3=\lambda e_3$&$\lambda\in\R^\ast,~\lambda\neq\pm2$&
\\
$\h_{0,10}$&$\nabla_{e_1}e_3= e_1$&$\nabla_{e_2}e_3= e_2$&$\nabla_{e_3}e_2=2e_2$&$\nabla_{e_3}e_3=e_1+e_3$&&&\\
$\h_{0,11}$&$\nabla_{e_1}e_2=e_3$&$\nabla_{e_2}e_1=e_3$&$\nabla_{e_3}e_1=-e_1$&$\nabla_{e_3}e_2=e_2$&&&
\\\hline
$\h_{2,1}$&$\nabla_{e_1}e_3=3e_1$&$\nabla_{e_2}e_2=e_1$&$\nabla_{e_3}e_1=2e_1$&$\nabla_{e_3}e_2=e_2$&$\nabla_{e_3}e_3=3e_3$&&\\
$\h_{2,2}$&$\nabla_{e_2}e_2=e_1$&$\nabla_{e_2}e_3=-3e_2$&$\nabla_{e_3}e_1=-e_1$&$\nabla_{e_3}e_2=-2e_2$&$\nabla_{e_3}e_3=-3e_3$&&
\\\hline	
			\end{longtable}
			\label{g3,4}
			\end{footnotesize}	
			}
			\newgeometry{left=0.5cm,right=2cm,top=2cm,bottom=2cm}
{\renewcommand*{\arraystretch}{1.8}
\captionof{table}{Flat torsion-free connection on the Lie algebra $\G_{3,4}^{\alpha\neq-1}$.}
\setcounter{table}{8}
\begin{footnotesize} 
\setlength{\tabcolsep}{3pt} 
\begin{longtable}{@{}cllllllc@{}} 
			\hline
		Flat algebra&\multicolumn{2}{@{}l@{}}{~~Flat torsion-free connection}&&&&Remarks  \\
			\hline
$\h_{0,1}$&$\nabla_{e_2}e_3=e_1$&$\nabla_{e_3}e_1=-e_1$&$\nabla_{e_3}e_2=e_1-\alpha e_2$&$\nabla_{e_3}e_3=(\alpha-1)e_3$&&&\\
$\h_{0,2}$&$\nabla_{e_2}e_3=e_1$&$\nabla_{e_3}e_1=-e_1$&$\nabla_{e_3}e_2=e_1-\frac{1}{2}e_2$&$\nabla_{e_3}e_3=e_2-\frac{1}{2}e_3$&&$(\alpha=\frac{1}{2})$&\\
$\h_{0,3}$&$\nabla_{e_3}e_1=-e_1$&$\nabla_{e_3}e_2=-\alpha e_2$&$\nabla_{e_3}e_3=\delta e_2+(\alpha-1)e_3$&&&$\delta=0,1$&\\
$\h_{0,4}$&$\nabla_{e_1}e_3=(1-\alpha)e_1$&$\nabla_{e_2}e_3=e_1+(1-\alpha)e_2$&$\nabla_{e_3}e_1=-\alpha e_1$&$\nabla_{e_3}e_2=e_1+(1-2\alpha)e_2$&$\nabla_{e_3}e_3=(1-\alpha)e_3$&&\\
$\h_{0,5}$&$\nabla_{e_1}e_3=\frac{1}{2}e_1$&$\nabla_{e_2}e_3=e_1+\frac{1}{2}e_2$&$\nabla_{e_3}e_1=-\frac{1}{2}e_1$&$\nabla_{e_3}e_2=e_1$&$\nabla_{e_3}e_3=e_2+\frac{1}{2}e_3$&$(\alpha=\frac{1}{2})$&\\
$\h_{0,6}$&$\nabla_{e_1}e_3=(1-\alpha)e_1$&$\nabla_{e_2}e_3=(1-\alpha)e_2$&$\nabla_{e_3}e_1=-\alpha e_1$&$\nabla_{e_3}e_2=(1-2\alpha)e_2$&$\nabla_{e_3}e_3=\delta e_2+(1-\alpha)e_3$&$\delta=0,1$&\\
$\h_{0,7}$&$\nabla_{e_1}e_1=\varepsilon e_3$&$\nabla_{e_3}e_1=-e_1$&$\nabla_{e_3}e_2=-\alpha e_2$&$\nabla_{e_3}e_3=-2e_3$&&$\varepsilon=\pm1$&\\
$\h_{0,8}$&$\nabla_{e_1}e_3=\delta e_2$&$\nabla_{e_3}e_1=-e_1+\delta e_2$&$\nabla_{e_3}e_2=-\alpha e_2$&$\nabla_{e_3}e_3=(1-\alpha)e_3$&&$\delta=0,1$&\\
$\h_{0,9}$&$\nabla_{e_1}e_3=(\alpha-1)e_1+\delta e_2$&$\nabla_{e_2}e_3=(\alpha-1)e_2$&$\nabla_{e_3}e_1=(\alpha-2)e_1+\delta e_2$&$\nabla_{e_3}e_2=-e_2$&$\nabla_{e_3}e_3=(\alpha-1)e_3$&$\delta=0,1$&\\
$\h_{0,10}$&$\nabla_{e_3}e_1=-e_1$&$\nabla_{e_3}e_2=-\alpha e_2$&$\nabla_{e_3}e_3=\lambda e_3$&&&$\lambda\in\R,~\lambda\neq\pm(\alpha-1)$&\\
$\h_{0,11}$&$\nabla_{e_3}e_1=-e_1$&$\nabla_{e_3}e_2=-\alpha e_2$&$\nabla_{e_3}e_3=e_2-\alpha e_3$&&&$(\alpha\neq\frac{1}{2})$&\\
$\h_{0,12}$&$\nabla_{e_3}e_1=-e_1$&$\nabla_{e_3}e_2=-\alpha e_2$&$\nabla_{e_3}e_3=e_1-e_3$&&&&\\
$\h_{0,13}$&$\nabla_{e_2}e_3=\lambda e_2$&$\nabla_{e_3}e_1=-e_1$&$\nabla_{e_3}e_2=(\lambda-\alpha)e_2$&$\nabla_{e_3}e_3=\lambda e_3$&&$\lambda\in\R^\ast$&\\
$\h_{0,14}$&$\nabla_{e_2}e_3=\alpha e_2$&$\nabla_{e_3}e_1=-e_1$&$\nabla_{e_3}e_3=e_2+\alpha e_3$&&&&\\
$\h_{0,15}$&$\nabla_{e_2}e_3=-e_2$&$\nabla_{e_3}e_1=-e_1$&$\nabla_{e_3}e_2=-(\alpha+1)e_2$&$\nabla_{e_3}e_3=e_1-e_3$&&&\\
$\h_{0,16}$&$\nabla_{e_1}e_3=\lambda e_1$&$\nabla_{e_3}e_1=(\lambda-1)e_1$&$\nabla_{e_3}e_2=-\alpha e_2$&$\nabla_{e_3}e_3=\lambda e_3$&&$\lambda\in\R^\ast$&\\
$\h_{0,17}$&$\nabla_{e_1}e_3=-\alpha e_1$&$\nabla_{e_3}e_1=-(\alpha+1)e_1$&$\nabla_{e_3}e_2=-\alpha e_2$&$\nabla_{e_3}e_3=e_2-\alpha e_3$&&&\\
$\h_{0,18}$&$\nabla_{e_1}e_3= e_1$&$\nabla_{e_3}e_2=-\alpha e_2$&$\nabla_{e_3}e_3=e_1+ e_3$&&&&\\
$\h_{0,19}$&$\nabla_{e_1}e_3=\lambda e_1$&$\nabla_{e_2}e_3=\lambda e_2$&$\nabla_{e_3}e_1=(\lambda-1)e_1$&$\nabla_{e_3}e_2=(\lambda-\alpha)e_2$&$\nabla_{e_3}e_3=\lambda e_3$&$\lambda\in\R^\ast,~\lambda\neq\pm(\alpha-1)$&\\
$\h_{0,20}$&$\nabla_{e_1}e_3=\alpha e_1$&$\nabla_{e_2}e_3=\alpha e_2$&$\nabla_{e_3}e_1=(\alpha-1)e_1$&$\nabla_{e_3}e_3=e_2+\alpha e_3$&&$(\alpha\neq\frac{1}{2})$&\\
$\h_{0,21}$&$\nabla_{e_1}e_3=e_1$&$\nabla_{e_2}e_3=e_2$&$\nabla_{e_3}e_2=(1-\alpha)e_2$&$\nabla_{e_3}e_3=e_1+e_3$&&\\
$\h_{0,22}$&$\nabla_{e_1}e_2=e_3$&$\nabla_{e_2}e_1=e_3$&$\nabla_{e_3}e_1=-e_1$&$\nabla_{e_3}e_2=-\alpha e_2$&$\nabla_{e_3}e_3=-(\alpha+1)e_3$&&\\
$\h_{0,23}$&$\nabla_{e_2}e_2=\varepsilon e_3$&$\nabla_{e_3}e_1=-e_1$&$\nabla_{e_3}e_2=-\alpha e_2$&$\nabla_{e_3}e_3=-2\alpha e_3$&&$\varepsilon=\pm1$&\\
$\h_{0,24}$&$\nabla_{e_2}e_2=\varepsilon e_3$&$\nabla_{e_2}e_3=e_1$&$\nabla_{e_3}e_1=-e_1$&$\nabla_{e_3}e_2=e_1-\frac{1}{3}e_2$&$\nabla_{e_3}e_3=-\frac{2}{3}e_3$&$\varepsilon=\pm1,~(\alpha=\frac{1}{3})$&
\\\hline
$\h_{2,1}$&$\nabla_{e_1}e_3=(1-2\alpha)e_1$&$\nabla_{e_2}e_2=e_1$&$\nabla_{e_3}e_1=-2\alpha e_1$&$\nabla_{e_3}e_2=-\alpha e_2$&$\nabla_{e_3}e_3=(1-2\alpha)e_3$&&\\
$\h_{2,2}$&$\nabla_{e_2}e_2=e_1$&$\nabla_{e_3}e_1=-e_1$&$\nabla_{e_3}e_2=-\frac{1}{2}e_2$&$\nabla_{e_3}e_3=\lambda e_3$&&$\lambda\in\R^\ast,~(\alpha=\frac{1}{2})$&\\
$\h_{2,3}$&$\nabla_{e_2}e_2=e_1$&$\nabla_{e_3}e_1=-e_1$&$\nabla_{e_3}e_2=-\frac{1}{2}e_2$&$\nabla_{e_3}e_3=\varepsilon e_1- e_3$&&$\varepsilon=\pm1,~(\alpha=\frac{1}{2})$&\\
$\h_{2,4}$&$\nabla_{e_2}e_2=e_1$&$\nabla_{e_2}e_3=(2\alpha-1)e_2$&$\nabla_{e_3}e_1=-e_1$&$\nabla_{e_3}e_2=(\alpha-1)e_2$&$\nabla_{e_3}e_3=(2\alpha-1)e_3$&$(\alpha\neq\frac{1}{2})$&\\
$\h_{2,5}$&$\nabla_{e_1}e_3=\lambda e_1$&$\nabla_{e_2}e_2=e_1$&$\nabla_{e_2}e_3=\lambda e_2$&$\nabla_{e_3}e_1=(\lambda-1)e_1$&$\nabla_{e_3}e_2=(\lambda-\frac{1}{2})e_2$&$\lambda\in\R^\ast,~~(\alpha=\frac{1}{2})$&\\
&$\nabla_{e_3}e_3=\lambda e_3$&&&&&&\\
$\h_{2,6}$&$\nabla_{e_1}e_3=e_1$&$\nabla_{e_2}e_2=e_1$&$\nabla_{e_2}e_3=e_2$&$\nabla_{e_3}e_2=\frac{1}{2}e_2$&$\nabla_{e_3}e_3=\varepsilon e_1+e_3$&$\varepsilon=\pm1,~(\alpha=\frac{1}{2})$&\\
$\h_{2,7}$&$\nabla_{e_1}e_2=e_3$&$\nabla_{e_2}e_1=e_3$&$\nabla_{e_2}e_2=e_1$&$\nabla_{e_3}e_1=-e_1$&$\nabla_{e_3}e_2=-\frac{1}{2}e_2$&$(\alpha=\frac{1}{2})$&\\
&$\nabla_{e_3}e_3=-\frac{3}{2}e_3$&&&&&&
\\\hline
$\h_{2,8}$&$\nabla_{e_1}e_1=e_2$&$\nabla_{e_2}e_3=(\alpha-2)e_2$&$\nabla_{e_3}e_1=-e_1$&$\nabla_{e_3}e_2=-2e_2$&$\nabla_{e_3}e_3=(\alpha-2)e_3$&&\\
$\h_{2,9}$&$\nabla_{e_1}e_1=e_2$&$\nabla_{e_1}e_3=(2-\alpha)e_1$&$\nabla_{e_3}e_1=(1-\alpha)e_1$&$\nabla_{e_3}e_2=-\alpha e_2$&$\nabla_{e_3}e_3=(2-\alpha)e_3$&&
\\\hline		
			\end{longtable}
			\label{g3,4,alpha}
			\end{footnotesize}	
			}
					
			\clearpage
			\restoregeometry
	
\end{pr}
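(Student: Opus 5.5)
We follow the same scheme as for $\G_{3,2}$ and $\G_{3,3}$. We view $\G_{3,4}$ as the semidirect sum $\R e_3\ltimes\R^2$ with $\R^2=\langle e_1,e_2\rangle$ and $\mathrm{D}=\mathrm{ad}_{e_3}|_{\R^2}=-\mathrm{diag}(1,\alpha)$. Any torsion-free connection then has the block form \eqref{Connetorsionfre}. In the basis $\{e_1,e_2,e_3\}$ the operators $\nabla_{e_1},\nabla_{e_2},\nabla_{e_3}$ are written as in the previous proofs: the $(1,1)$ and $(2,2)$ entries of $\nabla_{e_3}$ are shifted by $-1$ and $-\alpha$ respectively, to encode $[e_1,e_3]=e_1$ and $[e_2,e_3]=\alpha e_2$.

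By Lemma~\ref{isoofconnection} and Remark~\ref{sympleofauto}, $\nabla^0$ may be taken up to the action of the restricted automorphisms $\Phi_{3,4}$ of Lemma~\ref{Auto}. For $\alpha\neq-1$ these are the diagonal ones; for $\alpha=-1$ the antidiagonal ones are added, which exchange $e_1\leftrightarrow e_2$ while sending $e_3\mapsto -e_3$. The list of representatives is therefore the $\G_{3,4}$ list of Lemma~\ref{Lemg3j}, together with the flat connections of Table~\ref{FlatR2}, now normalized only under diagonal changes of basis.

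For each representative we substitute into conditions $(i)$--$(vi)$ of the curvature lemma. The unknowns are $\theta,\beta,\gamma,\zeta,\lambda$, with $\eta=\beta+\mathrm{D}$. We solve these polynomial systems case by case, as was done for $\G_{3,3}$. We expect that most non-flat $\nabla^0$ give no solution, because condition $(i)$ forces $\mathcal K^{\nabla^0}$ to have the rank-one form $\theta\otimes\beta$, as in Lemma~\ref{no flat to flat}. The surviving cases should be $\nabla^0\equiv0$ and the analogues of $\mathfrak b_1$ and $\mathfrak b_2$ (the latter giving $\nabla_{e_2}e_2=e_1$ or $\nabla_{e_1}e_1=e_2$, which produce the families $\h_{2,\ast}$).

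Each solution family is then normalized by automorphisms of the form $\Psi(e_i)=x_ie_i$ for $i=1,2$ and $\Psi(e_3)=u+e_3$ with $u\in\R^2$. Condition $(c_4)$ forces $\tau=1$ and $\sigma=0$, since $\mathrm{Im}\,\mathrm{D}=\R^2$. The shift $u$ is used to kill $\zeta$ whenever the relevant eigenvalue combination $c_{33}-(\text{weight})$ is nonzero, exactly as in the $\G_{3,3}$ proof. The special values where this fails ($\lambda=\pm(\alpha-1)$, $\lambda=\pm2$, and the resonances $\alpha=\tfrac12,\tfrac13$) produce the extra normal forms with a $\delta e_i$ term. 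This explains the exclusions and the $\alpha$-specific rows of Table~\ref{g3,4,alpha}. The scalings $x_i$ then reduce the remaining coefficients to $0$, $1$ or $\pm1$.

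For $\alpha=-1$ we additionally use the swap automorphism to identify families related by $e_1\leftrightarrow e_2$, which collapses the list to Table~\ref{g3,4}. We finish by checking pairwise non-isomorphism via invariants such as the spectrum of $\nabla_{e_3}$ modulo $\mathrm{ad}$, the ranks of $\theta$ and $\beta$, and the dimension of the subspace $\{\nabla_xy\}\cap\R e_3$.

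The main obstacle is the resonance analysis in the normalization step. The generic classification is routine, but for each special $\alpha$ and $\lambda$ the obstruction to removing $\zeta$, and to removing $\theta$-terms such as $\nabla_{e_1}e_2=e_3$, must be tracked carefully. Only then do we avoid both missing classes (for example $\h_{0,24}$ at $\alpha=\tfrac13$) and listing isomorphic ones twice. We would handle this by computing the action of $u$ on $\zeta$ and $\beta$ explicitly as a linear map whose kernel depends on $(\alpha,\lambda)$.
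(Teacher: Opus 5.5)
Your overall route is the paper's. You take the block form over the ideal $\langle e_1,e_2\rangle$, draw $\nabla^0$ from the $\G_{3,4}$ list of Lemma~\ref{Lemg3j}, solve $(i)$--$(vi)$ case by case, normalize by $\mathrm{Aut}(\G_{3,4})$, and use the swap when $\alpha=-1$. The gap is in the mechanism you give for the reduction step.

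Condition $(i)$ does not force rank-one curvature. For $(x,y)=(e_1,e_2)$ it reads $\mathcal K^{\nabla^0}(e_1,e_2)=\beta(e_2)\otimes\theta(e_1,\cdot)-\beta(e_1)\otimes\theta(e_2,\cdot)$, which is compatible with non-flat $\nabla^0$ as soon as $\theta\neq0$.
\begin{itemize}
\item The paper finds solution families over the non-flat $\nabla^1,\nabla^5,\nabla^8$ for every $\alpha$, including $\alpha=-1$, and over $\nabla^4,\nabla^7$ when $\alpha=\tfrac12$.
\item It also finds solutions over the $\mathfrak b_1$-type $\nabla^9$, $\nabla^{13}$ and $\nabla^{15}$ $(\delta=1)$.
\end{itemize}
None of these survives in the tables, and no $\mathfrak b_1$-type $\nabla^0$ occurs there. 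The reason is that an automorphism with $\Psi(e_3)=u+e_3$ changes $\nabla^0$ by $\theta\otimes u$ (Lemma~\ref{isoofconnection}$(i)$ with $\sigma=0$). For instance, $\Psi(e_1)=a_{32}e_1$, $\Psi(e_2)=e_2$, $\Psi(e_3)=-e_1-\tfrac{1}{a_{32}}e_2+e_3$ removes $\nabla^0$ and $\zeta$ at once from the first $\nabla^1$-solution and lands on $\h_{0,22}$.

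Your normalization only tracks the action of $u$ on $(\zeta,\beta)$, as a linear map. That is accurate only when $\theta=\gamma=0$. In general $u$ also shifts $\nabla^0$ (by $\theta\otimes u$), $\gamma$ (by $\theta(\cdot,u)$) and $\lambda$, and it enters quadratically through $\nabla^0_uu$ and $\theta(u,u)$ (Lemma~\ref{isoofconnection}$(v)$,$(vi)$). If you normalize each $\nabla^0$-family separately, these families remain as spurious extra classes and the ``exactly one'' claim fails. If you discard them on the rank-one heuristic, they are never accounted for. The missing ingredient is this cross-identification between different $\nabla^0$ via $\theta\otimes u$, which the paper obtains by applying the full $\Psi$ to the full connection.

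Your account of the special values is also misplaced. Several $\alpha$-specific rows are forced by the flatness equations, not by normalization.
\begin{itemize}
\item If $\nabla^0_{e_2}e_2=e_1$, $\gamma=0$, $\theta(e_2,e_2)=0$ and $\beta$ is a multiple of the identity, condition $(v)$ at $x=y=e_2$ reduces to $(1-2\alpha)e_1=0$. This is why $\h_{2,2},\h_{2,3},\h_{2,5},\h_{2,6},\h_{2,7}$ exist only for $\alpha=\tfrac12$.
\item For $\h_{0,24}$, condition $(vi)$ gives $\lambda=-2\alpha$ and $(iii)$ then gives $3\alpha=1$. The term $\beta(e_2)=e_1$ survives because $\theta\neq0$ forces $u=0$ if $\nabla^0$ is to stay $0$, not because of a kernel condition.
\end{itemize}
Conversely, $\lambda=\pm(\alpha-1)$ (with $\lambda=\pm2$ its case $\alpha=-1$) are not the values where $\zeta$ cannot be killed.
\begin{itemize}
\item For $\h_{0,10}$ those values are $\lambda=-1,-\alpha$, producing $\h_{0,12}$ and $\h_{0,11}$.
\item For $\h_{0,19}$ they are $\lambda=1,\alpha$, producing $\h_{0,21}$ and $\h_{0,20}$.
\item The exclusions $\lambda\neq\pm(\alpha-1)$ only remove overlaps with the $\delta=0$ members of $\h_{0,3},\h_{0,8}$ (for $\h_{0,10}$) and of $\h_{0,9},\h_{0,6}$ (for $\h_{0,19}$).
\end{itemize}

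Some of your proposed invariants are not invariant: $\R e_3$ is not preserved by $\mathrm{Aut}(\G_{3,4})$, and $\beta$ is shifted by $\nabla^0_{\cdot}u$. Finally, for $\alpha=-1$ the swap $e_1\leftrightarrow e_2$, $e_3\mapsto-e_3$ sends $\lambda\mapsto-\lambda$ in the rows $\h_{0,4}$ and $\h_{0,9}$ of Table~\ref{g3,4}. Carried out correctly, it does more than collapse your list to that table: it shows those two rows list each class twice, so uniqueness needs a sign normalization of $\lambda$ there.
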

\begin{proof}
 In the basis $\lbrace e_1, e_2, e_3 \rbrace$, the operators $\nabla_{e_1}$, $\nabla_{e_2}$ and $\nabla_{e_3}$ are given respectively by: 
	\begin{equation}
		\nabla_{e_1}=\left( \begin {array}{ccc} a_{11}&a_{12}&a_{13}\\ \noalign{\medskip}
		a_{21}&a_{22}&a_{23}\\ \noalign{\medskip}a_{31}&a_{32}&a_{33}\end {array} \right),\quad
		\nabla_{e_2}=\left( \begin {array}{ccc} a_{12}&b_{12}&b_{13}\\ \noalign{\medskip}
		a_{22}&b_{22}&b_{23}\\ \noalign{\medskip}a_{32}&b_{32}&b_{33}\end {array} \right),\quad
		\nabla_{e_3}=\left( \begin {array}{ccc} a_{13}-1 &b_{13}&c_{13}\\ \noalign{\medskip}
		a_{23}&b_{23}-\alpha&c_{23}\\ \noalign{\medskip}a_{33}&b_{33}&c_{33}\end {array} \right),\quad
	\end{equation}
	where $a_{ij}$, $b_{ij}$, $c_{ij}\in \mathbb{R}$, $\alpha\neq0$ and $-1\leq\alpha<1$.

	Assume that $\nabla^0$ is a  torsion-free connection. Then $\nabla^0$ is equivalent to one of the connections listed in Lemma~\ref{Lemg3j} under the Lie algebra $\G_{3,4}$. Assume that, $0<|\alpha|<1$. We first consider the case $\nabla^0=\nabla^1$. Using a straightforward computation, the flatness-equations can be solved with two real solutions. The first one is given by the following flat, torsion-free connection:
	\begin{equation}\label{g3,4,alpha,nabla1,sol1}
		\begin{aligned}
			\nabla_{e_1} e_2 &=e_1+e_2+a_{3 2}e_3, \hspace{1cm}\nabla_{e_1}e_3=-\tfrac{1}{a_{32}}e_1-\tfrac{1}{a_{32}}e_2-e_3,\hspace{0.75cm} \nabla_{e_2}e_1=e_1+e_2+a_{3 2}e_3,\\
			\nabla_{e_2}e_3&=-\tfrac{1}{a_{32}}e_1-\tfrac{1}{a_{32}}e_2-e_3,\hspace{0.2cm}\nabla_{e_3}e_1=-\tfrac{a_{32}+1}{a_{32}}e_1-\tfrac{1}{a_{32}}e_2-e_3, \hspace{0.2cm}\nabla_{e_3}e_2=-\tfrac{1}{a_{32}}e_1-\tfrac{1+\alpha\,a_{32}}{a_{32}}e_2-e_3,\\
			\nabla_{e_3}e_3&=\tfrac{2-\alpha\,a_{32}}{a_{32}^2}e_1+\tfrac{2-a_{32}}{a_{32}^2}e_2+\tfrac{2-(\alpha+1)a_{32}}{a_{32}}e_3.
		\end{aligned}
	\end{equation}
	Applying the following automorphism
	\begin{align*}
		\Psi(e_1) &=a_{32}\, e_1,&
		\Psi(e_2) &=e_2,&
		\Psi(e_3) &=-e_1-\tfrac{1}{a_{32}}e_2+ e_3.
	\end{align*}
	 yields the flat, torsion-free connection associated with the flat Lie algebra $\h_{0,22}$.
	 
	 The second solution is given by the following flat, torsion-free connection:
\begin{equation}\label{g3,4,alpha,nabla1,sol2}
	\begin{aligned}
		\nabla_{e_1} e_2 &=e_1+e_2+a_{3 2}e_3, \hspace{0.2cm}\nabla_{e_1}e_3=-\tfrac{1}{a_{32}}e_1-\tfrac{1}{a_{32}}e_2-e_3,\hspace{0.2cm} \nabla_{e_2}e_1=e_1+e_2+a_{3 2}e_3,\\
		\nabla_{e_2}e_2&=\nu_1e_1, \hspace{0.2cm}\nabla_{e_2}e_3=-\tfrac{1+\nu_1}{a_{32}}e_1-\tfrac{1}{a_{32}}e_2-e_3,\hspace{0.2cm}\nabla_{e_3}e_1=-\tfrac{a_{32}+1}{a_{32}}e_1-\tfrac{1}{a_{32}}e_2-e_3, \\\nabla_{e_3}e_2&=-\tfrac{1+\nu_1}{a_{32}}e_1-\tfrac{2+a_{32}}{2\,a_{32}}e_2-e_3,\hspace{0.2cm}
		\nabla_{e_3}e_3=\tfrac{2\,\nu_1+4-a_{32}}{2\,a_{32}^2}e_1+\tfrac{2-a_{32}}{a_{32}^2}e_2+\tfrac{4-3\,a_{32}}{a_{32}}e_3.
	\end{aligned}
\end{equation}
In this case, $\alpha=\frac{1}{2}$.
	Consider the following automorphism
	\begin{align*}
		\Psi(e_1) &=\tfrac{a_{32}}{x} e_1,&
		\Psi(e_2) &=x\,e_2,&
		\Psi(e_3) &=-\tfrac{1}{x} e_1-\tfrac{x}{a_{32}}e_2+ e_3.
	\end{align*}
 For a suitable choice of the parameter $x$, applying $\Psi$ to the connection given in~\eqref{g3,4,alpha,nabla1,sol2} yields the following equivalent connection:
\begin{equation}\label{g3,4,alpha,nabla1,sol2,1}
	\begin{aligned}
		\nabla_{e_1} e_2 &=e_3, &\nabla_{e_2}e_1&=e_3, &\nabla_{e_2}e_2&=\delta\,e_1, &\nabla_{e_3}e_1&=-e_1, &\nabla_{e_3}e_2&=-\tfrac{1}{2}e_2, &\nabla_{e_3}e_3&=-\tfrac{3}{2}e_3,\quad\delta=0,1.
	\end{aligned}
\end{equation}
If $\delta=1$, then this connection is precisely the one associated with the flat Lie algebra $\h_{2,7}$. If, on the other hand, $\delta=0$, then the connection~\eqref{g3,4,alpha,nabla1,sol2,1} coincides with the one associated with the flat Lie algebra $\h_{0,22}$, and necessarily $\alpha=\frac{1}{2}$.

If $\nabla^0=\nabla^2$ or $\nabla^0=\nabla^3$, then a straightforward computation shows that the flatness equations admit no solutions for any value of the parameter $\alpha$ satisfying the above assumptions and for any choice of the parameters defining the torsion-free connections on $\nabla^2$ and $\nabla^3$.

If $\nabla^0=\nabla^4$, then a straightforward computation shows that the flatness equations admit a solution if and only if $\alpha=\frac{1}{2}$. In this case, the solution is unique, and the flat torsion-free connection follows:
\begin{equation}\label{g3,4,alpha,nabla1,sol4}
	\begin{aligned}
		\nabla_{e_1} e_2 &=e_1-\tfrac{1}{2\,c_{13}}e_3, &\nabla_{e_2}e_1&=e_1-\tfrac{1}{2\,c_{13}}e_3, &\nabla_{e_2}e_3&=2\,c_{13}e_1-e_3, \\
		\nabla_{e_3}e_1&=-e_1, &\nabla_{e_3}e_2&=2\,c_{13}e_1-\tfrac{1}{2}e_2-e_3, &\nabla_{e_3}e_3&=c_{13}e_1-\tfrac{3}{2}e_2.
	\end{aligned}
\end{equation}

Consider the following automorphism
\begin{align*}
	\Psi(e_1) &=\tfrac{1}{(4\,c_{13}^2)^{\frac{1}{3}}} e_1,&
	\Psi(e_2) &=-\tfrac{1}{(2\,c_{13})^{\frac{1}{3}}} e_2,&
	\Psi(e_3) &=(2\,c_{13})^{\frac{1}{3}} e_1+ e_3.
\end{align*}
Applying $\Psi$ to the connection given in~\eqref{g3,4,alpha,nabla1,sol4} yields the following equivalent connection:
	\begin{equation}\label{g3,4,alpha,nabla1,sol4,1}
		\begin{aligned}
			\nabla_{e_1} e_2 &=e_3, &\nabla_{e_2}e_1&=e_3, &\nabla_{e_2}e_2&=e_1, &\nabla_{e_3}e_1&=-e_1, &\nabla_{e_3}e_2&=-\tfrac{1}{2}e_2, &\nabla_{e_3}e_3&=-\tfrac{3}{2}e_3.
		\end{aligned}
	\end{equation}
	As a matter of fact, this connection coincides with that associated with the flat Lie algebra $\h_{2,7}$.

	If $\nabla^0=\nabla^5$, then a straightforward computation shows that the flatness equations admit a unique solution given by the following flat, torsion-free connection:
	
	\begin{equation}\label{g3,4,alpha,nabla1,sol5}
		\begin{aligned}
			\nabla_{e_1} e_2 &=e_1+a_{32}e_3, &\nabla_{e_2}e_1&=e_1+a_{32}e_3, &\nabla_{e_2}e_3&=-\tfrac{1}{a_{32}}e_1-e_3,\\
			\nabla_{e_3}e_1&=-e_1, &\nabla_{e_3}e_2&=-\tfrac{1}{a_{32}}e_1-\alpha\,e_2-e_3, &\nabla_{e_3}e_3&=-\tfrac{\alpha}{a_{32}}e_1-(\alpha+1)e_3.
		\end{aligned}
	\end{equation}

	Consider the following automorphism
	\begin{align*}
		\Psi(e_1) &= e_1,&
		\Psi(e_2) &=a_{32}\, e_2,&
		\Psi(e_3) &=-\tfrac{1}{a_{32}} e_1+ e_3.
	\end{align*}
	Applying $\Psi$ to the connection given in~~\eqref{g3,4,alpha,nabla1,sol5} yields precisely the flat, torsion-free connection associated with the flat Lie algebra $\h_{0,22}$.

If $\nabla^0=\nabla^6$, then a straightforward computation shows that the flatness equations admit no solutions for any value of the parameter $\alpha$ satisfying the above assumptions.

	If $\nabla^0=\nabla^7$, then a straightforward computation shows that the flatness equations admit a solution if and only if $\alpha=\frac{1}{2}$. The corresponding 
	  flat torsion-free connection is given as follows:
	\begin{equation}\label{g3,4,alpha,nabla1,sol7}
	\begin{aligned}
		\nabla_{e_1} e_2 &=e_2-\tfrac{1}{a_{23}}e_3, &\nabla_{e_1}e_3&=a_{23}e_2-e_3, &\nabla_{e_2}e_1&=e_2-\tfrac{1}{a_{23}}e_3, &\nabla_{e_2}e_2&=\varepsilon_2e_1,\\ \nabla_{e_2}e_3&=\varepsilon_2\,a_{23}e_1,\
		&\nabla_{e_3}e_1&=-e_1+a_{23}e_2-e_3, &\nabla_{e_3}e_2&=\varepsilon_2\,a_{23}e_1-\tfrac{1}{2}e_2,\\ \nabla_{e_3}e_3&=\varepsilon_2\,a_{23}^2e_1+a_{23}e_2-\tfrac{3}{2}e_3.
			\end{aligned}
\end{equation}
	
		Consider the following automorphism
	\begin{align*}
		\Psi(e_1) &=-\tfrac{1}{x\,a_{23}} e_1,&
		\Psi(e_2) &=x\,e_2,&
		\Psi(e_3) &=x\,a_{23}e_2+ e_3.
	\end{align*}
	For a suitable choice of the parameter $x$, applying $\Psi$ to the connection given in~\eqref{g3,4,alpha,nabla1,sol7} yields the following equivalent connection:
	\begin{equation}\label{g3,4,alpha,nabla1,sol7,1}
		\begin{aligned}
			\nabla_{e_1} e_2 &=e_3, &\nabla_{e_2}e_1&=e_3, &\nabla_{e_2}e_2&=e_1, &\nabla_{e_3}e_1&=-e_1, &\nabla_{e_3}e_2&=-\tfrac{1}{2}e_2, &\nabla_{e_3}e_3&=-\tfrac{3}{2}e_3.
		\end{aligned}
	\end{equation}
	It is actually the connection associated with the flat Lie algebra $\h_{2,7}$.

		If $\nabla^0=\nabla^8$, then a straightforward computation shows that the flatness equations admit a solution if and only if $\delta_1=0$. The corresponding 
	flat torsion-free connection is given as follows:
	\begin{equation}\label{g3,4,alpha,nabla1,sol8}
	\begin{aligned}
		\nabla_{e_1} e_2 &=e_2-\tfrac{1}{a_{23}}e_3, &\nabla_{e_1}e_3&=a_{23}e_2-e_3, &\nabla_{e_2}e_1&=e_2-\tfrac{1}{a_{23}}e_3,\\ \nabla_{e_3}e_1&=-e_1+a_{23}e_2-e_3, 
		&\nabla_{e_3}e_2&=\alpha\,e_2, &\nabla_{e_3}e_3&=a_{23}e_2-(\alpha+1)e_3.
	\end{aligned}
	\end{equation}
		Consider the following automorphism
	\begin{align*}
		\Psi(e_1) &=-\tfrac{1}{a_{23}} e_1,&
		\Psi(e_2) &= e_2,&
		\Psi(e_3) &=a_{23} e_2+ e_3.
	\end{align*}
	Applying $\Psi$ to the connection given in~~\eqref{g3,4,alpha,nabla1,sol8} yields precisely the flat, torsion-free connection associated with the flat Lie algebra $\h_{0,22}$.

		If $\nabla^0=\nabla^9$, then a straightforward computation shows that the flatness equations admit a unique solution. The corresponding 
	flat torsion-free connection is given as follows:
	\begin{equation}\label{g3,4,alpha,nabla1,sol9}
		\begin{aligned}
			\nabla_{e_1} e_1 &=e_1+e_2+a_{31}e_3, &\nabla_{e_1}e_3&=-\tfrac{1}{a_{31}}e_1-\tfrac{1}{a_{31}}e_2-e_3, &\nabla_{e_3}e_1&=-\tfrac{1+a_{31}}{a_{31}}e_1-\tfrac{1}{a_{31}}e_2-e_3,\\
			\nabla_{e_3}e_2&=-\alpha\,e_2, &\nabla_{e_3}e_3&=\tfrac{1-a_{31}}{a_{31}^2}e_1+\tfrac{1+(\alpha-2)a_{31}}{a_{31}^2}e_2+\tfrac{1-2\,a_{31}}{a_{31}}e_3.
		\end{aligned}
	\end{equation}

		Consider the following automorphism
	\begin{align*}
		\Psi(e_1) &=\tfrac{\sqrt{\varepsilon\,a_{31}}}{\varepsilon} e_1,&
		\Psi(e_2) &= e_2,&
		\Psi(e_3) &=-\tfrac{\sqrt{\varepsilon\,a_{31}}}{\varepsilon\,a_{31}} e_1-\tfrac{1}{a_{31}}e_2+e_3,\quad\varepsilon=\pm1.
	\end{align*}
	Applying $\Psi$ to the connection given in~~\eqref{g3,4,alpha,nabla1,sol9} yields precisely the flat, torsion-free connection associated with the flat Lie algebra $\h_{0,7}$.

	If $\nabla^0=\nabla^{10}$ or $\nabla^0=\nabla^{11}$, then a straightforward computation shows that the flatness equations admit no solutions for any value of the parameter $\alpha$ satisfying the above assumptions and for any choice of the parameters defining the torsion-free connections on $\nabla^{10}$ and $\nabla^{11}$.

	If $\nabla^0=\nabla^{12}$ and $\delta_{\varepsilon}=0$, then a straightforward computation shows that the flatness equations admit three solutions. The first solution is given by the following  
	flat torsion-free connection:
	\begin{equation}\label{g3,4,alpha,nabla12,sol1}
		\begin{aligned}
			\nabla_{e_1} e_1 &=e_2+a_{31}e_3, &\nabla_{e_3}e_1&=-e_1, &\nabla_{e_3}e_2&=-\alpha\,e_2, &\nabla_{e_3}e_3&=\tfrac{\alpha-2}{a_{31}}e_2-2\,e_3.
		\end{aligned}
	\end{equation}
	
	Consider the following automorphism
	\begin{align*}
	\Psi(e_1) &=\tfrac{\sqrt{\varepsilon\,a_{31}}}{\varepsilon} e_1,&
	\Psi(e_2) &= e_2,&
	\Psi(e_3) &=-\tfrac{1}{a_{31}} e_2+e_3,\quad\varepsilon=\pm1.
\end{align*}
	Applying it to the  connection~\eqref{g3,4,alpha,nabla12,sol1}, we obtain the equivalent connection
	\begin{equation}\label{g3,4,alpha,nabla12,sol1,1}
		\begin{aligned}
			\nabla_{e_1} e_1 &=\varepsilon\, e_3, &\nabla_{e_3}e_1&=-e_1, &\nabla_{e_3}e_2&=-\alpha\,e_2, &\nabla_{e_3}e_3&=-2\,e_3.
		\end{aligned}
	\end{equation}
	which coincides with the one associated with the flat Lie algebra $\h_{0,7}$ listed in Table~\ref{g3,4,alpha}.
	
	The second solution is given by the following flat, torsion-free connection:
	\begin{equation}\label{g3,4,alpha,nabla12,sol2}
		\begin{aligned}
			\nabla_{e_1} e_1 &=e_2, \quad\nabla_{e_1}e_3=a_{23}e_2, \quad\nabla_{e_2}e_3=(\alpha-2)e_2, &\nabla_{e_3}e_1&=-e_1+a_{23}e_2, &\nabla_{e_3}e_2&=-2\,e_2,\\
			\nabla_{e_3}e_3&=a_{23}(1-\alpha)e_1+c_{23}e_2+(\alpha-2)e_3.
		\end{aligned}
	\end{equation}
		Consider the following automorphism
	\begin{align*}
		\Psi(e_1) &= e_1,&
		\Psi(e_2) &= e_2,&
		\Psi(e_3) &=a_{23}e_1+\tfrac{a_{23}^2-c_{23}}{2}e_2+e_3.
	\end{align*}
	Applying it to the  connection~\eqref{g3,4,alpha,nabla12,sol2}, we obtain the equivalent connection
	\begin{equation}\label{g3,4,alpha,nabla12,sol2,1}
		\begin{aligned}
			\nabla_{e_1} e_1 &=e_2, &\nabla_{e_2}e_3&=(\alpha-2)e_2,&\nabla_{e_3}e_1&=-e_1, &\nabla_{e_3}e_2&=-2\,e_2, &\nabla_{e_3}e_3&=(\alpha-2)e_3.
		\end{aligned}
	\end{equation}
	which coincides with the one associated with the flat Lie algebra $\h_{2,8}$ listed in Table~\ref{g3,4,alpha}.
	
	The third  solution is given by the following flat, torsion-free connection:
		\begin{equation}\label{g3,4,alpha,nabla12,sol3}
		\begin{aligned}
			\nabla_{e_1} e_1 &=e_2, \quad\nabla_{e_1}e_3=(2-\alpha)e_1+a_{23}e_2, &\nabla_{e_3}e_1&=(1-\alpha)e_1+a_{23}e_2, &\nabla_{e_3}e_2&=-\alpha\,e_2,\\
			\nabla_{e_3}e_3&=a_{23}(1-\alpha)e_1+c_{23}e_2+(2-\alpha)e_3.
		\end{aligned}
	\end{equation}
		Consider the following automorphism
	\begin{align*}
		\Psi(e_1) &= e_1,&
		\Psi(e_2) &= e_2,&
		\Psi(e_3) &=a_{23}e_1+\tfrac{a_{23}^2-c_{23}}{2}e_2+e_3.
	\end{align*}
	Applying it to the  connection~\eqref{g3,4,alpha,nabla12,sol3}, we obtain the equivalent connection
	\begin{equation}\label{g3,4,alpha,nabla12,sol3,1}
		\begin{aligned}
			\nabla_{e_1} e_1 &=e_2, &\nabla_{e_2}e_3&=(2-\alpha)e_2,&\nabla_{e_3}e_1&=(1-\alpha)e_1, &\nabla_{e_3}e_2&=-\alpha\,e_2, &\nabla_{e_3}e_3&=(2-\alpha)e_3.
		\end{aligned}
	\end{equation}
	which coincides with the one associated with the flat Lie algebra $\h_{2,9}$ listed in Table~\ref{g3,4,alpha}.

	If $\nabla^0=\nabla^{12}$ and $\delta_{\varepsilon}=\pm1$, it is evident from a straightforward computation that the flatness equations do not admit any solution under $0<|\alpha|<1$ and that $\delta_{\varepsilon}=\pm1$.

		If $\nabla^0=\nabla^{13}$, then a straightforward computation shows that the flatness equations admit two solutions. The first solution is given by the following  
	flat torsion-free connection:
	\begin{equation}\label{g3,4,alpha,nabla1,sol13}
		\begin{aligned}
			\nabla_{e_2} e_2 &=e_1+e_2-\tfrac{1}{b_{23}}e_3, &\nabla_{e_2}e_3&=b_{23}e_1+b_{23}e_2-e_1, \qquad\nabla_{e_3}e_1=-e_1,\\
			\nabla_{e_3}e_2&=b_{23}e_1+(b_{23}-\alpha)e_2-e_3, &\nabla_{e_3}e_3&=b_{23}(2\,\alpha+b_{23}-1)e_1+b_{23}(\alpha+b_{23})e_2-(b_{23}+2\,\alpha)e_3.
		\end{aligned}
	\end{equation}

	Consider the following automorphism
\begin{align*}
	\Psi(e_1) &= e_1,&
	\Psi(e_2) &= \tfrac{1}{\sqrt{-\varepsilon\,b_{23}}}e_2,&
	\Psi(e_3) &=b_{23}e_1+\tfrac{b_{23}}{\sqrt{-\varepsilon\,b_{23}}}e_2+e_3,\quad\varepsilon=\pm1.
\end{align*}
Applying $\Psi$ to the connection given in~~\eqref{g3,4,alpha,nabla1,sol13} yields precisely the flat, torsion-free connection associated with the flat Lie algebra $\h_{0,23}$.

	The second solution is given by the following flat, torsion-free connection:
	\begin{equation}\label{g3,4,alpha,nabla1,sol13,1}
		\begin{aligned}
			\nabla_{e_2} e_2 &=e_1+e_2-\tfrac{1}{b_{23}}e_3, &\nabla_{e_2}e_3&=\tfrac{3\,b_{23}^2+b_{23}+3\,c_{13}}{6\,b_{23}}e_1+b_{23}e_2-e_3, \qquad\nabla_{e_3}e_1=-e_1,\\
			\nabla_{e_3}e_2&=\tfrac{3\,b_{23}^2+b_{23}+3\,c_{13}}{6\,b_{23}}e_1+(b_{23}-\tfrac{1}{3})e_2-e_3, &\nabla_{e_3}e_3&=c_{13}e_1+(b_{23}^2+\tfrac{1}{3}b_{23})e_2-(b_{23}+\tfrac{2}{3})e_3.
		\end{aligned}
	\end{equation}
	In this case, $\alpha=\frac{1}{3}$.

		Consider the following automorphism
	\begin{align*}
		\Psi(e_1) &=x\, e_1,&
		\Psi(e_2) &= \tfrac{1}{\sqrt{-\varepsilon\,b_{23}}}e_2,&
		\Psi(e_3) &=x\,b_{23}e_1+\tfrac{b_{23}}{\sqrt{-\varepsilon\,b_{23}}}e_2+e_3,\quad\varepsilon=\pm1.
	\end{align*}
For a suitable choice of the parameter $x\in\R^\ast$, applying $\Psi$  to the connection given in~~\eqref{g3,4,alpha,nabla1,sol13} yields the following equivalent connection:

		\begin{equation}\label{g3,4,alpha,nabla1,sol13,1,1}
		\begin{aligned}
			\nabla_{e_2}e_2&=\varepsilon\,e_3, &\nabla_{e_2}e_3&=\delta\,e_1, &\nabla_{e_3}e_1&=-e_1, &\nabla_{e_3}e_2&=\delta\,e_1-\tfrac{1}{3}e_2, &\nabla_{e_3}e_3&=-\tfrac{2}{3}e_3,\quad\delta=0,1.
		\end{aligned}
	\end{equation}
	If $\delta=0$, then this connection coincides with the one associated with the flat Lie algebra $\h_{0,23}$, in which case $\alpha=\frac{1}{3}$.  If $\delta=1$,  then connection~\eqref{g3,4,alpha,nabla1,sol13,1,1} is precisely the one associated with the flat Lie algebra $\h_{0,24}$ listed in Table~\ref{g3,4,alpha}.

	If $\nabla^0=\nabla^{14}$, Thus, it can be seen that the flatness equations have exactly seven solutions if a simple calculation is performed. The first one is given by
		\begin{equation}\label{g3,4,alpha,nabla1,sol14,sol1}
		\begin{aligned}
			\nabla_{e_2}e_2&=e_1, \quad\nabla_{e_2}e_3=b_{13}e_1, \quad\nabla_{e_3}e_1=-e_1, &\nabla_{e_3}e_2&=b_{13}e_1-\tfrac{1}{2}e_2,\\ \nabla_{e_3}e_3&=c_{13}e_1-b_{13}(c_{33}+\tfrac{1}{2})e_2+c_{33}e_3.
		\end{aligned}
	\end{equation}
	In this case, $\alpha=\frac{1}{2}$. If $c_{33}\neq-1$. Applying the following automorphism
	\begin{align*}
		\Psi(e_1) &= e_1,&
		\Psi(e_2) &= e_2,&
		\Psi(e_3) &=\tfrac{b_{13}^2-c_{13}}{c_{33}+1}e_1+b_{13}e_2+e_3,
	\end{align*}
	to the connection given in~\eqref{g3,4,alpha,nabla1,sol14,sol1} yields the following equivalent connection:
		\begin{equation}\label{g3,4,alpha,nabla1,sol14,sol1,1}
		\begin{aligned}
			\nabla_{e_2}e_2&=e_1, &\nabla_{e_3}e_1&=-e_1, &\nabla_{e_3}e_2&=-\tfrac{1}{2}e_2, &\nabla_{e_3}e_3&=\lambda\,e_3.
		\end{aligned}
	\end{equation}
	In this case, we have $c_{33}=\lambda\neq-1$.
	Now, if $c_{33}=-1$. Consider the following automorphism
	\begin{align*}
		\Psi(e_1) &=x^2\, e_1,&
		\Psi(e_2) &=x\, e_2,&
		\Psi(e_3) &=x\,b_{13}e_2+e_3.
	\end{align*}
	For a suitable choice of the parameter $x\in\R^\ast$, applying $\Psi$ to the connection given in \eqref{g3,4,alpha,nabla1,sol14,sol1} yields the following equivalent connection:
	\begin{equation}\label{g3,4,alpha,nabla1,sol14,sol1,2}
		\begin{aligned}
			\nabla_{e_2}e_2&=e_1, &\nabla_{e_3}e_1&=-e_1, &\nabla_{e_3}e_2&=-\tfrac{1}{2}e_2, &\nabla_{e_3}e_3&=\delta_\varepsilon\,e_1-e_3,\quad\delta_\varepsilon=0,\pm1.
		\end{aligned}
	\end{equation}
	Observe first that the flat, torsion-free connections given in \eqref{g3,4,alpha,nabla1,sol14,sol1,1} and  \eqref{g3,4,alpha,nabla1,sol14,sol1,2} are isomorphic if and only if $\delta_\varepsilon=0$ and $\lambda=-1$. For this reason, we set $\lambda\in\R$ and assume that $\delta_\varepsilon=\varepsilon=\pm1$. The previous connection then corresponds to the flat, torsion-free connections associated with the flat Lie algebras $\h_{2,2}$ and $\h_{2,3}$, respectively. The additional condition $\lambda\in\R^\ast$ comes from the fact that, otherwise, the connection would be isomorphic to the flat Lie algebra $\h_{2,1}$, as will be shown later.
	
	The second solution is given by the following flat, torsion-free connection:
	\begin{equation}\label{g3,4,alpha,nabla1,sol14,sol2}
		\begin{aligned}
			\nabla_{e_2}e_2&=e_1, \quad\nabla_{e_2}e_3=b_{13}e_1+(2\,\alpha-1)e_2, &\nabla_{e_3}e_1&=-e_1, &\nabla_{e_3}e_2&=b_{13}e_1+(\alpha-1)e_2 ,\\
			\nabla_{e_3}e_3&=c_{13}e_1+b_{13}(\alpha-1)e_2+(2\,\alpha-1)e_3.
		\end{aligned}
	\end{equation}
	 Applying the following automorphism
	\begin{align*}
		\Psi(e_1) &= e_1,&
		\Psi(e_2) &= e_2,&
		\Psi(e_3) &=\tfrac{b_{13}^2-c_{13}}{2\,\alpha}e_1+b_{13}e_2+e_3,
	\end{align*}
	to the connection given in~\eqref{g3,4,alpha,nabla1,sol14,sol2} yields the following equivalent connection:
	\begin{equation}\label{g3,4,alpha,nabla1,sol14,sol2,1}
		\begin{aligned}
			\nabla_{e_2}e_2&=e_1, &\nabla_{e_2}e_3&=(2\,\alpha-1)e_2, &\nabla_{e_3}e_1&=-e_1, &\nabla_{e_3}e_2&=(\alpha-1)e_2, &\nabla_{e_3}e_3&=(2\,\alpha-1)e_3.
		\end{aligned}
	\end{equation}
	This connection is associated with the flat Lie algebra $\h_{2,4}$, where the assumption $\alpha\neq\frac{1}{2}$ is imposed to avoid it being isomorphic to the flat Lie algebra $\h_{2,1}$.
	
	The third solution corresponds to the following flat, torsion-free connection:
	\begin{equation}\label{g3,4,alpha,nabla1,sol14,sol3}
		\begin{aligned}
		\nabla_{e_1}e_3&=(1-2\,\alpha)e_1,	\quad\nabla_{e_2}e_2=e_1,  \quad\nabla_{e_2}e_3=b_{13}e_1, &\nabla_{e_3}e_1&=-2\,\alpha\,e_1, &\nabla_{e_3}e_2&=b_{13}e_1-\alpha\,e_2,\\
		\nabla_{e_3}e_3&=c_{13}e_1+b_{13}(\alpha-1)e_2+(1-2\,\alpha)e_3.
		\end{aligned}
	\end{equation}
	 Applying the following automorphism
	\begin{align*}
		\Psi(e_1) &= e_1,&
		\Psi(e_2) &= e_2,&
		\Psi(e_3) &=\tfrac{b_{13}^2-c_{13}}{2\,\alpha}e_1+b_{13}e_2+e_3,
	\end{align*}
	to the connection given in~\eqref{g3,4,alpha,nabla1,sol14,sol3} yields the following equivalent connection:
	\begin{equation}\label{g3,4,alpha,nabla1,sol14,sol3,1}
		\begin{aligned}
			\nabla_{e_1}e_3&=(1-2\,\alpha)e_1,	\quad\nabla_{e_2}e_2=e_1, &\nabla_{e_3}e_1&=-2\,\alpha\,e_1, &\nabla_{e_3}e_2&=-\alpha\,e_2,&
			\nabla_{e_3}e_3&=(1-2\,\alpha)e_3.
		\end{aligned}
	\end{equation}
	The previous flat, torsion-free connection coincides precisely with the one associated with the flat Lie algebra algebra $\h_{2,1}$ listed in Table~\ref{g3,4,alpha}.

	The fourth solution corresponds to the following flat, torsion-free connection:
	\begin{equation}\label{g3,4,alpha,nabla1,sol14,sol4}
		\begin{aligned}
			\nabla_{e_1}e_3&=c_{33}e_1, \quad\nabla_{e_2}e_2=e_1, &\nabla_{e_2}e_3&=b_{13}e_1+c_{33}e_2, \quad\nabla_{e_3}e_1=(c_{33}-1)e_1, \\\nabla_{e_3}e_2&=b_{13}e_1+(c_{33}-\tfrac{1}{2})e_2,&
			\nabla_{e_3}e_3&=c_{13}e_1+b_{13}(c_{33}-\tfrac{1}{2})e_2+c_{33}e_3.
		\end{aligned}
	\end{equation}

		In this case, $\alpha=\frac{1}{2}$. If $c_{33}\neq1$. Applying the following automorphism
	\begin{align*}
		\Psi(e_1) &= e_1,&
		\Psi(e_2) &= e_2,&
		\Psi(e_3) &=\tfrac{c_{13}-b_{13}^2}{c_{33}-1}e_1+b_{13}e_2+e_3,
	\end{align*}
	to the connection given in~\eqref{g3,4,alpha,nabla1,sol14,sol4} yields the following equivalent connection:
	\begin{equation}\label{g3,4,alpha,nabla1,sol14,sol4,1}
		\begin{aligned}
		\nabla_{e_1}e_3&=\lambda\,e_1,	&\nabla_{e_2}e_2&=e_1, &\nabla_{e_2}e_3&=\lambda\,e_2 &\nabla_{e_3}e_1&=(\lambda-1)e_1, &\nabla_{e_3}e_2&=(\lambda-\tfrac{1}{2})e_2, &\nabla_{e_3}e_3&=\lambda\,e_3.
		\end{aligned}
	\end{equation}
	In this case, we have $c_{33}=\lambda\neq1$.
	Now, if $c_{33}=1$. Consider the following automorphism
	\begin{align*}
		\Psi(e_1) &=x^2\, e_1,&
		\Psi(e_2) &=x\, e_2,&
		\Psi(e_3) &=x\,b_{13}e_2+e_3.
	\end{align*}
	For a suitable choice of the parameter $x\in\R^\ast$, applying $\Psi$ to the connection given in \eqref{g3,4,alpha,nabla1,sol14,sol4} yields the following equivalent connection:
	\begin{equation}\label{g3,4,alpha,nabla1,sol14,sol4,2}
	\begin{aligned}
		\nabla_{e_1}e_3&=e_1,	&\nabla_{e_2}e_2&=e_1, &\nabla_{e_2}e_3&=e_2,  &\nabla_{e_3}e_2&=\tfrac{1}{2}e_2, &\nabla_{e_3}e_3&=\delta_\varepsilon\,e_1+e_3.
	\end{aligned}
\end{equation}
	Observe first that the flat, torsion-free connections given in \eqref{g3,4,alpha,nabla1,sol14,sol4,1} and  \eqref{g3,4,alpha,nabla1,sol14,sol4,2} are isomorphic if and only if $\delta_\varepsilon=0$ and $\lambda=1$. For this reason, we set $\lambda\in\R$ and assume that $\delta_\varepsilon=\varepsilon=\pm1$. The previous connection then corresponds to the flat, torsion-free connections associated with the flat Lie algebras $\h_{2,5}$ and $\h_{2,6}$, respectively. The additional condition $\lambda\in\R^\ast$ comes from the fact that, otherwise, the connection would be isomorphic to the flat Lie algebra $\h_{2,1}$.
	
	The fifth solution corresponds to the following flat, torsion-free connection:
		\begin{equation}\label{g3,4,alpha,nabla1,sol14,sol5}
		\begin{aligned}
	\nabla_{e_2}e_2&=e_1+b_{32}e_3, &\nabla_{e_3}e_1&=-e_1,  &\nabla_{e_3}e_2&=\tfrac{b_{32}c_{13}-1}{2}e_2, &\nabla_{e_3}e_3&=c_{13}e_1+(b_{32}-1)e_3.
		\end{aligned}
	\end{equation}
	In this case, $\alpha=\tfrac{1-b_{32}c_{13}}{2}$. Suppose that $b_{32}\neq0$, and set $c_{13}=\frac{1-2\,\alpha}{b_{32}}$. Consider the following automorphism
	\begin{align*}
		\Psi(e_1) &= e_1,&
		\Psi(e_2) &=x\, e_2,&
		\Psi(e_3) &=-\tfrac{1}{b_{32}}e_1+e_3.
	\end{align*}
	For a suitable choice of the parameter $x\in\R^\ast$, applying $\Psi$ to the connection given in \eqref{g3,4,alpha,nabla1,sol14,sol5} yields the following equivalent connection:
	\begin{equation}\label{g3,4,alpha,nabla1,sol14,sol5,1}
		\begin{aligned}
			\nabla_{e_2}e_2&=\varepsilon\,e_3, &\nabla_{e_3}e_1&=-e_1,  &\nabla_{e_3}e_2&=-\alpha\,e_2, &\nabla_{e_3}e_3&=-2\,\alpha\,e_3.
		\end{aligned}
	\end{equation}
	This flat, torsion-free connection is precisely the one associated with the flat Lie algebra $\h_{0,23}$.
	
	If $b_{32}=0$, then $\alpha=\frac{1}{2}$. Consider the following automorphism
	\begin{align*}
		\Psi(e_1) &=x^2\, e_1,&
		\Psi(e_2) &=x\, e_2,&
		\Psi(e_3) &=e_3.
	\end{align*}
	For a suitable choice of the parameter $x\in\R^\ast$, applying $\Psi$ to the connection given in \eqref{g3,4,alpha,nabla1,sol14,sol5} yields the following equivalent connection:
	\begin{equation}\label{g3,4,alpha,nabla1,sol14,sol5,2}
		\begin{aligned}
			\nabla_{e_2}e_2&=e_1, &\nabla_{e_3}e_1&=-e_1,  &\nabla_{e_3}e_2&=-\tfrac{1}{2} e_2, &\nabla_{e_3}e_3&=\delta_\varepsilon e_1-e_3.
		\end{aligned}
	\end{equation}
	This flat, torsion-free connection coincides with the one presented previously in \eqref{g3,4,alpha,nabla1,sol14,sol1,2}, and thus no further analysis is necessary.
	
	The sixth solution corresponds to the following flat, torsion-free connection:
	\begin{equation}\label{g3,4,alpha,nabla1,sol14,sol6}
		\begin{aligned}
			\nabla_{e_2}e_2&=e_1+b_{32}e_3, &\nabla_{e_2}e_3&=b_{13}e_1, &\nabla_{e_3}e_1&=-e_1, &\nabla_{e_3}e_2&=b_{13}e_1-\tfrac{1}{3}e_2, &\nabla_{e_3}e_3&=\tfrac{1}{3\,b_{32}}e_1-\tfrac{2}{3}e_3.
		\end{aligned}
	\end{equation}
	In this case, $\alpha=\frac{1}{3}$. Consider the following automorphism
	\begin{align*}
		\Psi(e_1) &=x\, e_1,&
		\Psi(e_2) &=\tfrac{\sqrt{\varepsilon\,b_{32}}}{\varepsilon} e_2,&
		\Psi(e_3) &=-\tfrac{x}{b_{32}}e_1+e_3,\quad\varepsilon=\pm1.
	\end{align*}
	For a suitable choice of the parameter $x\in\R^\ast$, applying $\Psi$ to the connection given in \eqref{g3,4,alpha,nabla1,sol14,sol6} yields the following equivalent connection:
	\begin{equation}\label{g3,4,alpha,nabla1,sol14,sol6,1}
		\begin{aligned}
			\nabla_{e_2}e_2&=\varepsilon\, e_3, &\nabla_{e_2}e_3&=\delta\, e_1, &\nabla_{e_3}e_1&=-e_1, &\nabla_{e_3}e_2&=\delta\,e_1-\tfrac{1}{3}e_2, &\nabla_{e_3}e_3&=-\tfrac{2}{3}e_3.
		\end{aligned}
	\end{equation}
	This flat, torsion-free connection coincides with the one presented previously in \eqref{g3,4,alpha,nabla1,sol13,1,1}, and thus no further analysis is necessary.

	The last solution in this case is given by the following flat, torsion-free connection:
	\begin{equation}\label{g3,4,alpha,nabla1,sol14,sol7}
		\begin{aligned}
			\nabla_{e_1}e_2&=a_{32}e_3, &\nabla_{e_2}e_1&=a_{32}e_3, &\nabla_{e_3}e_1&=-e_1, &\nabla_{e_3}e_2&=-\tfrac{1}{2}e_2, &\nabla_{e_3}e_3&=-\tfrac{3}{2}e_3.
		\end{aligned}
	\end{equation}
	In this case, $\alpha=\frac{1}{2}$. Consider the following automorphism
	\begin{align*}
		\Psi(e_1) &=x^2\, e_1,&
		\Psi(e_2) &=x\, e_2,&
		\Psi(e_3) &=e_3.
	\end{align*}
	For a suitable choice of the parameter $x\in\R^\ast$, applying $\Psi$ to the connection given in \eqref{g3,4,alpha,nabla1,sol14,sol7} yields the following equivalent connection 
	\begin{equation}\label{g3,4,alpha,nabla1,sol14,sol7,1}
		\begin{aligned}
			\nabla_{e_1}e_2&=\delta\,e_3, &\nabla_{e_2}e_1&=\delta\,e_3, &\nabla_{e_3}e_1&=-e_1, &\nabla_{e_3}e_2&=-\tfrac{1}{2}e_2, &\nabla_{e_3}e_3&=-\tfrac{3}{2}e_3,\quad\delta=0,1.
		\end{aligned}
	\end{equation}
	If $\delta=1$, then this connection coincides with the one associated with the flat Lie algebra $\h_{2,7}$. On the other hand, if $\delta=0$, it coincides with the one associated with the flat Lie algebra $\h_{0,10}$, where $\alpha=\frac{1}{2}$ and  $\lambda=-\frac{3}{2}$.

	If $\nabla^0=\nabla^{15}$ and $\delta=1$, then a straightforward computation shows that the flatness equations admit exactly two solutions. The first one is given by
	\begin{equation}\label{g3,4,alpha,nabla1,sol15,delta=1}
		\begin{aligned}
			\nabla_{e_2}e_2&=e_2+b_{32}e_3, &\nabla_{e_2}e_3&=-\tfrac{1}{b_{32}}e_2-e_3, &\nabla_{e_3}e_1&=-e_1,\\
			\nabla_{e_3}e_2&=-\tfrac{1+\alpha\,b_{32}}{b_{32}}e_2-e_3, &\nabla_{e_3}e_3&=\tfrac{1-\alpha\,b_{32}}{b_{32}}e_2+\tfrac{1-2\,\alpha\,b_{32}}{b_{32}}e_3.
		\end{aligned}
	\end{equation}
Applying the following automorphism
\begin{align*}
	\Psi(e_1) &= e_1,&
	\Psi(e_2) &= \tfrac{\sqrt{\varepsilon\,b_{32}}}{\varepsilon}e_2,&
	\Psi(e_3) &=-\tfrac{\sqrt{\varepsilon\,b_{23}}}{\varepsilon\,b_{32}}e_2+e_3,\quad\varepsilon=\pm1.
\end{align*}
to the connection given in~\eqref{g3,4,alpha,nabla1,sol15,delta=1} yields precisely the one associated with the flat Lie algebra $\h_{0,23}$.

The second solution is given by the following flat, torsion-free connection:
\begin{equation}\label{g3,4,alpha,nabla1,sol15,delta=1,sol2}
	\begin{aligned}
		\nabla_{e_2}e_2&=e_2+b_{32}e_3, &\nabla_{e_2}e_3&=-\tfrac{b_{32}c_{13}}{2}e_1-\tfrac{1}{b_{32}}e_2-e_3, &\nabla_{e_3}e_1&=-e_1,\\
		\nabla_{e_3}e_2&=-\tfrac{b_{32}c_{13}}{2}e_1-\tfrac{b_{32}+3}{3\,b_{32}}e_2, &\nabla_{e_3}e_3&=c_{13}e_1+\tfrac{3-b_{32}}{3\,b_{32}^2}e_2+\tfrac{3-2\,b_{32}}{3\,b_{32}}e_3.
	\end{aligned}
\end{equation}
In this case, $\alpha=\frac{1}{3}.$ Consider the following automorphism
\begin{align*}
	\Psi(e_1) &=x\, e_1,&
	\Psi(e_2) &= \tfrac{\sqrt{\varepsilon\,b_{32}}}{\varepsilon}e_2,&
	\Psi(e_3) &=\tfrac{\sqrt{\varepsilon\,b_{32}}}{\varepsilon\,b_{32}}e_2+e_3,\quad\varepsilon=\pm1.
\end{align*}
For a suitable choice of the parameter $x\in\R^\ast$, applying $\Psi$  to the connection given in~~\eqref{g3,4,alpha,nabla1,sol15,delta=1,sol2} yields the following equivalent connection:
\begin{equation}\label{g3,4,alpha,nabla1,sol15,delta=1,sol2,1}
	\begin{aligned}
		\nabla_{e_2}e_2&=\varepsilon\,e_3, &\nabla_{e_2}e_3&=\delta_1\,e_1, &\nabla_{e_3}e_1&=-e_1, &\nabla_{e_3}e_2&=\delta_1\,e_1-\tfrac{1}{3}e_2, &\nabla_{e_3}e_3&=-\tfrac{2}{3}e_3,\quad\delta_1=0,1.
	\end{aligned}
\end{equation}
This connection is, in fact, identical to the one given in \eqref{g3,4,alpha,nabla1,sol13,1,1}.

If $\nabla^0=\nabla^{15}$ and $\delta=0$, then a straightforward computation shows that the flatness-equations admit exactly twelve solutions. The first one is given by
\begin{equation}\label{g3,4,alpha,nabla1,sol15,delta=0,sol1}
	\begin{aligned}
		\nabla_{e_1}e_2&=a_{32}e_3, &\nabla_{e_2}e_1&=a_{32}e_3, &\nabla_{e_3}e_1&=-e_1, &\nabla_{e_3}e_2&=-\alpha\,e_2, &\nabla_{e_3}e_3&=-(\alpha+1)e_3.
	\end{aligned}
\end{equation}
Consider the following automorphism
\begin{align*}
	\Psi(e_1) &=x\, e_1,&
	\Psi(e_2) &= e_2,&
	\Psi(e_3) &=e_3.
\end{align*}
For a suitable choice of the parameter $x\in\R^\ast$, applying $\Psi$  to the connection given in~~\eqref{g3,4,alpha,nabla1,sol15,delta=0,sol1} yields the following equivalent connection:
\begin{equation}\label{g3,4,alpha,nabla1,sol15,delta=0,sol1,1}
	\begin{aligned}
		\nabla_{e_1}e_2&=\delta\,e_3, &\nabla_{e_2}e_1&=\delta\,e_3, &\nabla_{e_3}e_1&=-e_1, &\nabla_{e_3}e_2&=-\alpha\,e_2, &\nabla_{e_3}e_3&=-(\alpha+1)e_3.
	\end{aligned}
\end{equation}
If $\delta=1$, then this connection coincides with the one associated with the flat Lie algebra $\h_{0,22}$. On the other hand, if $\delta=0$, it coincides with the one associated with the flat Lie algebra $\h_{0,10}$, where   $\lambda=-(\alpha+1)$.

The second solution is given by the following flat, torsion-free connection:
\begin{equation}\label{g3,4,alpha,nabla1,sol15,delta=0,sol2}
	\begin{aligned}
		\nabla_{e_1}e_3&=b_{23}e_1, \quad\nabla_{e_2}e_3=b_{23}e_2, &\nabla_{e_3}e_1&=(b_{23}-1)e_1, &\nabla_{e_3}e_2&=(b_{23}-\alpha)e_2, \\\nabla_{e_3}e_3&=c_{13}e_1+c_{23}e_2+c_{33}e_3.
	\end{aligned}
\end{equation}
If $b_{23}\neq\alpha$ and $b_{23}\neq1$.
Consider the following automorphism
\begin{align*}
	\Psi(e_1) &= e_1,&
	\Psi(e_2) &= e_2,&
	\Psi(e_3) &=\tfrac{c_{13}}{b_{23}-1}e_1+\tfrac{c_{23}}{b_{23}-\alpha}e_2+e_3.
\end{align*}
Applying $\Psi$  to the connection given in~~\eqref{g3,4,alpha,nabla1,sol15,delta=0,sol2} yields the following equivalent connection:
\begin{equation}\label{g3,4,alpha,nabla1,sol15,delta=0,sol2,1}
	\begin{aligned}
		\nabla_{e_1}e_3&=\lambda\,e_1, \quad\nabla_{e_2}e_3=\lambda\,e_2, &\nabla_{e_3}e_1&=(\lambda-1)e_1, &\nabla_{e_3}e_2&=(\lambda-\alpha)e_2,& \nabla_{e_3}e_3&=\lambda\,e_3.
	\end{aligned}
\end{equation}
In this case, $b_{23}=\lambda\neq\alpha,1$.

If $b_{23}=1$. Consider the following automorphism
\begin{align*}
	\Psi(e_1) &=x\, e_1,&
	\Psi(e_2) &= e_2,&
	\Psi(e_3) &=-\tfrac{c_{23}}{\alpha-1}e_2+e_3.
\end{align*}
For a suitable choice of the parameter $x\in\R^\ast$, applying $\Psi$  to the connection given in~~\eqref{g3,4,alpha,nabla1,sol15,delta=0,sol2,1} yields the following equivalent connection:
\begin{equation}\label{g3,4,alpha,nabla1,sol15,delta=0,sol2,2}
	\begin{aligned}
		\nabla_{e_1}e_3&=e_1, \quad\nabla_{e_2}e_3=e_2,  &\nabla_{e_3}e_2&=(1-\alpha)e_2,& \nabla_{e_3}e_3&=\delta_1\,e_1+e_3,\quad\delta_1=0,1.
	\end{aligned}
\end{equation}

We now consider the case where $b_{23}=\alpha$, and consider the following automorphism
\begin{align*}
	\Psi(e_1) &= e_1,&
	\Psi(e_2) &=x\, e_2,&
	\Psi(e_3) &=\tfrac{c_{13}}{\alpha-1}e_1+e_3.
\end{align*}
For a suitable choice of the parameter $x\in\R^\ast$, applying $\Psi$  to the connection given in~~\eqref{g3,4,alpha,nabla1,sol15,delta=0,sol2,1} yields the following equivalent connection:
\begin{equation}\label{g3,4,alpha,nabla1,sol15,delta=0,sol2,3}
	\begin{aligned}
		\nabla_{e_1}e_3&=\alpha\,e_1, \quad\nabla_{e_2}e_3=\alpha\,e_2,  &\nabla_{e_3}e_1&=(\alpha-1)e_2,& \nabla_{e_3}e_3&=\delta_2\,e_2+\alpha\,e_3,\quad\delta_2=0,1.
	\end{aligned}
\end{equation}

	Observe  that the flat, torsion-free connections given in \eqref{g3,4,alpha,nabla1,sol15,delta=0,sol2,1} and  \eqref{g3,4,alpha,nabla1,sol15,delta=0,sol2,2} are isomorphic if and only if $\delta_1=0$ and $\lambda=1$. For this reason, we set $\lambda\in\R \setminus{\{\alpha\}}$ and assume that $\delta_1=1$. Similarly, one can easily show that \eqref{g3,4,alpha,nabla1,sol15,delta=0,sol2,1} and  \eqref{g3,4,alpha,nabla1,sol15,delta=0,sol2,2} are isomorphic if and only if $\delta_2=0$ and $\lambda=\alpha$.  For this reason, we set $\lambda\in\R$ and assume that $\delta_2=1$.

		The connections given in \eqref{g3,4,alpha,nabla1,sol15,delta=0,sol2,1}, \eqref{g3,4,alpha,nabla1,sol15,delta=0,sol2,2}, and \eqref{g3,4,alpha,nabla1,sol15,delta=0,sol2,3} correspond to the flat, torsion-free connections associated with  the flat Lie algebras $\h_{0,19}$, $\h_{0,21}$ and $\h_{0,20}$, respectively. The additional conditions $\lambda\neq\pm(\alpha-1)$ in the flat Lie algebra $\h_{0,19}$ and $\alpha\neq\frac{1}{2}$ in the flat Lie algebra $\h_{0,20}$ are imposed to ensure that the corresponding connections are not isomorphic to the  algebras $\h_{0,6}$ and $\h_{0,9}$ for algebra $\h_{0,19}$ (in which case $\delta=0$), and to $\h_{0,6}$ for algebra $\h_{0,20}$ (in which case $\delta=0$), respectively, as will be shown later.

		The third solution is given by the following flat, torsion-free connection:
		\begin{equation}\label{g3,4,alpha,nabla1,sol15,delta=0,sol3}
			\begin{aligned}
				\nabla_{e_1}e_3&=(1-\alpha)e_1, \quad\nabla_{e_2}e_3=b_{13}e_1+(1-\alpha)e_2, &\nabla_{e_3}e_1&=-\alpha\,e_1, &\nabla_{e_3}e_2&=b_{13}e_1+(1-2\,\alpha)e_2,\\
				\nabla_{e_3}e_3&=c_{13}e_1+c_{23}e_2+(1-\alpha)e_3.
			\end{aligned}
		\end{equation}
		Suppose that $b_{13}\neq 0$ $\alpha\neq\frac{1}{2}$,  and consider the following automorphism:
		\begin{align*}
			\Psi(e_1) &= e_1,&
			\Psi(e_2) &=b_{13}e_2,&
			\Psi(e_3) &=\tfrac{c_{13}-2\,\alpha\,c_{1 3} - 2\,b_{1 3}c_{2 3}}{(2\,\alpha-1)\alpha}e_1-\tfrac{c_{23}b_{13}}{2\,\alpha-1}e_2+e_3.
		\end{align*}
	Applying $\Psi$  to the connection given in~~\eqref{g3,4,alpha,nabla1,sol15,delta=0,sol3} yields the following equivalent connection:
	\begin{equation}\label{g3,4,alpha,nabla1,sol15,delta=0,sol3,1}
		\begin{aligned}
			\nabla_{e_1}e_3&=(1-\alpha)e_1, &\nabla_{e_2}e_3&=e_1+(1-\alpha)e_2, &\nabla_{e_3}e_1&=-\alpha\,e_1,\\\nabla_{e_3}e_2&=e_1+(1-2\,\alpha)e_2,
			&\nabla_{e_3}e_3&=(1-\alpha)e_3\quad\alpha\neq\tfrac{1}{2}.
		\end{aligned}
	\end{equation}	
If $b_{13}\neq0$ and $\alpha=\frac{1}{2}$. Consider the following automorphism
\begin{align*}
	\Psi(e_1) &=x\, e_1,&
	\Psi(e_2) &=x\,b_{13}\, e_2,&
	\Psi(e_3) &=\tfrac{x\,c_{13}}{2}e_2+e_3.
\end{align*}
For a suitable choice of the parameter $x\in\R^\ast$, applying $\Psi$  to the connection given in~~\eqref{g3,4,alpha,nabla1,sol15,delta=0,sol3} yields the following equivalent connection:
	\begin{equation}\label{g3,4,alpha,nabla1,sol15,delta=0,sol3,2}
	\begin{aligned}
		\nabla_{e_1}e_3&=\tfrac{1}{2} e_1, &\nabla_{e_2}e_3&=e_1+\tfrac{1}{2}e_2, &\nabla_{e_3}e_1&=-\tfrac{1}{2}\,e_1,&\nabla_{e_3}e_2&=e_1,
		&\nabla_{e_3}e_3&=\delta\,e_2+\tfrac{1}{2}e_3,\quad\delta=0,1.
	\end{aligned}
\end{equation}	
 Suppose now that $b_{13}=0$, and consider the following automorphism
\begin{align*}
	\Psi(e_1) &= e_1,&
	\Psi(e_2) &= e_2,&
	\Psi(e_3) &=-\tfrac{c_{13}}{\alpha}e_1+x\,e_2+e_3.
\end{align*}
For a suitable choice of the parameter $x\in\R$, applying $\Psi$  to the connection given in~~\eqref{g3,4,alpha,nabla1,sol15,delta=0,sol3} yields the following equivalent connection:
	\begin{equation}\label{g3,4,alpha,nabla1,sol15,delta=0,sol3,3}
	\begin{aligned}
		\nabla_{e_1}e_3&=(1-\alpha)e_1, &\nabla_{e_2}e_3&=(1-\alpha)e_2, &\nabla_{e_3}e_1&=-\alpha\,e_1,\\\nabla_{e_3}e_2&=(1-2\,\alpha)e_2,
		&\nabla_{e_3}e_3&=\delta\,e_2+(1-\alpha)e_3,&\delta&=0,1.
	\end{aligned}
\end{equation}

Note  that the flat, torsion-free connections given in \eqref{g3,4,alpha,nabla1,sol15,delta=0,sol3,1} and  \eqref{g3,4,alpha,nabla1,sol15,delta=0,sol3,2} are isomorphic if and only if $\delta=0$ and $\alpha=\frac{1}{2}$. For this reason, we set $0<|\alpha|<1$ in the connection defined in \eqref{g3,4,alpha,nabla1,sol15,delta=0,sol3,1} and assume that  $\delta=1$ in \eqref{g3,4,alpha,nabla1,sol15,delta=0,sol3,2}.  The connections given in \eqref{g3,4,alpha,nabla1,sol15,delta=0,sol3,1}, \eqref{g3,4,alpha,nabla1,sol15,delta=0,sol3,2}, and \eqref{g3,4,alpha,nabla1,sol15,delta=0,sol3,3} correspond to the flat, torsion-free connections associated with  the flat Lie algebras $\h_{0,4}$, $\h_{0,5}$ and $\h_{0,6}$, respectively.

The fourth solution is given by the following flat, torsion-free connection:
\begin{equation}\label{g3,4,alpha,nabla1,sol15,delta=0,sol4}
	\begin{aligned}
		\nabla_{e_1}e_3&=a_{13}e_1, &\nabla_{e_3}e_2&=(a_{13}-1)e_1, &\nabla_{e_3}e_2&=-\alpha\,e_2, &\nabla_{e_3}e_3&=c_{13}e_1+c_{23}e_2+a_{13}e_3.
	\end{aligned}
\end{equation}
If $a_{13}\neq1$ and $a_{13}\neq-\alpha$. Consider the following automorphism:
\begin{align*}
	\Psi(e_1) &= e_1,&
	\Psi(e_2) &=e_2,&
	\Psi(e_3) &=\tfrac{c_{13}}{a_{13}-1}e_1-\tfrac{c_{23}}{\alpha+a_{13}}e_2+e_3.
\end{align*}
Applying $\Psi$  to the connection given in~~\eqref{g3,4,alpha,nabla1,sol15,delta=0,sol4} yields the following equivalent connection:
\begin{equation}\label{g3,4,alpha,nabla1,sol15,delta=0,sol4,1}
	\begin{aligned}
		\nabla_{e_1}e_3&=\lambda\,e_1, &\nabla_{e_3}e_2&=(\lambda-1)e_1, &\nabla_{e_3}e_2&=-\alpha\,e_2, &\nabla_{e_3}e_3&=\lambda\,e_3.
	\end{aligned}
\end{equation}
In this case, $a_{13}=\lambda\neq-\alpha,1$.

If $a_{13}=-\alpha$. Consider the following automorphism
\begin{align*}
	\Psi(e_1) &= e_1,&
	\Psi(e_2) &=x\, e_2,&
	\Psi(e_3) &=-\tfrac{c_{13}}{\alpha+1}e_1+e_3.
\end{align*}
For a suitable choice of the parameter $x\in\R^\ast$, applying $\Psi$  to the connection given in~~\eqref{g3,4,alpha,nabla1,sol15,delta=0,sol4} yields the following equivalent connection:
\begin{equation}\label{g3,4,alpha,nabla1,sol15,delta=0,sol4,2}
	\begin{aligned}
		\nabla_{e_1}e_3&=-\alpha\,e_1, &\nabla_{e_3}e_2&=-(\alpha+1)e_1, &\nabla_{e_3}e_2&=-\alpha\,e_2, &\nabla_{e_3}e_3&=\delta_1 e_2-\alpha\,e_3,\quad\delta_1=0,1.
	\end{aligned}
\end{equation}
Suppose now that $a_{13}=1$, and consider the following automorphism
\begin{align*}
	\Psi(e_1) &=x\, e_1,&
	\Psi(e_2) &=e_2,&
	\Psi(e_3) &=-\tfrac{c_{23}}{\alpha+1}e_2+e_3.
\end{align*}
For a suitable choice of the parameter $x\in\R^\ast$, applying $\Psi$  to the connection given in~~\eqref{g3,4,alpha,nabla1,sol15,delta=0,sol4} yields the following equivalent connection:
\begin{equation}\label{g3,4,alpha,nabla1,sol15,delta=0,sol4,3}
	\begin{aligned}
		\nabla_{e_1}e_3&=e_1, &\nabla_{e_3}e_2&=-\alpha\,e_2, &\nabla_{e_3}e_3&=\delta_2 e_1+e_3,\quad\delta_1=0,1.
	\end{aligned}
\end{equation}

	Observe  that the flat, torsion-free connections given in \eqref{g3,4,alpha,nabla1,sol15,delta=0,sol4,1} and  \eqref{g3,4,alpha,nabla1,sol15,delta=0,sol4,2} are isomorphic if and only if $\delta_1=0$ and $\lambda=-\alpha$. For this reason, we set $\lambda\in\R \setminus{\{1\}}$ and assume that $\delta_1=1$. Similarly, one can easily show that \eqref{g3,4,alpha,nabla1,sol15,delta=0,sol4,1} and  \eqref{g3,4,alpha,nabla1,sol15,delta=0,sol4,2} are isomorphic if and only if $\delta_2=0$ and $\lambda=1$.  For this reason, we set $\lambda\in\R$ and assume that $\delta_2=1$.

The connections given in \eqref{g3,4,alpha,nabla1,sol15,delta=0,sol4,1}, \eqref{g3,4,alpha,nabla1,sol15,delta=0,sol4,2}, and \eqref{g3,4,alpha,nabla1,sol15,delta=0,sol4,3} correspond to the flat, torsion-free connections associated with  the flat Lie algebras $\h_{0,16}$, $\h_{0,17}$ and $\h_{0,18}$, respectively. The additional condition $\lambda\in \R^\ast$ in the flat Lie algebra $\h_{0,16}$ is imposed to ensure that the corresponding connection is not isomorphic to the flat Lie algebra $\h_{0,10}$.

The fifth solution is given by the following flat, torsion-free connection:
\begin{equation}\label{g3,4,alpha,nabla1,sol15,delta=0,sol5}
	\begin{aligned}
		\nabla_{e_1}e_3&=(\alpha-1)e_1+a_{23}e_2, &\nabla_{e_2}e_3&=(\alpha-1)e_2, &\nabla_{e_3}e_1&=(\alpha-2)e_1+a_{23}e_2, &\nabla_{e_3}e_2&=-e_1,\\
		\nabla_{e_3}e_3&=c_{13}e_1+c_{23}e_2+(\alpha-1)e_3.
	\end{aligned}
\end{equation}

Consider the following automorphism
\begin{align*}
	\Psi(e_1) &=x\, e_1,&
	\Psi(e_2) &=e_2,&
	\Psi(e_3) &=\tfrac{x\,c_{13}}{\alpha-2}e_1+\tfrac{2\,c_{13}a_{23}+(2-\alpha)c_{23}}{\alpha-2}+e_3.
\end{align*}
For a suitable choice of the parameter $x\in\R^\ast$, applying $\Psi$  to the connection given in~~\eqref{g3,4,alpha,nabla1,sol15,delta=0,sol5} yields the following equivalent connection:
\begin{equation}\label{g3,4,alpha,nabla1,sol15,delta=0,sol5,1}
	\begin{aligned}
		\nabla_{e_1}e_3&=(\alpha-1)e_1+\delta\,e_2, &\nabla_{e_2}e_3&=(\alpha-1)e_2, &\nabla_{e_3}e_1&=(\alpha-2)e_1+\delta\,e_2, &\nabla_{e_3}e_2&=-e_1,\\
		\nabla_{e_3}e_3&=(\alpha-1)e_3.
	\end{aligned}
\end{equation}
The connection given in \eqref{g3,4,alpha,nabla1,sol15,delta=0,sol5,1} corresponds exactly to the one associated with the flat Lie algebra $\h_{0,9}$.

The sixth solution is given by the following flat, torsion-free connection:
\begin{equation}\label{g3,4,alpha,nabla1,sol15,delta=0,sol6}
	\begin{aligned}
		\nabla_{e_1}e_1&=a_{31}e_3, &\nabla_{e_3}e_1&=-e_1, &\nabla_{e_3}e_2&=-\alpha\,e_2, &\nabla_{e_3}e_3&=-2\,e_3.
	\end{aligned}
\end{equation}
Consider the following automorphism
\begin{align*}
	\Psi(e_1) &=x\, e_1,&
	\Psi(e_2) &=e_2,&
	\Psi(e_3) &=e_3.
\end{align*}
For a suitable choice of the parameter $x\in\R^\ast$, applying $\Psi$  to the connection given in~~\eqref{g3,4,alpha,nabla1,sol15,delta=0,sol6} yields the following equivalent connection:
\begin{equation}\label{g3,4,alpha,nabla1,sol15,delta=0,sol6,1}
	\begin{aligned}
		\nabla_{e_1}e_1&=\delta_\varepsilon\,e_3, &\nabla_{e_3}e_1&=-e_1, &\nabla_{e_3}e_2&=-\alpha\,e_2, &\nabla_{e_3}e_3&=-2\,e_3,\quad\delta_\varepsilon=0,\pm1.
	\end{aligned}
\end{equation}

If $\delta_\varepsilon=0$, then this connection coincides with the one associated with the flat Lie algebra$\h_{0,10}$, in which case $\lambda=-2$. Otherwise, if $\delta_\varepsilon=\pm1$, then the connection given in~\eqref{g3,4,alpha,nabla1,sol15,delta=0,sol6,1} coincides with the one associated with the flat Lie algebra $\h_{0,7}$.

The seventh solution is given by the following flat, torsion-free connection:
\begin{equation}\label{g3,4,alpha,nabla1,sol15,delta=0,sol7}
	\begin{aligned}
		\nabla_{e_1}e_3&=a_{23}e_2, &\nabla_{e_3}e_1&=-e_1+a_{23}e_2, &\nabla_{e_3}e_2&=-\alpha\,e_2, &\nabla_{e_3}e_3&=c_{13}e_2+c_{23}e_2+(1-\alpha)e_3.
	\end{aligned}
\end{equation}

Consider the following automorphism
\begin{align*}
	\Psi(e_1) &=x\, e_1,&
	\Psi(e_2) &=e_2,&
	\Psi(e_3) &=\tfrac{x\,c_{13}}{\alpha-2}e_1+\tfrac{(2-\alpha)c_{23}+2\,c_{13}a_{23}}{\alpha-2}+e_3.
\end{align*}
For a suitable choice of the parameter $x\in\R^\ast$, applying $\Psi$  to the connection given in~~\eqref{g3,4,alpha,nabla1,sol15,delta=0,sol7} yields the following equivalent connection:
\begin{equation}\label{g3,4,alpha,nabla1,sol15,delta=0,sol7,1}
	\begin{aligned}
		\nabla_{e_1}e_3&=\delta\,e_2, &\nabla_{e_3}e_1&=-e_1+\delta\,e_2, &\nabla_{e_3}e_2&=-\alpha\,e_2, &\nabla_{e_3}e_3&=(1-\alpha)e_3,\quad\delta=0,1.
	\end{aligned}
\end{equation}

The connection given in \eqref{g3,4,alpha,nabla1,sol15,delta=0,sol7,1} corresponds exactly to the one associated with the flat Lie algebra $\h_{0,8}$.

The eighth solution is given by the following flat, torsion-free connection:
\begin{equation}\label{g3,4,alpha,nabla1,sol15,delta=0,sol8}
	\begin{aligned}
		\nabla_{e_1}e_3&=-e_1, &\nabla_{e_3}e_2&=-\alpha\,e_2, &\nabla_{e_3}e_3&=c_{13}e_1+c_{23}e_2+c_{33}e_3.
	\end{aligned}
\end{equation}
Suppose that $c_{33}\neq-\alpha,-1$.  Consider the following automorphism
\begin{align*}
	\Psi(e_1) &=e_1,&
	\Psi(e_2) &=e_2,&
	\Psi(e_3) &=-\tfrac{c_{13}}{c_{33}+1}e_1-\tfrac{c_{23}}{\alpha+c_{33}}+e_3.
\end{align*}
Applying $\Psi$  to the connection given in~~\eqref{g3,4,alpha,nabla1,sol15,delta=0,sol8} yields the following equivalent connection: 
\begin{equation}\label{g3,4,alpha,nabla1,sol15,delta=0,sol8,1}
	\begin{aligned}
		\nabla_{e_1}e_3&=-e_1, &\nabla_{e_3}e_2&=-\alpha\,e_2, &\nabla_{e_3}e_3&=\lambda\,e_3.
	\end{aligned}
\end{equation}
In this case, $c_{33}=\lambda\neq-\alpha,-1$.

If $c_{33}=-\alpha$. Consider the following automorphism
\begin{align*}
	\Psi(e_1) &=e_1,&
	\Psi(e_2) &=x\,e_2,&
	\Psi(e_3) &=\tfrac{c_{13}}{\alpha-1}e_1+e_3.
\end{align*}
For a suitable choice of the parameter $x\in\R^\ast$, applying $\Psi$  to the connection given in~~\eqref{g3,4,alpha,nabla1,sol15,delta=0,sol8} yields the following equivalent connection:
\begin{equation}\label{g3,4,alpha,nabla1,sol15,delta=0,sol8,2}
	\begin{aligned}
		\nabla_{e_1}e_3&=-e_1, &\nabla_{e_3}e_2&=-\alpha\,e_2, &\nabla_{e_3}e_3&=\delta_1e_2-\alpha\,e_3,\quad\delta_1=0,1.
	\end{aligned}
\end{equation}

If $c_{33}=-1$. Consider the following automorphism
\begin{align*}
	\Psi(e_1) &=x\,e_1,&
	\Psi(e_2) &=e_2,&
	\Psi(e_3) &=\tfrac{c_{23}}{1-\alpha}e_2+e_3.
\end{align*}
For a suitable choice of the parameter $x\in\R^\ast$, applying $\Psi$  to the connection given in~~\eqref{g3,4,alpha,nabla1,sol15,delta=0,sol8} yields the following equivalent connection:
\begin{equation}\label{g3,4,alpha,nabla1,sol15,delta=0,sol8,3}
	\begin{aligned}
		\nabla_{e_1}e_3&=-e_1, &\nabla_{e_3}e_2&=-\alpha\,e_2, &\nabla_{e_3}e_3&=\delta_2e_1-e_3,\quad\delta_2=0,1.
	\end{aligned}
\end{equation}

	Observe  that the flat, torsion-free connections given in \eqref{g3,4,alpha,nabla1,sol15,delta=0,sol8,1} and  \eqref{g3,4,alpha,nabla1,sol15,delta=0,sol8,2} are isomorphic if and only if $\delta_1=0$ and $\lambda=-\alpha$. For this reason, we set $\lambda\in\R \setminus{\{-1\}}$ and assume that $\delta_1=1$. Similarly, one can easily show that \eqref{g3,4,alpha,nabla1,sol15,delta=0,sol8,1} and  \eqref{g3,4,alpha,nabla1,sol15,delta=0,sol8,3} are isomorphic if and only if $\delta_2=0$ and $\lambda=-1$.  For this reason, we set $\lambda\in\R$ and assume that $\delta_2=1$.

The connections given in \eqref{g3,4,alpha,nabla1,sol15,delta=0,sol8,1}, \eqref{g3,4,alpha,nabla1,sol15,delta=0,sol8,2}, and \eqref{g3,4,alpha,nabla1,sol15,delta=0,sol8,3} correspond to the flat, torsion-free connections associated with  the flat Lie algebras $\h_{0,10}$, $\h_{0,11}$ and $\h_{0,12}$, respectively. The additional conditions $\lambda\neq\pm(\alpha-1)$ in the flat Lie algebra $\h_{0,10}$ and $\alpha\neq\frac{1}{2}$ in the flat Lie algebra $\h_{0,11}$ are imposed to ensure that the corresponding connections are not isomorphic to the  algebras $\h_{0,3}$ and $\h_{0,8}$ for algebra $\h_{0,10}$ (in which case $\delta=0$), and to $\h_{0,8}$ for algebra $\h_{0,11}$ (in which case $\delta=0$), respectively.

The ninth solution is given by the following flat, torsion-free connection:
\begin{equation}\label{g3,4,alpha,nabla1,sol15,delta=0,sol9}
	\begin{aligned}
		\nabla_{e_2}e_3&=b_{13}e_1, &\nabla_{e_3}e_2&=b_{13}e_1-\alpha\,e_2, &\nabla_{e_3}e_3&=c_{13}e_1+c_{23}e_2+(\alpha-1)e_3.
	\end{aligned}
\end{equation}
Suppose that $b_{13}\neq0$ and $\alpha\neq\frac{1}{2}$. Consider the following automorphism
\begin{align*}
	\Psi(e_1) &=x\,e_1,&
	\Psi(e_2) &=e_2,&
	\Psi(e_3) &=\tfrac{c_{23}}{1-\alpha}e_2+e_3.
\end{align*}
Applying $\Psi$  to the connection given in~~\eqref{g3,4,alpha,nabla1,sol15,delta=0,sol9} yields the following equivalent connection:
\begin{equation}\label{g3,4,alpha,nabla1,sol15,delta=0,sol9,1}
	\begin{aligned}
		\nabla_{e_2}e_3&=e_1, &\nabla_{e_3}e_2&=e_1-\alpha\,e_2, &\nabla_{e_3}e_3&=(\alpha-1)e_3,\quad\alpha\neq\tfrac{1}{2}.
	\end{aligned}
\end{equation}
If $b_{13}\neq0$ and $\alpha=\frac{1}{2}$. Consider the following automorphism
\begin{align*}
	\Psi(e_1) &=x\,e_1,&
	\Psi(e_2) &=x\,b_{13}e_2,&
	\Psi(e_3) &=\tfrac{x\,c_{13}}{2}e_1+e_3.
\end{align*}
For a suitable choice of the parameter $x\in\R^\ast$, applying $\Psi$  to the connection given in~~\eqref{g3,4,alpha,nabla1,sol15,delta=0,sol9} yields the following equivalent connection:
\begin{equation}\label{g3,4,alpha,nabla1,sol15,delta=0,sol9,2}
	\begin{aligned}
		\nabla_{e_2}e_3&=e_1, &\nabla_{e_3}e_2&=e_1-\tfrac{1}{2}\,e_2, &\nabla_{e_3}e_3&=\delta_1 e_2-\tfrac{1}{2}\,e_3,\quad\delta_1=0,1.
	\end{aligned}
\end{equation}
If $b_{13}=0$.  Consider the following automorphism
\begin{align*}
	\Psi(e_1) &=e_1,&
	\Psi(e_2) &=x\,e_2,&
	\Psi(e_3) &=-\tfrac{c_{13}}{\alpha}e_1+e_3.
\end{align*}
For a suitable choice of the parameter $x\in\R^\ast$, applying $\Psi$  to the connection given in~~\eqref{g3,4,alpha,nabla1,sol15,delta=0,sol9} yields the following equivalent connection:
\begin{equation}\label{g3,4,alpha,nabla1,sol15,delta=0,sol9,3}
	\begin{aligned}
	\nabla_{e_3}e_1&=-e_1,&\nabla_{e_3}e_2&=-\alpha\,e_2, &\nabla_{e_3}e_3&=\delta_2 e_2-(\alpha-1)e_3,\quad\delta_2=0,1.
	\end{aligned}
\end{equation}

	Observe  that the flat, torsion-free connections given in \eqref{g3,4,alpha,nabla1,sol15,delta=0,sol9,1} and  \eqref{g3,4,alpha,nabla1,sol15,delta=0,sol9,2} are isomorphic if and only if $\delta_1=0$ and $\alpha=\frac{1}{2}$. For this reason, we set $0<|\alpha|<1$ in \eqref{g3,4,alpha,nabla1,sol15,delta=0,sol9,1}  and assume that $\delta_1=1$ in \eqref{g3,4,alpha,nabla1,sol15,delta=0,sol9,2}. The connections given in \eqref{g3,4,alpha,nabla1,sol15,delta=0,sol9,1}, \eqref{g3,4,alpha,nabla1,sol15,delta=0,sol9,2}, and \eqref{g3,4,alpha,nabla1,sol15,delta=0,sol9,3} correspond to the flat, torsion-free connections associated with  the flat Lie algebras $\h_{0,1}$, $\h_{0,2}$ and $\h_{0,3}$, respectively.

The tenth solution is given by the following flat, torsion-free connection:
\begin{equation}\label{g3,4,alpha,nabla1,sol15,delta=0,sol10}
	\begin{aligned}
		\nabla_{e_2}e_3&=c_{33}e_2, &\nabla_{e_3}e_1&=-e_1, &\nabla_{e_3}e_2&=(c_{33}-\alpha)e_2, &\nabla_{e_3}e_3&=c_{13}e_1+c_{23}e_2+c_{33}e_3.
	\end{aligned}
\end{equation}
Suppose that $c_{33}\neq \alpha,-1$,  and consider the following automorphism
\begin{align*}
	\Psi(e_1) &=e_1,&
	\Psi(e_2) &=e_2,&
	\Psi(e_3) &=-\tfrac{c_{13}}{c_{33}+1}e_1-\tfrac{c_{23}}{\alpha-c_{33}}e_2+e_3.
\end{align*}
Applying $\Psi$  to the connection given in~~\eqref{g3,4,alpha,nabla1,sol15,delta=0,sol10} yields the following equivalent connection:
\begin{equation}\label{g3,4,alpha,nabla1,sol15,delta=0,sol10,1}
	\begin{aligned}
		\nabla_{e_2}e_3&=\lambda\,e_2, &\nabla_{e_3}e_1&=-e_1, &\nabla_{e_3}e_2&=(\lambda-\alpha)e_2, &\nabla_{e_3}e_3&=\lambda\,e_3.
	\end{aligned}
\end{equation}
In this case, $c_{33}=\lambda\neq\alpha,-1$.

Suppose now that  $c_{33}=\alpha$. Consider the following automorphism
\begin{align*}
	\Psi(e_1) &=e_1,&
	\Psi(e_2) &=x\,e_2,&
	\Psi(e_3) &=-\tfrac{c_{13}}{\alpha+1}e_1+e_3.
\end{align*}
For a suitable choice of the parameter $x\in\R^\ast$, applying $\Psi$  to the connection given in~~\eqref{g3,4,alpha,nabla1,sol15,delta=0,sol10} yields the following equivalent connection:
\begin{equation}\label{g3,4,alpha,nabla1,sol15,delta=0,sol10,2}
	\begin{aligned}
		\nabla_{e_2}e_3&=\alpha\,e_2, &\nabla_{e_3}e_1&=-e_1, &\nabla_{e_3}e_3&=\delta_1e_2+\alpha\,e_3,\quad\delta_1=0,1.
	\end{aligned}
\end{equation}

If $c_{33}=-1$.  Consider the following automorphism
\begin{align*}
	\Psi(e_1) &=x\,e_1,&
	\Psi(e_2) &=e_2,&
	\Psi(e_3) &=-\tfrac{c_{23}}{\alpha+1}e_2+e_3.
\end{align*}
For a suitable choice of the parameter $x\in\R^\ast$, applying $\Psi$  to the connection given in~~\eqref{g3,4,alpha,nabla1,sol15,delta=0,sol10} yields the following equivalent connection:
\begin{equation}\label{g3,4,alpha,nabla1,sol15,delta=0,sol10,3}
	\begin{aligned}
		\nabla_{e_2}e_3&=-e_2, &\nabla_{e_3}e_1&=-e_1,
		&\nabla_{e_3}e_2&=-(\alpha+1)e_2, &\nabla_{e_3}e_3&=\delta_2e_1-e_3,\quad\delta_1=0,1.
	\end{aligned}
\end{equation}

Note  that the flat, torsion-free connections given in \eqref{g3,4,alpha,nabla1,sol15,delta=0,sol10,1} and  \eqref{g3,4,alpha,nabla1,sol15,delta=0,sol10,2} are isomorphic if and only if $\delta_1=0$ and $\lambda=\alpha$. For this reason, we set $\lambda\in\R \setminus{\{-1\}}$ and assume that $\delta_1=1$. Similarly, one can easily show that \eqref{g3,4,alpha,nabla1,sol15,delta=0,sol10,1} and  \eqref{g3,4,alpha,nabla1,sol15,delta=0,sol10,3} are isomorphic if and only if $\delta_2=0$ and $\lambda=-1$.  For this reason, we set $\lambda\in\R$ and assume that $\delta_2=1$.

The connections given in \eqref{g3,4,alpha,nabla1,sol15,delta=0,sol10,1}, \eqref{g3,4,alpha,nabla1,sol15,delta=0,sol10,2}, and \eqref{g3,4,alpha,nabla1,sol15,delta=0,sol10,3} correspond to the flat, torsion-free connections associated with  the flat Lie algebras $\h_{0,13}$, $\h_{0,14}$ and $\h_{0,15}$, respectively.   The additional condition $\lambda\neq0$ in the flat Lie algebra $\h_{0,13}$ is imposed to ensure that the corresponding connection is not isomorphic to the flat Lie algebra $\h_{0,10}$ (in which case $\lambda=0$).

The eleventh solution is given by the following flat, torsion-free connection:
\begin{equation}\label{g3,4,alpha,nabla1,sol15,delta=0,sol11}
	\begin{aligned}
		\nabla_{e_2}e_2&=b_{32}e_3, &\nabla_{e_3}e_1&=-e_1, &\nabla_{e_3}e_2&=-\alpha\,e_2, &\nabla_{e_3}e_3&=-2\,\alpha\,e_3.
	\end{aligned}
\end{equation}
Consider the following automorphism
\begin{align*}
	\Psi(e_1) &=e_1,&
	\Psi(e_2) &=x\,e_2,&
	\Psi(e_3) &=e_3.
\end{align*}
For a suitable choice of the parameter $x\in\R^\ast$, applying $\Psi$  to the connection given in~~\eqref{g3,4,alpha,nabla1,sol15,delta=0,sol11} yields the following equivalent connection:
\begin{equation}\label{g3,4,alpha,nabla1,sol15,delta=0,sol11}
	\begin{aligned}
		\nabla_{e_2}e_2&=\delta_\varepsilon\, e_3, &\nabla_{e_3}e_1&=-e_1, &\nabla_{e_3}e_2&=-\alpha\,e_2, &\nabla_{e_3}e_3&=-2\,\alpha\,e_3,\quad\delta_\varepsilon=0,\pm1.
	\end{aligned}
\end{equation}

Observe that if $\delta_\varepsilon=0$, then this connection coincides with the one associated with the flat Lie algebra $\h_{0,10}$, in which case $\lambda=-2\alpha$. Otherwise, if $\delta_\varepsilon=\pm1$, then this connection coincides exactly with the one associated with the affine algebra $\h_{0,23}$.

We now complete the classification of flat, torsion-free connections on the flat Lie algebra $\G_{3,4}^{\alpha\neq-1}$. Under the corresponding assumptions on their parameters, all flat, torsion-free connections presented in Table~\ref{g3,4,alpha} are pairwise non-isomorphic.

Let $\alpha=-1$. In the basis $\lbrace e_1, e_2, e_3 \rbrace$, the operators $\nabla_{e_1}$, $\nabla_{e_2}$ and $\nabla_{e_3}$ are given respectively by: 
\begin{equation}
	\nabla_{e_1}=\left( \begin {array}{ccc} a_{11}&a_{12}&a_{13}\\ \noalign{\medskip}
	a_{21}&a_{22}&a_{23}\\ \noalign{\medskip}a_{31}&a_{32}&a_{33}\end {array} \right),\quad
	\nabla_{e_2}=\left( \begin {array}{ccc} a_{12}&b_{12}&b_{13}\\ \noalign{\medskip}
	a_{22}&b_{22}&b_{23}\\ \noalign{\medskip}a_{32}&b_{32}&b_{33}\end {array} \right),\quad
	\nabla_{e_3}=\left( \begin {array}{ccc} a_{13}-1 &b_{13}&c_{13}\\ \noalign{\medskip}
	a_{23}&b_{23}+1&c_{23}\\ \noalign{\medskip}a_{33}&b_{33}&c_{33}\end {array} \right),\quad
\end{equation}
where $a_{ij}$, $b_{ij}$, $c_{ij}\in \mathbb{R}$.

Assume that $\nabla^0$ is a  torsion-free connection. Then $\nabla^0$ is equivalent to one of the connections listed in Lemma~\ref{Lemg3j} under the Lie algebra $\G_{3,4}$.  We first consider the case $\nabla^0=\nabla^1$. A straightforward calculation produces a unique real solution for the flatness-equations. Following is a flat, torsion-free connection that provides this solution:
\begin{equation}\label{g3,4,alpha=-1,nabla1,sol1}
	\begin{aligned}
		\nabla_{e_1}e_2&=e_1+e_2+a_{32}e_3, &\nabla_{e_1}e_3&=-\tfrac{1}{a_{32}}e_1-\tfrac{1}{a_{32}}e_2-e_3, &\nabla_{e_2}e_1&=e_1+e_2+a_{32}e_3,\\
		\nabla_{e_2}e_3&=-\tfrac{1}{a_{32}}e_1-\tfrac{1}{a_{32}}e_2-e_3, &\nabla_{e_3}e_1&=-\tfrac{a_{32}+1}{a_{32}}e_1-\tfrac{1}{a_{32}}e_2-e_3, &\nabla_{e_3}e_2&=-\tfrac{1}{a_{32}}e_1+\tfrac{a_{32}-1}{a_{32}}e_2-e_3,\\
		\nabla_{e_3}e_3&=\tfrac{a_{32}+2}{a_{32}^2}e_1+\tfrac{2-a_{32}}{a_{32}^2}e_3+\tfrac{2}{a_{32}}e_3.
	\end{aligned}
\end{equation}
Consider the following automorphism
\begin{align*}
	\Psi(e_1) &=a_{32}\,e_1,&
	\Psi(e_2) &=e_2,&
	\Psi(e_3) &=-e_1+\tfrac{1}{a_{32}}e_2+e_3.
\end{align*}
Applying $\Psi$  to the connection given in~~\eqref{g3,4,alpha=-1,nabla1,sol1} yields the following equivalent connection:
\begin{equation}\label{g3,4,alpha=-1,nabla1,sol1,1}
	\begin{aligned}
		\nabla_{e_1}e_2&=e_3, &\nabla_{e_2}e_1&=e_3, &\nabla_{e_3}e_1&=-e_1, &\nabla_{e_3}e_2&=e_2.
	\end{aligned}
\end{equation}
This flat, torsion-free connection coincides exactly with the one associated with the flat  Lie algebra $\h_{0,11}$.

If $\nabla^0=\nabla^2$,  $\nabla^0=\nabla^3$, or $\nabla^0=\nabla^4$, then a straightforward computation shows that the flatness equations admit no solutions for any value of  the parameters defining the torsion-free connections on $\nabla^2$, $\nabla^3$ and $\nabla^4$.

If $\nabla^0=\nabla^5$. A straightforward calculation produces a unique real solution for the flatness-equations. Following is a flat, torsion-free connection that provides this solution:
\begin{equation}\label{g3,4,alpha=-1,nabla5,sol1}
	\begin{aligned}
		\nabla_{e_1}e_2&=e_1+\tfrac{1}{c_{13}}e_3, &\nabla_{e_2}e_1&=e_1+\tfrac{1}{c_{13}}e_3, &\nabla_{e_2}e_3&=-c_{13}e_1-e_3, &\nabla_{e_3}e_1&=-e_1, \\\nabla_{e_3}e_2&=-c_{13}e_1+e_2-e_3, &\nabla_{e_3}e_3&=c_{13}e_1.
	\end{aligned}
\end{equation}
Consider the following automorphism
\begin{align*}
	\Psi(e_1) &=e_1,&
	\Psi(e_2) &=\tfrac{1}{c_{13}} e_2,&
	\Psi(e_3) &=-c_{13}e_1+e_3.
\end{align*}
Applying $\Psi$  to the connection given in~~\eqref{g3,4,alpha=-1,nabla5,sol1} yields the following equivalent connection:
\begin{equation}\label{g3,4,alpha=-1,nabla1,sol5,1}
	\begin{aligned}
		\nabla_{e_1}e_2&=e_3, &\nabla_{e_2}e_1&=e_3, &\nabla_{e_3}e_1&=-e_1, &\nabla_{e_3}e_2&=e_2.
	\end{aligned}
\end{equation}
This flat, torsion-free connection coincides exactly with the one associated with the flat  Lie algebra $\h_{0,11}$.

If $\nabla^0=\nabla^6$,  or  $\nabla^0=\nabla^7$, then a straightforward computation shows that the flatness equations admit no solutions for any value of  the parameters defining the torsion-free connections on  $\nabla^6$ and $\nabla^7$.

If $\nabla^0=\nabla^8$. A straightforward calculation produces a unique real solution for the flatness-equations. Following is a flat, torsion-free connection that provides this solution:
\begin{equation}\label{g3,4,alpha=-1,nabla8,sol1}
	\begin{aligned}
		\nabla_{e_1}e_2&=e_2+a_{32}e_3, &\nabla_{e_1}e_3&=-\tfrac{1}{a_{32}}e_2-e_3, &\nabla_{e_2}e_1&=e_2+a_{32}e_3, &\nabla_{e_3}e_1&=-e_1-\tfrac{1}{a_{32}}e_2-e_3,\\
		\nabla_{e_3}e_2&=e_2, &\nabla_{e_3}e_3&=-\tfrac{1}{a_{32}}e_2.
	\end{aligned}
\end{equation}
Consider the following automorphism
\begin{align*}
	\Psi(e_1) &=a_{32}e_1,&
	\Psi(e_2) &= e_2,&
	\Psi(e_3) &=-\tfrac{1}{a_{32}}e_2+e_3.
\end{align*}
Applying $\Psi$ to the connection given in~\eqref{g3,4,alpha=-1,nabla8,sol1} yields exactly the connection associated with the flat Lie algebra $\h_{0,11}$.

If $\nabla^0=\nabla^9$. A straightforward calculation produces a unique real solution for the flatness-equations. Following is a flat, torsion-free connection that provides this solution:
\begin{equation}\label{g3,4,alpha=-1,nabla9,sol1}
	\begin{aligned}
		\nabla_{e_1}e_1&=e_1+e_2+a_{31}e_3, &\nabla_{e_1}e_3&=-\tfrac{1}{a_{31}}e_1-\tfrac{1}{a_{31}}e_2-e_3, &\nabla_{e_3}e_1&=-\tfrac{1+a_{31}}{a_{31}}e_1-\tfrac{1}{a_{31}}e_2-e_3,\\
		\nabla_{e_3}e_2&=e_2, &\nabla_{e_3}e_3&=\tfrac{1-a_{31}}{a_{31}^2}e_1+\tfrac{1-3\,a_{31}}{a_{31}^2}e_2+\tfrac{1-2\,a_{31}}{a_{31}} e_3.
	\end{aligned}
\end{equation}
Consider the following automorphism
\begin{align*}
	\Psi(e_1) &=\tfrac{\sqrt{\varepsilon\,a_{31}}}{\varepsilon} e_1,&
	\Psi(e_2) &= e_2,&
	\Psi(e_3) &=-\tfrac{\sqrt{\varepsilon\,a_{31}}}{\varepsilon\,a_{31}}e_1-\tfrac{1}{a_{31}}e_2+e_3,\quad\varepsilon=\pm1.
\end{align*}
Applying $\Psi$ to the connection given in~\eqref{g3,4,alpha=-1,nabla9,sol1} yields exactly the connection associated with the flat Lie algebra $\h_{0,3}$.
\begin{equation}\label{g3,4,alpha=-1,nabla9,sol1,1}
	\begin{aligned}
		\nabla_{e_1}e_1&=\varepsilon\, e_3,  &\nabla_{e_3}e_1&=-e_1,
		&\nabla_{e_3}e_2&=e_2, &\nabla_{e_3}e_3&=-2\,e_3.
	\end{aligned}
\end{equation}
This flat, torsion-free connection coincides exactly with the one associated with the flat  Lie algebra $\h_{0,3}$.

If $\nabla^0=\nabla^{10}$,  or  $\nabla^0=\nabla^{11}$, then a straightforward computation shows that the flatness equations admit no solutions for any value of  the parameters defining the torsion-free connections on  $\nabla^{10}$ and $\nabla^{11}$.

If $\nabla^0=\nabla^{12}$.  A straightforward calculation shows that the flatness-equations admit a real solution if and only if $\delta_\varepsilon=0$. Moreover, the flatness-equations admit three solutions; the first is given by the following flat, torsion-free connection:
\begin{equation}\label{g3,4,alpha=-1,nabla12,sol1}
	\begin{aligned}
		\nabla_{e_1}e_1&=e_2, \quad\nabla_{e_1}e_3=3\,e_1+a_{23}e_2, &\nabla_{e_3}e_1&=2\,e_1+a_{23}e_2, &\nabla_{e_3}e_2&=e_2, \\\nabla_{e_3}e_3&=2\,a_{23}e_1+c_{23}e_2+3\,e_3.
	\end{aligned}
\end{equation}
Consider the following automorphism
\begin{align*}
	\Psi(e_1) &=e_2,&
	\Psi(e_2) &=e_1,&
	\Psi(e_3) &=\tfrac{a_{23}^2-c_{23}}{2} e_1+a_{23}e_2+e_3.
\end{align*}
Applying $\Psi$  to the connection given in~~\eqref{g3,4,alpha=-1,nabla12,sol1} yields the following equivalent connection:
\begin{equation}\label{g3,4,alpha=-1,nabla12,sol1,1}
	\begin{aligned}
		\nabla_{e_2}e_2&=e_1, &\nabla_{e_2}e_3&=-3\,e_2, &\nabla_{e_3}e_1&=-e_1, &\nabla_{e_3}e_2&=-2\,e_2, &\nabla_{e_3}e_3&=-3\,e_3.
	\end{aligned}
\end{equation}
This flat, torsion-free connection coincides exactly with the one associated with the flat  Lie algebra $\h_{2,2}$.

The second is given by the following flat, torsion-free connection:
\begin{equation}\label{g3,4,alpha=-1,nabla12,sol2}
	\begin{aligned}
		\nabla_{e_1}e_1&=e_2, \quad\nabla_{e_1}e_3=a_{23}e_2, &\nabla_{e_2}e_3&=-3\,e_2, &\nabla_{e_3}e_1&=-e_1+a_{23}e_2, &\nabla_{e_3}e_2&=-2\,e_2,\\
		\nabla_{e_3}e_3&=2\,a_{23}e_1+c_{23}e_2-3\,e_3.
	\end{aligned}
\end{equation}
Consider the following automorphism
\begin{align*}
	\Psi(e_1) &=e_2,&
	\Psi(e_2) &=e_1,&
	\Psi(e_3) &=\tfrac{a_{23}^2-c_{23}}{2} e_1+a_{23}e_2-e_3.
\end{align*}
Applying $\Psi$  to the connection given in~~\eqref{g3,4,alpha=-1,nabla12,sol2} yields the following equivalent connection:
\begin{equation}\label{g3,4,alpha=-1,nabla12,sol2,2}
	\begin{aligned}
		\nabla_{e_1}e_3&=3\,e_1, &\nabla_{e_2}e_2&=e_1, &\nabla_{e_3}e_1&=2\,e_1, &\nabla_{e_3}e_2&=e_2, &\nabla_{e_3}e_3&=3\,e_3.
	\end{aligned}
\end{equation}
This flat, torsion-free connection coincides exactly with the one associated with the flat  Lie algebra $\h_{2,1}$.

	The third solution is given by the following flat, torsion-free connection:
	\begin{equation}\label{g3,4,alpha=-1,nabla12,sol3}
		\begin{aligned}
			\nabla_{e_1}e_1&=e_2-\tfrac{3}{c_{23}}e_3, &\nabla_{e_3}e_1&=-e_1, &\nabla_{e_3}e_2&=e_2, &\nabla_{e_3}e_3&=c_{23}e_2-2\,e_3.
		\end{aligned}
	\end{equation}
	Consider the following automorphism
	\begin{align*}
		\Psi(e_1) &=\tfrac{\sqrt{3}}{\sqrt{-\varepsilon\,c_{23}}} e_1,&
		\Psi(e_2) &= e_2,&
		\Psi(e_3) &=\tfrac{c_{23}}{3}e_2+e_3,\quad\varepsilon=\pm1.
	\end{align*}
	Applying $\Psi$ to the connection given in~\eqref{g3,4,alpha=-1,nabla12,sol3} yields exactly the connection associated with the flat Lie algebra $\h_{0,3}$.

	If $\nabla^0=\nabla^{13}$. A straightforward calculation produces a unique real solution for the flatness-equations. Following is a flat, torsion-free connection that provides this solution:
	\begin{equation}\label{g3,4,alpha=-1,nabla13,sol1}
		\begin{aligned}
			\nabla_{e_2}e_2&=e_1+e_2-\tfrac{1}{b_{23}}e_3, \quad\nabla_{e_2}e_3=b_{23}e_1+b_{23}e_2-e_3, \quad\nabla_{e_3}e_1=-e_1, \\\nabla_{e_3}e_2&=b_{23}e_1+(b_{23}+1)e_2-e_3,\quad\nabla_{e_3}e_3=b_{23}(b_{23}-3)e_1+(b_{23}^2-b_{23})e_2+(2-b_{23})e_3.
		\end{aligned}
	\end{equation}
	Consider the following automorphism
	\begin{align*}
		\Psi(e_1) &=e_2,&
		\Psi(e_2) &=x\,e_1,&
		\Psi(e_3) &=x\,b_{23}e_1+b_{23}e_2-e_3.
	\end{align*}
	For a suitable choice of the parameter $x\in\R^\ast$, applying $\Psi$  to the connection given in~~\eqref{g3,4,alpha=-1,nabla13,sol1} yields the following equivalent connection:
		\begin{equation}\label{g3,4,alpha=-1,nabla13,sol1,1}
		\begin{aligned}
			\nabla_{e_2}e_2&=\delta_\varepsilon\,e_3,& &\nabla_{e_3}e_1=-e_1, &\nabla_{e_3}e_2&=e_2,&\nabla_{e_3}e_3&=-2\,e_3,\quad\delta_\varepsilon=0,\pm1.
		\end{aligned}
	\end{equation}
	Observe that if $\delta_\varepsilon=0$, then this connection coincides with the one associated with the flat Lie algebra $\h_{0,1}$, in which case $\delta=0$. Otherwise, if $\delta_\varepsilon=\varepsilon=\pm1$, then this connection coincides with the one associated with the flat Lie algebra  $\h_{0,3}$.

	If $\nabla^0=\nabla^{14}$. Then, 
	\[\nabla^0_{e_1}=\begin{pmatrix}
		0&0\\
		0&0
	\end{pmatrix},\quad\nabla^0_{e_2}=\begin{pmatrix}
	0&1\\
	0&0
	\end{pmatrix}.\]
	Applying the automorphism \[\Phi:=\begin{pmatrix}
		0&y\\
		y^2&0
	\end{pmatrix}\] to the previous torsion-free connection yields the following equivalent connection:
		\[\nabla^0_{e_1}=\begin{pmatrix}
		0&0\\
		1&0
	\end{pmatrix},\quad\nabla^0_{e_2}=\begin{pmatrix}
		0&0\\
		0&0
	\end{pmatrix}.\]
		This connection coincides with the torsion-free connection $\nabla^{12}$, in which case $\delta_\varepsilon=0$. It is therefore unnecessary to analyze this case further as it has already been treated.

		If $\nabla^0=\nabla^{15}$ and $\delta=1$. A straightforward calculation produces a unique real solution for the flatness-equations. Following is a flat, torsion-free connection that provides this solution:
		\begin{equation}\label{g3,4,alpha=-1,nabla15,sol1}
		\begin{aligned}
			\nabla_{e_2}e_2&=e_2+b_{32}e_3, &\nabla_{e_2}e_3&=-\tfrac{1}{b_{32}}e_2-e_3, &\nabla{e_3}e_1&=-e_1, &\nabla_{e_3}e_2&=\tfrac{b_{32}-1}{b_{32}}e_2-e_3,\\
			\nabla_{e_3}e_3&=\tfrac{b_{32}+1}{b_{32}^2}e_2+\tfrac{1+2\,b_{32}}{b_{32}}e_3.
					\end{aligned}
	\end{equation}

	Consider the following automorphism
	\begin{align*}
		\Psi(e_1) &=e_2,&
		\Psi(e_2) &=-\tfrac{b_{32}}{\sqrt{-\varepsilon\,b_{32}}} e_1,&
		\Psi(e_3) &=\tfrac{1}{\sqrt{-\varepsilon\,b_{32}}} e_1-e_3,\quad\varepsilon=\pm1.
	\end{align*}
	Applying $\Psi$ to the connection given in~\eqref{g3,4,alpha=-1,nabla12,sol3} yields exactly the connection associated with the flat Lie algebra $\h_{0,3}$.

		If $\nabla^0=\nabla^{15}$ and $\delta=0$, then a straightforward calculation shows that the flatness equations admit eleven real solutions. The first one is given by the following flat, torsion-free connection:
	\begin{equation}\label{g3,4,alpha=-1,nabla15,delta=0,sol1}
		\begin{aligned}
			\nabla_{e_1}e_2&=a_{32}e_3, &\nabla_{e_2}e_1&=a_{32}e_3, &\nabla_{e_3}e_1&=-e_1, &\nabla_{e_3}e_2&=e_2.
		\end{aligned}
	\end{equation}
		Consider the following automorphism
		\begin{align*}
			\Psi(e_1) &=x\,e_2,&
			\Psi(e_2) &=-e_1,&
			\Psi(e_3) &=-e_3.
		\end{align*}
		For a suitable choice of the parameter $x\in\R^\ast$, applying $\Psi$  to the connection given in~~\eqref{g3,4,alpha=-1,nabla15,delta=0,sol1} yields the following equivalent connection:
		\begin{equation}\label{g3,4,alpha=-1,nabla15,delta=0,sol1,1}
			\begin{aligned}
				\nabla_{e_1}e_2&=\delta\,e_3, &\nabla_{e_2}e_1&=\delta\,e_3, &\nabla_{e_3}e_1&=-e_1, &\nabla_{e_3}e_2&=e_2,\quad\delta=0,1.
			\end{aligned}
		\end{equation}		
		If $\delta=1$, then this connection coincides with the one associated with the  flat Lie algebra $\h_{0,11}$. Otherwise, if $\delta=0$, one can easily show that this connection coincides with the one associated with the flat Lie algebra $\h_{0,4}$, in which case $\lambda=0$.

The second solution is given by the following flat, torsion-free connection:
		\begin{equation}\label{g3,4,alpha=-1,nabla15,delta=0,sol2}
			\begin{aligned}
				\nabla_{e_1}e_3&=a_{13}e_1, &\nabla_{e_3}e_1&=(a_{13}-1)e_1, &\nabla_{e_3}e_2&=e_2, &\nabla_{e_3}e_3&=c_{13}e_1+c_{23}e_2+a_{13}e_3.
			\end{aligned}
		\end{equation}
Suppose that $a_{13}\neq1$, and consider the following automorphism		
		\begin{align*}
			\Psi(e_1) &=e_2,&
			\Psi(e_2) &=e_1,&
			\Psi(e_3) &=\tfrac{c_{23}}{1-a_{13}}e_1+\tfrac{c_{13}}{a_{13}-1}e_1-e_3.
		\end{align*}
Applying $\Psi$  to the connection given in~~\eqref{g3,4,alpha=-1,nabla15,delta=0,sol2} yields the following equivalent connection:		
		\begin{equation}\label{g3,4,alpha=-1,nabla15,delta=0,sol2,1}
			\begin{aligned}
				\nabla_{e_2}e_3&=\lambda\,e_2, &\nabla_{e_3}e_1&=-e_1, &\nabla_{e_3}e_2&=(\lambda+1)e_2, &\nabla_{e_3}e_3&=\lambda\,e_3.
			\end{aligned}
		\end{equation}
In this case, $\lambda=-a_{13}\neq-1$. Suppose now that $a_{13}=1$, and consider the following automorphism		
		\begin{align*}
			\Psi(e_1) &=z\,e_2,&
			\Psi(e_2) &=y\,e_1,&
			\Psi(e_3) &=-e_3.
		\end{align*}
		
For a suitable choice of the parameters $y,z\in\R^\ast$, applying $\Psi$  to the connection given in~~\eqref{g3,4,alpha=-1,nabla15,delta=0,sol2} yields the following equivalent connection:		
		\begin{equation}\label{g3,4,alpha=-1,nabla15,delta=0,sol2,2}
			\begin{aligned}
				\nabla_{e_2}e_3&=-e_2, &\nabla_{e_3}e_1&=-e_1,  &\nabla_{e_3}e_3&=\delta_1\,e_1+\delta_2\,e_2-e_3,\quad\delta_1,\delta_2=0,1.
			\end{aligned}
		\end{equation}
		Note  that the flat, torsion-free connections given in \eqref{g3,4,alpha=-1,nabla15,delta=0,sol2,1} and  \eqref{g3,4,alpha=-1,nabla15,delta=0,sol2,2} are isomorphic if and only if $\delta_1=\delta_2=0$ and $\lambda=-1$. For this reason, we set $\lambda\in\R$ and assume that $\delta_1^2+\delta_2^2\neq0$. The connections given in \eqref{g3,4,alpha=-1,nabla15,delta=0,sol2,1},  and \eqref{g3,4,alpha=-1,nabla15,delta=0,sol2,2} correspond to the flat, torsion-free connections associated with  the flat Lie algebras $\h_{0,7}$, $\h_{0,8}$, respectively.   The additional condition $\lambda\neq0$ in the flat Lie algebra $\h_{0,7}$ is imposed to ensure that the corresponding connection is not isomorphic to the flat Lie algebra $\h_{0,4}$ (in which case $\lambda=0$).

		The third solution is given by the following flat, torsion-free connection:
		\begin{equation}\label{g3,4,alpha=-1,nabla15,delta=0,sol3}
			\begin{aligned}
				\nabla_{e_1}e_3&=b_{23}e_1, \quad\nabla_{e_2}e_3=b_{23}e_2, &\nabla_{e_3}e_1&=(b_{23}-1)e_1, &\nabla_{e_3}e_2&=(b_{23}+1)e_2, \\\nabla_{e_3}e_3&=c_{13}e_1+c_{23}e_2+b_{23}e_3.
			\end{aligned}
		\end{equation}
Suppose that $b_{23}\neq\pm1$, and consider the following automorphism		
\begin{align*}
	\Psi(e_1) &=e_2,&
	\Psi(e_2) &=e_1,&
	\Psi(e_3) &=\tfrac{c_{23}}{1+b_{23}}e_1+\tfrac{c_{13}}{b_{23}-1}e_1-e_3.
\end{align*}		
		Applying $\Psi$  to the connection given in~~\eqref{g3,4,alpha=-1,nabla15,delta=0,sol3} yields the following equivalent connection:		
	\begin{equation}\label{g3,4,alpha=-1,nabla15,delta=0,sol3,1}
	\begin{aligned}
		\nabla_{e_1}e_3&=\lambda\,e_1, \quad\nabla_{e_2}e_3=\lambda\,e_2, &\nabla_{e_3}e_1&=(\lambda-1)e_1, &\nabla_{e_3}e_2&=(\lambda+1)e_2, &\nabla_{e_3}e_3&=\lambda\,e_3.
	\end{aligned}
\end{equation}		
In this case, $\lambda=-b_{23}\neq\pm1$.

If $b_{23}=1$. Consider the following automorphism		
	\begin{align*}
		\Psi(e_1) &=x\,e_2,&
		\Psi(e_2) &=e_1,&
		\Psi(e_3) &=\tfrac{c_{23}}{2}e_1-e_3.
	\end{align*}	
For a suitable choice of the parameter $x\in\R^\ast$, applying $\Psi$  to the connection given in~~\eqref{g3,4,alpha=-1,nabla15,delta=0,sol3} yields the following equivalent connection:			
		\begin{equation}\label{g3,4,alpha=-1,nabla15,delta=0,sol3,2}
			\begin{aligned}
				\nabla_{e_1}e_3&=-e_1, \quad\nabla_{e_2}e_3=-e_2, &\nabla_{e_3}e_1&=-2\,e_1, &\nabla_{e_3}e_3&=\delta_1e_2-e_3,\quad\delta_1=0,1.
			\end{aligned}
		\end{equation}

If $b_{23}=-1$. Consider the following automorphism		
\begin{align*}
	\Psi(e_1) &=e_2,&
	\Psi(e_2) &=x\,e_1,&
	\Psi(e_3) &=-\tfrac{c_{13}}{2}e_2-e_3.
\end{align*}	
For a suitable choice of the parameter $x\in\R^\ast$, applying $\Psi$  to the connection given in~~\eqref{g3,4,alpha=-1,nabla15,delta=0,sol3} yields the following equivalent connection:			
\begin{equation}\label{g3,4,alpha=-1,nabla15,delta=0,sol3,3}
	\begin{aligned}
		\nabla_{e_1}e_3&=e_1, \quad\nabla_{e_2}e_3=e_2, &\nabla_{e_3}e_2&=2\,e_2, &\nabla_{e_3}e_3&=\delta_2e_1+e_3,\quad\delta_2=0,1.
	\end{aligned}
\end{equation}

Note  that the flat, torsion-free connections given in \eqref{g3,4,alpha=-1,nabla15,delta=0,sol3,1} and  \eqref{g3,4,alpha=-1,nabla15,delta=0,sol3,2} are isomorphic if and only if $\delta_1=0$ and $\lambda=-1$. For this reason, we set $\lambda\in\R \setminus{\{1\}}$ and assume that $\delta_1=1$. Similarly, one can easily show that \eqref{g3,4,alpha=-1,nabla15,delta=0,sol3,1} and  \eqref{g3,4,alpha=-1,nabla15,delta=0,sol3,3} are isomorphic if and only if $\delta_2=0$ and $\lambda=1$.  For this reason, we set $\lambda\in\R$ and assume that $\delta_2=1$. If $\delta_1=\delta_2=1$, 
one can easily show that the connections defined in \eqref{g3,4,alpha=-1,nabla15,delta=0,sol3,2} and \eqref{g3,4,alpha=-1,nabla15,delta=0,sol3,3} are isomorphic via the following automorphism:
\begin{align*}
	\Psi(e_1) &=e_2,&
	\Psi(e_2) &=e_1,&
	\Psi(e_3) &=-e_3.
\end{align*}	

The connection given in \eqref{g3,4,alpha=-1,nabla15,delta=0,sol3,2} is therefore eliminated from our classification and only the one defined in \eqref{g3,4,alpha=-1,nabla15,delta=0,sol3,3} kept under the assumption that $\delta_2=1$. The connections given in \eqref{g3,4,alpha=-1,nabla15,delta=0,sol3,1} and \eqref{g3,4,alpha=-1,nabla15,delta=0,sol3,3} correspond to the flat, torsion-free connections associated with  the flat Lie algebras $\h_{0,9}$ and $\h_{0,10}$, respectively.   The additional condition $\lambda\neq\pm2$ in the flat Lie algebra $\h_{0,9}$ is imposed to ensure that the corresponding connection is not isomorphic to the flat Lie algebra $\h_{0,2}$ (in which case $\delta=0$).

The fourth solution is given by the following flat, torsion-free connection:		

\begin{equation}\label{g3,4,alpha=-1,nabla15,delta=0,sol4}
	\begin{aligned}
		\nabla_{e_1}e_3&=2\,e_1, \quad\nabla_{e_2}e_3=b_{13}e_1+2\,e_2, &\nabla_{e_3}e_1&=e_1, &\nabla_{e_3}e_2&=b_{13}e_1+3\,e_3, \\\nabla_{e_3}e_3&=c_{13}e_1+c_{23}e_2+2\,e_3.
	\end{aligned}
\end{equation}	
	Consider the following automorphism
\begin{align*}
	\Psi(e_1) &=e_1,&
	\Psi(e_2) &=x\,e_2,&
	\Psi(e_3) &=(c_{13}-\tfrac{2\,c_{23}b_{13}}{3})e_1+\tfrac{x\,c_{23}}{3}e_2+e_3.
\end{align*}
For a suitable choice of the parameter $x\in\R^\ast$, applying $\Psi$  to the connection given in~~\eqref{g3,4,alpha=-1,nabla15,delta=0,sol4} yields the following equivalent connection:
\begin{equation}\label{g3,4,alpha=-1,nabla15,delta=0,sol4,1}
	\begin{aligned}
		\nabla_{e_1}e_3&=2\,e_1, \quad\nabla_{e_2}e_3=\delta\,e_1+2\,e_2, &\nabla_{e_3}e_1&=e_1, &\nabla_{e_3}e_2&=\delta\,e_1+3\,e_2, &\nabla_{e_3}e_3&=2\,e_3,\quad\delta=0,1.
	\end{aligned}
\end{equation}	
The flat, torsion-free connection defined in~\eqref{g3,4,alpha=-1,nabla15,delta=0,sol4,1} coincides exactly with the one associated with the flat Lie algebra $\h_{0,2}$.

The fifth case is described by the following flat, torsion-free connection:
\begin{equation}\label{g3,4,alpha=-1,nabla15,delta=0,sol5}
	\begin{aligned}
		\nabla_{e_1}e_3&=-2\,e_1+a_{23}e_2, \quad\nabla_{e_2}e_3=-2\,e_2, &\nabla_{e_3}e_1&=-3\,e_1+a_{23}e_2, &\nabla_{e_3}e_2&=-3e_3, \\\nabla_{e_3}e_3&=c_{13}e_1+c_{23}e_2-2\,e_3.
	\end{aligned}
\end{equation}	
Consider the following automorphism
\begin{align*}
	\Psi(e_1) &=e_2,&
	\Psi(e_2) &=x\,e_1,&
	\Psi(e_3) &=-\tfrac{(2\,c_{13}a_{23}+3\,c_{23})\,x}{3}e_1-\tfrac{c_{13}}{3}e_2-e_3.
\end{align*}
For a suitable choice of the parameter $x\in\R^\ast$, applying $\Psi$  to the connection given in~~\eqref{g3,4,alpha=-1,nabla15,delta=0,sol4} yields the following equivalent connection:
\begin{equation}\label{g3,4,alpha=-1,nabla15,delta=0,sol5,1}
	\begin{aligned}
		\nabla_{e_1}e_3&=2\,e_1, \quad\nabla_{e_2}e_3=\delta_1\,e_1+2\,e_2, &\nabla_{e_3}e_1&=e_1, &\nabla_{e_3}e_2&=\delta_1\,e_1+3\,e_2, &\nabla_{e_3}e_3&=2\,e_3,\quad\delta_1=0,1.
	\end{aligned}
\end{equation}	
This flat, torsion-free connection coincides with the one given in~\eqref{g3,4,alpha=-1,nabla15,delta=0,sol4,1}, in which case $\delta_1=\delta_2$.

The sixth solution corresponds to the flat, torsion-free connection:
\begin{equation}\label{g3,4,alpha=-1,nabla15,delta=0,sol6}
	\begin{aligned}
		\nabla_{e_1}e_1&=a_{31}e_3,  &\nabla_{e_3}e_1&=-e_1, &\nabla_{e_3}e_2&=e_2, &\nabla_{e_3}e_3&=-2\,e_3.
	\end{aligned}
\end{equation}	
Consider the following automorphism
\begin{align*}
	\Psi(e_1) &=x\,e_1,&
	\Psi(e_2) &=e_2,&
	\Psi(e_3) &=e_3.
\end{align*}
For a suitable choice of the parameter $x\in\R^\ast$, applying $\Psi$  to the connection given in~~\eqref{g3,4,alpha=-1,nabla15,delta=0,sol6} yields the following equivalent connection:
\begin{equation}\label{g3,4,alpha=-1,nabla15,delta=0,sol6,1}
	\begin{aligned}
		\nabla_{e_1}e_1&=\delta_\varepsilon\, e_3,  &\nabla_{e_3}e_1&=-e_1, &\nabla_{e_3}e_2&=e_2, &\nabla_{e_3}e_3&=-2\,e_3,\quad\delta_\varepsilon=0,\pm1.
	\end{aligned}
\end{equation}	
	If $\delta_\varepsilon=\pm1$, then this connection coincides with the one associated with the  flat Lie algebra $\h_{0,3}$. Otherwise, if $\delta_\varepsilon=0$, one can easily show that this connection coincides with the one associated with the flat Lie algebra $\h_{0,1}$, in which case $\lambda=0$.

The seventh case corresponds to the following flat, torsion-free connection:
\begin{equation}\label{g3,4,alpha=-1,nabla15,delta=0,sol7}
	\begin{aligned}
		\nabla_{e_1}e_3&=a_{23}e_2, &\nabla_{e_3}e_1&=-e_1+a_{23}e_2, &\nabla_{e_3}e_2&=e_2, &\nabla_{e_3}e_3&=c_{13}e_1+c_{23}e_2+a_{23}e_3.
	\end{aligned}
\end{equation}	

Consider the following automorphism
\begin{align*}
	\Psi(e_1) &=e_2,&
	\Psi(e_2) &=x\,e_1,&
	\Psi(e_3) &=-\tfrac{(2\,c_{13}a_{23}+3\,c_{23})\,x}{3}e_1-\tfrac{c_{13}}{3}e_2-e_3.
\end{align*}
For a suitable choice of the parameter $x\in\R^\ast$, applying $\Psi$  to the connection given in~~\eqref{g3,4,alpha=-1,nabla15,delta=0,sol7} yields the following equivalent connection:
\begin{equation}\label{g3,4,alpha=-1,nabla15,delta=0,sol7,1}
	\begin{aligned}
		\nabla_{e_2}e_3&=\delta\,e_1, &\nabla_{e_3}e_1&=-e_1, &\nabla_{e_3}e_2&=\delta\,e_1+e_2, &\nabla_{e_3}e_3&=-2\,e_3.
	\end{aligned}
\end{equation}	
The flat, torsion-free connection defined in~\eqref{g3,4,alpha=-1,nabla15,delta=0,sol7,1} coincides exactly with the one associated with the flat Lie algebra $\h_{0,1}$.

The eighth solution is given by the following flat, torsion-free connection:
\begin{equation}\label{g3,4,alpha=-1,nabla15,delta=0,sol8}
	\begin{aligned} 
		\nabla_{e_3}e_1&=-e_1, &\nabla_{e_3}e_2&=e_2, &\nabla_{e_3}e_3&=c_{13}e_1+c_{23}e_2+c_{33}e_3.
	\end{aligned}
\end{equation}	
Suppose that $c_{33}\neq\pm1$. Consider the following automorphism
\begin{align*}
	\Psi(e_1) &=e_2,&
	\Psi(e_2) &=e_1,&
	\Psi(e_3) &=\tfrac{c_{23}}{1-c_{33}}e_1-\tfrac{c_{13}}{1+c_{33}}e_2-e_3.
\end{align*}
Applying $\Psi$  to the connection given in~~\eqref{g3,4,alpha=-1,nabla15,delta=0,sol8} yields the following equivalent connection:
\begin{equation}\label{g3,4,alpha=-1,nabla15,delta=0,sol8,1}
	\begin{aligned} 
		\nabla_{e_3}e_1&=-e_1, &\nabla_{e_3}e_2&=e_2, &\nabla_{e_3}e_3&=\lambda\,e_3.
	\end{aligned}
\end{equation}	
In this case, $\lambda=-c_{33}\neq\pm1$.

If $c_{33}=1$.  Consider the following automorphism
\begin{align*}
	\Psi(e_1) &=e_2,&
	\Psi(e_2) &=x\,e_1,&
	\Psi(e_3) &=-\tfrac{c_{13}}{2}e_2-e_3.
\end{align*}
For a suitable choice of the parameter $x\in\R^\ast$, applying $\Psi$  to the connection given in~~\eqref{g3,4,alpha=-1,nabla15,delta=0,sol8} yields the following equivalent connection:
\begin{equation}\label{g3,4,alpha=-1,nabla15,delta=0,sol8,2}
	\begin{aligned} 
		\nabla_{e_3}e_1&=-e_1, &\nabla_{e_3}e_2&=e_2, &\nabla_{e_3}e_3&=\delta_1\,e_1-e_3,\quad\delta_1=0,1.
	\end{aligned}
\end{equation}

If $c_{33}=-1$.  Consider the following automorphism
\begin{align*}
	\Psi(e_1) &=x\,e_2,&
	\Psi(e_2) &=e_1,&
	\Psi(e_3) &=\tfrac{c_{23}}{2}e_2-e_3.
\end{align*}
For a suitable choice of the parameter $x\in\R^\ast$, applying $\Psi$  to the connection given in~~\eqref{g3,4,alpha=-1,nabla15,delta=0,sol8} yields the following equivalent connection:
\begin{equation}\label{g3,4,alpha=-1,nabla15,delta=0,sol8,3}
	\begin{aligned} 
		\nabla_{e_3}e_1&=-e_1, &\nabla_{e_3}e_2&=e_2, &\nabla_{e_3}e_3&=\delta_2\,e_2+e_3,\quad\delta_2=0,1.
	\end{aligned}
\end{equation}

Note  that the flat, torsion-free connections given in \eqref{g3,4,alpha=-1,nabla15,delta=0,sol8,1} and  \eqref{g3,4,alpha=-1,nabla15,delta=0,sol8,2} are isomorphic if and only if $\delta_1=0$ and $\lambda=-1$. For this reason, we set $\lambda\in\R \setminus{\{1\}}$ and assume that $\delta_1=1$. Similarly, one can easily show that \eqref{g3,4,alpha=-1,nabla15,delta=0,sol8,1} and  \eqref{g3,4,alpha=-1,nabla15,delta=0,sol8,3} are isomorphic if and only if $\delta_2=0$ and $\lambda=1$.  For this reason, we set $\lambda\in\R$ and assume that $\delta_2=1$. The connections given in \eqref{g3,4,alpha=-1,nabla15,delta=0,sol8,1}, \eqref{g3,4,alpha=-1,nabla15,delta=0,sol8,2}, and \eqref{g3,4,alpha=-1,nabla15,delta=0,sol8,3} correspond to the flat, torsion-free connections associated with  the flat Lie algebras $\h_{0,4}$, $\h_{0,5}$ and $\h_{0,6}$, respectively.   The additional condition $\lambda\neq\pm2$ in the flat Lie algebra $\h_{0,4}$ is imposed to ensure that the corresponding connection is not isomorphic to the flat Lie algebra $\h_{0,1}$ (in which case $\delta=0$).

The ninth solution corresponds to the following flat, torsion-free connection:
\begin{equation}\label{g3,4,alpha=-1,nabla15,delta=0,sol9}
	\begin{aligned} 
		\nabla_{e_2}e_3&=b_{13}e_1,
		&\nabla_{e_3}e_1&=-e_1, &\nabla_{e_3}e_2&=b_{13}e_1+e_2, &\nabla_{e_3}e_3&=c_{13}e_1+c_{23}e_2-2\,e_3.
	\end{aligned}
\end{equation}	
Consider the following automorphism
\begin{align*}
	\Psi(e_1) &=x\,e_1,&
	\Psi(e_2) &=e_2,&
	\Psi(e_3) &=\tfrac{(-2\,b_{1 3}c_{2 3} + 3\,c_{1 3})x}{3}+\tfrac{c_{23}}{2}e_2+e_3.
\end{align*}
For a suitable choice of the parameter $x\in\R^\ast$, applying $\Psi$  to the connection given in~~\eqref{g3,4,alpha=-1,nabla15,delta=0,sol9} yields the following equivalent connection:
\begin{equation}\label{g3,4,alpha=-1,nabla15,delta=0,sol9,1}
	\begin{aligned} 
		\nabla_{e_2}e_3&=\delta\,e_1,
		&\nabla_{e_3}e_1&=-e_1, &\nabla_{e_3}e_2&=\delta\,e_1+e_2, &\nabla_{e_3}e_3&=-2\,e_3.
	\end{aligned}
\end{equation}	
This flat, torsion-free connection coincides exactly with the one associated with the flat  Lie algebra $\h_{0,1}$.

The tenth solution corresponds to the following flat, torsion-free connection:
\begin{equation}\label{g3,4,alpha=-1,nabla15,delta=0,sol10}
	\begin{aligned} 
		\nabla_{e_2}e_3&=c_{33}e_2,
		&\nabla_{e_3}e_1&=-e_1, &\nabla_{e_3}e_2&=(c_{33}+1)e_2, &\nabla_{e_3}e_3&=c_{13}e_1+c_{23}e_2+c_{33}e_3.
	\end{aligned}
\end{equation}	
Consider the following automorphism
\begin{align*}
	\Psi(e_1) &=e_2,&
	\Psi(e_2) &=e_1,&
	\Psi(e_3) &-e_3.
\end{align*}
For a suitable choice of the parameter $x\in\R^\ast$, applying $\Psi$  to the connection given in~~\eqref{g3,4,alpha=-1,nabla15,delta=0,sol10} yields the following equivalent connection:
\begin{equation}\label{g3,4,alpha=-1,nabla15,delta=0,sol10,1}
	\begin{aligned} 
		\nabla_{e_1}e_3&=-c_{33}e_1,
		&\nabla_{e_3}e_1&=-(c_{33}+1)e_1, &\nabla_{e_3}e_2&=e_2, &\nabla_{e_3}e_3&=c_{23}e_1+c_{13}e_2+c_{33}e_3.
	\end{aligned}
\end{equation}	

This family of flat, torsion-free connections coincides with the second case treated in~\eqref{g3,4,alpha=-1,nabla15,delta=0,sol2}, in which case $c_{33}=-a_{13}$ and $c_{13}$ is replaced by $c_{23}$. Therefore, no further analysis is required.

The 11th solution corresponds to the following flat, torsion-free connection:
\begin{equation}\label{g3,4,alpha=-1,nabla15,delta=0,sol11}
	\begin{aligned} 
		\nabla_{e_2}e_2&=b_{23}e_3,
		&\nabla_{e_3}e_1&=-e_1, &\nabla_{e_3}e_2&=e_2, &\nabla_{e_3}e_3&=2\,e_3.
	\end{aligned}
\end{equation}	
Consider the following automorphism
\begin{align*}
	\Psi(e_1) &=e_2,&
	\Psi(e_2) &=x\,e_1,&
	\Psi(e_3) &=-e_3.
\end{align*}
For a suitable choice of the parameter $x\in\R^\ast$, applying $\Psi$  to the connection given in~~\eqref{g3,4,alpha=-1,nabla15,delta=0,sol11} yields the following equivalent connection:
\begin{equation}\label{g3,4,alpha=-1,nabla15,delta=0,sol11,1}
	\begin{aligned} 
		\nabla_{e_1}e_1&=\delta_\varepsilon\, e_3,  &\nabla_{e_3}e_1&=-e_1, &\nabla_{e_3}e_2&=e_2, &\nabla_{e_3}e_3&=-2\,e_3,\quad\delta_\varepsilon=0,\pm1.
	\end{aligned}
\end{equation}
This connection coincides with the one treated in 
\eqref{g3,4,alpha=-1,nabla15,delta=0,sol6,1}. Therefore, no further analysis is required.

This completes the classification of flat, torsion-free connections on flat Lie algebra $\G_{3,4}^{\alpha=-1}$. Under the corresponding assumptions on their parameters, all flat, torsion-free connections presented in Table~\ref{g3,4} are pairwise non-isomorphic.
\end{proof}

\begin{co}
	With the notations as above, among the flat Lie algebras on $\G_{3,4}^{\alpha=-1}$, we have
	\begin{enumerate}
		\item[i)] Associative algebras$:$\hspace{0.275cm} $\h_{0,7}^{\lambda=-1}$.
		\item[ii)] Novikov algebras$:$\hspace{0.73cm}  $\h_{0,2}^{\delta=0}$, $\h_{0,4}^{\lambda=0}$, $\h_{0,9}$, $\h_{0,10}$. 
		\item[iii)] Bi-symmetric algebras$:$ $\h_{0,7}^{\lambda=-1}$, $\h_{0,8}$.
		\item[iv)] Complete algebras$:$\hspace{0.62cm}$\h_{0,4}^{\lambda=0}$, $\h_{0,11}$.
	\end{enumerate}
\end{co}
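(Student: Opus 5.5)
The plan is to verify the four properties directly on each flat Lie algebra of Table~\ref{g3,4}, using the left and right multiplication operators $\mathrm{L}_{e_i}=\nabla_{e_i}$ and $\mathrm{R}_{e_i}(Y)=\nabla_Y e_i$ written as $3\times 3$ matrices in the basis $\{e_1,e_2,e_3\}$. Since every structure in the table is already left-symmetric, each property reduces to a finite set of polynomial identities in the parameters $\lambda,\delta,\delta_1,\delta_2,\varepsilon$. These identities are checked on basis elements only, by trilinearity of the associator.

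\emph{Associativity.} For a left-symmetric algebra, associativity is equivalent to $\mathrm{L}$ being multiplicative. So I will check $\mathrm{L}_{e_i\cdot e_j}=\mathrm{L}_{e_i}\mathrm{L}_{e_j}$ for all $i,j$. The diagonal condition $\mathrm{L}_{e_3\cdot e_3}=\mathrm{L}_{e_3}^2$ already rules out almost every entry. For example:
\begin{itemize}
\item In $\h_{0,4}$, $\mathrm{L}_{e_3}=\mathrm{diag}(-1,1,\lambda)$ and $e_3\cdot e_3=\lambda e_3$. The condition becomes $\mathrm{diag}(1,1,\lambda^2)=\lambda\,\mathrm{diag}(-1,1,\lambda)$, which is impossible.
\item In $\h_{0,7}$, the same condition forces $\lambda=-1$. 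The remaining products $e_2\cdot e_3$ and $e_3\cdot e_2$ then have to be checked for that value.
\end{itemize}
This yields the single case $\h_{0,7}^{\lambda=-1}$.

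\emph{Bi-symmetry.} By left-symmetry, bi-symmetry means the associator is totally symmetric. Associative algebras are therefore bi-symmetric, and it remains to test the non-associative entries by computing $(e_i,e_j,e_k)-(e_i,e_k,e_j)$. I expect $\h_{0,8}$ to pass because its only nonzero products involve $e_3$ in a rank-one pattern. The others should fail already at some triple $(e_3,e_3,e_j)$.

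\emph{Novikov.} I will compute the commutators $[\mathrm{R}_{e_i},\mathrm{R}_{e_j}]$ and solve the resulting parameter constraints. For the diagonal-type families $\h_{0,9}$ and $\h_{0,10}$ the right multiplications are simultaneously triangular, and I expect the commutators to vanish identically. For $\h_{0,2}$ and $\h_{0,4}$ the commutators should produce the conditions $\delta=0$ and $\lambda=0$ respectively.

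\emph{Completeness.} Rather than checking nilpotency of $\mathrm{R}_X$ for a general $X$, I will use the standard criterion that a left-symmetric algebra is complete if and only if $\mathrm{tr}\,\mathrm{R}_X=0$ for all $X$. Since the trace is linear in $X$, this reduces to $\mathrm{tr}\,\mathrm{R}_{e_i}=0$ for $i=1,2,3$. For instance, in $\h_{0,4}$ one gets $\mathrm{tr}\,\mathrm{R}_{e_3}=\lambda$, forcing $\lambda=0$. Surviving cases such as $\h_{0,11}$ will also be confirmed by exhibiting strict triangularity of the $\mathrm{R}_{e_i}$ in a suitable basis.

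The main obstacle is the bookkeeping in the parametrized families $\h_{0,2},\h_{0,4},\h_{0,7},\h_{0,9},\h_{0,10}$. For these I must make sure that the parameter restrictions imposed in Table~\ref{g3,4}, such as $\lambda\neq\pm2$, $\lambda\neq0$ and $\delta_1^2+\delta_2^2\neq0$, are respected. In particular, a special value singled out by an identity must actually lie in the allowed range, and excluded values must not be listed. I will solve each constraint explicitly and intersect it with the stated range.
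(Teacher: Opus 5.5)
Your plan is correct and is essentially the direct case-by-case verification that the paper leaves implicit; the corollary is stated there without a separate argument. Carried out on the classification table for $\G_{3,4}^{\alpha=-1}$, your computations reproduce all four lists:
\begin{itemize}
\item the only nonzero basis associator of $\h_{0,8}$ is $(e_3,e_3,e_3)=\delta_1e_1-\delta_2e_2$, so $\h_{0,8}$ is bi-symmetric but not associative;
\item the only Novikov obstruction in $\h_{0,2}$ is $[\mathrm{R}_{e_2},\mathrm{R}_{e_3}]e_3=-3\delta e_1$;
\item $\mathrm{tr}\,\mathrm{R}_{e_3}\neq 0$ eliminates every entry except $\h_{0,4}^{\lambda=0}$ and $\h_{0,11}$.
\end{itemize}

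One step would not go through as written: the proposed confirmation of completeness of $\h_{0,11}$ by ``strict triangularity of the $\mathrm{R}_{e_i}$ in a suitable basis''. A common basis making $\mathrm{R}_{e_1}$ and $\mathrm{R}_{e_2}$ strictly triangular would give a common kernel vector. Here $\mathrm{R}_{e_1}\colon e_2\mapsto e_3,\ e_3\mapsto -e_1$ and $\mathrm{R}_{e_2}\colon e_1\mapsto e_3,\ e_3\mapsto e_2$, so $\ker\mathrm{R}_{e_1}=\langle e_1\rangle$ and $\ker\mathrm{R}_{e_2}=\langle e_2\rangle$ intersect trivially. Triangularizing each $\mathrm{R}_{e_i}$ separately says nothing about $\mathrm{R}_X$ for general $X$, since nilpotency is not additive.

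You can fix this in either of two ways.
\begin{itemize}
\item Rest the completeness part entirely on the trace criterion. Quote it explicitly as a theorem for finite-dimensional left-symmetric algebras in characteristic zero, since the paper defines completeness by nilpotency of every $\mathrm{R}_X$.
\item Check directly. For $X=x_1e_1+x_2e_2+x_3e_3$ one has $\mathrm{R}_X(e_1)=x_2e_3$, $\mathrm{R}_X(e_2)=x_1e_3$ and $\mathrm{R}_X(e_3)=-x_1e_1+x_2e_2$. Hence $\mathrm{R}_X^2(e_3)=(-x_1x_2+x_2x_1)e_3=0$, and therefore $\mathrm{R}_X^3=0$.
\end{itemize}

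In the same spirit, simultaneous triangularity is not why $\h_{0,9}$ and $\h_{0,10}$ are Novikov. The right multiplications of $\h_{2,1}$ are simultaneously upper triangular, yet $[\mathrm{R}_{e_2},\mathrm{R}_{e_3}]e_3=3e_2\neq 0$. The actual reasons are:
\begin{itemize}
\item in $\h_{0,9}$, $\mathrm{R}_{e_3}=\lambda\,\mathrm{Id}$ and $\mathrm{R}_{e_1}\mathrm{R}_{e_2}=\mathrm{R}_{e_2}\mathrm{R}_{e_1}=0$;
\item in $\h_{0,10}$, $\mathrm{R}_{e_1}=0$, together with a direct check that $[\mathrm{R}_{e_2},\mathrm{R}_{e_3}]=0$.
\end{itemize}
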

\begin{co}
	With the notations as above, among the flat Lie algebras on $\G_{3,4}^{\alpha\neq-1}$, we have
	\begin{enumerate}
		\item[i)] Associative algebras$:$
		\item[ii)] Novikov algebras$:$\hspace{0.73cm}  $\h_{0,4}^{\alpha=\tfrac12}$, $\h_{0,5}$, $\h_{0,6}$, $\h_{0,9}^{\delta=0}$, $\h_{0,10}^{\lambda=0}$, $\h_{0,19}$, $\h_{0,20}$, $\h_{0,21}$, $\h_{0,22}$, $\h_{2,1}^{\alpha=\tfrac12}$, $\h_{2,5}$, $\h_{2,6}$.
		\item[iii)] Bi-symmetric algebras$:$
		\item[iv)] Complete algebras$:$\hspace{0.62cm}$\h_{0,10}^{\lambda=0}$, $\h_{2,1}^{\alpha=\tfrac12}$.
	\end{enumerate}
\end{co}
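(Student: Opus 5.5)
For each entry of Table~\ref{g3,4,alpha}, I would work directly from the left-multiplication matrices $\mathrm{L}_{e_i}=\nabla_{e_i}$ and read off the right multiplications $\mathrm{R}_{e_j}(e_i)=\nabla_{e_i}e_j$. Every algebraic condition in the statement is multilinear, so it suffices to test it on basis elements. The cases are:
\begin{enumerate}
\item \textbf{Associative.} All associators $(e_i,e_j,e_k)$ vanish.
\item \textbf{Bi-symmetric.} $(e_i,e_j,e_k)=(e_i,e_k,e_j)$ for all $i,j,k$.
\item \textbf{Novikov.} $[\mathrm{R}_{e_i},\mathrm{R}_{e_j}]=0$ for $i<j$.
\item \textbf{Complete.} Here I would not test nilpotency of $\mathrm{R}_X$ for a general $X$. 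Instead I would use the standard criterion that a left-symmetric algebra is complete iff $\operatorname{tr}\mathrm{R}_X=0$ for all $X$. This reduces completeness to the vanishing of the three numbers $\operatorname{tr}\mathrm{R}_{e_j}$. As a cross-check on small cases, I would also compute the characteristic polynomial of $\mathrm{R}_{x e_1+y e_2+z e_3}$.
\end{enumerate}

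For the negative statements in i) and iii), I would look for a uniform obstruction rather than checking each family separately. In every table entry $\nabla_{e_3}$ acts on $\langle e_1,e_2\rangle$ with eigenvalues shifted by $\mathrm{ad}_{e_3}$, and the $e_3e_3$ component is prescribed. So I expect a single associator, for instance $(e_3,e_3,e_1)$ or $(e_1,e_3,e_3)$, to be nonzero for every admissible value of $\lambda\in\R$, $0<|\alpha|<1$ and $\delta,\varepsilon$.

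\begin{itemize}
\item For associativity, one nonzero associator suffices in each case.
\item For bi-symmetry, I would compare $(e_3,e_1,e_3)$ with $(e_3,e_3,e_1)$. This forces relations such as $\lambda(\lambda-1)=0$ together with $\alpha$-dependent equations, and these have no admissible solutions under the restrictions in Table~\ref{g3,4,alpha}.
\end{itemize}

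For ii), I would compute the commutators $[\mathrm{R}_{e_1},\mathrm{R}_{e_3}]$, $[\mathrm{R}_{e_2},\mathrm{R}_{e_3}]$ and $[\mathrm{R}_{e_1},\mathrm{R}_{e_2}]$ as polynomial matrices in the parameters. I would then solve for the parameter values at which all three vanish. The expected outcome is exactly the listed members, with the specializations $\alpha=\tfrac12$ for $\h_{0,4}$ and $\h_{2,1}$, $\delta=0$ for $\h_{0,9}$, and $\lambda=0$ for $\h_{0,10}$.

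For iv), the trace computation gives linear conditions. For example, in $\h_{0,10}$ one has $\operatorname{tr}\mathrm{R}_{e_3}=-1-\alpha+\lambda$ adjusted by the $e_3e_3$ term. I would check that the only admissible solutions are $\h_{0,10}^{\lambda=0}$ and $\h_{2,1}^{\alpha=1/2}$.

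The main obstacle is the bookkeeping for the parameter families and the special values of $\alpha$ ($\tfrac12$, $\tfrac13$). In these cases one must make sure that any solution of the polynomial conditions does not fall into an excluded range such as $\lambda\neq\pm(\alpha-1)$ or $\lambda\in\R^\ast$. It is also possible that a solution is isomorphic to another listed algebra, and such coincidences have to be identified. I would handle this by solving each condition symbolically and then intersecting the solution set with the remarks column of Table~\ref{g3,4,alpha}.
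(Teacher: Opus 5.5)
Your method is the right one, and the paper gives no argument for this corollary beyond its table. The conditions are multilinear, so testing them on $e_1,e_2,e_3$ suffices. The completeness criterion you invoke (a left-symmetric algebra in characteristic zero is complete iff $\operatorname{tr}\mathrm{R}_X=0$ for all $X$) is a standard theorem.

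The step that fails is your assertion that the Novikov computation returns \emph{exactly} the listed members. It does not, because $\h_{0,22}$ is not Novikov for any admissible $\alpha$. In $\h_{0,22}$ one has $e_1e_1=0$, $e_1e_2=e_3$, $e_3e_1=-e_1$, $e_1e_3=0$ and $e_3e_3=-(1+\alpha)e_3$, so
\[
\mathrm{R}_{e_1}\mathrm{R}_{e_2}(e_1)=(e_1e_2)e_1=-e_1\neq 0=(e_1e_1)e_2=\mathrm{R}_{e_2}\mathrm{R}_{e_1}(e_1).
\]
Likewise $[\mathrm{R}_{e_1},\mathrm{R}_{e_3}](e_3)=(1+\alpha)e_1\neq 0$. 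Carried out faithfully, your plan therefore refutes part ii) as printed: no argument can prove it, and what your computation actually establishes is the Novikov list with $\h_{0,22}$ deleted. Every other listed entry does satisfy $[\mathrm{R}_{e_i},\mathrm{R}_{e_j}]=0$ under the stated specializations, and no unlisted entry does for admissible parameters. Parts i), iii) and iv) check out. Note that i) follows from iii), since an associative algebra is automatically bi-symmetric.

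There is also a slip in your completeness example. For $\h_{0,10}$, the quantity $-1-\alpha+\lambda$ is $\operatorname{tr}\mathrm{L}_{e_3}$, not $\operatorname{tr}\mathrm{R}_{e_3}$. Since $\mathrm{R}_X=\mathrm{L}_X-\operatorname{ad}_X$ and $\operatorname{tr}\operatorname{ad}_{e_3}=-1-\alpha$, one gets $\operatorname{tr}\mathrm{R}_{e_3}=\lambda$. You can also see this directly from $e_1e_3=e_2e_3=0$ and $e_3e_3=\lambda e_3$. Used as you wrote it, the formula would give $\lambda=1+\alpha$ instead of $\lambda=0$.

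Finally, your hoped-for uniform obstruction to associativity and bi-symmetry does not exist. In $\h_{0,19}$, both $(e_3,e_3,e_1)=(\lambda-1)e_1$ and $(e_1,e_3,e_3)=0$ vanish at the admissible value $\lambda=1$. There you also need $(e_3,e_3,e_2)=\alpha(\lambda-\alpha)e_2$, so that part really is a case-by-case check.
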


\begin{pr}
Let $(\G, \nabla)$ be a three-dimensional real flat  Lie algebra with $\G = \G_{3,5}$. Then $(\G, \nabla)$ is isomorphic to exactly one of the flat Lie algebras listed in Table~$\ref{g3,5}$.
{\renewcommand*{\arraystretch}{1.8}
\captionof{table}{Flat torsion-free connection on the Lie algebra $\G_{3,5}$.}
\setcounter{table}{9}
\begin{footnotesize} 
\setlength{\tabcolsep}{4pt} 
\begin{longtable}{@{}cllllllc@{}} 
			\hline
		Flat algebra&\multicolumn{2}{@{}l@{}}{~~Flat torsion-free connection}&&&& Remarks\\
			\hline
$\h_{0,1}$&$\nabla_{e_3}e_1=-\beta e_1+e_2$&$\nabla_{e_3}e_2=-e_1-\beta e_2$&$\nabla_{e_3}e_3=\lambda e_3$&&&$\lambda\in\R$&\\
$\h_{0,2}$&$\nabla_{e_1}e_3=\lambda e_1$&$\nabla_{e_2}e_3=\lambda e_2$&$\nabla_{e_3}e_1=(\lambda-\beta)e_1+e_2$&$\nabla_{e_3}e_2=-e_1+(\lambda-\beta)e_2$&$\nabla_{e_3}e_3=\lambda e_3$&$\lambda\in\R^\ast$&\\
$\h_{0,3}$&$\nabla_{e_1}e_1=\varepsilon e_3$&$\nabla_{e_2}e_2=\varepsilon e_3$&$\nabla_{e_3}e_1=-\beta e_1+e_2$&$\nabla_{e_3}e_2=-e_1-\beta e_2$&$\nabla_{e_3}e_3=-2\beta e_3$&$\varepsilon=\pm1$&
\\\hline		
			\end{longtable}
			\label{g3,5}
			\end{footnotesize}	
			}
	
\end{pr}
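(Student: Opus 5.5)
I would follow the same scheme as for $\G_{3,1},\dots,\G_{3,4}$. View $\G_{3,5}=\R e_3\ltimes\R^2$ with $\G_0=\langle e_1,e_2\rangle$ abelian and $\mathrm{D}=\mathrm{ad}_{e_3}|_{\G_0}$ having matrix $\begin{pmatrix}-\beta&-1\\ 1&-\beta\end{pmatrix}$ (up to sign convention). Write a general torsion-free connection in the form \eqref{Connetorsionfre}, with $\nabla_{e_3}=\nabla_{\cdot}e_3+\mathrm{ad}$-correction. The restricted automorphisms are $\Phi_{3,5}=\begin{pmatrix}x&-z\\ z&x\end{pmatrix}$, i.e.\ $\mathbb{C}^\ast$ acting on $\R^2\cong\mathbb{C}$. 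Together with $\Psi(e_3)=u+e_3$, these are the only admissible transformations; $\tau=1$ is forced since $\mathrm{D}$ has no eigenvalue rescaling symmetry when $\beta\ne0$, and for $\beta=0$ we also allow $\tau=-1$ composed with conjugation.

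By Lemma~\ref{Lemg3j}, under this group $\nabla^0$ is equivalent to one of the five listed normal forms $\nabla^1,\dots,\nabla^5$, or to a flat one. For each normal form, I would write the six conditions $(i)$--$(vi)$ of the curvature lemma with unknowns $\theta,\beta,\gamma,\eta,\zeta,\lambda$ and solve.

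The key structural observation that should kill most cases is condition $(v)$ combined with $(i)$. Since $\mathrm{D}$ acts as multiplication by the complex number $-\beta+i$ with no real eigenvector, the $\mathrm{D}$-equivariance forced by $(v)$ on the quadratic part of $\nabla^0$ is very restrictive. Concretely, I expect the symmetric tensor part of $\nabla^0$ (modulo $\theta\otimes\zeta$) to have to transform like a character of $\mathbb{C}^\ast$ of weight $-\beta+i$. A real symmetric bilinear map $\R^2\times\R^2\to\R^2$ decomposes into complex weights $z^2$ and $z\bar z$ parts, which carry weights $2(-\beta+i)$ and $-2\beta$, neither equal to $-\beta+i$ unless it vanishes. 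So $\nabla^0_xy=\theta(x,y)\zeta$ up to terms absorbed by $\eta$. In particular, non-flat $\nabla^{1},\dots,\nabla^{4}$ should yield no solutions, and I would verify this by direct substitution.

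The surviving case $\nabla^0\equiv0$ (i.e.\ $\nabla^5$ with $\delta=0$) then leaves linear/quadratic equations in the remaining unknowns. Conditions $(ii)$ and $(vi)$ force $\theta$ to be a multiple of the $\mathrm{D}$-invariant-up-to-scale form $e^1\otimes e^1+e^2\otimes e^2$, or zero. Condition $(vi)$ with $\gamma$ gives $\theta(\mathrm{D}x,y)+\theta(x,\mathrm{D}y)=\lambda\theta(x,y)-\gamma(x)\gamma(y)$. This yields two branches:

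\begin{enumerate}
\item $\theta\neq0$. Then $\gamma=0$, $\eta=\mathrm{D}$, $\beta=0$ (the operator), and $\lambda=-2\beta$.
\item $\theta=0$. Then $\beta_{\rm op}=\gamma\text{-multiple of identity}$, giving $\nabla_xe_3=\lambda x$ and $\nabla_{e_3}x=(\lambda\,\mathrm{Id}+\mathrm{D})x$.
\end{enumerate}

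Finally, I would normalize using $\Psi$. The choice $u$ shifts $\zeta$ by $(\lambda\,\mathrm{Id}-\mathrm{D})u$-type terms, which are invertible since $\mathrm{D}$ has no real eigenvalues, so $\zeta$ can be killed. Scaling $x^2+z^2$ normalizes $\theta$ to $\varepsilon(e^1\otimes e^1+e^2\otimes e^2)$. This gives $\h_{0,1}$ ($\theta=0$, $\beta_{\rm op}=0$), $\h_{0,2}$ ($\lambda\ne0$), and $\h_{0,3}$.

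The main obstacle is ruling out exotic solutions in the non-flat cases of Lemma~\ref{Lemg3j} and the $\gamma\ne0$ subcases, where the equations are genuinely quadratic. I would handle these by complexifying and using the weight argument above rather than brute force. Pairwise non-isomorphism then follows from invariants: whether $\theta=0$, the sign $\varepsilon$ of the definite form $\theta$ (which is preserved since the $\mathbb{C}^\ast$ scaling is by $|w|^2>0$), and the trace of $\nabla_{e_3}$.
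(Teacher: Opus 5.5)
Your overall scheme is the paper's: reduce $\nabla^0$ to the normal forms of Lemma~\ref{Lemg3j}, solve $(i)$--$(vi)$, then normalize. Your analysis of the case $\nabla^0\equiv0$ is essentially right. The problem is the pruning step that shortens your argument: it is false.

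The paper's computation shows that $\nabla^0=\nabla^2$ (with $\eta_2=1$, $\nu_2=\lambda_2$) and $\nabla^0=\nabla^3$ (with $\nu_3=1$, $\eta_3=0$) \emph{do} admit real flat extensions. Each is identified with $\h_{0,3}$ only through an automorphism with $\Psi(e_3)=e_3+u$, $u\neq0$. The reason is that $\nabla^0$ is not an isomorphism invariant. For $\Psi|_{\G_0}=\mathrm{id}$ and $\Psi(e_3)=e_3+u$ (convention $\Psi(\nabla'_XY)=\nabla_{\Psi X}\Psi Y$), one gets $\nabla'^0=\nabla^0-\theta(\cdot,\cdot)\,u$ and $\gamma'=\gamma+\theta(\cdot,u)$. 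Lemma~\ref{Lemg3j}, by contrast, normalizes $\nabla^0$ using only $\Phi$ with $u=0$. For instance, shifting $\h_{0,3}$ by $u=-\varepsilon e_1$ gives $\nabla^0_{e_1}e_1=\nabla^0_{e_2}e_2=e_1$, which is non-flat, together with $\gamma=-e^1\neq0$ and $\beta_{\mathrm{op}}\neq0$.

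Your own heuristic ``$\nabla^0=\theta\otimes\zeta$ up to $\eta$-terms'' already predicts such non-flat $\nabla^0$, so it cannot give ``only $\nabla^0\equiv0$ survives''. Moreover, the weight argument only inverts the linear part $\nabla^0\mapsto\mathrm{D}\nabla^0-\nabla^0(\mathrm{D}\cdot,\cdot)-\nabla^0(\cdot,\mathrm{D}\cdot)$ of $(v)$. It leaves the couplings $[\nabla^0_x,\beta_{\mathrm{op}}]$ and $\gamma(y)\beta_{\mathrm{op}}(x)$ untouched, which are exactly the $\gamma\neq0$ cases you leave open.

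To close the gap you can treat $\nabla^1,\dots,\nabla^4$ explicitly, as the paper does. Alternatively, normalize with the shift \emph{before} fixing $\nabla^0$. When $\theta$ is nondegenerate this works cleanly:
\begin{enumerate}
\item Choose $u$ with $\theta(\cdot,u)=-\gamma$, so that $\gamma=0$.
\item Then $(iv)$ gives $\zeta=0$.
\item $(iii)$ gives $\beta_{\mathrm{op}}^2+[\mathrm{D},\beta_{\mathrm{op}}]=\lambda\beta_{\mathrm{op}}$, hence $\beta_{\mathrm{op}}\in\{0,\lambda\,\mathrm{Id}\}$.
\item $(v)$ then forces $\nabla^0=0$.
\item $(i)$ forces $\beta_{\mathrm{op}}=0$.
\end{enumerate}
The degenerate-$\theta$ case and the case $\theta=0$, $\gamma\neq0$ would still need their own argument.

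The uniqueness part also needs repair. Your invariants do not separate $\h_{0,1}$ with parameter $3\lambda$ from $\h_{0,2}$ with parameter $\lambda$: both have $\theta=0$ and $\mathrm{tr}\,\nabla_{e_3}=3\lambda-2\beta$. You need an extra invariant, such as whether $\nabla_x=0$ for all $x\in[\G,\G]$.

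More seriously, for $\beta=0$ you correctly allow $\tau=-1$, but then $\theta\mapsto\tau^{-1}\Phi^\ast\theta$ and $\mathrm{tr}\,\nabla_{e_3}$ both change sign. So neither $\varepsilon$ nor the trace is invariant there. Concretely, $e_1\mapsto e_1$, $e_2\mapsto-e_2$, $e_3\mapsto-e_3$ is an automorphism of $\G_{3,5}$ at $\beta=0$. It carries $\h_{0,3}^{\varepsilon}$ to $\h_{0,3}^{-\varepsilon}$, and sends $\lambda$ to $-\lambda$ in $\h_{0,1}$ and $\h_{0,2}$. Your argument therefore proves ``exactly one'' only for $\beta>0$. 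At $\beta=0$ the list is genuinely redundant, so the statement itself needs these identifications; the paper only asserts pairwise non-isomorphism without checking.
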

\begin{proof}
	As in the proof of the previous propositions. In the basis $\lbrace e_1, e_2, e_3 \rbrace$, the operators $\nabla_{e_1}$, $\nabla_{e_2}$ and $\nabla_{e_3}$ are given respectively by: 
	\begin{equation}
		\nabla_{e_1}=\left( \begin {array}{ccc} a_{11}&a_{12}&a_{13}\\ \noalign{\medskip}
		a_{21}&a_{22}&a_{23}\\ \noalign{\medskip}a_{31}&a_{32}&a_{33}\end {array} \right),\quad
		\nabla_{e_2}=\left( \begin {array}{ccc} a_{12}&b_{12}&b_{13}\\ \noalign{\medskip}
		a_{22}&b_{22}&b_{23}\\ \noalign{\medskip}a_{32}&b_{32}&b_{33}\end {array} \right),\quad
		\nabla_{e_3}=\left( \begin {array}{ccc} a_{13}-\beta &b_{13}-1&c_{13}\\ \noalign{\medskip}
		a_{23}+1&b_{23}-\beta&c_{23}\\ \noalign{\medskip}a_{33}&b_{33}&c_{33}\end {array} \right),\quad
	\end{equation}
	where $a_{ij}$, $b_{ij}$, $c_{ij}\in \mathbb{R}$ and $ \beta\geq0$.

Assume that $\nabla^0$ is a  non-flat, torsion-free connection. Then $\nabla^0$ is equivalent to one of the connections listed in Lemma~\ref{Lemg3j} under the Lie algebra $\G_{3,5}$. We first consider the case $\nabla^0=\nabla^1$. Using a straightforward computation, the flatness-equations can be solved with four complex solutions, so are not considered.

If $\nabla^0=\nabla^2$.   Then the flatness-equations admit a unique real solution given by the following flat torsion-free connection: 
\begin{equation}\label{g3,5,nabla2,sol1}
	\begin{aligned}
		\nabla_{e_1}e_1&=-a_{2 3}a_{3 1}e_1+e_2+a_{31}e_3, \hspace{3cm}\nabla_{e_1}e_3=-a_{2 3}^2a_{3 1}e_1+a_{23}e_2+a_{23}a_{31}e_3,\\
		\nabla_{e_2}e_2&=-a_{2 3}a_{3 1}e_1+e_2+a_{31}e_3, \hspace{3cm}\nabla_{e_2}e_3=a_{23}e_1-\tfrac{1}{a_{31}}e_2-e_3,\\
		\nabla_{e_3}e_1&=(-a_{2 3}^2a_{3 1} - \beta)e_1+(1 + a_{2 3})e_2+a_{2 3}a_{3 1}e_3, \hspace{0.54cm}\nabla_{e_3}e_2=(-1 + a_{2 3})e_1-\tfrac{1+\beta\,a_{31}}{a_{31}}e_2-e_3,\\
		\nabla_{e_3}e_3&=\tfrac {-{a_{{23}}^{3}}{a_{{31}}^{2}}+\beta\,a_{{23}}a_{{31}}-a_{{23}}+1}{a_{{31}}}e_1+\tfrac{1+ \left( {a_{{23}}^{2}}+a_{{23}} \right) {a_{{31}}^{2}}-
			\beta\,a_{{31}}}{{a_{{31}}^{2}}}e_2+\tfrac {{a_{{23}}^{2}}{a_{{31}}^{2}}-2\,\beta\,a_{{31}}+1}{a_{{31}
			}}e_3.			
	\end{aligned}
\end{equation}
In this case, $\eta_2=1$ and $\nu_2 =\lambda_2= -a_{2 3}a_{3 1}$.

Consider the following automorphism:
	\begin{align*}
	\Psi(e_1) &=x\,a_{31} e_2,&
	\Psi(e_2) &=x\,a_{31} e_1,&
	\Psi(e_3) &=x\, e_1+x\,a_{23}a_{31}e_2+e_3.
\end{align*}
 Applying it to the connection defined in \eqref{g3,5,nabla2,sol1} yields an equivalent connection:
 \begin{equation}\label{g3,5,nabla2,sol1,1}
 	\begin{aligned}
 		\begin{aligned}
 		\nabla_{e_1}e_1&=\tfrac{1}{x^2\,a_{31}} e_3, &\nabla_{e_2}e_2&=\tfrac{1}{x^2\,a_{31}} e_3, &\nabla_{e_3}e_1&=-\beta\, e_1+e_2, &\nabla_{e_3}e_2&=-e_1-\beta\, e_2, &\nabla_{e_3}e_3&=-2\,\beta\, e_2.
 	\end{aligned}
 	\end{aligned}
 \end{equation}
  Furthermore, by an appropriate choice of $x\in\R^\ast$, this connection coincides with the one associated with the flat Lie algebra $\h_{0,3}$ listed in Table~\ref{g3,5}.

If $\nabla^0=\nabla^3$, then the flatness-equations admit a unique real solution given by the following flat torsion-free connection: 
\begin{equation}\label{g3,5,nabla3,sol1}
\begin{aligned}
	\nabla_{e_2} e_2 &= e_1+a_{31}e_3, &\nabla_{e_1}e_3&=-\tfrac{1}{a_{31}}e_1-e_3, &\nabla_{e_2}e_2&=e_2+a_{31}e_3,\\
	\nabla_{e_3}e_1&=-\tfrac{\beta\,a_{31}+1}{a_{31}}e_1+e_2-e_3, &\nabla_{e_3}e_2&=-e_1-\beta\,e_2, &\nabla_{e_3}e_3&=\tfrac{1-\beta\,a_{31}}{a_{31}^2}e_1-\tfrac{1}{a_{31}}e_2+\tfrac{1-2\,\beta\,a_{31}}{a_{31}}e_3.
\end{aligned}
	\end{equation}
	In this case, $\eta_3=0$ and $\nu_3=1$. Next, consider the following automorphism
	\begin{align*}
		\Psi(e_1) &=x\, e_1,&
		\Psi(e_2) &=x\, e_2,&
		\Psi(e_3) &=-\frac{x}{a_{31}} e_1+e_3.
	\end{align*}
	Applying $\Psi$  to the previous connection yields the following equivalent connection:
	\begin{equation}\label{g3,5,nabla3,sol1,1}
		\begin{aligned}
	\nabla_{e_1}e_1&=\tfrac{a_{31}}{x^2} e_3, &\nabla_{e_2}e_2&=\tfrac{a_{31}}{x^2} e_3, &\nabla_{e_3}e_1&=-\beta\, e_1+e_2, &\nabla_{e_3}e_2&=-e_1-\beta\, e_2, &\nabla_{e_3}e_3&=-2\,\beta\, e_2.
\end{aligned}
\end{equation}
After normalizing the coefficient $\tfrac{a_{31}}{x^2}$ to $\pm1$, the resulting connection coincides precisely with the one associated with the flat Lie algebra $\h_{0,3}$ listed in Table~\ref{g3,5}.

If $\nabla^0=\nabla^4$, then the flatness equations admit no solutions.

Let $\nabla^0=\nabla^5$. If $\delta=0$, then the flatness-equations admit no solutions. On the other hand, when $\delta=0$, a straightforward computation shows that there exist exactly three distinct solutions. The first is determined by the following  flat torsion-free connection:
	\begin{equation}\label{g3,5,nabla5,sol1}
	\begin{aligned}
		\nabla_{e_1}e_1&=b_{32}e_3, &\nabla_{e_2}e_2&=b_{32}e_3, &\nabla_{e_3}e_1&=-\beta\,e_1+e_2, &\nabla_{e_3}e_2&=-e_1-\beta\,e_2, &\nabla_{e_3}e_3&=-2\,\beta\,e_3.
	\end{aligned}
\end{equation}
For a suitable choice of the parameter $x \in \R$, this connection is isomorphic, via the following automorphism, 
\begin{align*}
	\Psi(e_1) &=x\, e_1,&
	\Psi(e_2) &=x\, e_2,&
	\Psi(e_3) &=e_3.
\end{align*}
to the one associated with the flat Lie algebra $\h_{0,3}$.

The second solution is given by the following flat, torsion-free connection:	
	\begin{equation}\label{g3,5,nabla5,sol2}
		\begin{aligned} \nabla_{e_3}e_1&=-\beta\,e_1+e_2, &\nabla_{e_3}e_2&=-e_1-\beta\,e_2, &\nabla_{e_3}e_3&=c_{13}e_1+c_{23}e_2+c_{33}e_3.
		\end{aligned}
	\end{equation}
	Consider the following automorphism
	\begin{align*}
		\Psi(e_1) &=e_1,&
		\Psi(e_2) &=e_2,&
		\Psi(e_3) &=\tfrac{(-\beta - c_{3 3})c_{1 3} + c_{2 3}}{(\beta+c_{33})^2+1}e_1+\tfrac{(-\beta - c_{3 3})c_{23} - c_{1 3}}{(\beta+c_{33})^2+1}e_2+e_3.
	\end{align*}
	Applying $\Psi$  to the  connection \eqref{g3,5,nabla5,sol2} yields the following equivalent connection:
		\begin{equation}\label{g3,5,nabla5,sol2,1}
		\begin{aligned} \nabla_{e_3}e_1&=-\beta\,e_1+e_2, &\nabla_{e_3}e_2&=-e_1-\beta\,e_2, &\nabla_{e_3}e_3&=c_{33}e_3.
		\end{aligned}
	\end{equation}
	This connection is precisely the one associated with the flat Lie algebra $\h_{0,1}$, with $c_{33}=\lambda\in\R$.

	The third solution corresponds to the following flat, torsion-free connection:
\begin{equation}\label{g3,5,nabla5,sol3}
	\begin{aligned}
		\nabla_{e_1}e_3&=c_{33}e_1, &\nabla_{e_2}e_3&=c_{33}e_2,\\ \nabla_{e_3}e_1&=(c_{33}-\beta)\,e_1+e_2, &\nabla_{e_3}e_2&=-e_1+(c_{33}-\beta)\,e_2, &\nabla_{e_3}e_3&=c_{13}e_1+c_{23}e_2+c_{33}e_3.
	\end{aligned}
\end{equation}
	Consider the following automorphism
	\begin{align*}
		\Psi(e_1) &=e_1,&
		\Psi(e_2) &=e_2,&
		\Psi(e_3) &=\tfrac{(-\beta + c_{3 3})c_{1 3} + c_{2 3}}{(\beta+c_{33})^2+1}e_1+\tfrac{(-\beta + c_{3 3})c_{23} - c_{1 3}}{(\beta+c_{33})^2+1}e_2+e_3.
	\end{align*}
	Applying $\Psi$  to the  connection \eqref{g3,5,nabla5,sol3} yields the following equivalent connection:
	\begin{equation}\label{g3,5,nabla5,sol3,1}
		\begin{aligned}
			\nabla_{e_1}e_3&=c_{33}e_1, &\nabla_{e_2}e_3&=c_{33}e_2,\\ \nabla_{e_3}e_1&=(c_{33}-\beta)\,e_1+e_2, &\nabla_{e_3}e_2&=-e_1+(c_{33}-\beta)\,e_2, &\nabla_{e_3}e_3&=c_{33}e_3.
		\end{aligned}
	\end{equation}
	This connection is precisely the one associated with the flat Lie algebra $\h_{0,2}$, with $c_{33}=\lambda\in\R$.
	
	It is not difficult to verify that the three flat, torsion-free connections listed in Table~\ref{g3,5} are pairwise non-isomorphic under the action of the automorphism group of $\G_{3,5}$.

\end{proof}
\begin{co}
	With the notations as above, among the flat Lie algebras on $\G_{3,5}$, we have
	\begin{enumerate}
		\item[i)] Associative algebras$:$
		\item[ii)] Novikov algebras$:$\hspace{0.73cm}   $\h_{0,1}^{\lambda=0}$, $\h_{0,2}$. 
		\item[iii)] Bi-symmetric algebras$:$ 
		\item[iv)] Complete algebras$:$\hspace{0.62cm}$\h_{0,1}^{\lambda=0}$, $\h_{0,3}^{\beta=0}$.
	\end{enumerate}
\end{co}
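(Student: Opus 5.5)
We prove the corollary by direct verification on the three families of Table~\ref{g3,5}, using for each one the left and right multiplication operators $\mathrm{L}_{e_i}=\nabla_{e_i}$ and $\mathrm{R}_{e_i}=\nabla_{\cdot}\,e_i$ in the basis $\{e_1,e_2,e_3\}$. Throughout we write
\[
A=\begin{pmatrix}-\beta&-1\\ 1&-\beta\end{pmatrix}
\]
for the block of $\nabla_{e_3}$ acting on $\langle e_1,e_2\rangle$. This block is common to all three families, up to the shift by $\lambda$ in $\h_{0,2}$. Since $\mathrm{L}$ is a representation, each property translates into a small set of matrix identities, checked on basis elements:
\begin{itemize}
\item[(a)] \emph{associative}: $\mathrm{L}_{e_ie_j}=\mathrm{L}_{e_i}\mathrm{L}_{e_j}$;
\item[(b)] \emph{Novikov}: $[\mathrm{R}_{e_i},\mathrm{R}_{e_j}]=0$;
\item[(c)] \emph{bi-symmetric}: $(e_i,e_j,e_k)=(e_i,e_k,e_j)$;
\item[(d)] \emph{complete}: every $\mathrm{R}_X$ is nilpotent, which we verify on the $\mathrm{R}_{e_i}$ and, where needed, on a general $\mathrm{R}_X$.
\end{itemize}

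\textbf{Family $\h_{0,1}$.} Here $\mathrm{L}_{e_1}=\mathrm{L}_{e_2}=0$ and $\mathrm{L}_{e_3}=A\oplus(\lambda)$. Hence $\mathrm{R}_{e_1}$ and $\mathrm{R}_{e_2}$ send $e_3$ into $\langle e_1,e_2\rangle$ and kill $e_1,e_2$, while $\mathrm{R}_{e_3}=0\oplus(\lambda)$.
\begin{itemize}
\item \emph{Complete and Novikov.} A general $\mathrm{R}_X$ is lower block-triangular with diagonal $(0,0,\lambda x_3)$, so it is nilpotent for all $X$ exactly when $\lambda=0$. Also $\mathrm{R}_{e_1}\mathrm{R}_{e_3}(e_3)=\lambda\,\mathrm{R}_{e_1}(e_3)\neq0$, while $\mathrm{R}_{e_3}\mathrm{R}_{e_1}=0$. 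So Novikov holds precisely when $\lambda=0$; the relation $[\mathrm{R}_{e_1},\mathrm{R}_{e_2}]=0$ is automatic.
\item \emph{Associative.} This would force $\mathrm{L}_{e_3}^2=\mathrm{L}_{e_3e_3}=\lambda\mathrm{L}_{e_3}$, i.e. $A^2=\lambda A$. That is impossible, because $A$ has non-real eigenvalues $-\beta\pm i$.
\item \emph{Bi-symmetric.} Left symmetry plus right symmetry of the associator would give full symmetry. We compare $(e_3,e_3,e_1)=A^2e_1-\lambda Ae_1$ with $(e_3,e_1,e_3)=0$; the former is nonzero by the same eigenvalue argument.
\end{itemize}

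\textbf{Family $\h_{0,2}$.} Here $\mathrm{R}_{e_3}=\lambda\,\mathrm{Id}$ on $\langle e_1,e_2\rangle$ and $\lambda$ on $e_3$, with $\lambda\neq0$. So $\mathrm{R}_{e_3}$ is not nilpotent and the algebra is not complete. For Novikov, $\mathrm{R}_{e_3}$ is scalar, and $\mathrm{R}_{e_1},\mathrm{R}_{e_2}$ again map $e_3$ into $\langle e_1,e_2\rangle$ and kill $\langle e_1,e_2\rangle$, so all right operators commute. Associativity and bi-symmetry fail by the same comparison of $\mathrm{L}_{e_3}^2$ with $\mathrm{L}_{e_3e_3}=\lambda\mathrm{L}_{e_3}$. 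Now $\mathrm{L}_{e_3}$ restricted to $\langle e_1,e_2\rangle$ is $A+\lambda\,\mathrm{Id}$, whose eigenvalues are still non-real, so the relation cannot hold.

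\textbf{Family $\h_{0,3}$.} Here $e_1e_1=e_2e_2=\varepsilon e_3$, $\mathrm{L}_{e_3}=A\oplus(-2\beta)$, and $e_1e_3=e_2e_3=0$. Then $\mathrm{R}_{e_3}=0\oplus0\oplus(-2\beta)$, and $\mathrm{R}_{e_1}$ has $e_1\mapsto\varepsilon e_3$, $e_3\mapsto -\beta e_1+e_2$.
\begin{itemize}
\item \emph{Complete.} Using $\operatorname{tr}\mathrm{R}_X=-2\beta x_3$, completeness forces $\beta=0$. Conversely, for $\beta=0$ we check that $\mathrm{R}_X^3=0$ for general $X$, using the grading $\deg e_1=\deg e_2=1$, $\deg e_3=2$. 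This shows $\h_{0,3}^{\beta=0}$ is complete.
\item \emph{Not Novikov.} Compute $[\mathrm{R}_{e_1},\mathrm{R}_{e_2}](e_1)$, which produces a nonzero multiple of $e_2$ or $e_1$ coming from $\varepsilon A$.
\item \emph{Not associative, not bi-symmetric.} Use $(e_1,e_1,e_3)$ against $(e_1,e_3,e_1)$, together with the eigenvalue argument above.
\end{itemize}

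\textbf{Main obstacle.} The real work is making the negative statements uniform in all parameters ($\beta\ge0$, $\lambda$, $\varepsilon$). For each failure we commit to a single witness triple whose associator (or commutator) has a coordinate equal to a polynomial without real zeros, typically coming from the entry $\pm1$ of $A$. The remaining delicate point is the nilpotency of $\mathrm{R}_X$ for general $X$ in $\h_{0,3}^{\beta=0}$, which we handle via the grading argument rather than by brute computation.
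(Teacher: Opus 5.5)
Your route, checking each property directly on the operators $\mathrm{L}_{e_i}$ and $\mathrm{R}_{e_i}$ of the three families in Table~\ref{g3,5}, is the natural one; the paper leaves this verification implicit. Every conclusion you reach agrees with the statement. However, the one positive claim that is not immediate, the completeness of $\h_{0,3}^{\beta=0}$, rests on an argument that fails.

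\textbf{The grading argument is invalid.} The product is not compatible with $\deg e_1=\deg e_2=1$, $\deg e_3=2$. For $\beta=0$ one has $e_3e_1=e_2$ and $e_3e_2=-e_1$, so $\mathrm{R}_{e_1}$ sends $e_3$ (degree $2$) to $e_2$ (degree $1$). In fact no positive grading exists at all. The relations $e_1e_1=\varepsilon e_3$, $e_3e_1=e_2$, $e_3e_2=-e_1$ force $d_3=2d_1$, $d_2=d_3+d_1$ and $d_1=d_3+d_2$, hence $d_1=d_2=d_3=0$. The underlying obstruction is that $\mathrm{L}_{e_3}$ acts on $\langle e_1,e_2\rangle$ by a rotation.

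Nilpotency still holds, but for a different reason, and you should simply compute it. For $\beta=0$,
\[
\mathrm{R}_X=\begin{pmatrix}0&0&-x_2\\ 0&0&x_1\\ \varepsilon x_1&\varepsilon x_2&0\end{pmatrix},\qquad \det\bigl(t\,\mathrm{Id}-\mathrm{R}_X\bigr)=t^3,
\]
so $\mathrm{R}_X^3=0$ by Cayley--Hamilton. Equivalently, for $(x_1,x_2)\neq0$ there is a single chain
\[
x_1e_1+x_2e_2\;\mapsto\;\varepsilon(x_1^2+x_2^2)e_3\;\mapsto\;-\varepsilon(x_1^2+x_2^2)(x_2e_1-x_1e_2)\;\mapsto\;0 .
\]

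\textbf{A second, smaller problem in the same family.} Take $(X,Y,Z)=(XY)Z-X(YZ)$. Your witnesses give $(e_1,e_1,e_3)=-2\beta\varepsilon e_3$ and $(e_1,e_3,e_1)=\beta\varepsilon e_3$. Both vanish at $\beta=0$, so they prove neither non-associativity nor non-bi-symmetry for exactly the member you showed to be complete. This is the kind of parameter-dependent witness your own closing remark warns against.

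The uniform witness is $(e_3,e_3,e_1)=-A(A+2\beta\,\mathrm{Id})e_1$ compared with $(e_3,e_1,e_3)=0$. It is nonzero for every $\beta\ge0$, because $A$ and $A+2\beta\,\mathrm{Id}$ have eigenvalues $-\beta\pm i$ and $\beta\pm i$ and are therefore invertible.

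The same precision is needed in $\h_{0,1}$ and $\h_{0,2}$. There you need $A(A-\lambda\,\mathrm{Id})e_1\neq0$, respectively $(A+\lambda\,\mathrm{Id})Ae_1\neq0$. This follows from invertibility of these products, not merely from the matrix inequality $A^2\neq\lambda A$. For $\h_{0,2}$ you should also record that $(e_3,e_1,e_3)=0$, which holds because $\mathrm{R}_{e_3}=\lambda\,\mathrm{Id}$.
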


Let $G$ be a connected and simply connected affine Lie group. 
The following result summarizes our classification: $G$ is complete, Novikov, radiant, associative  or bi-symmetric if it admits a left-invariant affine structure carrying these properties.
\begin{theo}
	Let $G$ be a connected and simply connected three-dimensional real affine Lie group. Then the following statements hold:
	\begin{enumerate}
		\item $G$ is complete;
		\item $G$ is Novikov Lie group;
		\item $G$ is radiant;
		\item Associative Lie groups$:$\hspace{0.54cm} $G^{\alpha=-1}_{3,4}$,~$G_{33}$,~$G_{31}$, ~$\mathrm{Aff}(1,\R)\oplus\R$,~$\R^3$
		\item Bi-symmetric Lie groups$:$\hspace{0.28cm} $G^{\alpha=-1}_{3,4}$,~ $G_{33}$,~$G_{31}$,~$\mathrm{Aff}(1,\R)\oplus\R$,~$\R^3$
	\end{enumerate}
	\end{theo}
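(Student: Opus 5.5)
The proof is a bookkeeping argument over the classification. By the convention stated just before the theorem, $G$ has one of the listed properties exactly when its Lie algebra $\G$ carries at least one flat torsion-free connection with that property. Since $G$ admits a left-invariant affine structure, $\G$ is not semisimple. So $\G$ is one of $3\G_1$, $2\G_{2,1}\oplus\G_1$, $\G_{3,1},\dots,\G_{3,5}$. By the Propositions of Section~\ref{Sec4}, every flat torsion-free connection on $\G$ is isomorphic to an entry of the corresponding table. So for items 1--3 it is enough to exhibit one witness in each table. For items 4--5 we read off from the Corollaries which algebras do or do not admit such a structure.

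\textbf{Items 1 and 2.} These follow directly from the Corollaries after each Proposition. For completeness we use: $\h_{0,0}$ on $3\G_1$; $\h_{1,1}^{\lambda=0,\delta=0}$ on $\mathfrak{aff}(1,\R)\oplus\R$; $\h_{0,2}$ on $\G_{3,1}$; $\h_{0,2}^{\lambda=0}$ on $\G_{3,2}$; $\h_{0,1}^{\lambda=0}$ on $\G_{3,3}$; $\h_{0,4}^{\lambda=0}$ on $\G_{3,4}^{\alpha=-1}$; $\h_{0,10}^{\lambda=0}$ on $\G_{3,4}^{\alpha\neq-1}$; and $\h_{0,1}^{\lambda=0}$ on $\G_{3,5}$. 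For the Novikov property the same Corollaries supply witnesses, for instance $\h_{0,0}$, $\h_{1,1}$, $\h_{0,2}$, $\h_{0,5}$, $\h_{0,1}$, $\h_{0,9}$, $\h_{0,19}$ and $\h_{0,2}$ on the respective algebras. Where a Corollary entry carries a parameter restriction, we specialise it to an admissible value.

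\textbf{Item 3.} The Corollaries do not record radiance, so here we must find a vector $\zeta$ with $\mathrm{R}_\zeta=\mathrm{id}$, that is $\nabla_X\zeta=X$ for all $X$. The plan is to look for the $\lambda=1$ members of the families in which $\nabla_{e_j}e_3=\lambda e_j$ for $j=1,2,3$, and take $\zeta=e_3$:
\begin{itemize}
\item $\h_{0,8}$ on $3\G_1$ and $\h_{1,7}$ on $\mathfrak{aff}(1,\R)\oplus\R$;
\item $\h_{0,5}^{\lambda=1}$ on $\G_{3,2}$ and $\h_{0,1}^{\lambda=1}$ on $\G_{3,3}$;
\item $\h_{0,9}^{\lambda=1}$ on $\G_{3,4}^{\alpha=-1}$ and $\h_{0,19}^{\lambda=1}$ on $\G_{3,4}^{\alpha\neq-1}$;
\item $\h_{0,2}^{\lambda=1}$ on $\G_{3,5}$.
\end{itemize}
We must check that $\lambda=1$ is allowed by the stated parameter restrictions; for example, $\lambda\neq\pm(\alpha-1)$ in $\h_{0,19}$ rules out $\lambda=1$ only when $\alpha\in\{0,2\}$, which are excluded. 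The nilpotent case $\G_{3,1}$ has no such family, and it is the step I expect to be delicate. There the candidate is $\h_{4,1}$ with $\zeta=e_2$, since $\nabla_{e_1}e_2=e_1$, $\nabla_{e_2}e_2=e_2$ and $\nabla_{e_3}e_2=e_3$. This needs a direct verification from the table.

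\textbf{Items 4 and 5.} These are the union over the Corollaries' associative and bi-symmetric lists. The lists are nonempty exactly for $3\G_1$, $\mathfrak{aff}(1,\R)\oplus\R$, $\G_{3,1}$, $\G_{3,3}$ and $\G_{3,4}^{\alpha=-1}$. They are empty for $\G_{3,2}$, $\G_{3,4}^{\alpha\neq-1}$ and $\G_{3,5}$, which gives the two listed groups of algebras. The negative part of the statement relies on the exhaustiveness of each table and on checking the associativity and right-symmetry identities on every entry, including parametric families, where one must solve for the exceptional parameter values. This exhaustive check, together with the radiance verification above, is the main obstacle.
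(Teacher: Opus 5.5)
Your proposal is correct and follows the paper's route. The theorem is stated as a summary of the Corollaries attached to the classification Propositions, and you read items 1, 2, 4 and 5 off those Corollaries just as intended.

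All of your completeness and Novikov witnesses check out. One small logical point: exhaustiveness of the tables is irrelevant for items 1--3, since a single witness establishes existence. It is needed only for the negative halves of items 4--5, where you and the paper rely on it in the same way.

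The real difference is item 3. The Corollaries never record radiance, so the paper itself contains no check of it. Your witnesses are all correct and supply what the paper leaves implicit: $\zeta=e_3$ in $\h_{0,8}$, $\h_{1,7}$ and the $\lambda=1$ members, and $\zeta=e_2$ in $\h_{4,1}$ on $\G_{3,1}$.

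The case $\G_{3,1}$ is not actually delicate: $\h_{0,4}$ there is radiant with $\zeta=e_1$. More generally, every three-dimensional solvable $\G=\R\ell\ltimes\mathfrak{a}$ with $\mathfrak{a}$ an abelian ideal carries two uniform structures:
\begin{itemize}
\item the radiant structure $\nabla_x y=0$, $\nabla_x\ell=x$, $\nabla_\ell x=x+[\ell,x]$, $\nabla_\ell\ell=\ell$, with $\zeta=\ell$;
\item the complete Novikov structure $\nabla_\ell x=[\ell,x]$ for $x\in\mathfrak{a}$, all other products zero, for which every $\mathrm{R}_X\circ\mathrm{R}_Y$ vanishes.
\end{itemize}
For $\G_{3,1}$ take $\ell=e_1$, $\mathfrak{a}=\langle e_2,e_3\rangle$; for $\mathfrak{aff}(1,\R)\oplus\R$ take $\ell=e_2$, $\mathfrak{a}=\langle e_1,e_3\rangle$. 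This proves items 1--3 without consulting the tables at all. Your table-based witnesses buy nothing extra there, but they do confirm that these structures appear in the paper's lists.
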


\section{Appendix}\label{App}
\subsection{Automorphism Groups of three-Dimensional  Lie algebras}

\begin{align*}
\operatorname{Aut}(3\G_{1})&=
\left\{
M\in\mathcal{M}_3(\R)~:~\det(M)\neq0 \right\}, &\operatorname{Aut}(\G_{3,1})&=
\left\{
\begin{pmatrix}
	x_{11} & x_{12} & 0 \\
	x_{21} & x_{22} & 0 \\
	x_{31} & x_{32} & x_{11}x_{22}-x_{12}x_{21}
\end{pmatrix}
\;\middle|\;
(x_{11}x_{22}-x_{12}x_{21})\neq 0
\right\},
\\
\operatorname{Aut}(\G_{3,2})&=
\left\{
\begin{pmatrix}
	x_{22} & x_{12} & x_{13} \\
	0& x_{22} & x_{23} \\
	0 & 0&1
\end{pmatrix}
\;\middle|\;
x_{22}\neq 0
\right\}, &\operatorname{Aut}(\G_{3,3})&=
\left\{
\begin{pmatrix}
	x_{11} & x_{12} & x_{13} \\
	x_{21}& x_{22} & x_{23} \\
	0 & 0&1
\end{pmatrix}
\;\middle|\;
(x_{11}x_{22}-x_{12}x_{21})\neq 0
\right\}.
\\
\operatorname{Aut}(\G_{3,4}^{\alpha\neq-1})&=
\left\{
\begin{pmatrix}
	x_{11} & 0 & x_{13} \\
	0& x_{22} & x_{23} \\
	0 & 0&1
\end{pmatrix}
\;\middle|\;
x_{11}x_{22}\neq 0
\right\}, &\operatorname{Aut}(\G_{3,4}^{\alpha=-1})&=
\left\{
\begin{pmatrix}
	x_{11} & 0 & x_{13} \\
	0& x_{22} & x_{23} \\
	0 & 0&1
\end{pmatrix},\begin{pmatrix}
0& x_{12} & x_{13} \\
x_{21} & 0& x_{23} \\
0 & 0&-1
\end{pmatrix}
\;\middle|\;
x_{11}x_{22}\neq 0,~x_{12}x_{21}\neq0
\right\}.\\
\operatorname{Aut}(3\G_{2,1}\oplus\G_1)&=
\left\{
\begin{pmatrix}
	x_{11} & x_{12} & 0 \\
	0 &1& 0 \\
	0 & x_{32} & x_{33}
\end{pmatrix}
\;\middle|\;
x_{11}x_{33}\neq 0
\right\},
\end{align*}
\subsection{Flat torsion-free connections on $3\G_{2,1}\oplus\G_1$}
In the basis $\lbrace e_1, e_2, e_3 \rbrace$, the operators $\nabla_{e_1}$, $\nabla_{e_2}$ and $\nabla_{e_3}$ are given respectively by: 
\begin{equation}\label{aff3}
	\nabla_{e_1}=\left( \begin {array}{ccc} a_{11}&a_{12}&a_{13}\\ \noalign{\medskip}
	a_{21}&a_{22}&a_{23}\\ \noalign{\medskip}a_{31}&a_{32}&a_{33}\end {array} \right),\quad
	\nabla_{e_2}=\left( \begin {array}{ccc} a_{12}-1&b_{12}&b_{13}\\ \noalign{\medskip}
	a_{22}&b_{22}&b_{23}\\ \noalign{\medskip}a_{32}&b_{32}&b_{33}\end {array} \right),\quad
	\nabla_{e_3}=\left( \begin {array}{ccc} a_{13}&b_{13}&c_{13}\\ \noalign{\medskip}
	a_{23}&b_{23}&c_{23}\\ \noalign{\medskip}a_{33}&b_{33}&c_{33}\end {array} \right),\quad
\end{equation}
where $a_{ij}$, $b_{ij}$, $c_{ij}\in \mathbb{R}$.\\
\newgeometry{left=0.1cm,right=0.1cm,top=2cm,bottom=2cm}
\textbf{Case 1.} If $\nabla^{0}\equiv\nabla^1$ which corresponds to the flat Lie algebra $\mathfrak{a}_1$ (see Table \ref{Flataffine}), then the flatness equations associated
with the connection given in \eqref{aff3} can be solved directly, giving sixteen distinct solutions:
\begin{align*}
	\nabla_{e_1}^1 e_2 &= -e_1 + a_{32} e_3, &
	\nabla_{e_2}^1 e_1 &= -2e_1 + a_{32} e_3, &
	\nabla_{e_2}^1 e_2 &= -e_2 + b_{32} e_3, &
	\nabla_{e_2}^1 e_3& = \nabla_{e_3} e_2 = -e_3.\\
	\nabla_{e_1}^2 e_1 &= a_{31}e_3, &
	\nabla_{e_1}^2 e_2 &= 2e_1, &
	\nabla_{e_2}^2 e_1 &= e_1, &
	\nabla_{e_2}^2 e_2 &= 2e_1 + b_{32} e_3.
	\\
	\nabla_{e_1}^3 e_3 &= b_{23} e_1, &
	\nabla_{e_2}^3 e_1 &= -e_1, &
	\nabla_{e_2}^3 e_3 &=\nabla_{e_3}^3 e_2 = b_{23} e_2, &
	\nabla_{e_3}^3 e_1 &= b_{23} e_1,\\
	\nabla_{e_3}^3 e_3 &= b_{23} e_1 + c_{23} e_2 + b_{23} e_3.
	\\
	\nabla_{e_1}^4 e_2 &= \lambda e_1,&
	\nabla_{e_1}^4 e_3 &=\nabla_{e_3}^4 e_1 = a_{13} e_1,&
	\nabla_{e_2}^4 e_1 &= (\lambda-1) e_1, &
	\nabla_{e_2}^4 e_2 &= \lambda e_1, \\
	\nabla_{e_2}^4 e_3 &=\nabla_{e_3}^4 e_2= \lambda e_1, &
	\nabla_{e_3}^4 e_3 &= c_{23} e_2 + \tfrac{-\lambda c_{23} + a_{13}^2}{a_{13}} e_3,&& \lambda\in \mathbb{R}, a_{13}\neq0.
	\\
	\nabla_{e_1}^5 e_2 &= e_1, &
	\nabla_{e_1}^5 e_3 &=\nabla_{e_3}^5 e_1= a_{13} e_1, &
	\nabla_{e_2}^5 e_2 &= e_2, &
	\nabla_{e_2}^5 e_3 &=\nabla_{e_3}^5 e_2= b_{13} e_1 + e_3,  \\
	\nabla_{e_3}^5 e_3 &= -\tfrac{b_{13} c_{23}}{a_{13}} e_1 + c_{23} e_2 + \tfrac{a_{13}^2 - c_{23}}{a_{13}} e_3,  &\;  a_{13}\neq0.
	\\
	\nabla_{e_1}^6 e_2 &= \lambda e_1, &
	\nabla_{e_1}^6 e_3 &=\nabla_{e_3}^6 e_1= a_{13} e_1, &
	\nabla_{e_2}^6 e_1 &= (\lambda-1) e_1, &
	\nabla_{e_2}^6 e_2 &= \lambda e_2, \\
	\nabla_{e_2}^6 e_3 &=\nabla_{e_3}^6 e_2 = a_{13} e_2, &
	\nabla_{e_3}^6 e_3 &= c_{23} e_2 + \tfrac{-\lambda c_{23} + a_{13}^{2}}{a_{13}} e_3, && \lambda\in \mathbb{R}, a_{13}\neq0.
	\\
	\nabla_{e_1}^7 e_2 &= e_1, &
	\nabla_{e_1}^7 e_3 &=\nabla_{e_3}^7 e_1= a_{13} e_1, &
	\nabla_{e_2}^7 e_2 &= e_2, &
	\nabla_{e_2}^7 e_3 &=\nabla_{e_3}^7 e_2= a_{13} e_2,  \\
	\nabla_{e_3}^7 e_3 &= c_{13} e_1 + c_{23} e_2 + \tfrac{a_{13}^{2} - c_{23}}{a_{13}} e_3,&  a_{13}\neq0.
	\\
	\nabla_{e_1}^8 e_2 &= \lambda e_1, &
	\nabla_{e_2}^8 e_1 &= (\lambda-1) e_1, &
	\nabla_{e_2}^8 e_2 &= \lambda e_2 - \tfrac{b_{33}(\lambda-b_{33})}{c_{33}} e_3, &
	\nabla_{e_2}^8 e_3 &=\nabla_{e_3}^8 e_2= b_{33} e_3, \\
	\nabla_{e_3}^8 e_3 &= c_{33} e_3,\, \lambda\in \mathbb{R}, &c_{33}&\neq0.
	\\
	\nabla_{e_1}^9 e_2 &= e_1, &
	\nabla_{e_2}^9 e_2 &= e_2 + \tfrac{b_{33}(b_{33}-1)}{c_{33}} e_3, &
	\nabla_{e_2}^9 e_3 &=\nabla_{e_3}^9 e_2= \tfrac{c_{13}b_{33}}{c_{33}} e_1 + b_{33} e_3,  &
	\nabla_{e_3}^9 e_3 &= c_{13} e_1 + c_{33} e_3,\, c_{33}\neq0.
	\\
	\nabla_{e_1}^{10} e_2 &= \lambda e_1, &
	\nabla_{e_2}^{10} e_1 &= (\lambda-1) e_1, &
	\nabla_{e_2}^{10} e_2 &= \lambda e_2 + b_{32} e_3,&\lambda\in \mathbb{R}.
	\\
	\nabla_{e_1}^{11} e_2 &= e_1, &
	\nabla_{e_2}^{11} e_2 &= \nabla_{e_2}^{11} e_3 = -c_{13}b_{32} e_1, &
	\nabla_{e_3}^{11} e_2 &= -c_{13}b_{32} e_1, &
	\nabla_{e_3}^{11} e_3 &= c_{13} e_1.
	\\
	\nabla_{e_1}^{12} e_2 &= b_{33} e_1, &
	\nabla_{e_2}^{12} e_1 &= (b_{33}-1) e_1, &
	\nabla_{e_2}^{12} e_2 &= b_{33} e_2 + b_{32} e_3, &
	\nabla_{e_2}^{12} e_3 &= \nabla_{e_3}^{12} e_2 = b_{33} e_3.
	\\
	\nabla_{e_1}^{13} e_2 &= e_1, &
	\nabla_{e_2}^{13} e_2 &= e_2 + b_{32} e_3,&
	\nabla_{e_2}^{13} e_3 &= \nabla_{e_3}^{13} e_2 = b_{13} e_1 + e_3.
	\\
	\nabla_{e_2}^{14} e_1 &= -e_1, &
	\nabla_{e_2}^{14} e_2 &= b_{32} e_3, &
	\nabla_{e_2}^{14} e_3 &= 
	\nabla_{e_3}^{14} e_2 = \tfrac{b_{32}c_{33}-b_{33}^{2}}{b_{32}} e_2 + b_{33} e_3, \\
	\nabla_{e_3}^{14} e_3 &= \tfrac{(b_{32}c_{33}-b_{33}^{2})b_{33}}{b_{32}^{2}} e_2 + c_{33} e_3,&b_{32}\neq0.
	\\
	\nabla_{e_2}^{15} e_1 &= -e_1, &
	\nabla_{e_3}^{15} e_3 &= c_{23} e_2 + c_{33} e_3.
	\\
	\nabla_{e_2}^{16} e_1 &= -e_1, &
	\nabla_{e_2}^{16} e_3 &= \nabla_{e_3}^{16} e_2 = c_{33} e_2, &
	\nabla_{e_3}^{16} e_3 &= c_{23} e_2 + c_{33} e_3.
\end{align*}
\textbf{Case 2.} If $\nabla^{0}\equiv\nabla^{2}$, corresponding to the flat Lie algebra $\mathfrak{a}_2$ (see Table \ref{Flataffine}), the flatness equations associated with the connection in \eqref{aff3} admit a direct resolution, yielding fifteen distinct solutions:
\begin{align*}
	\nabla_{e_1}^1 e_3 &= \nabla_{e_3}^1 e_1 = c_{33} e_1,  &
	\nabla_{e_2}^1 e_1 &= -e_1, &
	\nabla_{e_2}^1 e_2 &= \mu e_2, &
	\nabla_{e_2}^1 e_3 &= \nabla_{e_3}^1 e_2 = b_{23} e_2, \\
	\nabla_{e_3}^1 e_3 &= \tfrac{b_{23}(b_{23}-c_{33})}{\mu} e_2 + c_{33} e_3, &\mu \in \R^*.\\
	\nabla_{e_1}^2 e_3 &= \nabla_{e_3}^2 e_1 = c_{33} e_1, &
	\nabla_{e_2}^2 e_1 &= -e_1, &
	\nabla_{e_3}^2 e_3 &= c_{23} e_2 + c_{33} e_3.
	\\
	\nabla_{e_1}^3 e_3 &= \nabla_{e_3}^3 e_1 = c_{33} e_1, &
	\nabla_{e_2}^3 e_1 &= -e_1, &
	\nabla_{e_2}^3 e_3 &= \nabla_{e_3}^3 e_2 = c_{33} e_2, &
	\nabla_{e_3}^3 e_3 &= c_{23} e_2 + c_{33} e_3.
	\\
	\nabla_{e_1}^4 e_3 &= \nabla_{e_3}^4 e_1 = c_{33} e_1 + a_{23} e_2, &
	\nabla_{e_2}^4 e_1 &= -e_1, &
	\nabla_{e_2}^4 e_2 &= -e_2, &
	\nabla_{e_3}^4 e_3 &= c_{33} e_3.
	\\
	\nabla_{e_1}^5 e_2 &= a_{32} e_3, &
	\nabla_{e_2}^5 e_1 &= -e_1 + a_{32} e_3, &
	\nabla_{e_2}^5 e_2 &= e_2 + b_{32} e_3.
	\\
	\nabla_{e_1}^6 e_3 &= \nabla_{e_3}^6 e_1 = a_{23} e_2,&
	\nabla_{e_2}^6 e_1 &= -e_1,&
	\nabla_{e_2}^6 e_2 &= -e_2,&
	\nabla_{e_2}^6 e_3 &= \nabla_{e_3}^6 e_2 = b_{23} e_2,\\
	\nabla_{e_3}^6 e_3 &= b_{23} e_3.
	\\
	\nabla_{e_1}^7 e_1 &= a_{31} e_3,&
	\nabla_{e_2}^7 e_1 &= -e_1,&
	\nabla_{e_2}^7 e_2 &= -2e_2 + b_{32} e_3,&
	\nabla_{e_2}^7 e_3 &= \nabla_{e_3}^7 e_2 = -2e_3.
	\\
	\nabla_{e_2}^8 e_1 &= -e_1, &
	\nabla_{e_2}^8 e_2 &= \mu e_2 + b_{32} e_3, \\
	\nabla_{e_2}^8 e_3 &= 
	\nabla_{e_3}^8 e_2 = \tfrac{\mu b_{33} + b_{32}c_{33} - b_{33}^{2}}{b_{32}} e_2 + b_{33} e_3,\\
	\nabla_{e_3}^8 e_3 &= \tfrac{(\mu b_{33} + b_{32}c_{33} - b_{33}^{2}) b_{33}}{b_{32}^{2}} e_2 + c_{33} e_3, &&\mu\in\R^*, \, b_{32}\neq 0. 
	\\
	\nabla_{e_2}^9 e_1 &= -e_1, &
	\nabla_{e_2}^9 e_2 &= \mu e_2, &
	\nabla_{e_2}^9 e_3 &= 
	\nabla_{e_3}^9 e_2 = b_{23} e_2, &
	\nabla_{e_3}^9 e_3 &= c_{23} e_2 + \tfrac{-\mu c_{23} + b_{23}^{2}}{b_{23}} e_3,& \, b_{32}\neq 0.
	\\
	\nabla_{e_2}^{10} e_1 &= -e_1, &
	\nabla_{e_2}^{10} e_2 &= \mu e_2, &
	\nabla_{e_3}^{10} e_3 &= c_{33} e_3.
	\\
	\nabla_{e_2}^{11} e_1 &= -e_1, &
	\nabla_{e_2}^{11} e_2 &= b_{33} e_2, &
	\nabla_{e_2}^{11} e_3 &= \nabla_{e_3}^{11} e_2 = b_{33} e_3, &
	\nabla_{e_3}^{11} e_3 &= c_{23} e_2 + c_{33} e_3.
	\\
	\nabla_{e_2}^{12} e_1 &= -e_1, &
	\nabla_{e_2}^{12} e_2 &= -e_2, &
	\nabla_{e_2}^{12} e_3 &= \nabla_{e_3}^{12} e_2 = -e_3, &
	\nabla_{e_3}^{12} e_3 &= c_{13} e_1 + c_{23} e_2 + c_{33} e_3.
	\\
	\nabla_{e_1}^{13} e_3 &= \nabla_{e_3}^{13} e_1 = a_{13} e_1 + \tfrac{a_{13}b_{23}}{b_{13}} e_2, &
	\nabla_{e_2}^{13} e_1 &= -e_1, &
	\nabla_{e_2}^{13} e_2 &= -e_2, &
	\nabla_{e_2}^{13} e_3 &= \nabla_{e_3}^{13} e_2 = b_{13} e_1 + b_{23} e_2, \\
	\nabla_{e_3}^{13} e_3 &= (a_{13}+b_{23}) e_3, \,\, b_{13}\neq 0.
	\\
	\nabla_{e_1}^{14} e_3 &=\nabla_{e_3}^{14} e_1 = a_{13} e_1, &
	\nabla_{e_2}^{14} e_1 &= -e_1, &
	\nabla_{e_2}^{14} e_2 &= -e_2, &
	\nabla_{e_2}^{14} e_3 &= 
	\nabla_{e_3}^{14} e_2 = b_{13} e_1 - \tfrac{c_{13}}{b_{13}} e_2, \\
	\nabla_{e_3}^{14} e_3 &= c_{13} e_1 - \tfrac{c_{13}(a_{13}b_{13}+c_{13})}{b_{13}^{2}} e_2 + a_{13} e_3, & \, b_{13}\neq 0.
	\\
	\nabla_{e_2}^{15} e_1 &= -e_1, &
	\nabla_{e_2}^{15} e_2 &= -e_2 + b_{32} e_3, \\
	\nabla_{e_2}^{15} e_3 &=
	\nabla_{e_3}^{15} e_2 = b_{13} e_1 + b_{23} e_2 + \tfrac{b_{32}c_{13}}{b_{13}} e_3, \\
	\nabla_{e_3}^{15} e_3 &= c_{13} e_1 + \tfrac{b_{23}c_{13}}{b_{13}} e_2 + \tfrac{b_{13}^{2}b_{23} + b_{32}c_{13}^{2} + b_{13}c_{13}}{b_{13}^{2}} e_3, & \, b_{13}\neq 0.
\end{align*}
\textbf{Case 3.} Suppose that $\nabla^{0}\equiv\nabla^{3}$, which corresponds to the flat Lie algebra $\mathfrak{a}_3$ (see Table \ref{Flataffine}). In this case, the flatness equations associated with the connection in \eqref{aff3} can be solved explicitly, leading to the following four distinct solutions:
\begin{align*}
	\nabla_{e_1}^1 e_3 = \nabla_{e_3}^1 e_1 &= b_{23} e_1, &
	\nabla_{e_2}^1 e_1 &= -e_1, &
	\nabla_{e_2}^1 e_2 &= e_1 - e_2, &
	\nabla_{e_2}^1 e_3 &= \nabla_{e_3}^1 e_2 = -\tfrac{c_{13}}{b_{23}} e_1 + b_{23} e_2, &
	\nabla_{e_3}^1 e_3 &= c_{13} e_1 + b_{23} e_3.
	\\
	\nabla_{e_2}^2 e_1 &= -e_1, &
	\nabla_{e_2}^2 e_2 &= e_1 - e_2, &
	\nabla_{e_2}^2 e_3& = \nabla_{e_3}^2 e_2 = b_{13} e_1, &
	\nabla_{e_3}^2 e_3 &= b_{13}c_{33} e_1 + c_{33} e_3.
	\\
	\nabla_{e_2}^3 e_1 &= -e_1, &
	\nabla_{e_2}^3 e_2 &= e_1 - e_2, &
	\nabla_{e_2}^3 e_3& = \nabla_{e_3}^3 e_2 = -e_3, &
	\nabla_{e_3}^3 e_3 &= c_{13} e_1 + c_{33} e_3.
	\\
	\nabla_{e_2}^4 e_1 &= -e_1, &
	\nabla_{e_2}^4 e_2 &= e_1 - e_2 + b_{32} e_3, &
	\nabla_{e_2}^4 e_3& = \nabla_{e_3}^4 e_2 = b_{13} e_1 + b_{33} e_3, &
	\nabla_{e_3}^4 e_3 &= \tfrac{b_{13}b_{33}}{b_{32}} e_1 + \tfrac{b_{33}(b_{33}+1)}{b_{32}} e_3,\,\,b_{32}\neq0.
\end{align*}
\textbf{Case 4.} Suppose that $\nabla^{0}\equiv\nabla^{4}$, which corresponds to the flat Lie algebra $\mathfrak{a}_4$ (see Table \ref{Flataffine}). In this case, the flatness equations associated with the connection in \eqref{aff3} admit four distinct solutions:
\begin{align*}
	\nabla_{e_1}^1 e_2 &= e_1, &
	\nabla_{e_2} e_2 &= e_1 + e_2 + \tfrac{b_{33}(b_{33}-1)}{c_{33}} e_3, &
	\nabla_{e_2}^1 e_3 = \nabla_{e_3}^1 e_2 &= \tfrac{b_{33}c_{13}}{c_{33}} e_1 + b_{33} e_3, &
	\nabla_{e_3} e_3 &= c_{13} e_1 + c_{33} e_3,\,\,c_{33}\neq0.
	\\
	\nabla_{e_1}^2 e_2 &= e_1, &
	\nabla_{e_2}^2 e_2 &= e_1 + e_2 + b_{32} e_3, &
	\nabla_{e_2}^2 e_3 = \nabla_{e_3}^2 e_2 &= -b_{32}c_{13} e_1, &
	\nabla_{e_3}^2 e_3 &= c_{13} e_1.
	\\
	\nabla_{e_1}^3 e_2 &= e_1, &
	\nabla_{e_2}^3 e_2 &= e_1 + e_2 + b_{32} e_3, &
	\nabla_{e_2}^3 e_3& = \nabla_{e_3}^3 e_2 = b_{13} e_1 + e_3.
	\\
	\nabla_{e_1}^4 e_2 &= e_1, &
	\nabla_{e_1}^4 e_3 &= \nabla_{e_3}^4 e_1 = c_{33} e_1, &
	\nabla_{e_2}^4 e_2 &= e_1 + e_2, &
	\nabla_{e_2}^4 e_3 &= \nabla_{e_3}^4 e_2 = c_{33} e_2, &
	\nabla_{e_3}^4 e_3 &= c_{13} e_1 + c_{33} e_3.
\end{align*}
\textbf{Case 5.} If $\nabla^{0}\equiv\nabla^5$ which corresponds to the flat Lie algebra $\mathfrak{a}_5$ (see Table \ref{Flataffine}), then the flatness equations associated
with the connection given in \eqref{aff3} can be solved directly, giving three different solutions:
\begin{align*}
	\nabla_{e_1}^1 e_1 &= \varepsilon e_2, &
	\nabla_{e_1}^1 e_3 &= \nabla_{e_3}^1 e_1 = c_{33} e_1, &
	\nabla_{e_2}^1 e_1 &= -e_1, &
	\nabla_{e_2}^1 e_2 &= -2e_2, &
	\nabla_{e_2}^1 e_3 &= \nabla_{e_3}^1 e_2 = c_{33} e_2, \\
	\nabla_{e_3}^1 e_3 &= c_{33} e_3,
	&\varepsilon&=\pm 1.
	\\
	\nabla_{e_1}^2 e_1 &= \varepsilon e_2 + a_{31} e_3, &
	\nabla_{e_2}^2 e_1 &= -e_1, &
	\nabla_{e_2}^2 e_2 &= -2e_2 + b_{32} e_3, &
	\nabla_{e_2}^2 e_3 &= \nabla_{e_3}^2 e_2 = \tfrac{-\varepsilon b_{32}-2a_{31}}{a_{31}} e_3, &
	\nabla_{e_3}^2 e_3 &= \tfrac{\varepsilon(\varepsilon b_{32}+2a_{31})}{a_{31}^2} e_3,\\
	a_{31}&\neq0, \,\,\varepsilon=\pm 1 .
	\\
	\nabla_{e_1}^3 e_1 &= \varepsilon e_2, &
	\nabla_{e_2}^3 e_1 &= -e_1, &
	\nabla_{e_2}^3 e_2 &= -2e_2, &
	\nabla_{e_3}^3 e_3 &= c_{33} e_3,\,\, \varepsilon=\pm 1.
\end{align*}
\newpage
\restoregeometry
\subsection{Proof of Proposition~$\ref{FlatinR3}$}
Let $\nabla$ be a linear connection on $\R^3$,  viewed as $\R^2\oplus\R e_3$, with basis $\{e_1,e_2,e_3\}$. Then, $\nabla$ can be expressed as 
\begin{align}\label{Connegeneral}
\begin{split}
\nabla_xy&=\nabla^0_xy+\theta(x,y)e_3,\\
\nabla_xe_3&=\beta(x)+\gamma(x)e_3,\\
\nabla_{e_3}x&=\beta(x)+\gamma(x)e_3,\\
\nabla_{e_3}e_3&=\zeta+\lambda e_3,
\end{split}
\end{align}

for all $x, y \in \R^2$, where $\zeta\in\R^2$,  $\theta \in \mathcal{S}^2(\R^2)$ is a symmetric form, $\beta, \eta: \R^2 \to \R^2$ are endomorphisms of $\R^2$, $\gamma: \R \to \R$ is a one-form, and $\nabla^0$ is a torsion-free connection on $\R^2$. The curvature tensor $\mathcal{R}^\nabla$ of $\nabla$ is given by
\begin{align}\label{flatness-equations}
\mathcal{R}^\nabla(x,y)z=\nabla_x\nabla_yz-\nabla_y\nabla_xz,\quad\quad\text{for all~} x,y,z\in\R^3.
\end{align}
The condition for $\nabla$ to be flat is $\mathcal{R}^\nabla = 0$. We will refer to the corresponding system of equations as the \textit{flatness-equations}. In the basis $\{e_1, e_2, e_3\}$, the operators $\nabla_{e_1}$, $\nabla_{e_2}$, and $\nabla_{e_3}$ are given respectively by the matrices:
\begin{align}
\nabla_{e_1}&=\left( \begin {array}{ccc} a_{11}&a_{12}&a_{{13}}\\ \noalign{\medskip}a_{21}&a_{22}&a_{{23}}\\ \noalign{\medskip}a_{{31}}&a_{{32}}&a_{{33}}\end {array}
 \right),&\nabla_{e_2}&=\left( \begin {array}{ccc} a_{12}&b_{12}&b_{{13}}\\ \noalign{\medskip}a_{22}&b_{22}&b_{{23}}\\ \noalign{\medskip}a_{{32}}&b_{{32}}&b_{{33}}\end {array}
 \right),&\nabla_{e_3}&=\left( \begin {array}{ccc} a_{{13}}&b_{{13}}&c_{{13}}
\\ \noalign{\medskip}a_{{23}}&b_{{23}}&c_{{23}}
\\ \noalign{\medskip}a_{{33}}&b_{{33}}&c_{{33}}\end {array}
 \right),
\end{align}
where, $a_{ij}, b_{ij}, c_{ij}\in\R$.

Suppose now that $\nabla^0$ is flat, i.e., it has vanishing curvature. Then $\nabla^0$ is isomorphic to one of the flat torsion-free connections listed in Table~\ref{FlatR2}.

\textbf{Case 1.} If $\nabla^0 \equiv 0$, then it is straightforward to solve the flatness-equations associated with the connection given in $(\ref{Connegeneral})$, and one finds that they have 12 distinct solutions. 

The first solution corresponds to the following flat torsion-free connection:
\begin{align}\label{Sol01}
\begin{split}
\nabla_{e_1}e_1&=\tfrac{a_{32}^2}{b_{32}}e_3,\quad\nabla_{e_1}e_2=\nabla_{e_2}e_1=a_{32}e_3,\quad\nabla_{e_1}e_3=\nabla_{e_3}e_1=\tfrac{ \left(c_{33} -b_{23} \right)a_{32}^{2}-a_{33}^{2}b_{32}}{a_{32}^{2}}e_1+\tfrac{a_{32}b_{23}}{b_{32}}e_2+a_{33}e_3,\\
\nabla_{e_2}e_2&=b_{32}e_3,\quad\nabla_{e_2}e_3=\nabla_{e_3}e_2=-\tfrac{b_{32}\left((b_{23}-c_{33})a_{32}^2+a_{33}^2b_{32}\right)}{a_{32}^3}e_1+b_{23}e_2+\tfrac{a_{33}b_{32}}{a_{32}}e_3,\\
\nabla_{e_3}e_3&=-\tfrac{b_{32}\left((b_{23}-c_{33})a_{32}^2+a_{33}^2b_{32}\right)a_{33}}{a_{32}^4}e_1+\tfrac{a_{33}b_{32}}{a_{32}}e_2+c_{33}e_3,
\end{split}
\end{align}
where, $a_{ij},b_{ij},c_{ij}\in\R$  and $b_{32}a_{32}\neq0$. Next, we classify the previously obtained flat torsion-free connections up to isomorphism. Two flat torsion-free connections $\nabla^1$ and $\nabla^2$ on $\R^3$ are defined to be isomorphic if there exists an automorphism $\Psi \in \mathrm{GL}_3(\R)$ such that
\begin{align}\label{Relation}
\nabla^2_x = \Psi \circ \nabla^1_{\Psi^{-1}(x)} \circ \Psi^{-1}, \quad \text{for all } x \in \mathbb{R}^3.
\end{align}
Assume $a_{33} \neq 0$ and $b_{23}\neq0$. Then, by applying the isomorphism $\Psi \in \mathrm{GL}_3(\R)$ defined by
\begin{align*}
    \Psi(e_1) &=-a_{33}e_1,\quad
    \Psi(e_2) =-\tfrac{a_{33}^3b_{32}^2}{a_{32}^3b_{23}}e_1-\tfrac{a_{33}b_{32}}{a_{32}}e_3,\quad
    \Psi(e_3) =\tfrac{a_{33}^2b_{32}}{a_{32}^2}\big(e_2-e_3\big),
\end{align*}
to the connection in (\ref{Sol01}) via Relation~(\ref{Relation}), we obtain the isomorphic connection:
\begin{align}\label{Sol01,1}
    \nabla_{e_2} e_2 &= e_1+\lambda_1 e_2,\quad\nabla_{e_2} e_3 = e_1+\lambda_1 e_3,\quad\nabla_{e_3}e_2=e_1+\lambda_1 e_3,\quad\nabla_{e_3}e_3=e_2-e_3,\quad\lambda_1\in\R
\end{align}
For the case $a_{33} \neq 0$ and $b_{23} = 0$, the isomorphism $\Psi \in \mathrm{GL}_3(\R)$ defined by
\begin{align*}
    \Psi(e_1) &= a_{33}e_1, \quad
    \Psi(e_2) = e_1 + \tfrac{a_{33}b_{32}}{a_{32}}e_3, \quad
    \Psi(e_3) = \tfrac{a_{33}^2b_{32}}{a_{32}^2}(e_2 + e_3),
\end{align*}
transforms the connection in (\ref{Sol01}) into the isomorphic connection:
\begin{align}\label{Sol01,2}
    \nabla_{e_2} e_2 &= \lambda_2 e_2,\quad\nabla_{e_2} e_3 = \lambda_2 e_3,\quad\nabla_{e_3}e_2=\lambda_2 e_3,\quad\nabla_{e_3}e_3=e_2+e_3,\quad\lambda_2\in\R
\end{align}
Now, if $a_{33} = 0$ and $b_{23} \neq 0$, we can apply the isomorphism $\Psi \in \mathrm{GL}_3(\mathbb{R})$ defined by
\begin{align*}
    \Psi(e_1) &= xe_1, \quad
    \Psi(e_2) =\tfrac{x^3b_{32}^2}{a_{32}^3b_{23}} e_1 + \tfrac{xb_{32}}{a_{32}}e_3, \quad
    \Psi(e_3) = \tfrac{x^2b_{32}}{a_{32}^2}e_2,
\end{align*}
to the connection in (\ref{Sol01}) to obtain the isomorphic connection (for a suitable parameter $x \in \R^\ast$):
\begin{align}\label{Sol01,3}
    \nabla_{e_2} e_2 &= \varepsilon_1 e_2,\quad\nabla_{e_2} e_3 =e_1+\varepsilon_1 e_3,\quad\nabla_{e_3}e_2=e_1+\varepsilon_1 e_3,\quad\nabla_{e_3}e_3=e_2,\quad\varepsilon_1=0,\pm1.
\end{align}
If $a_{33}=0$ and $b_{23}=0$, we can apply the isomorphism $\Psi \in \mathrm{GL}_3(\mathbb{R})$ defined by
\begin{align*}
    \Psi(e_1) &= xe_1, \quad
    \Psi(e_2) = e_1 + \tfrac{xb_{32}}{a_{32}}e_3, \quad
    \Psi(e_3) = \tfrac{x^2b_{32}}{a_{32}^2}e_2,
\end{align*}
to the connection in (\ref{Sol01}) to obtain the isomorphic connection (for a suitable parameter $x \in \R^\ast$):
\begin{align}\label{Sol01,4}
    \nabla_{e_2} e_2 &= \varepsilon_2 e_2,\quad\nabla_{e_2} e_3 =\varepsilon_2 e_3,\quad\nabla_{e_3}e_2=\varepsilon_1 e_3,\quad\nabla_{e_3}e_3=e_2,\quad\varepsilon_2=0,\pm1.
\end{align}
The automorphism $\Psi \in \mathrm{GL}_3(\mathbb{R})$, defined by
\begin{align*}
    \Psi(e_1) &= -\tfrac{\lambda_1\sqrt{
    \varepsilon_1(4\lambda_1+1)}}{2\varepsilon_1^2}
e_1, \quad
    \Psi(e_2) = \tfrac{\sqrt{
    \varepsilon_1(4\lambda_1+1)}}{2\varepsilon_1^2}
e_1+\tfrac{\lambda_1}{\varepsilon_1}e_2, \quad
    \Psi(e_3) = -\tfrac{1}{2\varepsilon_1}e_2-\tfrac{\sqrt{
    \varepsilon_1(4\lambda_1+1)}}{2\varepsilon_1}
e_3,
\end{align*}
establishes an isomorphism between the connection in (\ref{Sol01,1}) and the one in (\ref{Sol01,3}) if and only if $\varepsilon_1 = \pm 1$ and $\lambda_1 \notin \{0, -\frac{1}{4}\}$. Similarly, the isomorphism $\Psi$ defined by
\begin{align*}
    \Psi(e_1) &=e_1, \quad
    \Psi(e_2) = \tfrac{\lambda_2}{\varepsilon_1}e_2, \quad
    \Psi(e_3) =\tfrac{\sqrt{
    \varepsilon_1(4\lambda_1+1)}}{2\varepsilon_1^2}e_1 +\tfrac{1}{2\varepsilon_1}e_2+\tfrac{\sqrt{
    \varepsilon_1(4\lambda_1+1)}}{2\varepsilon_1}
e_3,
\end{align*}
establishes an isomorphism between the connection in (\ref{Sol01,2}) and the one in (\ref{Sol01,3}) if and only if $\varepsilon_1 = \pm 1$ and $\lambda_2 \notin \{0, -\frac{1}{4}\}$.

In the same manner, the automorphism $\Psi$ given by
\begin{align*}
    \Psi(e_1) &=-\varepsilon_1 e_1, \quad
    \Psi(e_2) = \tfrac{\varepsilon_1}{\varepsilon_2}e_2, \quad
    \Psi(e_3) =e_1 +\tfrac{\sqrt{
    \varepsilon_1\varepsilon_2}}{\varepsilon_2}
e_3,
\end{align*}
yields an isomorphism from the connection (\ref{Sol01,3}) to the connection (\ref{Sol01,4}) if and only if $\varepsilon_1\varepsilon_2 \neq 0$.

Now the isomorphism $\Psi$ defined by
\begin{align*}
    \Psi(e_1) &= e_1, \quad
    \Psi(e_2) =-4e_1+e_2 , \quad
    \Psi(e_3) =4e_1-e_3,
\end{align*}
yields an isomorphism from the connection (\ref{Sol01,3}) to the connection (\ref{Sol01,4}) if and only if $\lambda_1=\lambda_2=-\frac{1}{4}$.

As a conclusion, the remaining non-isomorphic flat torsion-free connections are:
\begin{itemize}
    \item Connection (\ref{Sol01,1}) with $\lambda_1 = 0$,
    \item Connection (\ref{Sol01,2}) with $\lambda_2 = 0$ or $\lambda_2 = -\frac{1}{4}$,
    \item Connection (\ref{Sol01,3}) with $\varepsilon_1 = 0, \pm 1$,
    \item Connection (\ref{Sol01,4}) with $\varepsilon_2 = 0$.
\end{itemize}

Applying the automorphism $\Psi$ given by
\begin{align*}
    \Psi(e_1) &= e_1, \quad
    \Psi(e_2) = -\tfrac{1}{4}e_3, \quad
    \Psi(e_3) = e_2+\tfrac{1}{2}e_3,
\end{align*}
to connection (\ref{Sol01,2}) with $\lambda_2 = -\frac{1}{4}$, we obtain the following simplified connection:
\begin{align}
    \nabla_{e_2}e_3=e_2,\quad\nabla_{e_3}e_2=e_2,\quad\nabla_{e_3}e_3=e_3.
\end{align}

Furthermore, by applying the automorphism $\Psi$ defined by
\begin{align*}
    \Psi(e_1) &= e_1, \quad
    \Psi(e_2) = \varepsilon_1e_3, \quad
    \Psi(e_3) = -\tfrac{1}{\epsilon_1}e_1+e_2,
\end{align*}
to the connection given in (\ref{Sol01,3}) with $\varepsilon_1=\pm1$, we can show that the latter is isomorphic to the following:
\begin{align}
\nabla_{e_2}e_2=\varepsilon e_3,\quad\nabla_{e_2}e_3=e_2,\quad\nabla_{e_3}e_2=e_2,\quad\nabla_{e_3}e_3=e_3,\quad\varepsilon=\pm1.
\end{align}
We summarize these connections with the following notations:
\begin{align}
\h_{0,1}:&~~ &\nabla_{e_2}e_2&=e_1,&\nabla_{e_2}e_3&=e_1,&\nabla_{e_3}e_2&=e_1,&\nabla_{e_3}e_3&=e_2-e_3,\\
\h_{0,2}:&~~&\nabla_{e_3}e_3&=e_2+e_3,\\
\h_{0,3}:&~~&\nabla_{e_2}e_3&=e_2,&\nabla_{e_3}e_2&=e_2,&\nabla_{e_3}e_3&=e_3,\\
\h_{0,4}:&~~&\nabla_{e_2}e_3&=e_1,&\nabla_{e_3}e_2&=e_1,&\nabla_{e_3}e_3&=e_2,\\
\h_{0,5}:&~~&\nabla_{e_2}e_2&=\varepsilon e_3,&\nabla_{e_2}e_3&=e_2,&\nabla_{e_3}e_2&=e_2,&\nabla_{e_3}e_3&=e_3,\\
\h_{0,6}:&~~&\nabla_{e_3}e_3&=e_2,
\end{align}
Let us now consider the second solution, which is given by:
\begin{align}\label{Sol02}
\begin{split}
\nabla_{e_1}e_1=a_{31}e_3,\quad\nabla_{e_1}e_2=a_{32}e_3,\quad\nabla_{e_2}e_1=a_{32}e_3,\quad\nabla_{e_2}e_2=b_{32}e_3,
\end{split}
\end{align}
$a_{ij},b_{ij}\in\R$.

If $a_{31} \neq 0$, applying the following
\begin{align*}
    \Psi(e_1) &=e_1 , \quad
    \Psi(e_2) =\tfrac{a_{32}}{a_{31}}e_1+xe_2 , \quad
    \Psi(e_3) = \tfrac{1}{a_{31}}e_3,
\end{align*}
to the connection given in (\ref{Sol02}), we obtain  (for a suitable parameter $x \in \R^\ast$):
\begin{align}\label{Sol02,1}
\nabla_{e_1}e_1&=e_3,~~\nabla_{e_2}e_2=\varepsilon e_3,~~\varepsilon=0,\pm1.
\end{align}
If $a_{31} = 0$ and $a_{32} \neq 0$, we apply the automorphism $\Psi$ defined by
\begin{align*}
    \Psi(e_1) &= a_{32}e_1, \quad
    \Psi(e_2) = \tfrac{b_{32}}{2}e_1 + x e_2, \quad
    \Psi(e_3) =e_3,
\end{align*}
to the connection in (\ref{Sol02}). This yields
\begin{align}\label{Sol02,2}
    \nabla_{e_1} e_2 &= e_3, \quad
    \nabla_{e_2} e_1 =  e_3.
\end{align}
Now, if $a_{31} = 0$ and $a_{32} = 0$, we apply the automorphism $ \Psi$ defined by
\begin{align*}
    \Psi(e_1) &= e_1, \quad
    \Psi(e_2) = e_2, \quad
    \Psi(e_3) =x e_3,
\end{align*}
to the connection in (\ref{Sol02}). This yields (for a suitable parameter $x \in \R^\ast$)
\begin{align}\label{Sol02,3}
    \nabla_{e_2} e_2 =\varepsilon e_3,\quad\varepsilon =0,1.
\end{align}
It is easy to show that the connection given in (\ref{Sol02,1}) is isomorphic only to the connection given by $\mathfrak{h}_{0,6}$ with $\varepsilon = 0$ via the automorphism defined by
\begin{align*}
    \Psi(e_1) &= e_3, \quad
    \Psi(e_2) = e_1, \quad
    \Psi(e_3) = e_2.
\end{align*}
Otherwise, we consider the new flat Lie algebra given by:
\begin{align}
\h_{0,7}:&~~&\nabla_{e_1}e_1&=e_3,\quad&\nabla_{e_2}e_2&=\varepsilon e_3,\quad&\varepsilon=\pm1&.
\end{align}
In the same manner, the automorphism $\Psi$ defined by
\begin{align*}
    \Psi(e_1) = e_1-e_2, \quad
    \Psi(e_2) = e_1+e_2, \quad
    \Psi(e_3) = 2e_3,
\end{align*}
yields an isomorphism between the connection given in (\ref{Sol02,2}) and that given in by $\mathfrak{h}_{0,7}$.

Finally, the automorphism $\Psi$ defined by
\begin{align*}
    \Psi(e_1) &= e_1, \quad
    \Psi(e_2) = e_3, \quad
    \Psi(e_3) = e_2,
\end{align*}
yields an isomorphism between the connection given in (\ref{Sol02,3}) with $\varepsilon = 1$ and that given by $\h_{0,6}$. Otherwise, we denote by $(\h_{0,0}, \nabla)$ the vanishing flat Lie algebra, i.e., $\nabla \equiv 0$.

Let us consider the third solution given by
\begin{align}\label{Sol03}
\begin{split}
\nabla_{e_1}e_1&=a_{31}e_3,~~\nabla_{e_1}e_3=\nabla_{e_3}e_1=\tfrac{a_{31}c_{33}-a_{33}^2}{a_{31}}e_1+\tfrac{a_{31}c_{23}}{a_{33}}e_2+a_{33}e_3,\\
\nabla_{e_3}e_3&=\tfrac{(a_{31}c_{33}-a_{33}^2)a_{33}}{a_{31}}e_1+c_{23}e_2+c_{33}e_3,
\end{split}
\end{align}
where, $a_{ij},c_{ij}\in\R$,  and $a_{31}a_{33}\neq0$.

If $a_{33}^2-a_{31}c_{33}\neq0$, we apply the automorphism $\Psi$ defined by
\begin{align*}
    \Psi(e_1) &=\tfrac{a_{31}c_{33}-a_{33}^2}{a_{33}} e_1-\tfrac{a_{31}c_{23}}{a_{33}(a_{31}c_{33}-a_{33}^2)}e_2-\tfrac{a_{31}c_{33}-a_{33}^2}{a_{33}}e_3, \quad
    \Psi(e_2) = e_2, \quad
    \Psi(e_3) = \tfrac{a_{31}c_{33}-a_{33}^2}{a_{31}}e_1,
\end{align*}
to the third solution (\ref{Sol03}) to obtain
\begin{align}\label{Sol03,1}
\nabla_{e_1}e_1=\lambda e_1-e_3,~~\nabla_{e_1}e_3=e_1,~~\nabla_{e_3}e_1=e_1,~~\nabla_{e_3}e_3=e_3,\quad\lambda\in\R.
\end{align}
If $a_{33}^2-a_{31}c_{33}=0$ and $c_{23}\neq0$, we apply the automorphism $\Psi$ defined by
\begin{align*}
    \Psi(e_1) &=\tfrac{a_{31}c_{33}-a_{33}^2}{a_{33}} e_1-\tfrac{a_{31}c_{23}}{a_{33}(a_{31}c_{33}-a_{33}^2)}e_2-\tfrac{a_{31}c_{33}-a_{33}^2}{a_{33}}e_3, \quad
    \Psi(e_2) = e_2, \quad
    \Psi(e_3) = \tfrac{a_{31}c_{33}-a_{33}^2}{a_{31}}e_1,
\end{align*}
to the third solution (\ref{Sol03}) to obtain
\begin{align}\label{Sol03,2}
\nabla_{e_1}e_1&=e_1+ e_2,~~\nabla_{e_3}e_3=-e_2.
\end{align}

If $a_{33}^2-a_{31}c_{33}=0$ and $c_{23}=0$, we apply the automorphism $\Psi$ defined by
\begin{align*}
    \Psi(e_1) &=a_{33}e_1+e_3, \quad
    \Psi(e_2) =e_2, \quad
    \Psi(e_3) =\tfrac{a_{33}^2}{a_{31}}e_1,
\end{align*}
to the third solution (\ref{Sol03}) to obtain
\begin{align}\label{Sol03,3}
\nabla_{e_1}e_1&=e_1.
\end{align}

We consider the connection given in (\ref{Sol03,1}). If $\lambda= \pm 2$, the automorphism $\Psi$ defined by
\begin{align*}
    \Psi(e_1) &=e_2\pm e_3, \quad
    \Psi(e_2) = e_1, \quad
    \Psi(e_3) =  e_3,
\end{align*}
yields an isomorphism to the connection given by $\h_{0,3}$. If $\lambda\neq\pm2$, the automorphism $\Psi$ defined by
\begin{align*}
    \Psi(e_1) &=\tfrac{\sqrt{\varepsilon(\lambda^2-4)}}{2\varepsilon}e_2+\tfrac{\lambda}{2}e_3, \quad
    \Psi(e_2) = e_1, \quad
    \Psi(e_3) =  e_3,
\end{align*}
yields an isomorphism to the connection given by $\h_{0,5}$. We now consider the connection given in (\ref{Sol03,2}). The automorphism $\Psi$ defined by
\begin{align*}
    \Psi(e_1) &=e_2- e_3, \quad
    \Psi(e_2) = -e_1, \quad
    \Psi(e_3) =  e_2,
\end{align*}
yields an isomorphism to the connection given by $\h_{0,1}$.

The automorphism $\Psi$, defined by
\begin{align*}
    \Psi(e_1) &= e_2 + e_3, \quad
    \Psi(e_2) = e_2, \quad
    \Psi(e_3) = e_1,
\end{align*}
establishes an isomorphism between the connection given in $(\ref{Sol03,3})$ and the connection defined by the Lie algebra $\mathfrak{h}_{0,2}$.

The fourth solution is given by the following flat, torsion-free connection:
\begin{align}\label{Sol04}
\nabla_{e_1}e_1&=a_{31}e_3,~~\nabla_{e_1}e_3=\nabla_{e_3}e_1=a_{13}e_1+a_{23}e_2,~~\nabla_{e_3}e_3=a_{13}e_3,
\end{align}
where, $a_{ij}\in\R$. 

If $a_{13}\neq0$, we apply the automorphism defined by 
\begin{align*}
\Psi(e_1) &= -\tfrac{a_{23}}{a_{13}} e_2+xe_3, \quad
    \Psi(e_2) = e_2, \quad
    \Psi(e_3) =a_{13} e_1,
\end{align*}
to the connection given in $(\ref{Sol04})$, we obtain (for a suitable parameter $x \in \R^\ast$)
\begin{align}\label{Sol04,1}
\nabla_{e_1}e_1=e_3,~~\nabla_{e_1}e_3=e_1,~~\nabla_{e_3}e_1=e_3,~~\nabla_{e_3}e_3=\varepsilon_1e_1,~~\varepsilon=0,\pm1.
\end{align}
If $a_{13}=0$ and $a_{23}a_{31}\neq0$, we apply the automorphism $\Psi$ defined by
\begin{align*}
\Psi(e_1) &= e_3, \quad
    \Psi(e_2) =\tfrac{1}{a_{23}a_{31}} e_2, \quad
    \Psi(e_3) =\tfrac{1}{a_{31}} e_1,
\end{align*}
to the connection given in $(\ref{Sol04})$, we obtain
\begin{align}\label{Sol04,2}
\nabla_{e_1}e_3&=e_2,~~\nabla_{e_3}e_1=e_2,~~\nabla_{e_3}e_3=e_1.
\end{align}
If $a_{13}=0$ and $a_{23}=0$, we apply the automorphism $\Psi$ defined by
\begin{align*}
\Psi(e_1) &= e_3, \quad
    \Psi(e_2) = e_2, \quad
    \Psi(e_3) =x e_1,
\end{align*}
to the connection given by $(\ref{Sol04})$, we obtain (for a suitable parameter $x \in \R^\ast$)
\begin{align}\label{Sol04,3}
\nabla_{e_3}e_3&=\varepsilon_1e_1,~~
\varepsilon_1=0,1.
\end{align}

If $a_{13}=0$ and $a_{31}=0$, we apply the automorphism $\Psi$ defined by
\begin{align*}
\Psi(e_1) &= e_3, \quad
    \Psi(e_2) = e_2, \quad
    \Psi(e_3) =x e_1,
\end{align*}
to the connection given by $(\ref{Sol04})$, we obtain (for a suitable parameter $x \in \R^\ast$)
\begin{align}\label{Sol04,4}
\nabla_{e_1}e_3&=\nabla_{e_3}e_1=\varepsilon_1e_2,~~
\varepsilon_1=0,1.
\end{align}

We consider the connection given in $(\ref{Sol04,1})$. If $\varepsilon_1=0$, the automorphism $\Psi$ defined by
\begin{align*}
\Psi(e_1) &= e_3, \quad
    \Psi(e_2) = e_1, \quad
    \Psi(e_3) = e_2,
\end{align*}
yields an isomorphism to the connection given by $\h_{0,3}$. Othewise, if $\varepsilon_1=\pm1$, the automorphism $\Psi$ defined by
\begin{align*}
\Psi(e_1) &= e_3, \quad
    \Psi(e_2) = e_1, \quad
    \Psi(e_3) =\tfrac{\sqrt{\varepsilon\varepsilon_1}}{\varepsilon} e_2,
\end{align*}
yields an isomorphism to the connection given by $\h_{0,5}$.

The automorphism $\Psi$, defined by
\begin{align*}
\Psi(e_1) &= e_2, \quad
    \Psi(e_2) = e_1, \quad
    \Psi(e_3) =e_3,
\end{align*}
establishes an isomorphism between the connection given in $(\ref{Sol04,2})$ and the connection defined by the flat Lie algebra $\h_{0,4}$.

Consider the connection given in $(\ref{Sol04,3})$. If $\varepsilon_1=0$, then this connection is actualy the vanishing flat torsion-free connection. Otherwise, let $\varepsilon_1=1$. The automorphism $\Psi$ defined by 
\begin{align*}
\Psi(e_1) &= e_2, \quad
    \Psi(e_2) = e_1, \quad
    \Psi(e_3) =e_3,
\end{align*}
yields an isomorphism to the connection given by $\h_{0,6}$.

Let us consider the connection given in $(\ref{Sol04,4})$. If $\varepsilon_1=0$, then this connection is actualy the vanishing flat torsion-free connection. Otherwise, let $\varepsilon_1=1$. The automorphism $\Psi$ defined by 
\begin{align*}
\Psi(e_1) &= e_1-e_2, \quad
    \Psi(e_2) = 2e_3, \quad
    \Psi(e_3) =e_1+e_2,
\end{align*}
yields an isomorphism to the connection given by $\h_{0,7}$.

The fifth case is described by the following flat, torsion-free connection:
\begin{align}\label{Sol05}
	\nabla_{e_1}e_3&=\nabla_{e_3}e_1=a_{13}e_1+a_{23}e_2,~~\nabla_{e_2}e_3=\nabla_{e_3}e_2=\tfrac{b_{23}a_{13}}{a_{23}}e_1+b_{23}e_2,~~\nabla_{e_3}e_3=c_{13}e_1+c_{23}e_2+(a_{13}+b_{23})e_3,
\end{align}
where, $a_{ij},b_{ij},c_{ij}\in\R$.

Assume that $a_{13} + b_{23} \neq 0$. In this case, we apply the automorphism $\Psi$ defined by
\begin{align*}
	\Psi(e_1) &=\tfrac{a_{23}}{a_{13}+b_{23}} e_3, \quad
	\Psi(e_2) =e_1+ \tfrac{b_{23}}{a_{13}+b_{23}} e_3, \quad
	\Psi(e_3) =\tfrac{(b_{23}-a_{13})c_{23}+2a_{23}c_{13}}{(a_{13}+b_{23})^2}e_1+(a_{13}+b_{23})e_2+\tfrac{a_{23}c_{13}+b_{23}c_{23}}{(a_{13}+b_{23})^2}e_3,
\end{align*}
to the connection given in~\eqref{Sol05}, we obtain
\begin{align}\label{Sol05,1}
	\nabla_{e_2}e_2&=e_2,~~\nabla_{e_2}e_3=e_1+e_3,~~\nabla_{e_3}e_2=e_1+e_3.
\end{align}

Assuming that $a_{13} = -b_{23}$, we apply the automorphism $\Psi$ given by
\begin{align*}
	\Psi(e_1) &=xe_3, \quad
	\Psi(e_2) =e_1+\tfrac{xb_{23}}{a_{23}}e_3, \quad
	\Psi(e_3) =\tfrac{a_{23}}{x}e_2+\tfrac{xc_{23}}{2a_{23}}e_3,
\end{align*}
to the connection given in $(\ref{Sol05})$, we obtain (for a suitable parameter $x\in\R^\ast$)
\begin{align}\label{Sol05,2}
	\nabla_{e_2}e_2&=\varepsilon_1 e_3,~~\nabla_{e_2}e_3=e_1,~~\nabla_{e_3}e_2=e_1,~~\varepsilon_1=0,1.
\end{align}

Let us consider the connection given in~\eqref{Sol05,1}. 
The automorphism $\Psi$ defined by
\begin{align*}
	\Psi(e_1) &= -e_1, &
	\Psi(e_2) &= e_3, &
	\Psi(e_3) &= e_1 + e_2
\end{align*}
induces an isomorphism to the connection corresponding to $\h_{0,3}$.

Consider the connection given in~\eqref{Sol05,2}. 

If $\varepsilon_1 = 1$, the automorphism
\begin{align*}
	\Psi(e_1) &= e_1, &
	\Psi(e_2) &= e_3, &
	\Psi(e_3) &= e_2
\end{align*}
induces an isomorphism to the connection corresponding to $\h_{0,4}$. 

If $\varepsilon_1 = 0$, the automorphism
\begin{align*}
	\Psi(e_1) &= e_3, &
	\Psi(e_2) &= \tfrac{1}{2} e_1 - \tfrac{1}{2} e_2, &
	\Psi(e_3) &= e_1 + e_2
\end{align*}
establishes an isomorphism between the connection given in~\eqref{Sol05,2} and the connection defined by the flat Lie algebra $\h_{0,7}$ with $\varepsilon = -1$.

The sixth solution corresponds to the flat, torsion-free connection:
\begin{align}\label{Sol06}
	\nabla_{e_1}e_3&=\nabla_{e_3}e_1=a_{13}e_1,~~\nabla_{e_2}e_3=\nabla_{e_3}e_2=b_{13}e_1,~~\nabla_{e_3}e_3=c_{13}e_1+c_{23}e_2+a_{13}e_3.
\end{align}
If $a_{13}\neq0$, we apply the automorphism $\Psi$ defined by
\begin{align*}
	\Psi(e_1) &=e_3, &
	\Psi(e_2)&=e_1+\tfrac{b_{13}}{a_{13}}e_3, &
	\Psi(e_3) &=-\tfrac{c_{23}}{a_{13}}e_1+a_{13}e_2+\tfrac{a_{13}c_{13}+b_{13}c_{23}}{a_{13}^2}e_3,
\end{align*}
to the connection given in $(\ref{Sol06})$, we obtain
\begin{align}\label{Sol06,1}
	\nabla_{e_2}e_2&=e_2,~~\nabla_{e_2}e_3=\nabla_{e_3}e_2=e_3.
\end{align}

If $a_{13}=0$ and $b_{13}c_{23}\neq0$, we apply the automorphism $\Psi$ defined by
\begin{align*}
	\Psi(e_1) &=\tfrac{1}{b_{13}c_{23}} e_3, &
	\Psi(e_2) &=\tfrac{1}{c_{23}}e_1-\tfrac{c_{13}}{c_{23}^2b_{13}}e_3, &
	\Psi(e_3)&=e_2,
\end{align*}
to the connection given in $(\ref{Sol06})$, we obtain
\begin{align}\label{Sol06,2}
	\nabla_{e_1}e_2&=\nabla_{e_2}e_1=e_3,~~\nabla_{e_2}e_2=e_1.
\end{align}

Assume that $a_{13} = 0$, $b_{13} \neq 0$, and $c_{23} = 0$. 
In this case, we apply the automorphism $\Psi$ defined by
\begin{align*}
	\Psi(e_1) &=\tfrac{1}{b_{13}c_{23}} e_3, &
	\Psi(e_2) &=\tfrac{1}{c_{23}}e_1-\tfrac{c_{13}}{c_{23}^2b_{13}}e_3, &
	\Psi(e_3) &=e_2,
\end{align*}
to the connection given in $(\ref{Sol06})$, we obtain
\begin{align}\label{Sol06,3}
	\nabla_{e_1}e_2&=\nabla_{e_2}e_1=e_3.
\end{align}

If $a_{13}=0$, $b_{13}=0$ and $c_{23}\neq0$, we apply the automorphism $\Psi$ defined by
\begin{align*}
	\Psi(e_1) &= e_3, &
	\Psi(e_2)& =\tfrac{1}{c_{23}}e_1-\tfrac{c_{13}}{c_{23}}e_3, &
	\Psi(e_3) &=e_2,
\end{align*}
to the connection given in $(\ref{Sol06})$, we obtain
\begin{align}\label{Sol06,4}
	\nabla_{e_2}e_2&=e_1.
\end{align}

If $a_{13}=0$, $b_{13}=0$ and $c_{23}=0$, we apply the automorphism $\Psi$ defined by
\begin{align*}
	\Psi(e_1) &= xe_3, &
	\Psi(e_2) &=e_1, &
	\Psi(e_3) &=e_2,
\end{align*}
to the connection given in $(\ref{Sol06})$, we obtain (for a suitable parameter $x\in\R^\ast$)
\begin{align}\label{Sol06,5}
	\nabla_{e_2}e_2&=\delta e_3,~~\delta=0,1.
\end{align}

On the other hand, the automorphism $\Psi$, defined by
\begin{align*}
	\Psi(e_1) &= e_1, &
	\Psi(e_2) &= e_3, &
	\Psi(e_3) &= e_2,
\end{align*}
induces an isomorphism between the connection given in~\eqref{Sol06,1} and the connection corresponding to the flat Lie algebra $\h_{0,3}$.

The automorphism $\Psi$, given by
\begin{align*}
	\Psi(e_1) &= e_2, &
	\Psi(e_2) &= e_3, &
	\Psi(e_3) &= e_1,
\end{align*}
maps the connection in~\eqref{Sol06,2} to the connection corresponding to the flat Lie algebra $\h_{0,4}$.

The automorphism $\Psi$, defined by
\begin{align*}
	\Psi(e_1) &= \tfrac{1}{2} e_1 - \tfrac{1}{2} e_2, &
	\Psi(e_2) &= e_1 + e_2, &
	\Psi(e_3) &= e_3,
\end{align*}
transforms the connection in~\eqref{Sol06,3} into the one corresponding to the flat Lie algebra $\h_{0,7}$ with $\varepsilon = -1$.

The automorphism $\Psi$, defined by
\begin{align*}
	\Psi(e_1) &= e_2, &
	\Psi(e_2) &= e_3, &
	\Psi(e_3) &= e_1,
\end{align*}
maps the connection in~\eqref{Sol06,4} to the one corresponding to the flat Lie algebra $\h_{0,6}$.

Consider the connection given in~\eqref{Sol06,5}. 

If $\delta = 0$, this connection reduces to the trivial (vanishing) flat, torsion-free connection. 
Otherwise, for $\delta = 1$, the automorphism
\begin{align*}
	\Psi(e_1) &= e_2, &
	\Psi(e_2) &= e_1, &
	\Psi(e_3) &= e_3
\end{align*}
maps the connection to the one corresponding to the flat Lie algebra $\h_{0,6}$.

The seventh case corresponds to the following flat, torsion-free connection:
\begin{align}\label{Sol07}
	\nabla_{e_1}e_3&=\nabla_{e_3}e_1=b_{23}e_1,~~\nabla_{e_2}e_3=\nabla_{e_3}e_2=b_{23}e_2,~~\nabla_{e_3}e_3=c_{13}e_1+c_{23}e_2+b_{23}e_3.
\end{align}
If $b_{23} \neq 0$, we consider the automorphism
\begin{align*}
	\Psi(e_1) &= e_3, &
	\Psi(e_2) &= e_1, &
	\Psi(e_3) &= \tfrac{c_{23}}{b_{23}}\, e_1 + b_{23}\, e_2 + \tfrac{c_{13}}{b_{23}}\, e_3
\end{align*}
and apply it to the connection in~\eqref{Sol07}. This transforms the connection into
\begin{align}\label{Sol07,1}
	\nabla_{e_1}e_2&=\nabla_{e_2}e_1=e_1,~~\nabla_{e_2}e_2=e_2,~~\nabla_{e_2}e_3=\nabla_{e_3}e_2=e_3.
\end{align}
If $b_{23} = 0$ and $c_{23} \neq 0$, we consider the automorphism
\begin{align*}
	\Psi(e_1) &= e_3, &
	\Psi(e_2) &= \tfrac{1}{c_{23}}\, e_1 - \tfrac{c_{13}}{c_{23}}\, e_3, &
	\Psi(e_3) &= e_2
\end{align*}
and apply it to the connection in~\eqref{Sol07}, thereby transforming the connection into
\begin{align}\label{Sol07,2}
	\nabla_{e_2}e_2&=e_1.
\end{align}

Now, if $b_{23} = 0$ and $c_{23} = 0$, we consider the automorphism
\begin{align*}
	\Psi(e_1) &= e_3, &
	\Psi(e_2) &= \tfrac{1}{c_{23}}\, e_1 - \tfrac{c_{13}}{c_{23}}\, e_3, &
	\Psi(e_3) &= e_2
\end{align*}
and apply it to the connection in~\eqref{Sol07}, resulting in the transformed connection
\begin{align}\label{Sol07,3}
	\nabla_{e_2}e_2&=\delta e_3,~~\delta=0,1.
\end{align}

Consider the connection given in~\eqref{Sol07,1}. 
It is easy to verify that, for any automorphism $\Psi \in \mathrm{GL}_3(\mathbb{R})$, this connection cannot be isomorphic to any of the connections corresponding to the algebras $\h_{0,1}$--$\h_{0,7}$. 

Moreover, applying the automorphism
\begin{align*}
	\Psi(e_1) &= e_1, &
	\Psi(e_2) &= e_3, &
	\Psi(e_3) &= e_2
\end{align*}
to the connection in~\eqref{Sol07,1}, we obtain the following flat, torsion-free connection, denoted by the algebra $\h_{0,8}$:
\begin{align*}
	\h_{0,8}:\quad &\nabla_{e_1}e_3=e_1,~~\nabla_{e_2}e_3=e_2,~~\nabla_{e_3}e_j=e_j,~~j=1,2,3.
\end{align*}

The automorphism $\Psi$, defined by
\begin{align*}
	\Psi(e_1) &= e_2, &
	\Psi(e_2) &= e_3, &
	\Psi(e_3) &= e_1,
\end{align*}
maps the connection in~\eqref{Sol07,2} to the one corresponding to the flat Lie algebra $\h_{0,6}$.

Consider the connection given in~\eqref{Sol07,3}. 

If $\delta = 0$, this connection reduces to the trivial (vanishing) flat, torsion-free connection. 
Otherwise, for $\delta = 1$, the automorphism
\begin{align*}
	\Psi(e_1) &= e_1, &
	\Psi(e_2) &= e_3, &
	\Psi(e_3) &= e_2
\end{align*}
maps the connection to the one corresponding to the flat Lie algebra $\h_{0,6}$.

The eighth solution is given by the following flat, torsion-free connection:
\begin{align}\label{Sol08}
	\nabla_{e_1}e_3&=\nabla_{e_3}e_1=a_{23}e_2,~~\nabla_{e_2}e_3=\nabla_{e_3}e_2=c_{33}e_2,~~\nabla_{e_3}e_3=c_{13}e_1+c_{23}e_2+c_{33}e_3.
\end{align}
If $a_{23} c_{33} \neq 0$, we consider the automorphism
\begin{align*}
	\Psi(e_1) &= \tfrac{a_{23}}{c_{33}}\, e_3, &
	\Psi(e_2) &= e_1 + e_3, &
	\Psi(e_3) &= \tfrac{2 a_{23} c_{13} + c_{23} c_{33}}{c_{33}^2}\, e_1 + c_{33}\, e_2 + \tfrac{a_{23} c_{13} + c_{23} c_{33}}{c_{33}^2}\, e_3
\end{align*}
and apply it to the connection in~\eqref{Sol08}, thereby transforming the connection into
\begin{align}\label{Sol08,1}
	\nabla_{e_2}e_2 &= e_2,~~\nabla_{e_2}e_3=\nabla_{e_3}e_2=e_1+e_3. 
\end{align}

If $a_{23} \neq 0$, $c_{33} = 0$, and $c_{13} \neq 0$, we consider the automorphism
\begin{align*}
	\Psi(e_1) &= \tfrac{1}{c_{13}}\, e_3, &
	\Psi(e_2) &= \tfrac{1}{c_{13} a_{23}}\, e_1, &
	\Psi(e_3) &= e_2 + \tfrac{c_{23}}{2\, c_{13} a_{23}}\, e_3
\end{align*}
and apply it to the connection in~\eqref{Sol08}, thereby transforming the connection into
\begin{align}\label{Sol08,2}
	\nabla_{e_2}e_2 &= e_3,~~\nabla_{e_2}e_3=\nabla_{e_3}e_2=e_1. 
\end{align}

If $a_{23} \neq 0$, $c_{33} = 0$, and $c_{13} = 0$, we consider the automorphism
\begin{align*}
	\Psi(e_1) &= a_{23}\, e_3, &
	\Psi(e_2) &= e_1, &
	\Psi(e_3) &= e_2 + \frac{c_{23}}{2}\, e_3
\end{align*}
and apply it to the connection in~\eqref{Sol08}, thereby transforming the connection into
\begin{align}\label{Sol08,3}
	\nabla_{e_2}e_3&=\nabla_{e_3}e_2=e_1. 
\end{align}

Now, if $a_{23} = 0$ and $c_{33} \neq 0$, we consider the automorphism
\begin{align*}
	\Psi(e_1) &= e_3, &
	\Psi(e_2) &= e_1, &
	\Psi(e_3) &= \tfrac{c_{23}}{c_{33}}\, e_1 + c_{33}\, e_2 - \tfrac{c_{13}}{c_{33}}\, e_3
\end{align*}
and apply it to the connection in~\eqref{Sol08}, producing the transformed connection
\begin{align}\label{Sol08,4}
	\nabla_{e_1}e_2&=\nabla_{e_2}e_1=e_1,~~\nabla_{e_2}e_2=e_2. 
\end{align}

 If $a_{23} = 0$ and $c_{33} = 0$, we consider the automorphism
 \begin{align*}
 	\Psi(e_1) &= y\, e_3, &
 	\Psi(e_2) &= x\, e_1, &
 	\Psi(e_3) &= e_2
 \end{align*}
 and apply it to the connection in~\eqref{Sol08}, resulting in the following transformed connection
  (for a suitable parameters $x,y\in\R^\ast$)
\begin{align}\label{Sol08,5}
	\nabla_{e_2}e_2&=\delta_1 e_1+\delta_2e_3,~~\delta_1,\delta_2=0,1. 
\end{align}

If $\delta_1 = 0$, the connection in~\eqref{Sol08,5} coincides with the one given in~\eqref{Sol02,3}. 
Moreover, if $\delta_2 = 0$ and $\delta_1 = 1$, it coincides with the connection in~\eqref{Sol06,2}. 

Now, consider the case $\delta_1 = \delta_2 = 1$. The automorphism
\begin{align*}
	\Psi(e_1) &= -e_1 + e_2, &
	\Psi(e_2) &= e_3, &
	\Psi(e_3) &= e_1
\end{align*}
maps the connection in~\eqref{Sol08,5} to the one corresponding to the flat Lie algebra $\h_{0,6}$.

Consider the connection given in $(\ref{Sol08,1})$. The automorphism $\Psi$ defined by 
\begin{align*}
	\Psi(e_1) &= e_1, \quad
	\Psi(e_2) = e_3, \quad
	\Psi(e_3) =-e_1+e_2,
\end{align*}
yields an isomorphism to the connection given by $\h_{0,3}$.

Let us consider the connection given in $(\ref{Sol08,2})$. The automorphism $\Psi$ defined by 
\begin{align*}
	\Psi(e_1) &= e_1, \quad
	\Psi(e_2) = e_3, \quad
	\Psi(e_3) =e_2,
\end{align*}
yields an isomorphism to the connection given by $\h_{0,4}$.

Let us consider the connection given in $(\ref{Sol08,3})$. The automorphism $\Psi$ defined by 
\begin{align*}
	\Psi(e_1) &= e_3, \quad
	\Psi(e_2) =\tfrac{1}{2} e_1-\tfrac{1}{2}e_2, \quad
	\Psi(e_3) =e_1+e_2,
\end{align*}
yields an isomorphism to the connection given by $\h_{0,7}$ with $\varepsilon=-1$.

The automorphism
\begin{align*}
	\Psi(e_1) &= e_2, &
	\Psi(e_2) &= e_3, &
	\Psi(e_3) &= e_1
\end{align*}
maps the connection in~\eqref{Sol08,4} to the one corresponding to the flat Lie algebra $\h_{0,3}$.

The ninth solution corresponds to the following flat, torsion-free connection:
\begin{align}\label{Sol09}
	\nabla_{e_2}e_2&=b_{32}e_3,~~\nabla_{e_2}e_3=\nabla_{e_3}e_2=\tfrac{b_{32}c_{13}}{b_{33}}e_1+\tfrac{c_{33}b_{32}-b_{33}^2}{b_{32}}e_2+b_{33}e_3,~~\nabla_{e_3}e_3=c_{13}e_1+\tfrac{(c_{33}b_{32}-b_{33}^2)b_{33}}{b_{32}^2}+c_{33}e_3,
\end{align}
If $c_{33} b_{32} - b_{33}^2 \neq 0$, we consider the automorphism
\begin{align*}
	\Psi(e_1) &= e_2, &
	\Psi(e_2) &= e_3, &
	\Psi(e_3) &= e_1
\end{align*}
and apply it to the connection in~\eqref{Sol09}, producing the following transformed connection
\begin{align}\label{Sol09,1}
	\nabla_{e_2}e_2&=e_2+e_3,~~\nabla_{e_2}e_3=\nabla_{e_3}e_2=\lambda e_2,~~\nabla_{e_3}e_3=\lambda e_3,~~\lambda\in\R^\ast.
\end{align}

If $c_{33} b_{32} - b_{33}^2 = 0$ and $c_{13} \neq 0$, we consider the automorphism
\begin{align*}
	\Psi(e_1) &= \tfrac{b_{33}^4}{b_{32}^2 c_{13}}\, e_1, &
	\Psi(e_2) &= b_{33}\, e_2, &
	\Psi(e_3) &= \tfrac{b_{33}^2}{b_{32}}\, e_2 + \tfrac{b_{33}^2}{b_{32}}\, e_3
\end{align*}
and apply it to the connection in~\eqref{Sol09}, which yields the following transformed connection
\begin{align}\label{Sol09,2}
	\nabla_{e_2}e_2&=e_2+e_3,~~\nabla_{e_2}e_3=\nabla_{e_3}e_2=e_1,~~\nabla_{e_3}e_3=-e_1.
\end{align}

Now, if $c_{33} b_{32} - b_{33}^2 = 0$ and $c_{13} = 0$, we consider the automorphism
\begin{align*}
	\Psi(e_1) &= e_1, &
	\Psi(e_2) &= b_{33}\, e_2, &
	\Psi(e_3) &= \tfrac{b_{33}^2}{b_{32}}\, e_2 + \tfrac{b_{33}^2}{b_{32}}\, e_3
\end{align*}
and apply it to the connection in~\eqref{Sol09}, which yields the following transformed connection:
\begin{align}\label{Sol09,3}
	\nabla_{e_2}e_2&=e_2+e_3.
\end{align}

Consider the connection in~\eqref{Sol09,1}. 

If $\lambda = -\frac{1}{4}$, the automorphism
\begin{align*}
	\Psi(e_1) &= e_1, &
	\Psi(e_2) &= e_2 + \tfrac{1}{2} e_3, &
	\Psi(e_3) &= -\tfrac{1}{4} e_3
\end{align*}
maps this connection to the one corresponding to the flat Lie algebra $\h_{0,3}$. 

If $\lambda \neq -\frac{1}{4}$, the automorphism
\begin{align*}
	\Psi(e_1) &= e_1, &
	\Psi(e_2) &= \tfrac{\sqrt{\varepsilon (4\lambda + 1)}}{2 \varepsilon}\, e_2 + \tfrac{1}{2} e_3, &
	\Psi(e_3) &= \lambda\, e_3
\end{align*}
maps the connection to the one defined by the flat Lie algebra $\h_{0,5}$.

Let us consider the connection given in $(\ref{Sol09,2})$. The automorphism $\Psi$, defined by
\begin{align*}
	\Psi(e_1) &=-e_1, \quad
	\Psi(e_2) =-e_3, \quad
	\Psi(e_3) =e_2,
\end{align*}
establishes an isomorphism between the connection given in $(\ref{Sol09,2})$ and the connection defined by the flat Lie algebra $\h_{0,1}$.

Let us consider the connection given in $(\ref{Sol09,3})$. The automorphism $\Psi$, defined by
\begin{align*}
	\Psi(e_1) &=e_1, \quad
	\Psi(e_2) =e_3, \quad
	\Psi(e_3) = e_2,
\end{align*}
establishes an isomorphism between the connection given in $(\ref{Sol09,3})$ and the connection defined by the flat Lie algebra $\h_{0,2}$.

The tenth solution corresponds to the following flat, torsion-free connection:
\begin{align}\label{Sol010}
	\nabla_{e_2}e_2&=b_{32}e_3,~~\nabla_{e_2}e_3=\nabla_{e_3}e_2=b_{13}e_1+c_{33}e_2,~~\nabla_{e_3}e_3=c_{33}e_3.
\end{align}

If $b_{32} c_{33} \neq 0$, we consider the automorphism
\begin{align*}
	\Psi(e_1) &= e_1, &
	\Psi(e_2) &= -\tfrac{b_{13}}{c_{33}}\, e_1 + x e_2, &
	\Psi(e_3) &= c_{33}\, e_3
\end{align*}
and apply it to the connection in~\eqref{Sol010}, which yields the transformed connection for a suitable $x \in \mathbb{R}^\ast$.
\begin{align}\label{Sol010,1}
	\nabla_{e_2}e_2&=\varepsilon_1e_3,~~\nabla_{e_2}e_3=\nabla_{e_3}e_2=e_2,~~\nabla_{e_3}e_3=e_3,~~\varepsilon_1=\pm1.
\end{align}
The connection in~\eqref{Sol010,1} actually coincides with the one corresponding to the flat Lie algebra $\h_{0,5}$.

If $c_{33} \neq 0$ and $b_{32} = 0$, we consider the automorphism
\begin{align*}
	\Psi(e_1) &= e_1, &
	\Psi(e_2) &= -\tfrac{b_{13}}{c_{33}}\, e_1 + e_2, &
	\Psi(e_3) &= c_{33}\, e_3
\end{align*}
and apply it to the connection in~\eqref{Sol010}, which yields the corresponding transformed connection: 
\begin{align}\label{Sol010,2}
	\nabla_{e_2}e_3&=\nabla_{e_3}e_2=e_2,~~\nabla_{e_3}e_3=e_3.
\end{align}
The connection in~\eqref{Sol010,2} actually coincides with the one corresponding to the flat Lie algebra $\h_{0,3}$.

If $c_{33} = 0$, $b_{13} \neq 0$, and $b_{32} \neq 0$, we consider the automorphism
\begin{align*}
	\Psi(e_1) &= e_1, &
	\Psi(e_2) &= \tfrac{b_{13} b_{32}}{(b_{13}^2 b_{32}^2)^{1/3}}\, e_2, &
	\Psi(e_3) &= \tfrac{(b_{13}^2 b_{32}^2)^{1/3}}{b_{32}}\, e_3
\end{align*}
and apply it to the connection in~\eqref{Sol010}, which yields the transformed connection: 
\begin{align}\label{Sol010,2}
	\nabla_{e_2}e_2&=e_3,~~\nabla_{e_2}e_3=\nabla_{e_3}e_2=e_1.
\end{align}

If $c_{33} = 0$, $b_{13} \neq 0$, and $b_{32} = 0$, we consider the automorphism
\begin{align*}
	\Psi(e_1) &= e_1, &
	\Psi(e_2) &= b_{13}\, e_2, &
	\Psi(e_3) &= e_3
\end{align*}
and apply it to the connection in~\eqref{Sol010}, which yields the corresponding transformed connection:
\begin{align}\label{Sol010,3}
	\nabla_{e_2}e_3&=\nabla_{e_3}e_2=e_1.
\end{align}

If $c_{33} = 0$ and $b_{13} = 0$, we consider the automorphism
\begin{align*}
	\Psi(e_1) &= e_1, &
	\Psi(e_2) &= e_2, &
	\Psi(e_3) &= x\, e_3
\end{align*}
and apply it to the connection in~\eqref{Sol010}, which yields the transformed connection for a suitable $x \in \mathbb{R}^\ast$:
\begin{align}\label{Sol010,4}
	\nabla_{e_2}e_2&=\delta e_3,~~\delta=0,1.
\end{align}

Consider the connection in~\eqref{Sol010,2}. The automorphism
\begin{align*}
	\Psi(e_1) &= e_1, &
	\Psi(e_2) &= e_3, &
	\Psi(e_3) &= e_2
\end{align*}
maps this connection to the one corresponding to the flat Lie algebra $\h_{0,4}$.

Consider the connection in~\eqref{Sol010,3}. The automorphism
\begin{align*}
	\Psi(e_1) &= e_3, &
	\Psi(e_2) &= \tfrac{1}{2} e_1 - \tfrac{1}{2} e_2, &
	\Psi(e_3) &= e_1 + e_2
\end{align*}
maps this connection to the one corresponding to the flat Lie algebra $\h_{0,7}$ with $\varepsilon = -1$.

Consider the connection in~\eqref{Sol010,4}. This connection coincides with the solution in~\eqref{Sol02,3}, which is isomorphic to the flat Lie algebra $\h_{0,6}$ when $\delta = 1$. Otherwise, if $\delta = 0$, it corresponds to the vanishing connection $\h_{0,0}$.

The 11th solution is given by the following flat, torsion-free connection:
\begin{align}\label{Sol011}
	\nabla_{e_2}e_3&=\nabla_{e_3}e_2=b_{13}e_1+c_{33}e_2,~~\nabla_{e_3}e_3=c_{13}e_1+c_{23}e_2+c_{33}e_3.
\end{align}

If $c_{33} \neq 0$ and $b_{13}c_{23} - c_{13}c_{33} \neq 0$, consider the automorphism
\begin{align*}
	\Psi(e_1) &= -\tfrac{c_{33}^3}{b_{13}c_{23}-c_{13}c_{33}}\, e_1, &
	\Psi(e_2) &= \tfrac{c_{33}^2 b_{13}}{b_{13}c_{23}-c_{13}c_{33}}\, e_1 + e_2, &
	\Psi(e_3) &= \tfrac{c_{23}}{c_{33}}\, e_2 + c_{33}\, e_3
\end{align*}
and apply it to the connection in~\eqref{Sol011}, yielding the corresponding transformed connection:
\begin{align}\label{Sol011,1}
	\nabla_{e_2}e_3&=\nabla_{e_3}e_2=e_2,~~\nabla_{e_3}e_3=e_1+e_3.
\end{align}

If $c_{33} \neq 0$ and $b_{13}c_{23} - c_{13}c_{33} = 0$, consider the automorphism
\begin{align*}
	\Psi(e_1) &= e_1, &
	\Psi(e_2) &= -\tfrac{b_{13}}{c_{33}}\, e_1 + e_2, &
	\Psi(e_3) &= \tfrac{c_{23}}{c_{33}}\, e_2 + c_{33}\, e_3
\end{align*}
and apply it to the connection in~\eqref{Sol011}, yielding the transformed connection:
\begin{align}\label{Sol011,2}
	\nabla_{e_2}e_3&=\nabla_{e_3}e_2=e_2,~~\nabla_{e_3}e_3=e_3.
\end{align}
The connection in~\eqref{Sol011,2} coincides with the one corresponding to the flat Lie algebra $\h_{0,3}$.

If $c_{33} = 0$, $b_{13} \neq 0$, and $c_{23} \neq 0$, consider the automorphism
\begin{align*}
	\Psi(e_1) &= e_1, &
	\Psi(e_2) &= -\tfrac{(b_{13}^2 c_{23}^2)^{1/3}}{c_{23}}\, e_2, &
	\Psi(e_3) &= \tfrac{(b_{13}^2 c_{23}^2)^{1/3} c_{13}}{2\, c_{23} b_{13}}\, e_2 + \tfrac{b_{13} c_{23}}{(b_{13}^2 c_{23}^2)^{1/3}}\, e_3
\end{align*}
and apply it to the connection in~\eqref{Sol011}, yielding the transformed connection:
\begin{align}\label{Sol011,3}
	\nabla_{e_2}e_3&=\nabla_{e_3}e_2=e_1,~~\nabla_{e_3}e_3=e_2.
\end{align}
The connection in~\eqref{Sol011,3} coincides with the one corresponding to the flat Lie algebra $\h_{0,4}$.

If $c_{33} = 0$, $b_{13} \neq 0$, and $c_{23} = 0$, consider the automorphism
\begin{align*}
	\Psi(e_1) &= e_1, &
	\Psi(e_2) &= e_2, &
	\Psi(e_3) &= \tfrac{c_{13}}{2\,b_{13}}\, e_2 + b_{13}\, e_3
\end{align*}
and apply it to the connection in~\eqref{Sol011}, yielding the transformed connection:
\begin{align}\label{Sol011,4}
	\nabla_{e_2}e_3&=\nabla_{e_3}e_2=e_1.
\end{align}

If $c_{33} = 0$, $b_{13} = 0$, and $c_{23} \neq 0$, consider the automorphism
\begin{align*}
	\Psi(e_1) &= e_1, &
	\Psi(e_2) &= -\tfrac{c_{13}}{c_{23}}\, e_1 + \tfrac{1}{c_{23}}\, e_2, &
	\Psi(e_3) &= e_3
\end{align*}
and apply it to the connection in~\eqref{Sol011}, yielding the transformed connection:
\begin{align}\label{Sol011,5}
	\nabla_{e_3}e_3&=e_2.
\end{align}
This connection coincides with the one corresponding to the flat Lie algebra $\h_{0,6}$.

If $c_{33} = 0$, $b_{13} = 0$, and $c_{23} = 0$, consider the automorphism
\begin{align*}
	\Psi(e_1) &= x\, e_1, &
	\Psi(e_2) &= e_2, &
	\Psi(e_3) &= e_3
\end{align*}
applied to the connection in~\eqref{Sol011}, where $x \in \R^\ast$ is a suitable parameter. This yields
\begin{align}\label{Sol011,6}
	\nabla_{e_3} e_3 &= \delta\, e_1, \quad \delta = 0,1.
\end{align}

Consider the connection given in~\eqref{Sol011,1}. The automorphism
\begin{align*}
	\Psi(e_1) &= e_1, &
	\Psi(e_2) &= e_2, &
	\Psi(e_3) &= -e_1 + e_3
\end{align*}
establishes an isomorphism between the connection~\eqref{Sol011,1} and the connection corresponding to the flat Lie algebra $\h_{0,3}$.

Consider the connection given in~\eqref{Sol011,4}. The automorphism
\begin{align*}
	\Psi(e_1) &= e_3, &
	\Psi(e_2) &= \tfrac{1}{2} e_1 - \tfrac{1}{2} e_2, &
	\Psi(e_3) &= e_1 + e_2
\end{align*}
establishes an isomorphism between the connection~\eqref{Sol011,4} and the connection corresponding to the flat Lie algebra $\h_{0,7}$ with $\varepsilon = -1$.

Consider the connection given in~\eqref{Sol011,6}. If $\delta = 0$, this connection coincides with the vanishing connection. Otherwise, for $\delta = 1$, the automorphism
\begin{align*}
	\Psi(e_1) &= e_2, &
	\Psi(e_2) &= e_1, &
	\Psi(e_3) &= e_3
\end{align*}
establishes an isomorphism between the connection~\eqref{Sol011,6} and the connection corresponding to the flat Lie algebra $\h_{0,6}$.

The 12th solution is given by the following flat, torsion-free connection:
\begin{align}\label{Sol012}
	\nabla_{e_3}e_3&=c_{13}e_1+c_{23}e_2+c_{33}e_3.
\end{align}
If $c_{33} \neq 0$ and $c_{23} \neq 0$, consider the automorphism
\begin{align*}
	\Psi(e_1) &=e_1 , &
	\Psi(e_2) &=-\tfrac{c_{13}}{c_{23}}e_1+e_2, &
	\Psi(e_3) &= -\tfrac{c_{23}}{c_{33}}e_1+c_{33}e_3
\end{align*}
and apply it to the connection in~\eqref{Sol012}, yielding the transformed connection:
\begin{align}\label{Sol012,1}
	\nabla_{e_3}e_3&=e_3.
\end{align}
This connection coincides with the one described in~\eqref{Sol04,3} for $\delta=1$.

Assume that $c_{33}\neq 0$ and $c_{23}=0$. Consider the automorphism
\begin{align*}
	\Psi(e_1) &= x\,e_1, &
	\Psi(e_2) &= e_2, &
	\Psi(e_3) &= c_{33}e_3,
\end{align*}
and apply it to the connection~\eqref{Sol012}. For a suitable choice of $x\in\R^\ast$, this yields the equivalent connection
\begin{align}\label{Sol012,2}
	\nabla_{e_3}e_3 = \delta e_1 + e_3, \qquad \delta = 0,1.
\end{align}
If $\delta=0$, the connection reduces to a case already treated above. 
Otherwise, when $\delta=1$, the automorphism
\begin{align*}
	\Psi(e_1) &= e_1, &
	\Psi(e_2) &= e_2, &
	\Psi(e_3) &= -e_1+e_3,
\end{align*}
defines an isomorphism between the connection~\eqref{Sol012,2} and the one associated with the flat Lie algebra $\h_{0,2}$.

Assume now that $c_{33}=0$ and $c_{23}\neq0$. Consider the automorphism
\begin{align*}
	\Psi(e_1) &= e_1, &
	\Psi(e_2) &=-\tfrac{c_{13}}{c_{23}} e_1+\tfrac{1}{c_{23}} e_2, &
	\Psi(e_3) &=e_3,
\end{align*}
and apply it to the connection~\eqref{Sol012}. 
For a suitable choice of $x\in\R^\ast$, this produces the equivalent connection
\begin{align}\label{Sol012,3}
	\nabla_{e_3}e_3 = e_2.
\end{align}
This connection coincides with the one associated with the flat Lie algebra $\h_{0,6}$.

Assume now that $c_{33}=0$ and $c_{23}=0$. Consider the automorphism
\begin{align*}
	\Psi(e_1) &= x\,e_1, &
	\Psi(e_2) &= e_2, &
	\Psi(e_3) &= e_3,
\end{align*}
and apply it to the connection~\eqref{Sol012}. 
For a suitable choice of $x\in\R^\ast$, this yields the equivalent connection
\begin{align}\label{Sol012,4}
	\nabla_{e_3}e_3 = \delta\,e_1, \qquad \delta=0,1.
\end{align}

If $\delta=0$, this is the vanishing connection.  
If $\delta=1$, the automorphism
\begin{align*}
	\Psi(e_1) &= e_2, &
	\Psi(e_2) &= e_1, &
	\Psi(e_3) &= e_3,
\end{align*}
establishes an isomorphism between the connection in~\eqref{Sol012,4} and the one corresponding to the flat Lie algebra $\h_{0,6}$.

Up to this point, we have identified the vanishing flat Lie algebra $\h_{0,0}$, as well as eight nontrivial flat Lie algebras, which we denote by $\h_{0,1}, \dots, \h_{0,8}$.

\textbf{Case 2.} If $\nabla^0 \equiv \nabla^1$, which corresponds to the Lie algebra $\mathfrak{b}_1$ (see Table~\ref{FlatR2}), then the flatness equations associated with the connection given in~\eqref{Connegeneral} can be solved directly, yielding twelve distinct solutions.

We begin with the first solution, which is given by the following flat, torsion-free connection:
\begin{equation}\label{Sol11}
	\begin{aligned}
		\nabla_{e_1}e_1 &= e_1+\tfrac{a_{32}(a_{32}b_{33}-b_{32})}{b_{32}b_{33}}e_3, 
		&\nabla_{e_1}e_2&=\nabla_{e_2}e_1=a_{32}e_3, 
		&\nabla_{e_1}e_3&=\nabla_{e_3}e_1=\tfrac{a_{32}b_{33}}{b_{32}}e_3,\\
		\nabla_{e_2}e_2&=b_{32}e_3, 
		&\nabla_{e_2}e_3&=\nabla_{e_3}e_2=b_{33}e_3, 
		&\nabla_{e_3}e_3&=\tfrac{b_{33}^2}{b_{32}}e_3.
	\end{aligned}
\end{equation}
Note that $b_{32}b_{33}\neq 0$. 
Consider the automorphism
\begin{align*}
	\Psi(e_1) &= e_1+\tfrac{a_{32}b_{33}}{b_{32}}e_2+e_3, &
	\Psi(e_2) &= b_{33}e_2, &
	\Psi(e_3) &= \tfrac{b_{33}^2}{b_{32}}e_2,
\end{align*}
and apply it to the connection~\eqref{Sol11}. 
This yields the equivalent connection
\begin{align}\label{Sol11,1}
	\nabla_{e_2}e_2&=e_2, &\nabla_{e_3}e_3&=e_1+e_3.
\end{align}
It is straightforward to verify that, for any automorphism 
$\Psi \in \mathrm{GL}_4(\mathbb{R})$, the connection given in~\eqref{Sol11,1}
is not isomorphic to any of the previously obtained connections, namely
$\h_{0,1},\dots,\h_{0,8}$.
Fixing $\nabla^0=\nabla^1$, we apply the following automorphism
\begin{align*}
	\Psi(e_1) &= e_2, &
	\Psi(e_2) &= e_1, &
	\Psi(e_3) &= -e_2+e_3,
\end{align*}
to the connection given in~\eqref{Sol11,1}. 
This transformation yields a new flat Lie algebra, denoted by
\begin{align}
	\h_{1,1}:\qquad
	\nabla_{e_1}e_1 &= e_1, &
	\nabla_{e_3}e_3 &= e_3.
\end{align}

The second solution is described by the following flat, torsion-free connection:
\begin{equation}\label{Sol12}
	\begin{aligned}
		\nabla_{e_1}e_1 &= e_1+a_{31}e_3, 
		&\nabla_{e_1}e_2&=\nabla_{e_2}e_1=a_{32}e_3, 
		&\nabla_{e_1}e_3&=\nabla_{e_3}e_1=e_3.
	\end{aligned}
\end{equation}

Consider the automorphism
\begin{align*}
	\Psi(e_1) &=a_{31}e_2+e_3, &
	\Psi(e_2) &=e_1+a_{32}e_2, &
	\Psi(e_3) &=e_2,
\end{align*}
and apply it to the connection~\eqref{Sol12}. 
This yields the equivalent connection
\begin{align}\label{Sol12,1}
	\nabla_{e_2}e_3&=e_2, &\nabla_{e_3}e_2&=e_2, &\nabla_{e_3}e_3&=e_3.
\end{align}
This connection corresponds to the flat Lie algebra $\h_{0,3}$.

The third solution is represented by the following flat, torsion-free connection:
\begin{equation}\label{Sol13}
		\begin{aligned}
	\nabla_{e_1}e_1&=e_1+a_{31}e_3, &\nabla_{e_1}e_3&=\nabla_{e_3}e_1=a_{13}e_1+a_{23}e_2+\tfrac{a_{31}c_{13}}{a_{13}}e_3,\\ \nabla_{e_3}e_3&=c_{13}e_1-\tfrac{a_{23}(a_{13}-a_{31}c_{13})}{a_{13}a_{31}}e_2+\tfrac{a_{13}^3+a_{31}c_{13}^2-a_{13}c_{13}}{a_{13}^2}e_3.
	\end{aligned}
\end{equation}

Note that $a_{31}a_{13}\neq0$. Consider the automorphism
\begin{align*}
	\Psi(e_1) &=-a_{31} e_2, &
	\Psi(e_2) &= e_1, &
	\Psi(e_3) &=e_2+a_{31}e_3,
\end{align*}
and apply it to the connection~\eqref{Sol13}. This yields the equivalent connection
\begin{equation}
\begin{aligned}\label{Sol13,1}
	\nabla_{e_2} e_2 &=e_3,\\ \nabla_{e_2}e_3&=\nabla_{e_3}e_2=-\tfrac{a_{23}}{a_{31}^2}e_1+\tfrac{a_{13}^2-c_{13}}{a_{31}a_{13}}e_2-\tfrac{a_{13}+a_{31}c_{13}}{a_{31}a_{13}}e_3,\\
	\nabla_{e_3}e_3&=\tfrac{a_{23}(c_{13}a_{31}+a_{13})}{a_{31}^3a_{13}}e_1-\tfrac{(a_{13}^2-c_{13})(c_{13}a_{31}+a_{13})}{a_{31}^2a_{13}^2}e_2+\tfrac{a_{31}^2c_{13}^2+a_{13}(a_{13}^2+c_{13})a_{31}+a_{13}^2}{a_{31}^2a_{13}^2}e_3.
\end{aligned}
\end{equation}
Observe that this connection belongs to the family of torsion-free connections with $\nabla^0 \equiv 0$,  which has already been analyzed in Case 1. Therefore, it must be isomorphic to one of the connections $\h_{0,1}, \dots, \h_{0,8}$, and no further distinct cases arise.

The fourth solution is given by the following flat, torsion-free connection:
\begin{equation}\label{Sol14}
	\begin{aligned}
		\nabla_{e_1}e_1&=e_1, &\nabla_{e_1}e_3&=\nabla_{e_3}e_1=a_{13}e_1, &\nabla_{e_3}e_3&=c_{13}e_1+c_{23}e_2+\tfrac{a_{13}^2-c_{13}}{a_{13}}e_3.
	\end{aligned}
\end{equation}
If $a_{13}^2 - c_{13} \neq 0$, consider the automorphism
\begin{align*}
	\Psi(e_1) &=  e_2, &
	\Psi(e_2) &= e_1, &
	\Psi(e_3) &=-\tfrac{c_{23}a_{13}}{a_{13}^2-c_{13}}e_1+a_{13}e_2+\tfrac{a_{13}^2-c_{13}}{a_{13}}e_3,
\end{align*}
and apply it to the connection~\eqref{Sol14}. 
This yields the equivalent connection
\begin{align}\label{Sol14,1}
	\nabla_{e_2} e_2 &= e_2, &
	\nabla_{e_3} e_3 &= e_3.
\end{align}
This connection is isomorphic to the one defined by the flat Lie algebra $\h_{1,1}$ via the automorphism
\begin{align*}
	\Psi(e_1) &= e_2, &
	\Psi(e_2) &= e_1, &
	\Psi(e_3) &= e_3.
\end{align*}
Now, if $c_{13} = a_{13}^2$, consider the automorphism
\begin{align*}
	\Psi(e_1) &= e_2, &
	\Psi(e_2) &= e_1, &
	\Psi(e_3) &= e_3.
\end{align*}
Applying it to the connection given in~\eqref{Sol14} yields the following connection for a suitable $x\in\R^\ast$:
\begin{align}\label{Sol14,1}
	\nabla_{e_2} e_2 &= e_2, &
	\nabla_{e_3} e_3 &= \delta\, e_1, \qquad \delta = 0,1.
\end{align}
If $\delta=0$, the automorphism
\begin{align*}
	\Psi(e_1) &= e_2, & 
	\Psi(e_2) &= e_2+e_3, & 
	\Psi(e_3) &= e_1
\end{align*}
applied to the connection in \eqref{Sol14,1} shows that it is isomorphic to the flat Lie algebra $\h_{0,2}$.  

If $\delta=1$, the  automorphism
\begin{align*}
	\Psi(e_1) &= e_1, & 
	\Psi(e_2) &= -e_1+e_2-e_3, & 
	\Psi(e_3) &= e_2
\end{align*}
establishes an isomorphism between the connection in \eqref{Sol14,1} and the flat Lie algebra $\h_{0,1}$.

The fifth solution is given by the following flat, torsion-free connection:
\begin{align}\label{Sol12,5}
	\nabla_{e_1}e_1&=e_1, &\nabla_{e_1}e_3&=\nabla_{e_3}e_1=a_{13}e_1, &\nabla_{e_2}e_3&=\nabla_{e_3}e_2=\tfrac{a_{13}^2-c_{13}}{a_{13}}e_2, &\nabla_{e_3}e_3&=c_{13}e_1+c_{23}e_2+\tfrac{a_{13}^2-c_{13}}{a_{13}}e_3.
\end{align}
If $c_{13}-a_{13}^2\neq0$, we apply the automorphism $\Psi$ defined by
\begin{align*}
	\Psi(e_1) &= e_2, \quad
	\Psi(e_2) = e_1 \quad
	\Psi(e_3) = \tfrac{c_{23}a_{13}}{a_{13}^2-c_{13}}e_1+a_{13}e_2+\tfrac{a_{13}^2-c_{13}}{a_{13}}e_3,
\end{align*}
to the connection given in~\eqref{Sol12,5}, which yields the following equivalent connection:
\begin{align}\label{Sol12,5,1}
	\nabla_{e_1}e_3 &=\nabla_{e_3}e_1= e_1, &\nabla_{e_2}e_2&=e_2, &\nabla_{e_3}e_3&=e_3.
\end{align}
It is straightforward to verify that this connection is not isomorphic to any of the connections associated with the flat Lie algebras
$\h_{0,1},\ldots,\h_{0,8}$ and $\h_{1,1}$.
Moreover, in order to preserve the condition $\nabla^0 \equiv \nabla^1$ corresponding to the flat Lie algebra $\mathfrak{b}_1$, we apply the automorphism $\Psi$ defined by
\begin{align*}
	\Psi(e_1) &= e_2, \quad
	&\Psi(e_2)& = e_1, \quad
	&\Psi(e_3)& = e_3,
\end{align*}
to the connection given in~\eqref{Sol12,5,1}, which yields the following connection:
\begin{align}
	\h_{1,2} : & &\nabla_{e_1}e_1 &=e_1, &\nabla_{e_2}e_3&=e_2, &\nabla_{e_3}e_2&=e_2, &\nabla_{e_3}e_3&=e_3.
\end{align}
If $c_{13}-a_{13}^2=0$, we apply the automorphism $\Psi$ defined by
\begin{align*}
	\Psi(e_1) &= e_2, \quad
	&\Psi(e_2) &= xe_1, \quad
	&\Psi(e_3)& = a_{13}e_2+e_3,
\end{align*}
for a suitable $x\in\R^\ast$, to the connection given in~\eqref{Sol12,5}. 
This yields the following equivalent connection:
\begin{align}\label{Sol12,5,2}
	\nabla_{e_2}e_2 &= e_2, &
	\nabla_{e_3}e_3 &= \delta\,e_1, \qquad \delta=0,1.
\end{align}
If $\delta=0$, the automorphism
\begin{align*}
	\Psi(e_1) &= e_1, &
	\Psi(e_2)& = e_2+e_3, &
	\Psi(e_3) &= e_2,
\end{align*}
induces an isomorphism between the flat torsion-free connection given in~\eqref{Sol12,5,2} and the one defined by the flat Lie algebra $\h_{0,2}$.

Otherwise, if $\delta=1$, the automorphism defined by
\begin{align*}
	\Psi(e_1) &= e_1, &
	\Psi(e_2)& = -e_1+e_2-e_3, &
	\Psi(e_3) = e_2&,
\end{align*}
establishes an isomorphism between the connection given in~\eqref{Sol12,5,2} and the one defined by the flat Lie algebra $\h_{0,1}$.

The sixth solution is given by the following flat, torsion-free connection:
\begin{align}\label{Sol12,6}
	\nabla_{e_1}e_1&=e_1+a_{31}e_3, &\nabla_{e_1}e_3&=\nabla_{e_3}e_1=a_{23}e_2+a_{33}e_3, &\nabla_{e_3}e_3&=\tfrac{a_{23}(a_{33}-1)}{a_{31}}e_2+\tfrac{a_{33}(a_{33}-1)}{a_{31}}e_3.
\end{align}
Note that $a_{31}\neq 0$. We therefore apply the automorphism $\Psi$ defined by
\begin{align*}
	\Psi(e_1) &= e_2, &
	\Psi(e_2)& = e_1, &
	\Psi(e_3)& = -\tfrac{1}{a_{31}}\,e_2 + \tfrac{1}{a_{31}}\,e_3,
\end{align*}
to the connection given in~\eqref{Sol12,6}. This yields the equivalent connection
\begin{equation}\label{Sol12,6,1}
\begin{aligned}
	\nabla_{e_2}e_2 &=e_3, &\nabla_{e_2}e_3&=\nabla_{e_3}e_2=a_{31}a_{32}e_1-a_{33}e_2+(1+a_{33})e_3,\\
	&&\nabla_{e_3}e_3&=a_{31}a_{23}(1+a_{33})e_1-a_{33}(1+a_{33})e_2+(a_{33}^2+a_{33}+1)e_3.
\end{aligned}
\end{equation}
Observe that the induced connection $\nabla^0$ associated with~\eqref{Sol12,6,1} satisfies
$\nabla^0 \equiv 0$. This case has already been treated in \textbf{Case~1}. Consequently,
the connection~\eqref{Sol12,6,1} is necessarily isomorphic to one of the flat Lie algebras $
\h_{0,0},\h_{0,1},\ldots,\h_{0,8}$. Hence, the classification procedure terminates here.

The seventh solution is represented by the following flat, torsion-free connection:
\begin{align}\label{Sol12,7}
	\nabla_{e_1}e_1&=e_1, &\nabla_{e_1}e_3&=\nabla_{e_3}e_1=a_{23}e_2+e_3, &\nabla_{e_3}e_3&=c_{13}e_1+a_{23}c_{33}e_2+c_{33}e_3.
\end{align}
Consider the following automorphism
\begin{align*}
	\Psi(e_1) &= e_2, 
	&\Psi(e_2)& = e_1, 
&\Psi(e_3) &=-a_{23}e_1 -\tfrac{c_{33}}{2}\,e_2 + x\,e_3,
\end{align*}
and apply it to the connection given in~\eqref{Sol12,7}. 
For a suitable choice of the parameter $x\in\R^\ast$, this yields the equivalent connection
\begin{align}\label{Sol12,7,1}
	\nabla_{e_2}e_2 &= e_2, &
	\nabla_{e_2}e_3 &= e_3, &
	\nabla_{e_3}e_2 &= e_3, &
	\nabla_{e_3}e_3 &= \varepsilon_1 e_2,
	\qquad \varepsilon_1=0,\pm1.
\end{align}
If $\varepsilon_1=\pm1$, the automorphism
\begin{align*}
	\Psi(e_1) &= e_1, &
	\Psi(e_2)& = e_3, &
	\Psi(e_3)& = e_2,
\end{align*}
establishes an isomorphism between the connection given in~\eqref{Sol12,7,1} and the one associated with the flat Lie algebra $\h_{0,5}$, where in this case $\varepsilon_1=\varepsilon$.

Otherwise, if $\varepsilon_1=0$, one easily checks that the automorphism
\begin{align*}
	\Psi(e_1) &= e_1, &
	\Psi(e_2)& = e_3, &
	\Psi(e_3)& = e_2,
\end{align*}
induces an isomorphism between the connection given in~\eqref{Sol12,7,1} and the one associated with the flat Lie algebra $\h_{0,3}$.

The eighth solution is given by the following flat, torsion-free connection:
\begin{align}\label{Sol12,8}
	\nabla_{e_1}e_1&=e_1+a_{31}e_3, &\nabla_{e_1}e_3&=\nabla_{e_3}e_1=-a_{31}c_{23}e_2, &\nabla_{e_3}e_3&=c_{23}e_2.
\end{align}
Consider the automorphism $\Psi$ defined by
\begin{align*}
	\Psi(e_1) &= x a_{31}^2 c_{23}\, e_1 - a_{31} e_2 + e_3,&
	\Psi(e_2)& = x e_1, &
	\Psi(e_3)& = e_2,
\end{align*}
and apply it to the connection given in~\eqref{Sol12,8}.  
For a suitable choice of $x \in \R^\ast$, this transformation leads to the following equivalent connection:
\begin{align}\label{Sol12,8,1}
	\nabla_{e_2}e_2&=\delta e_1,  &\nabla_{e_3}e_3&=e_3, &\delta=0,1&.
\end{align}
If $\delta = 0$, then this connection coincides with the one previously obtained and described in equation~\eqref{Sol012,2} (with $\delta=0$).   If $\delta = 1$, the automorphism $\Psi$ defined by
\begin{align*}
	\Psi(e_1) &=  e_1, &
	\Psi(e_2)& = e_2, &
	\Psi(e_3)& = -e_1+e_2-e_3,
\end{align*}
 induces an isomorphism between the connection given in~\eqref{Sol12,8,1} and the connection associated with the flat Lie algebra $\h_{0,1}$.

The ninth solution is given by the following flat, torsion-free connection:
\begin{align}\label{Sol12,9}
	\nabla_{e_1}e_1&=e_1+a_{31}e_3, &\nabla_{e_2}e_2&=b_{32}e_3.
\end{align}
Consider the automorphism $\Psi$ defined by
\begin{align*}
	\Psi(e_1) &= -a_{31}x\,e_1 + e_3, &
	\Psi(e_2) &= e_2, &
	\Psi(e_3) &= x\,e_1,
\end{align*}
and apply it to the connection given in~\eqref{Sol12,9}. 
For a suitable choice of $x \in \R^\ast$, this yields the equivalent connection
\begin{align}\label{Sol12,9,1}
	\nabla_{e_2}e_2 &=\delta_1 e_1, &
	\nabla_{e_3}e_3 &= e_3,
	\qquad \delta_1=0,1.
\end{align}

We observe that the connection~\eqref{Sol12,9,1} coincides with the one obtained in~\eqref{Sol12,8,1}, with $\delta=\delta_1\in\{0,1\}$. 
Hence, this case has already been treated, and no further analysis is required.

The tenth solution is given by the following flat, torsion-free connection:
\begin{align}\label{Sol12,10}
	\nabla_{e_1}e_1 &= e_1, &
	\nabla_{e_2}e_2 &=b_{32} e_3, &\nabla_{e_2}e_3&=\nabla_{e_3}e_2=\tfrac{b_{32}c_{33}-b_{33}^2}{b_{32}}e_2+b_{33}e_3, &\nabla_{e_3}e_3&=\tfrac{(b_{32}c_{33}-b_{33}^2)b_{33}}{b_{32}^2}e_2+c_{33}e_3.
\end{align}
 Consider the automorphism
\begin{align*}
	\Psi(e_1) &= e_1, &
	\Psi(e_2) &= x\,e_3, &
	\Psi(e_3) &= \tfrac{x^2}{b_{32}}\,e_2 + \tfrac{x\,b_{33}}{b_{32}}\,e_3,
\end{align*}
and apply it to the connection given in~\eqref{Sol12,10}. 
For a suitable choice of $x \in \R^\ast$, this yields the following equivalent connection:
\begin{align}\label{Sol12,10,1}
	\nabla_{e_1}e_1 &= e_1, &
	\nabla_{e_2}e_2 &=\varepsilon_1 e_2, &\nabla_{e_2}e_3&=\nabla_{e_3}e_2=\varepsilon_1 e_3, &\nabla_{e_3}e_3&=e_2+\lambda e_3, &\varepsilon_1=0,\pm1,~~\lambda\in\R&.
\end{align}
Here, if $\varepsilon_1 = 0$, then $\lambda = 0$ or $1$. 
Otherwise, if $\varepsilon_1 = \pm 1$, then $\lambda \in \mathbb{R}$.

We now begin by analyzing the isomorphisms between the connection given in~\eqref{Sol12,10,1} and the previously classified connections. 
We start with the case $\varepsilon_1 = 0$ and $\lambda = 0$. 
The following automorphism
\begin{align*}
	\Psi(e_1) &= -e_1+e_2-e_3, & 
	\Psi(e_2) &= e_1, & 
	\Psi(e_3) &= e_2,
\end{align*}
induces an isomorphism between the connection~\eqref{Sol12,10,1} and the one associated with the flat Lie algebra $\h_{0,1}$. If $\varepsilon_1 = 0$ and $\lambda = 1$, the automorphism
\begin{align*}
	\Psi(e_1) &= e_3, & 
	\Psi(e_2) &= -e_2, & 
	\Psi(e_3) &= e_1 + e_2,
\end{align*}
induces an isomorphism between the connection~\eqref{Sol12,10,1} and the one associated with the flat Lie algebra $\h_{0,9}$. Now assume that $\varepsilon_1=\pm 1$. We apply the automorphism
\begin{align*}
	\Psi(e_1) &= e_1, &
	\Psi(e_2) &=\varepsilon_1 e_2, &
	\Psi(e_3) &=\tfrac{\lambda}{2} e_2 + xe_3,\quad x\in\R^\ast
\end{align*}
to the connection given in~\eqref{Sol12,10,1}. This transformation simplifies the connection and yields the following equivalent form:
\begin{align}\label{Sol12,10,2}
	\nabla_{e_1}e_1 &= e_1, &
	\nabla_{e_2}e_2 &= e_2, &\nabla_{e_2}e_3&=e_3, &\nabla_{e_3}e_2&= e_3, &\nabla_{e_3}e_3&=\tfrac{\lambda^2+4\varepsilon_1}{4x^2} e_2.
\end{align}
Observe first that, if $\varepsilon_1=-1$, then this connection is actually
isomorphic to the one associated with the flat Lie algebra $\h_{1,2}$ via the
automorphism
\begin{align*}
	\Psi(e_1) &= e_1, &
	\Psi(e_2) &= \varepsilon_1 e_3, &
	\Psi(e_3) &= e_2.
\end{align*}
In this case $\lambda=\pm2$. 
Otherwise, it is straightforward to verify that the connection given in
\eqref{Sol12,10,2} is not isomorphic to any of the connections associated with
the flat Lie algebras
\[
\h_{0,1},\ldots,\h_{0,8}, \qquad \h_{1,1}, \h_{1,2}.
\]
Since the parameter $x\in\R^\ast$ can be chosen arbitrarily, we may take
\[
x=\tfrac{\sqrt{\lambda^2+4\varepsilon_1}}{2}.
\]
This yields a new flat Lie algebra, represented by the torsion-free connection
\begin{align}\label{Sol12,10,3}
\h_{1,3}&	&\nabla_{e_1}e_1 &= e_1, &
	\nabla_{e_2}e_2 &= e_2, &
	\nabla_{e_2}e_3 &= e_3, &
	\nabla_{e_3}e_2 &= e_3, &
	\nabla_{e_3}e_3 &= e_2.
\end{align}

The eleventh solution is defined by the following flat and torsion-free connection:
\begin{align}\label{Sol12,11}
	\nabla_{e_1}e_1 &= e_1, &
	\nabla_{e_3}e_3 &= c_{23}e_2+c_{33}e_3.
\end{align}
If $c_{33}\neq 0$, consider the automorphism
\begin{align*}
	\Psi(e_1) &= e_1, &
	\Psi(e_2) &= e_2, &
	\Psi(e_3) &= -\tfrac{c_{23}}{c_{33}}\,e_2 + c_{33} e_3 .
\end{align*}
Applying this automorphism to the connection given in~\eqref{Sol12,11} yields the following equivalent connection:
\begin{align}\label{Sol12,11,1}
	\nabla_{e_1}e_1 &= e_1, &
	\nabla_{e_3}e_3 &=e_3.
\end{align}
The connection~\eqref{Sol12,11,1} actually coincides with the one associated with the flat Lie algebra $\h_{1,1}$.
 Now assume that $c_{33}=0$. Consider the automorphism
 \begin{align*}
 	\Psi(e_1) &= e_1, &
 	\Psi(e_2) &= xe_2, &
 	\Psi(e_3) &= e_3,
 \end{align*}
 where $x\in\R^\ast$.
 Applying this automorphism to the connection~\eqref{Sol12,11},
 we obtain, for a suitable choice of $x$, the following equivalent connection:
 \begin{align}\label{Sol12,11,2}
 	\nabla_{e_1}e_1 &= e_1, &
 	\nabla_{e_3}e_3 &=\delta e_2,\qquad\delta=0,1.
 \end{align}
If $\delta = 1$, it is easy to check that the connection defined in~\eqref{Sol12,11,2}
is isomorphic to the one associated with the flat Lie algebra $\h_{0,1}$ via the
automorphism
\begin{align*}
	\Psi(e_1) &= -e_1 + e_2 - e_3, &
	\Psi(e_2) &= e_1, &
	\Psi(e_3) &= e_2.
\end{align*}
Otherwise, if $\delta = 0$, the connection defined in~\eqref{Sol12,11,2}
is isomorphic to the one associated with the flat Lie algebra $\h_{0,2}$
via the automorphism
\begin{align*}
	\Psi(e_1) &= e_2 + e_3, &
	\Psi(e_2) &= e_2, &
	\Psi(e_3) &= e_1.
\end{align*}
The twelfth solution is given by the following flat and torsion-free connection:
\begin{align}\label{Sol12,12}
	\nabla_{e_1}e_1 &= e_1, &
	\nabla_{e_2}e_3 &=\nabla_{e_3}e_2=c_{33}e_2, &\nabla_{e_3}e_3&=c_{23}e_2+c_{33}e_3.
\end{align}
Assume that $c_{33}\neq 0$ and consider the automorphism
\begin{align*}
	\Psi(e_1) &= e_2, &
	\Psi(e_2) &= e_1, &
	\Psi(e_3) &= \tfrac{c_{23}}{c_{33}}\, e_1 + c_{33} e_3.
\end{align*}
Applying this automorphism to the connection given in~\eqref{Sol12,12} yields the following equivalent connection:
\begin{align}\label{Sol12,12,1}
	\nabla_{e_1}e_3 &= e_1, &
	\nabla_{e_2}e_2&=e_2, &\nabla_{e_3}e_1&=e_1, &\nabla_{e_3}e_3&=e_3.
\end{align}
This connection actually coincides with the one given in~\eqref{Sol12,5,1}, which, as shown previously, is isomorphic to the connection associated with the flat Lie algebra $\h_{1,2}$.

Assume now that $c_{33}=0$, and consider the following automorphism
\begin{align*}
	\Psi(e_1) &= e_2, & 
	\Psi(e_2) &= x\, e_1, & 
	\Psi(e_3) &= e_3,
\end{align*}
where $x\in\R^\ast$ is chosen suitably. Applying this automorphism to the connection given in~\eqref{Sol12,12} yields the following equivalent connection:
\begin{align}\label{Sol12,12,1=2}
	\nabla_{e_2}e_2&=e_2, &\nabla_{e_3}e_3&=\delta e_1,\qquad\delta=0,1.
\end{align}
This connection coincides with the one previously given in~\eqref{Sol12,5,2}, which has already been treated, so no further procedure is required.

\textbf{Case 3.} If $\nabla^0 \equiv \nabla^2$, which corresponds to the Lie algebra $\mathfrak{b}_2$ (see Table~\ref{FlatR2}), then the flatness equations associated with the connection given in~\eqref{Connegeneral} can be solved directly, yielding seven distinct solutions.

The first solution is given by the following flat torsion-free connection:
\begin{equation}
\begin{aligned}\label{Sol21}
	\nabla_{e_1}e_1&= a_{32}b_{33}e_3, &\nabla_{e_1}e_2&=\nabla_{e_2}e_1=a_{32}e_3, &\nabla_{e_1}e_3&=\nabla_{e_3}e_1=b_{33}^2e_3,\\
\nabla_{e_2}e_2&=e_1, &\nabla_{e_2}e_3&=\nabla_{e_3}e_2=b_{33}e_3, &\nabla_{e_3}e_3&=\tfrac{b_{33}^2}{a_{32}}e_3.	
\end{aligned}
\end{equation}
Consider the automorphism
\begin{align*}
	\Psi(e_1) &= e_3, &
	\Psi(e_2) &= e_2, &
	\Psi(e_3) &= \tfrac{1}{a_{32}}\,e_1 + \tfrac{b_{33}}{a_{32}}\,e_3,
\end{align*}
and apply it to the connection given in~\eqref{Sol21}. This yields the following equivalent connection:
\begin{align}\label{Sol21,1}
	\nabla_{e_2}e_2 &= e_3, &
	\nabla_{e_2}e_3 &= \nabla_{e_3}e_2 = e_1 + b_{33}e_3, &
	\nabla_{e_3}e_3 &= b_{33}e_1 + b_{33}^2 e_2 .
\end{align}
Observe that the torsion-free part $\nabla^0$ of the connection defined in~\eqref{Sol21,1} coincides with the vanishing torsion-free connection, that is, $\nabla^0 \equiv 0$. Consequently, the connection~\eqref{Sol21,1} is necessarily isomorphic to one of the previously classified connections, and no further analysis is required.

The second solution is given by the following flat, torsion-free conncetion:
\begin{align}\label{Sol22}
\nabla_{e_2}e_2&=e_1, &\nabla_{e_2}e_3&=\nabla_{e_3}e_2=b_{13}e_1, &\nabla_{e_3}e_3&=c_{13}e_1-b_{13}c_{33}e_2+c_{33}e_3.
\end{align}
Assume that $b_{13}\neq 0$ and $c_{33}\neq 0$. Consider the automorphism
\begin{align*}
	\Psi(e_1) &= \tfrac{c_{33}^2}{b_{13}^2}\, e_1, &
	\Psi(e_2) &= \tfrac{c_{33}c_{13}}{b_{13}^3}\, e_1 + \tfrac{c_{33}}{b_{13}}\, e_2, &
	\Psi(e_3) &= c_{33}\, e_3,
\end{align*}
and apply it to the connection given in~\eqref{Sol22}. This yields the following flat torsion-free connection:
\begin{align}\label{Sol22,1}
	\nabla_{e_2}e_2 &= e_1, &
	\nabla_{e_2}e_3 &= \nabla_{e_3}e_2 = e_1, &
	\nabla_{e_3}e_3 &= -e_2 + e_3.
\end{align}
The automorphism defined by
\begin{align*}
	\Psi(e_1) &= e_1, &
	\Psi(e_2) &= e_2, &
	\Psi(e_3) &= 2\,e_2 - e_3,
\end{align*}
establishes an isomorphism between the connection given in~\eqref{Sol22,1}
and the one associated with the flat Lie algebra $\h_{0,1}$.

Assume now that $b_{13}\neq 0$ and $c_{33}=0$. Consider the automorphism
\begin{align*}
	\Psi(e_1) &= e_1, &
	\Psi(e_2) &= \tfrac{1}{b_{13}}\,e_2, &
	\Psi(e_3) &= e_3,
\end{align*}
and apply it to the connection given in~\eqref{Sol22}. This yields the following
equivalent flat torsion-free connection:
\begin{align}\label{Sol22,2}
	\nabla_{e_2}e_2 &= e_1, &
	\nabla_{e_2}e_3 &= \nabla_{e_3}e_2 = e_1, &
	\nabla_{e_3}e_3 &= \lambda\,e_1,\qquad \lambda\in\R.
\end{align}
We now analyze the isomorphisms between the connection given in~\eqref{Sol22,2}
and the previously classified connections. We begin with the case $\lambda = 1$.
The automorphism
\begin{align*}
	\Psi(e_1) &= e_2, &
	\Psi(e_2) &= e_3, &
	\Psi(e_3) &= e_1 + e_2,
\end{align*}
establishes an isomorphism between the connection~\eqref{Sol22,2} and the one
associated with the flat Lie algebra $\h_{0,6}$.

If $\lambda\neq1$, the following automorphism 
\begin{align*}
	\Psi(e_1) &=\varepsilon e_1, &
	\Psi(e_2) &= e_2, &
	\Psi(e_3) &= \sqrt{\varepsilon(\lambda-1)}e_1 + e_2,\qquad\varepsilon=\pm1,
\end{align*}
establishes an isomorphism between the connection~\eqref{Sol22,2} and the one
associated with the flat Lie algebra $\h_{0,7}$.

Assume that $b_{13}=0$ and $c_{33}\neq 0$. Consider the automorphism
\begin{align*}
	\Psi(e_1) &= e_1, &
	\Psi(e_2) &= e_2, &
	\Psi(e_3) &= -\tfrac{c_{13}}{c_{33}}\,e_1 + c_{33}e_3,
\end{align*}
and apply it to the connection given in~\eqref{Sol22}. This yields the following
equivalent flat torsion-free connection:
\begin{align}\label{Sol22,3}
	\nabla_{e_2}e_2 &= e_1, &
	\nabla_{e_3}e_3 &= e_3.
\end{align}
It is straightforward to verify that the automorphism
\begin{align*}
	\Psi(e_1) &= e_1, &
	\Psi(e_2) &= e_2, &
	\Psi(e_3) &= -e_1 + e_2 - e_3,
\end{align*}
induces an isomorphism between the connection given in~\eqref{Sol22,3}
and the one associated with the flat Lie algebra $\h_{0,1}$.

Assume that $b_{13}=0$ and $c_{33}=0$, and consider the automorphism
\begin{align*}
	\Psi(e_1) &= x^2 e_1, &
	\Psi(e_2) &= x e_2, &
	\Psi(e_3) &= e_3,
\end{align*}
where $x \in \R^\ast$. Applying this automorphism to the connection given in~\eqref{Sol22},
we obtain the following equivalent connection:
\begin{align}\label{Sol22,4}
	\nabla_{e_2} e_2 &= e_1, &
	\nabla_{e_3} e_3 &= \delta_1 e_1,
	\qquad \delta_1 = 0, \pm 1.
\end{align}
If $\delta_1 = 0$, it is straightforward to verify that the automorphism
\begin{align*}
	\Psi(e_1) &= e_2, &
	\Psi(e_2) &= e_2 + e_3, &
	\Psi(e_3) &= e_1,
\end{align*}
induces an isomorphism between the connection defined in~\eqref{Sol22,3}
and the one associated with the flat Lie algebra $\h_{0,6}$.
 
If $\delta_1=\pm1$, it is straightforward to verify that the automorphism
\begin{align*}
	\Psi(e_1) &= \varepsilon\, e_3, &
	\Psi(e_2) &= e_2, &
	\Psi(e_3) &= \varepsilon\, e_1,
\end{align*}
induces an isomorphism between the connection defined in~\eqref{Sol22,3}
and the one associated with the flat Lie algebra $\h_{0,7}$, where
$\delta_1=\varepsilon=\pm1$.

The third solution is given by the following flat, torsion-free connection:
\begin{equation}
\begin{aligned}\label{Sol23}
	\nabla_{e_1}e_3&=\nabla_{e_3}e_1=c_{33}e_1, &\nabla_{e_2}e_2&=e_1, &\nabla_{e_2}e_3&=\nabla_{e_3}e_2=b_{13}e_1+c_{33}e_2,\\ \nabla_{e_3}e_3&=c_{13}e_1+b_{13}c_{33}e_2+c_{33}e_3.
\end{aligned}
\end{equation}
Assume that $c_{33}\neq 0$ and $b_{13}\neq 0$. Consider the automorphism
\begin{align*}
	\Psi(e_1) &= e_1, &
	\Psi(e_2) &=-\tfrac{c_{33}c_{13}}{b_{13}^2}e_1+\tfrac{c_{33}}{b_{13}} e_2, &
	\Psi(e_3) &= c_{33} e_3.
\end{align*}
Applying this automorphism to the connection given in~\eqref{Sol23} yields the following equivalent connection:
\begin{equation}
\begin{aligned}\label{Sol23,1}
\h_{2,1}:	&~~&\nabla_{e_1}e_3&=e_1, &\nabla_{e_2}e_2&=e_1, &\nabla_{e_2}e_3&=e_1+e_2,\\&~~& \nabla_{e_3}e_1&=e_1, &\nabla_{e_3}e_2&=e_1+e_2, &\nabla_{e_3}e_3&=e_2+e_3.
\end{aligned}
\end{equation}
It is straightforward to verify that, for every automorphism $\Psi \in \mathrm{GL}_4(\mathbb{R})$, 
the connection defined in~\eqref{Sol23,1} is not isomorphic to any of the previously classified 
connections associated with the flat Lie algebras 
$\mathfrak{h}_{0,1}, \ldots, \mathfrak{h}_{0,8}$ and 
$\mathfrak{h}_{1,1}, \mathfrak{h}_{1,2}, \mathfrak{h}_{1,3}$. 
Therefore, we obtain a new flat Lie algebra, which we denote by $\mathfrak{h}_{1,4}$.

Assume that $c_{33}\neq 0$ and $b_{13}= 0$. Consider the automorphism
\begin{align*}
	\Psi(e_1) &= e_1, &
	\Psi(e_2) &= e_2, &
	\Psi(e_3) &=\tfrac{c_{13}}{c_{33}}e_1+ c_{33} e_3.
\end{align*}
Applying this automorphism to the connection given in~\eqref{Sol23} yields the following equivalent connection:
\begin{equation}
	\begin{aligned}\label{Sol23,2}
		\nabla_{e_1}e_3&=e_1, &\nabla_{e_2}e_2&=e_1, &\nabla_{e_2}e_3&=e_2,\\\nabla_{e_3}e_1&=e_1, &\nabla_{e_3}e_2&=e_2, &\nabla_{e_3}e_3&=e_3.
	\end{aligned}
\end{equation}
This connection is, in fact, isomorphic to the one associated with the flat Lie algebra $\mathfrak{h}_{1,4}$ via the following automorphism:
\begin{align*}
	\Psi(e_1) &= e_1, &
	\Psi(e_2) &=e_2, &
	\Psi(e_3) &=e_1-e_2+ e_3.
\end{align*}
Assume now that $c_{33}=0$ and $b_{13}\neq 0$. Consider the automorphism
\begin{align*}
	\Psi(e_1) &= e_1, &
	\Psi(e_2) &= e_2, &
	\Psi(e_3) &= b_{13}\, e_3.
\end{align*}
Applying this automorphism to the connection given in~\eqref{Sol23} yields the following equivalent connection:
\begin{equation}
	\begin{aligned}
		\nabla_{e_2}e_2 &= e_1, &
		\nabla_{e_2}e_3 &= e_1, &
		\nabla_{e_3}e_2 &= e_1, &
		\nabla_{e_3}e_3 &= \tfrac{c_{13}}{b_{13}^2}\, e_3.
	\end{aligned}
\end{equation}
Observe that this connection coincides with the one given in~\eqref{Sol22,2} with 
$\lambda = \tfrac{c_{13}}{b_{13}^2}$; hence, no further analysis is required.

Assume that $c_{33}=0$ and $b_{13}=0$. Consider the automorphism
\begin{align*}
	\Psi(e_1) &= x^{2} e_1, &
	\Psi(e_2) &= x e_2, &
	\Psi(e_3) &= e_3.
\end{align*}
Applying this automorphism to the connection given in~\eqref{Sol23}, for a suitable choice of 
$x \in \mathbb{R}^\ast$, yields the following equivalent connection:
\begin{align}\label{Sol23,3}
	\nabla_{e_2}e_2 &= e_1, &
	\nabla_{e_3}e_3 &= \delta_1 e_1,\qquad \delta_1 = 0,\pm 1.
\end{align}

We now analyze whether there exists an isomorphism between the connection defined in~\eqref{Sol23,3}
and the previously classified connections. We begin with the case $\delta_1 = 0$. 
The automorphism defined by
\begin{align*}
	\Psi(e_1) &=  e_2, &
	\Psi(e_2) &=  e_3, &
	\Psi(e_3) &= e_1
\end{align*}
induces an isomorphism between the connection given in~\eqref{Sol23,3} and the one associated with
the flat Lie algebra $\mathfrak{h}_{0,6}$.

Now, if $\delta_1 = \pm 1$, it is straightforward to verify that the automorphism
\begin{align*}
	\Psi(e_1) &=e_3, &
	\Psi(e_2) &=  e_1, &
	\Psi(e_3) &= e_2
\end{align*}
induces an isomorphism between the connection given in~\eqref{Sol23,3} and the one corresponding to
the flat Lie algebra $\mathfrak{h}_{0,7}$ with $\delta_1=\varepsilon=\pm1$.

The fourth solution is given by the following flat, torsion-free connection:
\begin{equation}
\begin{aligned}\label{Sol24}
	\nabla_{e_1}e_1 &= \tfrac{(b_{32}c_{33}-b_{33}^2)^2}{c_{33}}e_3, &\nabla_{e_1}e_2&=\nabla_{e_2}e_1= -\tfrac{b_{33}(b_{32}c_{33}-b_{33}^2)}{c_{33}}e_3, &\nabla_{e_1}e_3&=\nabla_{e_3}e_1=(b_{33}^2-b_{32}c_{33})e_3,\\
	\nabla_{e_2}e_2&=e_1+b_{32}e_3, &\nabla_{e_2}e_3&=\nabla_{e_3}e_2=b_{33}e_3, &\nabla_{e_3}e_3&=c_{33}e_3.
\end{aligned}
\end{equation}
Note that $c_{33} \neq 0$. Consider the automorphism
\begin{align*}
	\Psi(e_1) &= e_1 + (b_{33}^2 - b_{32} c_{33})\, e_3, &
	\Psi(e_2) &= e_2 + b_{33}\, e_3, &
	\Psi(e_3) &= c_{33}\, e_3.
\end{align*}
Applying this automorphism to the connection given in~\eqref{Sol24} yields the following equivalent connection:
\begin{align}\label{Sol24,1}
	\nabla_{e_2} e_2 &= e_1, &
	\nabla_{e_3} e_3 &= e_3.
\end{align}
Observe that this connection coincides with the one given in~\eqref{Sol22,3}, which has been treated previously,
and is isomorphic to the connection associated with the flat Lie algebra $\mathfrak{h}_{0,1}$.

The fifth solution is given by the following flat, torsion-free connection:
\begin{equation}
	\begin{aligned}\label{Sol25}
		\nabla_{e_1} e_2 &= \nabla_{e_2} e_1 = a_{32} e_3, &
		\nabla_{e_2} e_2 &= e_1 + b_{32} e_3.
	\end{aligned}
\end{equation}

Consider the automorphism
\begin{align*}
	\Psi(e_1) &= e_1 - x\, b_{32} e_3, &
	\Psi(e_2) &= e_2, &
	\Psi(e_3) &= x\, e_3.
\end{align*}

Applying this automorphism to the connection given in~\eqref{Sol25}, for a suitable choice of 
$x \in \mathbb{R}^\ast$, yields the following equivalent connection:
\begin{equation}
	\begin{aligned}\label{Sol25,1}
		\nabla_{e_1} e_2 &= \delta_2 e_3, &
		\nabla_{e_2} e_1 &= \delta_2 e_3, &
		\nabla_{e_2} e_2 &= e_1, \qquad \delta_2 = 0,1.
	\end{aligned}
\end{equation}

Observe that if $\delta_2 = 0$, this connection coincides with the one given in~\eqref{Sol23,3} 
for $\delta_1 = \delta_2 = 0$, which has been treated previously. 

Otherwise, if $\delta_2 = 1$, the automorphism
\begin{align*}
	\Psi(e_1) &= e_2, &
	\Psi(e_2) &= e_3, &
	\Psi(e_3) &= e_1
\end{align*}
induces an isomorphism between the connection given in~\eqref{Sol25,1} and the one associated 
with the flat Lie algebra $\mathfrak{h}_{0,4}$.

The sixth solution is given by the following flat, torsion-free connection:
\begin{equation}
	\begin{aligned}\label{Sol26}
		\nabla_{e_2} e_2 &=e_1+b_{32}e_3, &\nabla_{e_2}e_3&=\tfrac{b_{32}c_{33}-b_{33}^2}{b_{32}}e_2+b_{33}e_3, &\nabla_{e_3}e_3&=\tfrac{b_{32}c_{33}-b_{33}^2}{b_{32}^2}e_1+\tfrac{b_{33}(b_{32}c_{33}-b_{33}^2)}{b_{32}^2}e_2+c_{33}e_3.
	\end{aligned}
\end{equation}
Assume that $c_{33} b_{32} - b_{33}^2 \neq 0$. Consider the automorphism
\begin{align*}
	\Psi(e_1) &= e_1, &
	\Psi(e_2) &= \tfrac{b_{33}}{c_{33} b_{32} - b_{33}^2}\, e_1
	+ \tfrac{\sqrt{\varepsilon_1 (c_{33} b_{32} - b_{33}^2)}}{\varepsilon_1}\, e_2, 
	&\Psi(e_3) &= \tfrac{\sqrt{b_{33} (c_{33} b_{32} - b_{33}^2)}}{\varepsilon_1 b_{32}}\, e_2
	+ e_3.
\end{align*}
Applying this automorphism to the connection given in~\eqref{Sol26} yields the following equivalent connection:
\begin{align}\label{Sol26,1}
	\nabla_{e_2} e_2 &= \lambda e_2 + e_3, &
	\nabla_{e_2} e_3 &= \varepsilon_1 e_2, &
	\nabla_{e_3} e_2 &= \varepsilon_1 e_2, &
	\nabla_{e_3} e_3 &= \varepsilon_1 e_3,\quad\varepsilon_1=\pm1,~~\lambda\in\R.
\end{align}
Consider the following automorphism 
\begin{align*}
	\Psi(e_1) &= e_1, &
	\Psi(e_2) &= e_2, &
	\Psi(e_3) &= -\lambda e_2
	+\varepsilon_1 e_3.
\end{align*}
Applying this automorphism to the connection given in~\eqref{Sol26,1} yields the following equivalent connection:
\begin{align}\label{Sol26,1,1}
	\nabla_{e_2} e_2 &= \varepsilon_1 e_3, &
	\nabla_{e_2} e_3 &= e_2 + \lambda e_3, &
	\nabla_{e_3} e_2 &= e_2 + \lambda e_3, 
	&\nabla_{e_3} e_3 &= \tfrac{\lambda}{\varepsilon_1} e_2
	+ \tfrac{\lambda^2 + \varepsilon_1}{\varepsilon_1} e_3,
	\qquad \varepsilon_1 = \pm 1.
\end{align}

Observe that this family of flat, torsion-free connections is equivalent to the family obtained in
\textbf{Case~1}. Therefore, no further analysis is required, since these connections are necessarily
isomorphic to one or more of the connections associated with the flat Lie algebras
$\mathfrak{h}_{0,1}, \ldots, \mathfrak{h}_{0,8}$.

Assume that $c_{33} b_{32} =b_{33}^2$ and $b_{33}\neq0$. Consider the automorphism
\begin{align*}
	\Psi(e_1) &=-b_{33}^2 e_1, &
	\Psi(e_2) &=b_{33} e_2, 
	&\Psi(e_3) &=\tfrac{b_{33}^2}{b_{32}} (e_1+e_2+e_3).
\end{align*}
Applying this automorphism to the connection given in~\eqref{Sol26} yields the following equivalent connection:
\begin{align}\label{Sol26,2}
	\nabla_{e_2} e_2 &= e_2+ e_3, &
	\nabla_{e_2} e_3 &= e_1, &
	\nabla_{e_3} e_2 &= e_1, 
	&\nabla_{e_3} e_3 &=-e_1.
\end{align}
The connection defined in~\eqref{Sol26,2} is isomorphic to the connection corresponding to the flat
Lie algebra $\mathfrak{h}_{0,1}$ through the following automorphism:
\begin{align*}
	\Psi(e_1) &=-e_1, &
	\Psi(e_2) &=-e_3, 
	&\Psi(e_3) &=e_2.
\end{align*}

The seventh solution is given by the following flat, torsion-free connection:
\begin{equation}
\begin{aligned}\label{Sol27}
	\nabla_{e_2} e_2 &=e_1+b_{32}e_3, &\nabla_{e_2}e_3&=\nabla_{e_3}e_2=b_{13}e_1+b_{23}e_2+b_{33}e_3,\\ \nabla_{e_3}e_3&=\tfrac{b_{13}b_{33}+b_{23}}{b_{32}}e_1+\tfrac{b_{23}b_{33}}{b_{32}}e_2+\tfrac{b_{23}b_{32}+b_{33}^2}{b_{32}}e_3.
\end{aligned}
\end{equation}
Assume that $b_{23} \neq 0$. Consider the automorphism
\begin{align*}
	\Psi(e_1) &= e_1, &
	\Psi(e_2) &=\tfrac{b_{33}-b_{13}b_{32}}{b_{23}b_{32}} e_1+x\,e_2, &
	\Psi(e_3) &=\tfrac{b_{33}\sqrt{\varepsilon_2b_{23}b_{32}}}{\varepsilon_2b_{32}} e_2+b_{23}e_3.
\end{align*}
Applying this automorphism to the connection given in~\eqref{Sol27}, for a suitable choice of
$x \in \mathbb{R}^\ast$, yields the following equivalent connection:
\begin{align}\label{Sol27,1}
	\nabla_{e_2} e_2 &= \lambda e_2 +\varepsilon_2 e_3, &
	\nabla_{e_2} e_3 &= e_2, &
	\nabla_{e_3} e_2 &=  e_2, &
	\nabla_{e_3} e_3 &=  e_3,\quad\varepsilon_2=\pm1,~~\lambda\in\R.
\end{align}
Note that if $\varepsilon_2 = 1$, this connection coincides with the one given in~\eqref{Sol26,1}
with $\varepsilon_1 = 1$, which has been treated previously. 

Otherwise, if $\varepsilon_2 = -1$ and $\lambda \neq \pm 1$, then the automorphism
\begin{align*}
	\Psi(e_1) &= e_1, &
	\Psi(e_2) &= \tfrac{\sqrt{\varepsilon_2(\lambda^2-4)}}{2\varepsilon_2}\, e_2 + \tfrac{\lambda}{2} e_3, &
	\Psi(e_3) &= e_3
\end{align*}
induces an isomorphism between the connection given in~\eqref{Sol27,1}, under the above assumptions,
and the one associated with the flat Lie algebra $\mathfrak{h}_{0,5}$.

If, on the other hand, $\varepsilon_2 = -1$ and $\lambda = \pm 2$, then the connection given
in~\eqref{Sol27,1} under these assumptions is isomorphic to the one associated with the flat Lie algebra
$\mathfrak{h}_{0,3}$ via the following automorphism:
\begin{align*}
	\Psi(e_1) &= e_1, &
	\Psi(e_2) &= e_2 + \tfrac{\lambda}{2} e_3, &
	\Psi(e_3) &= e_3,\qquad\lambda=\pm2.
\end{align*}

Assume that $b_{23} = 0$, $b_{33} \neq 0$, and $b_{13} b_{32} - b_{33} \neq 0$. 
Consider the automorphism
\begin{align*}
	\Psi(e_1) &= \tfrac{b_{33}^3}{b_{13} b_{32} - b_{33}}\, e_1, &
	\Psi(e_2) &= b_{33}\, e_2, &
	\Psi(e_3) &= -\tfrac{b_{33}^3}{(b_{13} b_{32} - b_{33}) b_{32}}\, e_1
	+ \tfrac{b_{33}^2}{b_{32}}\, (e_2 + e_3).
\end{align*}
Applying this automorphism to the connection given in~\eqref{Sol27} yields the following equivalent connection:
\begin{align}\label{Sol27,2}
	\nabla_{e_2} e_2 &= e_2 + e_3, &
	\nabla_{e_2} e_3 &= e_1, &
	\nabla_{e_3} e_2 &= e_1, &
	\nabla_{e_3} e_3 &= -e_1.
\end{align}
As we can see, this connection coincides with the one given in~\eqref{Sol26,2}, which is isomorphic
to the connection associated with the flat Lie algebra $\mathfrak{h}_{0,1}$.

Assume that $b_{23} = 0$ and $b_{33} = 0$. Consider the automorphism
\begin{align*}
	\Psi(e_1) &= x\, e_1, &
	\Psi(e_2) &= e_2, &
	\Psi(e_3) &= -\tfrac{x}{b_{32}}\, e_1
	+ \tfrac{b_{33}}{b_{32}}\, e_2
	+ \tfrac{1}{b_{32}}\, e_3.
\end{align*}
Applying this automorphism to the connection given in~\eqref{Sol27}, for a suitable choice of
$x \in \mathbb{R}^\ast$, yields the following equivalent connection:
\begin{align}\label{Sol27,3}
	\nabla_{e_2} e_2 &= e_3, &
	\nabla_{e_2} e_3 &= \delta_1 e_1, &
	\nabla_{e_3} e_2 &= \delta_1 e_1,\qquad\delta_1=0,1.
\end{align}
We now analyze the isomorphisms between this connection and the previously classified flat,
torsion-free connections. We begin with the case $\delta_1 = 0$. The automorphism defined by
\begin{align*}
	\Psi(e_1) &= e_1, &
	\Psi(e_2) &= e_3, &
	\Psi(e_3) &= e_2
\end{align*}
induces an isomorphism between the connection given in~\eqref{Sol27,3} and the one associated
with the flat Lie algebra $\mathfrak{h}_{0,6}$.

If $\delta_1 = 1$, the same automorphism induces an isomorphism between the connection given
in~\eqref{Sol27,3} and the one associated with the flat Lie algebra $\mathfrak{h}_{0,4}$.

\textbf{Case 4.} If $\nabla^0 \equiv \nabla^3$, which corresponds to the Lie algebra $\mathfrak{b}_3$
(see Table~\ref{FlatR2}), then the flatness equations associated with the connection
given in~\eqref{Connegeneral} can be solved directly, yielding ten distinct solutions.

The first solution is given by the following flat, torsion-free connection:
\begin{equation}
\begin{aligned}\label{Sol31}
	\nabla_{e_1} e_1 &=e_1, \qquad\nabla_{e_1}e_3=\nabla_{e_3}e_1=a_{13}e_1, \qquad\nabla_{e_2}e_2=e_2, \qquad\nabla_{e_2,e_3}=\nabla_{e_3}e_2=b_{23}e_2, \\
	\nabla_{e_3}e_3&=a_{13}(a_{13}-c_{33})e_1+b_{23}(b_{23}-c_{33})+c_{33}e_3.
\end{aligned}
\end{equation}
Consider the automorphism
\begin{align*}
	\Psi(e_1) &= e_1, &
	\Psi(e_2) &= e_2, &
	\Psi(e_3) &= a_{13} e_1 + b_{23} e_2 + x\, e_3.
\end{align*}
Applying this automorphism to the connection given in~\eqref{Sol31}, for a suitable choice of
$x \in \mathbb{R}$, yields the following equivalent connection:
\begin{equation}
	\begin{aligned}\label{Sol31,1}
		\nabla_{e_1} e_1 &= e_1, &
		\nabla_{e_2} e_2 &= e_2, &
		\nabla_{e_3} e_3 &= \delta_1 e_3, \qquad \delta_1 = 0,1.
	\end{aligned}
\end{equation}

If $\delta_1 = 1$, then it is straightforward to verify that the automorphism
\begin{align*}
	\Psi(e_1) &= \tfrac{1}{2} (e_2 + e_3), &
	\Psi(e_2) &= \tfrac{1}{2} (e_2 - e_3), &
	\Psi(e_3) &= e_1
\end{align*}
induces an isomorphism between the connection given in~\eqref{Sol31,1} and the one associated
with the flat Lie algebra $\mathfrak{h}_{1,3}$.

Otherwise, if $\delta_1 = 0$, the automorphism
\begin{align*}
	\Psi(e_1) &= e_1, &
	\Psi(e_2) &= e_2 + e_3, &
	\Psi(e_3) &= e_2
\end{align*}
induces an isomorphism between the connection given in~\eqref{Sol31,1} and the one associated
with the flat Lie algebra $\mathfrak{h}_{1,1}$.

The second solution is given by the following flat, torsion-free connection:
\begin{align}\label{Sol32}
	\nabla_{e_1} e_1 &=e_1+a_{31}e_3, & \nabla_{e_2}e_2&=e_2+b_{32}e_3.
\end{align}
Consider the automorphism
\begin{align*}
	\Psi(e_1) &= e_1-a_{31}e_3, &
	\Psi(e_2) &= e_2-b_{32}e_3, &
	\Psi(e_3) &=  e_3.
\end{align*}
Applying this automorphism to the connection given in~\eqref{Sol32}, yields the following equivalent connection:
\begin{equation}
	\begin{aligned}\label{Sol31,1}
		\nabla_{e_1} e_1 &= e_1, &
		\nabla_{e_2} e_2 &= e_2.
	\end{aligned}
\end{equation}
Observe that this connection coincides with the one given in~\eqref{Sol31,1} with $\delta_1 = 0$;
hence, no further analysis is required.

The third solution is given by the following flat, torsion-free connection:
\begin{equation}
	\begin{aligned}\label{Sol33}
		\nabla_{e_1} e_1 &=e_1, &\nabla_{e_2}e_2&=e_2+b_{32}e_3, &\nabla_{e_2}e_3&=\nabla_{e_3}e_2=\tfrac{c_{33}b_{32}-b_{33}^2+b_{33}}{b_{32}}e_2+b_{33}e_3, \\& &&&\nabla_{e_3}e_3&=\tfrac{b_{33}(c_{33}b_{32}-b_{33}^2+b_{33})}{b_{32}^2}e_2+c_{33}e_3.
	\end{aligned}
\end{equation}
Note that $b_{32}\neq0$ and consider the following automorphism
\begin{align*}
	\Psi(e_1) &= e_1, &
	\Psi(e_2) &= -b_{32}e_2, &
	\Psi(e_3) &=  b_{32} e_3.
\end{align*}
Applying this automorphism to the connection given in~\eqref{Sol33}, yields the following equivalent connection:
\begin{equation}
	\begin{aligned}\label{Sol33,1}
		\nabla_{e_1} e_1 &= e_1, &
		\nabla_{e_2} e_2 &= e_3, &\nabla_{e_2}e_3&=\nabla_{e_3}e_2=\tfrac{c_{33}b_{32}-b_{33}^2}{b_{32}^2}e_2-\tfrac{1+b_{33}}{b_{32}}e_3,\\
		& && &\nabla_{e_3}e_3&=-\tfrac{(c_{33}b_{32}-b_{33}^2)(1+b_{33})}{b_{32}^3}e_2+\tfrac{c_{33}b_{32}+2\,b_{33}+1}{b_{32}^2}e_3.
	\end{aligned}
\end{equation}
Observe that this family of flat, torsion-free connections is equivalent to the family obtained in
\textbf{Case~2}. Therefore, no further analysis is required.

The fourth solution is given by the following flat, torsion-free connection:
\begin{equation}
	\begin{aligned}\label{Sol34}
		\nabla_{e_1} e_1 &= e_1, &
		\nabla_{e_1} e_3 &=\nabla_{e_3}e_1=a_{13}e_1, &\nabla_{e_2}e_2&=e_2, &\nabla_{e_2}e_3&=\nabla_{e_3}e_2=-a_{13}e_1+e_3,\\ && && &&\nabla_{e_3}e_3&= a_{13}(a_{13}-c_{33})e_1+c_{23}e_2+c_{33}e_3.
	\end{aligned}
\end{equation}
Consider the following automorphism:
\begin{align*}
	\Psi(e_1) &= e_1, &
	\Psi(e_2) &= e_2, &
	\Psi(e_3) &= a_{13}e_1 + \tfrac{c_{33}}{2}e_2 + x\,e_3.
\end{align*}
Applying this automorphism to the connection given in~\eqref{Sol34}, for a suitable choice of
$x \in \mathbb{R}^\ast$, yields the following equivalent connection:
\begin{align}\label{Sol34,1}
	\nabla_{e_1} e_1 &= e_1, &
	\nabla_{e_2} e_2 &= e_2, &
	\nabla_{e_2} e_3 &= e_3, &
	\nabla_{e_3} e_2 &= e_3, &
	\nabla_{e_3} e_3 &= \delta_1 e_2,
	\qquad \delta_1 = 0, \pm 1.
\end{align}
We now analyze the isomorphisms between the connection given in~\eqref{Sol34,1} and the previously
classified flat connections. We begin with the case $\delta_1 = 0$.
The automorphism
\begin{align*}
	\Psi(e_1) &= e_1, &
	\Psi(e_2) &= e_3, &
	\Psi(e_3) &= e_2
\end{align*}
induces an isomorphism between the connection defined in~\eqref{Sol34,1} under this assumption
and the one associated with the flat Lie algebra $\mathfrak{h}_{1,2}$.
Otherwise, if $\delta_1 = 1$, this connection in fact coincides with the one associated
with the flat Lie algebra $\mathfrak{h}_{1,3}$. 
Now, if $\delta_1 = -1$, it is straightforward to verify that, for every choice of
automorphism $\Psi \in \mathrm{GL}_3(\mathbb{R})$, this connection is not isomorphic to
any of the previously classified connections. Therefore, we introduce a new flat,
torsion-free connection given by
\begin{align}\label{Sol34,1,1}
\h_{3,1}:&	&\nabla_{e_1} e_1 &= e_1, &
	\nabla_{e_2} e_2 &= e_2, &
	\nabla_{e_2} e_3 &= e_3, &
	\nabla_{e_3} e_2 &= e_3, &
	\nabla_{e_3} e_3 &= - e_2.
\end{align}

The fifth solution is given by the following flat, torsion-free connection:
\begin{equation}
	\begin{aligned}\label{Sol35}
		\nabla_{e_1}e_1&=e_1+\tfrac{a_{32}^2(a_{33}-1)}{a_{33}b_{32}+a_{32}}e_3, &\nabla_{e_1}e_2&=\nabla_{e_2}e_1=a_{32}e_3, &\nabla_{e_1}e_3&=\nabla_{e_3}e_1=a_{33}e_3,\\
		\nabla_{e_2}e_2&=e_2+b_{32}e_3, &\nabla_{e_2}e_3&=\nabla_{e_3}e_2=\tfrac{a_{33}b_{32}+a_{32}}{a_{32}}e_3, &\nabla_{e_3}e_3&=\tfrac{a_{33}(a_{33}b_{32}+a_{32})}{a_{32}^2}e_3.
	\end{aligned}
\end{equation}
Assume that $a_{33}\neq 0$. Consider the automorphism
\begin{align*}
	\Psi(e_1) &= e_1+a_{33}e_3, &
	\Psi(e_2) &= e_2+\tfrac{a_{33}b_{32}+a_{32}}{a_{32}}e_3, &
	\Psi(e_3) &= \tfrac{a_{33}(a_{33}b_{32}+a_{32})}{a_{32}^2}e_3.
\end{align*}
Applying $\Psi$ to the connection defined in~\eqref{Sol35}
yields the following equivalent connection:
\begin{align*}\label{Sol35,1}
	\nabla_{e_1}e_1&= e_1, &
	\nabla_{e_2}e_2 &= e_2, &
	\nabla_{e_3}e_3 &= e_3.
\end{align*}
In fact, this connection is isomorphic to the flat torsion-free connection associated
with the Lie algebra $\mathfrak{h}_{1,3}$ via the following automorphism:
\begin{align*}
	\Psi(e_1) &=\tfrac{1}{2}( e_2+e_3), &
	\Psi(e_2) &= \tfrac{1}{2}( e_2-e_3), &
	\Psi(e_3) &= e_1.
\end{align*}

Now, if $a_{33}=0$, consider the following automorphism
\begin{align*}
	\Psi(e_1) &= e_1 + \tfrac{a_{32}^2}{a_{33} b_{32} + a_{32}}\, e_3, &
	\Psi(e_2) &= e_2 + b_{32} e_3, &
	\Psi(e_3) &= e_3.
\end{align*}
Applying this automorphism to the connection given in~\eqref{Sol35} yields the following
equivalent connection:
\begin{align}
	\nabla_{e_1} e_1 &= e_1, &
	\nabla_{e_2} e_2 &= e_2, &
	\nabla_{e_2} e_3 &= e_3, &
	\nabla_{e_3} e_2 &= e_3.
\end{align}
Observe that this connection coincides with the one given in~\eqref{Sol34,1} with $\delta_1=0$, which has
already been treated. Hence, no further analysis is required.

The sixth solution is given by the following flat, torsion-free connection:
\begin{equation}
\begin{aligned}\label{Sol36}
	\nabla_{e_1} e_1 &= e_1+b_{32}e_3, &
	\nabla_{e_1} e_2 &=\nabla_{e_2}e_3=-b_{32} e_3, \qquad	\nabla_{e_1} e_3 =\nabla_{e_3}e_1=a_{13}e_1+\tfrac{b_{32}c_{23}}{a_{33}-1}e_2+a_{33} e_3, \\
	\nabla_{e_2} e_2 &=e_2+b_{32} e_3, &
	\nabla_{e_2} e_3 &=\nabla_{e_3}e_2=-a_{13}e_1-\tfrac{b_{32}c_{23}}{a_{33}-1}e_2 +(1-a_{33}) e_3, \\
	&&\nabla_{e_3}e_3&=\tfrac{a_{13}a_{33}}{b_{32}}e_1+c_{23}e_2+\tfrac{a_{13}a_{33}b_{32}+a_{33}^2-b_{32}^2c_{23}-a_{13}b_{32}-2\,a_{33}^2+a_{33}}{b_{32}(a_{33}-1)}e_3.
\end{aligned}
\end{equation}
If $a_{33}\neq2$. Consider the following automorphism:
\begin{align*}
	\Psi(e_1) &=e_1-\tfrac{a_{33}-2}{2}e_2, &
	\Psi(e_2) &= \tfrac{2-a_{33}}{2}e_2, &
	\Psi(e_3) &=\tfrac{a_{33}(a_{33}-2)}{4\,b_{32}} e_2+\tfrac{(a_{33}-2)^2}{4\,b_{32}}e_3.
\end{align*}
Applying this automorphism to the connection given in \eqref{Sol36}, yields the following equivalent connection:
\begin{equation}
	\begin{aligned}\label{Sol36,1}
		\nabla_{e_1} e_1 &= e_1,  &\nabla_{e_1} e_2&=e_2, &\nabla_{e_1} e_3&=e_3,\\
		\nabla_{e_2} e_1&=e_2, &\nabla_{e_2} e_2&= e_2+e_3, &\nabla_{e_2} e_3&=\lambda_1e_1+\lambda_2e_2+e_3,\\ \nabla_{e_3} e_1&=e_3, &\nabla_{e_3}e_2&=\lambda_1e_1+\lambda_2e_2+e_3, &\nabla_{e_3}e_3&=(\lambda_1+\lambda_2)e_2+\lambda_2e_3, \quad\lambda_1,\lambda_2\in\R.
	\end{aligned}
\end{equation}
Now, we simplify the connection given in~(\ref{Sol36,1}). If $\lambda_2 \neq -\frac{1}{3}$, we consider the following automorphism
\begin{align*}
	\Psi(e_1) &=e_1, &
	\Psi(e_2) &=\tfrac{2}{3}e_1+\tfrac{\sqrt{3}\sqrt{\varepsilon_1(3\,\lambda_2+1)}}{3\varepsilon_1}e_2 , &
	\Psi(e_3) &=-\tfrac{2}{9}e_1+\tfrac{\sqrt{3}\sqrt{\varepsilon_1(3\,\lambda_2+1)}}{9\varepsilon_1}e_2+\tfrac{1+3\,\lambda_2}{3\varepsilon_1}e_3.
\end{align*}
 and apply it to~(\ref{Sol36,1}), which yields the following equivalent connection:
 \begin{equation}
 	\begin{aligned}\label{Sol36,1,1}
 		\nabla_{e_1} e_1 &= e_1,  &\nabla_{e_1} e_2&=e_2, &\nabla_{e_1} e_3&=e_3,\\
 		\nabla_{e_2} e_1&=e_2, &\nabla_{e_2} e_2&= e_3, &\nabla_{e_2} e_3&=\lambda e_1+\varepsilon_1e_2,\\ \nabla_{e_3} e_1&=e_3, &\nabla_{e_3}e_2&=\lambda e_1+\varepsilon_1e_2, &\nabla_{e_3}e_3&=\lambda e_2+\varepsilon_1e_3, \quad\lambda\in\R,~~\varepsilon_1=\pm1.
 	\end{aligned}
 \end{equation}
 If $\lambda=0$ and $\varepsilon_1=1$, then the following automorphism
 \begin{align*}
 	\Psi(e_1) &= e_1+e_2, &
 	\Psi(e_2) &= e_3, &
 	\Psi(e_3) &= e_2
 \end{align*}
 establishes an isomorphism between the connection given in~(\ref{Sol36,1,1}) and that associated with the flat Lie algebra $\h_{1,3}$.
 
 If $\lambda=0$ and $\varepsilon_1=-1$, then the following automorphism
 \begin{align*}
 	\Psi(e_1) &= e_1+e_2, &
 	\Psi(e_2) &= e_3, &
 	\Psi(e_3) &= -e_2
 \end{align*}
 establishes an isomorphism between the connection given in~(\ref{Sol36,1,1}) and that associated with the flat Lie algebra $\h_{3,1}$.

 Since we work in low dimensions, we can verify, without using Lemma~\ref{isoofconnection}, that the connection given in~(\ref{Sol36,1,1}) is not isomorphic to any of the previously classified connections. 
 
 With this analysis, we may assume that $\lambda \neq 0$. Moreover, we note that the upper block of this connection coincides with \textbf{Case~5}, which will be treated later. For this reason, we denote this connection as being associated with the flat Lie algebra $\h_{4,1}$.

If $\lambda_2 = -\frac{1}{3}$, we consider the following automorphism:
\begin{align*}
	\Psi(e_1) &=e_1 , &
	\Psi(e_2) &= \tfrac{2}{3}e_1+x\,e_2, &
	\Psi(e_3) &= -\tfrac{2}{9}e_1+\tfrac{x}{3}e_2+x^2\,e_3.
\end{align*}
We then apply it to~(\ref{Sol36,1}), for a suitable parameter $x \in \mathbb{R}^\ast$, which yields the following equivalent connection:
 \begin{equation}
	\begin{aligned}\label{Sol36,1,2}
		\nabla_{e_1} e_1 &= e_1,  &\nabla_{e_1} e_2&=e_2, &\nabla_{e_1} e_3&=e_3,\\
		\nabla_{e_2} e_1&=e_2, &\nabla_{e_2} e_2&= e_3, &\nabla_{e_2} e_3&=\delta e_1\\ \nabla_{e_3} e_1&=e_3, &\nabla_{e_3}e_2&=\delta e_1, &\nabla_{e_3}e_3&=\delta e_2, \quad \delta=0,1.
	\end{aligned}
\end{equation}
If $\delta = 0$, then the following automorphism
\begin{align*}
	\Psi(e_1) &= e_1-e_2+e_3, &
	\Psi(e_2) &= e_2, &
	\Psi(e_3) &= e_1
\end{align*}
establishes an isomorphism between the connection given in~(\ref{Sol36,1,2}) and that associated with the flat Lie algebra $\mathfrak{h}_{2,1}$. Otherwise, if $\delta=1$, then the following automorphism
\begin{align*}
	\Psi(e_1) &= e_1, &
	\Psi(e_2) &=e_1-\tfrac{\sqrt{3}}{2} e_2+\tfrac{3}{2} e_3, &
	\Psi(e_3) &= e_1+\tfrac{\sqrt{3}}{2} e_2+\tfrac{3}{2} e_3
\end{align*}
establishes an isomorphism between the connection given in~(\ref{Sol36,1,2}) and that associated with the flat Lie algebra $\h_{4,1}$ with $\lambda=0$ and $\varepsilon_1=-1$.

If $a_{33}=2$ and $a_{13}\neq0$. Consider the following automorphism:
\begin{align*}
	\Psi(e_1) &=e_1-(-a_{13}b_{32})^{\frac{1}{3}}e_2, &
	\Psi(e_2) &=(-a_{13}b_{32})^{\frac{1}{3}}e_2, &
	\Psi(e_3) &=-\tfrac{(-a_{13}b_{32})^{\frac{1}{3}}}{b_{32}}e_2+\tfrac{(-a_{13}b_{32})^{\frac{2}{3}}}{b_{32}}e_3.
\end{align*}
Applying this automorphism to the connection given in \eqref{Sol36}, yields the following equivalent connection:
\begin{equation}
	\begin{aligned}\label{Sol36,2}
		\nabla_{e_1} e_1 &= e_1,  &\nabla_{e_1} e_2&=e_2, &\nabla_{e_1} e_3&=e_3,\\
		\nabla_{e_2} e_1&=e_2, &\nabla_{e_2} e_2&= e_3, &\nabla_{e_2} e_3&= e_1+\lambda_3e_2\\ \nabla_{e_3} e_1&=e_3, &\nabla_{e_3}e_2&=e_1+\lambda_3 e_2, &\nabla_{e_3}e_3&=e_2+\lambda_3e_3, \quad \lambda_3\in\R.
	\end{aligned}
\end{equation}
If $\lambda_3 = 0$, then this connection coincides with the one treated previously in~\eqref{Sol36,1,2} with $\delta = 1$. Otherwise, the following automorphism
\begin{align*}
	\Psi(e_1) &= e_1 , &
	\Psi(e_2) &= -\tfrac{2\,\lambda_3^2}{9} e_1+\tfrac{\lambda_3^2\sqrt{3}}{9}e_2+\tfrac{\lambda_3^2}{3}e_3, &
	\Psi(e_3) &= \tfrac{2\,\lambda_3}{3}e_1+\tfrac{\sqrt{3}\lambda_3}{3}e_2
\end{align*}
establishes an isomorphism between the connection given in~\eqref{Sol36,2} and the one associated with the flat Lie algebra $\mathfrak{h}_{4,1}$ in~\eqref{Sol36,1,1}, with $\varepsilon_1 = 1$ and $\lambda = -\tfrac{\sqrt{3}\,(2\lambda_3^3 - 27)}{9\,\lambda_3^3}$.

If $a_{33}=2$ and $a_{13}=0$. Consider the following automorphism:
\begin{align*}
	\Psi(e_1) &=e_1-x\,e_2, &
	\Psi(e_2) &=x\,e_2, &
	\Psi(e_3) &=-\tfrac{x}{b_{32}}e_2+\tfrac{x^2}{b_{32}}e_3.
\end{align*}
Applying this automorphism to the connection given in \eqref{Sol36} for a suitable parameter $x\in\R^\ast$, yields the following equivalent connection:
\begin{equation}
	\begin{aligned}\label{Sol36,3}
		\nabla_{e_1} e_1 &= e_1,  &\nabla_{e_1} e_2&=e_2, &\nabla_{e_1} e_3&=e_3,\\
		\nabla_{e_2} e_1&=e_2, &\nabla_{e_2} e_2&= e_3, &\nabla_{e_2} e_3&=\varepsilon_2e_2,\\ \nabla_{e_3} e_1&=e_3, &\nabla_{e_3}e_2&=\varepsilon_2e_2, &\nabla_{e_3}e_3&=\varepsilon_2e_3, \quad \varepsilon_2=\pm1.
	\end{aligned}
\end{equation}
Note that this connection coincides with the one associated with the flat Lie algebra $\mathfrak{h}_{4,1}$ given in \eqref{Sol36,2} with $\lambda = 0$; therefore, no further analysis is required.

The seventh solution is given by the following flat, torsion-free connection:
\begin{equation}
	\begin{aligned}\label{Sol37}
		\nabla_{e_1} e_1 &= e_1+a_{31}e_3,   &\nabla_{e_1} e_3&=\tfrac{a_{31}c_{33}-a_{33}^2+a_{33}}{a_{31}}e_1+a_{33}e_3,
&		\nabla_{e_2} e_2&= e_2,\\ \nabla_{e_3} e_1&=\tfrac{a_{31}c_{33}-a_{33}^2+a_{33}}{a_{31}}e_1+a_{33}e_3, &\nabla_{e_3}e_3&=\tfrac{a_{33}(a_{31}c_{33}-a_{33}^2+a_{33})}{a_{31}^2}e_1+c_{33}e_3.
	\end{aligned}
\end{equation}
If $a_{31}c_{33}-a_{33}^2\neq0$. Consider the following automorphism
\begin{align*}
	\Psi(e_1) &=\tfrac{\sqrt{\varepsilon_4(a_{31}c_{33}-a_{33}^2)}}{\varepsilon_4}e_3, &
	\Psi(e_2) &=e_2, &
	\Psi(e_3) &=\tfrac{a_{31}c_{33}-a_{33}^2}{\varepsilon_4 a_{31}}e_1+\tfrac{a_{33}\sqrt{\varepsilon_4(a_{31}c_{33}-a_{33}^2)}}{\varepsilon_4 a_{31}}e_3.
\end{align*}
Applying this automorphism to the connection given in~\eqref{Sol37}, yields the following equivalent connection:
\begin{equation}
	\begin{aligned}\label{Sol37,1}
		\nabla_{e_1} e_1 &=\varepsilon_4 e_1, &
		\nabla_{e_1} e_3 &=\varepsilon_4 e_3, &
		\nabla_{e_2} e_2 &=  e_1, &\nabla_{e_3}e_1&=\varepsilon_4 e_3, &\nabla_{e_3}e_3&=e_1+\lambda_4 e_3, \quad \lambda_4\in\R,~~\varepsilon_4=\pm1.
	\end{aligned}
\end{equation}
Assume that $\varepsilon_4 = 1$ and $\lambda_4 \in \mathbb{R}$. Then the automorphism
\begin{align*}
	\Psi(e_1) &= -\tfrac{2\lambda_4^2}{9}e_1-\tfrac{\lambda_4\sqrt{3\lambda_4^2+9}}{9}e_2+(\tfrac{\lambda_4^2}{3}+1)e_3, &
	\Psi(e_2) &=(\tfrac{2\lambda_4^2}{9}+1)e_1+\tfrac{\lambda_4\sqrt{3\lambda_4^2+9}}{9}e_2-(\tfrac{\lambda_4^2}{3}+1)e_3, &
	\Psi(e_3) &=\tfrac{\lambda_4}{3}e_1+\tfrac{\sqrt{3\lambda_4^2+9}}{3}e_2,
\end{align*}
establishes an isomorphism between the connection given in \eqref{Sol37,1} and the one associated with the flat Lie algebra $\mathfrak{h}_{4,1}$ in \eqref{Sol36,1,1} with $\lambda=\frac{\lambda_4(2\lambda_4^2+9)\sqrt{3\lambda_4^2+9}}{9\,(\lambda_4^2+3)^2}$ and $\varepsilon_1=1$.

Suppose that $\varepsilon_4 = -1$ and $\lambda_4 \in \mathbb{R}$ with $\lambda_4 \neq \pm\sqrt{3}$. Then the following automorphism
\begin{align*}
	\Psi(e_1) &=-\tfrac{2\lambda_4^2}{9}e_1-\tfrac{\lambda_4\sqrt{3}\sqrt{\varepsilon_1(\lambda_4^2-3)}}{9\,\varepsilon_1}e_2 +\tfrac{\lambda_4^2-3}{3\,\varepsilon_1}e_3, &
	\Psi(e_2) &=(1-\tfrac{2\,\lambda_4^2}{9})e_1-\tfrac{\lambda_4\sqrt{3}\sqrt{\varepsilon_1(\lambda_4^2-3)}}{9\,\varepsilon_1}e_2+\tfrac{\lambda_4^2-3}{3\,\varepsilon_1}e_3, &
	\Psi(e_3) &=\tfrac{\lambda_4}{3}e_1+\tfrac{\sqrt{3}\sqrt{\varepsilon_1(\lambda_4^2-3)}}{3\,\varepsilon_1}e_2,
\end{align*}
establishes an isomorphism between the connection given in \eqref{Sol37,1} and the one associated with the flat Lie algebra $\mathfrak{h}_{4,1}$ in \eqref{Sol36,1,1} with $\lambda=\frac{\lambda_4\varepsilon_1^2\sqrt{3}(2\,\lambda_4^2-9)}{\sqrt{\varepsilon_1(\lambda_4^2-3)}(9\,\lambda_4^2-27)}$.

Assume now that $\varepsilon_4 = -1$ and $\lambda_4 = \pm \sqrt{3}$. Then the following automorphism
\begin{align*}
	\Psi(e_1) &=-e_2, &
	\Psi(e_2) &=e_1, &
	\Psi(e_3) &=\tfrac{\sqrt{3}\varepsilon_0}{2}e_2+\tfrac{\sqrt{4}-3\,\varepsilon_0}{2},\quad\varepsilon_0=\pm1.
\end{align*}
establishes an isomorphism between the connection given in \eqref{Sol37,1} and the one associated with the flat Lie algebra $\h_{3,1}$.

If $a_{31}c_{33}-a_{33}^2=0$. Consider the following automorphism
\begin{align*}
	\Psi(e_1) &= x\, e_3, &
	\Psi(e_2) &= e_2, &
	\Psi(e_3) &= \tfrac{x^2}{a_{31}} e_1 + \tfrac{a_{33} x}{a_{31}} e_3.
\end{align*}
Applying this automorphism to the connection given in \eqref{Sol37}, for a suitable parameter $x \in \mathbb{R}^\ast$, yields the following equivalent connection:
\begin{equation}
	\begin{aligned}\label{Sol37,2}
		\nabla_{e_2} e_2 &=e_2, &
		\nabla_{e_3} e_3 &=e_1+\delta e_3,\quad\delta=0,1.
	\end{aligned}
\end{equation}
If $\delta = 0$, then the following automorphism
\begin{align*}
	\Psi(e_1) &= e_1, &
	\Psi(e_2) &= -e_1 + e_2 - e_3, &
	\Psi(e_3) &= e_2,
\end{align*}
establishes an isomorphism between the connection given in \eqref{Sol37,2} and the one associated with the flat Lie algebra $\mathfrak{h}_{0,1}$. Otherwise, if $\delta = 1$, the following automorphism
\begin{align*}
	\Psi(e_1) &= e_1, &
	\Psi(e_2) &=\tfrac{1}{2} e_2 +\tfrac{1}{2} e_3, &
	\Psi(e_3) &= -e_1-\tfrac{1}{2} e_2 +\tfrac{1}{2} e_3,
\end{align*}
establishes an isomorphism between the connection given in \eqref{Sol37,2} and the one associated with the flat Lie algebra $\mathfrak{h}_{0,5}$

The eighth solution is given by the following flat, torsion-free connection:
\begin{equation}
	\begin{aligned}\label{Sol38}
		\nabla_{e_1} e_1 &=e_1, &
		\nabla_{e_1} e_3 &=\nabla_{e_3}e_1=-b_{23} e_2+e_3,
		&\nabla_{e_2} e_2 &=e_2, &
		\nabla_{e_2} e_3 &=\nabla_{e_3}e_2=b_{23}e_2\\
		&&\nabla_{e_3}e_3&=c_{13}e_1+b_{23}(b_{23}-c_{33})e_2+c_{33}e_3.
	\end{aligned}
\end{equation}
Consider the following automorphism:
\begin{align*}
	\Psi(e_1) &= e_1, &
	\Psi(e_2) &= e_2 , &
	\Psi(e_3) &=  \tfrac{c_{33}}{2}e_1+b_{23}e_2+x\,e_3.
\end{align*}
Applying it to the connection given in \eqref{Sol38}, for a suitable parameter $x \in \mathbb{R}^\ast$, yields the following equivalent connection:
\begin{equation}
	\begin{aligned}\label{Sol38,1}
		\nabla_{e_1} e_1 &=e_1, &
		\nabla_{e_1} e_3 &=e_3,
		&\nabla_{e_2} e_2 &=e_2,
		&\nabla_{e_3}e_1&=e_3, &\nabla_{e_3}e_3&=\varepsilon_0 e_1,\quad\varepsilon_0=0,\pm1.
	\end{aligned}
\end{equation}
If $\varepsilon_0 = 0$, then the following automorphism:
\begin{align*}
	\Psi(e_1) &= e_3, &
	\Psi(e_2) &= e_1, &
	\Psi(e_3) &= e_2
\end{align*}
establishes an isomorphism between the connection given in \eqref{Sol38,1} and the one associated with the flat Lie algebra $\h_{1,2}$.

If $\varepsilon_0 = 1$, then the following automorphism:
\begin{align*}
	\Psi(e_1) &= e_2, &
	\Psi(e_2) &= e_1, &
	\Psi(e_3) &= e_3
\end{align*}
establishes an isomorphism between the connection given in \eqref{Sol38,1} and the one associated with the flat Lie algebra $\h_{1,3}$.

If $\varepsilon_0 = -1$, then the following automorphism:
\begin{align*}
	\Psi(e_1) &= e_2, &
	\Psi(e_2) &= e_1, &
	\Psi(e_3) &= e_3
\end{align*}
establishes an isomorphism between the connection given in \eqref{Sol38,1} and the one associated with the flat Lie algebra $\h_{3,1}$.

The ninth solution is given by the following flat, torsion-free connection:
\begin{equation}
	\begin{aligned}\label{Sol39}
		\nabla_{e_1} e_1 &=e_1+b_{32}e_3, &\nabla_{e_1}e_2&=\nabla_{e_2}e_1=-b_{32}e_3, &
		\nabla_{e_1} e_3 &=\nabla_{e_3}e_1=c_{13}b_{32}e_1-b_{23}e_2+e_3,\\
		\nabla_{e_2} e_2 &=e_2+b_{32}e_3, &
		\nabla_{e_2} e_3 &=\nabla_{e_3}e_2=-c_{13}b_{32}e_1+b_{23}e_2,
	&\nabla_{e_3}e_3&=c_{13}e_1+(c_{13}b_{32}+b_{23})e_3.
	\end{aligned}
\end{equation}
If $b_{32} \neq 0$, consider the following automorphism:
\begin{align*}
	\Psi(e_1) &= e_1-e_2, &
	\Psi(e_2) &= e_2, &
	\Psi(e_3) &= \tfrac{1}{b_{32}}e_1- \tfrac{1}{b_{32}}e_2+ \tfrac{1}{b_{32}}e_3.
\end{align*}
Applying it to the connection given in \eqref{Sol39} yields the following equivalent connection:
\begin{equation}
	\begin{aligned}\label{Sol39,1}
		\nabla_{e_1} e_1 &=e_1, &\nabla_{e_1}e_2&=e_2, &
		\nabla_{e_1} e_3 &=e_3,\\
		\nabla_{e_2} e_1 &=e_2, &\nabla_{e_2}e_2&=e_1+e_3, &
		\nabla_{e_2} e_3 &=ae_1+be_2+e_3,\\
		\nabla_{e_3}e_1&=e_3, &\nabla_{e_3}e_2&=ae_1+be_2+e_3, &\nabla_{e_3}e_3&=(a+b)e_1+(a+b)e_2+be_3,\quad a,b\in\R.
	\end{aligned}
\end{equation}
It is straightforward to verify that the following automorphism:
\begin{align*}
	\Psi(e_1) &= e_1, &
	\Psi(e_2) &=-\tfrac{1}{3}e_1+ e_2, &
	\Psi(e_3) &= -\tfrac{8}{9}e_1+\tfrac{1}{3}e_2+e_3.
\end{align*}
establishes an isomorphism between the connection given in \eqref{Sol39,1} and the one treated in \eqref{Sol36,1}, with $\lambda_1 =a-\frac{b}{3}-\frac{32}{27} $ and $\lambda_2 =1+b $.

If $b_{32} = 0$, consider the following automorphism:
\begin{align*}
	\Psi(e_1) &= e_1 - e_2, &
	\Psi(e_2) &= e_2, &
	\Psi(e_3) &= \tfrac{b_{23}}{2} e_1 - \tfrac{b_{23}}{2} e_2 + x\,e_3.
\end{align*}
Applying it to the connection given in \eqref{Sol39}, for a suitable parameter $x \in \mathbb{R}^\ast$, yields the following equivalent connection:
\begin{equation}
	\begin{aligned}\label{Sol39,2}
	\nabla_{e_1} e_1 &=e_1, &\nabla_{e_1}e_2&=e_2, &
		\nabla_{e_1} e_3 &=e_3,&
		\nabla_{e_2} e_1 &=e_2, &\nabla_{e_2}e_2&=e_2,&
		\nabla_{e_3}e_1&=e_3,\\ \nabla_{e_3}e_3&=\varepsilon_0e_1-\varepsilon_0e_2, && & \varepsilon_0=0,\pm1&.
	\end{aligned}
\end{equation}
If $\varepsilon_0 = \pm 1$, then the following automorphism:
\begin{align*}
	\Psi(e_1) &= e_1, &
	\Psi(e_2) &= e_1 - \tfrac{1}{\varepsilon_1} e_3, &
	\Psi(e_3) &= \tfrac{\sqrt{\varepsilon_0 \varepsilon_1}}{\varepsilon_1} e_2.
\end{align*}
establishes an isomorphism between the connection given in \eqref{Sol39,2} and the one associated with the flat Lie algebra $\h_{4,1}$ given in \eqref{Sol36,1,1} with $\lambda=0$. Otherwise, if $\varepsilon_0 = 0$, then the following automorphism:
\begin{align*}
	\Psi(e_1) &= e_1, &
	\Psi(e_2) &=\tfrac{1}{9} e_1 + \tfrac{2\sqrt{3}}{9}e_2+\tfrac{1}{3} e_3, &
	\Psi(e_3) &=e_1+ \tfrac{\sqrt{3}}{2} e_2-\tfrac{3}{2}e_3.
\end{align*}
establishes an isomorphism between the connection given in \eqref{Sol39,2} and, once again, the one associated with the flat Lie algebra $\h_{4,1}$, with $\varepsilon = 1$ and $\lambda = \tfrac{2\sqrt{3}}{9}$.

The tenth solution is given by the following flat, torsion-free connection:
\begin{equation}
	\begin{aligned}\label{Sol310}
		\nabla_{e_1} e_1 &=e_1+a_{31}e_3, &\nabla_{e_1}e_2&=\nabla_{e_2}e_1=-b_{32}e_3, &\nabla_{e_1}e_3&=\nabla_{e_3}e_1=e_3,
		&\nabla_{e_2} e_2 &=e_2+b_{32}e_3,
	\end{aligned}
\end{equation}
If $(a_{31}-b_{32})\neq0$. Consider the following automorphism:
\begin{align*}
	\Psi(e_1) &= e_1-e_2, &
	\Psi(e_2) &= e_2, &
	\Psi(e_3) &= \tfrac{1}{a_{31}-b_{32}}e_3.
\end{align*}
Applying it to the connection given in \eqref{Sol310} yields the following equivalent connection:
\begin{equation}
	\begin{aligned}\label{Sol310,1}
		\nabla_{e_1} e_1 &=e_1+e_3, &\nabla_{e_1}e_2&=e_2, &
		\nabla_{e_1} e_3 &=e_3,&
		\nabla_{e_2} e_1 &=e_2, &\nabla_{e_2}e_2&=e_2+\lambda_0 e_3,&
		\nabla_{e_3}e_1&=e_3,\quad\lambda_0\in\R.
	\end{aligned}
\end{equation}
It is straightforward to verify that the following automorphism:
\begin{align*}
	\Psi(e_1) &=\tfrac{1}{3}e_1+\tfrac{\sqrt{3}}{3}e_2 + e_3, &
	\Psi(e_2) &=(\tfrac{2\,\lambda_0}{3}+\tfrac{1}{9})e_1-\tfrac{(3\,\lambda_0+2)\sqrt{3}}{9}e_2+(\tfrac{1}{3}-\lambda_0)e_3, &
	\Psi(e_3) &= -\tfrac{2}{3}e_1+\tfrac{\sqrt{3}}{3}e_2+ e_3.
\end{align*}
establishes an isomorphism between the connection given in \eqref{Sol310,1} and the one associated with the flat Lie algebra $\h_{4,1}$ with $\lambda = -\tfrac{2\sqrt{3}}{9}$ and $\varepsilon_1 = 1$.

If $a_{31} - b_{32} = 0$, consider the following automorphism:
\begin{align*}
	\Psi(e_1) &= e_1 - e_2, &
	\Psi(e_2) &= e_2, &
	\Psi(e_3) &= x\, e_3.
\end{align*}
Applying it to the connection given in \eqref{Sol310}, for a suitable parameter $x \in \mathbb{R}^\ast$, yields the following equivalent connection:
\begin{equation}
	\begin{aligned}\label{Sol310,2}
		\nabla_{e_1} e_1 &=e_1, &\nabla_{e_1}e_2&=e_2, &
		\nabla_{e_1} e_3 &=e_3,&
		\nabla_{e_2} e_1 &=e_2, &\nabla_{e_2}e_2&=e_2+\delta_0 e_3,&
		\nabla_{e_3}e_1&=e_3,\quad\delta_0=0,1.
	\end{aligned}
\end{equation}
If $\delta = 0$, then this connection coincides with the one given in \eqref{Sol39,2} with $\varepsilon_0 = 0$. Otherwise, if $\delta = 1$, then the following automorphism:
\begin{align*}
	\Psi(e_1) &= e_1, &
	\Psi(e_2) &=\tfrac{7}{9}e_1+\tfrac{5\sqrt{3}}{9} e_2-\tfrac{2}{3}e_3, &
	\Psi(e_3) &=-\tfrac{2}{3}e_2 -\tfrac{\sqrt{3}}{3}e_2+ e_3.
\end{align*}
establishes an isomorphism between the connection given in \eqref{Sol310,2} under the previous assumption and the one associated with the flat Lie algebra $\h_{4,1}$ with $\lambda=\frac{2\sqrt{3}}{9}$ and $\varepsilon_1=1$.\\\\

\textbf{Case 5.} If $\nabla^0 \equiv \nabla^4$, which corresponds to the Lie algebra $\mathfrak{b}_4$
(see Table~\ref{FlatR2}), then the flatness equations associated with the connection
given in~\eqref{Connegeneral} can be solved directly, yielding eight distinct solutions.

The first solution is provided by the following flat, torsion-free connection:
\begin{equation}
	\begin{aligned}\label{Sol41}
		\nabla_{e_1} e_1 &=e_1+\tfrac{a_{32}(a_{32}b_{33}+b_{32})}{b_{32}b_{33}}e_3, &\nabla_{e_1}e_2&=\nabla_{e_2}e_1=e_2+a_{32}e_3, &
		\nabla_{e_1} e_3 &=\nabla_{e_3}e_1=\tfrac{a_{32}b_{33}+b_{32}}{b_{32}}e_3,\\
		\nabla_{e_2} e_2 &=b_{32}e_3, &\nabla_{e_2}e_3&=\nabla_{e_3}e_2=b_{33} e_3,&
		\nabla_{e_3}e_3&=\tfrac{b_{33}^2}{b_{32}}e_3.
	\end{aligned}
\end{equation}
Consider the following automorphism:
\begin{align*}
	\Psi(e_1) &=\tfrac{a_{32}b_{33}}{b_{32}} e_1+e_2+\tfrac{a_{32}b_{33}}{b_{32}}e_3, &
	\Psi(e_2) &= b_{33}e_3, &
	\Psi(e_3) &= \tfrac{b_{33}^2}{b_{32}}e_1 + \tfrac{b_{33}^2}{b_{32}}e_3.
\end{align*}
Applying it to the connection given in \eqref{Sol41} yields the following equivalent connection:
\begin{equation}
	\begin{aligned}\label{Sol41,1}
	\nabla_{e_1}e_2&=\nabla_{e_2}e_1=e_1, &
		\nabla_{e_2} e_2 &=e_2, &\nabla_{e_2}e_3&=\nabla_{e_3}e_2= e_3,&
		\nabla_{e_3}e_3&=e_1+e_3.
	\end{aligned}
\end{equation}
The following automorphism
\begin{align*}
	\Psi(e_1) &=-\tfrac{2}{3}e_1+\tfrac{\sqrt{3}}{3}e_2+e_3,&
	\Psi(e_2) &= e_1,&
	\Psi(e_3) &= \tfrac{7}{9}e_1-\tfrac{5\sqrt{3}}{9}e_2-\tfrac{2}{3}e_3,
\end{align*}
establishes an isomorphism between the connection given in \eqref{Sol41,1} and that associated with the flat Lie algebra defined in \eqref{Sol36,1,1} with $\varepsilon_1=1$ and $\lambda=-\frac{2\sqrt{3}}{9}$.

The second solution is provided by the following flat, torsion-free connection:
\begin{equation}
	\begin{aligned}\label{Sol42}
	\nabla_{e_1}e_1&=e_1,~~ \nabla_{e_1}e_2=\nabla_{e_2}e_1=e_2, ~~
		\nabla_{e_1} e_3 =\nabla_{e_3}e_1=e_3,\\ \nabla_{e_2}e_2&=b_{32} e_3,~~
		\nabla_{e_2}e_3=\nabla_{e_3}e_2=(b_{32}c_{23}-b_{23}b_{33})e_1+b_{23}e_2+b_{33}e_3,\\
		\nabla_{e_3}e_3&=-\tfrac{b_{33}(b_{23}b_{33}-b_{32}c_{23})}{b_{32}}e_1+c_{23}e_2+\tfrac{b_{32}b_{23}+b_{33}^2}{b_{32}}e_3.
	\end{aligned}
\end{equation}
Consider the following automorphism:
\begin{align*}
	\Psi(e_1) &=e_1, &
	\Psi(e_2) &=\tfrac{b_{33}}{3}e_1+x\,e_2, &
	\Psi(e_3) &= \tfrac{b_{33}^2}{9b_{32}}e_1+\tfrac{2b_{33}\,x}{3b_{32}} + \tfrac{x^2}{b_{32}}e_3.
\end{align*}
Applying it to the connection given in \eqref{Sol42}, for a suitable parameter $x\in\R^\ast$, yields the following equivalent connection:
\begin{equation}
	\begin{aligned}\label{Sol42,1}
			\nabla_{e_1} e_1 &= e_1,  &\nabla_{e_1} e_2&=e_2, &\nabla_{e_1} e_3&=e_3,\\
		\nabla_{e_2} e_1&=e_2, &\nabla_{e_2} e_2&= e_3, &\nabla_{e_2} e_3&=\lambda_2 e_1+\varepsilon_2e_2,\\ \nabla_{e_3} e_1&=e_3, &\nabla_{e_3}e_2&=\lambda_2 e_1+\varepsilon_2e_2, &\nabla_{e_3}e_3&=\lambda_2 e_2+\varepsilon_2e_3, \quad\lambda_2\in\R,~~\varepsilon_2=0,\pm1.
	\end{aligned}
	\end{equation}
If $\varepsilon_2=\pm 1$, then for all $\lambda_2 \in \mathbb{R}$, this connection coincides with the one associated with the flat Lie algebra $\mathfrak{h}_{4,1}$, where $\varepsilon_1=\varepsilon_2$ and $\lambda=\lambda_2$. If $\varepsilon_2=0$ and $\lambda_2=0$, consider the following automorphism:
\begin{align*}
	\Psi(e_1) &= e_1-e_2+e_3, &
	\Psi(e_2) &= e_2, &
	\Psi(e_3) &= e_1.
\end{align*}
This automorphism establishes an isomorphism between the connection given in \eqref{Sol42,1} and the one associated with the flat Lie algebra $\h_{2,1}$.

If $\varepsilon_2=0$ and $\lambda_2\neq0$, consider the following automorphism:
\begin{align*}
	\Psi(e_1) &= e_1, &
	\Psi(e_2) &=\lambda_2^{\frac{1}{3}} e_2, &
	\Psi(e_3) &=\lambda_2^{\frac{2}{3}}  e_3.
\end{align*}
Applying this automorphism to the connection given in \eqref{Sol42,1} yields the following equivalent connection:
\begin{equation}
	\begin{aligned}\label{Sol42,1,1}
		\nabla_{e_1} e_1 &= e_1,  &\nabla_{e_1} e_2&=e_2, &\nabla_{e_1} e_3&=e_3,\\
		\nabla_{e_2} e_1&=e_2, &\nabla_{e_2} e_2&= e_3, &\nabla_{e_2} e_3&= e_1,\\ \nabla_{e_3} e_1&=e_3, &\nabla_{e_3}e_2&= e_1, &\nabla_{e_3}e_3&=e_2.
	\end{aligned}
\end{equation}
In fact, the connection \eqref{Sol42,1,1} is isomorphic to the one associated with the flat Lie algebra $\h_{4,1}$, given in \eqref{Sol36,1,1} with $\lambda=0$ and $\varepsilon_1=-1$, via the following automorphism:
\begin{align*}
	\Psi(e_1) &= e_1, &
	\Psi(e_2) &= e_1-\tfrac{\sqrt{3}}{2} e_2+\tfrac{3}{2}e_3, &
	\Psi(e_3) &= e_1+\tfrac{\sqrt{3}}{2} e_2+\tfrac{3}{2}e_3.
\end{align*}

The third solution is given by the following flat, torsion-free connection:
\begin{equation}
	\begin{aligned}\label{Sol43}
		\nabla_{e_1}e_1&=e_1+a_{31}e_3, &\nabla_{e_1}e_2=\nabla_{e_2}e_1=e_2&, &\nabla_{e_1}e_3=\nabla_{e_3}e_1=&e_3,
		&\nabla_{e_2}e_2&=b_{32}e_3.
	\end{aligned}
\end{equation}
Consider the following automorphism:
\begin{align*}
	\Psi(e_1) &= a_{31}\,x\,e_1, &
	\Psi(e_2) &= e_3, &
	\Psi(e_3) &=x\, e_1.
\end{align*}
Applying this automorphism to the connection given in \eqref{Sol43}, for a suitable parameter $x\in\R^\ast$, yields the following equivalent connection:
\begin{equation}
	\begin{aligned}\label{Sol43,1}
		\nabla_{e_1}e_2&=e_1, &\nabla_{e_1}e_2=e_1&, &\nabla_{e_2}e_2=&e_2,
		&\nabla_{e_2}e_3&=e_3, &\nabla_{e_3}e_2&=e_3, &\nabla_{e_3}e_3&=\delta_2 e_1,\quad\delta=0,1.
	\end{aligned}
\end{equation}
 If $\delta=0$. Consider the following automorphism:
\begin{align*}
	\Psi(e_1) &= e_1, &
	\Psi(e_2) &= e_3, &
	\Psi(e_3) &= e_2.
\end{align*}
This automorphism establishes an isomorphism between the connection given in \eqref{Sol43,1} and the one associated with the flat Lie algebra $\h_{0,8}$.

If $\delta=1$.Then, the following automorphism:
\begin{align*}
	\Psi(e_1) &= e_1, &
	\Psi(e_2) &=e_1-e_2+ e_3, &
	\Psi(e_3) &= e_2
\end{align*}
establishes an isomorphism between the connection given in \eqref{Sol43,1} and the one associated with the flat Lie algebra $\h_{2,1}$.

The fourth solution is given by the following flat, torsion-free connection:
\begin{equation}
	\begin{aligned}\label{Sol44,2}
		\nabla_{e_1}e_1&=e_1+a_{31}e_3, &\nabla_{e_1}e_2&=\nabla_{e_2}e_1=e_2, &\nabla_{e_1}e_3&=\nabla_{e_3}e_1=\tfrac{a_{31}c_{23}}{a_{33}}e_2+a_{33}e_3, \\\nabla_{e_3}e_3&=c_{23}e_2+\tfrac{a_{33}(a_{33}-1)}{a_{31}}e_3.
	\end{aligned}
\end{equation}
If $a_{33}-1\neq0$, consider the following automorphism:
\begin{align*}
	\Psi(e_1) &= e_1-\tfrac{a_{31}^2c_{23}}{a_{33}(a_{33}-1)}e_2+(a_{33}-1)e_3, &
	\Psi(e_2) &= e_2, &
	\Psi(e_3) &= -\tfrac{c_{23}a_{31}}{a_{33}(a_{33}-1)}e_2+\tfrac{a_{33}(a_{33}-1)}{a_{31}}e_3.
\end{align*}
Applying this automorphism to the connection given in \eqref{Sol44,2} yields the following equivalent connection:
\begin{equation}
	\begin{aligned}\label{Sol44,2,1}
		\nabla_{e_1}e_1&=e_1, &\nabla_{e_1}e_2&=\nabla_{e_2}e_1=e_2, &\nabla_{e_1}e_3&=\nabla_{e_3}e_1=e_3, &\nabla_{e_3}e_3&=e_3.
	\end{aligned}
\end{equation}
In fact, this connection is isomorphic to the one associated with the flat Lie algebra $\h_{1,2}$ via the following automorphism:
\begin{align*}
	\Psi(e_1) &= e_1+e_3, &
	\Psi(e_2) &= e_2, &
	\Psi(e_3) &= e_1.
\end{align*}
If $a_{33}=1$, consider the following automorphism:
\begin{align*}
	\Psi(e_1) &= e_1-a_{31}^2c_{23}e_2+a_{31}e_3, &
	\Psi(e_2) &= x\,e_2, &
	\Psi(e_3) &= e_3.
\end{align*}
Applying this automorphism to the connection given in \eqref{Sol43}, for a suitable parameter $x\in\R^\ast$, yields the following equivalent connection:
\begin{equation}
	\begin{aligned}\label{Sol44,2,2}
		\nabla_{e_1}e_1&=e_1, &\nabla_{e_1}e_2&=\nabla_{e_2}e_1=e_2, &\nabla_{e_1}e_3&=\nabla_{e_3}e_1=e_3, &\nabla_{e_3}e_3&=\delta_2e_2,\quad\delta=0,1.
	\end{aligned}
\end{equation}
If $\delta_2=0$, then the following automorphism
\begin{align*}
	\Psi(e_1) &= e_3, &
	\Psi(e_2) &= e_2, &
	\Psi(e_3) &= e_1
\end{align*}
establishes an isomorphism between the connection given in \eqref{Sol44,2,2} and the one associated with the flat Lie algebra $\h_{0,8}$.

If $\delta_2=1$, then the following automorphism
\begin{align*}
	\Psi(e_1) &= e_1-e_2+e_3, &
	\Psi(e_2) &= e_1, &
	\Psi(e_3) &= e_2
\end{align*}
establishes an isomorphism between the connection given in \eqref{Sol44,2,2} and the one associated with the flat Lie algebra $\h_{2,1}$.

The fifth solution is given by the following flat, torsion-free connection:
\begin{equation}
	\begin{aligned}\label{Sol45}
		\nabla_{e_1}e_1&=e_1, ~~\nabla_{e_1}e_2=\nabla_{e_2}e_1=e_2, ~~\nabla_{e_1}e_3=\nabla_{e_3}e_1=e_3, ~~\nabla_{e_2}e_3=\nabla_{e_3}e_2=b_{23}e_3,\\
		\nabla_{e_3}e_3&=b_{23}(b_{23}-c_{33})e_1+c_{23}e_2+c_{33}e_3.
	\end{aligned}
\end{equation}
If $2\,b_{23}-c_{33}\neq0$, consider the following automorphism:
\begin{align*}
	\Psi(e_1) &= e_1, &
	\Psi(e_2) &= e_2, &
	\Psi(e_3) &=b_{23}e_2+\tfrac{c_{23}}{2\,b_{23}-c_{33}} e_2-(2\,b_{23}-c_{33})e_3.
\end{align*}
Applying this automorphism to the connection given in \eqref{Sol45} yields the following equivalent connection:
\begin{equation}
	\begin{aligned}\label{Sol45,1}
		\nabla_{e_1}e_1&=e_1, &
		\nabla_{e_1}e_2&=\nabla_{e_2}e_1=e_2, &
		\nabla_{e_1}e_3&=\nabla_{e_3}e_1=e_3, &
		\nabla_{e_3}e_3&=e_3.
	\end{aligned}
\end{equation}
Observe that this connection coincides with the one treated in \eqref{Sol44,2,1}; therefore, no further analysis is required.

If $2\,b_{23}-c_{33}=0$, consider the following automorphism:
\begin{align*}
	\Psi(e_1) &= e_1, &
	\Psi(e_2) &= x\,e_2, &
	\Psi(e_3) &=b_{23}e_2+e_3.
\end{align*}
Applying this automorphism to the connection given in \eqref{Sol45}, for a suitable parameter $x\in\R^\ast$, yields the following equivalent connection:
\begin{equation}
	\begin{aligned}\label{Sol45,2}
		\nabla_{e_1}e_1&=e_1, &
		\nabla_{e_1}e_2&=\nabla_{e_2}e_1=e_2, &
		\nabla_{e_1}e_3&=\nabla_{e_3}e_1=e_3, &
		\nabla_{e_3}e_3&=e_3.
	\end{aligned}
\end{equation}
Observe that this connection coincides with the one treated in \eqref{Sol44,2,2}; therefore, no further analysis is required.

The sixth solution is given by the following flat, torsion-free connection:
\begin{equation}
	\begin{aligned}\label{Sol46}
		\nabla_{e_1}e_1&=e_1, ~~\nabla_{e_1}e_2=\nabla_{e_2}e_1=e_2, ~~\nabla_{e_1}e_3=\nabla_{e_3}e_1=b_{23}e_1+a_{23}e_2,  ~~\nabla_{e_2}e_3=\nabla_{e_3}e_2=b_{23}e_3,\\
		\nabla_{e_3}e_3&=b_{23}(b_{23}-c_{33})e_1+a_{23}(2\,b_{23}-c_{33})e_2+c_{33}e_3.
	\end{aligned}
\end{equation}
Consider the following automorphism:
\begin{align*}
	\Psi(e_1) &= e_1, &
	\Psi(e_2) &= e_2, &
	\Psi(e_3) &= b_{23}e_2+a_{23}e_2+x\,e_3.
\end{align*}
Applying this automorphism to the connection given in \eqref{Sol46}, for a suitable parameter $x\in\R^\ast$, yields the following equivalent connection:
\begin{equation}
	\begin{aligned}\label{Sol46,1}
		\nabla_{e_1}e_1&=e_1, &\nabla_{e_1}e_2&=\nabla_{e_2}e_1=e_2, 
		&\nabla_{e_3}e_3&=\delta_3e_3,\quad\delta_3=0,1.
	\end{aligned}
\end{equation}
If $\delta_3=0$, then this connection is isomorphic to the one associated with the flat Lie algebra $\h_{0,3}$ via the following automorphism:
\begin{align*}
	\Psi(e_1) &= e_3, &
	\Psi(e_2) &= e_2, &
	\Psi(e_3) &= e_1.
\end{align*}
Otherwise, if $\delta_3=1$, then it is isomorphic to the one associated with the flat Lie algebra $\h_{1,2}$ via the previous automorphism.

The seventh solution is given by the following flat, torsion-free connection:
\begin{equation}
	\begin{aligned}\label{Sol47}
		\nabla_{e_1}e_1&=e_1+a_{31}e_3, &\nabla_{e_1}e_2&=\nabla_{e_2}e_1=e_2+a_{32}e_3, 
	\end{aligned}
\end{equation}
Consider the following automorphism:
\begin{align*}
	\Psi(e_1) &=-a_{31} e_1+e_3, &
	\Psi(e_2) &= e_2, &
	\Psi(e_3) &= x\,e_1.
\end{align*}
Applying this automorphism to the connection given in \eqref{Sol47}, for a suitable parameter $x\in\R^\ast$, yields the following equivalent connection:
\begin{equation}
	\begin{aligned}\label{Sol47,1}
		\nabla_{e_2}e_3&=\nabla_{e_3}e_1=\delta_3e_1+e_2, &\nabla_{e_3}e_3&=e_3, \quad\delta_3=0,1.
	\end{aligned}
\end{equation}
The upper matrix $\nabla^0$ associated with this connection coincides with the vanishing connection $\nabla \equiv 0$. Therefore, this case has already been treated in \textbf{Case 1}, and no further analysis is required.

The last solution in this case is given by the following flat, torsion-free connection:
\begin{equation}
	\begin{aligned}\label{Sol48}
		\nabla_{e_1}e_1&=e_1+a_{31}e_3, &\nabla_{e_1}e_2&=\nabla_{e_2}e_1=e_2, &\nabla_{e_1}e_3&=\nabla_{e_3}e_1=a_{23}e_2.
	\end{aligned}
\end{equation}
Consider the following automorphism:
\begin{align*}
	\Psi(e_1) &=-a_{31} e_1+e_3, &
	\Psi(e_2) &= e_2, &
	\Psi(e_3) &= x\,e_1.
\end{align*}
Applying this automorphism to the connection given in \eqref{Sol48} yields the following equivalent connection:
\begin{equation}
	\begin{aligned}\label{Sol48,1}
		\nabla_{e_2}e_3&=\nabla_{e_3}e_1=e_2, &\nabla_{e_3}e_3&=e_3.
	\end{aligned}
\end{equation}
Observe that this connection coincides with the one given previously in \eqref{Sol47,1} with $\delta_3=0$; hence, no further analysis is required.

\textbf{Case 6.} If $\nabla^0 \equiv \nabla^5$, which corresponds to the Lie algebra $\mathfrak{b}_5$
(see Table~\ref{FlatR2}), The flatness equations associated with the connection given in \eqref{Connegeneral} can be solved directly, yielding six distinct solutions.

The first solution is determined by the following flat torsion-free connection:
\begin{equation}
	\begin{aligned}\label{Sol51}
		\nabla_{e_1}e_1&=e_1+(b_{32}^2c_{33}-b_{32})e_3, &\nabla_{e_1}e_2&=\nabla_{e_2}e_1=e_2, &\nabla_{e_1}e_3&=\nabla_{e_3}e_1=b_{32}c_{33}e_3, \\\nabla_{e_2}e_2&=-e_1+b_{32}e_3, &\nabla_{e_3}e_3&=c_{33}e_3. 
	\end{aligned}
\end{equation}
Consider the following automorphism:
\begin{align*}
	\Psi(e_1) &= b_{32}\,x\, e_1 + e_3, &
	\Psi(e_2) &= e_2, &
	\Psi(e_3) &= x\,e_1.
\end{align*}
Applying this automorphism to the connection given in~\eqref{Sol51}, for a suitable parameter $x \in \mathbb{R}^\ast$, yields the following equivalent connection:
\begin{equation}
	\begin{aligned}\label{Sol51,1}
		\nabla_{e_1}e_1&=\delta e_1,  &\nabla_{e_2}e_2&=-e_3, &\nabla_{e_2}e_3&=\nabla_{e_3}e_2=e_2, &\nabla_{e_3}e_3&=e_3,\quad\delta=0,1. 
	\end{aligned}
\end{equation}
Note that if $\delta = 0$, then the upper matrix associated with this connection coincides with that of \textbf{Case~1}, and hence has already been treated. If $\delta = 1$, by the same reasoning, we are reduced to \textbf{Case~2}; therefore, no further analysis is required.

The second solution is determined by the following flat torsion-free connection:
\begin{equation}
	\begin{aligned}\label{Sol52}
		\nabla_{e_1}e_1&= e_1,  ~~\nabla_{e_1}e_2=\nabla_{e_2}e_1=e_2, ~~\nabla_{e_1}e_3=\nabla_{e_3}e_1=b_{23}e_1, ~~\nabla_{e_2}e_2=-e_1, ~~\nabla_{e_2}e_3=\nabla_{e_3}e_2=b_{23}e_2, \\\nabla_{e_3}e_3&=b_{23}(b_{23}-c_{33})e_1+c_{33}e_3. 
	\end{aligned}
\end{equation}
Consider the following automorphism:
\begin{align*}
	\Psi(e_1) &= e_3, &
	\Psi(e_2) &= e_2, &
	\Psi(e_3) &= x\,e_1+(b_{23}-c_{33})e_3,
\end{align*}
where $x\in\R^\ast$ is a suitable parameter. Applying this automorphism to the connection given in~\eqref{Sol52} yields the following equivalent connection
\begin{equation}
	\begin{aligned}\label{Sol52,1}
		\nabla_{e_1}e_1&=\delta e_1,  ~~\nabla_{e_1}e_2=\nabla_{e_2}e_1=\delta e_2, ~~\nabla_{e_1}e_3=\nabla_{e_3}e_1=\delta e_3, ~~\nabla_{e_2}e_2=-e_1, ~~\nabla_{e_2}e_3=\nabla_{e_3}e_2=-e_3, \\\nabla_{e_3}e_3&=e_3,\quad \delta=0,1. 
	\end{aligned}
\end{equation}
Note that if $\delta=0$, then the upper block of this matrix coincides with \textbf{Case 1}, where $\nabla^0\equiv 0$. If $\delta=1$, then this case also coincides with \textbf{Case 4}. Therefore, no further analysis is required.

The third solution is given by the following flat torsion-free connection:
\begin{equation}
	\begin{aligned}\label{Sol53}
		\nabla_{e_1}e_1&= e_1,  ~~\nabla_{e_1}e_2=\nabla_{e_2}e_1=e_2, ~~\nabla_{e_1}e_3=\nabla_{e_3}e_1=\tfrac{b_{13}c_{33}-c_{23}}{2\, b_{13}} e_1-b_{13}e_2, ~~\nabla_{e_2}e_2=-e_1, \\\nabla_{e_2}e_3&=\nabla_{e_3}e_2=b_{13}e_1+\tfrac{b_{13}c_{33}-c_{23}}{2\,b_{13}}e_2, ~~\nabla_{e_3}e_3=\tfrac{c_{23}^2-b_{13}^2c_{33}^2-4\,b_{13}^4}{4\,b_{13}^2}e_1+c_{23}e_2+c_{33}e_3.
	\end{aligned}
\end{equation}
Consider the following automorphism:
\begin{align*}
	\Psi(e_1) &= e_1, &
	\Psi(e_2) &= e_2, &
	\Psi(e_3) &=\tfrac{ b_{13}c_{33}-c_{23}}{2\,b_{13}}e_1-b_{13}e_2-b_{13}e_3,
\end{align*}
 Applying this automorphism to the connection given in~\eqref{Sol52} yields the following equivalent connection
\begin{equation}
	\begin{aligned}\label{Sol53,1}
		\nabla_{e_1}e_1&= e_1,  &\nabla_{e_1}e_2&=\nabla_{e_2}e_1= e_2, &\nabla_{e_2}e_2&=-e_1,  &\nabla_{e_3}e_3&=\lambda_5 e_3,\quad \lambda\in\R. 
	\end{aligned}
\end{equation}
If $\lambda_5\neq 0$, then the following automorphism
\begin{align*}
	\Psi(e_1) &=-e_3, &
	\Psi(e_2) &= e_2, &
	\Psi(e_3) &= \lambda_5 e_1+\lambda_5 e_3,
\end{align*}
establishes an isomorphism between the connection given in~\eqref{Sol53,1} and the one associated with the flat Lie algebra $\h_{4,1}$ for $\lambda=0$, $\varepsilon=-1$, and $\lambda_5\neq 0$. Otherwise, if $\lambda_5=0$, then  the following automorphism
\begin{align*}
	\Psi(e_1) &=e_3, &
	\Psi(e_2) &= e_2, &
	\Psi(e_3) &= e_1,
\end{align*}
establishes an isomorphism between the connection given in~\eqref{Sol53,1} and the one associated with the flat Lie algebra $\h_{0,5}$ for  $\varepsilon=-1$.

The fourth solution is given by the following flat torsion-free connection:
\begin{equation}
	\begin{aligned}\label{Sol54}
		\nabla_{e_1}e_1&= e_1,  ~~\nabla_{e_1}e_2=\nabla_{e_2}e_1=e_2, ~~\nabla_{e_1}e_3=\nabla_{e_3}e_1=e_3, ~~\nabla_{e_2}e_2=-e_1+b_{32}e_3, \\\nabla_{e_2}e_3&=\nabla_{e_3}e_2=b_{13}e_1+b_{23}e_2+b_{33}e_3, ~~\nabla_{e_3}e_3=\tfrac{b_{13}b_{33}-b_{23}}{b_{32}}e_1+\tfrac{b_{23}b_{33}+b_{13}}{b_{32}} e_2+\tfrac{b_{23}b_{32}+b_{33}^2+1}{b_{32}} e_3.
	\end{aligned}
\end{equation}
Consider the following automorphism:
\begin{align*}
	\Psi(e_1) &= e_1, &
	\Psi(e_2) &=\tfrac{b_{33}}{3}e_1+x\, e_2, &
	\Psi(e_3) &=\tfrac{ b_{33}^2+9}{9\,b_{32}}e_1+\tfrac{2\,b_{33}\,x}{3\, b_{32}} e_2+\tfrac{x^2}{b_{32}} e_3,
\end{align*}
Applying this automorphism to the connection given in~\eqref{Sol52} for a suitable parameter $x\in\R^\ast$, yields the following equivalent connection
\begin{equation}
	\begin{aligned}\label{Sol54,1}
		\nabla_{e_1} e_1 &= e_1,  &\nabla_{e_1} e_2&=e_2, &\nabla_{e_1} e_3&=e_3,\\
		\nabla_{e_2} e_1&=e_2, &\nabla_{e_2} e_2&= e_3, &\nabla_{e_2} e_3&=\lambda_6 e_1+\delta e_2,\\ \nabla_{e_3} e_1&=e_3, &\nabla_{e_3}e_2&=\lambda_6 e_1+\delta e_2, &\nabla_{e_3}e_3&=\lambda_6 e_2+\delta e_3, \quad\lambda_6\in\R,~~\delta=0,\pm1.
	\end{aligned}
\end{equation}
In fact, this connection coincides with the one given in~\eqref{Sol36,1,1}.

The fifth solution is given by the following flat torsion-free connection:
\begin{equation}
	\begin{aligned}\label{Sol55}
		\nabla_{e_1}e_1&= e_1+\tfrac{(b_{32}c_{33}-b_{33}^2-1)(b_{32}c_{33}-b_{33}^2)}{c_{33}}e_3,  ~~\nabla_{e_1}e_2=\nabla_{e_2}e_1=e_2+\tfrac{b_{33}(b_{32}c_{33}-b_{33}^2-1)}{c_{33}}e_3, \\\nabla_{e_1}e_3&=\nabla_{e_3}e_1=(b_{32}c_{33}-b_{33}^2)e_3, ~~\nabla_{e_2}e_2=-e_1+b_{32}e_3, \\\nabla_{e_2}e_3&=\nabla_{e_3}e_2=b_{33}e_3, ~~\nabla_{e_3}e_3=c_{33} e_3.
	\end{aligned}
\end{equation}
If $b_{32}c_{33}-b_{33}^2\neq0$. Consider the following automorphism
\begin{align*}
	\Psi(e_1) &= e_1, &
	\Psi(e_2) &=e_2+\tfrac{b_{33}}{b_{32}c_{33}-b_{33}^2}e_3, &
	\Psi(e_3) &=\tfrac{c_{33}}{b_{32}c_{33}-b_{33}^2} e_3,
\end{align*}
Applying this automorphism to the connection given in~\eqref{Sol55} for a suitable parameter $x\in\R^\ast$, yields the following equivalent connection
\begin{equation}
	\begin{aligned}\label{Sol55,1}
		\nabla_{e_1}e_1&= e_1+(\lambda_7-1)e_3, &\nabla_{e_1}e_2&=\nabla_{e_2}e_1=e_2, &\nabla_{e_1}e_3&=\nabla_{e_3}e_1=\lambda_7 e_3, &\nabla_{e_2}e_2&=-e_1+e_3, &\nabla_{e_3}e_3&=\lambda_7 e_3,
	\end{aligned}
\end{equation}
where, $\lambda_7=b_{32}c_{33}-b_{33}^2\in\R^\ast$. 

Note that the connection~\eqref{Sol55,1} is isomorphic to the one associated with the flat Lie algebra $\h_{4,1}$ via the following automorphism:
\begin{align*}
	\Psi(e_1) &=\lambda_7 e_1+(\lambda_7-1)e_3, &
	\Psi(e_2) &=-e_2, &
	\Psi(e_3) &=\lambda_7 e_1+\lambda_7 e_3.
\end{align*}

 If $b_{32}c_{33}-b_{33}^2=0$. Consider the following automorphism
 \begin{align*}
 	\Psi(e_1) &= e_1, &
 	\Psi(e_2) &=e_2+b_{33}e_3, &
 	\Psi(e_3) &=c_{33} e_3,
 \end{align*}
 Applying this automorphism to the connection given in~\eqref{Sol55}, yields the following equivalent connection
 \begin{equation}
 	\begin{aligned}\label{Sol55,2}
 		\nabla_{e_1}e_1&= e_1, &\nabla_{e_1}e_2&=\nabla_{e_2}e_1=e_2, &\nabla_{e_2}e_2&=-e_1, &\nabla_{e_3}e_3&= e_3,
 	\end{aligned}
 \end{equation}
 The connection~\eqref{Sol55,2} is in fact isomorphic to the one associated with the flat Lie algebra $\h_{4,1}$ for $\lambda=0$ and $\varepsilon=-1$ via the following automorphism:
 \begin{align*}
 	\Psi(e_1) &=-e_3, &
 	\Psi(e_2) &=e_2, &
 	\Psi(e_3) &=e_1+ e_3.
 \end{align*}
 The sixth solution is given by the following flat, torsion-free connection:
 \begin{equation}
 	\begin{aligned}\label{Sol56}
 		\nabla_{e_1}e_1&= e_1-b_{32}e_3, &\nabla_{e_1}e_2&=\nabla_{e_2}e_1=e_2+a_{32}e_3, &\nabla_{e_2}e_2&=-e_1+b_{32}e_3.
 	\end{aligned}
 \end{equation}
 Consider the following automorphism
 \begin{align*}
 	\Psi(e_1) &=b_{32}e_1+e_2, &
 	\Psi(e_2) &=-a_{32}e_1+e_3, &
 	\Psi(e_3) &=e_1.
 \end{align*}
 Applying it to the connection given in \eqref{Sol56} yields the following equivalent connection:
 \begin{equation}
 	\begin{aligned}\label{Sol56,1}
 	\nabla_{e_2}e_2&=e_2, &\nabla_{e_2}e_3&=\nabla_{e_3}e_2=e_3, &\nabla_{e_3}e_3&=-e_2.
 	\end{aligned}
 \end{equation}
 This connection \eqref{Sol56,1} is in fact isomorphic to the one associated with the flat Lie algebra $\mathfrak{h}_{0,5}$ with $\varepsilon = -1$, via the following automorphism:
 \begin{align*}
 	\Psi(e_1) &=e_1, &
 	\Psi(e_2) &=e_3, &
 	\Psi(e_3) &=e_2.
 \end{align*}

\end{document}